\let\olddiamond\diamond
\let\oldsquare\square 
\renewcommand{\square}{\oldsquare}
\renewcommand{\diamond}{\olddiamond}
\numberwithin{equation}{section}
\numberwithin{figure}{section}
\newtheorem{theorem}{Theorem}[section]
\newtheorem{corollary}[theorem]{Corollary}
\newtheorem{proposition}[theorem]{Proposition}
\newtheorem{lemma}[theorem]{Lemma}
\newtheorem{theoremA}{Theorem}
\theoremstyle{definition}
\newtheorem{definition}[theorem]{Definition}
\newtheorem{remark}[theorem]{Remark}
\newtheorem{assertion}{Assertion}
\newcommand*{\supp}{\ensuremath{\mathrm{supp\,}}}
\newcommand*{\Id}{\ensuremath{\mathrm{I}_d}}
\newcommand*{\Itwod}{\ensuremath{\mathrm{I}_{2d}}}
\newcommand{\nf}{\nicefrac}
\newcommand*{\tr}{\ensuremath{\mathrm{trace\,}}}
\newcommand*{\N}{\ensuremath{\mathbb{N}}}
\newcommand*{\Z}{\ensuremath{\mathbb{Z}}}
\newcommand*{\R}{\ensuremath{\mathbb{R}}}
\newcommand*{\Zd}{\ensuremath{\mathbb{Z}^d}}
\newcommand*{\Rd}{\ensuremath{\mathbb{R}^d}}
\newcommand{\eps}{\varepsilon}
\renewcommand*{\tilde}{\widetilde}
\renewcommand{\P}{\ensuremath{\mathbb{P}}}
\renewcommand{\O}{\ensuremath{\mathcal{O}}}
\newcommand{\X}{\ensuremath{\mathcal{X}}}
\renewcommand{\b}{\ensuremath{\mathbf{b}}}
\newcommand{\qand}{\quad \mbox{and} \quad }
\newcommand{\sym}{\mathrm{sym}}
\renewcommand{\skew}{\mathrm{skew}}
\newcommand{\f}{\mathbf{f}}
\newcommand{\g}{\mathbf{g}}
\newcommand{\h}{\mathbf{h}}
\newcommand{\s}{\mathbf{s}}
\renewcommand{\tt}{\mathbf{t}}
\newcommand{\Lat}{\ensuremath{\mathbb{L}_0}}
\newcommand{\ep}{\eps}
\newcommand{\A}{\mathcal{A}}
\renewcommand{\S}{\mathcal{S}}
\newcommand{\G}{\mathbf{G}}
\DeclareMathOperator{\dist}{dist}
\DeclareMathOperator{\diam}{diam}
\DeclareMathOperator{\intr}{int}
\DeclareMathOperator*{\var}{var}\DeclareMathOperator*{\cov}{cov}
\newcommand{\E}{\mathbb{E}}
\newcommand{\XiDet}{\Xi}
\DeclareSymbolFont{boldoperators}{OT1}{cmr}{bx}{n}
\newcommand\thickbar[1]{\accentset{\rule{.45em}{.6pt}}{#1}}
\renewcommand{\bar}{\thickbar}
\renewcommand{\a}{\mathbf{a}}
\renewcommand{\k}{\mathbf{k}}
\newcommand{\m}{\mathbf{m}}
\newcommand{\ahom}{\bar{\a}}
\newcommand{\bhom}{\bar{\mathbf{b}}}
\newcommand{\shom}{\bar{\mathbf{s}}}
\newcommand{\khom}{\bar{\mathbf{k}}}
\newcommand{\hhom}{\bar{\mathbf{h}}}
\newcommand{\thom}{\bar{\mathbf{t}}}
\newcommand{\gammafun}{\mathbf{\Gamma}}
\newcommand{\Lsolo}{L^2_{\a,\mathrm{sol,0}}} 
\newcommand{\Lpoto}{L^2_{\a,\mathrm{pot,0}}} 
\newcommand{\sol}{\mathrm{sol}}
\newcommand{\pot}{\mathrm{pot}}
\newcommand{\CFS}{\mathsf{CFS}}
\newcommand{\bfA}{\mathbf{A}}
\newcommand{\bfAhom}{\overline{\mathbf{A}}}
\newcommand{\bfB}{\mathbf{B}}
\newcommand{\bfJ}{\mathbf{J}}
\newcommand{\bfE}{\mathbf{E}_0}
\newcommand{\uhom}{u_{\mathrm{hom}}}
\newcommand{\negphantom}{\v@true\h@true\negph@nt} 
\newcommand{\neghphantom}{\v@false\h@true\negph@nt} 
\newcommand{\negph@nt}{\ifmmode\expandafter\mathpalette 
  \expandafter\mathnegph@nt\else\expandafter\makenegph@nt\fi} 
\newcommand{\makenegph@nt}[1]{%
  \setbox\z@\hbox{\color@begingroup#1\color@endgroup}\finnegph@nt} 
\newcommand{\finnegph@nt}{%
  \setbox\tw@\null 
  \ifv@ \ht\tw@\ht\z@\dp\tw@\dp\z@\fi \ifh@\wd\tw@-\wd\z@\fi\box\tw@} 
\newcommand{\mathnegph@nt}[2]{%
  \setbox\z@\hbox{$\m@th #1{#2}$}\finnegph@nt} 
\newcommand{\Hminushat}[1]{\hat{\phantom{H}}\negphantom{H}{H}^{#1}}
\newcommand{\Hminusuls}[1]{\hat{\phantom{H}}\negphantom{H}\underline{H}^{#1}}
\newcommand{\Wminusul}[2]{\hat{\phantom{W}}\negphantom{W}\underline{W}^{#1,#2}}
\newcommand{\Besov}[3]{\mathring{\phantom{B}}\negphantom{B}\underline{B}^{#1}_{#2,#3}}
\newcommand{\Besovnoul}[3]{\mathring{\phantom{B}}\negphantom{B}{B}^{#1}_{#2,#3}}
\newcommand{\Bhatminusul}[3]{\hat{\phantom{B}}\negphantom{B}\underline{B}^{#1}_{#2,#3}}
\newcommand{\cs}[1][s]{\mathfrak{c}_{#1}}
\newcommand{\css}[1]{\mathfrak{c}_{#1}} 
\def\Xint#1{\mathchoice
{\XXint\displaystyle\textstyle{#1}}%
{\XXint\textstyle\scriptstyle{#1}}%
{\XXint\scriptstyle\scriptscriptstyle{#1}}%
{\XXint\scriptscriptstyle\scriptscriptstyle{#1}}%
\!\int}
\def\XXint#1#2#3{{\setbox0=\hbox{$#1{#2#3}{\int}$}
\vcenter{\hbox{$#2#3$}}\kern-.5\wd0}}
\def\fint{\Xint-}
\newcommand{\avsum}{\mathop{\mathpalette\avsuminner\relax}\displaylimits}
\newcommand\avsuminner[2]{%
  {\sbox0{$\m@th#1\sum$}%
   \vphantom{\usebox0}%
   \ooalign{%
     \hidewidth
     \smash{\,\rule[.23em]{8.8pt}{1.1pt} \relax}%
     \hidewidth\cr
     $\m@th#1\sum$\cr
   }%
  }%
}
\newcommand\avsuminnerr[2]{%
  {\sbox0{$\m@th#1\sum$}%
   \vphantom{\usebox0}%
   \ooalign{%
     \hidewidth
     \smash{\,\rule[.23em]{6pt}{0.7pt} \relax}%
     \hidewidth\cr
     $\m@th#1\sum$\cr
   }%
  }%
}
\let\originalleft\left
\let\originalright\right
\renewcommand{\left}{\mathopen{}\mathclose\bgroup\originalleft}
\renewcommand{\right}{\aftergroup\egroup\originalright}
\newcommand{\cu}{\square}
\newcommand{\cus}{\mathord{\lozenge}}
\newcommand{\indc}{{\boldsymbol{1}}}
\renewcommand{\hat}{\widehat}
\newcommand{\addperiod}[1]{#1.}
\titleformat{\subsection}[runin]
  {\normalfont\bfseries\large}
  {\thesubsection.}
  {0.5em}
  {\addperiod}
\titleformat{\subsubsection}[runin]
  {\normalfont\bfseries}
  {\thesubsubsection.}
  {0.5em}
  {\addperiod}
\titleformat*{\subsubsection}{\normalfont\itshape}
\titleformat*{\paragraph}{\bfseries}
\titleformat*{\subparagraph}{\large\bfseries}
\title{Renormalization group and elliptic homogenization in high contrast}
\author{Scott Armstrong
\thanks{Courant Institute of Mathematical Sciences, New York University.
{\footnotesize \href{mailto:scotta@cims.nyu.edu}{scotta@cims.nyu.edu}.}
}
\and
Tuomo Kuusi
\thanks{Department of Mathematics and Statistics, University of Helsinki.
{\footnotesize \href{mailto:tuomo.kuusi@helsinki.fi}{tuomo.kuusi@helsinki.fi}.}
}
}
\date{September 5, 2025}
\begin{document}

\maketitle

\begin{abstract}
We prove a quantitative estimate for the homogenization length scale in terms of the ellipticity ratio~$\Lambda/\lambda$ of the coefficient field. 
This upper bound applies to high-contrast elliptic equations exhibiting near-critical behavior. Specifically, we show, assuming a suitable decay of correlations, the length scale at which homogenization occurs is at most $\exp(C \log^2(1+\Lambda/\lambda))$. The proof introduces the new concept of coarse-grained ellipticity, which measures the effective ellipticity ratio of the equation---and thus the strength of the disorder---after integrating out smaller scales. By a direct analytic argument, we derive an approximate differential inequality for this coarse-grained ellipticity as a function of the length scale. This approach may be viewed as a rigorous renormalization group argument and provides a quantitative framework for homogenization that can be iteratively applied across an arbitrary number of length scales.
\end{abstract}

\setcounter{tocdepth}{2}
\renewcommand{\baselinestretch}{0.95}\normalsize
\tableofcontents
\renewcommand{\baselinestretch}{1.0}\normalsize

\section{Introduction}

\subsection{Homogenization in high contrast}

We consider the elliptic equation
\begin{equation}
\label{e.pde}
-\nabla \cdot \a(x)\nabla u = 0 \quad \mbox{in} \ U \subseteq \Rd\,,\end{equation}
where~$\a(x)$ is a~$\Zd$--stationary random field in dimension~$d\geq 2$, taking values in~$\R^{d\times d}$. 
Under appropriate ellipticity and decorrelation assumptions on the coefficient field~$\a(x)$, the equation~\eqref{e.pde} homogenizes on large scales. This essentially means that its solutions will be close to those of the equation~$-\nabla \cdot \ahom \nabla \uhom = 0$, for a constant and deterministic matrix~$\ahom$ which depends in a complicated way on the law of~$\a(x)$ and describes the macroscopic behavior of the system. The phenomenon of elliptic homogenization is observed across a wide spectrum of statistical physics problems in which diffusive limits on macroscopic scales are expected to emerge from microscopic disorder. 

\smallskip

Given an error tolerance~$\delta>0$, the \emph{homogenization length scale}~$\X$ is a random variable that is defined, roughly, as the minimal scale above which the relative homogenization error---the ratio of the size of the error~$u-\uhom$ to the size of~$\uhom$---is smaller than~$\delta$. This length scale characterizes the transition beyond which the local fluctuations in the solutions have averaged out, and the macroscopic behavior is dominant. As such, the stochastic moments of~$\X$ measure the extent to which local randomness in the field~$\a(x)$ affects the large-scale properties of the solutions.

\smallskip

In this paper, we establish quantitative upper bounds on the homogenization length scale for very general coefficient fields. These estimates depend explicitly on the ellipticity of the field~$\a(x)$, as well as its rate of decorrelation, but otherwise require no structural assumption on the law of~$\a(x)$. 
Of particular interest is the dependence of~$\X$ on the ellipticity of the coefficient field. We obtain estimates which are completely new even in the special case that the field~$\a(x)$ satisfies (almost surely) a uniform ellipticity condition of the form
\begin{equation}
\label{e.UE.intro}
\exists \lambda,\Lambda>0 \,, \quad
\lambda \leq \Lambda\,, \quad
\lambda |e|^2 \leq e \cdot \a (x)e
\qand
\Lambda^{-1} |e|^2 \leq e \cdot \a^{-1} (x)e \,,
\quad \forall x,e\in\Rd\,.
\end{equation}
Any upper bound on~$\X$ for general fields satisfying~\eqref{e.UE.intro} must diverge as the \emph{ellipticity ratio}~$\Lambda/\lambda$ tends to infinity---even under the strongest possible mixing assumptions, such as a finite range of dependence. Indeed, as~$\Lambda/\lambda$ increases, the dependence of the solutions on the coefficient field becomes more singular, necessitating, in general, a larger scale separation for the macroscopic behavior to manifest. This divergence of the homogenization length scale mirrors critical phenomena widely observed in statistical physics, in which systems nearing critical points exhibit diverging correlation lengths and increased sensitivity to external parameters.

\smallskip

An example that exhibits a diverging homogenization length scale, and to which our results apply, is a continuum version of the conductance model at criticality. Consider a Poisson point process on~$\Rd$ with intensity~$\gamma>0$ and let~$A \subseteq\Rd$ be the union of all balls of radius one centered at a point in the point cloud. It is well-known that there is a critical value~$\gamma_c\in (0,\infty)$ such that~$A$ has an infinite connected component (which is necessarily unique) if~$\gamma > \gamma_c$, and no infinite component if~$\gamma < \gamma_c$. This is often referred to as the \emph{continuum percolation model}, and we associate an elliptic coefficient field to it by setting~$\a \coloneqq  \Id \indc_{A} + \lambda \indc_{\Rd \setminus A}$, where~$0< \lambda \ll 1$ is a small parameter. The scalar field~$\a(x)$ has the physical interpretation of the conductivity of a random material at the point~$x$. It satisfies~\eqref{e.UE.intro} with~$(\Lambda,\lambda)=(1,\lambda)$ and so its ellipticity ratio is~$\lambda^{-1}$. Smaller values of~$\lambda$ mean the resistance of the vacant set~$\Rd \setminus A$ is larger, and the flux of the solutions of~\eqref{e.pde} will therefore be more concentrated on the set~$A$. The connectedness (or lack thereof) of the set~$A$ becomes the main driver of large-scale behavior of solutions, and in the limit~$\lambda \to 0$, we should expect the homogenization length scale~$\X$ to be roughly of the same order as the correlation length of the underlying percolation problem.\footnote{Indeed, for the noncritical case~$\gamma\neq\gamma_c$, this can be made rigorous in the limit~$\lambda \to 0$ by a simple soft argument; the  convergence rate would however depend on a lower bound for~$|\gamma-\gamma_c|$.} However, since~$\gamma = \gamma_c$, this is infinite. It is, therefore, natural to wonder how large we should expect~$\X$ to be as a function of~$\lambda^{-1}$. 

\smallskip

The following theorem is the first general quantitative estimate in \emph{high contrast homogenization}. Assuming only the uniform ellipticity condition~\eqref{e.UE.intro} and a unit range of dependence, it provides an upper bound estimate on the homogenization length scale which states roughly that
\begin{equation}
\label{e.X.upperbound}
\X \lesssim \exp \bigl( C \log^2 (1+\Lambda /\lambda) \bigr)
\simeq
\Bigl( \frac{\Lambda}{\lambda} \Bigr)^{C \log (1+\Lambda /\lambda)}
\end{equation}
for a prefactor constant~$C(\delta,d)<\infty$ which depends only on the dimension~$d$ and tolerance~$\delta$. This estimate is a special case of the main results in the paper, presented below in Section~\ref{ss.mainresults}, which apply to more general coefficient fields (very singular and/or degenerate fields are allowed, as are those with much weaker decay of correlations) and provide stronger, more extensive quantitative estimates in their conclusions.

\begin{theoremA}[Quantitative homogenization in high contrast]
\label{t.HC.intro}
Let~$\P$ be the law of a~$\Zd$--stationary random field~$\a(\cdot)$, valued in~$\R^{d\times d}$, such that:
\begin{itemize}
\item $\a(\cdot)$ satisfies the uniform ellipticity condition~\eqref{e.UE.intro} with constants~$0<\lambda\leq1\leq \Lambda<\infty$, $\P$--a.s. 

\item $\a(\cdot)$ has a unit range of dependence. 

\end{itemize}
Let~$\ahom$ denote the corresponding homogenized matrix,~$\shom \coloneqq  \frac12(\ahom+\ahom^t)$ be the symmetric part of~$\ahom$ and~$\overline\lambda$ be the smallest eigenvalue of~$\shom$. 
Denote the family~$\{ E_r \}_{r\geq 0}$ of ellipsoids adapted to~$\shom$ by
\begin{equation*}
E_r  \coloneqq  \bigl \{ x\in \Rd \,:\, x\cdot \shom^{-1} x \leq \overline{\lambda}^{\,-1} r^2 \bigr\} \,.
\end{equation*}
Then, for every~$\delta>0$, there exists~$C(\delta,d)<\infty$ and a nonnegative random variable~$\X$ satisfying 
\begin{equation}
\P \bigl[ \X \geq t \exp \bigl( C \log^2 (1+\Lambda /\lambda) \bigr)  \bigr] 
\leq 
\exp ( - t^d )\,, \quad \forall t \in [1,\infty)\,,
\label{e.thm.A.estimate.for.X}
\end{equation}
such that the following statements are valid:
\begin{itemize}

\item \underline{Homogenization of the Dirichlet problem}. For every~$r\geq \X$,~$f\in L^2(E_r)$ and~$g\in H^1(E_r)$, if we let~$u,\uhom\in H^1(E_r)$ be the solutions of the Dirichlet problems
\begin{equation}
\label{e.BVPs}
\left\{ \begin{aligned} & -\nabla \cdot \a \nabla u =  f & \mbox{in} & \ E_r \,, \\ & u = g &\mbox{on} &\ \partial E_r \,, \end{aligned} \right.
\qquad \mbox{and} \qquad  
\left\{ \begin{aligned} & -\nabla \cdot \ahom \nabla \uhom = f & \mbox{in} & \ E_r \,, \\ & \uhom = g &\mbox{on} &\ \partial E_r \,, \end{aligned} \right.
\end{equation}
then we have the estimate
\begin{equation}
\label{e.intro.homog}
\| u - \uhom \|_{L^2(E_r)} 
\leq 
\delta \bigl( r \| \nabla g \|_{L^2(E_r)} + r^2 \| f \|_{L^2(E_r)} \bigr) 
\,.
\end{equation}

\item \underline{Large-scale $C^{0,1}$ regularity}. 
For every~$R \geq \X$ and solution~$u\in H^1(E_R)$ of the equation
\begin{equation*}
 -\nabla \cdot \a \nabla u = 0 \quad \mbox{in} \ E_R \,, 
\end{equation*}
we have the estimate
\begin{equation*}
\sup_{ r \in [ \X , R ] } 
\fint_{E_r} \nabla u \cdot \a\nabla u 
\leq 
C 
\fint_{E_R} \nabla u \cdot \a\nabla u \,. 
\end{equation*}
\smallskip
\end{itemize}
\end{theoremA}

It is widely believed that the divergences of correlation lengths near critical points, as well as that of other physical quantities, are described by power laws. That is, one expects a correlation length to be a power of the underlying macroscopic control parameters driving the phase transition. 
Typical examples include percolation and Ising models, in which one expects the correlation length~$\xi$ to be of order~$|p-p_c|^{-\nu}$ and~$|T/T_c-1|^{-\nu}$, respectively, where~$p_c$ is the critical percolation probability,~$T_c$ is the critical temperature for the Ising model, and the occupation probability~$p\in (0,1)$ and temperature~$T>0$ are the control parameters. In the continuum percolation model described above, we expect the correlation length~$\xi$ to be of order~$|\gamma-\gamma_c|^{-\nu}$. The value of~$\nu$, as well as that of other critical exponents, is expected to be universal in the sense that it should depend only on the dimension~$d$ and the type of model but not on the particular microscopic details of the model. For instance, the value of~$\nu$ for a bond percolation model on the lattice~$\Zd$ is expected to be the same as the exponent for continuum percolation.

\smallskip

The existence of critical exponents is of great physical interest, and there is a large body of literature devoted to estimating and computing them, with predictions of their exact values available for certain models. This is typically accomplished by heuristic renormalization group arguments, with rigorous results being comparatively rare. Some models for which critical exponents are known rigorously include certain two-dimension models in which conformal invariance can be exploited (such as site percolation on the~$2d$ triangular lattice~\cite{SW}), some exactly integrable models, and in sufficiently large dimensions where mean field methods are applicable (see~\cite{Hara} in the case of bond percolation). For most models, polynomial upper bounds---much less the existence of critical exponents---have not been rigorously demonstrated. For instance, to our knowledge, the best upper bound for the correlation length for Bernoulli bond percolation on the lattice~$\Z^2$ is 
\begin{equation}
\label{e.DKT}
\xi \leq \exp\bigl(C |p-p_c|^{-2} \bigr)\,. 
\end{equation}
This result was proved in~\cite{DKT} for~$\Zd$ in general dimension~$d \geq 2$ and is obviously far from the expected polynomial-type dependence.

\smallskip

In the context of the elliptic equation~\eqref{e.pde}, the ellipticity ratio~$\Lambda/\lambda$ plays the role of a control parameter. So the natural conjecture is that the homogenization length scale should satisfy an upper bound of the form~$\X \lesssim (\Lambda / \lambda)^{\nu}$ for some finite exponent~$\nu(d)<\infty$. Proving such an upper bound estimate is perhaps the most important open problem in quantitative homogenization. Apart from its intrinsic interest, such an estimate would have immediate and important consequences in mathematical physics and probability. Theorem~\ref{t.HC.intro} does not provide such an estimate. 
However, the upper bound in~\eqref{e.X.upperbound} is close to a polynomial-type bound in the sense that the desired fixed exponent~$\nu=C$ is replaced by one that is only logarithmically diverging,~$\nu=C\log (1+\Lambda/\lambda)$. This is evidently significantly better than a bound which is exponential in the control parameter, like~\eqref{e.DKT}, and in fact it is to our knowledge the best rigorous upper bound obtained for a general class of models in low dimensions. Note that our results do not however yield an improvement of~\eqref{e.DKT}, since we are not able to pass quantitative information from the conductance model to the bond percolation model.

\smallskip

To prove Theorem~\ref{t.HC.intro}, we study certain coarse-grained diffusion matrices that are defined at a given scale and in a particular region of space. Based on these objects, we introduce the new concept of \emph{coarse-grained ellipticity}, which is a relaxation of the usual uniform ellipticity ratio. This quantity is a softer and more flexible notion of ellipticity compared to uniform ellipticity and, in particular, permits certain degenerate and unbounded coefficient fields. We view the process of homogenization as a \emph{flow} of the coarse-grained ellipticity, from a possibly very large number at small length scales to unity in the large-scale limit. Indeed, as we show in the paper, the homogenization error can be controlled, in a deterministic way, by the coarse-grained ellipticity. The homogenization length scale~$\X$ is then, roughly, the scale at which the coarse-grained ellipticity is smaller than~$1+\delta$. At the heart of this paper is an analytic argument that obtains a differential inequality for coarse-grained ellipticity as a function of (the logarithm of) the length scale, which then implies the desired bound on the homogenization length scale. This argument is notable for being entirely \emph{renormalizable} in the sense that its outputs (bounds on the coarse-grained ellipticity) are of the same form as its inputs. It is, therefore, possible to iterate it, and indeed, the proof Theorem~\ref{t.HC.intro} relies on such an iteration. 

\smallskip

In a concurrent joint work with Bou-Rabee~\cite{ABK.SD}, we prove a superdiffusive central limit theorem for a Brownian particle in a critically correlated, divergence-free drift. The high contrast homogenization estimates proved in this present paper played an important role in the arguments in~\cite{ABK.SD}. In particular, the fact that the exponent in our estimate for the homogenization length scale in~\eqref{e.X.upperbound} grows logarithmically, rather than like a power of~$\Lambda/\lambda$, is crucial. In the context of that paper, homogenization estimates must be iterated an infinite number of times as a way of formalizing a renormalization group argument. One difficulty encountered is that the ellipticity ratio is also growing as a function of the scale, and there is an apparent ``race'' between homogenization and an accumulation of disorder. A quantitative estimate like~\eqref{e.X.upperbound} is needed to ensure that the randomness at each scale can be integrated out before interacting in an unexpected way with the other, larger scales. Of course, this kind of phenomenon is not unique to this particular problem, and we expect that the methods developed here will find similar applications to other critical models in mathematical physics.

\smallskip

Quantitative estimates for elliptic homogenization problems have been extensively studied in the regime of \emph{moderate} ellipticity contrast in recent years. By this, we mean that the ellipticity ratio~$\Lambda/\lambda$ is held fixed, and the goal, broadly speaking, is to obtain estimates for the homogenization error as a function of the scale separation ratio as it asymptotically approaches infinity. Originating in the pioneering works~\cite{GO1,GO2}, this theory has recently reached maturity, and there are now very detailed and precise quantitative estimates available (an overview and further references can be found in our monograph~\cite{AK.Book}). Each of the several independent approaches to this theory uses constructive arguments and produces constants that are explicitly computable. While the dependence on the ellipticity ratio has been kept implicit in this literature, it is possible to extract an estimate for the homogenization length scale~$\X$ by tracking the dependence of~$\Lambda/\lambda$ through the whole theory. Prior to this work, such a bookkeeping exercise would reveal, in all cases, an exponential upper bound, comparable to~\eqref{e.DKT}, of the form~$\X \lesssim\exp ( C (\Lambda/\lambda)^p)$ for an exponent~$p$ which is at least~$\nicefrac12$ and, we expect, typically much larger (see the discussion below~\eqref{e.naive} for more).

\smallskip 

This paper also develops the first systematic quantitative homogenization theory for a broad class of degenerate equations. Previous works on quantitative homogenization in non-uniformly elliptic settings have addressed finite difference equations on supercritical bond percolation clusters~\cite{AD,D,DG}, domains with random inclusions~\cite{DGl}, certain unbounded coefficients with local integrability conditions~\cite{BellaK,Aya} and, more recently, log-normal coefficients with an integrable covariance function~\cite{CGQ} (cf.~Proposition~\ref{p.example.log-normal}, below). These works extend techniques from the moderate contrast, uniformly elliptic theory while managing specific degeneracies in an ad hoc manner. Each considers only ``far from critical'' cases.\footnote{For example, if the parameter~$\gamma$ in the conductance model mentioned above is either very large or very small, then the system does not display critical behavior and can be analyzed quantitatively for all values of~$\lambda$ by arguments which are comparatively much simpler than those deployed here. Similar comments apply to the papers~\cite{AD,D,DG}, which analyze harmonic functions on supercritical bond percolation clusters on~$\Zd$, for any~$p>p_c$, and prove sharp quantitative homogenization estimates. The constants in these estimates, however, depend on~$p-p_c$ like in~\eqref{e.DKT}, as the correlation length scale for supercritical percolation is used as input to the homogenization argument. As such, the results in these papers should not be seen as analyzing near-critical phenomena.} In contrast, our introduction of the concept of \emph{coarse-grained ellipticity} leads to a quantitative theory covering a broad class of degenerate equations. Note that while the estimates stated in this paper do not give sharp scaling exponents for the homogenization error in the regime of large-scale separation, such estimates can be straightforwardly obtained by combining known techniques from the moderate contrast theory with Theorem~\ref{t.main} below.

\smallskip 

In the following subsection, we state our main results, which are a great deal more general than Theorem~\ref{t.HC.intro}. Since our methods are based on renormalizing the coarse-grained ellipticity, the uniform ellipticity condition can be replaced by the assumption that the coarse-grained ellipticity ratio is bounded on sufficiently large scales. This allows us to consider very general coefficient fields that may be very degenerate and/or singular, including some of those mentioned in the previous paragraph and others that have not been previously analyzed.

\subsection{Coarse-grained ellipticity and the statement of the main results}
\label{ss.mainresults}

In this subsection, we state the main result, which is a general quantitative homogenization result for elliptic equations with high contrast coefficients. 

\smallskip

We begin by introducing some notation. 
Throughout the paper, we denote, for~$m\in\Z$, triadic cubes of size~$3^m$ by
\begin{equation*}
\cu_m  \coloneqq  \Bigl( -\frac12 3^m , \frac12 3^m \Bigr)^d \,.
\end{equation*}
The set of~$m$-by-$n$ matrices with real entries is denoted by~$\R^{m\times n}$. The transpose of a matrix~$A$ is denoted by~$A^t$. 
The sets of~$m$-by-$m$ symmetric and anti-symmetric matrices are denoted, respectively, by 
\begin{equation*}
\R^{d\times d}_{\mathrm{sym}}  \coloneqq 
\bigl\{ A \in \R^{d\times d} \,:\, A = A^t \bigr\}
\quad \mbox{and} \quad 
\R^{d\times d}_{\mathrm{skew}} \coloneqq 
\bigl\{ A \in \R^{d\times d} \,:\, A = -A^t \bigr\}\,.
\end{equation*}
We also define the cone of matrices with positive symmetric parts by
\begin{equation*}
\R^{d \times d}_+  \coloneqq  
\bigl \{ 
A \in \R^{d \times d} \,:\, e \cdot A e \geq 0 \,, \ \forall e \in \Rd
\bigl \}\,.
\end{equation*}
Let~$\Omega$ be the collection of all Lebesgue measurable maps~$\a:\Rd \to\R^{d \times d}_+$
such that, if we split~$\a(\cdot)$ into its symmetric and anti-symmetric parts by writing~$\a(x) = \s(x) + \k(x)$, where we define
\begin{equation}
\label{e.sk.def}
\s(x) \coloneqq  \frac12(\a(x) + \a^t(x)) \in \R^{d\times d}_{\mathrm{sym}}
\quad 
\mbox{and} \quad
\k(x) \coloneqq  \frac12(\a(x) - \a^t(x)) \in \R^{d\times d}_{\mathrm{skew}}
\,, \quad x\in\Rd\,,
\end{equation}
then we have that 
\begin{equation}
\label{e.qual.ellipticity}
\s ,\, \s^{-1} , \, \k^t \s^{-1} \k \in L^1_{\mathrm{loc}}(\Rd; \R^{d\times d})
\,.
\end{equation}
The condition~\eqref{e.qual.ellipticity} represents the minimal qualitative ellipticity we require of our coefficient fields. It is natural in view of the fact that the uniform ellipticity condition~\eqref{e.UE.intro} is equivalent to 
\begin{equation}
\label{e.ellipticity.nonsymm}
\s^{-1} \leq \lambda^{-1}  \Id 
\quad \mbox{and} \quad
\s + \k^t\s^{-1}\k \leq \Lambda  \Id\,.
\end{equation}
Indeed, replacing $e$ by~$\a e$ in the second condition in~\eqref{e.UE.intro}, we see that it is equivalent to~$\a^t \a \leq \Lambda \s$, which is equivalent to~$\Lambda  \Id \geq \a^t \s^{-1} \a  =  
 (\s-\k) \s^{-1} (\s+\k) = \s + \k^t \s^{-1} \k$. In view of~\eqref{e.ellipticity.nonsymm}, we see that~\eqref{e.qual.ellipticity} is a relaxation in ellipticity from~$L^\infty$ to~$L^1$. 

\smallskip
 
We define a~$\sigma$-field~$\mathcal{F}(U)$, for each Borel subset~$U\subseteq \Rd$, as the one generated by the family random variables of the form 
\begin{equation*}
\a \mapsto \int_{\Rd} e' \cdot \a(x) e \varphi(x) \,dx\,
\quad
e,e' \in \Rd, \
\varphi \in C^\infty_c(U)\,.
\end{equation*}
We also denote~$\mathcal{F} \coloneqq  \mathcal{F}(\Rd)$. 
We let~$\{ T_y \, : \, y \in \Rd\}$ denote the group of~$\Rd$ translations acting on~$\Omega$. That is,~$T_y:\Omega \to \Omega$ is given by~$T_y \a = \a(\cdot+y)$. We extend this group action to~$\mathcal{F}$ by defining~$T_y F  \coloneqq  \{ T_y \a \,:\, \a \in F\}$ for~$F\in \mathcal{F}$. 

\smallskip

We consider a probability measure~$\P$  on~$(\Omega,\mathcal{F})$ satisfying the three assumptions~\ref{a.stationarity},~\ref{a.ellipticity} and~\ref{a.CFS} stated below. The first is that~$\P$ is statistically homogeneous.

\begin{enumerate}[label=(\textrm{P\arabic*})]
\setcounter{enumi}{0}
\item \emph{Stationarity with respect to~$\Zd$--translations:}
\label{a.stationarity}
\begin{equation*}
\P \circ T_z = \P, \quad \forall z \in \Zd\,.
\end{equation*}
\end{enumerate}

We turn next to the ellipticity assumption. Conceptually, this assumption requires that the field behaves elliptically only in 
a suitable coarse-grained sense, a much less rigid condition than uniform ellipticity.
We formulate this assumption in terms of the \emph{coarse-grained matrices}, which are objects that play a central role in this paper. 
They are denoted, for each bounded Lipschitz domain~$U\subseteq \Rd$, by~$\s(U;\a)$,~$\s_*(U;\a)$ and~$\k(U;\a)$. These are random elements of~$\R^{2d\times 2d}_{\mathrm{sym}}$ which depend only on the restriction~$\a\vert_U$ of the field~$\a(\cdot)$ to~$U$.
As the notation suggests, we think of the matrices~$\s(U;\a)$ and~$\s_*(U;\a)$ as a coarse-graining of the symmetric part~$\s(\cdot)$ of the field~$\a(\cdot)$, and~$\k(U;\a)$ as a coarse-graining of the anti-symmetric part~$\k(\cdot)$. 
We postpone their definitions to Section~\ref{ss.bfA.def}. 

\smallskip

Motivated by~\eqref{e.ellipticity.nonsymm}, 
we introduce, for each exponent~$s\in [0,1)$ and scale parameter~$m\in\Z$, the \emph{coarse-grained ellipticity constants}~$\lambda_{s,\infty} (\cu_m;\a)$ and~$\Lambda_{s,\infty} (\cu_m;\a)$ as follows:
\begin{equation}
\label{e.coarse.grained.ellipticity.infty}
\left\{
\begin{aligned}
& {\Lambda}_{s,\infty}(\cu_m\,;\a)
 \coloneqq  
\sup_{k\in(-\infty,m]\cap \Z}
3^{-2s(m-k)} 
\max_{z\in 3^k\Zd \cap \cu_m} 
\bigl| (\s+ \k^t\s_*^{-1} \k ) (z+\cu_k; \a) \bigr|
\,, \\  &
{\lambda}_{s,\infty}(\cu_m;\a) 
 \coloneqq 
\biggl( 
\sup_{k\in(-\infty,m]\cap \Z} 
3^{-2s(m-k)} 
\max_{z\in 3^k\Zd \cap \cu_m} 
\bigl| \s_{*}^{-1}(z+\cu_k; \a) \bigr|
\biggr)^{\!-1}
\,.
\end{aligned}
\right.
\end{equation}
Here, and in the rest of the paper,~$|\mathbf{B}|$ denotes the spectral norm of a square matrix~$\mathbf{B}$, that is, the square root of the largest eigenvalue of~$\mathbf{B}^t\mathbf{B}$. If~$s=0$, then~$\Lambda_{s,\infty}(\cu_m)$ and~$\lambda_{s,\infty}(\cu_m)$ coincide precisely with the constants of uniform ellipticity. For~$s>0$, the definitions in~\eqref{e.coarse.grained.ellipticity.infty} become scale dependent as larger cubes are given greater weight and there is a discount for smaller cubes. If the quantities in~\eqref{e.coarse.grained.ellipticity.infty} are positive and finite, then we say that~$\a$ is \emph{coarse-grained elliptic} in~$\cu_m$. 

\smallskip

Our ellipticity assumption says that the field is coarse-grained elliptic, and the coarse-grained ellipticity constants are bounded by a deterministic constant on all scales larger than a sufficiently large (random) scale. The precise statement is: 

\begin{enumerate}[label=(\textrm{P\arabic*})]
\setcounter{enumi}{1}
\item \emph{Coarse-grained ellipticity on large scales.} \label{a.ellipticity}
There exist constants~$0< \lambda_0\leq \Lambda_0<\infty$, an exponent~$\gamma\in [0,1)$, an increasing function~$\Psi_\S:\R_+ \to [1,\infty)$, a constant~$K_{\Psi_\S}\in (1,\infty)$ satisfying the growth condition
\begin{equation}
\label{e.Psi.S.growth}
t \Psi_\S(t) \leq \Psi_\S(K_{\Psi_\S}  t), \quad \forall t \in [1,\infty)\,,
\end{equation}
and a nonnegative random variable~$\S$ satisfying the bound
\begin{equation}
\label{e.S.integrability}
\P \bigl[ \S > t   \bigr]
\leq 
\frac{1}{\Psi_\S(t)}
\,, 
\quad \forall t\in (0,\infty) \,,
\end{equation}
such that, for every~$m\in\Z$, 
\begin{equation}
\label{e.ellipticity}
3^m \geq \S 
\quad\implies \quad
\Lambda_{\nf\gamma2,\infty}(\cu_m;\a) \leq \Lambda_0 
\qand
\lambda_{\nf\gamma2,\infty}(\cu_m;\a) \geq \lambda_0 
\,.
\end{equation}
\end{enumerate}
 
The third and final assumption we need is a quantitative ergodicity condition. The one we use here is formulated in terms of \emph{concentration for sums}, a general mixing condition we previously introduced in~\cite{AK.Book}. 
It is a linear concentration inequality that is flexible enough to contain all of the different quantitative ergodic assumptions used in stochastic homogenization literature but still strong enough to yield optimal quantitative homogenization estimates for the most important examples. In particular, it contains finite range of dependence as well as the assumption of a logarithmic Sobolev inequality as special cases.

\smallskip

To state this condition, we require some terminology. 
We first introduce a measure of the ``sensitivity'' of a random variable with respect to perturbations of the coefficients in a given subset~$U\subseteq \Rd$. 
Given an~$\mathcal{F}$--measurable random variable~$X$ on~$\Omega$, we define another random variable~$|\mathrm{D}_UX|$by setting, for each~$\bfA\in\Omega$, 
\begin{align} 
\label{e.malliavin.mult}
\lefteqn{
\left| \mathrm{D}_U X\right| (\bfA) 
} \ \ & 
\notag \\ & 
 \coloneqq 
\limsup_{t\to 0} 
\frac1{2t} 
\sup 
\Bigl\{ X(\bfA_1) - X(\bfA_2) 
:
\bfA_i \in\Omega, \, 
\bigl| \bfA^{\!-\nicefrac12}\bfA_i \bfA^{\!-\nicefrac12}  - \Itwod \bigr| \leq t \indc_{U} ,\, \forall i\in\{1,2\}
\Bigr\}.
\end{align}
In contrast to the usual notion of ``Malliavin derivative'' which measures sensitivity with respect to uniformly elliptic fields, we measure our perturbations to~$\a(\cdot)$ \emph{multiplicatively} rather than \emph{additively}. Of course, in the uniformly elliptic case, this is a distinction without a difference, but our choice is the more natural one from the point of view of degenerate equations. 

\begin{enumerate}[label=(\textrm{P\arabic*})]
\setcounter{enumi}{2}
\item \emph{Concentrations for sum ($\CFS$).} \label{a.CFS}
There exist~$\beta \in \left[0,1\right)$ and~$\nu\in (0,\frac d2]$ and an increasing function~$\Psi:\R_+ \to [1,\infty)$ and a constant~$K_\Psi\in [3,\infty)$ satisfying the growth condition
\begin{equation}
\label{e.Psi.growth}
t \Psi(t) \leq \Psi(K_\Psi  t), \quad \forall t \in [1,\infty)\,,
\end{equation}
such that,
for every~$m,n\in\N$ with $\beta m < n<m$ and  family $\{ X_z \,:\, z\in 3^n\Zd\cap \cu_m\}$ of random variables satisfying, for every~$z\in3^n \Zd \cap \cu_m$,
\begin{equation*} 
\left\{
\begin{aligned}
& \E\left[ X_z \right]=0 \,, 
\\ & 
\left| X_z \right| \leq 1\,,
\\ &   
\left| \mathrm{D}_{z+\cu_n}  X_z\right| \leq 1 \,,
\\
& X_z \ \ \mbox{is $\mathcal{F}(z+\cu_n)$--measurable} \,, \\
\end{aligned}
\right.
\end{equation*}
we have the estimate 
\begin{equation} 
\label{e.CFS}
\P \Biggl[ 
\biggl| \,
\avsum_{z\in 3^n\Zd\cap \cu_m}  X_z
\biggr| 
\geq 
t3^{-\nu(m-n)}
\Biggr]
\leq
\frac{1}{\Psi(t)}
\,, \quad \forall t\in [1,\infty) \,.
\end{equation}
 \end{enumerate} 
The mixing condition~\ref{a.CFS} is discussed further in~\cite[Chapter 3]{AK.Book} as well as in Section~\ref{ss.examples}, below, where we also give some explicit examples satisfying it. We remark that, in the case of finite range of dependence or LSI, the assumption~\ref{a.CFS} is satisfied with~$(\beta,\nu)=(0,\frac d2)$ and~$\Psi(t) = \exp( c t^2)$ for some constant~$c(d)>0$: see~\cite[Chapter 3]{AK.Book}. 

\smallskip

The following is the main result of the paper. It gives an explicit estimate for the length scale at which we first see homogenization. Some of the notation appearing in the statement is defined below. For instance, the space~$H^1_{\a}$ is defined below in Section~\ref{ss.Sob}, see~\eqref{e.H1s} and~\eqref{e.H.one.a}. The underlines in the norms~$\| \cdot \|_{\underline{L}^2}$ and~$\| \cdot \|_{\underline{H}^{-1}}$ denote volume normalization; see~\eqref{e.volume.normalize} and~\eqref{e.volume.normalize.neg}. The fractional Sobolev space~$H^s(U)$ and its dual space~$H^{-s}(U)$ and their corresponding norms are defined in Section~\ref{ss.notation}: see~\eqref{e.fractional.Sob.def} and~\eqref{e.volume.normalize.neg}.   

\smallskip

A summary of the main ideas in the proof of the following theorem is given in Section~\ref{ss.ideas}.

\begin{theoremA}[Homogenization in high contrast]
\label{t.main}
Assume that~$\P$ is a probability measure on~$(\Omega,\mathcal{F})$ satisfying assumptions~\ref{a.stationarity},~\ref{a.ellipticity} and~\ref{a.CFS} above, with~$\nu > \gamma$. 
There exists~$C(\nu, \gamma, \beta, d)<\infty$, an exponent~$\kappa (\nu, \gamma, \beta, d) >0$, a nonnegative random variable~$\X$ and a matrix~$\ahom\in \R^{d \times d}_+$ such that, if we let~$\shom\coloneqq \frac12(\ahom+\ahom^t)$,~$\khom\coloneqq \ahom-\shom$ and~$\{ E_r \}_{r\geq 0}$ be as in Theorem~\ref{t.HC.intro} and define a length scale 
\begin{equation}
L \coloneqq  \exp\biggl( C\log (1+\Pi K_\Psi K_{\Psi_\S} )\log \Bigl(1+\frac{\Lambda_0}{\lambda_0}\Bigr) \biggr)\,,
\label{e.length.scale.L}
\end{equation}
then the following statements are valid:

\begin{itemize}

\item \underline{Estimate of the homogenization length scale}.
For every~$t\geq 1$ and for~$\mu  \coloneqq  (\nu-\gamma )(1-\beta)$,
\begin{equation}
\P \bigl[ \X \geq C L t \bigr] 
\leq 
\frac{1}{\Psi(t^{\mu})}
+
\frac{1}{\Psi_\S(CLt)}
\,.
\label{e.thm.B.X.integrability}
\end{equation}

\item \underline{Homogenization of the Dirichlet problem}. 
For every~$s\in (0,\nf12)$ and Lipschitz domain~$U \subseteq E_1$, there exists a constant~$C_0(U,s,d)<\infty$, depending only on~$s$,~$d$ and the Lipschitz character of~$\shom^{-\nf12}U$, such that, 
for every~$\ep >0$ with~$\ep^{-1} \geq \X$ and~$g \in W^{1,\infty}(U) \cap H^{1+s}(U)$, if we denote~$\a^\ep\coloneqq \a (\tfrac \cdot \ep)$, then the unique solutions~$h \in g+H^1_0(U)$ and~$u^\ep  \in g+H^1_{\a^\ep ,0}(U)$ of the Dirichlet problems
\begin{equation*}
\left\{
\begin{aligned}
& -\nabla \cdot \ahom \nabla h = 0 & \mbox{in} & \ U \,, 
\\ 
& h = g & \mbox{on} & \ \partial U 
\,,
\end{aligned}
\right.
\qquad \mbox{and} \qquad
\left\{
\begin{aligned}
& -\nabla \cdot \a^\ep \nabla u^\ep = 0 & \mbox{in} & \ U \,, 
\\ 
& u^\ep  = g & \mbox{on} & \ \partial U 
\,,
\end{aligned}
\right.
\end{equation*}
then we have the estimate
\begin{equation}
\| \shom^{\nf12} ( \nabla u - \nabla h ) \|_{H^{-s} (U)}
+
\| \shom^{-\nf12} ( \a^\ep \nabla u - \ahom \nabla h ) \|_{H^{-s} (U)}
\leq 
C_0 
\bigl( \ep \X \bigr)^{\kappa}
\| \shom^{\nf12} \nabla g \|_{{H}^s(U)}
\,.
\label{e.big.homogenization.estimate}
\end{equation}

\item \underline{First-order corrector estimates}. 
There exist~$\Zd$--stationary gradient fields~$\{ \nabla \phi_e \,:\, e \in \Rd \}$ satisfying the equation
\begin{equation*}
-\nabla \cdot \a (e+ \nabla \phi_e) = 0 \quad \mbox{in} \ \Rd
\end{equation*}
and, for every~$e\in \Rd$, 
\begin{equation}
\label{e.corrector.estimates}
\| \shom^{\nicefrac12} \nabla \phi_e \|_{\underline{H}^{-1}(E_r)} 
+
\| \shom^{-\nicefrac12} (\a (e+\nabla \phi_e) - \ahom e) \|_{\underline{H}^{-1}(E_r)} 
\leq 
C |\shom^{\nicefrac12}e | 
\Bigl(\frac{r} {\X} \Bigr)^{\!-\kappa}
\,, \quad \forall r \geq \X \,.
\end{equation}
Moreover, for every~$\theta\in (0,1)$, if we define 
\begin{equation}
\label{e.A.one.def}
\A_1(\Rd)   \coloneqq  \Bigl\{ v \in H^1_{\s,\mathrm{loc}}(\Rd) \,:\, -\nabla \cdot \a\nabla v = 0 \ \mbox{in} \ \Rd, \ \limsup_{r\to \infty} r^{-(1+\theta)} \| v \|_{\underline{L}^2(B_r)} = 0 \Bigr\} \,,
\end{equation}
then the linear space~$\A_1(\Rd)$ coincides with~$\{ x\mapsto e\cdot x + \phi_e(x) + c \,:\, e \in\Rd ,\ c \in\R \}$. 

\item \underline{Large-scale regularity}.
For every~$R \geq \X$ and solution~$u\in H^1_\a (E_R)$ of the equation
\begin{equation*}
 -\nabla \cdot \a \nabla u = 0 \quad \mbox{in} \ E_R \,, 
\end{equation*}
we have the estimate
\begin{equation}
\label{e.largescale.C01}
\sup_{ r \in [ \X , R ] } \| \s^{\nicefrac12} \nabla u \|_{\underline{L}^2(E_{r})} \leq C \| \s^{\nicefrac12} \nabla u \|_{\underline{L}^2(E_{R})}
\,.
\end{equation}
Moreover, there exists~$v \in \A_1(\Rd)$ such that, for every~$\theta\in (0,1)$ and~$r\in [\X,R]$, 
\begin{equation}
\label{e.largescale.C1theta}
\| \s^{\nicefrac12} \nabla ( u - v) \|_{\underline{L}^2(E_{r})} 
\leq C\Bigl( \frac r R \Bigr)^{\!\theta}  
\| \s^{\nicefrac12} \nabla u \|_{\underline{L}^2(E_{R})}\,,
\end{equation}
where~$C_{\eqref{e.largescale.C1theta}}$ depends on~$\theta$ in addition to~$d$. 
\end{itemize}
\end{theoremA}

\subsection{Overview of the proof of Theorem~\ref{t.main}}
\label{ss.ideas} 

In this subsection, we explain the main ideas comprising the proof of Theorem~\ref{t.main} and explain where they are formalized in the paper. We break the argument into five informal ``assertions.'' 

\begin{assertion}
\label{e.cg.elliptic}
The ellipticity condition~\ref{a.ellipticity} is sufficiently strong, implying basic~$L^2$ elliptic theory on large scales. That is, on scales larger than~$\S$, we get the same basic $L^2$-type estimates as in the uniformly elliptic case---with the ellipticity constants~$\lambda_0$ and~$\Lambda_0$ taking the place of the usual constants of uniform ellipticity appearing in~\eqref{e.UE.intro}.
\end{assertion}

This is the main purpose of Section~\ref{s.coarse.graining}. There, we introduce the coarse-grained matrices and explore their basic properties, including the basic \emph{coarse-graining inequalities} in~\eqref{e.gradtofluxcg},~\eqref{e.energymaps.nonsymm},~\eqref{e.energymaps.nonsymm.flux} and~\eqref{e.energymaps.nonsymm.all}. These basic estimates allow us to control the spatial averages (and thus the negative Sobolev norms) of the gradients and fluxes of solutions of the equation. Later in the section, we show how these estimates may be combined with the assumption~\ref{a.ellipticity} to obtain coarse-grained versions of the Poincar\'e and Caccioppoli inequalities---the two basic estimates needed in elliptic regularity theory. See Lemmas~\ref{l.crude.weaknorms},~\ref{l.Poincare.largescale},~\ref{l.testing.makes.sense} and Proposition~\ref{p.coarse.grained.Caccioppoli}.

\begin{assertion}
\label{assert.Theta.good.if.small}
The quantity~$\Theta - 1$ quantifies, in a deterministic way, the difference between the solutions of the equation~\eqref{e.pde} and those of the constant-coefficient equation
\begin{equation*}
-\nabla \cdot \a_0 \nabla u = 0 \,,
\end{equation*}
on scales larger than~$\S$, where~$\a_0 \coloneqq  \s_0 + \k_0$ and~$\s_0$ and~$\k_0$ are as defined in~\eqref{e.E.naught.components}. 
\end{assertion}

This is an extension of the previous assertion: we are saying that the parameter~$\Theta$ defined in~\eqref{e.Theta} is a sufficiently good measure of the ellipticity ratio in the sense that, if~$\Theta$ is close to unity, then~\ref{a.ellipticity} exhibits the behavior we expect from equations with (uniformly) small contrast. In particular, the solutions are close to those of a constant-coefficient equation. This assertion is formalized rigorously in Section~\ref{s.homogenization}, using purely deterministic arguments. See Propositions~\ref{p.big.black.box} for quantitative homogenization estimates (and note that Lemma~\ref{l.mathcal.E.to.Lambdas} provides the link between the random variables in that proposition and the coarse-grained ellipticity ratio).

\begin{assertion}
\label{assert.renormalize}
The ellipticity assumption~\ref{a.ellipticity} is \emph{renormalizable}. If we let~$\P_{n_0}$ be the pushforward of~$\P$ under the dilation map 
\begin{equation*}
\a \mapsto \bigl( x \mapsto \a(3^{n_0}x) \bigr)\,,
\end{equation*}
then~$\P_{n_0}$ satisfies~\ref{a.ellipticity} with~$\mathbf{E}_0$ replaced by the expectation~$\bfAhom(\cu_{n_0-l_0})$ of~$\bfA(\cu_{n_0-l_0})$, where~$l_0$ is a constant which is roughly~$C\lceil \log \Theta\rceil$. 
In other words, if we ``zoom out'' and view the equation from a larger scale, then we have the same assumptions as before, except that the mean of the coarse-grained coefficients (on a slightly smaller scale) takes the role of the ellipticity upper bounds. 
\end{assertion}

The rigorous version of Assertion~\ref{assert.renormalize} is stated and proved in Section~\ref{ss.renormalization}: see in particular Proposition~\ref{p.renormalization.P} and Lemma~\ref{l.renormalize.ellipticity}. It is a relatively simple consequence of the subadditivity of the coarse-grained matrices, combined with an application of assumption~\ref{a.CFS}. 

\smallskip

Assertion~\ref{assert.renormalize} gives rise to the renormalization (semi)group. 
It is natural then to define a scale-dependent notion of ellipticity ratio; we do this by defining~$\Theta_n$ to be the quantity defined analogously to~\eqref{e.Theta}, but with~$\bfAhom(\cu_n)$ in place of~$\bfE$: see~\eqref{e.Theta.n.def}. 

\smallskip

The subadditivity property of the coarse-grained matrices implies that~$n\mapsto \Theta_n$ is monotone decreasing, and qualitative homogenization implies that it does converge to~$1$ as~$n\to \infty$. Meanwhile, Assertion~\ref{assert.Theta.good.if.small} says that quantitative homogenization estimates will immediately follow once we give an upper bound on the scale~$n$ such that the quantity~$\Theta_n-1$ is small. 

\smallskip

This naturally leads to the problem of estimating the  scale~$n$ such that~$\Theta_n-1$ is no larger than~$\frac12 (\Theta-1)$. Such an estimate could then be iterated many times, with the help of Assertion~\ref{assert.renormalize} above, to obtain an estimate of the scale on which the renormalized ellipticity ratio is at most~$1+\delta$, for~$\delta>0$ as small as desired.

\begin{assertion}
\label{assert.reduce.Theta}
If~$\Theta \geq 2$, then for every~$\sigma\in (0,\frac12]$ and~$n\in\N$ satisfying~$n\geq C \log (1+\sigma^{-1}\Pi)$,
\begin{equation}
\label{e.assert4}
\Theta_{n} -1
\leq
\sigma \bigl( \Theta - 1 \bigr) \,.
\end{equation}
\end{assertion}

The precise version of Assertion~\ref{assert.reduce.Theta} is stated in Proposition~\ref{p.renormalize}, and the proof of this proposition is the analytic heart of the paper. Here, we see the full power of the renormalization and coarse-graining arguments. 
As one can see from the statement of Proposition~\ref{p.renormalize}, the precise version of~\eqref{e.assert4} is slightly more complicated and involves a second quantity~$\XiDet_n$ which is a variant of~$\Theta_n$. 

\smallskip

The proof is inspired by ideas that originate in~\cite{AS}. That paper, and subsequent works, obtain an inequality which (substantially simplified) states roughly that 
\begin{equation}
\label{e.naive}
\Theta_{m+10} - 1 \leq \bigl( 1 - C_0(d,\Lambda/\lambda)^{-1} \bigr)  ( \Theta_m -1 )\,.
\end{equation}
The constant~$C_0(d,\Lambda/\lambda)$ comes from various applications of elliptic estimates and the Poincar\'e inequality, so it has the form~$C_0(d,\Lambda/\lambda) = C(d) \cdot (\Lambda/\lambda)^{p}$ for some exponent~$p$.\footnote{Even a single application of an energy estimate such as Caccioppoli's inequality will produce a factor of~$(\Lambda/\lambda)^{\nicefrac12}$, so we would have~$p\geq \nicefrac12$ in~\eqref{e.boundverybad}.} It is not hard to see that this inequality must be iterated approximately~$C_0(d,\Lambda/\lambda)\log (\Lambda/\lambda)$ many times before the error~$\Theta_m-1$ is smaller than~$\nicefrac12$. Therefore, the upper bound for the length scale of homogenization that this argument gives is roughly
\begin{equation}
\label{e.boundverybad}
\X \lesssim 3^{C_0(d,\Lambda/\lambda)\log (\Lambda/\lambda)}
\simeq 
\exp\bigl( C C_0(d,\Lambda/\lambda)\log (\Lambda/\lambda) \bigr) 
\simeq 
\exp\bigl( C (\Lambda/\lambda)^p \log (\Lambda/\lambda) \bigr) 
\,.
\end{equation}
If there is any hope to improve this bound to a sub-exponential bound in the ellipticity ratio, it seems that we need to \emph{remove all dependence on the ellipticity constants} from our argument! This may seem quite hopeless since elliptic estimates come with dependence on~$\Lambda/\lambda$, and ellipticity is obviously an important assumption that we need to use.  

\smallskip

But an estimate without dependence on the ellipticity constants is precisely what Assertion~\ref{assert.reduce.Theta} gives---with the caveat that we must take a bigger step, say from~$m$ to~$m+C \log (1+\sigma^{-1}\Pi)$ rather than from~$m$ to~$m+10$ like in~\eqref{e.naive}. 
This is the only way that dependence on~$\Theta$ or~$\Pi$ is allowed to enter into the proof of Assertion~\ref{assert.reduce.Theta}: via the scale restriction (the lower bound on~$n$). Note that here~$n$ is the size of the step in the iteration since, by renormalization (Assertion~\ref{assert.renormalize}), we can assume~$m=0$ without loss of generality. 

\smallskip

To get rid of the ellipticity dependence, we rely on the coarse-grained version of elliptic estimates summarized in Assertion~\ref{e.cg.elliptic}. The idea is to look for a sequence of consecutive scales~$\{ n_1,\ldots,n_1+k\}$ such that~$\E [ \bfA(\cu_m)]$ does not change much for~$m\in[ n_1,n_1+k]$. Since this quantity is monotone decreasing in~$m$, it can only change in one direction, and therefore, a suitable sequence of consecutive scales can be found by a simple pigeonhole argument. (This pigeonhole argument is the one place where the scale restriction is needed in the proof.)
We then argue that, along this finite sequence of scales, the optimizing functions in certain variational formulas for the coarse-grained matrices \emph{must be flat}: that is, their gradients and fluxes must be close to constant functions. This implies, using a new \emph{coarse-grained} div-curl argument, that the expectations of the two coarse-grained matrices~$\s(\cu)$ and~$\s_*(\cu)$ are close to each other (with~$\cu$ being the cube on the largest scale in this range of scales). Since the difference~$\E [ \s(\cu_n) - \s_*(\cu_n)]$ actually upper bounds the quantity~$\Theta_n$, this yields the desired conclusion.

\smallskip

If the ellipticity~$\Theta$ is sufficiently close to one, then the statement of Assertion~\ref{assert.reduce.Theta} can be improved, and the convergence of the renormalized diffusivities to one can be sped up. The idea is essentially that, if~$\Theta -1$ is small, then we can reduce~$\Theta-1$ by a factor of two by zooming out only a fixed finite number of scales---provided we are working in a suitable geometry. An iteration then yields an algebraic rate of decay, summarized in the following informal statement.

\begin{assertion}
\label{assert.five}
There exist~$\sigma_0(d) , \alpha(d) \in (0,\nicefrac12 ]$ and~$C(d) < \infty$ such that, if~$\Theta - 1 \leq \sigma_0$ and
\begin{equation*}
\frac12 \Id \leq \s_0 \leq 2 \Id\,,
\end{equation*}
then we have the estimate
\begin{equation*}
\Theta_m - 1 
\leq C 3^{-m\alpha} (\Theta - 1)\,, 
\quad \forall m\in\N\,.
\end{equation*}
\end{assertion}

Assertion~\ref{assert.five} is proved in Section~\ref{ss.smallcontrast}: see Proposition~\ref{p.algebraic.exp}.

\smallskip

The five assertions above are assembled into a proof of Theorem~\ref{t.main} in Section~\ref{ss.proofs.main.results}. 
The statement of Theorem~\ref{t.HC.intro} is essentially a corollary of Theorem~\ref{t.main} and its proof also appears in Section~\ref{ss.proofs.main.results}. 

\smallskip

In Section~\ref{ss.pigeon.prime}, we demonstrate that the hypotheses of Theorem~\ref{t.main} can be further relaxed. We anticipate that such a generalization will be important for applications, and indeed, it is necessary for the arguments in~\cite{ABK.SD}.

\subsection{Examples satisfying the hypotheses of Theorem~\ref{t.main}} 
\label{ss.examples}

The primary motivating example for the main results in this paper is a finite-range dependent, uniformly elliptic coefficient field~$\a(x)$, which may have very large ellipticity constants and exhibit \emph{near-critical behavior}. 

\smallskip

However, while it is \emph{not} the main focus of the paper, the general ellipticity assumption~\ref{a.ellipticity} we have introduced above also enables us to treat degenerate and/or unbounded coefficient fields. Indeed, under this assumption, certain ``large contrast" examples can be reinterpreted within our framework as having small ellipticity contrast. To demonstrate these points, we present three fundamental examples of random fields that satisfy our assumptions~\ref{a.stationarity},~\ref{a.ellipticity}, and~\ref{a.CFS}. For each example, the results of Theorem~\ref{t.main} are new.

\smallskip

The first example is a scalar coefficient field with \emph{Poisson inclusions}.
We consider two Poisson point clouds~$\omega_1$ and~$\omega_2$ on~$\Rd$ with intensities~$\rho_1\geq 0$ and~$\rho_2\geq 0$, respectively. Let~$\lambda\in(0,1]$, $\Lambda\in[1,\infty)$ and define the scalar matrix-valued field
\begin{equation}
\label{e.a.inclusions}
\a \coloneqq  
\bigl(  1 + (\Lambda-1) \indc_{B_1} \ast \omega_1 + (\lambda-1) \indc_{B_1} \ast \omega_2 \bigr) \Id
\,.
\end{equation}
This field clearly satisfies~\ref{a.stationarity}. Since it has a finite range of dependence, it also satisfies~\ref{a.CFS} with~$(\beta,\nu)=(0,\frac d2)$ and~$\Psi(t) = \exp( c t^2)$ for some constant~$c(d)>0$. The interest is in the ellipticity assumption~\ref{a.ellipticity}. 
As mentioned above, regardless of the values of intensities~$\rho_1$ and~$\rho_2$, this coefficient field satisfies the uniform ellipticity assumption~\eqref{e.UE.intro} with constants~$\lambda$ and~$\Lambda$, and therefore~\ref{a.ellipticity} with~$\mathcal{S}=0$,~$\gamma=0$ and~$(\lambda_0,\Lambda_0)=(\lambda,\Lambda)$. 

\smallskip

However, in the case that~$\rho_1$ and~$\rho_2$ are small (perhaps~$10^{-2}$) and both~$\Lambda$ and~$\lambda^{-1}$ are very large, we can do better than using the uniform ellipticity condition to check~\ref{a.ellipticity}. In this case, the random inclusions are rare, and the connected components of their union will be far from percolating. Therefore, while the uniform ellipticity ratio~$\Lambda\lambda^{-1}$ is very large, Theorem~\ref{t.main} will give a pessimistic bound for the homogenization length scale. 
\smallskip

However, the coarse-grained ellipticity constants are not large. Therefore, to get a better quantitative estimate, we can use the flexibility of the condition~\ref{a.ellipticity}. We argue instead that, on a sufficiently large (random) scale (the typical size of which is a power of~$\Lambda\lambda^{-1}$), the coefficient field has a coarse-grained ellipticity contrast close to one (and in particular less than two). Precisely, we have the following statement, the proof of which appears in Appendix~\ref{ss.Poisson.inclusions}.

\begin{proposition}[Poisson inclusions]
\label{p.inclusions}
There exist constants~$c(d)\in (0,1]$ and~$C(d) \in [1,\infty)$ such that, if~$\max\{ \rho_1, \rho_2 \} \leq c$ and~$\gamma \in (0,1)$, then the random field~$\a(\cdot)$ defined in~\eqref{e.a.inclusions} above satisfies assumptions~\ref{a.stationarity},~\ref{a.ellipticity} and~\ref{a.CFS} with the following parameters:
\begin{equation*}
\Lambda_0 = \lambda_0^{-1}  = (1+C\left|\log \rho\right|^{-2}) \Itwod\,, \quad 
\Psi_\S(t) =  \exp\biggl( c \max\{ \Lambda,\lambda^{-1}\}^{-\frac{1}{d+2} - \frac{\gamma}{d}}  t^{\frac{\gamma}{d+2}} -1 \biggr),
\, \quad 
\Psi (t) = \exp( ct^2 )\,.
\end{equation*}
In particular,~$\Theta \leq \Pi \leq 1+C\left|\log \rho\right|^{-2}$,~$K_\Psi = C$ and~$K_{\Psi_\S} =  (  C\gamma^{-1} )^{\! \frac
{d+2}{\gamma}} \max\{\Lambda , \lambda^{-1}\}^{\frac{1}{\gamma} + 1+ \frac{2}{d}}$.
\end{proposition}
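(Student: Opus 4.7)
The plan is to verify the three assumptions~\ref{a.stationarity},~\ref{a.ellipticity},~\ref{a.CFS} for the field~$\a$ in~\eqref{e.a.inclusions} separately. Assumption~\ref{a.stationarity} is immediate from translation invariance of the Poisson processes~$\omega_1,\omega_2$. For~\ref{a.CFS}, the field~$\a$ has finite range of dependence equal to~$1$ (since~$\a(\cdot)$ restricted to a region~$V$ depends only on the restriction of~$\omega_1\cup\omega_2$ to~$V+B_1$); the general result of~\cite{AK.Book} Chapter~3 then gives~\ref{a.CFS} with~$(\beta,\nu)=(0,\nicefrac{d}{2})$ and~$\Psi(t)=\exp(ct^2)$, which satisfies the growth condition with~$K_\Psi=C(d)$, and the required~$\nu>\gamma$ holds whenever~$\gamma<\nicefrac{d}{2}$.

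All the substantive work lies in checking~\ref{a.ellipticity}. The strategy rests on the variational characterization of~$\bfA(U)$: upper bounds on~$\s(U)$ and~$\s_*^{-1}(U)$ come from exhibiting affine-plus-correction test \emph{fields} and test \emph{fluxes}, respectively. For each inclusion~$B_1(x)$ with~$x\in\omega_1\cup\omega_2$, I would locally modify the affine reference~$p\cdot x$ to be constant inside~$B_1(x)$ (smoothly interpolated in an annulus of thickness~$1$), paying a Dirichlet energy of order~$|p|^2$ that is supported in the annulus; symmetrically, I would route the reference flux~$q$ around~$B_1(x)$ by a harmonic correction, paying~$O(|q|^2)$. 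The crucial point is that these constructions bypass the interior of the inclusion entirely, so the energy cost is independent of~$\Lambda$ and~$\lambda^{-1}$. Summing over all inclusions with centers in a fattening of a cube~$U=z+\cu_m$ yields, after normalization, the pointwise matrix bound~$\bfA(U)\leq (1 + C N_U/|U|)\Itwod$ where~$N_U$ counts those inclusions.

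Poisson concentration then controls~$N_U$: with exponentially small probability,~$N_U$ deviates from~$\rho |U|$ by more than~$|U|/|\log\rho|^2$. Applying this uniformly over a nested triadic family of subcubes and taking a union bound over scales and positions identifies the minimal scale~$\S$ as the smallest triadic scale above which the empirical inclusion density lies within~$|\log\rho|^{-2}$ of~$\rho$ in every subcube; the resulting tail is of the advertised form~$\Psi_\S$. The growth exponent in~$K_{\Psi_\S}$ is then fixed by requiring the slack~$3^{\gamma(m-n)}$ in~\eqref{e.ellipticity} to dominate the trivial worst-case bound~$\bfA(z+\cu_n)\leq C\max\{\Lambda,\lambda^{-1}\}\Itwod$ on very small subcubes where the Poisson process has not yet equidistributed---which gives rise precisely to the factor~$\max\{\Lambda,\lambda^{-1}\}^{\nicefrac{1}{\gamma}+1+\nicefrac{2}{d}}$.

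The main obstacle is that the pointwise interpolation near inclusions breaks down on clusters of overlapping balls, and even in the subcritical regime such clusters can be arbitrarily large. For~$\max\{\rho_1,\rho_2\}\leq c(d)$ chosen strictly below the continuum-percolation threshold, however, cluster sizes have exponentially decaying tails, so each finite cluster can be enclosed in an annulus of controlled diameter and handled by the same annular interpolation. I would therefore condition on the event that every cluster intersecting~$U$ has diameter at most a prescribed power of~$|U|^{\nicefrac{1}{d}}$ (a high-probability event whose complement is absorbed by~$\Psi_\S$), use the test-function construction on this good event to obtain~$\bfA(U)\leq (1+C|\log\rho|^{-2})\Itwod$, and otherwise rely on the trivial ellipticity estimate to fit within the~$3^{\gamma(m-n)}\bfE$ window of~\eqref{e.ellipticity}. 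Combining these ingredients yields the stated values of~$\bfE$,~$\Psi_\S$,~$K_{\Psi_\S}$,~$\Psi$, and the ratio bounds~$\Theta\leq\Pi\leq 1+C|\log\rho|^{-2}$.
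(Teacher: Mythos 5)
Your overall route --- stationarity for free, \ref{a.CFS} from the unit range of dependence, and \ref{a.ellipticity} by testing the variational formulas for~$\s(U)$ and~$\s_*^{-1}(U)$ with explicit gradient and divergence-free fields that avoid the inclusions, followed by concentration, union bounds over scales, and a crude pointwise bound on small scales to generate the~$\max\{\Lambda,\lambda^{-1}\}$ factors --- is the same as the paper's. But two steps, as you state them, do not hold up. First, the energy bookkeeping: once you group overlapping balls into clusters and make the test function constant on a whole cluster~$A$, the gradient in the surrounding annulus must bridge an affine variation of order~$|p|\,\mathrm{diam}(A)$, so the cost per cluster is of order~$|p|^2|A|^3$ (gradient~$\lesssim |p||A|$ on an annulus of volume~$\lesssim |A|$), not~$O(|p|^2)$ per inclusion. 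Hence the bound produced by your construction is not~$\bfA(U)\leq(1+CN_U/|U|)\Itwod$ with~$N_U$ the number of inclusions, but one involving~$\avsum_{z} N(z)^2$, where~$N(z)$ is the size of the cluster through~$z$; this is an unbounded random variable with exponential tails (of type~$\O_{\Gamma_1}(C|\log\rho|^{-1})$ when~$\rho\leq c(d)$, by a lattice-animal counting argument), and the concentration step has to be run for this heavy-tailed sum --- that is exactly where the factor~$1+C|\log\rho|^{-2}$ in~$\bfE$ comes from. Poisson concentration for the raw count~$N_U$ addresses the wrong quantity.

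Second, boundary clusters. The variational characterization~\eqref{e.J.P0.Dirichlet} only permits compactly supported modifications inside~$U$, so a cluster whose surrounding annulus would exit~$U=z+\cu_m$ cannot be ``zeroed out'': on such clusters the test gradient must remain~$\approx p$ (and the test flux~$\approx q$), which costs a factor~$\Lambda$ (resp.~$\lambda^{-1}$) on a boundary layer. This term is controlled by the surface-to-volume factor~$3^{-m}$ times a boundary sum of~$N(z)^2$, and it is the source of the restriction~$3^n\gtrsim\Lambda\vee\lambda^{-1}$ on the admissible mesoscales, which, together with the small-scale crude bound you do mention, is what produces the stated forms of~$\Psi_\S$ and~$K_{\Psi_\S}$. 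Your good event about cluster diameters does not address this, so the claimed~$\Lambda$-independent bound~$\bfA(U)\leq(1+C|\log\rho|^{-2})\Itwod$ on the good event is not justified as written. Both gaps are repairable along the path you are already on, but the key displayed estimate and the target of the concentration argument need to be corrected.
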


Proposition~\ref{p.inclusions} says that if the intensities of the Poisson processes are small enough, then this seemingly ``high contrast'' homogenization problem is actually a small contrast problem. Consequently, an application of Theorem~\ref{t.main} implies that, in this case, the length scale for homogenization is proportional to a power of~$\max\{ \Lambda, \lambda^{-1} \}$ (with stretched exponential moments). 

\smallskip 

For the second example, we consider the \emph{advection-diffusion equation}
\begin{equation}
\label{e.advdiff}
-\lambda \Delta u + \b(x) \cdot \nabla u = 0 \,.
\end{equation}
Here~$\b(x)$ is a divergence-free, random vector field which can be written as 
\begin{equation}
\label{e.stream}
\b = -\nabla \cdot \k\,, 
\end{equation}
for a stream matrix~$\k$ which is a Gaussian random field taking values in the set~$\R^{d\times d}_{\mathrm{skew}}$ of anti-symmetric matrices. Specifically, assume that each of the entries of~$\k$ is given by the convolution of a fractional Gaussian field with Hurst parameter~$-\sigma\in(-\nicefrac d2,0)$ and the standard mollifier (see Appendix~\ref{ss.FGF} for the definition and explicit construction of the fractional Gaussian fields). Here we do not make any restriction on the covariance structure of the different entries in~$\k$: in particular, we do not assume they are independent or uncorrelated. 

\smallskip

The identity~\eqref{e.stream} allows us to write the equation~\eqref{e.advdiff} as
\begin{equation*}
- \nabla \cdot \bigl( \lambda \Id + \k(x) \bigr) \nabla u = 0\,.
\end{equation*}
Since~$\k$ is a Gaussian random field, it does not belong to~$L^\infty(\Rd)$, and so the equation is not literally uniformly elliptic. 
However, we show that it satisfies~\ref{a.ellipticity} with an ellipticity ratio of order~$\sigma^{-3}\lambda^{-2}$. 

\begin{proposition}[Gaussian stream matrices]
\label{p.example.advdiff}
Consider the random field~$\a = \lambda \Id + \k$, where each of the entries of~$\k \in \R^{d\times d}_{\mathrm{skew}}$ is given by the convolution of a fractional Gaussian field with Hurst parameter~$-\sigma\in (-\nicefrac d2,0)$ and the standard mollifier. 
There exists~$C(d) <\infty$ such that the field~$\a(\cdot)$ satisfies
the assumptions~\ref{a.stationarity},~\ref{a.ellipticity} and~\ref{a.CFS} with the following parameters:
\begin{equation*}
\left\{
\begin{aligned}
&\gamma \in (0,\sigma\wedge 1)\,,
\\ &
\Lambda_0 
=2(\lambda + C\lambda^{-1} \sigma^{-3} ) \qand 
\lambda_0^{-1} = 
2\lambda^{-1}  \,,
\\ &
\Psi_\S(t) = (\sigma-\gamma) \exp \bigl( C^{-1}t^\gamma -C \gamma^{-1} | \log \gamma| \bigr) \,,
\\ &
\beta = 1-\nicefrac{2\sigma}d\,,
\\ &
\Psi(t) = \Gamma_2(c (\tfrac d2-\sigma) t)\,.
\end{aligned}
\right.
\end{equation*}
\end{proposition}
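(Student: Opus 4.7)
The plan is to verify the three assumptions \ref{a.stationarity}, \ref{a.ellipticity}, and \ref{a.CFS} in turn for the field $\a = \lambda \Id + \k$, exploiting that the symmetric part $\s = \lambda \Id$ is constant and deterministic. Stationarity is immediate from the translation invariance of the fractional Gaussian field and the mollifier. The first preparatory step is to record two properties of the covariance $K(x) := \E[\k_{ij}(0)\k_{ij}(x)]$ of any fixed component of $\k$, both of which follow from elementary Fourier-space computations: after the standard normalization of the fractional Gaussian field with Hurst parameter $-\sigma$, one has $K(0)=O(\sigma^{-3})$, with the cubic factor arising from the normalization of the Riesz kernel as $\sigma \to 0$, and $|K(x)| \lesssim (1+|x|)^{-2\sigma}$ for $|x|\geq 1$. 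These two estimates drive everything that follows.

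For \ref{a.ellipticity}, I would exploit $\s \equiv \lambda \Id$ to reduce the deterministic upper bound \eqref{e.integral.ub} to an $L^1$-control of $\fint_U|\k|^2$: since $\k^t \s^{-1}\k = \lambda^{-1}\k^t\k$ and the cross terms in \eqref{e.bfA.def} vanish after the symmetric estimate \eqref{e.integral.ub}, the claimed $\bfE$ with upper-left block $2(\lambda+C\lambda^{-1}\sigma^{-3})\Id$ exactly matches the naive bound $\lambda + C\lambda^{-1} K(0)$. The real work is then to show
\[
\fint_{z+\cu_n} |\k|^2 \,dx \leq C\sigma^{-3}\,3^{\gamma(m-n)}, \qquad \forall n \leq m,\ z \in 3^n\Zd\cap\cu_m,
\]
on the event $\{3^m \geq \S\}$. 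Since $\fint|\k|^2$ is a quadratic form in Gaussians, Hanson--Wright concentration (or Gaussian hypercontractivity applied to truncations of $\k$) provides single-cube tail bounds of the form $\P[\fint_{z+\cu_n}|\k|^2 \geq C\sigma^{-3}(1+t)] \leq \exp(-ct^\gamma)$ for any $\gamma \in (0,\sigma \wedge 1)$. A union bound over the $O(3^{d(m-n)})$ locations and the dyadic scales $n \leq m$ then yields the stated form of $\Psi_\S$, with the prefactor $(\sigma-\gamma)$ reflecting the convergence of the geometric series $\sum_{k\geq 0} 3^{-(\sigma-\gamma)k}$ in the union bound and the $-C\gamma^{-1}|\log\gamma|$ term in the exponent absorbing the combinatorial cost of summing over the $O(\gamma^{-1})$ dyadic scales on which the tail bound is nontrivial.

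For \ref{a.CFS}, I would apply the Gaussian Poincar\'e inequality to the sum $Y := \avsum_{z\in 3^n\Zd\cap\cu_m} X_z$. The assumption $|D_{z+\cu_n} X_z|\leq 1$ localizes the Malliavin derivative of $Y$ to the scale $3^n$ with amplitude $O(3^{-d(m-n)})$, and the polynomial decay of $K$ then gives $\var(Y) \lesssim 3^{-2\sigma(m-n)}$ provided $n \geq \beta m$ with $\beta = 1 - \sfrac{2\sigma}d$; this lower bound on $n$ is needed precisely so that pairs of cubes at distances below the mollifier scale do not dominate the double sum $\sum_{z,z'} K(z-z')$. Herbst's argument then upgrades this variance bound to the stretched sub-Gaussian tail $\Gamma_2(c(\tfrac d2 - \sigma)t)$, with the factor $\tfrac d2 - \sigma$ tracking the behavior of the spectral integral $\int |\xi|^{2\sigma-d}|\hat\eta(\xi)|^2\,d\xi$ near the critical integrability threshold $\sigma = d/2$.

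The hardest part will be the tail analysis for $\S$ in \ref{a.ellipticity}: one must simultaneously control the entire hierarchy of coarse-grained matrices $\bfA(z+\cu_n)$ over all scales $n\leq m$ and locations $z$, with a concentration exponent $\gamma$ that must be kept strictly below the correlation decay exponent $\sigma$ in order for the geometric sum over scales to converge. This strict inequality $\gamma<\sigma$ is the structural origin of both the $(\sigma-\gamma)$ prefactor and the $\gamma^{-1}|\log\gamma|$ penalty in $\Psi_\S$, and tracking the constants through the union bound carefully is what produces the precise quantitative form stated in the proposition.
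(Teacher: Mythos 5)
Your route---working directly with the covariance of $\k$ and Gaussian-chaos concentration, instead of the paper's decomposition of the fractional field into finite-range annular pieces (Section~\ref{ss.FGF.decomp}) fed into Proposition~\ref{p.example.advdiff.gen} via the independent-sum concentration of Lemma~\ref{l.exp.concentration}---is genuinely different, but as written the verification of~\ref{a.ellipticity} has a gap that breaks the construction of~$\S$. Your single-cube bound $\P[\fint_{z+\cu_n}|\k|^2\geq C\sigma^{-3}(1+t)]\leq\exp(-ct^\gamma)$ carries no dependence on the scale~$n$. The minimal scale is defined by requiring $\fint_{z+\cu_n}|\k|^2\leq C\sigma^{-3}3^{\gamma(m-n)}$ simultaneously for all $n\leq m$ and all $z$, and $\P[\S>3^k]$ is then estimated by a union bound over $m\geq k$. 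At the largest scales, $n$ comparable to $m$, the threshold gives no gain ($t=O(1)$), so your bound contributes a non-decaying constant for every~$m$ and the sum over $m\geq k$ diverges: no tail estimate for~$\S$, let alone the stated~$\Psi_\S$, can be extracted. What is needed---and what the finite-range decomposition supplies automatically---is concentration whose fluctuation amplitude \emph{improves with the cube size}: the paper proves $\|\k\|_{\underline{L}^2(\cu_n)}^2\leq CK_0^2\sigma^{-2}+\O_{\Gamma_1}\bigl(CK_0^2\sigma^{-1}3^{-\sigma(n\vee 0)}\bigr)$, so the failure probability at scale $n$ inside $\cu_m$ is of order $3^{d(m-n)}\exp\bigl(-c\sigma^{-1}3^{\gamma(m-n)+\sigma(n\vee0)}\bigr)$, which decays in $m$ even when $n=m$. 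A covariance-based argument can recover this (the Hilbert--Schmidt norm of the covariance operator of $\k$ restricted to $\cu_n$ decays in $n$ thanks to $|K(x)|\lesssim|x|^{-2\sigma}$), but the scale-dependence must be made explicit; it is the whole content of the ellipticity verification here.

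Two further points of bookkeeping. Hanson--Wright gives exponential, not $t^\gamma$, tails for a Gaussian quadratic form, and the $t^\gamma$ in $\Psi_\S$ does not come from stretched single-cube tails: it comes from identifying $t\simeq 3^m$ after the per-scale failure probabilities $\exp(-c\,3^{\gamma m})$ are summed; likewise the $(\sigma-\gamma)^{-1}$ and $\exp(C\gamma^{-1}|\log\gamma|)$ factors arise from summing $3^{d(m-n)}\exp(-c\sigma^{-1}3^{\gamma(m-n)+\sigma(n\vee0)})$ over $n$, not from a series $\sum_k 3^{-(\sigma-\gamma)k}$. Also $\E[\k_{ij}(0)^2]$ is of order $\sigma^{-1}$, not $\sigma^{-3}$ (the constant in~\eqref{e.C.sigma.d} is $\sim\sigma^{-1}$ and the mollified double integral is $O(1)$); in the paper the $\sigma^{-3}$ in $\bfE$ comes from $K_0^2\sigma^{-2}$ with $K_0\simeq\sigma^{-\nicefrac12}$ the $\O_{\Gamma_2}$-size of the annular pieces~\eqref{e.F.sigman.size} together with a Cauchy--Schwarz across scales, so your attribution to the Riesz normalization is off (harmless, since only an upper bound is required). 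Finally, for~\ref{a.CFS} the paper defers to~\cite{AK.Book}; your Poincar\'e/Herbst sketch is reasonable in outline, but the constraint $n>\beta m$ with $\beta=1-\nicefrac{2\sigma}d$ is precisely what upgrades the variance of the averaged sum to $3^{-d(m-n)}$ (i.e.\ $\nu=\nicefrac d2$), whereas the bound $3^{-2\sigma(m-n)}$ you state holds without any condition on $\beta$ and would only yield $\nu=\sigma$; moreover $|D_UX|$ in~\eqref{e.malliavin.mult} is a multiplicative derivative in $\a$, so a chain-rule step relating it to the derivative with respect to the underlying white noise is needed before any Gaussian functional inequality applies.
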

The proof of Proposition~\ref{p.example.advdiff} appears in Appendix~\ref{ss.example.advdiff}. 

\smallskip

If the Hurst parameter~$\sigma$ is equal to zero, the~$L^2$ oscillation of the stream matrix~$\k$ is no longer bounded as a function of the scale, and the ellipticity is infinite (even in the sense of~\ref{a.ellipticity}). In this case, the equation does not homogenize in the usual sense and instead exhibits \emph{superdiffusivity}. As we show in~\cite{ABK.SD}, the techniques introduced in this paper are nevertheless able to analyze it.

\smallskip 

We turn next to our third example: \emph{log-normal coefficient fields} which are of the form  
\begin{equation*}
\a(x) = \exp( h \mathbf{g} (x)) \,, 
\end{equation*}
where~$h>0$ and~$\mathbf{g}$ is a Gaussian field valued in the set~$\R^{d\times d}$ of (not necessarily symmetric) real~$d$-by-$d$ matrices. For concreteness, we assume that each of the entries of~$\mathbf{g}$ is given by the convolution of a fractional Gaussian field with Hurst parameter~$-\sigma \in (-\nicefrac d2,0)$ and the standard mollifier.

\begin{proposition}[Log-normal fields]
\label{p.example.log-normal}
Consider the random field~$\a = \exp( h \g)$, where each of the entries of~$\g \in \R^{d\times d}$ is given by the convolution of a fractional Gaussian field with Hurst parameter~$-\sigma\in (-\nicefrac d2,0)$ and the standard mollifier. 
There exists~$C(d) <\infty$ such that the field~$\a(\cdot)$ satisfies
the assumptions~\ref{a.stationarity},~\ref{a.ellipticity} and~\ref{a.CFS} with the following parameters:
\begin{equation*}
\left\{
\begin{aligned}
&\gamma \in (0,1)\,,
\\ &
\Lambda_0 = \lambda_0^{-1} = 
\exp(Ch^2 \sigma^{-2} ) 
\\ &
\Psi_\S(t) =\exp \bigl( C^{-1}h^{-2} \sigma^{2} \log^2 t -C h^2 \sigma^{-2} \gamma^{-2}\bigr) \,,
\\ &
\beta = 1-\nicefrac {2\sigma}{d}\,,
\\ & 
\Psi(t)=\Gamma_2 \bigl(c (\tfrac d2-\sigma) t\bigr)
\,.
\end{aligned}
\right.
\end{equation*}
\end{proposition}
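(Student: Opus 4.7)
My plan is to verify the three assumptions in turn, closely paralleling the argument sketched for the advection-diffusion example (Proposition~\ref{p.example.advdiff}). Stationarity~\ref{a.stationarity} is immediate, since $\g$ is constructed as the convolution of a stationary fractional Gaussian field with a fixed mollifier, and the matrix exponential is applied pointwise. Concentration for sums~\ref{a.CFS} is established via Gaussian concentration for the underlying field $\g$, which satisfies a logarithmic Sobolev inequality. For any family $\{X_z\}$ as in~\eqref{e.CFS.ass}, one bounds the (additive) Malliavin derivatives of the $X_z$ with respect to $\g$ and then applies the Herbst/tensorization argument. The exponent $\beta=1-\nicefrac{2\sigma}{d}$ captures the cross-over scale below which the long-range correlations $\sim|x|^{-2\sigma}$ dominate the variance of the sum $\sum_z X_z$, while the double exponential $\Gamma_2$ in $\Psi(t)$ emerges because Gaussian tails in $\g$ get composed with the exponential $\exp(h\g)$, producing log-normal-type tails.

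The main work is the ellipticity assumption~\ref{a.ellipticity}. Starting from the deterministic upper bound $\bfA(U)\leq 2\fint_U\bfA(x)\,dx$ in~\eqref{e.integral.ub}, I would establish the pointwise estimate $|\bfA(x)|\leq C\exp(Ch|\g(x)|)$ by direct matrix computation, where $|\g(x)|$ is the spectral norm; this requires controlling both $\exp(h\g)$ and $\exp(-h\g)$ in order to bound each of $\s$, $\s^{-1}$, $\k$, and $\k^t\s^{-1}\k$ entering~\eqref{e.bfA.def}. Using the pointwise variance of the mollified fractional Gaussian field---computed directly from its covariance kernel, see Appendix~\ref{ss.FGF}---and standard Gaussian moment bounds, one finds $\E\fint_{z+\cu_n}\exp(Ch|\g|)\,dx\leq\exp(Ch^2\sigma^{-2})$. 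This identifies the matrix $\bfE=\exp(Ch^2\sigma^{-2})\Itwod$.

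To produce the claimed tail on the minimal scale~$\mathcal S$, apply Gaussian concentration to the random variable $\log\fint_{z+\cu_n}\exp(Ch|\g|)\,dx$, viewed as a functional of $\g$. Its Malliavin derivative is readily estimated, yielding a Gaussian tail on the logarithm which, upon exponentiation, becomes a log-normal bound of the form
\[
\P\Bigl[\fint_{z+\cu_n}\exp(Ch|\g(x)|)\,dx\geq t\exp(Ch^2\sigma^{-2})\Bigr]\leq\exp(-ch^{-2}\sigma^2\log^2 t).
\]
A union bound across all triadic scales $n\in\Z\cap(-\infty,m]$ and all subcubes $z\in 3^n\Zd\cap\cu_m$, exploiting the $3^{\gamma(m-n)}$ slack permitted on small scales in~\eqref{e.ellipticity}, produces the claimed $\Psi_\S(t)=\exp(C^{-1}h^{-2}\sigma^2\log^2 t-Ch^2\sigma^{-2}\gamma^{-2})$. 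The $\log^2 t$ dependence is the signature of log-normal tails, while the additive constant $-Ch^2\sigma^{-2}\gamma^{-2}$ is the total cost of the union bound summed down to the scale at which the discount factor $3^{\gamma(m-n)}$ becomes the binding bound.

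The main technical obstacle is handling the matrix exponential $\a=\exp(h\g)$ when $\g$ is asymmetric. Noncommutativity produces commutator-type contributions when bounding $|\exp(h\g)|$ in terms of $\exp(h|\g|)$, and also when passing between the symmetric and antisymmetric parts of $\a$, neither of which admit a closed-form expression in terms of $\g$. A closely related subtlety appears in the multiplicative Malliavin derivative~\eqref{e.malliavin.mult}, which must be rewritten as an additive perturbation of $\g$---incurring at most a bounded multiplicative loss depending on the size of $\a$---in order to transfer Gaussian concentration for $\g$ into the multiplicative form demanded by~\ref{a.CFS}. Once these matrix estimates are in place, the remaining arguments run in parallel with those for the advection-diffusion example.
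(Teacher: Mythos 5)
There is a genuine gap in your argument for the ellipticity assumption~\ref{a.ellipticity}, specifically in the construction of the minimal scale~$\S$. The concentration estimate you propose—Borell--TIS (or Malliavin/Herbst) applied to $\log\fint_{z+\cu_n}\exp(Ch|\g|)$, with Lipschitz constant controlled by $Ch\sup_x(\var\,\g(x))^{\nicefrac12}\sim h\sigma^{-1}$—is \emph{scale-independent}: the bound $\P[\fint_{z+\cu_n}\exp(Ch|\g|)\geq t\exp(Ch^2\sigma^{-2})]\leq\exp(-ch^{-2}\sigma^2\log^2 t)$ does not improve as $n\to\infty$, because the $L^\infty$-type Lipschitz bound ignores the spatial averaging entirely. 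But the event defining $\{\S>3^m\}$ includes the top scale $n=m$, where the slack factor in~\eqref{e.ellipticity} is $3^{\gamma(m-m)}=1$, so your estimate only gives an $m$-independent constant upper bound on the failure probability at scale $m$. Consequently the union bound over $m$ cannot be closed: you cannot even conclude $\S<\infty$ almost surely from the stated bounds, let alone the tail $\P[\S>t]\leq\exp(Ch^2\sigma^{-2}\gamma^{-2}-ch^{-2}\sigma^2\log^2 t)$, which requires the failure probability at scale $3^m$ to decay like $\exp(-ch^{-2}\sigma^2 m^2)$. What is missing is a mechanism by which the averaging over $\cu_m$ enters the concentration; with the exponential inside the average, the functional is dominated by the largest excursions of $\g$, and the naive Cameron--Martin Lipschitz bound cannot see that such excursions become relatively harmless on large cubes.

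The paper's proof supplies exactly this mechanism, by a different route. First the mollified fractional Gaussian field is decomposed into finite-range pieces $F_{\sigma,j}$ (a partition of unity applied to the kernel, Section~\ref{ss.FGF.decomp}), with $\|F_{\sigma,j}\|_{L^\infty}=\O_{\Gamma_2}(C\sigma^{-\nicefrac12}3^{-\sigma j})$ and range $\sim 3^j$. For the truncation $\hat\g_{\lceil m/2\rceil}=\sum_{j\le\lceil m/2\rceil}\g_j$ one applies the finite-range result, Proposition~\ref{p.log-normal.frd}: there one performs a level-set decomposition $\g=\sum_j\g\indc_{\{2^{j-1}\le|\g|<2^j\}}$ and uses binomial (Hoeffding) concentration for the \emph{number} of unit cells in $\cu_m$ where $|\g|$ exceeds level $2^{j-1}$, which is where independence at unit distance and the spatial average are actually exploited; this produces bounds of the form $\exp(-c\lambda^{-2}\log^2(t3^{\delta m}))$, in which the crucial factor $3^{\delta m}$ inside the logarithm makes the failure probability decay like $\exp(-c h^{-2}\sigma^{2} m^2)$ and powers the union bound over scales and subcubes (the additive cost $Ch^2\sigma^{-2}\gamma^{-2}$ in $\Psi_\S$ comes out here, as you anticipated). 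The high-frequency remainder $\g-\hat\g_{\lceil m/2\rceil}$ is controlled in $L^\infty(\cu_m)$ by its $\O_{\Gamma_2}(Ch\sigma^{-1}3^{-\sigma m/2})$ tail (Proposition~\ref{p.log-normal.gen}). Two smaller remarks: the noncommutativity of the matrix exponential, which you flag as a main obstacle, is handled in the paper with nothing more than submultiplicativity of the spectral norm, $|\exp(\g)|\le\exp(|\g|)$ and $|\bfA(x)|\le(1+|\a(x)|^2)|\a^{-1}(x)|$, together with~\eqref{e.integral.ub}; and the verification of~\ref{a.CFS} with $\beta=1-\nicefrac{2\sigma}{d}$ is not carried out via an LSI/Herbst argument in the paper but is deferred to~\cite[Chapter 3]{AK.Book}, again resting on the finite-range decomposition rather than on a log-Sobolev inequality for $\g$.
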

The proof of Proposition~\ref{p.example.log-normal} appears in Appendix~\ref{ss.log-normal}.

\subsection{Notation}
\label{ss.notation} 

We denote~$r\wedge s \coloneqq  \min\{ r,s\}$ and~$r\vee s \coloneqq  \max\{ r,s\}$. The H\"older conjugate of an exponent~$p\in[1,\infty]$ is denoted by~$p'$, 
where $p' \coloneqq  p(p-1)^{-1}$ if~$p\neq 1,\infty$, 
$p'= \infty$ if $p=1$, and~$p'=1$ if~$p=\infty$. The Euclidean norm on~$\R^m$ is denoted by~$|\cdot|$. For every~$s \in (0,\infty)$, we denote~$\cs  \coloneqq  1 - 3^{-s}$.  

\smallskip

We let~$\R^{m\times n}$ denote the set of~$m$-by-$n$ matrices with real entries. We let~$B^t$ denote the transpose of a matrix~$B$. The~$n$-by-$n$ identity matrix is~$\mathrm{I}_n$. The symmetric and anti-symmetric~$n$-by-$n$ matrices are denoted respectively by~$\R^{n\times n}_{\mathrm{sym}}$ and~$\R^{n\times n}_{\mathrm{skew}}$. The Loewner ordering on~$\R^{n\times n}_{\mathrm{sym}}$ is denoted by~$\leq$; that is, if~$A,B\in \R^{n\times n}_{\mathrm{sym}}$ then we write~$A\leq B$ if~$B-A$ has nonnegative eigenvalues. Unless otherwise indicated, the norm we use for~$\R^{m\times n}$, denoted by~$|A|$, is the spectral norm, that is, the square root of the largest eigenvalue of~$A^tA$. 
For every square matrix~$A$, we have~$|A|=|A^t|$. The spectral norm is also submultiplicative: for all square matrices~$A$ and~$B$, we have~$|AB| \leq |A| |B|$. We also denote~$\R^{n\times n}_{+} \coloneqq \{ A \in \R^{n\times n}\,:\, \xi \cdot A \xi >0, \ \forall \xi \in \R^{n} \setminus \{ 0 \} \}$ and~$\R^{n\times n}_{\sym,+} = \R^{n\times n}_{\mathrm{sym}} \cap \R^{n\times n}_{+}$. 

\smallskip

The Lebesgue measure of a (measurable) subset~$U \subseteq\Rd$ is~$|U|$. If~$V\subseteq \Rd$ is of codimension~$1$ (for instance, the boundary~$\partial U$ of a Lipschitz domain~$U$), then~$|V|$ instead denotes the~$(d-1)$-dimensional Hausdorff measure of~$V$. Volume-normalized integrals and~$L^p$ norms are denoted, for~$p\in[1,\infty)$, by
\begin{equation}
\label{e.volume.normalize}
(f)_U  \coloneqq  
\fint_U f(x) \,dx  \coloneqq  \frac{1}{|U|} \int_U f(x)\,dx
\qquad \mbox{and} \qquad 
\| f \|_{\underline{L}^p(U)} \coloneqq  \Bigl( \fint_U |f(x)|^p \,dx \Bigr)^{\nicefrac1p}
\,.
\end{equation}
We denote by~$|A|$ the cardinality of a finite set~$A$. A slash through the sum symbol~$\sum$ denotes the sample mean: for every~$f:A \to \R^n$,
\begin{equation*}
\avsum_{a \in A} f(a)  \coloneqq  \frac{1}{|A|} \sum_{ a\in A} f(a)\,.
\end{equation*}
Indicator functions of events and subsets of~$\R^n$ are denoted by~$\indc$. 

\smallskip

We let~$C_0(\Rd)$ denote the space of continuous functions~$u:\Rd\to\R$ such that~$\lim_{|x|\to \infty} u(x)=0$,
~$C_c^k(\Rd)$ denotes the subspace of~$C^k(\Rd)$ with compact support in~$\Rd$, and, more generally,~$C_{c}^k(U)$ denotes the space of $k$-times continuously differentiable functions on~$U$ with compact support in~$U$. We use the notation~$C_{c}^\infty(U)\coloneqq \bigcap_{k\in\N} C_{c}^k(U)$. 
The standard H\"older spaces are denoted by~$C^{k,\alpha}(U)$ for every~$k\in\N_0 \coloneqq \{ 0\} \cup \N$,~$\alpha \in (0,1]$ and domain~$U\subseteq\Rd$ and Sobolev spaces are denoted by~$W^{s,p}(U)$ for~$s\in \R$ and~$p\in[1,\infty]$. The volume-normalized norms for the  classical Sobolev spaces are defined, for~$p\in [1,\infty)$, by 
\begin{equation*}
\| f \|_{\underline{W}^{1,p}(U)}
 \coloneqq  
\Bigl( |U|^{-\nf pd} \| \nabla f \|_{{L}^p(U)}^p+ \| f \|_{{L}^p(U)}^p \Bigr)^{\! \nf 1p} 
\,, \quad 
\| f \|_{\underline{W}^{1,\infty}(U)}
 \coloneqq  
|U|^{-\nf 1d}\| \nabla f \|_{{L}^{\infty}(U)}+ \| f \|_{{L}^\infty(U)} \,.
\end{equation*}
The volume-normalized seminorms for the fractional Sobolev spaces with $s\in (0,1)$ and~$p \in [1,\infty)$ are defined as
\begin{equation} 
\label{e.fractional.Sob.def}
[f]_{\underline{W}^{s,p}(U)} 
\coloneqq
\biggl( 
\fint_{U} \int_U \frac{|f(x) - f(y)|^{p}}{|x-y|^{d+sp}} \, dx \, dy
\biggr)^{\! \nf 1p}   
\qand
[f]_{W^{s,\infty}(U)} 
\coloneqq \sup_{x,y \, \in \, U} \frac{|f(x) - f(y)|}{|x-y|^s} \,,
\end{equation}
and then, for~$s \in (0,1)$ and~$p \in [1,\infty)$,  
\begin{equation*} 
\| f \|_{\underline{W}^{s,p}(U)}
 \coloneqq  
\Bigl( |U|^{-\nf {sp}d} \| f \|_{\underline{L}^{p}(U)}^p+ [f]_{\underline{W}^{s,p}(U)}^p \Bigr)^{\!\nf 1p} \,,
\quad
\| f \|_{\underline{W}^{s,\infty}(U)}
 \coloneqq  
 |U|^{-\nf sd} \| f \|_{L^{\infty}(U)}+ [f]_{W^{s,\infty}(U)}\,. 
\end{equation*}
In the case~$p=2$, we denote~$H^s(U) = W^{s,2}(U)$. 
We let~$W^{s,p}_0(U)$ denote the closure of~$C^\infty_c(U)$ in~$W^{s,p}(U)$ with respect to the norm~$\| \cdot \|_{W^{s,p}(U)}$. If~$X(U)$ is a function space defined for every domain~$U \subseteq\Rd$, then~$X_{\mathrm{loc}}(U)$ denotes the set of functions on an open set~$U$ which belong to~$X(V)$ for every Lipschitz domain~$V$ such that~$\overline{V} \subset U$. The dual seminorms of negative regularity are defined, for~$s \in (0,1]$ and~$p \in [1,\infty]$, by
\begin{equation}
\label{e.volume.normalize.neg}
\bigl[ f \bigr]_{\underline{W}^{-s,p'}(U)} \coloneqq  
\sup
\biggl\{ 
 \fint_U f g \,:\,
g\in C^\infty_c(U)\,,\ \| g \|_{\underline{W}^{s,p}(U)} \leq 1 \biggr\}\,,
\end{equation}
which is the volume-normalized norm for the dual space of~$W_0^{s,p}(U)$,
and
\begin{equation*}
\bigl[ f \bigr]_{\Wminusul{-s}{p'}(U)} \coloneqq  
\sup
\biggl\{ 
\fint_U f g  \,:\, g \in C^\infty(U)\,, \
\| g \|_{\underline{W}^{s,p}(U)} \leq 1  \biggr\}\,,
\end{equation*}
which is the volume-normalized norm for the dual space of~$W^{s,p}(U)$. Whenever possible, we represent the duality pairing between~$f \in X$ and~$g \in X'$ by the integral notation~$\int_U f g$, suppressing the abstract brackets~$\langle f,g\rangle$ in favor of this integral form. If~$p=p'=2$, then we write~$H^{-s}$ in place of~$W^{-s,p}$. 

\smallskip

We use the~$\O_\Psi(\cdot)$ notation defined in Section~\ref{ss.big.O} to keep track of the stochastic integrability of our random variables. Throughout, for~$\sigma \in (0,\infty)$ we denote~$\Gamma_\sigma(t) \coloneqq  \exp (t^\sigma)$. The bold symbol~$\gammafun$ denotes the gamma function~$\gammafun(s)  \coloneqq  \int_0^\infty t^{s-1} \exp(-t) \,dt$.

\section{A coarse-graining theory for elliptic operators}
\label{s.coarse.graining}

In this section, we introduce the coarse-grained diffusion matrices that form the basis of our approach in this paper. These quantities are not new here and go back to the works~\cite{AS,AM}. The novelty of this paper lies in the precise way they are used to renormalize the equation. (For historical context and a more complete presentation of some of the material covered in this section, we refer to~\cite[Chapters 4 \& 5]{AK.Book}.) 

\smallskip

There are various possible definitions for a ``coarse-grained diffusion matrix" and the ones we introduce are not the only plausible choices. Particularly in the general nonsymmetric case, our definitions may initially seem counterintuitive. However, these specific quantities are crucial for proving results such as Theorem~\ref{t.HC.intro}. They exhibit a complex algebraic structure and possess essential properties that facilitate coarse-graining. Attempting to substitute alternative notions of ``box diffusivity" into our arguments would result in failure. To paraphrase Steven Weinberg's Third Law of Progress~\cite{Weinberg}, \emph{you may use any quantities you like to study elliptic homogenization, but if you use the wrong ones, you'll be sorry}. 

\smallskip

Given a bounded domain~$U\subseteq\Rd$, we will define two symmetric matrices~$\s_*(U)$ and~$\s(U)$, which we think of as two competing coarse-grained versions of the symmetric part~$\s(\cdot)$ of the coefficient field, and another matrix~$\k(U)$ which may not be antisymmetric but we still consider to be the coarse-grained version of the anti-symmetric part. 
The two symmetric matrices satisfy the ordering~$\s_*(U) \leq \s(U)$, as we will show, and we think of the pair as giving us lower and upper bounds for the coarse-grained symmetric part---with their difference representing the error in the coarse-graining procedure. 

\smallskip

The coarse-grained matrices have a rich structure and many interesting properties. 
In this section, we will list the facts that are used in this paper while omitting some of the proofs of the more basic properties (each of which can be found in~\cite[Section 5]{AK.Book}). 
We also prove an important statement about renormalizations of the assumptions (see Section~\ref{ss.renormalization}) as well as provide some functional inequalities which indicate that the assumption of~\ref{a.ellipticity} is a good notion of a scale-dependent ellipticity condition (see Section~\ref{ss.coarse.grained.ineqs}). 

\subsection{Basic Sobolev space framework}
\label{ss.Sob}

Recall that~$\a\in\Omega$ means that~$\s$,~$\s^{-1}$ and~$\k^t \s^{-1} \k$ have entries belonging to~$L^1_{\mathrm{loc}}(\Rd;\R^{d\times d})$, where~$\s$ and~$\k$ are respectively the symmetric and anti-symmetric parts of~$\a$. 
It is important to keep in mind that this is the minimal \emph{qualitative} requirement for our coefficients fields. Of course, our main results require quantitative ellipticity assumptions, namely~\ref{a.ellipticity}. As we will show, this ensures that the solutions of the equation behave much better than what we can show under the qualitative assumption. However, to even define the coarse-grained coefficients appearing in the quantitative ellipticity assumption, we must introduce some basic notions from elliptic theory, which are somewhat nonstandard due to the general qualitative setting, which allows for unbounded and highly degenerate equations. For this reason, we give a thorough (if succinct) presentation. 
 
\smallskip

We also define, for each~$U\subseteq\Rd$, 
\begin{equation}
\Omega(U)\coloneqq \bigl\{ \a : U \to \R^{d\times d}_{+} \,:\,
\a = \s+\k
\,,\  
\s = \tfrac12(\a+\a^t)
\,,\  
\s,\, \s^{-1} ,\,  
\k^t \s^{-1} \k 
\in L^1_{\mathrm{loc}}(\overline{U};\R^{d\times d})
\bigr\}
\,.
\label{e.Omega.U.def}
\end{equation}
For each~$U \subseteq\Rd$ and~$\a\in \Omega(U)$, we define the function spaces~$H^1_\s(U)$ as the completion of~$C^\infty(U)$ with respect to the norm
\begin{equation}
\label{e.H1s}
\big\| u \bigr\|_{H^1_\s(U)} \coloneqq \Bigl( \| u \|_{L^1(U)}^2 + \int_U \nabla u \cdot \s \nabla u\Bigr)^{\nicefrac12} \,.
\end{equation}
Observe that, by H\"older's inequality, we have that 
\begin{equation}
\label{e.Omega.imp}
u\in H^1_\s(U)
\implies 
\nabla u , \,\a\nabla u \in L^1(U)
\,.
\end{equation}
Indeed,~$u\in H^1_\s(U)$ implies that~$\s^{\nicefrac12} \nabla u \in L^2(U)$ and the assumption of~$\a\in\Omega$ implies that~$\s^{\nicefrac12}$,~$\s^{-\nicefrac12}$ and~$\k \s^{-\nicefrac12}$ also belong to~$L^2(U)$. This, together with Cauchy-Schwarz, give the implication~\eqref{e.Omega.imp}.
According to~\cite[Theorem 1.11]{KO84}\footnote{The paper~\cite{KO84} considers the case of scalar~$\s$, but the proof generalizes to a general (matrix-valued) case.}, the space~$H^1_\s(U)$ is a complete Hilbert space for every~$U\subseteq\Rd$ and~$\a\in\Omega(U)$. 
It is clear that 
\begin{equation*}
C^\infty_c (U) \subseteq H^1_{\s}(U)\,.
\end{equation*}
We also define the subspace of~``trace zero'' functions by
\begin{equation*}
H^1_{\s,0}(U) \coloneqq  \mbox{closure of~$C^\infty_c (U)$ with respect to~$\| \cdot\|_{H^1_{\s}(U)}$}\,.
\end{equation*}
The linear subspace of~$H^1_{\s}(U)$ consisting of solutions of the equation~$\nabla\cdot \a\nabla u =0$ is denoted by
\begin{equation*}
\A(U;\a)  \coloneqq 
\bigl\{ 
u \in H^1_\s(U) \,:\,
\nabla \cdot \a\nabla u = 0 \ \mbox{in} \ U
\bigr\}
\,.
\end{equation*}
Here the equation~$\nabla \cdot \a\nabla u=0$ is to be understood in the sense of distributions; that is, 
\begin{equation*}
\fint_U \nabla \psi \cdot \a\nabla u 
= 0 \,,
\qquad \forall \psi \in C^\infty_c(U)\,.
\end{equation*}
We may write~$\A(U)$ instead of~$\A(U;\a)$ when the coefficient field~$\a$ is clear from the context.

\smallskip

Things are now set up correctly for the application of the Riesz representation theorem to the Dirichlet problem
\begin{equation}
\label{e.H1s.Dirichlet}
\left\{
\begin{aligned}
& -\nabla \cdot \s\nabla v = f
& \mbox{in} & \ U \,,
\\ 
& v = 0 & \mbox{on} & \ \partial U\,.
\end{aligned}
\right.
\end{equation}
We deduce that, for every bounded domain~$U\subseteq\Rd$ and element~$f$ of the dual space of~$H^1_{\s,0}(U)$, the boundary-value problem~\eqref{e.H1s.Dirichlet} has a unique solution~$v\in H^1_{\s,0}(U)$.
This means that~$v$ belongs to~$H^1_{\s,0}(U)$ and, for every~$u\in H^1_{\s,0}(U)$,
\begin{equation*}
\int_U \nabla u \cdot \s \nabla v = \langle u, f \rangle\,,
\end{equation*}
with the brackets~$\langle \cdot,\cdot\rangle$ denoting the pairing between~$H^1_{\s,0}(U)$ and its dual. 
We deduce that the dual space of~$H^1_{\s,0}(U)$, which we denote by~$H^{-1}_{\s}(U)$, can be characterized as 
\begin{equation*}
H^{-1}_{\s}(U)
 \coloneqq 
\bigl\{
\nabla \cdot \s^{\nicefrac12} \f \,:\, \f \in L^2(U)^d
\bigr\}
\,.
\end{equation*}
Indeed, the inclusion~$\supseteq$ is obvious, and the reverse inclusion~$\subseteq$ follows from the solvability of~\eqref{e.H1s.Dirichlet}. We define the dual norm~$\| \cdot \|_{H^{-1}_{\s}(U)}$ by
\begin{equation*}
\| f \|_{H^{-1}_{\s}(U)}  \coloneqq  \sup \bigl\{ \langle u,f \rangle \,:\, u\in H^1_{\s,0}(U), \ \| u \|_{H^1_{\s}(U)} \leq 1 \bigr\}
\end{equation*}
We often abuse notation by writing~$\int_U uf$ in place of~$\langle u,f \rangle$ when~$u\in H^1_{\s,0}(U)$ and~$f\in H^{-1}_\s(U)$. 

\smallskip

We next discuss the solvability of the (not necessarily self-adjoint) equation~$-\nabla \cdot \a\nabla u = 0$ in every bounded domain~$U\subseteq\Rd$. 
For this purpose, we introduce the norm
\begin{equation} 
\label{e.H.one.a}
\| u \|_{H^1_\a(U)}  \coloneqq  \bigl(  \| u \|_{H^1_\s(U)}^2 + \| \nabla \cdot \k \nabla u \|_{H^{-1}_\s(U)}^2 \bigr)^{\nicefrac12} \,,
\end{equation}
and we let~$H^1_{\a}(U)$ denote the closure of~$C^\infty(U)$ with respect to the norm in~\eqref{e.H.one.a}. 
We also let~$H^1_{\a,0}(U)$ denote the closure of~$C^\infty_c(U)$ with respect to~$\| \cdot \|_{H^1_\a(U)}$.

\smallskip

The Lions-Lax-Milgram lemma (see~\cite[Theorem 2.1, page 109]{Showalter}) implies, for every~$f\in H^{-1}_\s(U)$, 
the existence of a unique solution~$u\in H^1_{\a,0}(U)$ of the Dirichlet problem
\begin{equation}
\label{e.H1a.Dirichlet}
\left\{
\begin{aligned}
& -\nabla \cdot \a\nabla u = f
& \mbox{in} & \ U \,,
\\ 
& u = 0 & \mbox{on} & \ \partial U\,.
\end{aligned}
\right.
\end{equation}
Being a solution of~\eqref{e.H1a.Dirichlet} means that~$u \in H_{\a,0}^1(U)$ and~$u$ satisfies
\begin{equation*}
\int_{U} \nabla w \cdot  \s \nabla u  
= 
\langle  \nabla \cdot \k \nabla u  + f , w  \rangle   \,, \quad \forall w \in H^1_{\s,0}(U)
\,.
\end{equation*}
We interpret this simply as
\begin{equation*}
\int_{U} \nabla w \cdot  \a \nabla u  = \langle w,f  \rangle  \,, \quad \forall w \in H^1_{\s,0}(U)\,.
\end{equation*}
Similarly, the Lions-Lax-Milgram lemma implies the well-posedness of the Neumann problem. We introduce the space
\begin{equation*}
L^2_{\s,\mathrm{sol}}(U) \coloneqq  
\bigl\{ \g \, :\, \s^{-\nicefrac12 } \g \in L^2(U)^d\,, \nabla \cdot \g = 0 \bigr\}\,
\end{equation*}
and we let~$\hat{H}^{-1}_\s (U)$ be the closure of~$C^\infty_c(U)$ with respect to the norm 
\begin{equation*}
\| f \|_{\hat{H}^{-1}_\s (U)}  \coloneqq  \sup\biggl\{ \int_U f u \,:\, u\in H^1_{\s}(U), \ \| u \|_{H^1_{\s}(U)} \leq 1 \biggr\} \,.
\end{equation*} 
Since constant functions belong to~$H^1_{\s}(U)$, each element~$f\in \hat{H}^{-1}_\s (U)$ has a well-defined mean value on~$U$ which we denote by~$(f)_U$. For every~$f\in \hat{H}^{-1}_\s (U)$ with~$(f)_U=0$ and~$\g\in L^2_{\s,\mathrm{sol}}(U)$, there exists a unique~$u\in H^1_{\a}(U)$ satisfying~$(u)_U = 0$ and 
\begin{equation}
\label{e.H1a.Neumann}
\left\{
\begin{aligned}
& -\nabla \cdot \a\nabla u = f
& \mbox{in} & \ U \,,
\\ 
& \mathbf{n} \cdot ( \a\nabla u - \g) = 0 & \mbox{on} & \ \partial U\,,
\end{aligned}
\right.
\end{equation}
where~$\mathbf{n}$ is the outward-pointing unit normal vector on~$\partial U$.  The interpretation of~\eqref{e.H1a.Neumann} is that
\begin{equation*}
\int_{U} \nabla w \cdot ( \a\nabla u - \g) = \langle w,f \rangle \,, \quad \forall w \in H^1_{\s}(U)\,.
\end{equation*}

The soft analysis discussed above, which assumes only~$\a\in\Omega$, is very limited. We cannot perform basic energy estimates, nor test the equation with the solutions multiplied by a cutoff function, since, in general, the product~$\varphi u$ need not belong to~$H^1_\s(U)$ even if~$u\in H^1_\s(U)$ and~$\varphi\in C_c^\infty(U)$.

\smallskip

To address this issue and proceed further, 
we will need some basic Sobolev-type embeddings for our space~$H^1_\s$, and this requires a stronger assumption on the coefficient field~$\a(\cdot)$ beyond that~$\a \in \Omega$. 
As it turns out, certain bounds on the coarse-grained coefficients--- implied by assumption~\ref{a.ellipticity}---provide exactly what is needed. 
These Sobolev-type embeddings are presented below in Section~\ref{ss.besov} (see Lemma~\ref{l.crude.weaknorms} for the embeddings and Lemma~\ref{l.testing.makes.sense} for the justification of testing).
However, we must first introduce the coarse-grained matrices and explore their basic properties.

\subsection{The double-variable matrix field~\texorpdfstring{$\bfA(x)$}{A(x)}}

It is convenient to arrange the symmetric and anti-symmetric parts of the field~$\a(\cdot)$ as the block entries of an~$\R^{2d\times 2d}_{\mathrm{sym}}$-valued random field~$\bfA$, which is defined by
\begin{equation}
\label{e.bfA.def}
\bfA(x)  \coloneqq 
\begin{pmatrix} 
( \s + \k^t\s^{-1}\k )(x) 
& -(\k^t\s^{-1})(x) 
\\ - ( \s^{-1}\k )(x) 
& \s^{-1}(x) 
\end{pmatrix}
\,.
\end{equation}
The field~$\bfA(x)$ defined in~\eqref{e.bfA.def} arises naturally in the variational interpretation of~\eqref{e.pde} for coefficient fields~$\a(x)$ which may not be symmetric, and it consequently plays an essential role in coarse-graining. Notice that the assumption~\eqref{e.qual.ellipticity} is just the requirement that~$\bfA$ belong to~$L^1_{\mathrm{loc}}(\Rd;\R^{2d\times 2d}_{\mathrm{sym}})$. 
We may equivalently regard the set~$\Omega$ as being the collection of fields~$\bfA(\cdot)$ having the form of~\eqref{e.bfA.def}, with~$\s \in \R^{d\times d}_{\mathrm{sym}}$ and~$\k \in \R^{d\times d}_{\mathrm{skew}}$, and with entries belonging to~$L^1_{\mathrm{loc}}(\Rd)$. 
By abuse of notation, we will sometimes consider either~$\a(\cdot)$ or~$\bfA(\cdot)$ as the canonical element of~$\Omega$, whichever is more convenient. Throughout, the random fields~$\s(\cdot)$ and~$\k(\cdot)$ always refer to those defined in~\eqref{e.sk.def}.

\subsection{Coarse-grained matrices: definitions and basic properties}
\label{ss.bfA.def}

For every bounded Lipschitz domain~$U\subseteq \Rd$ and realization of the coefficients~$\a\in\Omega(U)$, we associate three matrices~$\s(U;\a)$,~$\s_*(U;\a)$ and~$\k(U;\a)$. The matrices~$\s(U;\a)$ and~$\s_*(U;\a)$ are symmetric, invertible and satisfy the ordering~$\s_*(U;\a) \leq \s(U;\a)$. Together, this pair represents the symmetric part of the coarse-grained field, and the difference between them can be thought of as a quantification of the ``coarse-graining error,'' as we will see below. The matrix~$\k(U;\a)$ represents the anti-symmetric part of the coarse-grained field. It is not necessarily anti-symmetric, in general, but its symmetric part~$\frac12(\k+\k^t)(U;\a)$ is bounded by the gap~$(\s-\s_*)(U;\a)$, as will be shown below, and is thus bounded by the uncertainty. 

\smallskip

There are several equivalent ways to define the matrices~$\s(U;\a)$,~$\s_*(U;\a)$ and~$\k(U;\a)$, and here we opt for a variational formulation. We first introduce the quantity~$J(U,p,q;\a)$, which is defined for and~$p,q\in\Rd$ by
\begin{equation}
\label{e.J.def}
J(U,p,q;\a) 
\coloneqq
\sup_{u \in  \A(U;\a) }
\fint_{U} \biggl( - \frac12 \nabla u \cdot \s \nabla u - p \cdot \a \nabla u + q \cdot \nabla u \biggr) 
\,.
\end{equation} 
Notice that~$J(U,p,q;\a)$ is well-defined by the discussion in the previous subsection. In particular, the integrand in~\eqref{e.J.def} belongs to~$L^1(U)$
for each~$\a\in\Omega(U)$ and~$u\in\A(U;\a)$.
We also define the analog of this quantity for the adjoint operator by
\begin{equation}
\label{e.Jstar.def}
J^*(U,p,q;\a) 
\coloneqq
J(U,p,q;\a^t) 
=
\sup_{u \in  \A^*(U;\a) }
\fint_{U} \biggl( - \frac12 \nabla u \cdot \s \nabla u - p \cdot \a^t \nabla u + q \cdot \nabla u \biggr) 
\end{equation} 
where
\begin{equation*}
\A^*(U;\a)  \coloneqq \A(U;\a^t) =
 \bigl\{ u \in H^1_{\a}(U) \, : \, -\nabla \cdot \a^t \nabla u = 0 \ \mbox{in} \ U \big\}
\end{equation*}
denotes the set of solutions to the adjoint equation in~$U$. 
The supremums in the variational problems on the right sides of~\eqref{e.J.def} and~\eqref{e.Jstar.def}  are achieved, and the maximizers belong to~$H^1_{\a}(U)$ and are unique up to additive constants. We denote them by~$v(\cdot,U,p,q;\a)$ and~$v^*(\cdot,U,p,q;\a)$, respectively. 

\smallskip

The mapping~$(p,q) \mapsto J(U,p,q;\a)$ is quadratic, and it is convenient to write it using matrices. We let~$\s(U;\a), \s_*(U;\a)\in \R^{d\times d}_{\mathrm{sym}}$ and~$\k(U;\a)\in \R^{d\times d}$ be defined in such a way that the following relation is satisfied:
\begin{equation}
\label{e.J.mat}
J(U,p,q;\a) =
\frac 12p \cdot \s(U;\a)p 
+ \frac 12 (q+\k(U;\a) p) \cdot \s_*^{-1}(U;\a) (q+\k(U;\a) p) 
- p \cdot q \,.
\end{equation}
It turns out that~$J^*$ can be written using the same matrices; it satisfies
\begin{equation}
\label{e.J.mat.star}
J^*(U,p,q;\a) =
\frac 12p \cdot \s(U;\a)p 
+ \frac 12 (q-\k(U;\a) p) \cdot \s_*^{-1}(U;\a) (q-\k(U;\a) p) 
- p \cdot q \,.
\end{equation}
That is,~$\s(U;\a^t) = \s(U;\a)$,~$\s_*(U;\a^t) = \s_*(U;\a)$ and~$\k(U;\a^t)=-\k(U;\a)$. 
This non-obvious fact cannot be deduced using algebra alone: it follows from the variational principles~\eqref{e.J.P0.Dirichlet} and~\eqref{e.bfJ.var} below: see~\cite[Lemma 5.2]{AK.Book} for a proof. 

\smallskip

To lighten the notation, we will often drop the dependence on~$\a$ from the quantities defined above, when the coefficient field~$\a$ is clear from context.

\smallskip

We collect the coarse-grained matrices into a single~$2d$-by-$2d$ matrix by defining
\begin{equation}
\label{e.bigA.def}
\bfA(U;\a)
 \coloneqq  
\begin{pmatrix} 
\s + \k^t\s_*^{-1}\k  
& -\k^t\s_*^{-1}
\\ - \s_*^{-1}\k 
& \s_*^{-1}
\end{pmatrix}(U;\a)
\,.
\end{equation}
It is sometimes helpful to refer to the top-left~$d$-by-$d$ block of~$\bfA(U;\a)$, so we define
\begin{equation*}
\b(U;\a) \coloneqq  ( \s + \k^t\s_*^{-1}\k )(U;\a) 
\,.
\end{equation*}
This matrix~$\bfA(U;\a)$ can be considered a coarse-graining of the field~$\bfA(x)$ defined in~\eqref{e.bfA.def}. Indeed, it has the following variational interpretation (see~\cite[Lemma 5.2]{AK.Book}), which gives an alternative way of defining the coarse-grained matrices: for every~$P\in\R^{2d}$, 
\begin{equation}
\label{e.J.P0.Dirichlet}
\frac12 P \cdot \bfA(U;\a) P 
=
\inf\biggl\{ 
\fint_{U} 
\frac12 (X + P) \cdot \bfA (X + P)
\, : \, 
X \in  \Lpoto(U) \times  \Lsolo(U)  
\biggr\}
\,.
\end{equation}
Here~$\Lpoto(U)$ is defined as the closure of the set~$\{  \nabla \phi \, : \, \phi \in C_{\mathrm{c}}^\infty(U)  \}$ of smooth, compactly supported gradients with respect to the norm~$\f \mapsto ( \int_{U} \f\cdot \s \f )^{\nicefrac12}$, and~$\Lsolo(U)$ is the closure of the set~$
\{  \f  \, : \, \f \in C_{\mathrm{c}}^\infty(U;\Rd)\,, \; \nabla \cdot \f = 0  \bigr\}$ of smooth, compactly supported divergence-free fields with respect to the norm~$\f \mapsto ( \int_{U} \f\cdot \s^{-1} \f )^{\nicefrac12}$.

\smallskip

The coarse-grained quantity~$\bfA(U;\a)$ has  the same information as~$J$ and~$J^*$, or equivalently the coarse-grained matrices~$\s(U;\a)$,~$\s_*(U;\a)$ and~$\k(U;\a)$. The interplay between the different ways of writing these quantities has many algebraic and analytic advantages. 

\smallskip

To lighten the notation, we will often drop the dependence on~$\a$ from the quantities defined above, when the identity of the coefficient field~$\a$ is clear from context.

\smallskip

By straightforward algebraic manipulations, we observe that the identities~\eqref{e.J.mat} and~\eqref{e.J.mat.star} are equivalent to
\begin{equation}
\label{e.Jaas.matform}
J(U,p,q) 
=
\frac 12 
\begin{pmatrix} 
-p \\ q
\end{pmatrix}
\cdot \bfA(U)
\begin{pmatrix} 
-p \\ q
\end{pmatrix}
-p\cdot q
\quad \mbox{and} \quad
J^*(U,p,q) 
=
\frac 12 
\begin{pmatrix} 
p \\ q
\end{pmatrix}
\cdot \bfA(U)
\begin{pmatrix} 
p \\ q
\end{pmatrix}
-p\cdot q\,.
\end{equation}
We ``double the variables'' by combining~$J$ and~$J^*$ into a single quantity by defining 
\begin{equation}
\label{e.bfJ}
\bfJ
\biggl(U, \begin{pmatrix} p  \\ q \end{pmatrix}, \begin{pmatrix} q^* \\ p^* \end{pmatrix} ;\a \biggr)
\coloneqq 
\frac12 J\bigl(U, p-p^* , q^*\!-q ;\a\bigr)
+ 
\frac12 J^*\bigl(U, p^*\!+p , q^*+q;\a \bigr)
\,.
\end{equation}
By~\eqref{e.Jaas.matform} and some straightforward algebraic manipulations, the definition~\eqref{e.bfJ} is equivalent to
\begin{align}
\label{e.Jsplitting}
\bfJ(U,P,Q )
=
\frac12 P \cdot \bfA(U) P 
+ 
\frac12 Q \cdot \bfA_*^{-1}(U) Q
- P \cdot Q, \qquad \forall P,Q  \in \R^{2d}
\,,
\end{align}
where the matrix~$\bfA_*^{-1}(U)$ is defined by swapping the rows and columns of~$\bfA(U)$:
\begin{equation}
\label{e.bigAstar.def}
\bfA_*^{-1}(U;\a)
 \coloneqq  
\mathbf{R}
\bfA(U;\a)
\mathbf{R}
=
\begin{pmatrix} 
\s_*^{-1}
& -  \s_*^{-1}\k 
\\ - \k^t\s_*^{-1}
& \s + \k^t\s_*^{-1}\k  
\end{pmatrix}(U;\a)
\,,
\end{equation}
where we introduce the matrix
\begin{equation}
\label{e.R.def}
\mathbf{R} \coloneqq  \begin{pmatrix} 
0 & \Id
\\ \Id
& 0
\end{pmatrix}
\,.
\end{equation}
The following formulas for the inverses of~$\bfA(U)$ and~$\bfA_*^{-1}(U)$ are obtained by a direct computation:
\begin{equation*}
\left\{
\begin{aligned}
& 
\bfA^{-1}(U)
= 
\begin{pmatrix} 
\s^{-1}
& \s^{-1}\k^t 
\\ \k \s^{-1}
& \s_* + \k \s^{-1}\k^t 
\end{pmatrix}(U) 
\,,
\\ & 
\bfA_*(U)
= 
\begin{pmatrix} 
\s_* + \k \s^{-1}\k^t 
& \k \s^{-1}
\\ \s^{-1}\k^t 
& \s^{-1}
\end{pmatrix}(U) 
\,.
\end{aligned}
\right.
\end{equation*}
The quantity~$\bfJ(U,P,Q;\a)$ also has the following variational formulation, which is easy to check  (or see~\cite[Lemma 5.2]{AK.Book}): for every~$P,Q \in \R^{2d}$, 
\begin{equation}
\label{e.bfJ.var}
\bfJ(U, P,Q;\a )
=
\max_{X  \in \S(U;\a)}
\fint_U
\biggl( -\frac12  X  \cdot \bfA X -  P \cdot \bfA X +Q\cdot X \biggr)
\,,
\end{equation}
where the space~$\S(U;\a)$ is defined by
\begin{equation*}
\S(U;\a)
 \coloneqq 
\biggl\{ 
\begin{pmatrix} \nabla v+\nabla v^*  \\ \a\nabla v - \a^t\nabla v^* \end{pmatrix}
\,:\, 
v\in \A(U;\a), \ v^*\in \A^*(U;\a)
\biggr\}
\,.
\end{equation*}
The coarse-grained objects defined above have a rich structure. The properties above and those we list below can be found in~\cite[Sections 5.1 and 5.2]{AK.Book}. 

\smallskip

We continue by discussing basic upper and lower bounds. The matrices~$\s(U)$ and~$\s_*(U)$ are ordered, and the coarse-grained matrices~$\s_*(U)$ and~$\b(U)$ are bounded from above and below by the averages of the field. We have that
\begin{equation}
\label{e.ssk.bounds}
\biggl( \fint_{U} \s^{-1} (x)\,dx \biggr)^{\!-1} \leq \s_*(U) \leq \s(U) 
\leq
\b (U) \leq 
\fint_U \bigl ( \s + \k^t \s_*^{-1} \k \bigr )(x)\, dx
\,.
\end{equation}
Each of the inequalities in~\eqref{e.ssk.bounds} is very easy to prove with the exception of~$\s_*(U)\leq \s(U)$, which is a consequence of the identities~\eqref{e.J.mat},~\eqref{e.J.mat.star} and a duality argument: see~\cite[Lemma 5.4]{AK.Book} and the discussion following it.
The other bounds~\eqref{e.ssk.bounds} can be written more compactly and also more generally in terms of the~$2d$ block matrices:
\begin{equation}
\label{e.bfA.bounds}
\biggl( \fint_{U} \bfA^{-1} (x)\,dx \biggr)^{\!-1} \leq \bfA_{*}(U) \leq \bfA(U) \leq
\fint_{U} \bfA  (x)\,dx
\,.
\end{equation}
The matrix~$\bfA(U)$ and its inverse also satisfy, for every~$\eta > 0$, 
\begin{equation}
\label{e.bfA.bounds.diag}
\left\{
\begin{aligned}
&
\bfA(U)
\leq 
\begin{pmatrix} 
\bigl( \s + (1 + \eta^{-1} )\k^t\s_*^{-1}\k \bigr)(U) 
& 0 
\\ 0 
& (1 + \eta ) \s_*^{-1}(U) 
\end{pmatrix}
\,, 
\\ & 
\bfA^{-1}(U)
\leq 
\begin{pmatrix} 
(1 + \eta ) \s^{-1}(U)
& 0 
\\ 0 
&  
\bigl( \s_* + (1 + \eta^{-1} )\k \s^{-1}\k^t \bigr)(U) 
\end{pmatrix}
\,.
\end{aligned}
\right.
\end{equation}
Subadditivity is another important property of the quantity~$J(U,p,q)$. We write this in terms of the~$2d$-by-$2d$ block matrices as follows. For every bounded Lipschitz domain~$U\subseteq \Rd$ and disjoint partition~$\{ U_i \}_{i=1,\ldots,N}$ of~$U$ (up to a zero Lebesgue measure set), we have 
\begin{equation}
\label{e.subadditivity}
\bfA (U)
\leq 
\sum_{i=1}^N \frac{|U_i|}{|U|} \bfA (U_i)
\quad \mbox{and} \quad 
\bfA_{*}^{-1} (U)
\leq 
\sum_{i=1}^N \frac{|U_i|}{|U|} \bfA_{*}^{-1} (U_i)
\,.
\end{equation}
The bounds in~\eqref{e.subadditivity} should be regarded as a generalization of~\eqref{e.bfA.bounds}, a coarse-grained version of the latter. 
Note that while each of~$\bfA(U)$,~$\bfA_*^{-1}(U)$,~$\b(U)$ and~$\s_*^{-1}(U)$ is subadditive, but neither~$\s(U)$ nor~$\k(U)$ is subadditive is any sense.

\smallskip

By~\cite[Lemma 5.2]{AK.Book}, the symmetric part of~$\k$ is controlled by the gap between~$\s(U)$ and~$\s_*(U)$:
\begin{equation*}
(\k+\k^t)(U) \leq (\s-\s_*)(U)
\quad \mbox{and} \quad
-(\k+\k^t)(U) \leq (\s-\s_*)(U)
\,.
\end{equation*}
This is also proved below in~\eqref{e.symm.k.quad.small}.
The difference of~$\s(U)$ and~$\s_*(U)$ can also be expressed by means of~$J$ and~$J^*$ via the identity
\begin{equation}
\label{e.gap.by.J}
e\cdot (\s-\s_*)(U) e
=
J(U,e,(\s_*- \k)(U) e) 
+ J^*(U,e,(\s_*+\k)(U) e)
\,.
\end{equation}
More generally, we have that by~\cite[Lemma 5.2]{AK.Book}, for every~$\tilde{\s} \in\R^{d\times d}_{\mathrm{sym}}$ and~$\tilde{\k}\in \R^{d\times d}$, we have
\begin{multline} 
\label{e.diagonalset.nosymm}
e \cdot \bigl (\s(U) -\s_*(U)\bigr )e
+
\bigl (\tilde{\s} - \s_*(U)\bigr)e \cdot \s_*^{-1}(U) \bigl (\tilde{\s} - \s_*(U)\bigr )e
+
\bigl ( \tilde{\k}-\k(U) \bigr ) e \cdot \s_*^{-1}(U)\bigl ( \tilde{\k}-\k(U) \bigr ) e
\\
= 
J(U,e,(\tilde{\s}-\tilde{\k}) e) + J^*(U,e,(\tilde{\s}+\tilde{\k}) e)
\,. 
\end{multline}
Equivalently, for any~$\tilde{\bfA} \in \R^{2d\times 2d}_{\sym,+}$ and~$P \in \R^{2d}$, 
\begin{equation}
\frac12 P \cdot (\bfA-\bfA_*)(U) P 
+
\frac12 (\bfA(U) - \tilde{\bfA})P \cdot 
\bfA_*^{-1}(U) (\bfA(U) - \tilde{\bfA}) P
=
\bfJ(U, P, \tilde{\bfA}P) 
\,.
\label{e.diagonalset.nosymm.bfJ}
\end{equation}

We next explore properties of the coarse-grained matrices that give us information about general solutions. 
The first variation of the optimization problem in~\eqref{e.J.def} is  
\begin{align}
\label{e.first.variation}
q \cdot \fint_U \nabla w 
- p \cdot \fint_U \a \nabla w 
=
\fint_U 
\nabla w 
\cdot \s \nabla v(\cdot,U,p,q)
\,, \quad \forall w \in \A(U)\,.
\end{align}
The second variation says that
\begin{align}
\label{e.quadratic.response}
& J(U,p,q) - \fint_U \Bigl  ( -\frac12 \nabla w \cdot \s\nabla w -p\cdot \a\nabla w+ q\cdot \nabla w   \Bigr  )
\notag \\ & \qquad \qquad\qquad 
=
\fint_U \frac12 \bigl ( \nabla v(\cdot,U,p,q) - \nabla w \bigr )\cdot \s\bigl ( \nabla v(\cdot,U,p,q) - \nabla w \bigr )
\,,\quad \forall w \in \A(U)\,.
\end{align}
By taking~$w=0$ in~\eqref{e.quadratic.response}, it follows that~$J$ can be expressed as the energy of its maximizer:
\begin{equation}
\label{e.J.energy}
J(U,p,q)
=
\fint_U 
\frac12 \nabla v(\cdot,U,p,q) 
\cdot \s \nabla v(\cdot,U,p,q)\,.
\end{equation}
We can read off the spatial averages of the gradient and flux of the maximizer~$v(\cdot,U,p,q)$ from the quantity~$J$ itself. We have 
\begin{equation}
\label{e.all.averages.entries}
\left\{
\begin{aligned}
& \fint_U 
\nabla v(\cdot,U,p,q)
=
- p + \s_{*}^{-1}(U) (q + \k(U) p)
\\ & 
\fint_U 
\a \nabla v(\cdot,U,p,q)
=
(\Id -\k^t  \s_{*}^{-1}\bigr )(U)  q - \b(U) p\,.
\end{aligned}
\right.
\end{equation}
These identities play a central role in the analysis in this paper. It will be convenient to write them more compactly, using matrix notation. Observe  that~\eqref{e.all.averages.entries} is equivalent to
\begin{equation}
\label{e.all.averages}
\fint_U
\begin{pmatrix} 
\nabla v 
\\ 
\a \nabla v
\end{pmatrix}
(\cdot,U,p,q)
=
\bigl( \mathbf{R} \bfA(U) + \Itwod \bigr) 
\begin{pmatrix} 
-p \\ q
\end{pmatrix}
\,.
\end{equation}
The quantity~$J$ allows us to relate the spatial averages of gradients and fluxes of arbitrary solutions: for every~$p,q\in\Rd$ and~$w \in \A(U)$, we have by~\eqref{e.first.variation},~\eqref{e.J.energy} and H\"older's inequality that
\begin{equation}
\label{e.fluxmaps}
\Bigl | \fint_{U} \bigl ( p \cdot \a \nabla w - q \cdot \nabla w \bigr ) \Bigr |
=
\Bigl | \fint_U \nabla w 
\cdot  \s \nabla v\bigl (\cdot, U, p,q \bigr )  \Bigr |
\leq
(2J \bigl (U, p,q \bigr ) )^{\nicefrac12}
\Bigl( \fint_U \nabla w \cdot \s \nabla w \Bigr)^{\!\nicefrac12}
\,.
\end{equation}
This inequality is useful when~$J(U,p,q)$ is small, which requires~$q$ and~$p$ to be related and the gap between~$\s(U)$ and~$\s_*(U)$ to be small. Indeed, letting~$\tilde \s \in \R^{d \times d}$ be a positive symmetric matrix and choosing~$p = \tilde \s^{-\nicefrac12} e$ and~$q=(\s_*-\k^t)(U)p$ and taking the supremum over~$|e|=1$ yields, in view of~\eqref{e.gap.by.J}, for every~$w\in \A(U)$, 
\begin{equation}
\label{e.gradtofluxcg}
\Bigl|\tilde \s^{-\nicefrac12} \fint_{U} \bigl( \a \nabla w - (\s_*-\k^t)(U) \nabla w \bigr) \Bigr|^2
\leq
2
\bigl| \tilde \s^{-\nicefrac12}(\s-\s_*)(U) \tilde \s^{-\nicefrac12} \bigr|
\fint_U \nabla w \cdot \s \nabla w 
\,.
\end{equation}
This motivates the definition
\begin{equation*}
\a_*(U;\a)  \coloneqq 
\s_*(U;\a) - \k^t(U;\a)\,.
\end{equation*}
We can then write the previous inequality as 
\begin{equation}
\label{e.gradtofluxcg.a}
\Bigl| \tilde \s^{-\nicefrac12}  \fint_{U} (\a_*(U)-\a) \nabla w \Bigr |^2
\leq
2
\bigl| \tilde \s^{-\nicefrac12}  (\s-\s_*)(U) \tilde \s^{-\nicefrac12}  \bigr|
\fint_U \nabla w \cdot \s \nabla w 
\,, \quad \forall w\in \A(U)
\,. 
\end{equation}
The coarse-grained matrix~$\s_*(U)$ gives a lower bound for the spatial average of the gradient of an arbitrary solution in terms of its energy: 
\begin{align}
\label{e.energymaps.nonsymm}
\frac12\Bigl( \fint_U \nabla u \Bigr) \cdot \s_*(U) \Bigl( \fint_U \nabla u \Bigr)
\leq
\fint_U \frac12 \nabla u \cdot \s\nabla u 
\,, \quad 
\forall u\in \mathcal{A}(U)\,. 
\end{align}
Similarly, the coarse-grained matrix~$\b(U)$ gives a lower bound for the spatial average of the flux of an arbitrary solution in terms of its energy: 
\begin{align}
\label{e.energymaps.nonsymm.flux}
\frac12\Bigl( \fint_U \a \nabla u \Bigr) \cdot \b^{-1} (U) \Bigl( \fint_U \a \nabla u \Bigr)
\leq
\fint_U \frac12 \nabla u \cdot \s\nabla u 
\,, \quad 
\forall u\in \mathcal{A}(U)\,.
\end{align}
In more generality, we have 
\begin{equation} \label{e.energymaps.nonsymm.all}
\frac12 (X)_U\cdot \bfA_*(U) (X)_U \leq  \frac12 \fint_{U} X \cdot \bfA X
\, 
\quad \forall X \in \mathcal{S}(U) 
\,.
\end{equation}
The proof of~\eqref{e.energymaps.nonsymm.all} is simple: we compute
\begin{align}
\frac12 (X)_U\cdot \bfA_*(U) (X)_U  & = \sup_{Q \in \R^{2d}} \bigl( Q \cdot (X)_U - \frac12 Q \cdot \bfA_*^{-1}(U) Q \bigr) \notag \\ & = 
\sup_{Q \in \R^{2d}} \bigl( Q \cdot (X)_U - \bfJ(U,0,Q) \bigr) \notag \\ &  =  \sup_{Q \in \R^{2d}} \inf_{Z \in \S(U)}\biggl( \fint_{U} \bigl( Q \cdot (X -Z)_U +  \frac12 Z \cdot \bfA Z \bigr) \biggr) 
\leq \fint_{U}  \frac12 X \cdot \bfA X   \,.\notag \end{align} 
We refer to  inequalities like~\eqref{e.gradtofluxcg},~\eqref{e.energymaps.nonsymm},~\eqref{e.energymaps.nonsymm.flux} and~\eqref{e.energymaps.nonsymm.all} as \emph{coarse-graining inequalities}. They give strong evidence that the coarse-grained matrices are aptly named, and they play a central role in the arguments in this paper.

\subsection{Coarse-grained elliptic and functional inequalities}
\label{ss.coarse.grained.ineqs}

In this subsection, we introduce a new notion of coarse-grained ellipticity and show that the space~$\A(U;\a)$ of solutions embeds into certain fractional Besov spaces, provided that certain bounds on the coarse-grained matrices are satisfied. 

Throughout, for every~$s\in (0,\infty)$, we define~$\cs \coloneqq 1-3^{-s}$, which is the normalizing constant that makes~$n\mapsto \cs 3^{-ns}$ a probability density on~$\N$.  

\begin{definition}[Coarse-grained ellipticity constants]
For every~$s\in (0,\infty)$,~$q\in [1,\infty)$,~$m\in\Z$ and coefficient field~$\a: \cu_m \to \R^{d\times d}_+$, we define the multiscale composite quantities 
\begin{equation}
\label{e.coarse.grained.ellipticity}
\left\{
\begin{aligned}
& {\Lambda}_{s,q}(\cu_m\,;\a)
 \coloneqq  
\biggl( 
\css{sq} \sum_{k=-\infty}^{m} 
3^{-sq(m-k)} 
\max_{z\in 3^k\Zd \cap \cu_m} 
\bigl| \b(z+\cu_k; \a) \bigr|^{\nf q2} 
\biggr)^{\!\nf{2}{q}}
\,, \\  &
{\lambda}_{s,q}(\cu_m;\a) 
 \coloneqq 
\biggl(\css{sq} \sum_{k=-\infty}^{m} 
3^{-sq(m-k)} 
\max_{z\in 3^k\Zd \cap \cu_m} 
\bigl| \s_{*}^{-1}(z+\cu_k; \a) \bigr|^{\nf q2}
\biggl)^{\!- \nf{2}{q}}
\,.
\end{aligned}
\right.
\end{equation}
For the exponent~$q=\infty$, these quantities were already defined above in~\eqref{e.coarse.grained.ellipticity.infty}. We extend these definitions to translations~$y+\cu_m$ of the cube~$\cu_m$ in the obvious way. 
\end{definition}

For every~$m\in\Z$,~$q\in [1,\infty]$ and~$t,s\in (0,\infty)$ with~$t<s$, we have 
\begin{equation}
\lambda_{t,q}(\cu_m; \a)
\leq
\lambda_{s,q} (\cu_m; \a) 
\leq |\s_*^{-1}(\cu_m;\a) |^{-1} 
\leq 
|\b(\cu_m;\a)| 
\leq 
\Lambda_{s,q}(\cu_m; \a)
\leq 
\Lambda_{t,q}(\cu_m; \a)
\,.
\label{e.ellipticities.monotone.ordered}
\end{equation}
Indeed, the monotonicity of~$\lambda_{s,q}^{-1}$ and~$\Lambda_{s,q}$ in the parameter~$s$, as well as the second and fourth inequalities in the display, follows from subadditivity~\eqref{e.subadditivity} and the general fact that, for any nondecreasing sequence~$\{ \alpha_n\}_{n\in\N}$, 
\begin{equation}
s\mapsto \cs \sum_{n=0}^{\infty} 
3^{-sn} \alpha_n
\quad \mbox{is nonincreasing}
\,.
\label{e.deep.fact}
\end{equation}
The third inequality of~\eqref{e.ellipticities.monotone.ordered} is immediate from~\eqref{e.ssk.bounds}. 
Similarly, by subadditivity, for every~$k \leq m$, 
\begin{equation}
\label{e.lambdas.downscales}
\left\{
\begin{aligned}
& 
\max_{z\in 3^k\Zd\cap \cu_m} 
|\b (z+\cu_k;\a)|
\leq
\max_{z\in 3^k\Zd\cap \cu_m} 
\Lambda_{s,q} (z+\cu_k;\a)
\leq 3^{2s(m-k)} \Lambda_{s,q} (\cu_m;\a)
\,,
\\ & 
\max_{z\in 3^k\Zd\cap \cu_m} 
|\s_*^{-1} (z+\cu_k;\a)|
\leq
\max_{z\in 3^k\Zd\cap \cu_m} 
\lambda_{s,q}^{-1} (z+\cu_k;\a)
\leq 3^{2s(m-k)} \lambda_{s,q}^{-1} (\cu_m;\a)
\,.
\end{aligned}
\right.
\end{equation}
By the Lebesgue differentiation theorem and the equivalance of~\eqref{e.UE.intro} and~\eqref{e.ellipticity.nonsymm},
\begin{equation*}
\text{$\a$ is uniformly elliptic} 
\quad \iff \quad 
\lim_{s\to 0} \lambda_{s,q}(\cu_m; \a) >0
\qand
\lim_{s\to 0} \Lambda_{s,q}(\cu_m; \a) <\infty\,,
\end{equation*}
in which case the two limits on the right coincide with the lower and upper constants of uniform ellipticity, respectively. Note that, if~$\lambda_{s,q}(\cu_m; \a)$ is positive for any~$s$, then~$\lim_{s\to \infty} \lambda_{s,q}(\cu_m; \a) = |\s_*^{-1}(\cu_m;\a) |^{-1}$ and if~$\Lambda_{s,q}(\cu_m; \a)$ is finite for any~$s$, then~$\lim_{s\to \infty} \Lambda_{s,q}(\cu_m; \a) = |\b(\cu_m;\a)|$. Thus the pair~$( \lambda_{s,q}(\cu_m; \a), \Lambda_{s,q}(\cu_m; \a))$ interpolates between the uniform ellipticity constants and the smallest and largest eigenvalues~$( |\s_*^{-1}(\cu_m;\a) |^{-1} , |\b(\cu_m;\a)|)$ of the coarse-grained matrices.

\smallskip

We next introduce our notation for Besov spaces. 
For each~$s \in (0,1)$, $p\in [1,\infty)$, $q\in [1,\infty)$ and~$n\in \N$, we define a (volume-normalized) Besov seminorm in the cube~$\cu_n$ by
\begin{equation}
\label{e.Bs.seminorm}
\left[ g \right]_{\underline{B}_{p,q}^{s}(\cu_{n})}
 \coloneqq 
\Biggl( 
\sum_{k=-\infty}^n
3^{- s q k} \biggl( 
\avsum_{z\in 3^{k-1}\Zd, \, z + \cu_k \subseteq \cu_n}
\bigl \| g - (g)_{z+\cu_k}\bigr \|_{\underline{L}^p(z+\cu_k)}^p
\biggr)^{\!\nicefrac qp}
\Biggr)^{\! \nicefrac1q}
\,.
\end{equation}
In the case~$q=\infty$, we define the Besov seminorms for every~$s\in[0,1]$ and $p \in [1,\infty)$ by
\begin{equation}
\label{e.Bs.seminorm.infty}
\left[ g \right]_{\underline{B}_{p,\infty}^{s}(\cu_{n})}
 \coloneqq 
\sup_{k \in (-\infty,n] \cap \Z}
3^{- s k}
\biggl(
\avsum_{z\in 3^{k-1}\Zd, \,  z + \cu_k \subseteq \cu_n}
\bigl \| g - (g)_{z+\cu_k}\bigr \|_{\underline{L}^p(z+\cu_k)}^p
\biggr)^{\!\nicefrac1p}\,.
\end{equation}
The corresponding (volume-normalized) Besov norms are defined by
\begin{equation*} 
\left\| g \right\|_{\underline{B}_{p,q}^{s}(\cu_{n})}
 \coloneqq  
3^{-sn}| (g)_{\cu_n}|  +
\left[ g \right]_{\underline{B}_{p,q}^{s}(\cu_{n})} 
\,.
\end{equation*}
The Banach space~$B_{p,q}^{s}(\cu_{n})$ is defined to be the closure of~$C^\infty(\overline{\cu}_n)$ with respect to~$\left\| \cdot \right\|_{\underline{B}_{p,q}^{s}(\cu_{n})}$.

\smallskip

Note that~$s\in \{ 0,1\}$ is allowed if~$q=\infty$, and we actually obtain the more familiar Sobolev spaces. Indeed,~$\left[ \cdot \right]_{\underline{B}_{p,\infty}^{1}(\cu_{n})}$ is equivalent to the (volume-normalized)~$W^{1,p}(\cu_n)$ seminorm and, similarly,~$\left[ \cdot \right]_{\underline{B}_{p,\infty}^{0}(\cu_{n})}$ is equivalent to the volume-normalized~$L^p(\cu_n)$ norm modulo constants:
\begin{equation*}  
\| g - (g)_{\cu_n} \|_{\underline{L}^p(\cu_n)} \leq \left[ g \right]_{\underline{B}_{p,\infty}^{0}(\cu_{n})} \leq  C(d) \| g - (g)_{\cu_n} \|_{\underline{L}^p(\cu_n)} \,.
\end{equation*}
We also work with weak Besov norms with negative regularity exponents which are defined as the dual spaces of~$B^{s}_{p,q}$. For every~$s \in (0,1]$,~$p\in [1,\infty]$ and~$q\in [1,\infty]$, we 
let $p'$ and~$q'$ denote the H\"older conjugate exponents of~$p$ and~$q$, respectively, and we define
\begin{equation}
\label{e.Bs.minus.seminorm}
\left[ f \right]_{\Bhatminusul{-s}{p}{q} (\cu_{n})}
 \coloneqq 
\sup \biggl\{ \fint_{\cu_n} f g \, : \, g \in B_{p',q'}^{s}(\cu_{n}) \,, \; 
\left\|  g \right\|_{\underline{B}_{p',q'}^{s}(\cu_{n})} \leq 1 \biggr\}
\,.
\end{equation}
The dual space of the subspace of~$B_{p',q'}^{s}(\cu_n)$ with zero boundary values is defined by 
\begin{equation}
\label{e.Bs.minus.seminorm.zero}
\left[ f \right]_{\underline{B}_{p,q}^{-s}(\cu_{n})}
 \coloneqq 
\sup \biggl\{ \fint_{\cu_n} f g \, : \, g \in C_{\mathrm{c}}^\infty(\cu_n) \,, \; 
\left[  g \right]_{\underline{B}_{p',q'}^{s}(\cu_{n})} \leq 1 \biggr\}
\,.
\end{equation}
Finally, we introduce another variant of these negative spaces by defining
\begin{equation}
\label{e.Bs.minus.seminorm.explicit}
\left[ f \right]_{\Besov{-s}{p}{q}(\cu_n)}
 \coloneqq 
\biggl(
\sum_{k=-\infty}^n
3^{s qk}
\biggl(
\avsum_{z\in 3^k\Zd \cap \cu_n}
\bigl| (f)_{z+\cu_k}\bigr |^p
\biggr)^{\!\nicefrac qp}
\biggr)^{\! \nicefrac1q}
\,.
\end{equation}
The latter definition will sometimes be useful when estimating the negative seminorms from above since we have that, for every~$f$, 
\begin{equation}
\label{e.weak.norms.ordering}
\left[ f \right]_{\underline{B}_{p,q}^{-s}(\cu_{n})}
\leq
\left[ f \right]_{\Bhatminusul{-s}{p}{q} (\cu_{n})}
\leq
3^{d+s}
\left[ f \right]_{\Besov{-s}{p}{q}(\cu_n)}
\,.
\end{equation}
The first inequality in~\eqref{e.weak.norms.ordering} is immediate from the definitions and the second inequality is proved in Lemma~\ref{l.duality.Bs} in the appendix.

\smallskip

In the next lemma, we use the coarse-graining inequalities~\eqref{e.energymaps.nonsymm} and~\eqref{e.energymaps.nonsymm.flux} to obtain embeddings of the solution space~$\A(U;\a)$ into Besov spaces. In fact, we show that bounds on the coarse-grained matrices imply bounds on the weak Besov norms of the gradient and flux of a solution. In other words, we obtain weak, spatially averaged information about an arbitrary solution in terms of the total energy of the solution and the coarse-grained matrices. Note that the estimates in~\eqref{e.grad.B21minuss} and~\eqref{e.flux.B21minuss} are obvious if we replace the coarse-grained matrices in each cube by the supremum of~$\s^{-1}$ and~$\b$, respectively, in that cube. What is new in our approach is that we are able to prove these estimates \emph{without} using pointwise information about the coefficient field.

\begin{lemma} 
\label{l.crude.weaknorms}
For every~$s \in (0,1]$,~$n \in \N$ and~$u \in \mathcal{A}(\cu_n;\a)$,
\begin{equation}
\label{e.grad.B21minuss}
\left[ \nabla u  \right]_{\Besov{-s}{2}{q}(\cu_n)}
\leq 
\css{sq}^{-\nf1q} 
\lambda_{s,q}^{-\nf 12}(\cu_n\,;\a) \| \s^{\nf 12} \nabla u \|_{\underline{L}^2(\cu_n)}
\end{equation}
and
\begin{equation} \label{e.flux.B21minuss}
\left[ \a \nabla u  \right]_{\Besov{-s}{2}{q}(\cu_n)}
\leq
\css{sq}^{-\nf1q} 
\Lambda_{s,q}^{\nf 12}(\cu_n\,;\a) \| \s^{\nf 12} \nabla u \|_{\underline{L}^2(\cu_n)}
\,.
\end{equation}
\end{lemma}

\begin{proof}
For the gradient we have by~\eqref{e.energymaps.nonsymm} and~\eqref{e.coarse.grained.ellipticity} that 
\begin{align*} 
3^{-sqn} [ \nabla u ]_{\Besov{-s}{2}{q}(\cu_n)}^q
& =  
\sum_{k=-\infty}^n  3^{sq (k-n)} 
\biggl( \avsum_{z \in 3^{k} \Z^d \cap \cu_n}  \bigl|(\nabla u)_{z+\cu_k} \bigr|^2
\biggr)^{\! \nf q2} 
\notag \\ &
\leq 
\sum_{k=-\infty}^n  3^{ sq (k-n)} 
\max_{z \in 3^{k} \Z^d \cap \cu_n} 
|\s_*^{-1}(z + \cu_k)|^{\nf q2} 
\biggl( 
\avsum_{z \in 3^{k} \Z^d \cap \cu_n}  \|\s^{\nf12} \nabla u \|_{\underline{L}^2(z + \cu_k)}^2
\biggr)^{\! \nf q2}
\notag \\ &
=
\css{sq}^{-1} 
\|\s^{\nf12} \nabla u \|_{\underline{L}^2(\cu_n)}^q
\css{sq} 
\sum_{k=-\infty}^n  3^{-sq (n-k)} \max_{z \in 3^{k} \Z^d \cap \cu_n} |\s_*^{-1}(z + \cu_k)|^{\nf q2} 
\notag \\ &
= 
\css{sq}^{-1} 
\lambda_{s,q}^{-\nf q2}(\cu_n) \| \s^{\nf 12} \nabla u \|_{\underline{L}^2(\cu_n)}^q
\,.
\end{align*}
This is~\eqref{e.grad.B21minuss}.
The bound~\eqref{e.flux.B21minuss} for the flux follows similarly, using~\eqref{e.energymaps.nonsymm.flux}. 
We have 
\begin{align*} 
3^{-sq m} [ \a \nabla u ]_{\Besov{-s}{2}{q}(\cu_m)}^q
& =  
\sum_{k=-\infty}^m  3^{sq (k-m)}
\biggl( \avsum_{z \in 3^{k} \Z^d \cap \cu_m}  \bigl|(\a \nabla u)_{z+\cu_k} \bigr|^2 
\biggr)^{\!\nf q2}
\notag \\ &
\leq 
\sum_{k=-\infty}^m  3^{sq (k-m)} \max_{z \in 3^{k} \Z^d \cap \cu_m} |\b(z + \cu_k)|^{\nf q2}  
\biggl( \avsum_{z \in 3^{k} \Z^d \cap \cu_m}  \|\s^{\nf12} \nabla u \|_{\underline{L}^2(z + \cu_k)}^2 \biggr)^{\!\nf q2} 
\notag \\ &
= 
\css{sq}^{-1} \Lambda_{s}^{\nf q2}  (\cu_m) \| \s^{\nf 12} \nabla u \|_{\underline{L}^2(\cu_m)}^q
\, . 
\end{align*}
This completes the proof. 
\end{proof}

The previous lemma implies a \emph{coarse-grained Poincar\'e inequality} for solutions, because the left side of~\eqref{e.grad.B21minuss} actually controls a \emph{positive} Besov norm of~$u-(u)_{\cu_n}$. 
Indeed, we will show that 
\begin{equation}
\lambda_{s,1}(\cu_n;\a) > 0 
\quad \implies \quad 
\mathcal{A}(\cu_n;\a) \hookrightarrow
B^{1-s}_{2,\infty}(\cu_n)
\,.
\label{e.embedding}
\end{equation}
This embedding is immediate from~Lemma~\ref{l.crude.weaknorms} and a purely functional analytic fact that is presented in Lemma~\ref{l.dualitylemma} in Appendix~\ref{s.Besov.appendix}.

\begin{lemma}[Coarse-grained Poincar\'e inequality]
\label{l.Poincare.largescale}
There exists a constant~$C(d) < \infty$ such that, for every~$s\in [0,1)$,~$n \in \N$ and~$u \in \A(\cu_n;\a)$, 
\begin{equation}
\label{e.Poink}
\left\| u- (u)_{\cu_n}  \right\|_{\underline{B}_{2,\infty}^{s}(\cu_n)}
\leq
C3^{(1-s)n}
\lambda_{1-s,1}^{-\nf 12}(\cu_n\,;\a) 
\|\s^{\nicefrac12} \nabla u \|_{\underline{L}^2(\cu_n)}
\,.
\end{equation}
In particular, under assumption~\ref{a.ellipticity}, we have that, for every~$s\in [0,1-\nicefrac\gamma2)$,~$n \in \N$ with~$3^n \geq \S$ and~$u \in \A(\cu_n;\a)$,
\begin{equation}
\label{e.Poinked}
\| u- (u)_{\cu_n} \|_{\underline{B}_{2,\infty}^{s}(\cu_n)}
\leq
\frac{C\lambda_0^{-\nicefrac12} 3^{(1-s)n}}{2-2s-\gamma} 
\|\s^{\nicefrac12} \nabla u \|_{\underline{L}^2(\cu_n)}
\,.
\end{equation}
\end{lemma}
\begin{proof}
We combine~\eqref{e.divcurl.est0} in Lemma~\ref{l.dualitylemma} with~\eqref{e.grad.B21minuss} to obtain~\eqref{e.Poink}. 
Thus, if~$3^n \geq \S$, we use~\ref{a.ellipticity} and 
\begin{equation*}
\lambda_{1-s,1}^{-\nf 12}(\cu_n\,;\a) 
\leq 
C (1-s-\nf\gamma 2)^{-1} 
\lambda_{\gamma,\infty}^{-\nf 12}(\cu_n\,;\a) 
\leq
C (1-s-\nf\gamma 2)^{-1} \lambda_0^{-\nf12} 
\end{equation*}
to obtain~\eqref{e.Poinked}.
\end{proof}

The next lemma says that if a solution~$u\in\A(\cu_n)$ has the property that its gradient and flux belong to compatible Besov spaces, then we are able to test the equation with~$\varphi u$ and integrate by parts. In view of the above lemmas, this allows us to justify basic energy estimates in this general non-uniformly elliptic framework---such as the Caccioppoli inequality---clearing the way for basic elliptic theory. 

\begin{lemma} \label{l.testing.makes.sense}
Let $n \in \N$, $s\in(0,1)$ and $\ep \in (0,1-s)$.  If~$u \in \mathcal{A}(\cu_n;\a)$ is such that
\begin{equation} \label{e.weak.norms.qualitative}
[ \nabla u]_{\Besov{s-1+\ep}{2}{1}(\cu_{m})} + \bigl[ \a \nabla u \bigr]_{\Besov{-s}{2}{1}(\cu_{m})} < \infty\,,
\end{equation}
then, for every~$\varphi \in C_c^\infty(\cu_n)$,
\begin{equation} \label{e.testing.makes.sense}
\fint_{\cu_n} \varphi \nabla u \cdot \s \nabla u 
=
-
\fint_{\cu_n}  (u -(u)_{\cu_n}) \nabla \varphi \cdot \a \nabla u 
\,.
\end{equation}
\end{lemma}

\begin{proof}
Without loss of generality, we may assume that~$(u)_{\cu_n} = 0$. 
By~\eqref{e.weak.norms.qualitative} and~\eqref{e.divcurl.est0}, we have that~$\left\| u \right\|_{\underline{B}_{2,\infty}^{s+\ep}(\cu_n)}<\infty$. 
For~$k \in \N$ with~$k\geq 10$, we set~$u_k  \coloneqq  (u \wedge k) \vee (-k)$ and~$v_k  \coloneqq  u - u_k$. Observe that~$v_k \to 0$ in~$L^2(\cu_n)$ as~$k\to \infty$. Let~$\varphi \in C_c^\infty(\cu_n)$. Then~$u_k \varphi \in H_{\s,0}^{1}(\cu_n)$, because~$u_k$ is bounded and~$u \in H_{\s}^{1}(\cu_n)$. Since~$u \in \mathcal{A}(\cu_n)$, we  obtain
\begin{align} \label{e.testing.makes.sense.pre}
0  = \int_{\cu_n} \a  \nabla u \cdot \nabla (u_k \varphi) 
& =  \int_{\cu_n} \varphi  \a \nabla u \cdot \nabla u_k 
+
\int_{\cu_n} u_k  \a  \nabla u \cdot \nabla \varphi  
\notag \\  &
=  \int_{\cu_n} \varphi \s \nabla u_k \cdot \nabla u_k  
+
\int_{\cu_n} u_k \a  \nabla u \cdot \nabla \varphi   
\,.
\end{align}
We will argue that~\eqref{e.weak.norms.qualitative} allows us to pass to the limit~$k\to \infty$ in~\eqref{e.testing.makes.sense.pre} and thereby obtain~\eqref{e.testing.makes.sense}. We first observe that the first term on the right side of~\eqref{e.testing.makes.sense.pre} converges to the energy of~$u$ by the monotone convergence theorem.  To show that we can pass to the limit in the second term on the right, we use duality to get, by~\eqref{e.duality.Bs}, 
\begin{equation*}  
\biggl| \int_{\cu_n} v_k \a  \nabla u \cdot \nabla \varphi    \biggr| 
\leq 
3^{d+s} s^{-1}
 \left[  \a  \nabla u \right]_{\Besov{-s}{2}{1}(\cu_{m})}
\left\| v_k \nabla \varphi \right\|_{\underline{B}_{2,\infty}^{s}(\cu_n)} 
\,.
\end{equation*}
The first factor on the right is finite by assumption~\eqref{e.weak.norms.qualitative}. 
We will show that the second factor on the right vanishes in the limit~$k\to \infty$. To that end, we apply~\eqref{e.divcurl.est1.pre}, which gives us
\begin{equation}
\label{e.vk.approx.splot}  
\left\| v_k \nabla \varphi \right\|_{\underline{B}_{2,\infty}^{s}(\cu_n)} 
\leq 
C ( 3^{n} \| \nabla^2 \varphi \|_{L^{\infty}(\cu_n)} +  \| \nabla \varphi \|_{L^\infty(\cu_n} \bigr) \left\| v_k \right\|_{\underline{B}_{2,\infty}^{s}(\cu_n)} 
\,.
\end{equation}
The average term in~$\left\| v_k \right\|_{\underline{B}_{2,\infty}^{s}(\cu_n)}$ on the right side of~\eqref{e.vk.approx.splot} converges to zero as~$k\to \infty$ since, as mentioned above,~$v_k \to 0$ in~$L^2(\cu_n)$. For the semi-norm, we first use that 
\begin{align*}
\| u_k - (u_k)_{z+\cu_j}  \|_{\underline{L}^2(z+\cu_j)}^2 
&
\leq
\fint_{z+\cu_j} \fint_{z+\cu_j} |  u_k(x) - u_k(y)|^2 \, dx \, dy
\notag \\ &
\leq 
\fint_{z+\cu_j} \fint_{z+\cu_j} |  u(x) - u(y)|^2 \, dx \, dy
\leq  
4 \| u - (u)_{z+\cu_j}  \|_{\underline{L}^2(z+\cu_j)}^2 
\,.
\end{align*}
Select~$\tilde \ep\in (0,1)$ so that~$s(1-\tilde \ep)^{-1}= s + \ep$, and then use H\"older's inequality to obtain that
\begin{align}   
[v_k]_{\underline{B}_{2,\infty}^{s}(\cu_n)}
& =  \sup_{j \in (-\infty,n] \cap \Z} 3^{sj} \biggl( \avsum_{z \in 3^j \Z^d \cap \cu_n} \| v_k - (v_k)_{z+\cu_j}  \|_{\underline{L}^2(z+\cu_j)}^2 \biggr)^{\! \nicefrac12}
\notag \\  &
\leq
\sup_{j \in (-\infty,n] \cap \Z} 3^{sj} \biggl( \avsum_{z \in 3^j \Z^d \cap \cu_n} \| v_k - (v_k)_{z+\cu_j} \|_{\underline{L}^2(z+\cu_j)}^{2(1-\tilde \ep)} \| v_k \|_{\underline{L}^2(z+\cu_j)}^{2\tilde \ep}  \biggr)^{\! \nicefrac12} 
\notag \\  &
\leq
 \| v_k \|_{\underline{L}^2(\cu_n)}^{\tilde \ep} 
\sup_{j \in (-\infty,n] \cap \Z} 
3^{sj} \biggl( \avsum_{z \in 3^j \Z^d \cap \cu_n} \| v_k - (v_k)_{z+\cu_j} \|_{\underline{L}^2(z+\cu_j)}^{2}  \biggr)^{\! \nicefrac{(1-\tilde \ep)}2} 
\notag \\  &
\leq
 \| v_k \|_{\underline{L}^2(\cu_n)}^{\tilde \ep} 
 \sup_{j \in (-\infty,n] \cap \Z}  3^{sj} \biggl( 4 \avsum_{z \in 3^j \Z^d \cap \cu_n} \| u - (u)_{z+\cu_j} \|_{\underline{L}^2(z+\cu_j)}^{2}  \biggr)^{\! \nicefrac{(1-\tilde \ep)}2} 
\notag \\  &
\leq
2  \| v_k \|_{\underline{L}^2(\cu_n)}^{\tilde \ep}  [u]_{\underline{B}_{2,\infty}^{s + \ep}(\cu_n)}^{1-\tilde \ep}
\,.
\notag
\end{align}
In view of the assumption~\eqref{e.weak.norms.qualitative} and~\eqref{e.divcurl.est0}, and the fact that~$v_k \to 0$ in~$L^2(\cu_n)$, we deduce that~$[v_k]_{\underline{B}_{2,\infty}^{s}(\cu_n)} \to 0$ as~$k \to \infty$ and thus the right side of~\eqref{e.vk.approx.splot} tends to zero as~$k\to \infty$. 
Thus, we may pass to the limit~$k\to \infty$  in~\eqref{e.testing.makes.sense.pre} to obtain~\eqref{e.testing.makes.sense}. The proof is complete.
\end{proof}

Given a uniformly elliptic coefficient field~$\a(\cdot)$ satisfying~\eqref{e.UE.intro}, the classical Caccioppoli inequality  states that every solution~$u\in \mathcal{A}(\cu_m;\a)$ satisfies, for a universal constant $C<\infty$, 
\begin{equation*}
\| \s^{\nf12}  \nabla u \|_{\underline{L}^2(\cu_{m-1})}^2
\leq 
C \Lambda 3^{-2m}
\| u \|_{\underline{L}^2(\cu_{m})}^2
\,.
\end{equation*}
We next prove a generalization of this classical inequality by relaxing the requirement of uniform ellipticity to coarse-grained ellipticity.

\begin{proposition}[Coarse-grained Caccioppoli inequality]
\label{p.coarse.grained.Caccioppoli}
There exists a constant~$C(d)<\infty$ such that, 
for every~$s,t\in (0,1)$ with~$s+t < 1$ and for every~$u\in\A(\cu_m; \a)$, we have the estimate
\begin{equation}
\label{e.coarse.grained.Caccioppoli}
\| \s^{\nf12}  \nabla u \|_{\underline{L}^2(\cu_{m-1})}^2
\leq 
\biggl( \frac{C}{1-s-t} \biggr)^{\! 2 + \frac{4s}{1-s-t}} 
\biggl( \frac{\Lambda_{s,1}(\cu_m; \a)}{\lambda_{t,1}(\cu_m; \a)} \biggr)^{\!\frac{s}{1-s-t}}
{\Lambda}_{s,1}(\cu_m; \a) 
3^{-2m}
\| u \|_{\underline{L}^2(\cu_{m})}^2
\,.
\end{equation}
\end{proposition}
\begin{proof}
By scaling we may suppose that~$m=0$. 
We may assume that~${\Lambda}_{s,1}(\cu_0)$ and~${\lambda_{t,1}^{-1}(\cu_0)}$ are finite, otherwise there is nothing to show. 
Select~$u\in H^1(\cu_0)$ satisfying~$-\nabla \cdot \a\nabla u = 0$ in~$\cu_0$. By subtracting a constant, we may suppose~$(u)_{\cu_0}=0$. 

\smallskip

Define~$\sigma\coloneqq 1-s-t > 0$. 
The main step in the argument is to prove the following inequality: for every~$\rho_1, \rho_2 \in [\nf 12,1)$ 
with~$\rho_1 < \rho_2$, 
\begin{align} 
\label{e.Cacc.pre.iteration}
\| \s^{\nf12}  \nabla u \|_{\underline{L}^2(\rho_1\cu_0)}^2
&
\leq
\frac12 \| \s^{\nf12}  \nabla u \|_{\underline{L}^2(\rho_2 \cu_0)}^2
\notag \\ & \quad
+ 
C (\rho_2-\rho_1)^{- \frac{2(s + \sigma)}{\sigma}} 
\biggl(  \frac{C}{s(1-s)} \biggl(\frac{\Lambda_{s,1}(\cu_{0})}{\lambda_{t,1}(\cu_{0}) } 
\biggr)^{\!\nf12}   \biggr)^{\! \nf{2s}{\sigma}} \Lambda_{s,1}  (\cu_{0})\| u \|_{L^2(\cu_0)}^2
 \,.
\end{align}
Assuming~\eqref{e.Cacc.pre.iteration}, a standard iteration argument (see~\cite[Lemma C.6]{AKMbook}) then yields, using also that~$s^{-s} \leq C$,
\begin{equation*} 
\| \s^{\nf12}  \nabla u \|_{\underline{L}^2(\cu_{-1})}^2
\leq
 \bigl(C \sigma^{-1} \bigr)^{\frac{2(s + \sigma)}{\sigma}} 
 (1-s)^{-\nf {2s}\sigma}  
\biggl(\frac{\Lambda_{s,1}(\cu_{0})}{\lambda_{t,1}(\cu_{0}) } 
\biggr)^{\! \nf{s}{\sigma}}
\Lambda_{s,1}  (\cu_{0})\| u \|_{L^2(\cu_0)}^2
 \,.
\end{equation*}
Plugging in~$\sigma = 1-s-t$, we get~\eqref{e.coarse.grained.Caccioppoli}.

\smallskip

Fix parameters~$\rho_1, \rho_2 \in [\nf 12,1)$ 
with~$\rho_1 < \rho_2$ and scale separation parameters~$h,k\in\N$ satisfying~$h\geq k+4$ and 
$3^{-4} (\rho_2 - \rho_1) \leq 3^{-k} \leq 3^{-3} (\rho_2 - \rho_1)$. Select a smooth cut-off function~$\varphi \in C^\infty_c(\cu_0)$ satisfying
\begin{equation*} 
\indc_{\rho_1 \cu_0}  
\leq
\varphi \leq 
\indc_{\frac12(\rho_1 + \rho_2) \cu_0} 
\qand
\| \nabla^j\varphi\|_{L^\infty(\cu_0)} \leq 3^{k j} \leq C (\rho_2 - \rho_1)^{-j}\,, \quad \forall j \in \{1,2\} 
\,.
\end{equation*} 
Denote the set of subcubes which overlap with the support of~$\nabla \varphi$ by
\begin{equation*}
S \coloneqq  \bigl\{ z \in 3^{-h} \Zd \cap \cu_0 \,:\, \supp(\nabla \varphi) \cap (z+\cu_{-h}) \neq \emptyset \bigr\}\,.
\end{equation*}
Since~$h \geq k+4$, we observe that~$z \in S$ implies~$z+\cu_{-h} \subseteq \rho_2 \cu_0$. 
By Lemmas~\ref{l.dualitylemma} and~\ref{l.crude.weaknorms}, for every~$r\in \{s,1-t\}$, 
\begin{align*} 
\bigl\| (u - (u)_{z+\cu_{-h}})  \nabla \varphi \bigr\|_{\underline{B}^{r}_{2,\infty}(z+\cu_{-h})} 
& 
\leq 
C 3^{k} [\nabla u ]_{\Besov{r-1}{2}{1}(z+\cu_{-h})} 
\notag \\ &
\leq
\frac{C}{1-s} 3^{k-(1-s)h} \lambda_{1-r,1}^{-\nf12} (z+\cu_{-h}) \| \s^{\nf 12} \nabla u \|_{\underline{L}^2(z+\cu_{-h})}  
 \,.
\end{align*}
Lemma~\ref{l.crude.weaknorms} also yields, for~$r \in \{s,1\}$, 
\begin{equation*} 
[ \a \nabla u ]_{\Besov{-r}{2}{1}(z+\cu_{-h})} 
\leq
2 r^{-1} 
3^{-r h}\Lambda^{\nf12 }_{r,1} (z + \cu_{-h})
\| \s^{\nf 12} \nabla u \|_{\underline{L}^2(z+\cu_{-h})}  
\,.
\end{equation*}
In particular, since~$1-s-t>0$, 
we may apply Lemma~\ref{l.testing.makes.sense} to obtain 
\begin{align}
\label{e.Caccioppoli.the.beginning}
\int_{\cu_0} 
\varphi \nabla u \cdot \a \nabla u 
=
\int_{\cu_0} 
- u \nabla \varphi \cdot \a\nabla u 
& 
= 
\avsum_{z\in 3^{-h}\Zd \cap \cu_0} \indc_{S}(z)
\fint_{z+\cu_{-h}} 
- u \nabla \varphi \cdot \a\nabla u 
\,.
\end{align}
Using~\eqref{e.weak.norms.ordering.A}, we estimate, for~$z \in S$, 
\begin{align*}
\fint_{z+\cu_{-h}} 
- u \nabla \varphi \cdot \a\nabla u 
& 
\leq 
C \bigl| (u)_{z+\cu_{-h}}\bigr| 
\| \nabla \varphi \|_{\underline{B}_{2,\infty}^{1}(z+\cu_{-h})}
[ \a \nabla u ]_{\Besov{-1}{2}{1}(z+\cu_{-h})}
\notag \\ & \qquad 
 + 
 C 
\bigl\| (u - (u)_{z+\cu_{-h}})  \nabla \varphi \bigr\|_{\underline{B}^{s}_{2,\infty}(z+\cu_{-h})}
[ \a \nabla u ]_{\Besov{-s}{2}{1}(z+\cu_{-h})}  \,.
\end{align*}
Since~$\| \nabla \varphi \|_{\underline{B}_{2,\infty}^{1}(z+\cu_{-h})} \leq C3^{k+h}$, we may combine the previous three displays to obtain
\begin{align*}
\fint_{z+\cu_{-h}} 
- u \nabla \varphi \cdot \a\nabla u 
&
\leq
C 3^{k} \Lambda^{\nf12 }_{1,1} (z + \cu_{-h}) 
\bigl| (u)_{z+\cu_{-h}}\bigr| 
\| \s^{\nf 12} \nabla u \|_{\underline{L}^2(z+\cu_{-h})}  
\notag \\ & \qquad 
+
\frac{C 3^{k -h} }{s(1-s)} \biggl( \frac{\Lambda_{s,1}(z+\cu_{-h})}{\lambda_{1-s,1}(z+\cu_{-h})} 
\biggr)^{\! \nf 12} \| \s^{\nf 12} \nabla u \|_{\underline{L}^2(z+\cu_{-h})}^2 
 \,.
\end{align*}
We now give up a bit of the regularity exponent in the ellipticity constants to take advantage of the scale separation: using that~$1-s > t$, we apply~\eqref{e.ellipticities.monotone.ordered} in the form~$\lambda_{1-s}^{-1} (z{+}\cu_{-h}) \leq \lambda_{1- s - \sigma}^{-1}(z{+}\cu_{-h})$ with~$\sigma  \coloneqq  1-s-t$, and then use~\eqref{e.lambdas.downscales} to obtain
\begin{equation*}
3^{-h} 
\max_{z\in 3^{-h} \Zd \cap \cu_0} 
\biggl( 
\frac{\Lambda_{s,1}(z+\cu_{-h})}{\lambda_{1-s,1}(z+\cu_{-h}) } 
\biggr)^{\!\nf12} 
\leq 
3^{-\sigma h} 
\biggl( 
\frac{\Lambda_{s,1}(\cu_{0})}{\lambda_{t,1}(\cu_{0}) } 
\biggr)^{\!\nf12}
\,. 
\end{equation*}
We use this to estimate the second term on the right side of the previous display. For the first term, we use~\eqref{e.ellipticities.monotone.ordered} and~\eqref{e.lambdas.downscales} once again to get
\begin{equation*}
\max_{z\in 3^{-h} \Zd \cap \cu_0} 
\Lambda_{1,1} (z+\cu_{-h}) 
\leq 
\max_{z\in 3^{-h} \Zd \cap \cu_0} 
\Lambda_{s,1} (z+\cu_{-h}) 
\leq
3^{sh} \Lambda_{s,1} (\cu_{0}) 
\,.
\end{equation*}
Combining the above three displays with~\eqref{e.Caccioppoli.the.beginning}, we obtain 
\begin{align}
\label{e.Caccioppoli.almost.done}
\| \s^{\nf12}  \nabla u \|_{\underline{L}^2(\rho_1\cu_0)}^2
&
\leq 
\frac{C3^{k-\sigma h}}{s(1-s)} 
\biggl( 
\frac{\Lambda_{s,1}(\cu_{0})}{\lambda_{t,1}(\cu_{0}) } 
\biggr)^{\!\nf12}
\| \s^{\nf12}  \nabla u \|_{\underline{L}^2(\rho_2\cu_0)}^2
\notag \\ & \qquad
+
C3^{k+sh}
\Lambda_{s,1}^{\nf12}  (\cu_{0})
\| u \|_{L^2(\cu_0)}
\| \s^{\nf12}  \nabla u \|_{\underline{L}^2(\rho_2 \cu_0)}
\,.
\end{align}
We now select
\begin{equation*} 
h  \coloneqq  \Biggl\lceil \sigma^{-1} \log_3 \biggl( \frac{4C_{\eqref{e.Caccioppoli.almost.done}}3^{k}}{s(1-s)} 
\biggl(\frac{\Lambda_{s,1}(\cu_{0})}{\lambda_{t,1}(\cu_{0}) } 
\biggr)^{\!\nf12} \biggr) \Biggl\rceil
\,. 
\end{equation*}
Plugging this into the previous display and using Young's inequality, we obtain~\eqref{e.Cacc.pre.iteration}. 
This completes the proof.  
\end{proof}

\subsection{Centering the anti-symmetric part of the coefficient field}
\label{ss.skew}
The set of solutions of the equation
\begin{equation*}
-\nabla \cdot \a\nabla u=0
\end{equation*}
does not change when we add a constant anti-symmetric matrix to the coefficient field~$\a(\cdot)$. 
We may even consider that the field~$\a(\cdot)$ and its anti-symmetric part are defined only modulo a constant anti-symmetric matrix. This is an important invariance that is reflected in the properties of the coarse-grained matrices.

\smallskip

For convenience, we extend the definition of the quantity~$J$ by defining, for each given constant anti-symmetric matrix~$\h_0\in\R^{d\times d}_{\mathrm{skew}}$, 
\begin{equation*}  
J_{\h_0}(U,p,q;\a) 
 \coloneqq 
\sup_{u \in  \A(U;\a) }
\fint_{U} \biggl( - \frac12 \nabla u \cdot \s \nabla u - p \cdot ( \a - \h_0 ) \nabla u + q \cdot \nabla u \biggr) 
\,.
\end{equation*} 
In other words, the quantity~$J_{\h_0}$ is the same as~$J$ if we replace the coefficient field~$\a$ with~$\a-\h_0$. 
The observation is then that the~$J$'s for different~$\h_0$'s are equivalent in the sense that
\begin{equation}
\label{e.your.face}
J_{\h_0}(U,p,q;\a) 
=
J(U,p,q;\a-\h_0) 
=
J(U,p ,q - \h_0p;\a)\,.
\end{equation}
Indeed, the solution space~$\A(U;\a)$ is unchanged by the subtraction of~$\h_0$ and therefore, 
if we let~$v_{\h_0}(\cdot,U,p,q;\a) \in \A(U;\a)$ denote the maximizer of~$J_{\h_0}(U,p,q;\a)$, it follows immediately from the definitions that
\begin{equation*}
\nabla v_{\h_0}(\cdot,U,p,q;\a)  = \nabla v(\cdot,U,p,q- \h_0p;\a)
\,, \qquad \forall p,q \in \Rd\,.
\end{equation*}
We obtain~\eqref{e.your.face} from this and a routine computation. 
The quantity~$J_{\h_0}(U,\cdot,\cdot)$ can be represented by a matrix~$\bfA_{\h_0}(U)$ which is computed in terms of~$\bfA(U)$ as follows:
\begin{align*}
\begin{pmatrix} 
-p \\ q
\end{pmatrix}
\cdot \bfA_{\h_0} (U)
\begin{pmatrix} 
-p \\ q
\end{pmatrix}
&
=
2J(U,p ,q - \h_0p) + 2 p\cdot q
\notag \\ & 
=
\begin{pmatrix} 
-p \\ q - \h_0p
\end{pmatrix}
\cdot \bfA (U)
\begin{pmatrix} 
-p \\ q- \h_0p
\end{pmatrix}
+ 2 p\cdot \h_0 p
\notag \\ & 
=
\begin{pmatrix} 
-p \\ q
\end{pmatrix}
\cdot
\begin{pmatrix} 
\Id
& 0
\\ \h_0
& \Id
\end{pmatrix}^{\!t}
\bfA(U)
\begin{pmatrix} 
\Id
& 0
\\ \h_0
& \Id
\end{pmatrix}
\begin{pmatrix} 
-p \\ q
\end{pmatrix}
\,.
\end{align*}
Here, we used that~$p\cdot \h_0 p = 0$, since $\h_0$ is anti-symmetric.
Therefore, we deduce that 
\begin{align}
\label{e.Ak0.formula}
\bfA_{\h_0} (U)
&
=
\mathbf{G}_{\h_0}^{t}
\bfA(U)
\mathbf{G}_{\h_0}
=
\begin{pmatrix} 
\s + (\k-\h_0)^t\s_*^{-1}(\k-\h_0)
& -(\k-\h_0)^t\s_*^{-1}
\\ - \s_*^{-1}(\k-\h_0)
& \s_*^{-1}
\end{pmatrix} (U)
\,,
\end{align}
where we denote 
\begin{equation}
\label{e.Gk.def}
\mathbf{G}_{\h_0} \coloneqq  \begin{pmatrix} 
\Id
& 0
\\ \h_0
& \Id
\end{pmatrix}\,.
\end{equation}
Comparing~\eqref{e.Ak0.formula} with~\eqref{e.bigA.def}, we see that the subtraction of a constant anti-symmetric matrix~$\h_0$ ``commutes'' with the coarse-graining operation in the sense that it leaves~$\s(U;\a)$ and~$\s_*(U;\a)$ unchanged and simply subtracts~$\h_0$ from~$\k(U;\a)$: for every~$\h_0\in \R^{d\times d}_{\skew}$, 
\begin{equation}
\left\{
\begin{aligned}
& \s(U;\a-\h_0) = \s(U;\a) 
\\ & 
\s_*(U;\a-\h_0) = \s_*(U;\a) 
\\ & 
\k(U;\a-\h_0) = \k(U;\a) - \h_0\,.
\end{aligned}
\right.
\label{e.what.h0.does}
\end{equation}
We also define
\begin{equation*}
\b_{\h_0}(U;\a) \coloneqq 
\b(U;\a - \h_0) =( \s + (\k-\h_0)^t\s_*^{-1}(\k-\h_0)) (U)
\end{equation*}
We will use this in our analysis to ``center'' the quantity~$J$. 

\smallskip

We next observe that the leaves the eigenvalues of ratios of pairs of coarse-grained matrices unchanged. Indeed, for every matrix~$\h_0 \in \R^{d\times d}$ (not necessarily skew-symmetric) and every pair of symmetric matrices~$\mathbf{D},\mathbf{E} \in \R^{2d\times 2d}_{\sym}$ with~$\mathbf{D}$ being positive definite, if we denote 
\begin{equation*}  
\mathbf{D}_{\h_0}  \coloneqq  \mathbf{G}_{\h_0}^t  \mathbf{D} \mathbf{G}_{\h_0} \qand
\mathbf{E}_{\h_0}  \coloneqq  \mathbf{G}_{\h_0}^t  \mathbf{E} \mathbf{G}_{\h_0} \,,
\end{equation*}
then
\begin{equation*}  
\mathbf{D}_{\h_0}^{-1}  \mathbf{E}_{\h_0} 
=
\mathbf{G}_{\h_0}^{-1}
\mathbf{D}^{-1}
\mathbf{E}
\mathbf{G}_{\h_0}
\,.
\end{equation*}
The matrix~$\mathbf{G}_{\h_0}$ is invertible with the inverse~$\mathbf{G}_{\h_0}^{-1} = \mathbf{G}_{-\h_0}$. Thus~$\mathbf{D}_{\h_0}^{-1}  \mathbf{E}_{\h_0}$ 
and~$\mathbf{D}^{-1}\mathbf{E}$ 
are similar. It follows that~$\mathbf{D}^{-\nicefrac 12}  \mathbf{E} \mathbf{D}^{-\nicefrac 12}$ and~$\mathbf{D}_{\h_0}^{-\nicefrac 12}  \mathbf{E}_{\h_0}  \mathbf{D}_{\h_0}^{-\nicefrac 12}$ have the same set of eigenvalues. 
In particular, 
\begin{equation}
\label{e.Ak.vs.A.diff}
\bigl | \mathbf{D}^{-\nicefrac 12}\mathbf{E}\mathbf{D}^{-\nicefrac 12}  \bigr | 
=
\bigl | \mathbf{D}_{\h_0}^{-\nicefrac 12}  \mathbf{E}_{\h_0}  \mathbf{D}_{\h_0}^{-\nicefrac 12} \bigr | 
\qand
\bigl | \mathbf{D}^{-\nicefrac 12}\mathbf{E}\mathbf{D}^{-\nicefrac 12} - \Itwod \bigr | 
=
\bigl | \mathbf{D}_{\h_0}^{-\nicefrac 12}  \mathbf{E}_{\h_0}  \mathbf{D}_{\h_0}^{-\nicefrac 12}- \Itwod \bigr | 
\,.
\end{equation}
We next show that, if~$\mathbf{E}_1, \mathbf{E}_2\in\R^{2d\times2d}$ satisfy, for some~$\theta \in [1,\infty)$, 
\begin{equation}  
\label{e.Eone.and.Etwo}
\mathbf{E}_1 \leq \theta \mathbf{E}_2 \quad \mbox{and} \quad 
\mathbf{E}_j  \coloneqq  
\G_{-\k_j}^t 
\begin{pmatrix} 
\s_j & 0 \\ 0 & \s_{*,j}^{-1}
\end{pmatrix}
\G_{-\k_j}\,,
\quad
\s_j\,, \s_{*,j}^{-1} \in \R_{\mathrm{sym}}^{d\times d}\,, \ 
\k_j \in \R^{d\times d}\,,
\  j \in \{ 1,2\}\,,
\end{equation}
then we have
\begin{equation}
\label{e.Eone.vs.Etwo.one}
\theta \s_2 \geq \s_1 + (\k_1 - \k_2)^t \s_{*,1}^{-1} (\k_1 - \k_2)
\quad \mbox{and} \quad 
\s_{*,2} \leq \theta \s_{*,1} \,.
\end{equation}
To see this, we observe that, for every~$\h \in \R^{d\times d}$, 
\begin{equation*}
0\leq 
\G_{\h}^{t} (\theta \mathbf{E}_2 - \mathbf{E}_1) \G_{\h} 
=
\begin{pmatrix} 
\theta \b_{2,\h} -\b_{1,\h} & (\k_1 - \h)^t \s_{*,1}^{-1} - \theta (\k_2 - \h)^t \s_{*,2}^{-1} \\
\s_{*,1}^{-1} (\k_1 - \h) - \theta \s_{*,2}^{-1} (\k_2 - \h)&  \theta\s_{*,2}^{-1} - \s_{*,1}^{-1}
\end{pmatrix}
\,,
\end{equation*}
and thus
\begin{equation*}
\b_{1,\h} \leq \theta \b_{2,\h} \quad \forall \h \in \R^{d\times d} \qand \s_{*,2} \leq \theta \s_{*,1} 
 \,.
\end{equation*}
Taking~$\h = \k_2$ yields~\eqref{e.Eone.vs.Etwo.one}.

\subsection{Rewriting the ellipticity assumption~\ref{a.ellipticity}}
\label{ss.dagger}

We next show that the ellipticity condition~\ref{a.ellipticity} can be rewritten in an equivalent way, which turns out to be more flexible and convenient. 
The new condition is:
\begin{enumerate}[label=(\textrm{P\arabic*\textdagger})]
\setcounter{enumi}{1}
\item \emph{Coarse-grained ellipticity on large scales.} \label{a.ellipticity.dagger}
There exist~$\bfE\in \R^{2d\times 2d}_{\sym}$, an exponent~$\gamma\in [0,1)$, an increasing function~$\Psi_\S:\R_+ \to [1,\infty)$, a constant~$K_{\Psi_\S}\in (1,\infty)$ satisfying the growth condition
\begin{equation}
\label{e.Psi.S.growth.prime}
t \Psi_\S(t) \leq \Psi_\S(K_{\Psi_\S}  t), \quad \forall t \in [1,\infty)\,,
\end{equation}
and a nonnegative random variable~$\S$ satisfying the bound
\begin{equation}
\label{e.S.integrability.prime}
\P \bigl[ \S > t   \bigr]
\leq 
\frac{1}{\Psi_\S(t)}
\,, 
\quad \forall t\in (0,\infty) \,,
\end{equation}
such that, for every~$m\in\Z$, 
\begin{equation}
\label{e.ellipticity.bfE}
3^m \geq \S 
\quad\implies \quad
\bfA(z+ \cu_k;\a) \leq 3^{\gamma(m-k)} \bfE \,, \quad \forall k \in (-\infty,m]\cap \Z\,, z \in 3^k\Zd\cap \cu_m
\,.
\end{equation}
\end{enumerate}

The difference between~\ref{a.ellipticity} and~\ref{a.ellipticity.dagger} is that we have replaced the condition~\eqref{e.ellipticity}, which we recall is 
\begin{equation}
\label{e.ellipticity.again}
3^m \geq \S 
\quad\implies \quad
\Lambda_{\nf\gamma2,\infty}(\cu_m;\a) \leq \Lambda_0 
\qand
\lambda_{\nf\gamma2,\infty}(\cu_m;\a) \geq \lambda_0 
\,,
\end{equation}
with the condition~\eqref{e.ellipticity.bfE}
where~$\bfE\in \R^{2d\times 2d}_{\sym}$ is a given matrix. Let us show that these are essentially equivalent. First, in view of~\eqref{e.bfA.bounds.diag} and~\eqref{e.lambdas.downscales}, for every~$k \in (-\infty,m]\cap \Z$ and~$z \in 3^k\Zd\cap \cu_m$, 
\begin{equation*}
\bfA(z+\cu_k)
\leq
\begin{pmatrix} 
\s + 2 \k^t\s_*^{-1}\k  
& 0 
\\ 0 
& 2 \s_*^{-1}
\end{pmatrix}(z+\cu_k)
\leq 
3^{\gamma(m-k)} 
\begin{pmatrix} 
2\Lambda_{\nf\gamma2,\infty}
& 0 
\\ 0 
& 2\lambda_{\nf\gamma2,\infty}^{-1} 
\end{pmatrix}(\cu_m) 
\,.
\end{equation*}
Thus~\eqref{e.ellipticity.again} implies~\eqref{e.ellipticity.bfE} for the particular choice of matrix 
\begin{equation*}
\bfE = \begin{pmatrix} 
2\Lambda_0 \Id 
& 0 
\\ 0 
& 2\lambda_0^{-1} \Id
\end{pmatrix}
\,.
\end{equation*}
On the other hand, given a matrix~$\bfE$, we write it in block form as
\begin{equation*}
\bfE = 
\begin{pmatrix} 
\mathbf{E}_{11}
& \mathbf{E}_{12}
\\ \mathbf{E}_{21}
& \mathbf{E}_{22}
\end{pmatrix} \,,
\end{equation*}
where~$\mathbf{E}_{ij} \in \R^{d\times d}$ for~$i,j\in \{1,2\}$, and we define
\begin{equation}
\label{e.E.naught.components}
\left\{
\begin{aligned}
& \s_{*,0}  \coloneqq  \mathbf{E}_{22}^{-1}\,,
\\ &
\k_0  \coloneqq  -\mathbf{E}_{22}^{-1} \mathbf{E}_{21}\,,
\\ &
\s_{0}  \coloneqq 
\mathbf{E}_{11} - 
\mathbf{E}_{12}\mathbf{E}_{22}^{-1} \mathbf{E}_{21}\,,
\\ &
\b_{0}  \coloneqq 
\mathbf{E}_{11}
\,.
\end{aligned}
\right.
\end{equation}
In other words, we have given names to the block entries of~$\bfE$ so that
\begin{equation}
\label{e.bfE.matform}
\bfE =
\begin{pmatrix} 
\s_0 + \k_0^t\s_{*,0}^{-1}\k_0
& -\k_0^t\s_{*,0}^{-1}
\\ - \s_{*,0}^{-1}\k_0 
& \s_{*,0}^{-1} 
\end{pmatrix}
=
\begin{pmatrix} 
\b_0 
& -\k_0^t\s_{*,0}^{-1}
\\ - \s_{*,0}^{-1}\k_0 
& \s_{*,0}^{-1} 
\end{pmatrix}
\,.
\end{equation} 
It follows that
\begin{equation*} 
\bfE  
\leq 
2 \begin{pmatrix} \s_0 + \k_0^t\s_{*,0}^{-1}\k_0 & 0 \\  0 & \s_{*,0}^{-1}\end{pmatrix} \,.
\end{equation*}
Therefore,~\eqref{e.ellipticity.bfE} implies~\eqref{e.ellipticity.again} for~$\Lambda_0  \coloneqq  2 |\s_0 + \k_0^t\s_{*,0}^{-1}\k_0|$ and~$\lambda_0 \coloneqq  2|\s_{*,0}^{-1}|^{-1}$. 

\smallskip

In most of the paper we take~\ref{a.ellipticity.dagger} as our ellipticity assumption, instead of~\ref{a.ellipticity}. In view of the discussion in Section~\ref{ss.skew}, above, we also define the \emph{intrinsic ellipticity ratio}~$\Theta$ by
\begin{equation}
\label{e.Theta}
\Theta  \coloneqq  \min_{\h \in \R^{d\times d}_{\mathrm{skew}}}
\bigl|\s_{*,0}^{-\nicefrac12} (\s_0 + (\k_0-\h)^t\s_{*,0}^{-1}(\k_0-\h)) \s_{*,0}^{-\nicefrac12}\bigr|\,. 
\end{equation}
We let~$\h_0 \in \R^{d\times d}_{\mathrm{skew}}$ to be a minimizer of the above quantity, so that
\begin{equation*}  
\Theta = \bigl|\s_{*,0}^{-\nicefrac12} (\s_0 + (\k_0-\h_0)^t\s_{*,0}^{-1}(\k_0-\h_0)) \s_{*,0}^{-\nicefrac12}\bigr|
\,.
\end{equation*}
We also define the ellipticity constants~$0<\lambda_0\leq \Lambda_0<\infty$ by
\begin{equation}
\label{e.lambda.Lambda.def}
\lambda_0  \coloneqq  
\bigl|\s_{*,0}^{-1}\bigr|^{-1} 
\quad \mbox{and} \quad 
\Lambda_0 \coloneqq  
\min_{\h\in \R^{d\times d}_{\mathrm{skew}}}
\bigl| \s_0 + (\k_0-\h)^t\s_{*,0}^{-1}(\k_0-\h) \bigr|
\end{equation}
and the \emph{aspect ratio}~$\Pi$ by
\begin{equation}
\label{e.Pi}
\Pi  \coloneqq  \frac{\Lambda_0}{\lambda_0}\,.
\end{equation}
In view of the discussion in Section~\ref{ss.skew}, for any fixed~$\h_0\in \R^{d\times d}$, the assumption~\eqref{e.ellipticity.bfE} is equivalent to
\begin{equation*}
3^m \geq \S 
\implies 
\bfA_{\h_0}(z+\cu_n;\a) 
\leq 
3^{\gamma(m-n)}
\mathbf{E}_{0,\h_0}\,,
\quad
\forall n \in \Z \cap (-\infty,m]\,,
\ z \in 3^n\Zd \cap \cu_m\,,
\end{equation*}
where~$\mathbf{E}_{0,\h_0}  \coloneqq  \mathbf{G}_{\h_0}^t  \mathbf{E}_0 \mathbf{G}_{\h_0}$.
Since the transformation~$\bfE\mapsto \mathbf{E}_{0,\h_0}$ leaves~$\s_0$ and~$\s_{*,0}$ unchanged, the ellipticity contrast~$\Theta$ is invariant under this transformation, while the new value of~$\Pi$ is bounded above by
\begin{equation*}
\bigl|\s_0 + (\k_0-\h_0)^t\s_{*,0}^{-1}(\k_0-\h_0) \bigr|\bigl| \s_{*,0}^{-1} \bigr |
\leq 
2\Pi + 2 \bigl |\h_0^t\s_{*,0}^{-1}\h_0\bigr | \bigl| \s_{*,0}^{-1} \bigr |\,.
\end{equation*}

We will discover (in Lemma~\ref{l.bfE.bounds} below) that, since~$\bfE$ dominates the coarse-grained matrices by~\ref{a.ellipticity}, we must have the ordering 
\begin{equation*}
\s_0 \geq \s_{*,0}\,.
\end{equation*}
It follows that
\begin{equation*}
1\leq \Theta\leq \Pi  =  \frac{\Lambda_0}{\lambda_0}\,.
\end{equation*}
As shown above, the constants~$\lambda_0^{-1}$ and~$\Lambda_0$ defined under assumption~\ref{a.ellipticity.dagger} are no larger than twice their values under assmption~\ref{a.ellipticity}, and for this reason we do not distinguish between them.

\smallskip 

Why do we have two competing notions of ellipticity ratio,~$\Theta$ and~$\Pi$? 
The classical ellipticity assumption~\eqref{e.UE.intro} simultaneously controls two different things, which we need to keep separate: (i) the ratio of the size of~$\a(x)$ to its smallest eigenvalue at each point~$x$; and (ii) the ratio of matrices~$\a(x)$ and~$\a(y)$ at two different points~$x$ and~$y$. It is important in our setting to distinguish these two because, obviously, homogenization should be concerned with~(ii) but not with~(i). In the terminology here, it is the aspect ratio~$\Pi$ which measures~(i),  while the ellipticity ratio~$\Theta$ measures~(ii).

\subsection{Two-sided bounds from one-sided bounds}

In the next two lemmas, we formalize an important observation, which is that if~$\Theta-1$ is small and~$\bfA(U) \leq \bfE$, then in fact the difference~$\bfE - \bfA(U)$ must also be small. We get a two-sided bound from a one-sided one for free if the ellipticity contrast is small. This is related to the idea that the difference (or ratio) of~$\s(U)$ and~$\s_*(U)$ should represent the ``uncertainty'' in the coarse-graining map. Since this is an essentially algebraic fact, we present a slightly more general statement that will prove useful. 

\begin{lemma}
\label{l.bfE.bounds}
Suppose that~$\mathbf{E}_1,\mathbf{E}_{*,1} \in \R^{2d\times 2d}_{\mathrm{sym}}$ are symmetric matrices having the form 
\begin{equation}
\label{e.E1.form}
\mathbf{E}_1 =
\begin{pmatrix} 
\s_1 + \k_1^t\s_{*,1}^{-1}\k_1 
& -\k_1^t\s_{*,1}^{-1}
\\ -  \s_{*,1}^{-1}\k_1 
& \s_{*,1}^{-1} 
\end{pmatrix} 
\qquad \mbox{and} \qquad 
\mathbf{E}_{*,1}=
\begin{pmatrix} 
\s_{*,1} + \k_1\s_1^{-1}\k_1^t 
& \k_1\s_{1}^{-1}
\\  \s_{1}^{-1}\k_1^t 
& \s_{1}^{-1} 
\end{pmatrix}
\,,
\end{equation}
satisfying the inequality 
\begin{equation}
\label{e.E1ord}
\mathbf{E}_{*,1} \leq \mathbf{E}_{1}\,,
\end{equation}
where~$\s_1, \s_{*,1}, \k_1 \in \R^{d \times d}$ with~$\s_1$ and~$\s_{*,1}$ being positive definite.
Then~$\s_{*,1} \leq \s_{1}$ and, by denoting
\begin{equation}
\label{e.Theta.tilde.def}
\tilde{\Theta}_1 \coloneqq  
\bigl|\s_{*,1}^{-\nicefrac12}\s_{1} \s_{*,1}^{-\nicefrac12}\bigr|\,,
\end{equation}
we have the inequalities
\begin{equation}
\label{e.symm.k.quad.small}
\bigl| \s_{*,1}^{-\nicefrac12} (\k_1 + \k_1^t) \s_{*,1}^{-\nicefrac12}  \bigr|^2  
\leq
(\tilde{\Theta}_1-1)
\min\bigl\{ 
2 
\,,
(\tilde{\Theta}_1-1) 
\bigr\} 
\end{equation}
and
\begin{equation}
\label{e.grok}
\tilde{\Theta}_1 - 1 
\leq
\bigl|
\mathbf{E}_{*,1}^{-\nicefrac12}\mathbf{E}_{1}\mathbf{E}_{*,1}^{-\nicefrac12}
-\Itwod
\bigr|
\leq 
6 \bigl(\tilde{\Theta}_1 - 1 \bigr)
\,.
\end{equation}
\end{lemma}
\begin{proof}
The inequality~$\s_1 \geq \s_{*,1}$ is immediate from~\eqref{e.E1ord}, since the bottom right matrices in the block forms in~\eqref{e.E1.form} must be ordered. Consequently, 
\begin{equation} \label{e.s.vs.sstar.one}
\bigl |\s_{*,1}^{-\nicefrac12} \s_1 \s_{*,1}^{-\nicefrac12} - \Id \bigr|   = \tilde{\Theta}_1 - 1
\quad \mbox{and} \quad 
\bigl|\s_{*,1}^{\nicefrac12} (\s_{*,1}^{-1} - \s_1^{-1})  \s_{*,1}^{\nicefrac12} \bigr| 
\leq 
\tilde{\Theta}_1 - 1
\,.
\end{equation}
Let~$\h_0 \in \R^{d\times d}$ and recall the definition of~$\mathbf{G}_{\h_0}$ from~\eqref{e.Gk.def}. Observe that
\begin{equation*}
\tilde{\mathbf{E}}_1= \tilde{\mathbf{E}}_{1,\h_0} 
 \coloneqq  
\mathbf{G}_{\h_0}^t\mathbf{E}_1\mathbf{G}_{\h_0}
=
\begin{pmatrix} 
\s_1 + (\k_{1}-\h_0)^t\s_{*,1}^{-1}(\k_{1}-\h_0)
& -(\k_{1}-\h_0)^t\s_{*,1}^{-1}
\\ -  \s_{*,1}^{-1}(\k_{1}-\h_0)
& \s_{*,1}^{-1} 
\end{pmatrix}
\end{equation*}
and
\begin{equation*}
\tilde{\mathbf{E}}_{*,1}
=
\tilde{\mathbf{E}}_{*,1,\h_0} 
 \coloneqq  
\mathbf{G}_{\h_0}^t\mathbf{E}_{*,1} \mathbf{G}_{\h_0}
= 
\begin{pmatrix} 
\s_{*,1} + (\k_{1}+\h_0^t)\s_1^{-1}(\k_{1}+\h_0^t)^t 
& (\k_{1}+\h_0^t)\s_{1}^{-1}
\\  \s_{1}^{-1}(\k_{1}+\h_0^t)^t
& \s_{1}^{-1} 
\end{pmatrix}\,.
\end{equation*}
Then~$\tilde{\mathbf{E}}_{1}$ and~$\tilde{\mathbf{E}}_{*,1}$ are positive and~\eqref{e.E1ord} is equivalent to~$\tilde{\mathbf{E}}_{*,1}\leq \tilde{\mathbf{E}}_{1}$.
Moreover, as discussed after~\eqref{e.Gk.def}, 
the matrices~$\tilde{\mathbf{E}}_1 \tilde{\mathbf{E}}_{*,1}^{-1}$ and~$\mathbf{E}_1\mathbf{E}_{*,1}^{-1}$ are similar. In particular, since both~$\tilde{\mathbf{E}}_{1}$ and~$\tilde{\mathbf{E}}_{*,1}$ are symmetric,
\begin{equation}
\label{e.rats.dont.change}
\bigl|
\tilde{\mathbf{E}}_{*,1}^{-\nicefrac12}\tilde{\mathbf{E}}_{1}\tilde{\mathbf{E}}_{*,1}^{-\nicefrac12}
-\Itwod
\bigr|
=
\bigl|
\mathbf{E}_{*,1}^{-\nicefrac12}\mathbf{E}_{1}\mathbf{E}_{*,1}^{-\nicefrac12}
-\Itwod
\bigr|\,.
\end{equation}

\smallskip

We next make a reduction to the case that~$\k_1$ is symmetric. Let~$\k_{1,s}$ and~$\k_{1,a}$ denote, respectively, the symmetric and anti-symmetric parts of~$\k_1$ and take~$\h_0= \k_{1,a}$ in the above definitions. 
We then find that
\begin{equation*}
\tilde{\mathbf{E}}_1
=
\begin{pmatrix} 
\s_1 + \k_{1,s}\s_{*,1}^{-1}\k_{1,s}
& -\k_{1,s}\s_{*,1}^{-1}
\\ -  \s_{*,1}^{-1}\k_{1,s}
& \s_{*,1}^{-1} 
\end{pmatrix}
\quad \mbox{and} \quad
\tilde{\mathbf{E}}_{*,1}
= 
\begin{pmatrix} 
\s_{*,1} + \k_{1,s}\s_1^{-1}\k_{1,s} 
& \k_{1,s}\s_{1}^{-1}
\\  \s_{1}^{-1}\k_{1,s} 
& \s_{1}^{-1} 
\end{pmatrix}\,.
\end{equation*}
These matrices satisfy the same assumptions as~$\mathbf{E}_{1}$ and~$\mathbf{E}_{*,1}$, and the symmetric part of~$\k_1$ is unchanged, but the anti-symmetric part of~$\k_1$ has been removed. In view of~\eqref{e.rats.dont.change}, we assume without loss of generality that~$\k_1$ is symmetric; otherwise we replace the pair~$(\mathbf{E}_{1},\mathbf{E}_{*,1})$ by~$(\tilde{\mathbf{E}}_{1},\tilde{\mathbf{E}}_{*,1})$.

\smallskip

We next take~$\h_0=\eta\k_1$ for~$\eta \in\R$ in the definition of~$\mathbf{G}_{\h_0}$ and eventually optimize over the parameter~$\eta$. The inequality~$\tilde{\mathbf{E}}_{*,1}\leq \tilde{\mathbf{E}}_{1}$ reads as
\begin{equation*}
\begin{pmatrix} 
\s_1 - \s_{*,1} +(1-\eta)^2\k_{1}\s_{*,1}^{-1}\k_{1}- (1+\eta)^2\k_{1}\s_1^{-1}\k_{1}
& -(1-\eta) \k_{1}\s_{*,1}^{-1}+(1+\eta) \k_{1}\s_{1}^{-1}
\\ -(1-\eta)\s_{*,1}^{-1} \k_{1}+(1+\eta) \s_{1}^{-1}\k_{1}
& \s_{*,1}^{-1}-\s_{1}^{-1} 
\end{pmatrix}\geq 
0\,.
\end{equation*}
The nonnegativity of the top left block says that
\begin{equation*}
(1+\eta)^2\k_{1}\s_{1}^{-1}\k_{1}
\leq 
\s_1 - \s_{*,1} +(1-\eta)^2\k_{1}\s_{*,1}^{-1}\k_{1}
\leq
\bigl(\tilde{\Theta}_1-1 \bigr)\s_{*,1} 
+\tilde{\Theta}_1 (1-\eta)^2\k_{1}\s_{1}^{-1}\k_{1} \,.
\end{equation*}
Rearranging, we obtain
\begin{equation*}
\bigl ( (1+\eta)^2 - \tilde{\Theta}_1 (1-\eta)^2 \bigr )\k_{1}\s_{1}^{-1}\k_{1}
\leq 
\bigl(\tilde{\Theta}_1-1\bigr)\s_{*,1}\,.
\end{equation*}
We now optimize in~$\eta$ by taking~$\eta \coloneqq (\tilde{\Theta}_1+1)(\tilde{\Theta}_1-1)^{-1}$ to get
\begin{equation}
\label{e.verygood.quads}
\s_{*,1}^{-\nicefrac12}\k_{1}\s_{1}^{-1}\k_{1}\s_{*,1}^{-\nicefrac12}
\leq 
\frac{\bigl(\tilde{\Theta}_1-1\bigr)^2}{4\tilde{ \Theta}_1}\Id\,.
\end{equation}
This and the bound~$\s_1\leq \tilde{\Theta}_1 \s_{*,1}$ yields
\begin{equation}
\label{e.good.bound.small.Theta}
\bigl(\s_{*,1}^{-\nicefrac12}\k_{1}\s_{*,1}^{-\nicefrac12} \bigr)^2 
=
\s_{*,1}^{-\nicefrac12}\k_{1}\s_{*,1}^{-1}\k_{1}\s_{*,1}^{-\nicefrac12}
\leq 
\tilde{\Theta}_1 
\s_{*,1}^{-\nicefrac12}\k_{1}\s_{1}^{-1}\k_{1}\s_{*,1}^{-\nicefrac12}
\leq 
\frac14\bigl(\tilde{\Theta}_1-1\bigr)^2 \Id
\,.
\end{equation}
This gives~\eqref{e.symm.k.quad.small} if~$\tilde{\Theta}_1\leq 3$. 

\smallskip

For the case~$\tilde{\Theta}_1>3$, we use the above factorization with~$\h_0=-\k_1^t = - \k_1$ and have
\begin{equation*}
\tilde{\mathbf{E}}_{1,-\k_1}
=
\begin{pmatrix} 
\s_1 + 4 \k_1  \s_{*,1}^{-1} \k_1 
& -2 \k_1 \s_{*,1}^{-1}
\\ -2 \s_{*,1}^{-1} \k_1
& \s_{*,1}^{-1} 
\end{pmatrix}
\quad \mbox{and} \quad
\tilde{\mathbf{E}}_{*,1,-\k_1}
= 
\begin{pmatrix} 
\s_{*,1} 
& 0
\\  0
& \s_{1}^{-1} 
\end{pmatrix}\,.
\end{equation*}
We again select a free parameter~$\eta \in (0,\infty)$ and set~$ p  \coloneqq  \eta^{-1} e$ and~$q  \coloneqq  \eta \s_1^{-\nf 12} \s_{*,1}^{\nf 12}e'$ for~$e,e' \in \Rd$ with~$|e|,|e'| \leq 1$. Then, using~$\tilde{\mathbf{E}}_{1,-\k_1} \geq \tilde{\mathbf{E}}_{*,1,-\k_1}$, we get 
\begin{align*} 
0 & 
\leq
\begin{pmatrix} 
 p \\ q
\end{pmatrix} 
\cdot
\bigl(  \tilde{\mathbf{E}}_{*,1,-\k_1}^{-\nicefrac12} \tilde{\mathbf{E}}_{1,-\k_1} \tilde{\mathbf{E}}_{*,1,-\k_1}^{-\nicefrac12}
-\Itwod \bigr) 
\begin{pmatrix} 
 p \\ q
\end{pmatrix} 
\notag \\ & 
= 
\begin{pmatrix} 
 p \\ q
\end{pmatrix} 
\cdot
\begin{pmatrix} 
\s_{*,1}^{-\nicefrac12}\s_1\s_{*,1}^{-\nicefrac12} -\Id
+ 4\s_{*,1}^{-\nicefrac12}\k_{1}\s_1^{-1}\k_{1}\s_{*,1}^{-\nicefrac12} 
& -2\s_{*,1}^{-\nicefrac12}\k_{1}\s_{*,1}^{-1}\s_{1}^{\nicefrac12}
\\ -2\s_{1}^{\nicefrac12}\s_{*,1}^{-1}\k_{1}\s_{*,1}^{-\nicefrac12}
& \s_{1}^{\nicefrac12}\s_{*,1}^{-1} \s_{1}^{\nicefrac12}-\Id
\end{pmatrix}
\begin{pmatrix} 
 p \\ q
\end{pmatrix} 
\notag \\ &
\leq 
\eta^{-2} 
\bigl| 
\s_{*,1}^{-\nicefrac12}\s_1\s_{*,1}^{-\nicefrac12} -\Id
+ 4\s_{*,1}^{-\nicefrac12}\k_{1}\s_1^{-1}\k_{1}\s_{*,1}^{-\nicefrac12} 
\bigr|
+\eta^2
\bigl| \Id - \s_{*,1}^{\nicefrac12}\s_{1}^{-1} \s_{*,1}^{\nicefrac12} \bigr| 
- 4  e \cdot \s_{*,1}^{-\nicefrac12}\k_{1}\s_{*,1}^{-\nf12} e'
\,.
\end{align*}
Moving the last term on the right side to the left side, then optimizing over all unit vectors~$e,e'$ and using the bounds~\eqref{e.verygood.quads} and~$|\Id - \s_{*,1}^{\nicefrac12}\s_{1}^{-1} \s_{*,1}^{\nicefrac12}| \leq 1$, we get
\begin{equation*}
4 \bigl| \s_{*,1}^{-\nicefrac12}\k_{1}\s_{*,1}^{-\nf12} \bigr| 
\leq 
2\eta^{-2} 
(\tilde{\Theta}_1-1) 
+\eta^2 \,. 
\end{equation*}
Optimizing in~$\eta$ leads to the choice~$\eta= 2^{\nf14} (\tilde{\Theta}_1-1)^{\nf14}$, which yields 
\begin{equation}
\label{e.good.bound.large.Theta}
\bigl| \s_{*,1}^{-\nicefrac12}\k_{1}\s_{*,1}^{-\nf12} \bigr|^2 
\leq 
\frac12 (\tilde{\Theta}_1-1) \,.
\end{equation}
The bounds~\eqref{e.good.bound.small.Theta} and~\eqref{e.good.bound.large.Theta} together give~\eqref{e.symm.k.quad.small}.

\smallskip

To obtain~\eqref{e.grok}, we use~$\tilde{\mathbf{E}}_{1,-\k_1} \geq \tilde{\mathbf{E}}_{*,1,-\k_1}$ together with bounds~\eqref{e.s.vs.sstar.one} and~\eqref{e.verygood.quads}, and  get 
\begin{align*}
\Bigl| \tilde{\mathbf{E}}_{*,1,-\k_1}^{-\nicefrac12} \tilde{\mathbf{E}}_{1,-\k_1} \tilde{\mathbf{E}}_{*,1,-\k_1}^{-\nicefrac12}
-\Itwod
\Bigr|
&
=
\Biggl| \begin{pmatrix} 
\s_{*,1}^{-\nicefrac12}\s_1\s_{*,1}^{-\nicefrac12} -\Id
+ 4\s_{*,1}^{-\nicefrac12}\k_{1}\s_1^{-1}\k_{1}\s_{*,1}^{-\nicefrac12} 
& -2\s_{*,1}^{-\nicefrac12}\k_{1}\s_{*,1}^{-1}\s_{1}^{\nicefrac12}
\\ -2\s_{1}^{\nicefrac12}\s_{*,1}^{-1}\k_{1}\s_{*,1}^{-\nicefrac12}
& \s_{1}^{\nicefrac12}\s_{*,1}^{-1} \s_{1}^{\nicefrac12}-\Id
\end{pmatrix}
\Biggr|
\notag \\ &
\leq 
2 |\s_{*,1}^{-\nicefrac12}\s_1\s_{*,1}^{-\nicefrac12} -\Id|
+
8| \s_{*,1}^{-\nicefrac12}\k_{1}\s_1^{-1}\k_{1}\s_{*,1}^{-\nicefrac12} |
+ 2|\s_{1}^{\nicefrac12}\s_{*,1}^{-1} \s_{1}^{\nicefrac12}-\Id|
\notag \\ &
\leq 
4 (\tilde{\Theta}_1 - 1)
+
\frac{2\bigl(\tilde{\Theta}_1-1\bigr)^2}{\tilde{ \Theta}_1}
\leq 
6 (\tilde{\Theta}_1 - 1)
\,.
\end{align*}
In view of~\eqref{e.rats.dont.change} this completes the proof of~\eqref{e.grok}. 
\end{proof}

\begin{lemma}
\label{l.bfE.bfA.bounds}
Let~$\mathbf{E}_1 \in \R^{2d\times 2d}_{\mathrm{sym}}$ be of the form~\eqref{e.E1.form},  let~$\tilde{\Theta}_1$ be defined by~\eqref{e.Theta.tilde.def} and~$U$ be a bounded Lipschitz domain such that
\begin{equation}
\label{e.bfA.vs.bfEord}
\bfA(U;\a) \leq \mathbf{E}_1\,.
\end{equation}
Then 
\begin{equation}
\label{e.AEA.squash}
\max\bigl\{ 
\big| \mathbf{E}_1^{-\nicefrac12} \bfA(U;\a)\mathbf{E}_1^{-\nicefrac12} - \Itwod\bigr|
\,,\,
\big| \mathbf{E}_1^{\nicefrac12} \bfA_*^{-1}(U;\a)\mathbf{E}_1^{\nicefrac12} - \Itwod\bigr|
\bigr\}
\leq 
6 \bigl(\tilde{\Theta}_1 - 1 \bigr)
\,. 
\end{equation}
\end{lemma}
\begin{proof}
Define~$\mathbf{E}_{*,1} \coloneqq  \mathbf{R} \mathbf{E}_1^{-1} \mathbf{R}$. 
The hypothesis~\eqref{e.bfA.vs.bfEord}, the identity~\eqref{e.bigAstar.def} and the ordering~\eqref{e.bfA.bounds} imply that 
\begin{equation*}
\mathbf{E}_{*,1} 
\leq
\bfA_*(U;\a) \leq \bfA(U;\a)
\leq \mathbf{E}_{1}\,.
\end{equation*}
The inequality~\eqref{e.AEA.squash} then follows from~\eqref{e.grok}.
\end{proof}

\subsection{Stochastic bounds for the  coarse-grained matrices}

We show first that the assumption of~\ref{a.ellipticity.dagger} implies control across a range of mesoscopic scales. The~$\O_{\Psi}$ notation is defined in Appendix~\ref{ss.big.O}.

\begin{lemma}[Improving ellipticity on large mesoscales]
\label{l.ellipticity.mesoscales}
Assume that~$\P$ satisfies~\ref{a.stationarity} and~\ref{a.ellipticity.dagger}.
For every~$h \in \N$, there exists a random scale~$\S_h$ satisfying 
\begin{equation}
\label{e.Sh.O}
\S_h \leq \O_{\Psi_{\S}} \bigl ( K_{\Psi_{\S}}^{4(d+1)} 3^{h+1} \bigr)
\end{equation}
such that, for every~$m\in\Z$,
\begin{equation}
\label{e.ellipticity.mesogrid}
3^m \geq \S_h 
\implies 
\bfA(z+\cu_n) 
\leq 
3^{\gamma(m-h-n)_+}
\bfE\,,
\quad
\forall n \in \Z \cap (-\infty,m]\,,
\ z \in 3^n\Zd \cap \cu_m \,.
\end{equation}
\end{lemma}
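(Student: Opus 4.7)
The plan is to define $\S_h$ via a supremum over mesoscopic translates at which the minimal scale $\S$ from~\ref{a.ellipticity} fails, then derive~\eqref{e.ellipticity.mesogrid} by a case analysis using subadditivity, and finally bound the tail of $\S_h$ by combining stationarity with the super-polynomial growth of $\Psi_\S$ forced by~\eqref{e.Psi.S.growth}.

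For each $m \in \Z$, let $B_m := \bigl\{ \exists y \in 3^{m-h}\Zd \cap \cu_m : \S \circ T_y > 3^{m-h} \bigr\}$ and set $\S_h := 3 \sup\{3^m : B_m\text{ holds}\}$ (with $\sup \emptyset := 0$). Then $3^m \geq \S_h$ implies $B_{m'}$ fails for every $m' \geq m$, so in particular $\S(T_y\a) \leq 3^{m-h}$ for every $y \in 3^{m-h}\Zd \cap \cu_m$. Using translation covariance $\bfA(T_y\a)(U) = \bfA(\a)(y+U)$, the assumption~\eqref{e.ellipticity} applied to $T_y\a$ at scale $m-h$ gives
\begin{equation*}
\bfA(y + z' + \cu_{n'}) \leq 3^{\gamma((m-h)-n')}\bfE, \quad \forall n' \leq m-h,\ z' \in 3^{n'}\Zd \cap \cu_{m-h}\,.
\end{equation*}
To deduce~\eqref{e.ellipticity.mesogrid} for $n \leq m$ and $z \in 3^n\Zd \cap \cu_m$, I split into two cases. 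If $n \leq m-h$, triadic nesting identifies the unique $y \in 3^{m-h}\Zd \cap \cu_m$ with $z + \cu_n \subseteq y + \cu_{m-h}$, and the display above directly yields $\bfA(z+\cu_n) \leq 3^{\gamma(m-h-n)}\bfE = 3^{\gamma(m-h-n)_+}\bfE$. If $m-h < n \leq m$, I partition $z + \cu_n$ into its mesoscopic subcubes $\{y + \cu_{m-h}\}_{y \in 3^{m-h}\Zd \cap (z+\cu_n)}$, use the previous case (at $n' = m-h$) to obtain $\bfA(y+\cu_{m-h}) \leq \bfE$ for each, and conclude via subadditivity~\eqref{e.subadditivity} that $\bfA(z+\cu_n) \leq \bfE = 3^{\gamma(m-h-n)_+}\bfE$.

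For the stochastic bound~\eqref{e.Sh.O}, stationarity~\eqref{e.P.stationary} gives $\S \circ T_y \stackrel{d}{=} \S$ for each $y$, so a union bound over the $3^{dh}$ translates in $3^{m-h}\Zd \cap \cu_m$ yields $\P[B_m] \leq 3^{dh}/\Psi_\S(3^{m-h})$. Summing over $m$ with $3^{m+1} > \tau$ and invoking the iterated form $\Psi_\S(K_{\Psi_\S}^j s) \geq K_{\Psi_\S}^{j(j-1)/2} s^j \Psi_\S(s)$ of~\eqref{e.Psi.S.growth} (both to sum the geometric-type tail and to absorb the combinatorial $3^{dh}$ prefactor into a shift of the argument of $\Psi_\S$), and choosing $j$ of order $4(d+1)$, produces the claimed bound $\S_h \leq \O_{\Psi_\S}(K_{\Psi_\S}^{4(d+1)} 3^h)$. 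The one nontrivial technical point is this final step: verifying that the super-polynomial growth supplied by~\eqref{e.Psi.S.growth} genuinely swallows the combinatorial $3^{dh}$ for every $h \in \N$ after the scale shift by $K_{\Psi_\S}^{4(d+1)} 3^h$, relying on the quadratic exponent $j(j-1)/2$ in the iterated growth estimate; the remaining parts---the triadic case analysis and the stationarity/union-bound reduction---are straightforward.
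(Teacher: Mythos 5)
Your deterministic reduction is sound and is essentially the paper's: subadditivity handles the scales $n\in(m-h,m]$, triadic nesting plus translation covariance handles $n\le m-h$, and your definition of $\S_h$ through the translated minimal scales $\S\circ T_y$ is an admissible (if slightly larger) substitute for the paper's $\S_h$, which is defined directly as the largest scale at which the conclusion~\eqref{e.ellipticity.mesogrid} fails. One small caveat of your variant: for $m<h$ the grid $3^{m-h}\Zd$ is not contained in $\Zd$, so neither stationarity nor the $\P$--a.s.\ validity of~\ref{a.ellipticity} for the configurations $T_y\a$ is guaranteed for those translates; the paper's definition avoids this entirely, and in your set-up it is harmless only because the tail bound never probes $m<h$, but the deterministic implication you claim for all $m\in\Z$ does.

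The genuine gap is exactly the step you flagged. Your union bound is over the $3^{dh}$ points of $3^{m-h}\Zd\cap\cu_m$, so the per-scale estimate is $\P[B_m]\le 3^{dh}/\Psi_\S(3^{m-h})$, and the factor $3^{dh}$ depends on $h$ but \emph{not} on the argument $3^{m-h}$ of $\Psi_\S$. The iterated form $\Psi_\S(K_{\Psi_\S}^{j}s)\ge K_{\Psi_\S}^{j(j-1)/2}s^{j}\Psi_\S(s)$ only trades a \emph{fixed} multiplicative shift $K_{\Psi_\S}^{j}$ for a gain depending on $d$, $K_{\Psi_\S}$ and $s$, not on $h$. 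At the smallest scales relevant to the claimed tail, namely $3^{m-h}\simeq t\,K_{\Psi_\S}^{4(d+1)}$ with $t$ of order one, this gain is a constant while the factor to be absorbed is $3^{dh}$; concretely your estimate gives $\P\bigl[\S_h>K_{\Psi_\S}^{4(d+1)}3^h\bigr]\lesssim 3^{dh}/\Psi_\S\bigl(K_{\Psi_\S}^{4(d+1)}\bigr)$, which is not $\le 1/\Psi_\S(1)$ once $h$ is large, so~\eqref{e.Sh.O} does not follow. The paper's bookkeeping closes because at the corresponding line it carries the prefactor $3^{d(m-h)}$ --- a power of the argument $3^{m-h}$, which is precisely what~\eqref{e.eat.the.poly} digests (at the cost of the shift $K_{\Psi_\S}^{-4(d+1)}$ and one extra power for summability over $m$); your sharper count $3^{dh}$ is dominated by $3^{d(m-h)}$ only when $m\ge 2h$. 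So as written your argument proves the tail bound only for scales $m\gtrsim 2h$ (equivalently, with a threshold like $K_{\Psi_\S}^{4(d+1)}3^{2h}$ in place of $K_{\Psi_\S}^{4(d+1)}3^h$); to recover the stated~\eqref{e.Sh.O} you need either to follow the paper's accounting, in which the combinatorial factor enters as a power of the scale separation $m-h$, or to supply a separate argument for the intermediate range $h\le m<2h$, where the quadratic exponent $j(j-1)/2$ you invoke gives no help against $3^{dh}$.
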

\begin{proof}
Fix~$h\in\N$. For every~$m\in\N$ with~$m \geq h$, we have 
\begin{align*}
\lefteqn{
\P\biggl[
\sup_{n\in \Z\cap  (-\infty,m]}
3^{-\gamma(m-h-n)_+}
\sup_{z \in 3^n\Zd \cap \cu_m}
\bigl| \bfE^{-\nicefrac12} \bfA(z+\cu_n) \bfE^{-\nicefrac12} \bigr|
> 1
\biggr ]
} \qquad &
\notag \\ & 
=
\P\biggl[
\sup_{n\in \Z\cap  (-\infty,m-h]}
3^{-\gamma(m-h-n)}
\sup_{z \in 3^n\Zd \cap \cu_m}
\bigl| \bfE^{-\nicefrac12} \bfA(z+\cu_n) \bfE^{-\nicefrac12} \bigr|
> 1
\biggr ]
\notag \\ & 
\leq 
\sum_{z' \in 3^{h}\Zd\cap\cu_m}
\P\biggl[
\sup_{n\in \Z\cap  (-\infty,m-h]}
3^{-\gamma(m-h-n)}
\sup_{z \in z'+3^n\Zd \cap \cu_{m-h}}
\bigl| \bfE^{-\nicefrac12} \bfA(z+\cu_n) \bfE^{-\nicefrac12} \bigr|
> 1
\biggr ]
\notag \\ & 
=
3^{d(m-h)}
\P\biggl[
\sup_{n\in \Z\cap  (-\infty,m-h]}
3^{-\gamma(m-h-n)}
\sup_{z \in 3^n\Zd \cap \cu_{m-h}}
\bigl| \bfE^{-\nicefrac12} \bfA(z+\cu_n) \bfE^{-\nicefrac12} \bigr|
> 1
\biggr ]
\notag \\ &
\leq 
3^{d(m-h)}\P\bigl[ \S > 3^{m-h} \bigr ]
\notag \\ &
\leq 
3^{-(m-h)}
\bigl( \Psi_{\S} \bigl(  K_{\Psi_{\S}}^{-4(d+1)}  3^{m-h} \bigr)  \bigr )^{-1}  \,.
\end{align*}
In the above display, subadditivity was used to get the first equality, a union bound gives the next line, then stationarity in the following line, and finally, assumption~\ref{a.ellipticity.dagger} and~\eqref{e.eat.the.poly} in the last line.  Define
\begin{equation*}
\S_h \coloneqq  \sup
\biggl \{
3^m \,:\,
m\in\Z\,, \ 
\sup_{n\in \Z\cap  (-\infty,m]}
3^{-\gamma(m-h-n)_+}
\sup_{z \in 3^n\Zd \cap \cu_m}
\bigl| \bfE^{-\nicefrac12} \bfA(z+\cu_n) \bfE^{-\nicefrac12} \bigr|
> 1
\biggr \}
\,.
\end{equation*}
By another union bound and using~\eqref{e.Psi.S.growth}, we obtain, for every~$m\in \N$ with~$m\geq h+1$, 
\begin{equation*}
\P \bigl[ \S_h \geq 3^m \bigr ]
\leq
\sum_{n=m}^\infty 
3^{-(n-h)}
\bigl( \Psi_{\S} \bigl(  K_{\Psi_{\S}}^{-4(d+1)}  3^{n-h} \bigr)  \bigr )^{-1} 
\leq 
\frac{1}{
\Psi_{\S} \bigl( K_{\Psi_{\S}}^{-4(d+1)}  3^{m-h} \bigr) }
\,.
\end{equation*}
This completes the proof of the lemma. 
\end{proof}

In view of~\eqref{e.bigAstar.def}, it is convenient to define~$\mathbf{E}_{*,0} 
 \coloneqq 
\bigl( \mathbf{R} \bfE \mathbf{R} \bigr)^{-1}$, where~$\bfE$ is the matrix from assumption~\ref{a.ellipticity.dagger}. 

\smallskip

We next show that~\ref{a.ellipticity.dagger} gives us control over all finite moments of the coarse-grained matrices.

\begin{lemma}[Upper bounds for coarse-grained matrices]
\label{l.bfA.upperbounds}
Assume that~$\P$ satisfies~\ref{a.stationarity} and~\ref{a.ellipticity.dagger}.
For every~$m\in\N$ and~$n\in\Z$ with~$n\leq m$, 
\begin{align}
\label{e.bfA.crude.moment}
& \sup_{z\in3^n \Zd \cap \cu_m}
\bigl| \bfE^{-\nicefrac12} \bfA(z+\cu_n) \bfE^{-\nicefrac12} \bigr| 
\leq 
3^{\gamma(m-n)} \bigl( 1 + \O_{\Psi_\S} \bigl( 3^{\gamma-m} \bigr) \bigr)\,,
\\ & 
\sup_{z\in3^n \Zd \cap \cu_m}
\bigl| \mathbf{E}_{*,0}^{\nicefrac12} \bfA_*^{-1} (z+\cu_n) \mathbf{E}_{*,0}^{\nicefrac12} \bigr| 
\leq 
3^{\gamma(m-n)} \bigl( 1 + \O_{\Psi_\S} \bigl( 3^{\gamma-m} \bigr) \bigr)\,.
\label{e.bfA.star.crude.moment}
\end{align}
In particular, for every~$n\in\N$, 
\begin{equation}
\label{e.bfA.crude.moment.E}
\bigl| \bfE^{-\nicefrac12} \bfA(\cu_n) \bfE^{-\nicefrac12} \bigr|
\leq
1 + \O_{\Psi_\S} ( 3^{\gamma-n} )
\qand 
\bigl| \mathbf{E}_{*,0}^{\nicefrac12}  \bfA_*^{-1} (\cu_n) \mathbf{E}_{*,0}^{\nicefrac12}  \bigr|
\leq
1 + \O_{\Psi_\S} ( 3^{\gamma-n} )
\,.
\end{equation}
\end{lemma}
\begin{proof}
The assumption~\ref{a.ellipticity.dagger} implies that, for every~$m\in\N$ and~$n\in\Z$ with~$n\leq m$, 
\begin{align*}
\sup_{z\in3^n \Zd \cap \cu_m}\bigl| \bfE^{-\nicefrac12} \bfA(z+\cu_n) \bfE^{-\nicefrac12} \bigr| 
&
\leq
3^{\gamma(m-n)}  + \Bigl( \frac{3\S}{3^n} \Bigr)^{\!\gamma} \indc_{\{ \S > 3^m \}}
\notag \\ & 
\leq
3^{\gamma(m-n)} + 3^{\gamma(m-n+1)} \Bigl( \frac{\S}{3^m} \Bigr)
\notag \\ & 
\leq 
3^{\gamma(m-n)} \bigl( 1 + \O_{\Psi_\S} \bigl( 3^{\gamma-m} \bigr) \bigr)\,.
\end{align*}
Notice that~\eqref{e.ellipticity.bfE} implies by the identity~\eqref{e.bigAstar.def} that, for every~$3^m\geq \mathcal{S}$,  
\begin{equation*}
\bfA_{*}^{-1} (z+\cu_n) \leq 3^{\gamma(m-n)} \mathbf{E}_{*,0}^{-1} \,,
\quad \forall n\in \Z \cap (-\infty,m], \ z \in 3^n \Zd \cap \cu_m\,.
\end{equation*}
Using this the bounds in~\eqref{e.bfA.star.crude.moment} for~$\bfA_*^{-1}$ are obtained similarly. 
\end{proof}

In view of~\eqref{e.bfA.crude.moment.E} and~\eqref{e.Psi.moments.bound}, we have the boundedness of all finite moments of~$|\bfA(\cu_n)|$. 
In fact, by~\eqref{e.bfA.crude.moment}, this can be extended to~$|\bfA(U)|$ for any bounded Lipschitz domain~$U\subseteq\Rd$ by partitioning the domain into triadic cubes and using subadditivity and the fact that~$\gamma <1$. This kind of argument can be found in the proof of Lemma~\ref{l.crude.moments} below, so we do not give it here. 

\smallskip

According to Lemma~\ref{l.bfA.upperbounds}, the assumption of~\ref{a.ellipticity.dagger} implies that~$\bfA(U)$ have all finite moments bounded. We may therefore define 
\begin{equation*}
\bfAhom(U)
 \coloneqq 
\E \bigl[ \bfA(U) \bigr]
\quad \text{and} \quad
\bfAhom_*^{-1}(U)
 \coloneqq 
\E \bigl[ \bfA_*^{-1} (U) \bigr]
\,.
\end{equation*}
We let~$\shom(U)$, $\shom_*(U)$,~$\khom(U)$ and~$\bhom(U)$ denote deterministic matrices which satisfy:
\begin{equation}
\label{e.meet.the.homs}
\left\{
\begin{aligned}
& \shom_*(U) = \E \bigl[ \s_*^{-1}(U) \bigr]^{-1} \,, 
\\ & 
\shom_*(U) \khom(U) 
=  \E \bigl[ \s_*^{-1}(U) \k(U) \bigr] \,, 
\\ & 
\bhom(U)  \coloneqq  \shom(U) + \khom^t(U) \shom_*^{-1}(U) \khom(U)
=
\E \bigl[ \s(U) + \k^t(U) \s_*^{-1}(U) \k(U) \bigr]\,.
\end{aligned}
\right.
\end{equation}
We see immediately that the first line of~\eqref{e.meet.the.homs} defines~$\shom_*(U)$, the second line defines~$\khom(U)$, and the third line defines~$\shom(U)$ and~$\bhom(U)$. 
In other words, these matrices are defined in such a way that 
\begin{equation*}
\bfAhom(U) 
= 
\begin{pmatrix} 
\shom + \khom^t\shom_*^{-1}\khom
& - \khom^t \shom_*^{-1}
\\ -\shom_*^{-1}\khom
& \shom_*^{-1}
\end{pmatrix}(U)
\,.
\end{equation*}
We also have
\begin{equation*}
\bfAhom_*^{-1} (U) 
= 
\begin{pmatrix} 
\shom_*^{-1}
& - \shom_*^{-1}\khom
\\ - \khom^t\shom_*^{-1}
& \shom + \khom^t\shom_*^{-1}\khom
\end{pmatrix}(U)
\,.
\end{equation*}
In most cases, taking the expectation of a natural expression involving the coarse-grained matrices   amounts to putting bars over each matrix. 

\smallskip

Observe that, by subadditivity, for every~$m,n \in \N$ with~$m\geq n$, 
\begin{equation}
\label{e.bfAhom.subadd}
\bfAhom(\cu_m) \leq \bfAhom(\cu_n) 
\end{equation}
and, in particular, 
\begin{equation} 
\label{e.monotone.s}
\bhom(\cu_m) \leq \bhom(\cu_n)\,,\quad
\shom(\cu_m) \leq \shom(\cu_n) 
\quad \mbox{and} \quad
\shom_{*}(\cu_m) \geq \shom_{*}(\cu_n) \,. 
\end{equation}
The monotonicity of~$\bhom(\cu_n)$ and~$\shom_*(\cu_n)$ is immediate from~\eqref{e.bfAhom.subadd}, while the monotonicity of~$\shom(\cu_n)$ is due to~\eqref{e.bfAhom.subadd} and~\eqref{e.Eone.vs.Etwo.one}. 

\smallskip

By Lemma~\ref{l.bfA.upperbounds} and~\eqref{e.Psi.moments.bound},
for every~$n\in\N$,  
\begin{equation*}
\bigl( 1 + 3^{3-n} K_{\Psi_\S}^2
\bigr)^{-1} 
\mathbf{E}_{*,0}
\leq 
\bfAhom_*(\cu_n)
\leq 
\bfAhom(\cu_n) 
\leq 
\bigl( 1 + 3^{3-n} K_{\Psi_\S}^2
\bigr)
\bfE
\,.
\end{equation*}
By~\ref{a.ellipticity.dagger} and~Lemma~\ref{l.bfE.bounds}, we have that 
\begin{equation*}
|\mathbf{E}_{*,0}^{-\nf12} \bfE \mathbf{E}_{*,0}^{-\nf12}| 
\leq 
1 + 6
(\Theta-1) 
\,,
\end{equation*}
and so we deduce that, for every~$n\in\N$, 
\begin{equation}
\label{e.bfAhom.by.E0}
\bigl( 1 + 3^{3-n} K_{\Psi_\S}^2
\bigr)^{-1} \bfAhom(\cu_n) 
\leq 
\bfE
\leq
(1 + 6 (\Theta-1))
\bigl( 1 + 3^{3-n} K_{\Psi_\S}^2
\bigr)
\bfAhom(\cu_n)\,.
\end{equation}

\smallskip

We next discuss sensitivity estimates for the random matrix~$\bfA(U)$. 
It is immediate from the variational characterization of~$\bfA(U)$ in~\eqref{e.J.P0.Dirichlet} that, with~$\mathrm{D}_U$ defined in~\eqref{e.malliavin.mult} above, 
\begin{equation}
\label{e.bfA.malliavin}
\bigl| \mathrm{D}_U \bigl( P \cdot \bfA(U) P) \bigr| 
\leq 
P \cdot \bfA(U) P\,, 
\qquad \forall P\in\R^{2d}\,.
\end{equation}
Indeed, if we fix a bounded Lipschitz domain~$U\subseteq \Rd$ and let~$\a_1,\a_2\in\Omega(U)$ with corresponding~$2d$-by-$2d$ matrices~$\bfA_1$ and~$\bfA_2$ satisfying~$\| 
\bfA_2^{-\nicefrac 12}
\bfA_1
\bfA_2^{-\nicefrac 12}
\|_{L^\infty(U)} \leq 2$. Note that this implies that~$\s_2^{-\nf12} \s_1 \s_2^{-\nf12} \geq \frac12\Id$ in~$U$ and hence~$L^2_{\a_1,\mathrm{pot},0}(U)=L^2_{\a_2,\mathrm{pot},0}(U)$ and~$L^2_{\a_1,\mathrm{sol},0}(U)=L^2_{\a_2,\mathrm{sol},0}(U)$. We compute 
\begin{align*}
P \cdot \bfA_1(U) P 
&
= 
\inf \biggl\{ 
\fint_{U}
(X + P) \cdot \bfA_1 (X + P)
\, : \, X \in  L^2_{\a_1,\mathrm{pot},0} (U) \times  L^2_{\a_1,\mathrm{sol},0}(U)  \biggr\}
\notag \\ &
\leq 
\| 
\bfA_2^{-\nicefrac 12}
\bfA_1
\bfA_2^{-\nicefrac 12}
\|_{L^\infty(U)}
\inf \biggl\{ 
\fint_{U}  
(X + P) \cdot \bfA_2 (X + P)
 :  X \in  L^2_{\a_2,\mathrm{pot},0}(U) {\times}  L^2_{\a_2,\mathrm{sol},0}(U)  \biggr\}
\notag \\ &
=
\| 
\bfA_2^{-\nicefrac 12}
\bfA_1
\bfA_2^{-\nicefrac 12}
\|_{L^\infty(U)}
\cdot
P \cdot \bfA_2(U) P 
\,.
\end{align*}
This implies~\eqref{e.bfA.malliavin}. 
It is immediate from the definitions that 
\begin{equation}
\label{e.bfA.local}
\bfA(U) \quad \mbox{is~$\mathcal{F}(U)$--measurable.}
\end{equation}
The sensitivity estimate~\eqref{e.bfA.malliavin} and the locality~\eqref{e.bfA.local} of~$\bfA(U)$ will allow us to apply our mixing assumption~\ref{a.CFS} to sums of coarse-grained matrices.

\smallskip

We next apply the CFS condition~\ref{a.CFS} to sums of the coarse-grained matrix~$\bfA(U)$. Since these random variables are not bounded, we need to apply a cutoff function and use the previous lemma to control the resulting error. 

\begin{lemma}[Concentration for sums of~$\bfA$'s]
\label{l.bfA.CFS}
Assume that~$\P$ satisfies~\ref{a.stationarity},~\ref{a.ellipticity.dagger} and~\ref{a.CFS}.
For every~$k,m,n\in\N$ with~$\beta m < n < k \leq m$ and~$z\in 3^k\Zd\cap \cu_m$,
\begin{equation}
\label{e.bfA.CFS.upper}
\avsum_{z'\in 3^n\Zd\cap (z+\cu_k)}\!\!\!\!
\bigl(  \bfA(z'{+}\cu_n) - \bfAhom(\cu_n) \bigr) 
\indc_{\{ \S \leq 3^m\} }
\\ \leq 
\O_{\Psi}\Bigl( 4 \cdot 3^{\gamma(m-n)}3^{-\nu(k-n) }\bfE  \Bigr) \,.
\end{equation}
\end{lemma}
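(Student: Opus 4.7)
The plan is to apply the concentration assumption~\ref{a.CFS} to a suitable family of bounded, centered, local scalar functionals of~$\bfA$, with the truncation justified by the ellipticity assumption~\ref{a.ellipticity} on the event~$\{\S \leq 3^m\}$. I first reduce to a scalar inequality by fixing a direction~$P \in \R^{2d}$ and setting
\begin{equation*}
Y_{z'} := P \cdot \bfA(z'+\cu_n) P\,, \qquad M := 3^{\gamma(m-n)}\, P \cdot \bfE P\,.
\end{equation*}
By~\eqref{e.ellipticity} applied at the scale~$3^m$, on the event~$\{\S \leq 3^m\}$ we have~$Y_{z'} \leq M$ for every~$z' \in 3^n \Zd \cap \cu_m$, so~$Y_{z'} \wedge M$ agrees with~$Y_{z'}$ there. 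I introduce the rescaled, centered variables
\begin{equation*}
X_{z'} := M^{-1}\bigl( (Y_{z'} \wedge M) - \E[Y_{z'} \wedge M]\bigr)
\end{equation*}
and check the four hypotheses in~\eqref{e.CFS.ass}: mean zero by construction; $|X_{z'}| \leq 1$ since~$Y_{z'} \wedge M \in [0,M]$; $\mathcal{F}(z'+\cu_n)$-measurability by~\eqref{e.bfA.local} and stationarity; and the sensitivity bound~$|D_{z'+\cu_n} X_{z'}| \leq 1$. The last point is the heart of the argument: from~\eqref{e.bfA.malliavin} we have~$|D_{z'+\cu_n} Y_{z'}| \leq Y_{z'}$, and since the truncation~$t \mapsto t \wedge M$ is $1$-Lipschitz with derivative supported on~$\{t \leq M\}$, the chain rule gives~$|D_{z'+\cu_n}(Y_{z'} \wedge M)| \leq Y_{z'}\indc_{\{Y_{z'} \leq M\}} \leq M$, and dividing by~$M$ yields the claim.

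Next I apply~\ref{a.CFS} to the family~$\{X_{z'}\}_{z' \in 3^n\Zd\cap(z+\cu_k)}$. The hypothesis~$\beta m < n$ together with $k \leq m$ gives~$\beta k \leq \beta m < n < k$, so~\eqref{e.CFS} (applied at the scale~$k$ after using stationarity to translate~$z+\cu_k$ to~$\cu_k$) yields
\begin{equation*}
\avsum_{z' \in 3^n \Zd \cap (z+\cu_k)} X_{z'} \leq \O_\Psi\bigl(3^{-\nu(k-n)}\bigr)\,.
\end{equation*}
Multiplying through by~$M$ and using stationarity in the form~$\E Y_{z'} = P \cdot \bfAhom(\cu_n)P$, together with the \emph{one-sided} truncation error~$\E[Y_{z'} \wedge M] - \E Y_{z'} = -\E[(Y_{z'}-M)_+] \leq 0$, I obtain
\begin{equation*}
P \cdot \biggl[\avsum_{z'}\bigl(\bfA(z'+\cu_n) - \bfAhom(\cu_n)\bigr)\biggr] P \cdot \indc_{\{\S \leq 3^m\}} \leq \O_\Psi\bigl(3^{\gamma(m-n)}3^{-\nu(k-n)}\bigr) \cdot P \cdot \bfE P\,.
\end{equation*}

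To upgrade from a per-direction scalar estimate to the stated L\"owner-order matrix bound, I discretize the unit sphere~$\{P \in \R^{2d} : P \cdot \bfE P = 1\}$ (equipped with the~$\bfE$-norm) by a~$\nicefrac14$-net~$\mathcal{N}$ of cardinality~$\leq C^{d}$, apply the scalar bound at each point of~$\mathcal{N}$, and take a union bound; by the growth property~\eqref{e.Psi.growth} of~$\Psi$ this union bound costs only a constant multiplicative penalty in the~$\O_\Psi$ factor. A standard self-improving net argument then promotes the supremum over~$\mathcal{N}$ to the supremum over the whole sphere at the cost of a factor of at most~$4$, which accounts for the prefactor appearing in the statement.

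The principal obstacle is the sensitivity step: the raw multiplicative Malliavin-type bound~\eqref{e.bfA.malliavin} gives~$|D_{z'+\cu_n}Y_{z'}| \leq Y_{z'}$, which is not uniformly bounded, so~\ref{a.CFS} cannot be applied directly. The key point is that the Lipschitz truncation~$\wedge M$ both caps the sensitivity at~$M$ \emph{and} is trivial on~$\{\S \leq 3^m\}$ by~\ref{a.ellipticity}, so the truncated family satisfies the hypotheses of~\ref{a.CFS} without any loss on the event of interest.
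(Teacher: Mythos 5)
The core mechanism of your argument---truncate at the level permitted by~\ref{a.ellipticity} on the event~$\{\S\leq 3^m\}$, check the~\eqref{e.CFS.ass} hypotheses (the Lipschitz composition indeed gives~$|D_{z'+\cu_n}(Y_{z'}\wedge M)|\leq M$), apply~\ref{a.CFS} at the scales~$(n,k)$ after a translation, and then exploit that the truncation error~$\E[Y\wedge M]-\E[Y]\leq 0$ only pushes in the favorable direction---is exactly the paper's mechanism, and your per-direction scalar estimate is correct. The genuine gap is in the last step, where you pass from the quadratic forms on a~$\nicefrac14$-net to the L\"owner inequality. The ``standard self-improving net argument'' does not apply here: for a symmetric matrix~$\tilde S$ the error incurred in replacing~$Q$ by a net point~$Q_0$ is of order~$\epsilon\,|\tilde S|$, and on the event~$\{\S\leq 3^m\}$ the only a priori bound on~$|\tilde S|$ (with~$\tilde S$ the~$\bfE$-conjugated sum) is of order~$3^{\gamma(m-n)}$, which is enormously larger than the target~$t\,3^{\gamma(m-n)}3^{-\nu(k-n)}$; a fixed-radius net therefore cannot self-improve. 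Moreover, the self-improvement for the spectral norm requires \emph{two-sided} control of~$Q_0\cdot\tilde S Q_0$ at the net points, but your comparison with~$\bfAhom(\cu_n)$ is intrinsically one-sided (the reverse direction would require estimating~$\E[(Y-M)_+]$, i.e.\ a quantitative use of~$\Psi_\S$, which you do not perform). Finally, even granting a net argument, the union bound over the net costs a factor depending on~$d$ and~$K_\Psi$ via~\eqref{e.Psi.doubling}, not the factor~$4$ in the statement; this latter point is minor, but it shows the prefactor cannot be produced the way you claim.

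The paper avoids this issue by truncating the \emph{matrix} with a single scalar cutoff: it sets~$\tilde{\mathbf{A}}_{z'}:=\varphi\bigl(|\bfE^{-\nicefrac12}\bfA(z'+\cu_n)\bfE^{-\nicefrac12}|\bigr)\,\bfE^{-\nicefrac12}\bfA(z'+\cu_n)\bfE^{-\nicefrac12}$ with~$\varphi$ supported in~$[0,2T]$,~$T=3^{\gamma(m-n)}$. Because the cutoff is a scalar factor in~$[0,1]$, the ordering~$0\leq\tilde{\mathbf{A}}_{z'}\leq\bfE^{-\nicefrac12}\bfA(z'+\cu_n)\bfE^{-\nicefrac12}$ is preserved, so~$\E[\tilde{\mathbf{A}}_{z'}]\leq\bfE^{-\nicefrac12}\bfAhom(\cu_n)\bfE^{-\nicefrac12}$ holds as a matrix inequality (your one-sided recentering, but at the matrix level), while~$|\tilde{\mathbf{A}}_{z'}|\leq 2T$ and~$|D_{z'+\cu_n}\tilde{\mathbf{A}}_{z'}|\leq 4T$ allow~\ref{a.CFS} to be applied to the (finitely many, dimension-bounded) components of the centered family, yielding a genuine spectral-norm bound~$\bigl|\avsum(\tilde{\mathbf{A}}_{z'}-\E[\tilde{\mathbf{A}}_{z'}])\bigr|\leq\O_\Psi(4T3^{-\nu(k-n)})$; the L\"owner statement then follows immediately by sandwiching, with no net and no sphere supremum. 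If you want to rescue your direction-wise scheme, you would have to replace the~$P$-dependent truncation by a single matrix truncation of this type (or otherwise obtain two-sided control), which is essentially the paper's proof.
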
 
\begin{proof}
Denote~$T  \coloneqq  3^{\gamma(m-n)}$.  We take a smooth cutoff function~$\varphi:\R_+ \to [0,1]$ satisfying
\begin{equation*}
\indc_{[0,T]} \leq \varphi \leq \indc_{[0,2T]}\,, \quad | \varphi'| \leq 2T^{-1}. 
\end{equation*}
Denote, for each~$z\in 3^n\Zd\cap \cu_m$,  
\begin{equation*}
\tilde{\mathbf{A}}_z  \coloneqq 
\varphi\bigl( \bigl| 
\bfE^{-\nicefrac12} \bfA(z+\cu_n) \bfE^{-\nicefrac12}  \bigr|\bigr)
\bfE^{-\nicefrac12} \bfA(z+\cu_n) \bfE^{-\nicefrac12} 
\,.
\end{equation*}
It is clear that~$\bigl| \tilde{\mathbf{A}}_z\bigr| \leq 2T$. 
According to~\eqref{e.bfA.malliavin}, we have that 
\begin{equation}
\label{e.malliavin.after.cutoff}
\bigl| \mathrm{D}_{z+\cu_n}  \tilde{\mathbf{A}}_z\bigr|
\leq 
\bigl|\tilde{\mathbf{A}}_z\bigr|
\bigl(1 + 
\varphi'\bigl( \bigl| 
\bfE^{-\nicefrac12}
\bfA(z+\cu_n) 
\bfE^{-\nicefrac12}
\bigr|\bigr)\bigr)
\leq 4T. 
\end{equation}
By~\eqref{e.bfA.local}, it is clear that~$\tilde{\mathbf{A}}_z$ is~$\mathcal{F}(z+\cu_n)$--measurable. 
We may, therefore, apply~\ref{a.CFS} to obtain,
for every~$k\in\N\cap (n,m]$ and~$z\in 3^k\Zd \cap \cu_m$,
\begin{equation}
\label{e.tilde.dub}
\biggl| \avsum_{z'\in 3^n\Zd\cap (z+\cu_k)}
\bigl(  \tilde{\mathbf{A}}_{z'} - \E \bigl[ \tilde{\mathbf{A}}_{z'} \bigr] \bigr) \biggr|
\leq 
\O_{\Psi}\bigl( 4T 3^{-\nu(k-n)} \bigr)\,.
\end{equation}
We also have that 
\begin{equation*}
 \bfA(z+\cu_n) \indc_{\{ \S \leq 3^m\} }  =  \tilde{\mathbf{A}}_z \indc_{\{ \S \leq 3^m\} }
\quad \mbox{and} \quad 
\bfAhom(\cu_n) \geq  \E \bigl[ \tilde{\mathbf{A}}_z \bigr]
\,,
\end{equation*}
and therefore 
\begin{equation*}  
\avsum_{z'\in 3^n\Zd\cap (z+\cu_k)}
\bfE^{-\nicefrac12}\bigl(  \bfA(z'+\cu_n) - \bfAhom(\cu_n) \bigr) \bfE^{-\nicefrac12} 
\indc_{\{ \S \leq 3^m\} }
\leq 
\!\!\!
\avsum_{z'\in 3^n\Zd\cap (z+\cu_k)} \! \!
\bigl(  \tilde{\mathbf{A}}_{z'} - \E \bigl[ \tilde{\mathbf{A}}_{z'} \bigr] \bigr) 
\,.
\end{equation*}
Therefore,~\eqref{e.bfA.CFS.upper} follows by~\eqref{e.tilde.dub}. 
The proof is complete. 
\end{proof}

\subsection{Renormalization of the ellipticity assumption}
\label{ss.renormalization}

We next show that we can \emph{renormalize} the assumptions~\ref{a.stationarity},~\ref{a.ellipticity.dagger} and~\ref{a.CFS}. 
To formalize this, we introduce the mapping~$D_{n_0}:\Omega \to \Omega$ given by dilation by~$3^{n_0}$,
\begin{equation}
\label{e.dilation.def}
(D_{n_0} \a )(x) = \a (3^{n_0}x)
\end{equation}
and we define~$\P_{n_0}$ by
\begin{equation}
\label{e.Pn0}
\P_{n_0} \coloneqq  \mbox{\,the pushforward of~$\P$ under~$D_{n_0}$.}
\end{equation}
We show that~$\P_{n_0}$ satisfies the same assumptions as~$\P$---with the ellipticity matrix~$\bfE$ replaced by~$\bfAhom(\cu_{n_0-l_0})$ for a sufficiently large scale separation parameter~$l_0$---and some suitable modifications to the other parameters (we must also slightly enlarge~$\gamma$ and replace~$\S$ by a new minimal scale~$\S'$ which has integrability quantified by a new function~$\Psi_{\S'}$ given in terms of the~$\Psi_\S$ and~$\Psi$).

\smallskip

The main point is that the ellipticity ratio for~$\bfAhom(\cu_{n_0-l_0})$ may be much smaller than for~$\bfE$. 
It is natural therefore to define, for each~$n\in\N$, a new parameter~$\Theta_n$, which we call the \emph{renormalized ellipticity ratio~$\Theta_n \in [1,\infty)$ at scale}~$3^n$, which is the ellipticity ratio for~$\bfAhom(\cu_n)$. 
In view of~\eqref{e.Theta} and~\eqref{e.bigA.def}, we define it by 
\begin{equation}
\label{e.Theta.n.def}
\Theta_n  \coloneqq  
\min_{\h_0 \in \R^{d\times d}_{\mathrm{skew}}}
\bigl| ( \shom_{*}^{-\nicefrac12} \bhom_{\h_0}\,\shom_{*}^{-\nicefrac12}  ) (\cu_n) \bigr|
\,.
\end{equation}
Observe that, by Lemma~\ref{l.bfE.bounds}, 
\begin{equation*}
\bigl| ( \shom_*^{-\nf12} \shom\shom_*^{-\nf12})(\cu_n) - \Id \bigr|
\leq 
\Theta_n -1 
\leq 
3\bigl| ( \shom_*^{-\nf12} \shom\shom_*^{-\nf12})(\cu_n) - \Id \bigr| 
\,.
\end{equation*}
We also define 
\begin{equation}
\label{e.hat.Theta.m}
\XiDet_n 
 \coloneqq 
\det\bigl( (\shom \shom_*^{-1})(\cu_n) \bigr)^{\nf1d} 
=
\det\bigl( \bfAhom (\cu_n) \bigr)^{\nf1d} 
\,.
\end{equation}
Note that both~$n\mapsto \Theta_n$ and~$n\mapsto \XiDet_n$ are monotone decreasing, since the matrices~$\shom(\cu_n)$ and~$\shom_{*}^{-1}(\cu_n)$ are, by~\eqref{e.monotone.s}. 
Using that~$\shom_*(\cu_n) \leq \shom(\cu_n)$, we find that 
\begin{equation}
\label{e.Theta.n.by.Theta.n.hat} 
\XiDet_n -1 
\leq
\bigl| ( \shom_*^{-\nf12} \shom\shom_*^{-\nf12})(\cu_n) - \Id \bigr| 
\leq 
\XiDet_n^d-1  \,.
\end{equation}
The first inequality of~\eqref{e.Theta.n.by.Theta.n.hat} follows from the arithmetic-geometric mean inequality; the second inequality follows from the fact that~$\tr(A) \leq \det( (\Id+A)) - 1$ for any nonnegative matrix~$A$.

\smallskip

For convenience, we introduce an exponent~$\mu$ used throughout the rest of the paper, defined by 
\begin{equation}
\label{e.def.mu}
\mu \coloneqq  (\nu-\gamma)(1-\beta)\,.
\end{equation}

\begin{proposition}[Renormalization of the assumptions]
\label{p.renormalization.P}
Suppose~$\P$ satisfies~\ref{a.stationarity},~\ref{a.ellipticity.dagger} and~\ref{a.CFS} with~$\nu>\gamma$.
Let~$\rho \in (\gamma, \min\{ \nu,1\})$ and~$\delta>0$. 
Suppose that~$l_0\in\N$ satisfies
\begin{equation}
\label{e.l0.condition}
l_0 \geq 
\frac{1}{\rho-\gamma}
\Bigl(1  + \frac d{\mu}\Bigr) \bigl( 6 + 
\log_3 ( \delta^{-1} \Theta )
\bigr)  +\frac{6}{\mu} \bigl( 1 + \log K_{\Psi} \bigr) \,.
\end{equation}
For every~$n_0\in\N$ with~$n_0 \geq l_0 + 2 \log K_\Psi$, the pushforward~$\P_{n_0}$ of~$\P$ under the dilation map given in~\eqref{e.dilation.def} satisfies the assumptions~\ref{a.stationarity},~\ref{a.ellipticity.dagger} and~\ref{a.CFS}, where the parameters~$(\gamma,\Psi_\S,\bfE)$ in assumption~\ref{a.ellipticity.dagger} are replaced by~$(\rho,\Psi_{\S'},(1+\delta) \bfAhom(\cu_{n_0-l_0}))$ and~$\Psi_{\S'}$ is defined by
\begin{equation}
\label{e.new.Psi.S}
\Psi_{\S'}(t) \coloneqq \frac12 \min\bigl\{ \Psi_\S(3^{n_0}t) ,\Psi  ( t^{\mu })\bigr\}
\,.
\end{equation}
\end{proposition}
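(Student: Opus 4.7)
The plan is to verify each of the three assumptions for $\tilde\P := \P_{n_0}$ separately, with $\S'$ defined as the maximum of two contributions: the rescaled original minimal scale, and a new scale $\S'_1$ coming from a $\CFS$-based concentration of the coarse-grained matrices around $\bfAhom(\cu_{n_0-l_0})$. Stationarity~\ref{a.stationarity} for $\tilde\P$ is immediate from that of $\P$ since $3^{n_0}\Zd \subseteq \Zd$. For~\ref{a.CFS}, if a family $\{X_z : z \in 3^n\Zd \cap \cu_m\}$ satisfies the hypotheses~\eqref{e.CFS.ass} under $\tilde\P$, the same random variables, reindexed by $z' := 3^{n_0} z \in 3^{n+n_0}\Zd \cap \cu_{m+n_0}$, satisfy~\eqref{e.CFS.ass} under $\P$; the constraint $\beta(m+n_0) < n+n_0$ needed to invoke~\ref{a.CFS} for $\P$ is implied by $\beta m < n$, and the averaged sums rescale identically, so~\ref{a.CFS} holds for $\tilde\P$ with unchanged parameters.

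The substantive work is verifying~\ref{a.ellipticity} for $\tilde\P$. Using the change of variables $\bfA_{\tilde\a}(\cu_n) = \bfA_\a(\cu_{n+n_0})$ in the variational formula~\eqref{e.J.P0.Dirichlet}, the claim reduces to producing, for $\a \sim \P$, a random scale $\S'$ such that $3^{m'} \geq 3^{n_0}\S'$ implies
\begin{equation*}
\bfA(z'+\cu_{n'}) \leq 3^{\rho(m'-n')}(1+\delta)\bfAhom(\cu_{n_0-l_0}), \qquad \forall n' \leq m',\ z' \in 3^{n'}\Zd \cap \cu_{m'}\,.
\end{equation*}
I would split into a small-cube regime $n' \leq n_0 - l_0$ and a large-cube regime $n' > n_0 - l_0$. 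In the small-cube regime, I would invoke the original~\ref{a.ellipticity} to get $\bfA(z'+\cu_{n'}) \leq 3^{\gamma(m'-n')}\bfE$, then use the upper bound $\bfE \leq C\Theta\, \bfAhom(\cu_{n_0-l_0})$ from~\eqref{e.bfAhom.by.E0} (valid since $n_0 - l_0$ dominates $\log K_{\Psi_\S}$ under the stated hypotheses on $l_0$ and $n_0$); the gap $m'-n' \geq l_0$ combined with $(\rho-\gamma)l_0 \gtrsim \log(\delta^{-1}\Theta)$ from~\eqref{e.l0.condition} then closes the estimate. In the large-cube regime, I would use subadditivity~\eqref{e.subadditivity} to dominate $\bfA(z'+\cu_{n'})$ by the empirical average of $\bfA(z''+\cu_{n^*})$ over triadic subcubes of scale $3^{n^*}$ where $n^* := \max(n_0 - l_0, \lfloor\beta m'\rfloor + 1)$ (so that the $\CFS$ hypothesis $\beta m' < n^*$ is satisfied), then apply Lemma~\ref{l.bfA.CFS} to bound the excess above $\bfAhom(\cu_{n^*})$, and finish using the monotonicity $\bfAhom(\cu_{n^*}) \leq \bfAhom(\cu_{n_0-l_0})$ from~\eqref{e.monotone.s}. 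When $n' \leq n^*$, which can occur if $\beta m'$ exceeds $n'$, the case reduces to the small-cube argument since $m' - n' \geq (1-\beta)m'$ is then large enough to absorb $\Theta$.

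The new minimal scale is $\S' := \max(3^{-n_0}\S, \S'_1)$, where $\S'_1$ is the smallest scale above which the large-cube concentration holds simultaneously for all admissible $(n',z')$. A union bound over the $\lesssim 3^{d m'}$ admissible pairs combined with the super-polynomial growth~\eqref{e.Psi.growth} of $\Psi$ yields $\P[\S'_1 > 3^m] \leq \tfrac12/\Psi((3^m)^\mu)$, with the exponent $\mu := (\nu-\gamma)(1-\beta)$ of~\eqref{e.def.mu} absorbing both the $\CFS$-restriction factor $(1-\beta)$ and the $\gamma$-loss from coarse-graining at mesoscopic scales. Combining this with the rescaled tail $\P[3^{-n_0}\S > t] \leq 1/\Psi_\S(3^{n_0}t)$ gives the claimed form~\eqref{e.new.Psi.S} of $\Psi_{\S'}$. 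The principal obstacle is the quantitative bookkeeping in the large-cube regime: tracking how the polynomial union-bound factor, the prefactor $C\Theta/\delta$, and the iterated $\Psi$-growth combine to give precisely the lower bound~\eqref{e.l0.condition} on $l_0$ and the clean form of~\eqref{e.new.Psi.S}. Everything else is a structural combination of subadditivity, Lemma~\ref{l.bfA.CFS}, the coarse-scale bound~\eqref{e.bfAhom.by.E0}, and monotonicity~\eqref{e.monotone.s}.
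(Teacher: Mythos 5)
Your treatment of~\ref{a.stationarity} and~\ref{a.CFS} matches the paper's (these are dismissed in one line there), and your toolkit for~\ref{a.ellipticity} — subadditivity, Lemma~\ref{l.bfA.CFS}, the comparison $\bfE\leq C\Theta\,\bfAhom(\cu_{n_0-l_0})$ from~\eqref{e.bfAhom.by.E0}, monotonicity of $\bfAhom(\cu_n)$, a union bound, and the growth property of $\Psi$ — is exactly the right one. The gap is in how you split the cubes. The paper (Lemma~\ref{l.renormalize.ellipticity}) splits according to the gap from the top scale: the deterministic bound $\bfA(z+\cu_k)\leq 3^{\gamma(m-k)}\bfE\leq C\Theta\,3^{\gamma(m-k)}\bfAhom(\cu_{n_0-l_0})$, absorbed by the gain $3^{(\rho-\gamma)(m-k)}$, works for \emph{every} cube with $m-k\geq h$, where $h\simeq(\rho-\gamma)^{-1}\log(\delta^{-1}\Theta)$, irrespective of whether its scale sits above or below $n_0-l_0$. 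Consequently the concentration estimate is only needed for the top $h$ scales $k\in(m-h,m]$, where the union bound costs only a factor $3^{dh}$ — and that factor is precisely what the coefficient $(1+d/\mu)$ and the term $\tfrac6\mu(1+\log K_\Psi)$ in~\eqref{e.l0.condition} are budgeted to absorb (a single application of~\eqref{e.eat.it} with $p=1$). Moreover the mesoscale there is taken to be $l=(n_0-l_0)+\lceil\beta(k-n_0+l_0)\rceil$, which makes the deviation scale in Lemma~\ref{l.bfA.CFS} of order $3^{\mu(k-n_0+l_0)}3^{(\rho-\gamma)(m-k)}\gtrsim 3^{\mu(m-n_0)}3^{\mu(l_0-h)}$; the factor $3^{\mu(m-n_0)}$ is what produces the $\Psi(t^\mu)$ term in~\eqref{e.new.Psi.S}.

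Your split by absolute scale ($n'\lessgtr n_0-l_0$) forces the probabilistic step to cover cubes with arbitrarily large gap $m'-n'$, and there it does not deliver the stated conclusion. Take $n'=n_0-l_0+1$ and $m'$ very large (say $\beta$ small, so your mesoscale is $n^*=n_0-l_0$): Lemma~\ref{l.bfA.CFS} gives a fluctuation of order $\O_\Psi\bigl(3^{\gamma(m'-n^*)}3^{-\nu(n'-n^*)}\bfE\bigr)$ with $n'-n^*=1$, so against the target $\delta\,3^{\rho(m'-n')}\bfAhom(\cu_{n_0-l_0})$ the admissible deviation parameter is only of order $\delta\Theta^{-1}3^{(\rho-\gamma)(m'-n^*)}$, and you must additionally pay a union bound over $3^{d(m'-n')}$ cubes. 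After absorbing that factor through~\eqref{e.Psi.doubling}, the decay in the renormalized scale $m''=m'-n_0$ you can extract is governed by $\rho-\gamma$, not by $\mu$; when $\Psi$ has only the minimal growth~\eqref{e.explog2.growth} (log-normal type) and $\rho-\gamma<\mu$ (e.g.\ $\beta=0$ and $\rho$ close to $\gamma$), the resulting tail is strictly fatter than the claimed $\Psi(t^\mu)$ in~\eqref{e.new.Psi.S}, and the repeated use of $\Psi$-growth needed to beat $3^{d(m'-n')}$ produces constants ($K_\Psi$ to a power of order $d/(\rho-\gamma)$) that the stated $l_0$ in~\eqref{e.l0.condition} does not budget for. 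So this is not bookkeeping: those intermediate cubes must be handled deterministically (they can be, since $m'-n'\geq h$ there), and the probabilistic argument confined to $m'-n'<h$, where your mesoscale choice does yield the $3^{\mu(m'-n_0)}$ deviation scale. Your small-cube regime and the edge case $n'\leq n^*$ are fine as written; the missing idea is that the same deterministic absorption applies to every cube far below the top scale, which is what keeps the union bound at $3^{dh}$ and makes the stated $l_0$ and $\Psi_{\S'}$ achievable.
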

\begin{proof}
The conditions~\ref{a.stationarity} and~\ref{a.CFS} for~$\P_{n_0}$ are immediate from their validity for~$\P$, as the dilation causes no harm; the only condition which needs to be checked is therefore~\ref{a.ellipticity.dagger}, and this is the content of Lemma~\ref{l.renormalize.ellipticity}, below. Indeed, taking~$\mathcal{S}^\prime \coloneqq  3^{-n_0}\max\{ \S, \mathcal{R}_{n_0} \}$ with~$\mathcal{R}_{n_0}$ as in Lemma~\ref{l.renormalize.ellipticity} yields~\ref{a.ellipticity.dagger} with~$\Psi_{\mathcal{S}^\prime}$ given by~\eqref{e.new.Psi.S}. 
\end{proof}

The function~$\Psi_{\S'}$ satisfies~$t\Psi_{\S'}(t) \leq \Psi_{\S'}(K_{\Psi_{\S'}}t)$ for all~$t\geq 1$ with~$K_{\Psi_{\S'}}$ given by
\begin{equation*}
K_{\Psi_{\S'}}  \coloneqq  
\max\big\{ K_{\Psi_{\S}} , K_{\Psi}^{\lceil \nicefrac1\mu \rceil} \bigr\}\,.
\end{equation*}
This follows from the definition of~$\Psi_{\S'}$ in~\eqref{e.new.Psi.S}  and~\eqref{e.eat.poly.factors}. 
The new values of the ellipticity ratios~$\Theta$ and~$\Pi$ are at most~$(1+\delta)^2\Theta_{n_0-l_0}$ and~$256(1+\delta)^2\Pi$, respectively. This follows immediately from the definition~\eqref{e.Theta.n.def} of~$\Theta_n$, and, respectively,~\eqref{e.bfAhom.by.E0} and the bound~$n_0-l_0\geq \log K_\Psi$.

\smallskip

We turn to the main part of the proof of Proposition~\ref{p.renormalization.P}, formulated in the following lemma.

\begin{lemma}[Renormalization of ellipticity]
\label{l.renormalize.ellipticity}
Suppose~$\P$ satisfies~\ref{a.stationarity},~\ref{a.ellipticity.dagger} and~\ref{a.CFS} with~$\nu>\gamma$.
Let~$\rho \in (\gamma,1)$ and~$\delta>0$. 
Define
\begin{equation}
\label{e.ell.naught.def}
l_0  \coloneqq   
\biggl\lceil \frac{1}{\rho-\gamma}
\Bigl(1  + \frac d{\mu}\Bigr) \bigl( 6 + 
\log_3 ( \delta^{-1} \Theta )
\bigr)  +\frac{6}{\mu} \bigl( 1 + \log K_{\Psi} \bigr) 
\biggr\rceil
\,.
\end{equation}
For every~$n\in\N$ with~$n \geq l_0 + 2\log K_\Psi$, 
there exists a minimal scale~$\mathcal{R}_n$ satisfying
\begin{equation}
\label{e.new.scale}
\mathcal{R}_n^\mu = \O_{\Psi} ( 3^{n\mu})
\,,
\end{equation}
such that, for every~$m\in\N$ with~$m\geq n$, 
\begin{align}
\label{e.new.ellipticity}
\lefteqn{ 
3^m \geq \max\{ \S , \mathcal{R}_n \} 
} \quad &
\notag \\ &
\implies 
\bfA(z+\cu_k) 
\leq 
\bigl( 1 + 
\delta 3^{\rho(m-k)} \bigr)
\bfAhom(\cu_{n-l_0})
\,,
\quad
\forall k \in \Z \cap (-\infty,m]\,,
\ z \in 3^k\Zd \cap \cu_m\,.
\end{align}
\end{lemma}
\begin{proof}
Let~$h\in\N$ be a parameter to be selected below. Let~$m,n ,l_0 \in\N$ with~$m \geq n$ and~$n-l_0 \geq \log K_{\Psi_\S}$. 
Fix~$k,l \in \N$ with~$m-h < k \leq m$ and~$\max\{ n - l_0, \beta k \} < l < k$.
Using~\eqref{e.bfAhom.by.E0},~\eqref{e.bfA.CFS.upper} and subadditivity, we find that, for every~$z\in 3^k\Zd \cap \cu_m$,
\begin{align}
\label{e.renormalize.subadd}
\bfA(z+\cu_k) \indc_{\{ \S \leq 3^m \}}
&
\leq
\avsum_{y\in 3^l \Zd \cap (z+\cu_k)} 
\bfA(y+\cu_{l})
\indc_{\{ \S \leq 3^m \}}
\notag \\ & 
\leq
\bfAhom(\cu_{l}) 
+
\O_{\Psi} \bigl( 4 \cdot 3^{\gamma(m-l)} 3^{-\nu(k-l)} \bfE \bigr) 
\notag \\ & 
\leq
\bigl( 
1
+
\O_{\Psi}
\bigl(3^6\Theta( 1 + 3^{-l} K_{\Psi_\S}^2) \cdot 3^{\gamma(m-l)} 3^{-\nu(k-l)} \bigr) 
\bigr)
\bfAhom(\cu_{l}) 
\notag \\ & 
\leq
\bigl( 
1
+
\O_{\Psi}
\bigl( 3^7 \Theta 3^{\gamma(m-l)} 3^{-\nu(k-l)} \bigr) 
\bigr)
\bfAhom(\cu_{l}) 
\,.
\end{align}
In the last line of~\eqref{e.renormalize.subadd} we used that~$3^{-l} K_{\Psi_\S}^2 \leq 1$, which is valid by the assumption that~$l \geq n-l_0 \geq 2\log K_{\Psi_\S}\geq 2 \log K_{\Psi_\S}/\log 3$. 
By a union bound, we deduce that, for every~$k\in \N \cap ((m-h) \vee l ,m]$ and~$T\geq 1$, 
\begin{align*}
\lefteqn{
\P \Biggl[ \sup_{z\in 3^k\Zd \cap  \cu_m} 
\bfA(z+\cu_k)\indc_{\{ \S \leq 3^m \}}
\not\leq 
\bigl( 1 + 
3^7\Theta 3^{\gamma(m-l)} 3^{-\nu(k-l)}T \bigr)  
\bfAhom(\cu_{l}) 
\Biggr]
} \qquad &
\notag \\ & 
\leq 
\sum_{z\in 3^k\Zd \cap \cu_m} \!\!
\P \Bigl[ 
\bfA(z+\cu_k) \indc_{\{ \S \leq 3^m \}}
\not\leq
\bigl( 1 + 
3^7 \Theta 3^{\gamma(m-l)} 3^{-\nu(k-l)} T\bigr) 
\bfAhom(\cu_{l}) 
\Bigr]
\leq
\frac{3^{d(m-k)}}{\Psi (T )}
\,.
\end{align*}
We set~$l  \coloneqq  n-l_ 0+ \lceil \beta (k-n + l_0) \rceil$. Observe that this choice of~$l$ satisfies~$\max\{ n - l_0, \beta k \} < l < k$ announced above. We have
\begin{equation*}
m-l
\geq
m-k + (1-\beta)(k-n+l_0)-1\,.
\end{equation*}
We also define 
\begin{equation*}
T \coloneqq 
3^{-7} 3^{\gamma - \nu} \delta \Theta^{-1}3^{\mu(k-n+l_0)} 3^{(\rho-\gamma)(m-k)}\,,
\end{equation*}
and observe that, since~$ m < k + h$ and~$\rho > \gamma$ and~$\gamma-\nu \geq -\nu \geq -\frac d2$,
\begin{align*}
T
& 
= 
3^{-7} 3^{\gamma - \nu} \delta \Theta^{-1}
3^{\mu(l_0 -h)} 3^{\mu(k+h - n)} 3^{(\rho-\gamma)(m-k)}
\geq
3^{-7-d}\delta \Theta^{-1} 3^{\mu (l_0 - h)}
3^{\mu(m-n)}
\geq 1\,,
\end{align*}
provided that 
\begin{equation*}
l_0 \geq 
h +
\frac {7+d+ \log_3 (\delta^{-1} \Theta) }{\mu} 
\,.
\end{equation*}
Note that this also implies that~$l < m-h$. Since~$\nu > \gamma$, we have
\begin{equation*} 
\Theta 3^{-7} 3^{\gamma(m-l)} 3^{-\nu(k-l)} T 
\leq 
3^{\nu-\gamma} \cdot 3^{-7}  \Theta 3^{\gamma(m-k) + (\gamma- \nu)(1-\beta)(k-n + l_0)} T 
\leq 
\delta 3^{\rho(m-k)}
\,.
\end{equation*}
Substituting this choice of~$T$ into the inequalities above yields
\begin{align*}
\P \biggl[ \sup_{z\in 3^k\Zd \cap  \cu_m} 
\bfA(z+\cu_k)\indc_{\{ \S \leq 3^m \}}
\not\leq 
\bigl( 1 + 
\delta 3^{\rho(m-k)} \bigr)
\bfAhom(\cu_{l}) 
\biggr]
&
\leq
\frac{3^{d(m-k)}}{\Psi (3^{-7-d}  \delta \Theta^{-1} 3^{\mu (l_0 - h)}
3^{\mu(m-n)})}
\,.
\end{align*}
Summing over~$k \in \{ m-h+1,\ldots,m\}$ and using~\eqref{e.Psi.growth},~\eqref{e.eat.it} with~$p=1$, and a union bound, we obtain
\begin{align*}
&
\P \Biggl[ 
\exists k \in \N \cap [m-h+1, m ],
\,
\sup_{z\in 3^k\Zd \cap  \cu_m} 
\bfA(z+\cu_k)
\indc_{\{ \S \leq 3^m \}}
\not\leq  
(1+\delta3^{\rho(m-k)}) \bfAhom(\cu_{n - l_0}) 
\Biggr]
\\ & \qquad 
\leq 
\frac{3^{dh}}{\Psi (3^{-7-d}  \delta \Theta^{-1} 3^{\mu (l_0 - h)}
3^{\mu(m-n)})}
\leq 
\frac{1}{\Psi ( 3^{-7-d}  K_{\Psi}^{-6} 3^{-dh}  \delta \Theta^{-1} 3^{\mu (l_0 - h)}
3^{\mu(m-n)})}
\,.
\end{align*}
If we impose another restriction on~$l_0$, namely that 
\begin{equation*}
l_0 \geq \Bigl(1  + \frac d{\mu}\Bigr) h +\frac{6}{\mu}  \log K_{\Psi} + \frac 1{\mu} \log_3 (\delta^{-1}\Theta)+ \frac {d+8}{\mu} 
\,,
\end{equation*}
then~$K_{\Psi}^{-6} 3^{-7-d} 3^{-dh} \delta \Theta^{-1} 3^{\mu (l_0 - h)} \geq 1$ and we  therefore  arrive at
\begin{multline*}
\P \Biggl[ \exists k \in \N \cap [m-h+1, m ], 
\,
\sup_{z\in 3^k\Zd \cap  \cu_m} 
\bfA(z+\cu_k)
\indc_{\{ \S \leq 3^m \}}
\not\leq  
(1+\delta3^{\rho(m-k)}) \bfAhom(\cu_{n - l_0}) 
\Biggr]
\\
\leq
\frac{1}{\Psi (3^{\mu(m-n)})}
\,.
\end{multline*}
If we define the minimal scale~$\mathcal{R}_n$ by
\begin{multline*}
\mathcal{R}_n
 \coloneqq 
\sup \biggl\{ 3^m \,:\, 
m\geq n, \ 
\exists k \in \N \cap [m-h+1, m ],
\\
\sup_{z\in 3^k\Zd \cap  \cu_m} 
\bfA(z+\cu_k)
\indc_{\{ \S \leq 3^m \}}
\not\leq  
\bigl (1+\delta3^{\rho(m-k)}\bigr ) \bfAhom(\cu_{n-l_0}) 
\biggr\}
\,,
\end{multline*}
then the previous display implies that~\eqref{e.new.scale} holds. 

\smallskip

For the small scales, we proceed more crudely: by~\eqref{e.bfAhom.by.E0}, we have that
\begin{equation*}
3^m \geq \S \implies
\sup_{ k \in \N \cap [-\infty, m-h]} \,
\sup_{z\in 3^k\Zd \cap  \cu_m} 
\bfA(z+\cu_k)
\indc_{\{ \S \leq 3^m \}}
\leq 
3^{\gamma(m-h)}
\bfE
\leq
\delta 3^{\rho(m-h)}
\bfAhom(\cu_{n-l_0}) 
\,,
\end{equation*}
provided that we choose~$h>0$ large enough so that
\begin{equation*}
\bigl( 1 + 3^{3-l_0} K_{\Psi_\S}^2
\bigr) \bigl(1 + 6(\Theta -1) \bigr)  
\bigr)  
\leq 
\delta 3^{(\rho-\gamma)h}\,.
\end{equation*}
It suffices to take any~$h\in\N$ satisfying
\begin{equation*}
h \geq 
\frac{1}{\rho-\gamma}
\Bigl( 5 + 
\log_3 ( \delta^{-1} \Theta )
\Bigr)
\,.
\end{equation*}
This completes the proof of~\eqref{e.new.ellipticity} and of the lemma. 
\end{proof}

\subsection{Intrinsic geometry, adapted cubes, and consequences of subadditivity}
\label{ss.subadditivity}

At various points in the paper, it will be necessary to change from Euclidean geometry to one that is more intrinsic to the coefficient field. For instance, in the case that the ellipticity ratio~$\Theta$ is close to unity, it is natural to use the affine geometry dictated by the symmetric part~$\shom$ of the homogenized matrix (see the discussion below~\eqref{e.bigM.naught.def}). 

\smallskip

When we are in the high contrast regime~$\Theta \gg 1$, it is still necessary to change the geometry, although the choice of the best geometry is more subtle. It turns out that, as we will see in Section~\ref{s.firstpigeon}, in this case, it is natural to use the affine geometry dictated by~$\b_0\#\, \s_{*,0}$, where~$\#$ denotes the geometric mean of two positive matrices (see Appendix~\ref{s.geomean} for the definition). 

\smallskip

Given a symmetric positive matrix~$\m_0 \in \R_{\mathrm{sym}}^{d\times d}$, we ``use the affine geometry of~$\m_0$'' by working with parallelepipeds that are adapted to~$\m_0$ instead the Euclidean cubes~$\cu_m$. These will be denoted by~$\cus_m$. The choice of~$\m_0$ will change from section to section, but to avoid burdensome notation, we do not display the dependence of~$\cus_m$ on~$\m_0$. We call these parallelepipeds the \emph{adapted cubes}. 

\smallskip

Given~$\m_0$, the adapted cubes are defined as follows. Since we work with a~$\Zd$ stationarity assumption, it will be convenient for the lattice corresponding to the adapted cubes~$\cus_m$ to have integer lattice points, at least for sufficiently large~$m$. We therefore define another symmetric matrix~$\mathbf{q}_0$ by taking a large integer~$k_0 \in\N$ (to be selected below in terms of~$d$, but not on~$\m_0$) and 
\begin{equation}
\label{e.q.naught}
\bigl (\mathbf{q}_0\bigr )_{ij}  \coloneqq 
3^{-k_0} \Bigl \lceil 3^{k_0} \bigl|\m_{0}^{-1}\bigr|^{\nicefrac12}(\m_0^{\nicefrac12})_{ij} \Bigr \rceil
\,.
\end{equation}
In other words, we take the matrix~$\bigl|\m_{0}^{-1}\bigr|^{\nicefrac12} \m_0^{\nicefrac12}$ and slightly alter each entry so that it belongs to the lattice~$3^{-k_0}\Zd$. Note that~$\mathbf{q}_0$ is symmetric, and it satisfies
\begin{equation*}
\bigl | \mathbf{q}_0 - \bigl|\m_{0}^{-1}\bigr|^{\nicefrac12} \m_0^{\nicefrac12} \bigr | 
\leq C 3^{-k_0}\,,
\end{equation*}
where~$C$ depends only on~$d$. In particular, we have
\begin{equation*}
\bigl ( 1 - C 3^{-k_0} \bigr ) \bigl|\m_{0}^{-1}\bigr|^{\nicefrac12} \m_0^{\nicefrac12} 
\leq 
\mathbf{q}_0   
\leq 
\bigl ( 1 + C 3^{-k_0} \bigr ) \bigl|\m_{0}^{-1}\bigr|^{\nicefrac12} \m_0^{\nicefrac12} 
\,.
\end{equation*}
By making~$k_0$ sufficiently large, depending only on~$d$, we obtain 
\begin{equation}
\label{e.szero.vs.qzero}
\frac{99}{100}
\mathbf{q}_0   
\leq 
\bigl|\m_{0}^{-1}\bigr|^{\nicefrac12} \m_0^{\nicefrac12} 
\leq
\frac{101}{100}
\mathbf{q}_0   
\qand
\frac{100}{101} \Id 
\leq 
\mathbf{q}_0 \leq \frac{100}{99} \bigl( \bigl|\m_{0}^{-1}\bigr| \bigl|\m_{0} \bigr| \bigr)^{\nicefrac12} \Id
\,.
\end{equation}
We then define the adapted cube~$\cus_k$ by 
\begin{equation}
\label{e.cus.k.def}
\cus_k 
 \coloneqq  
\mathbf{q}_0 (\cu_k)
=
\Bigl\{ 
x \in \Rd \,:\, 
\mathbf{q}_0^{-1} x \in \cu_k
\Bigr\} 
\,.
\end{equation}
Note that
\begin{equation*}
\mbox{$\m_0$ is a scalar matrix} \quad \implies  \quad
\mathbf{q}_0 = \Id 
\quad\implies  \quad
\cus_k = \cu_k\,.
\end{equation*}
The eccentricity of~$\cus_k$ (the ratio of largest to smallest side) is at most~$\frac{101}{99}\bigl( \bigl|\m_{0}^{-1}\bigr| \bigl|\m_{0} \bigr| \bigr)^{\nicefrac12}$, and 
\begin{equation}
\label{e.cus.bound}
\tfrac{99}{100} \cu_k
\subseteq \,\cus_k 
\subseteq \tfrac{101}{100}\bigl( \bigl|\m_{0}^{-1}\bigr| \bigl|\m_{0} \bigr| \bigr)^{\nicefrac12}\cu_k
\,.
\end{equation}
We let~$\Lat$ denote the lattice
\begin{equation}
\label{e.adapted.lattice}
\Lat \coloneqq  \mathbf{q}_0 (\Zd) = \bigl \{ \mathbf{q}_0z \,:\, z \in \Zd \bigr \} \,.
\end{equation}
Note that~$\{ z + \cus_n \,:\, z \in 3^n\Lat \cap \cus_m \}$ is a partition (up to a set of measure zero) of~$\cus_m$. 
By the construction of~$\mathbf{q}_0$, it is clear that 
\begin{equation} \label{e.adapted.stationarity}
\Lat \subseteq 
3^{-k_0} \Zd\,.
\end{equation}
This implies that~$3^n\Lat \subseteq \Zd$ for all~$n \geq k_0$. Finally, we also denote
\begin{equation}
\label{e.bigM.naught.def}
\lambda_{\m_0} \coloneqq  \bigl|\m_{0}^{-1}\bigr|^{-1} \,, \quad 
\Lambda_{\m_0} \coloneqq  \bigl|\m_{0}\bigr| 
\,, \quad 
\Pi_{\m_0} 
 \coloneqq  
\frac{\Lambda_{\m_0}}{\lambda_{\m_0}}
\qand
\mathbf{M}_0  \coloneqq  \begin{pmatrix} 
\m_0 & 0 \\ 0 & \m_0^{-1}
\end{pmatrix}
\,.
\end{equation}

As mentioned above, we work with the adapted rectangles~$\cus_m$ to avoid artificial factors of the aspect ratio~$\Pi$ from creeping into our estimates. To see why this is necessary, consider a constant-coefficient equation like~$-\nabla \cdot \s_0\nabla u = 0$. This equation can be considered to have an ellipticity ratio of one because the coefficients are constant. We could perform a simple affine change of coordinates and transform this equation to the Laplace equation. However, suppose we do not perform this change of variables, and we start performing standard elliptic estimates (such as, for instance, the Caccioppoli inequality) in standard Euclidean balls or cubes. In that case, we will see powers of the ratio~$\Pi$ of the largest to the smallest eigenvalue of~$\s_0$ appear in our estimates. These factors would not appear if we were clever enough to have changed variables beforehand. 

\smallskip

To avoid these extra factors of~$\Pi$, we must either perform the affine change of variables and then work in Euclidean balls or cubes, or else work in the original coordinate system but use the~$\s_0$-adapted balls or cubes. 

\smallskip

Using subadditivity and a Whitney-type decomposition of the adapted cubes into (normal) triadic cubes, we can reduce the upper bounds on the coarse-grained matrices in adapted cubes to those of~\eqref{e.bfA.crude.moment} and~\eqref{e.bfA.crude.moment.E}. 

\begin{lemma}[Upper bounds for~$\bfA$ in adapted cubes]
\label{l.crude.moments} 
Assume that~$\P$ satisfies~\ref{a.stationarity} and~\ref{a.ellipticity.dagger}.
Let~$\delta\in(0,1]$. There exists~$C(d)<\infty$ such that, if we define~$h' \in \N$ to be the smallest integer satisfying
\begin{equation}
\label{e.h.delta.relation}
3^{h'} \geq  \frac{C\Pi_{\m_0} }{\delta(1-\gamma)} \,,
\end{equation}
then, with~$\S_{h+h'}$ being the random scale in the statement of Lemma~\ref{l.ellipticity.mesoscales}, we have, for every~$m\in\N$, 
\begin{equation}
\label{e.crude.moments.adapted}
3^m \geq \S_{h+h'}
\implies 
\bfA(y+\cus_n) 
\leq 
(1{+}\delta)3^{\gamma(m-n-h)_+} \bfE\,,
\  
\forall n \leq m\,, \
y \in \Rd\,,
\ y+ \cus_n \subseteq \cus_m\,.
\end{equation}
\end{lemma}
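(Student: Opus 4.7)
The plan is to decompose the adapted hyperrectangle $R := y+\cus_n$ as a disjoint union of standard triadic cubes and to combine the subadditivity inequality~\eqref{e.subadditivity} with Lemma~\ref{l.ellipticity.mesoscales} applied at mesoscale parameter~$h+h'$. Under the hypothesis~$3^m \geq \S_{h+h'}$, the latter lemma yields
\begin{equation*}
\bfA(z+\cu_k) \leq 3^{\gamma(m-h-h'-k)_+}\bfE\,, \qquad \forall k \leq m,\ z\in 3^k\Zd\cap \cu_m\,.
\end{equation*}
The natural ``bulk'' scale is~$k^\ast := n-h'$, because any triadic cube at this scale already satisfies the target upper bound $\bfA \leq 3^{\gamma(m-h-n)_+}\bfE$. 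This is the key identity driving the choice $k^\ast = n-h'$.

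First I would construct a Whitney-type partition of $R$: fill the interior with triadic cubes at scale $k^\ast$ and cover the remaining boundary layer by progressively smaller triadic cubes at scales $k^\ast-1, k^\ast-2, \ldots$. By~\eqref{e.cus.bound}, the hyperrectangle $\cus_n$ has shortest edge $\gtrsim 3^n$ and longest edge $\lesssim \Pi^{\nicefrac12}3^n$, so a surface-area computation shows that the total volume of scale-$k$ triadic cubes meeting $\partial R$ is at most $C(d)\Pi^{(d-1)/2}\,3^{(d-1)n+k}$. Normalizing by $|R| \gtrsim 3^{dn}$, the scale-$k$ boundary contribution to the volume fraction is at most $C(d)\Pi^{(d-1)/2}3^{k-n}$.

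Next I would apply subadditivity. The interior cubes contribute at most $3^{\gamma(m-h-n)_+}\bfE$. Each scale-$k$ boundary cube contributes at most $C(d)\Pi^{(d-1)/2}3^{k-n}\cdot 3^{\gamma(m-h-h'-k)_+}\bfE$, and summation over $k \leq k^\ast-1$ yields a geometric series with ratio $3^{-(1-\gamma)}$, whose total is bounded by $C(d)\Pi^{(d-1)/2}(1-\gamma)^{-1}\,3^{-h'}\cdot 3^{\gamma(m-h-n)_+}\bfE$. Choosing $h'$ so that $3^{h'}$ dominates $C(d)\Pi^{(d-1)/2}(\delta(1-\gamma))^{-1}$---which, after absorbing the dimension-dependent power of $\Pi$ into the constant $C(d)$, takes the form~\eqref{e.h.delta.relation}---the boundary contribution is at most $\delta\cdot 3^{\gamma(m-h-n)_+}\bfE$, and combining the two estimates gives $\bfA(R) \leq (1+\delta)3^{\gamma(m-h-n)_+}\bfE$.

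The main obstacle is that when $y$ is not aligned with the triadic grid, an exact finite partition of $R$ into triadic cubes does not exist. I would handle this by supplementing the triadic partition with finitely many ``sliver'' regions along $\partial R$ and bounding $\bfA$ on those slivers either (i) via the volume-average inequality~\eqref{e.bfA.bounds} combined with the pointwise bound on $\bfA(x)$ implied by~\ref{a.ellipticity} on slightly enlarged enclosing triadic cubes, or (ii) by an approximation argument in which $R$ is approximated from below by hyperrectangles with triadic vertices and the estimate is passed to the limit using monotonicity of $\bfA(\cdot)$ in the domain. Neither route introduces a genuine difficulty, but the quantitative bookkeeping must be executed with enough care that the sliver contribution is absorbed into the $\delta$-term.
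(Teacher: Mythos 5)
Your skeleton is the same as the paper's (Whitney-type decomposition of $y+\!\cus_n$ into standard triadic cubes, subadditivity, Lemma~\ref{l.ellipticity.mesoscales}, a geometric series controlled because $\gamma<1$, and a choice of $h'$ absorbing the boundary layer into $\delta$), but the execution has two genuine gaps. First, you invoke \eqref{e.ellipticity.mesogrid} for cubes $z+\cu_k$ with $z\in 3^k\Zd\cap\cu_m$ and then apply it to the triadic cubes filling $R=y+\!\cus_n\subseteq\,\cus_m$. But by \eqref{e.cus.bound} the adapted cube $\cus_m$ is \emph{not} contained in $\cu_m$ (it can stick out by a factor of order $\Pi^{\nicefrac12}$), so those cubes need not be covered by the bound you quote. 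The paper fixes this by choosing $l$ with $3^l\simeq C\Pi^{\nicefrac12}$ so that $\cus_m\subseteq\cu_{m+l}$ and applying Lemma~\ref{l.ellipticity.mesoscales} at scale $m+l$ (using $3^{m+l}\geq\S_{h+h'}$), which degrades the exponent to $\gamma(m+l-h-h'-j)_+$. With your bulk scale $k^*=n-h'$ this leaves a factor $3^{\gamma l}\simeq\Pi^{\nicefrac{\gamma}{2}}$ multiplying the \emph{main} term, which cannot be absorbed into $1+\delta$; one must instead take the bulk scale at $n+l-h'$ (equivalently require $h'\geq 2l$, which is where the factor of $\Pi$ in \eqref{e.h.delta.relation} really comes from). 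Relatedly, your claim that a power $\Pi^{(d-1)/2}$ can be ``absorbed into $C(d)$'' is not legitimate, since $\Pi$ is a parameter, not a dimensional constant; a correct surface-to-volume estimate gives a boundary-layer fraction of order $\Pi^{\nicefrac12}3^{k-n}$ (the paper's bound), which is what makes \eqref{e.h.delta.relation} come out with the first power of $\Pi$.

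Second, both of your proposed ways of handling the ``slivers'' rest on facts that are false in this framework: assumption~\ref{a.ellipticity} gives \emph{no} pointwise bound on $\bfA(x)$ (the whole point is to allow unbounded or degenerate fields, e.g.\ the Gaussian stream matrices), so route (i) via \eqref{e.bfA.bounds} is unavailable; and $\bfA(\cdot)$ enjoys no monotonicity in the domain of the kind route (ii) would need (only subadditivity over partitions). The correct resolution—the one the paper uses—is that no slivers exist at all: the Whitney decomposition is continued recursively through \emph{all} scales $j\to-\infty$, covering $y+\!\cus_n$ up to a Lebesgue-null set by triadic cubes, and the resulting infinite sum converges precisely because the boundary-layer volume fractions decay like $3^{j-n}$ while the ellipticity loss grows only like $3^{-\gamma j}$, with $\gamma<1$. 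With these two repairs (enlarging to $\cu_{m+l}$ together with the requirement $h'\geq 2l$ and the shifted bulk scale, and replacing the sliver discussion by the full-depth Whitney covering), your argument coincides with the paper's proof.
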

\begin{proof}
Fix~$h'\in2\N$ to satisfy~\eqref{e.h.delta.relation}, where we will make the constant~$C$ sufficiently large where needed, but depending only on~$d$. Let~$m\in \N$ be such that~$3^{m} \geq \S_{h+h'}$, where~$\S_{h+h'}$ is the minimal scale given by Lemma~\ref{l.ellipticity.mesoscales}. Let~$l  \coloneqq  \lceil \log_3 (C(d) \Pi_{\m_0} ^{\nicefrac12}) \rceil$ with sufficiently large~$C(d)$ such that~$\cus_m\subseteq \cu_{m+l}$. By taking a larger constant~$C(d)$ in~\eqref{e.h.delta.relation}, we may assume that $h' \geq 2l$. Suppose that~$y\in\Rd$ and~$n\in\Z$ satisfy~$y+\cus_n \subseteq \cu_{m+l}$. 
Then~$y+\cus_n$ can be written as the disjoint union, up to a null set, of a family~$\{ V_j(y) \,:\, -\infty < j \leq n \}$ of sets such that each~$V_j(y)$ is the disjoint union of cubes of the form~$z+\cu_j$ with~$z\in 3^j\Zd$, and 
\begin{equation} 
\label{e.Whitney.property}
|V_j(y)| \leq C\Pi^{\nicefrac12} 3^{j-n} |\cus_n|
\quad \mbox{and} \quad 
\sum_{j = -\infty}^n \frac{|V_j(y)|}{|\cus_n|} = 1
\,.
\end{equation}
We can obtain such a partition recursively as follows. Define first
\begin{equation*}
V_n(y) \coloneqq  \bigcup \bigl\{ z+\cu_n \,:\, z \in 3^n \Zd\,, \; z+\cu_n \subseteq y+\cus_n \bigr\}
\end{equation*}
and then, having defined~$V_n(y),\ldots,V_j(y)$, we define~$V_{j-1}(y)$ by
\begin{equation*}
V_{j-1}(y) \coloneqq  \bigcup 
\bigl\{
z+ \cu_{j-1} \,:\,
z\in 3^{j-1}\Zd,\, 
z+ \cu_{j-1} \subseteq 
(y+\cus_n) \setminus ( V_{n}(y)  \cup \cdots \cup V_{j}(y) ) 
\bigr \}\,.
\end{equation*}
By subadditivity, the assumption~$3^{m+l}\geq \S_{h+h'}$ and~\eqref{e.ellipticity.mesogrid}, we obtain that
\begin{align*}
\bfA(y+\cus_n) 
\leq 
\sum_{j=-\infty}^n
\frac{|V_j(y)|}{|\cus_n|}
\bfA(V_j(y))
&
\leq
\sum_{j=-\infty}^n
\frac{|V_j(y)|}{|\cus_n|}
3^{\gamma(m + l - h' - h - j)_+} \bfE
\notag \\ &
\leq 
 \biggl( 1 
+
 \sum_{j=-\infty}^{n + l - h'}
(C\Pi_{\m_0} ^{\nicefrac12}3^{j-n})
3^{\gamma (n+l-h'-j)}
\biggr) 3^{\gamma(m - h - n)_+}  \bfE 
\notag \\ &
\leq 
\Bigl( 1 + \frac{C\Pi_{\m_0} }{1-\gamma} 3^{-h'}
 \Bigr)
 3^{\gamma(m - h - n)_+} \bfE 
 \,.
\end{align*} 
The second last inequality is a consequence of~\eqref{e.Whitney.property} and the triangle inequality:~$$(m + l - h' - h - j)_+ \leq (m - h - n)_+ + (n+l-h'-j)_+\,,$$
and the last inequality follows from the definition of~$l$. If we now choose the constant~$C(d)$ in~\eqref{e.h.delta.relation} large enough,~\eqref{e.crude.moments.adapted} follows.  This completes the proof.
\end{proof}

We next formalize a version of Lemma~\ref{l.bfA.CFS} in the adapted cubes. 

\begin{lemma}[Concentration for adapted cubes]
\label{l.bfA.CFS.adapted}
Assume that~$\P$ satisfies~\ref{a.stationarity},~\ref{a.ellipticity.dagger} and~\ref{a.CFS}.
There exists a constant~$C(d)<\infty$ such that, 
for every~$m,n\in\N$ with~$\beta m < n < m$, 
\begin{multline}
\label{e.bfA.CFS.adapted}
\biggl| \avsum_{z\in 3^{n}\Lat \cap \cus_m }
\bfE^{-\nicefrac12}\bigl(  \bfA(z+\cus_n) - \bfAhom(z+\cus_n) \bigr)\bfE^{-\nicefrac12} 
\biggr|
\\
\leq 
\frac{CK_{\Psi_\S}^2 \Pi_{\m_0}}{1-\gamma} 3^{\gamma(m-n)-m}
+
\O_{\Psi_{\S}} \biggl ( \frac{C\Pi_{\m_0}}{1-\gamma} 3^{\gamma(m-n) - m}\biggr )
+
\O_{\Psi}\Bigl( C\Pi_{\m_0}^{\nicefrac12}3^{-(\nu-\gamma)(m-n)}  \Bigr)
\,.
\end{multline}
\end{lemma}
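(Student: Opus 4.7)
The plan is to approximate each adapted cube $z + \cus_n$ by a union of standard triadic cubes at a common scale $n' := n - h'$ (with $h' \in \N$ chosen large enough to suppress the boundary layer), then invoke the triadic concentration bound of Lemma~\ref{l.bfA.CFS}. Three sources of error will arise from the reduction: a deterministic Whitney boundary contribution, the tail of the random scale $\S$, and the $\O_\Psi$-concentration of the triadic average; these will respectively produce the three right-hand terms of~\eqref{e.bfA.CFS.adapted}.

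Fix $h' \in \N$ with $3^{h'} \geq C\Pi / (1-\gamma)$ as in Lemma~\ref{l.crude.moments}, and let $k \in \N$ be minimal with $\cus_m \subseteq \cu_k$, so $3^k \leq C\Pi^{\nicefrac12}\,3^m$ by~\eqref{e.cus.bound}. For each $z \in 3^n\Lat \cap \cus_m$, decompose $z + \cus_n$ Whitney-style into a bulk $V_{n'}(z)$ consisting of triadic cubes at scale $n'$ packed inside, together with smaller boundary layers $V_j(z)$ for $j<n'$ of total relative volume at most $C\Pi^{\nicefrac12}\, 3^{-h'}$. Subadditivity~\eqref{e.subadditivity} gives
\[
\bfA(z+\cus_n) \leq \frac{|V_{n'}(z)|}{|\!\cus_n|}\avsum_{z' \in 3^{n'}\Zd \cap V_{n'}(z)}\bfA(z' + \cu_{n'}) + \sum_{j<n'} \frac{|V_j(z)|}{|\!\cus_n|} \bfA(V_j(z)).
\]
Summing over $z$ collapses the bulk contributions into a triadic-cube average over $z' \in 3^{n'}\Zd \cap \cu_k$, to which Lemma~\ref{l.bfA.CFS} applies at scales $(k, n')$, yielding the $\O_\Psi\bigl(C\Pi^{\nicefrac12}\, 3^{-(\nu-\gamma)(m-n)}\bfE\bigr)$ term. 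On the event $\{\S \leq 3^m\}$ the boundary sum is controlled pointwise by~\eqref{e.bfA.crude.moment}, contributing the deterministic first term $CK_{\Psi_\S}^2\Pi(1-\gamma)^{-1}3^{\gamma(m-n)-m}$; on the complement $\{\S > 3^m\}$, the trivial bound from~\eqref{e.bfA.crude.moment} combined with the tail $\P[\S > 3^m] \leq 1/\Psi_\S(3^m)$ yields the $\O_{\Psi_\S}$ term. Finally, the discrepancy $\bfAhom(\cu_{n'}) - \bfAhom(\cus_n)$ is estimated by the same Whitney decomposition taken in expectation and absorbed into the first error term via~\eqref{e.bfA.crude.moment.E}.

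To obtain the matching Loewner lower bound (the spectral-norm statement is two-sided), I would run an analogous argument with the dual quantity $\bfA_*^{-1}$ in place of $\bfA$: by the second half of~\eqref{e.subadditivity}, $\bfA_*^{-1}$ is also subadditive, and matrix convexity of $X \mapsto X^{-1}$ converts the resulting upper bound on $\avsum \bfA_*^{-1}(z + \cus_n)$ into a Loewner lower bound on $\avsum \bfA_*(z + \cus_n)$. This passes to $\bfA$ through the pointwise inequality $\bfA_* \leq \bfA$ and the gap estimate~\eqref{e.grok} applied in expectation at scale $\cus_n$ (taking $\mathbf{E}_1$ from the a priori ellipticity bound of Lemma~\ref{l.crude.moments}).

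The main obstacle is the $\Pi$-bookkeeping. Each reduction step (Whitney, inversion, gap estimate) threatens to amplify the ellipticity ratio into a higher power of $\Pi$, which would spoil the $\Pi/(1-\gamma)$ prefactor on the deterministic error term. The choice $3^{h'} \sim \Pi(1-\gamma)^{-1}$ is tuned precisely so the Whitney boundary contributes $O(\Pi/(1-\gamma))$ rather than a higher power, and the quadratic dependence on $\tilde\Theta - 1$ in~\eqref{e.grok} is what prevents the dual-inversion step in the lower bound from inflating the error. Once these constants are tracked, the three contributions match the three right-hand terms of~\eqref{e.bfA.CFS.adapted}.
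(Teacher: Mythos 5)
There is a genuine gap, and it is structural rather than a matter of bookkeeping. Your reduction replaces each $\bfA(z+\cus_n)$ by a Whitney average of triadic-cube quantities via subadditivity~\eqref{e.subadditivity}, but this is inherently one-sided (a Loewner upper bound), and the deterministic errors it creates are far larger than the right side of~\eqref{e.bfA.CFS.adapted}. Note that the first term there is \emph{not} a boundary-layer term: it carries the factor $3^{-m}$ because it arises from the tail of the minimal scale, i.e.\ from a truncation bias of the form $\E\bigl[\bfE^{-\nicefrac12}\bfA(\cus_n)\bfE^{-\nicefrac12}\indc_{\{\S_{h'}>3^m\}}\bigr]$, which is small only thanks to $\P[\S_{h'}>3^m]$. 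Your Whitney boundary layer contributes, even on the good event, roughly $\frac{C\Pi^{\nicefrac12}}{1-\gamma}3^{\gamma(m-n)}3^{-(1-\gamma)h'}$; with $3^{h'}\simeq \Pi/(1-\gamma)$ this is of order one in the scale separation, and forcing it down to $3^{\gamma(m-n)-m}$ requires $h'\gtrsim m/(1-\gamma)$, which pushes the bulk scale $n-h'$ below $\beta m$ (indeed below zero), where Lemma~\ref{l.bfA.CFS} is not applicable. Likewise the discrepancy $\bfAhom(\cu_{n-h'})-\bfAhom(\cus_n)$ is a systematic bias of the size of the additivity defect; Lemma~\ref{l.tilt.to.Euc} controls it only one-sidedly, with error $C\Pi\,3^{-h'}$ and no factor $3^{-m}$, and two-sided closeness of these means is exactly the kind of information the renormalization scheme is trying to prove, so it cannot be invoked here.

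The second gap is the lower bound. Since $\bfA_*^{-1}(U)=\mathbf{R}\,\bfA(U)\,\mathbf{R}$ with $\mathbf{R}$ the fixed orthogonal swap matrix, ``running the analogous argument with $\bfA_*^{-1}$'' produces nothing new: its subadditivity bound is the conjugate of the upper bound you already have. The return trip through operator Jensen, $\bfA_*\leq\bfA$ and~\eqref{e.grok} costs an additive error of order $\tilde\Theta_1-1$, i.e.\ of order $\Theta-1$, which does not decay in $m-n$ and cannot be absorbed into~\eqref{e.bfA.CFS.adapted}. The two-sidedness has to come from the concentration assumption itself, applied directly to the adapted-cube matrices: the paper truncates $\bfE^{-\nicefrac12}\bfA(x+\cus_n)\bfE^{-\nicefrac12}$ at $T=3^{\gamma(m-n)}$ (the truncation bias is precisely the deterministic first term), groups the adapted cubes by the nearest point of $3^{n+n_0}\Zd$ with $3^{n_0}\leq 3\Pi^{\nicefrac12}$ so that each group is $\mathcal{F}(z+\cu_{n+n_0+1})$--measurable as~\ref{a.CFS} requires, and applies~\ref{a.CFS} to these grouped, bounded random variables; the multiplicity $\leq 3^{n_0+1}$ per Euclidean cell is what produces the $\Pi^{\nicefrac12}$ in the third term. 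Concentration about the mean is two-sided, which is exactly what the spectral-norm statement needs, and no passage through triadic-cube coarse-grained matrices (with its subadditivity-defect bias) is involved.
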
 
\begin{proof}
Denote~$T \coloneqq 3^{\gamma(m-n)}$ and let~$\S_{h'}$ be as in Lemma~\ref{l.ellipticity.mesoscales} where~$h'\in\N$ is chosen as small as possible such that~\eqref{e.h.delta.relation} holds. The proof is now similar to the one of Lemma~\ref{l.bfA.CFS}.
There is a slightly annoying complication caused by the fact that the assumption~\ref{a.CFS} is formulated in terms of (regular) triadic cubes, whereas now we need to apply it to sums of the adapted cubes. It turns out that this difficulty can be handled quite crudely, while only dropping a factor of~$\Pi_{\m_0}^{\nicefrac12}$, by putting the adapted cubes into groups based on membership in slightly larger Euclidean cubes. 
Throughout, we fix~$m,n\in\N$ with~$\beta m < n \leq m$.

\smallskip

We let~$n_0$ be the smallest positive integer such that~$\cus_{0} \subseteq \cu_{n_0}$; in view of~\eqref{e.cus.bound}, we have that~$3^{n_0} \leq 3\Pi_{\m_0}^{\nicefrac12}$. 
For each~$x\in\Rd$, let~$[x]$ denote the nearest point of the lattice~$3^{n+n_0}\Zd$ to~$x$, with the lexicographical ordering used as a tiebreaker if this point is not unique. 
We have then that 
\begin{equation*}
x+ \cus_n \subseteq [x]+\cu_{n+n_0+1}, \quad \forall x\in\Rd\,.
\end{equation*}
Meanwhile, each~$z\in 3^{n+n_0}\Zd$ satisfies~$z = [ x ]$ for at most~$3^{n_0+1}$ many distinct elements~$x$ belonging to the lattice~$3^n\Lat$. 

\smallskip

Select a smooth cutoff function~$\varphi:\R_+ \to [0,1]$ satisfying
\begin{equation*}
\indc_{[0,T]} \leq \varphi \leq \indc_{[0,2T]}\,, \quad | \varphi'| \leq 2T^{-1}. 
\end{equation*}
We define, for each~$z\in 3^{n+n_0}\Zd\cap \cu_{m+n_0+1}$,
\begin{equation*}
\mathbf{X}_z  \coloneqq  
\sum_{x \in z+3^n\Lat \cap (z+\cus_m)}
\varphi\bigl( \bigl| 
\bfE^{-\nicefrac12} \bfA(x+\cus_n) \bfE^{-\nicefrac12}  \bigr|\bigr)
\bfE^{-\nicefrac12} \bfA(x+\cus_n) \bfE^{-\nicefrac12} 
\,.
\end{equation*}
Since there are at most~$3^{n_0+1}\leq 9\Pi^{\nicefrac12}$ many distinct elements in the sum, we have that 
\begin{equation*}
\bigl| \mathbf{X}_z \bigr|
\leq 18 \Pi_{\m_0}^{\nicefrac12} T\,.
\end{equation*}
It is clear that~$\mathbf{X}_z$ is~$\mathcal{F}(z+\cu_{n+n_0+1})$--measurable and, similar to~\eqref{e.malliavin.after.cutoff}, we have that 
\begin{equation*}
\bigl| \mathrm{D}_{z+\cu_n}  \mathbf{X}_z\bigr|
\leq 36\Pi_{\m_0}^{\nicefrac12} T \,. 
\end{equation*}
We want to sum~$\mathbf{X}_z$ over~$z\in 3^{n+n_0}\Zd\cap \cu_{m+n_0+1}$, but there is some overlap in the cubes~$z+\cu_{n+n_0+1}$. So we break the sum into~$3^d$ many different sums, each with~$z$'s corresponding to disjoint cubes, and apply~\ref{a.CFS} to each of these. The result is 
\begin{equation*}
\biggl| \avsum_{z\in 3^{n+n_0}\Zd\cap \cu_{m+n_0+1}}
\bigl( \mathbf{X}_z - \E \bigl[ \mathbf{X}_z \bigr] \bigr) \biggr|
\leq 
\O_{\Psi}\bigl( C_d\Pi_{\m_0}^{\nicefrac12} T  \cdot 3^{-\nu(m-n)} \bigr)\,.
\end{equation*}
Since~$\indc_{\{ \S_{h'} \leq 3^m \}} \varphi\bigl( \bigl| 
\bfE^{-\nicefrac12} \bfA(z+\cus_n) \bfE^{-\nicefrac12}  \bigr|\bigr) = \indc_{\{ \S_{h'} \leq 3^m \}}$ for every~$z \in 3^n\Lat \cap \! \cus_m$, we deduce that
\begin{align} 
\notag 
\lefteqn{
\indc_{\{ \S_{h'} \leq 3^m \}} 
\avsum_{z \in 3^n\Lat \cap \cus_m}
\bfE^{-\nicefrac12} \bigl(  \bfA(z+\cus_n) - \bfAhom(z+\cus_n) \bigr) \bfE^{-\nicefrac12}
} \qquad &
\notag \\ &
=
\indc_{\{ \S_{h'} \leq 3^m \}}  
\avsum_{z \in 3^n\Lat \cap \cus_m} 
\bigl( \mathbf{X}_z - \E \bigl[ \mathbf{X}_z \bigr] \bigr) 
+
\indc_{\{ \S_{h'} \leq 3^m \}}  \E \bigl[ \bfE^{-\nicefrac12}\bfA(\cus_n)\bfE^{-\nicefrac12} \indc_{\{ \S_{h'} > 3^m \}}   \bigr]
\notag
\,.
\end{align}
It follows that
\begin{multline}
\biggl| \indc_{\{ \S_{h'} \leq 3^m \}} \! \! \! \avsum_{z \in 3^n\Lat \cap \cus_m}
\bfE^{-\nicefrac12}\bigl(  \bfA(z+\cus_n) - \bfAhom(z+\cus_n) \bigr) \bfE^{-\nicefrac12}
\biggr|
\\
\leq
\bigl|\E \bigl[ \bfE^{-\nicefrac12}\bfA(\cus_n)\bfE^{-\nicefrac12} \indc_{\{ \S_{h'} > 3^m \}}   \bigr]\bigr|
+
\O_{\Psi}\bigl( C\Pi_{\m_0}^{\nicefrac12}T 3^{-\nu(m-n)}  \bigr)\,.
\notag
\end{multline}
In view of~\eqref{e.Psi.moments.bound}, we apply~\eqref{e.crude.moments.adapted} with~$\delta=1$ and~$h=0$ to get that
\begin{align*}
\bigl|\E \bigl[ \bfE^{-\nicefrac12}\bfA(\cus_n)\bfE^{-\nicefrac12} \indc_{\{ \S_{h'} > 3^m \}}   \bigr]\bigr|
&
\leq 
6\cdot 3^{-n\gamma} \E\bigl[ \S_{h'}^{\gamma}
\indc_{\{\S_{h'}> 3^m\}}\bigr] 
\leq
 \frac{CK_{\Psi_\S}^2\Pi_{\m_0}}{1-\gamma} 3^{\gamma(m-n)-m}
 \,.
\end{align*}
Similarly, 
\begin{align*}
\avsum_{z \in 3^n\Lat \cap \cus_m}
\bigl| \bfE^{-\nicefrac12}
\bfA(z+\cus_n) 
\bfE^{-\nicefrac12}
\bigr|
\indc_{\{\S_{h'}> 3^m\}}
\leq
6\cdot 3^{-n\gamma}\S_{h'}^{\gamma}
\indc_{\{\S_{h'}> 3^m\}}
\leq 
\O_{\Psi_{\S}} \Bigl ( \frac{C\Pi_{\m_0}}{1-\gamma} 3^{\gamma(m-n) -m} \Bigr )
\,.
\end{align*}
Combining the last three displays implies~\eqref{e.bfA.CFS.adapted} and completes the proof. 
\end{proof}

In the following lemma, we use subadditivity arguments similar to the proof of Lemma~\ref{l.crude.moments} to compare the means of the coarse-grained matrices in Euclidean triadic cubes to those in the adapted cubes. 

\begin{lemma}
\label{l.tilt.to.Euc}
Assume that~$\P$ satisfies~\ref{a.stationarity},~\ref{a.ellipticity.dagger} and~\ref{a.CFS}.
There exists~$C(d) <\infty$ such that, for every~$y \in \Rd$ and~$k,n,m\in\N$ with~$k<n<m$, 
\begin{equation}
\label{e.tilt.by.Euc}
\bfAhom(y+\cus_n) 
\leq 
\bfAhom(\cu_k) 
+
\frac{C \Pi_{\m_0}^{\nicefrac12}}{1-\gamma} \bigl( 1 + K_{\Psi_{\S}}^2 3^{-k}\bigr)^{\gamma} 3^{-(n-k)}  \bfE
\end{equation}
and, if~$n \geq k_0$, 
\begin{equation}
\label{e.Euc.by.tilt}
\bfAhom(\cu_m)
\leq 
\bfAhom(\cus_n )
+
\frac{C \Pi_{\m_0}^{\nicefrac12}}{1-\gamma} \bigl( 1 + K_{\Psi_{\S}}^2 3^{-n}\bigr)^{\gamma} 3^{-(m-n)}  \bfE
\,.
\end{equation}
\end{lemma}
\begin{proof}
Fix~$n,k\in\N$ with~$k\leq n$. Following the proof of Lemma~\ref{l.crude.moments} above, we define~$V_{j}(y)$ as in that proof. Then, by taking expectations,  
\begin{equation*} 
\bfAhom(z+\cus_n)
\leq
\sum_{j=k}^{n} \frac{|V_{j}(y)|}{|\cus_n|} 
\bfAhom(V_j(y)) 
+
\sum_{j=-\infty}^{k-1} \frac{|V_{j}(y)|}{|\cus_n|} 
\bfAhom(V_j(y)) 
\,.
\end{equation*}
For the first term, we use subadditivity once more and get, by~$\Zd$-stationarity,
\begin{equation*} 
\sum_{j=k}^{n} \frac{|V_{j}(y)|}{|\cus_n|} 
\bfAhom(V_j(y)) 
\leq
\bfAhom(\cu_k)
\sum_{j=k}^{n} \frac{|V_{j}(y)|}{|\cus_n|} 
\leq 
\bfAhom(\cu_k)
\,.
\end{equation*}
The second term can be estimated using~\eqref{e.ellipticity.bfE} and H\"older's inequality as
\begin{align*} 
\sum_{j=-\infty}^{k-1} \frac{|V_{j}(y)|}{|\cus_n|} 
\bfAhom(V_j(y)) 
&
\leq 
C \Pi_{\m_0}^{\nicefrac12} 
3^{-n} 
\sum_{j=-\infty}^{k-1} 
3^{(1-\gamma) j}
\bigl( 3^{\gamma k} + 3^\gamma \E\bigl[\S^{\gamma}\bigr] \bigr)\bfE
\notag \\ &
\leq  
\frac{C \Pi_{\m_0}^{\nicefrac12}}{1-\gamma} 3^{-(n-k)} \bigl( 1 + 3^{-k} \E[\S] \bigr)^\gamma
\bfE
\leq
\frac{C \Pi_{\m_0}^{\nicefrac12}}{1-\gamma} 3^{-(n-k)} \bigl( 1 + K_{\Psi_{\S}}^2 3^{-k}\bigr)^{\gamma}
\,,
\end{align*}
where we also applied~$\E[\S] \leq 5 K_{\Psi_{\S}}^2$ implied by~\eqref{e.Psi.S.growth},~\eqref{e.S.integrability} and~\eqref{e.Psi.moments.bound}.

\smallskip

To get an estimate in the opposite direction, we need to partition the cube~$\cu_m$ into cubes of the form~$y'+\cus_n$ with~$y' \in 3^n \Lat$, plus a small boundary layer. We write 
\begin{equation*}
W \coloneqq  
\bigcup
\bigl \{ z + \cus_n \,:\,  z \in 3^n \Lat\,, \; z + \cus_{n} \subseteq \cu_m \bigr \}
\end{equation*}
and, analogously to the definition of~$V_j(y)$, we set~$W_n  \coloneqq  W$ and then, recursively, for~$j \in \Z$ with~$j \leq n$, 
\begin{equation*}
W_{j-1} \coloneqq  \bigcup 
\bigl\{
z+ \cu_{j-1} \,:\,
z\in 3^{j-1}\Zd,\, 
z+ \cu_{j-1} \subseteq 
\cu_m \setminus ( W_{n}  \cup \cdots \cup W_{j} ) 
\bigr \}\,.
\end{equation*}
The rest of the proof is analogous to the proof of~\eqref{e.tilt.by.Euc} using the fact that~$3^n \Lat \subset \Zd$ and the following upper bound for the volume fraction of the boundary layer for~$j<n$:
\begin{equation*}
| W_j |
\leq 
C\Pi_{\m_0}^{\nicefrac12}  3^{j-m}|\cu_m|
\,. 
\end{equation*}
This completes the proof of~\eqref{e.Euc.by.tilt} and thus of the lemma.
\end{proof}

\subsection{Besov spaces in adapted geometry}
\label{ss.besov}

We record here some analogs of the above estimates in the~$\m_0$--adapted geometry since these will be needed in what follows.
We will not give the proofs since they can be obtained by repeating the arguments above or by applying the statements above after performing an affine change of coordinates. 

\begin{itemize}

\item We first extend the Besov norms with positive regularity, defined in~\eqref{e.Bs.seminorm.cus}, to~$\cus_n$ by defining, for every~$s \in (0,1)$, $p\in [1,\infty)$, $q\in [1,\infty)$ and~$n\in \N$,
\begin{equation}
\label{e.Bs.seminorm.cus}
\left[ g \right]_{\underline{B}_{p,q}^{s}(\cus_{n})}
 \coloneqq 
\Biggl(
\sum_{k=-\infty}^n
3^{- s qk}
\biggl(
\avsum_{z\in 3^{k-1}\Lat, \, z + \cus_k \subseteq \cus_n}
\bigl \| g - (g)_{z+\cus_k}\bigr \|_{\underline{L}^p(z+\cus_k)}^p
\biggr)^{\!\nicefrac qp}
\Biggr)^{\! \nicefrac1q}
\,.
\end{equation}
We also define~$\left[ g \right]_{\underline{B}_{p,\infty}^{s}(\cus_{n})}$ similarly, in analogy to~\eqref{e.Bs.seminorm.infty}.

\item
The negative Besov norms are defined following~\eqref{e.Bs.minus.seminorm},~\eqref{e.Bs.minus.seminorm.zero} and~\eqref{e.Bs.minus.seminorm.explicit}:
\begin{equation*}
\left[ f \right]_{\Bhatminusul{-s}{p}{q} (\cus_{n})}
 \coloneqq 
\sup \biggl\{ \fint_{\cus_n} f g \, : \, g \in B_{p',q'}^{s}(\cus_{n}) \,, \; 
\left\|  g \right\|_{\underline{B}_{p',q'}^{s}(\cus_{n})} \leq 1 \biggr\}
\,,
\end{equation*}
\begin{equation*}
\left[ f \right]_{\underline{B}_{p,q}^{-s}(\cus_{n})}
 \coloneqq 
\sup \biggl\{ \fint_{\cus_n} f g \, : \, g \in C_{\mathrm{c}}^\infty(\cu_n) \,, \; 
\left[  g \right]_{\underline{B}_{p',q'}^{s}(\cus_{n})} \leq 1 \biggr\}
\,,
\end{equation*}
and
\begin{equation}
\label{e.Bs.minus.seminorm.explicit.cus}
\left[ f \right]_{\Besov{-s}{p}{q}(\cus_{n})}
 \coloneqq 
\Biggl(
\sum_{k=-\infty}^n
3^{s qk}
\biggl(
\avsum_{z\in 3^k\Lat \cap \cus_n}
\bigl| (f)_{z+\cus_k}\bigr |^p
\biggr)^{\!\nicefrac qp}
\Biggr)^{\! \nicefrac1q}
\,.
\end{equation}
For every~$s \in (0,1]$, $p\in [1,\infty]$ and~$q\in [1,\infty]$, we have that 
\begin{equation*}
\left[ f \right]_{\underline{B}_{p,q}^{-s}(\cus_{n})}
\leq
\left[ f \right]_{\Bhatminusul{-s}{p}{q} (\cus_{n})}
\leq
3^{d+s}
\left[ f \right]_{\Besov{-s}{p}{q}(\cus_{n})}
\,.
\end{equation*}

\item 
The statement of Lemma~\ref{l.dualitylemma} is modified as follows: 
there exists a constant~$C(d)<\infty$ such that, for every~$n \in \N$,~$s \in [0,1)$ and~$u \in H^1_\s(\cus_n)$, we have that
\begin{equation}  
\label{e.divcurl.est.cus}
\left\| u- (u)_{\cus_n}  \right\|_{\underline{B}_{2,\infty}^{s}(\cus_n)}
\leq 
C \bigl|\m_{0}^{-1}\bigr|^{\nicefrac12} 
\bigl[ \mathbf{m}_0^{\nf 12} \nabla u  \bigr]_{\Besov{s-1}{2}{1}(\cus_{n})}
\,,
\end{equation}
and, if~$\phi \in C^\infty(\cus_n)$ satisfies~$3^{n}  \bigl \| \mathbf{q}_0 \nabla \phi \bigr \|_{L^\infty(\cus_n)} + 3^{2n} \bigl \| (\mathbf{q}_0\nabla)^2 \phi \bigr \|_{L^\infty(\cus_n)} \leq 1$, then
\begin{equation}  
\label{e.divcurl.est1.cus}
\bigl \| (u- (u)_{\cus_n}) \m_0^{\nicefrac12} \nabla \phi \bigr \|_{\underline{B}_{2,\infty}^{s}(\cus_n)}
\leq 
C 3^{-n} \bigl[ \m_0^{\nicefrac12} \nabla u  \bigr]_{\Besov{s-1}{2}{1}(\cus_{n})}
\,.
\end{equation}

\item The statement of Lemma~\ref{l.crude.weaknorms} can be modified as follows. 
For every~$s \in (0,1]$,~$n \in \N$ and~$u \in \mathcal{A}(\cus_n)$, we have that
\begin{equation}
\label{e.grad.B21minuss.cus}
\bigl[ \m_0^{\nicefrac12} \nabla u  \bigr]_{\Besov{-s}{2}{1}(\cus_{n})}
\\
\leq 
\|\s^{\nicefrac12} \nabla u \|_{\underline{L}^2(\cus_n)}\,
\sum_{k=-\infty}^{n} 
\! \! 
3^{s k} 
\max_{z \in 3^k \Lat \cap \cus_n}\bigl| \m_{0}^{\nf 12} \s_{*}^{-1}(z+\cus_k) \m_{0}^{\nf 12} \bigr|^{\nicefrac12} 
\end{equation}
and
\begin{equation} \label{e.flux.B21minuss.cus}
\!\! 
\bigl[ \m_0^{-\nicefrac12} 
\a\nabla u  \bigr]_{\Besov{-s}{2}{1}(\cus_{n})}
\\
\leq
\|\s^{\nicefrac12} \nabla u \|_{\underline{L}^2(\cus_n)} \,
\sum_{k=-\infty}^{n} \! \! 3^{s k} 
\max_{z \in 3^k \Lat \cap \cus_n}\bigl| \m_{0}^{-\nf12} \b(z+\cus_k) \m_{0}^{-\nf12} \bigr|^{\nicefrac12} \,.
\end{equation}
\end{itemize}

\subsection{Gradient and flux estimates in weak norms}

We conclude this subsection with a lemma that allows us to estimate weak norms of the gradient and flux of a solution in terms of the coarse-grained coefficients. While the statement may initially seem a bit ugly, the explicit form of the estimate will be useful.

\begin{lemma}
\label{l.weaknorms.moreproto}
Let~$\rho \in (0,2)$,~$s \in (\nicefrac\rho2,1]$ and~$h,n \in \N$ with~$h < n$. Also let~$\m \in \R^{d\times d}$ be positive and symmetric and~$\h\in \R^{d\times d}$ be antisymmetric, and denote
\begin{equation*}
\mathbf{M}
 \coloneqq  \begin{pmatrix} 
\m + \h^t \m^{-1} \h & - \h^t \m \\ - \m \h  & \m^{-1}
\end{pmatrix}
\,.
\end{equation*}  
Given a symmetric and positive matrix~$\mathbf{E} \in \R^{2d\times 2d}$, define the random variable
\begin{equation} 
\label{e.event.moreproto}
\mathcal{M}_{n,\rho}  \coloneqq  \sup_{k \in \Z \cap (-\infty,n]}  3^{-\rho(n-k)}
 \max_{z\in 3^{k}\Lat  \cap \cus_n}  \bigl| \bigl( \mathbf{E}^{-\nicefrac12} ( \bfA(z + \! \cus_k;\a) - \mathbf{E} ) \mathbf{E}^{-\nicefrac12}\bigr)_+ \bigr| 
\,.
\end{equation}
Then there exists a universal constant~$C<\infty$ such that, for every~$\delta \in (0,1]$ and~$p,q\in\Rd$, by writing~$v_n  \coloneqq  v(\cdot,\cus_n,p,q;\a)$, 
\begin{align} 
\label{e.weaknorms.moreproto}
\lefteqn{
3^{-sn}
\biggl[
\mathbf{M}^{\nicefrac12} 
\begin{pmatrix} 
\nabla v_n -  (\nabla v_n)_{\cus_n}  \\ 
\a \nabla v_n- (\a\nabla v_n)_{\cus_n}
\end{pmatrix} 
\biggr]_{\Besov{-s}{2}{1}(\cus_{n})}
} \    &
\notag \\ &
\leq 
C 
\bigl|  \mathbf{M}^{-\nicefrac12} \mathbf{E} \mathbf{M}^{-\nicefrac12} \bigr|^{\nicefrac12}
\Bigl | 
\mathbf{E}^{\nicefrac 12}  
\Bigl(
\begin{matrix} 
-p \\ q
\end{matrix} 
\Bigr)
\Bigr|
\sum_{k=n-h}^n \! \! \! 3^{s(k-n)} \biggl(
\avsum_{z\in 3^{k}\Lat  \cap \cus_n}  \!\!\!\!
 \bigl| \mathbf{E}^{-\nicefrac12}( \bfA(z+\cus_k) - \bfA(\cus_n) ) \mathbf{E}^{-\nicefrac12} \bigr|^2
 \biggr)^{\! \nicefrac12} 
\notag \\ &
\quad 
+
C
\bigl|  \mathbf{M}^{-\nicefrac12} \mathbf{E} \mathbf{M}^{-\nicefrac12} \bigr|^{\nicefrac12} 
\Bigl | 
\mathbf{E}^{\nicefrac 12}  
\Bigl(
\begin{matrix} 
-p \\ q
\end{matrix} 
\Bigr)
\Bigr|
 \!\sum_{k=n-h}^{n} \! \! \! 3^{(s-\nf \rho2)(k-n)} 
\biggl| \avsum_{z\in 3^{k}\Lat  \cap \cus_n} \!\!\!\! \mathbf{E}^{-\nicefrac12}\bigl(\bfA(z{+}\cus_k) - \bfA(\cus_n)  \bigr) \mathbf{E}^{-\nicefrac12}  \biggr|^{\nicefrac12}
\notag \\ &
\quad 
+
\frac{C \delta^{-\nicefrac12} }{2s -  \rho}\biggl( 
\indc_{\{ \mathcal{M}_{n,\rho} > \delta \}}  \mathcal{M}_{n,\rho}^{\nicefrac12} 
{+} 3^{-(s-\nicefrac{\rho}{2})h} \indc_{\{ \mathcal{M}_{n,\rho} \leq \delta \}}  \biggr)
\bigl|  \mathbf{M}^{-\nicefrac12} \mathbf{E} \mathbf{M}^{-\nicefrac12} \bigr|^{\nicefrac12}
\bigl\| \s^{\nicefrac12} \nabla v_n \bigr\|_{\underline{L}^2(\cus_n)}
\,.
\end{align}
\end{lemma}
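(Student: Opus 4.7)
The plan is to expand the explicit form of the negative Besov norm in \eqref{e.Bs.minus.seminorm.explicit.cus} scale-by-scale and estimate each average $(f)_{z+\cus_k}$ separately, where
\begin{equation*}
f := \mathbf{M}^{\nicefrac12} \begin{pmatrix} \nabla v_n - (\nabla v_n)_{\cus_n}\\ \a\nabla v_n - (\a\nabla v_n)_{\cus_n}\end{pmatrix}.
\end{equation*}
The key decomposition introduces, for each $z+\cus_k$ with $k\in[n-h,n]$, the local maximizer $v_k^z := v(\cdot,z+\cus_k,p,q)$ and writes $(f)_{z+\cus_k} = A_{z,k}+B_{z,k}$, where using the average formula \eqref{e.all.averages} applied once on $z+\cus_k$ (to $v_k^z$) and once on $\cus_n$ (to $v_n$),
\begin{equation*}
A_{z,k} := \mathbf{M}^{\nicefrac12}\mathbf{R}\bigl(\bfA(z+\!\cus_k)-\bfA(\cus_n)\bigr)\binom{-p}{q},\qquad
B_{z,k} := \mathbf{M}^{\nicefrac12}\binom{(\nabla(v_n-v_k^z))_{z+\cus_k}}{(\a\nabla(v_n-v_k^z))_{z+\cus_k}}.
\end{equation*}

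A crucial algebraic observation, which I would establish first, is that the specific shape of $\mathbf{M}$ (symmetric $\m$ taking the role of both $\s$ and $\s_*$, with antisymmetric $\h$) forces $\mathbf{M}_*=\mathbf{M}$. Combined with the general identity $\bfA_*=\mathbf{R}\bfA^{-1}\mathbf{R}$ from \eqref{e.bigAstar.def}--\eqref{e.bigA.formulas.inv}, this yields $\mathbf{R}\mathbf{M}\mathbf{R}=\mathbf{M}^{-1}$, hence $|\mathbf{M}^{\nicefrac12}\mathbf{R}Y|=|\mathbf{M}^{-\nicefrac12}Y|$. Sandwiching by $\mathbf{E}^{\pm\nicefrac12}$ then gives $|A_{z,k}|\le|\mathbf{M}^{-\nicefrac12}\mathbf{E}\mathbf{M}^{-\nicefrac12}|^{\nicefrac12}\,|\mathbf{E}^{-\nicefrac12}(\bfA(z+\!\cus_k)-\bfA(\cus_n))\mathbf{E}^{-\nicefrac12}|\,|\mathbf{E}^{\nicefrac12}(-p;q)|$. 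Applying Cauchy--Schwarz over $z$ and summing over $k\in[n-h,n]$ with weight $3^{s(k-n)}$ produces the first term on the right-hand side of \eqref{e.weaknorms.moreproto}.

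For the error term $B_{z,k}$, the plan is to combine the quadratic response \eqref{e.quadratic.response} for $J(z+\!\cus_k,p,q)$ with test function $v_n\in\A(z+\!\cus_k)$, giving
\begin{equation*}
\avsum_z\fint_{z+\cus_k}\tfrac12|\s^{\nicefrac12}\nabla(v_n-v_k^z)|^2 = \avsum_z J(z+\!\cus_k,p,q)-J(\cus_n,p,q),
\end{equation*}
with the right side equal to $\tfrac12 (-p;q)\cdot\avsum_z(\bfA(z+\!\cus_k)-\bfA(\cus_n))(-p;q)$. Applying the coarse-graining inequality \eqref{e.energymaps.nonsymm.all} to the pair $(\nabla w;\a\nabla w)\in\S(z+\!\cus_k)$ with $w=v_n-v_k^z$ gives $|\mathbf{M}^{\nicefrac12}(X)_{z+\cus_k}|^2\le|\mathbf{M}^{\nicefrac12}\bfA_*^{-\nicefrac12}(z+\!\cus_k)|^2\cdot 2\fint|\s^{\nicefrac12}\nabla w|^2$. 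The ellipticity bound (available for $k\ge n-h$ when $\mathcal{M}_{n,\rho}$ is moderate) yields $\bfA_*(z+\!\cus_k)\ge c\mathbf{E}_*$, and once again the $\mathbf{M}_*=\mathbf{M}$ symmetry reduces $|\mathbf{M}^{\nicefrac12}\mathbf{E}_*^{-\nicefrac12}|^2$ to $|\mathbf{M}^{-\nicefrac12}\mathbf{E}\mathbf{M}^{-\nicefrac12}|$. Jensen's inequality to pass from $\avsum|B|^2$ to $|\avsum B|^2$ then delivers a square root on the matrix variance, producing the second term.

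For the small scales $k<n-h$, the plan is to apply the adapted-cube versions \eqref{e.grad.B21minuss.cus} and \eqref{e.flux.B21minuss.cus} of the coarse-grained Poincar\'e/flux estimates with the total-energy bound $\|\s^{\nicefrac12}\nabla v_n\|_{\underline{L}^2(\cus_n)}$. On the event $\{\mathcal{M}_{n,\rho}\le\delta\}$, the ellipticity bound $\bfA(z+\!\cus_k)\le(1+\delta 3^{\rho(n-k)})\mathbf{E}$ holds at every scale, so the corresponding $|\m_0\s_*^{-1}(z+\!\cus_k)|^{\nicefrac12}$ and $|\m_0^{-1}\b(z+\!\cus_k)|^{\nicefrac12}$ are dominated by $C|\mathbf{M}^{-\nicefrac12}\mathbf{E}\mathbf{M}^{-\nicefrac12}|^{\nicefrac12}(1+\delta^{\nicefrac12} 3^{\rho(n-k)/2})$; summing $\sum_{k<n-h}3^{s(k-n)+\rho(n-k)/2}$ yields the geometric factor $(s-\rho/2)^{-1}3^{-(s-\rho/2)h}$. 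On the complementary event $\{\mathcal{M}_{n,\rho}>\delta\}$, which is where the ellipticity deteriorates, the bound $\bfA(z+\!\cus_k)\le C\mathcal{M}_{n,\rho} 3^{\rho(n-k)}\mathbf{E}$ is used directly, producing the $\mathcal{M}_{n,\rho}^{\nicefrac12}$ contribution; the factor $\delta^{-\nicefrac12}$ absorbs the loss in the comparison $\mathcal{M}_{n,\rho}/\delta>1$ across the two regimes.

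The main obstacle will be the bookkeeping of the $2d\times 2d$ matrix algebra, especially ensuring that every appearance of norms relative to $\mathbf{M}$, $\bfA_*(\cdot)$, and $\mathbf{E}$ can be consistently funneled into the single prefactor $|\mathbf{M}^{-\nicefrac12}\mathbf{E}\mathbf{M}^{-\nicefrac12}|^{\nicefrac12}$ via the identities $\mathbf{M}_*=\mathbf{M}$ and $\bfA_*=\mathbf{R}\bfA^{-1}\mathbf{R}$. A secondary point of care is the clean separation of the variance-type and mean-type contributions in Term 1 and Term 2, which requires bounding the matrix variance in two different ways (pointwise $L^2$ via Cauchy--Schwarz, versus via Jensen applied to the quadratic response identity).
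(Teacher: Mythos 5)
Your proposal is correct and follows essentially the same route as the paper's proof: the same decomposition of $(X_n)_{z+\cus_k}-(X_n)_{\cus_n}$ into the difference of coarse-grained averages (controlled via \eqref{e.all.averages} and the identity $\mathbf{R}\mathbf{M}\mathbf{R}=\mathbf{M}^{-1}$) plus the error $(X_n-X_{z,k})_{z+\cus_k}$ (controlled via the quadratic response \eqref{e.quadratic.response} and the coarse-graining inequality \eqref{e.energymaps.nonsymm.all}), together with the same split into the events $\{\mathcal{M}_{n,\rho}\le\delta\}$ and $\{\mathcal{M}_{n,\rho}>\delta\}$ and the geometric-series treatment of the scales $k<n-h$. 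The only slip is the appeal to ``Jensen'' to pass from $\avsum_z|B_{z,k}|^2$ to $|\avsum_z\cdot|$ (that inequality goes the other way); what actually does the work, as you correctly state just before, is that the averaged energy differences equal the quadratic form of the averaged matrix difference by linearity, so the average lands inside the norm without any convexity argument.
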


\begin{proof}
As in the statement, we write~$v_n \coloneqq  v(\cdot,\cus_n,p,q)$ to shorten the notation. 
Fix~$\delta \in (0,\infty)$,~$\rho \in (0,2)$,~$s \in (\nicefrac\rho2,1]$ and~$n,h \in \N$ with~$h<n$. We suppress~$\rho,\delta$ from the notation with~$\mathcal{M}_{\rho,\delta}$.  Fix also~$p,q \in \Rd$. 
We denote, for any Lipschitz domain~$U$, 
\begin{equation*} 
X(\cdot,U,p,q)  \coloneqq   \begin{pmatrix} 
\nabla v(\cdot,U,p,q)   \\ \a \nabla v(\cdot,U,p,q) 
\end{pmatrix}
\,.
\end{equation*}
It is sometimes convenient for computations to note that 
\begin{equation*}
\bfA(x)
\begin{pmatrix} 
p \\ \a(x) p 
\end{pmatrix}
=
\begin{pmatrix} 
\a(x) p \\ p 
\end{pmatrix}
\,.
\end{equation*}
This implies in particular that 
\begin{equation}
\label{e.energy.X.to.v}
X(\cdot,U,p,q) 
\cdot \bfA
X(\cdot,U,p,q)
=
2 \nabla v(\cdot,U,p,q) \cdot \s \nabla v(\cdot,U,p,q) 
\,.
\end{equation}
For every~$k \in \Z$ and~$z \in \Rd$, we denote $X_{z,k} \coloneqq X(\cdot,z+\cus_k,p,q)$ and, for~$z=0$, we suppress~$z$ from the notation and write~$X_k = X_{0,k}$. We also denote~$v_{k,z} \coloneqq v(\cdot,z+\cus_k,p,q)$.

\smallskip
  
\emph{Step 1.} We first show that, for any Lipschitz domains~$U,V$, 
\begin{multline}
\label{e.sqbound}
\Bigl| 
\mathbf{M}^{\nicefrac12} 
\Bigl( 
\bigl( X(\cdot,U,p,q) \bigr)_{U} 
- \bigl( X(\cdot,V,p,q) \bigr)_{V} 
\Bigr)
\Bigr|^2
\\ 
\leq 
\bigl| \mathbf{E}^{\nicefrac12}  \mathbf{M}^{-1} \mathbf{E}^{\nicefrac12}  \bigr|
\bigl| \mathbf{E}^{-\nicefrac12}  \bigl( \bfA(U) -\bfA(V) \bigr) \mathbf{E}^{-\nicefrac12} 
\bigr|^2
\Bigl | 
\mathbf{E}^{\nicefrac 12}  
\Bigl(
\begin{matrix} 
-p \\ q
\end{matrix} 
\Bigr)
\Bigr|^2
\,.
\end{multline}
We have the following identity by~\eqref{e.all.averages}:
\begin{equation*}
\bigl( X(\cdot,U,p,q) \bigr)_{U} 
=
\bigl( \mathbf{R}  \bfA(U) + \Itwod \bigr)
\begin{pmatrix} 
-p \\ q
\end{pmatrix}
\quad \mbox{with} \quad \mathbf{R}  \coloneqq  \begin{pmatrix} 
0 & \Id \\ \Id & 0  
\end{pmatrix}
\,.
\end{equation*}
It follows that
\begin{align*} 
\lefteqn{
\Bigl| 
\mathbf{M}^{\nicefrac12} 
\Bigl( 
\bigl( X(\cdot,U,p,q) \bigr)_{U} 
- \bigl( X(\cdot,V,p,q) \bigr)_{V} 
\Bigr)
\Bigr|^2
} \qquad &
\notag \\ &
=
\biggl|
\begin{pmatrix} 
-p \\ q
\end{pmatrix}
\cdot
(\bfA(U) - \bfA(V)) \mathbf{R}   \mathbf{M} \mathbf{R}  (\bfA(U) - \bfA(V))
\begin{pmatrix} 
-p \\ q
\end{pmatrix}
\biggr|
\,,
\end{align*}
from which~\eqref{e.sqbound} follows since~$\mathbf{R}   \mathbf{M} \mathbf{R} = \mathbf{M}^{-1}$. 

\smallskip
  
\emph{Step 2.} We show that, for every~$X \in \mathcal{S}(\cus_n)$, 
\begin{align} 
\label{e.proto.X.bound}
\lefteqn{
3^{-sn}\bigl[ 
\mathbf{M}^{\nicefrac12}  X
\bigr]_{\Besov{-s}{2}{1}(\cus_{n})} 
} \qquad &
\notag \\ &
\leq 
C \bigl| \mathbf{E}^{\nicefrac12}  \mathbf{M}^{-1} \mathbf{E}^{\nicefrac12}  \bigr|^{\nicefrac12} 
\| \bfA^{\nicefrac12} X \|_{\underline{L}^2(\cus_n)}
 s \! 
\sum_{k=-\infty}^n \! \!  3^{s(k-n)}
 \! \!   \max_{z\in 3^{k}\Lat  \cap \cus_n} \! \! \!
 \bigl| \mathbf{E}^{-\nicefrac12} \bfA(z {+} \! \cus_k) \mathbf{E}^{-\nicefrac12} \bigr|^{\nicefrac12} 
\,.
\end{align}
To see this, we use~$\mathbf{R}^2 = \Itwod$,~$\bfA(U) = \mathbf{R}\bfA_*^{-1}(U) \mathbf{R}$,~$\mathbf{M}^{-1} = \mathbf{R} \mathbf{M} \mathbf{R}$, noting that this also implies that~$\mathbf{M}^{\nicefrac12} = \mathbf{R} \mathbf{M}^{-\nicefrac12} \mathbf{R}$, and the fact that~$\mathbf{R}$ is unitary and~$\bfA(z+\cus_k)$ is positive semidefinite, to deduce that 
\begin{align*}
\bigl| \mathbf{M}^{\nicefrac12} \bfA_*^{-1} (z+\cus_k)\mathbf{M}^{\nicefrac12} \bigr|
& 
= \bigl| \mathbf{R} \mathbf{M}^{-\nicefrac12} \mathbf{R}\bfA_*^{-1} (z+\cus_k)\mathbf{R} \mathbf{M}^{-\nicefrac12} \mathbf{R}\bigr|
\notag \\ & 
=\bigl| \mathbf{R} \mathbf{M}^{-\nicefrac12}\bfA (z+\cus_k)\mathbf{M}^{-\nicefrac12} \mathbf{R}\bigr|
= 
\bigl| \mathbf{M}^{-\nicefrac12}\bfA (z+\cus_k)\mathbf{M}^{-\nicefrac12} \bigr|
\,.
\end{align*}
Thus, we obtain by~\eqref{e.energymaps.nonsymm.all} that
\begin{align}  
\label{e.proto.X.bound.pre}
\avsum_{z\in 3^{k}\Lat  \cap \cus_n} \! \! \!
\bigl| \mathbf{M}^{\nicefrac12}
(X)_{z+\cus_k} 
\bigr|^2
&
\leq 
\! \! \!
\avsum_{z\in 3^{k}\Lat  \cap \cus_n} 
\bigl| \bfA_*^{-\nicefrac12} (z+\cus_k)\mathbf{M} \bfA_*^{-\nicefrac12} (z+\cus_k)\bigr|
\bigl| \bfA_*^{\nicefrac12} (z+\cus_k) ( X  )_{z+\cus_k} \bigr|^2
\notag \\ &
= 
\! \! \!
\avsum_{z\in 3^{k}\Lat  \cap \cus_n} 
\bigl| \mathbf{M}^{\nicefrac12} \bfA_*^{-1} (z+\cus_k)\mathbf{M}^{\nicefrac12} \bigr|
\bigl| \bfA_*^{\nicefrac12} (z+\cus_k) ( X  )_{z+\cus_k} \bigr|^2
\notag \\ &
\leq 
\bigl| \mathbf{E}^{\nicefrac12} \mathbf{M}^{-1}\mathbf{E}^{\nicefrac12}\bigr| \| \bfA^{\nicefrac12} X \|_{\underline{L}^2(\cus_n)}^2 
\max_{z\in 3^{k}\Lat  \cap \cus_n}  \! \! \! \bigl|   \mathbf{E}^{-\nicefrac12} \bfA(z+\cus_k) \mathbf{E}^{-\nicefrac12}\bigr|  
\,,
\end{align}
which gives us~\eqref{e.proto.X.bound}. 

\smallskip

\emph{Step 3.} We next show that 
\begin{align}
\label{e.weaknorms.proto.above.minscale}
\lefteqn{
3^{-sn} 
\Bigl[ 
\mathbf{M}^{\nicefrac12} 
\bigl(  X_n    -  ( X_n  )_{\cus_n} \bigr)
\Bigr]_{\Besov{-s}{2}{1}(\cus_{n})}
\indc_{\{ \mathcal{M} > \delta\}} 
} \qquad & 
\notag \\ &
\leq
 \frac{C}{2s-\rho}
\frac{1+\delta^{\nicefrac12}}{\delta^{\nicefrac12}}  
\bigl| \mathbf{E}^{\nicefrac12} \mathbf{M}^{-1}\mathbf{E}^{\nicefrac12}\bigr|^{\nicefrac12} 
\| \s^{\nicefrac12} \nabla v_n \|_{\underline{L}^2(\cus_n)}
\mathcal{M}^{\nicefrac12} 
\indc_{\{ \mathcal{M} > \delta\}}  
\,.
\end{align}
In view of~\eqref{e.event.moreproto},~\eqref{e.energy.X.to.v} and~\eqref{e.proto.X.bound}, we obtain 
\begin{align*} 
\lefteqn{
3^{-sn}\bigl[ 
\mathbf{M}^{\nicefrac12}  X_n 
\bigr]_{\Besov{-s}{2}{1}(\cus_{n})}
\indc_{\{ \mathcal{M} > \delta\}} 
} \qquad &  
\notag \\ & 
\leq
3^{d}\bigl| \mathbf{E}^{\nicefrac12} \mathbf{M}^{-1}\mathbf{E}^{\nicefrac12}\bigr|^{\nicefrac12} 
\| \bfA^{\nicefrac12} X_n \|_{\underline{L}^2(\cus_n)}
\sum_{k=-\infty}^n 3^{s(k-n)} \bigl(1 + \mathcal{M}^{\nicefrac12}  3^{\frac{\rho}{2}(n-k)}\bigr)
\indc_{\{ \mathcal{M} > \delta\}} 
\notag \\ &
\leq
 \frac{C}{2s-\rho}
\frac{1+\delta^{\nicefrac12}}{\delta^{\nicefrac12}}  
\bigl| \mathbf{E}^{\nicefrac12} \mathbf{M}^{-1}\mathbf{E}^{\nicefrac12}\bigr|^{\nicefrac12} 
\| \s^{\nicefrac12} \nabla v_n \|_{\underline{L}^2(\cus_n)}
\mathcal{M}^{\nicefrac12}
\indc_{\{ \mathcal{M} > \delta\}} 
 \,,
\end{align*}
which completes the proof of~\eqref{e.weaknorms.proto.above.minscale}. 

\smallskip
  
\emph{Step 4.} We conclude by proving~\eqref{e.weaknorms.moreproto} under the event~$\{ \mathcal{M} \leq \delta \} $. 
First, by~\eqref{e.proto.X.bound.pre}, 
\begin{align*} 
\notag
\lefteqn{
\sum_{k=-\infty}^{n-h} 3^{s(k-n)} \biggl(
\avsum_{z\in 3^{k}\Lat  \cap \cus_n} \! \! \!
\bigl| \mathbf{M}^{\nicefrac12} \bigl(
( X_n  )_{z+\cus_k}  - ( X_n  )_{\cus_n} \bigr)
\bigr|^2
\biggr)^{\!\nicefrac 12}
\indc_{\{ \mathcal{M} \leq \delta \}} 
}  \qquad &
\notag \\ &  
\leq
2 \bigl| \mathbf{E}^{\nicefrac12} \mathbf{M}^{-1}\mathbf{E}^{\nicefrac12}\bigr|^{\nicefrac12}
\| \s^{\nicefrac12} \nabla v_n \|_{\underline{L}^2(\cus_n)} \,
\sum_{k=-\infty}^{n-h} 3^{s(k-n)} \bigl(
1 + \delta^{\nicefrac12} 3^{\frac{\rho}{2}(n-k)} \bigr)
\notag \\ &  
\leq
C \biggl( s^{-1}3^{-hs} + \frac{ \delta^{\nicefrac12}3^{-(s-\nicefrac{\rho}{2}) h}}{2s-\rho} \biggr)
\bigl| \mathbf{E}^{\nicefrac12} \mathbf{M}^{-1}\mathbf{E}^{\nicefrac12}\bigr|^{\nicefrac12} 
\| \s^{\nicefrac12} \nabla v_n \|_{\underline{L}^2(\cus_n)}
\,.
\end{align*}
Second,  using the triangle inequality, 
for each~$k\in \Z$ with~$n - h \leq k\leq n$, we get 
\begin{align}
\label{e.dumbsplit}
\lefteqn{
\avsum_{z\in 3^{k}\Lat  \cap \cus_n}
\bigl| \mathbf{M}^{\nicefrac12} \bigl(   ( X_n  )_{z+\cus_k}  -( X_n  )_{\cus_n} \bigr)
\bigr|^2
} \qquad &
\notag \\ &
\leq 
2
\avsum_{z\in 3^{k}\Lat  \cap \cus_n}
\Bigl( 
\bigl| \mathbf{M}^{\nicefrac12}\bigl(  ( X_{z,k}  )_{z+\cus_k}  -  ( X_{n}  )_{\cus_n} 
\bigr)
\bigr|^2
+
\bigl|\mathbf{M}^{\nicefrac12} ( X_{n} {-} X_{z,k}  )_{z+\cus_k} \bigr|^2
\Bigr)
\,.
\end{align}
The contribution of the first term on the right side of~\eqref{e.dumbsplit} can be estimated using~\eqref{e.sqbound}:
\begin{align*}
\lefteqn{
\bigl| \mathbf{M}^{\nicefrac12}\bigl(  ( X_{z,k}  )_{z+\cus_k}  {-}  ( X_{n}  )_{\cus_n} 
\bigr)
\bigr|^2
} \qquad &
\notag \\ &
\leq
\bigl| \mathbf{E}^{\nicefrac12} \mathbf{M}^{-1}\mathbf{E}^{\nicefrac12}\bigr|
\bigl| \mathbf{E}^{-\nicefrac12}   \bigl( \bfA(z+\cus_{k}) - \bfA(\cus_n) \bigr)  \mathbf{E}^{-\nicefrac12} \bigr|^2 
\Bigl | 
\mathbf{E}^{\nicefrac 12}  
\begin{pmatrix} 
-p \\ q
\end{pmatrix} 
\Bigr|^2
\,.
\end{align*}
We estimate the second term on the right side of~\eqref{e.dumbsplit} using the same computation as in~\eqref{e.proto.X.bound.pre}, but now for~$X_{n} {-} X_{z,k}$ instead of~$X_n$:
\begin{align*}
\lefteqn{
\avsum_{z\in 3^{k}\Lat  \cap \cus_n} \!\!\!
\bigl|\mathbf{M}^{\nicefrac12} ( X_{n} {-} X_{z,k}  )_{z+\cus_k} \bigr|^2
} \qquad & 
\notag \\ &
\leq
\bigl| \mathbf{E}^{\nicefrac12} \mathbf{M}^{-1}\mathbf{E}^{\nicefrac12}\bigr|
\max_{z\in 3^{k}\Lat  \cap \cus_n} 
\bigl|\mathbf{E}^{-\nicefrac12}  \bfA(z+\cus_k) \mathbf{E}^{-\nicefrac12}  \bigr|
\avsum_{z\in 3^{k}\Lat  \cap \cus_n}
\| \bfA^{\nicefrac12} ( X_{n} {-} X_{z,k}  ) \|_{\underline{L}^2(z+\cus_k)}^2 
\,.
\end{align*}
Thus,
\begin{align*}
\lefteqn{
\avsum_{z\in 3^{k}\Lat  \cap \cus_n} \!\!\!
\bigl|\mathbf{M}^{\nicefrac12} ( X_{n} {-} X_{z,k}  )_{z+\cus_k} \bigr|^2
\indc_{\{ \mathcal{M} \leq \delta \}}
} \qquad &
\notag \\ &
\leq
\bigl(1+\delta 3^{\rho(n-k)} \bigr) 
\bigl| \mathbf{E}^{\nicefrac12} \mathbf{M}^{-1}\mathbf{E}^{\nicefrac12}\bigr|
\avsum_{z\in 3^{k}\Lat  \cap \cus_n}
\| \bfA^{\nicefrac12} ( X_{n} {-} X_{z,k}  ) \|_{\underline{L}^2(z+\cus_k)}^2 
\,.
\end{align*}
By quadratic response~\eqref{e.quadratic.response},~\eqref{e.J.energy}, the first variation~\eqref{e.first.variation} and~\eqref{e.Jaas.matform}, we have that
\begin{align*} 
\avsum_{z\in 3^{k}\Lat  \cap \cus_n} \!\!\!
\| \bfA^{\nicefrac12} ( X_{n} {-} X_{z,k}  ) \|_{\underline{L}^2(z+\cus_k)}^2 
&
=
2  \avsum_{z\in 3^{k}\Lat  \cap \cus_n} \bigl( \| \s^{\nicefrac12} \nabla v_{k,z}   \|_{\underline{L}^2(z+\cus_k)}^2 - \| \s^{\nicefrac12} \nabla v_{n}  \|_{\underline{L}^2(\cus_n)}^2 \bigr)
\notag \\ &
=
4  \avsum_{z\in 3^{k}\Lat  \cap \cus_n}  
\begin{pmatrix} 
-p \\ q
\end{pmatrix} 
\cdot \bigl(\bfA(z+\cus_k) - \bfA(\cus_n) \bigr)
\begin{pmatrix} 
-p \\ q
\end{pmatrix} 
\notag \\ &
\leq 
4 \Bigl | 
\mathbf{E}^{\nicefrac 12}  
\begin{pmatrix} 
-p \\ q
\end{pmatrix} 
\Bigr|^2
\biggl| \avsum_{z\in 3^{k}\Lat  \cap \cus_n} \mathbf{E}^{-\nicefrac12}  ( \bfA(z+\cus_k) - \bfA(\cus_n) ) \mathbf{E}^{-\nicefrac12}  \biggr|
 \,.
\end{align*}
Putting the above estimates together gives us~\eqref{e.weaknorms.moreproto} under the event~$\{ \mathcal{M} \leq \delta \} $. 
\end{proof}

\section{Renormalization in high contrast}
\label{s.firstpigeon}

The purpose of this section is to give an estimate of the length scale at which a general elliptic coefficient field~$\a(x)$, with (possibly very large) ellipticity ratio~$\Theta\in[1,\infty)$, has homogenized to within a specified finite error.
The precise statement is given in the following theorem.

\begin{theorem}[Homogenization in high contrast]
\label{t.HC}
Assume that~$\P$ is a probability measure on~$(\Omega,\mathcal{F})$ satisfying assumptions~\ref{a.stationarity},~\ref{a.ellipticity.dagger} and~\ref{a.CFS}, with~$\nu > \gamma$.
Let~$\alpha \coloneqq  (\min\{\nu,1\}-\gamma )(1-\beta)$. There exists a constant~$C(d)<\infty$ such that, for every~$\sigma\in (0,\frac12 \Theta]$ and~$m\in\N$ satisfying 
\begin{equation}
\label{e.highcontrast.tamed.m}
m \geq 
\frac{C}{\sigma^2} 
\biggl(  
\log K_{\Psi_\S} + \frac{1}{\alpha^2} \log \Bigl( \frac{\Pi K_{\Psi}}{\alpha \sigma} \Bigr)  
\biggr)
\log (1+\Theta)
\,,
\end{equation}
the renormalized ellipticity ratio~$\Theta_m$ defined in~\eqref{e.Theta.n.def} satisfies
\begin{equation}
\label{e.squoosh}
\Theta_m - 1
\leq 
\sigma 
\,.
\end{equation}
\end{theorem}

The reader is encouraged to ignore the details of the rather explicit form of~\eqref{e.highcontrast.tamed.m}, on first reading, and notice only that the theorem asserts that, for a constant~$C$ depending on the parameters~$(d,\gamma,\beta)$, but not on~$\Theta$ or~$\Pi$, 
\begin{equation*}
m \geq
C 
\log \bigl( 1+\max \{ \Pi, K_{\Psi} , K_{\Psi_\S} \} \bigr) \log(1+\Theta) 
\quad \implies \quad 
\Theta_m - 1 \leq 10^{-6}
\,.
\end{equation*}
If we wish, we can ignore also the parameters~$(K_{\Psi},K_{\Psi_\S})$ and use the trivial bound~$\Theta \leq \Pi$ to obtain that, for a constant~$C(d,\gamma,\beta,\nu,K_{\Psi},K_{\Psi_\S})<\infty$, 
\begin{equation*}
m\geq C \log^2 (1+\Pi)
\quad \implies \quad 
\Theta_m - 1 \leq 10^{-6}
\,.
\end{equation*}
This matches the length scale appearing in the statement of Theorem~\ref{t.HC.intro} in the introduction. 

\smallskip

Since the parameter~$\sigma$ in Theorem~\ref{t.HC} can be taken arbitrarily small, the theorem statement provides an explicit convergence rate for~$\Theta_m -1$. However, this rate is not very useful when~$\sigma$ is small. The main role of the theorem is, therefore, to reduce~$\Theta_m -1$ from a possibly very large number to a somewhat small number, say,~$10^{-6}$. In the next section, we use this estimate as a starting point for the derivation of a much better estimate on the rate of~$\Theta_m - 1$ to zero. 

\smallskip

Throughout the section, we assume that ~$\P$ is a probability measure on~$(\Omega,\mathcal{F})$ satisfying assumptions~\ref{a.stationarity},~\ref{a.ellipticity.dagger} and~\ref{a.CFS}, with~$\nu > \gamma$, and we let~$\alpha$ be the parameter defined in the statement of Theorem~\ref{t.HC}. 

\smallskip

The main step in the proof of Theorem~\ref{t.HC} lies in the following proposition, which formalizes one step of the renormalization procedure. It says that either~$\Theta_m$ or its variant~$\XiDet_n$, defined in~\eqref{e.hat.Theta.m}, contracts by a factor of~$\sigma$ after we zoom out on the order of~$\log \Pi$ many triadic scales.

\begin{proposition}[One renormalization step]
\label{p.renormalize}
There exists a constant~$C(d)<\infty$ such that, for every~$\sigma\in (0,\nicefrac12]$ and~$m\in\N$ satisfying 
\begin{equation}
\label{e.m.explivomit}
m\geq 
\frac{C}{\sigma^2} 
\biggl(  
\log K_{\Psi_\S} + \frac{1}{\alpha^2} \log \Bigl( \frac{\Pi K_{\Psi}}{\alpha \sigma} \Bigr)  
\biggr)
\,,
\end{equation}
we have
\begin{equation}
\label{e.improve}
\min \biggl\{ 
\frac{\Theta_m-1}{\Theta_0} \,,\, 
\frac{\XiDet_m}{\XiDet_0} 
\biggr\} 
\leq 
\sigma 
\,.
\end{equation}
\end{proposition}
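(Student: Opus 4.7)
\textbf{Proof plan for Proposition~\ref{p.renormalize}.}
My plan is to implement Assertion~\ref{assert.reduce.Theta} from Section~\ref{ss.ideas} in a quantitative way by coupling a monotonicity/pigeonhole argument for~$n\mapsto \bfAhom(\cus_n)$ with the coarse-grained Besov-norm flatness bounds of Lemma~\ref{l.weaknorms.moreproto} and a coarse-grained div-curl identity. First, I would renormalize using Proposition~\ref{p.renormalization.P} so that, at the base scale, the deterministic upper bound~$\bfE$ is essentially~$\bfAhom(\cu_{-l_0})$; this way we are free to measure all errors against the current~$\bfAhom$ rather than the original~$\bfE$, and the parameters~$K_\Psi, K_{\Psi_\S}$ are allowed to grow by the fixed explicit factors appearing in Section~\ref{ss.renormalization}. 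Also by~\eqref{e.how.we.do.it} and the discussion after~\eqref{e.s0.bounds}, passing to the intrinsic adapted cubes~$\cus_k$ will be essential to keep powers of~$\Pi$ from leaking into the Poincar\'e/energy estimates.

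Next, I would set up the pigeonhole. Since~\eqref{e.subadditivity} gives that~$n\mapsto \bfAhom(\cus_n)$ is monotone decreasing in the Loewner order, and since the ellipticity ratio of~$\bfAhom(\cus_n)$ is~$\Theta_n$, the logarithm~$\log\Theta_n$ decreases from~$\log\Theta$ to~$0$ in a controlled way. Partition~$[0,m]$ into~$N$ consecutive blocks of length~$k$ with~$N k \leq m$, where~$k$ will be chosen of order~$\alpha^{-2}\log(\Pi K_\Psi/(\alpha\sigma))$ and~$N$ of order~$\sigma^{-2}\log(1{+}\Theta)$. Either~$\Theta_m -1 \leq \sigma \Theta$ already (in which case we are done), or by pigeonhole there exists a block~$[n_1,n_1+k]$ along which
\begin{equation*}
\bigl| \bfAhom(\cus_{n_1})^{-\nicefrac12}\bigl(\bfAhom(\cus_{n_1})-\bfAhom(\cus_{n_1+k})\bigr)\bfAhom(\cus_{n_1})^{-\nicefrac12}\bigr|
\lesssim \frac{\sigma^2}{N}\,.
\end{equation*}
Combined with Lemma~\ref{l.tilt.to.Euc} and the concentration estimate of Lemma~\ref{l.bfA.CFS.adapted}, this controls, on an event of very high probability (whose complement contributes at most~$\Psi^{-1}$-type tails fitting into the allowed parameters), the empirical variance of~$\{\bfA(z+\cus_l)\}_{z,l}$ across scales $l\in[n_1,n_1+k]$ and across the sub-cube decomposition.

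With this smallness in hand, I would feed it into Lemma~\ref{l.weaknorms.moreproto} at scale~$n := n_1+k$, choosing~$\mathbf{E} := \bfAhom(\cus_{n_1})$, $\mathbf{M} := \mathbf{M}_0$ (so that the prefactor~$|\mathbf{E}^{\nicefrac12}\mathbf{M}^{-1}\mathbf{E}^{\nicefrac12}|$ is controlled by~$\Theta$), $\rho=\gamma$, and~$h=k$. The right-hand side of~\eqref{e.weaknorms.moreproto}, evaluated at the maximizer~$v_n=v(\cdot,\cus_n,p,q)$ for unit-size vectors~$p,q$, becomes small in the negative Besov norm~$\mathring{B}_{2,1}^{-s}$: both~$\nabla v_n-(\nabla v_n)_{\cus_n}$ and~$\a\nabla v_n - (\a\nabla v_n)_{\cus_n}$ are quantitatively \emph{flat}. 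Now I execute a coarse-grained div-curl argument: since~$\nabla v_n$ is curl-free and~$\a\nabla v_n$ is divergence-free, duality with the positive Besov norm~$B^{s}_{2,\infty}$ and the Poincar\'e-type inequality~\eqref{e.divcurl.est.cus} yield
\begin{equation*}
\Bigl| \fint_{\cus_n} \nabla v_n\cdot \a\nabla v_n \,-\, (\nabla v_n)_{\cus_n} \cdot (\a\nabla v_n)_{\cus_n} \Bigr|
\end{equation*}
is a product of two small negative Besov norms, hence much smaller than the corresponding energies. Interpreted through~\eqref{e.J.energy},~\eqref{e.all.averages} and~\eqref{e.diagonalset.nosymm} (taking~$\tilde\s=\shom_*(\cus_n)$, $\tilde\k=\khom(\cus_n)$ and~$p$, $q$ ranging over unit vectors), this collapses the quantity~$e\cdot(\s-\s_*)(\cus_n)e$ to zero up to the small error. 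Taking expectations gives~$|\shom_*(\cus_n)^{-\nicefrac12}(\bhom_{\h_0}-\shom_*)(\cus_n)\shom_*(\cus_n)^{-\nicefrac12}|\leq \sigma$, which is~$\Theta_n-1\leq \sigma$.

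The main technical obstacle is the bookkeeping in the div-curl step: each application of the coarse-grained Poincar\'e or energy estimate costs a prefactor~$|\m_0^{-1}\s_{*,0}^{-1}|$ or~$|\m_0 \b_{0,\h_0}|$, each of order~$\Theta^{\nicefrac12}$, so three such applications in a chain would cost~$\Theta^{\nicefrac32}$ and swamp the gain from the pigeonhole. The key trick that makes the argument work is to pair~$\nabla v$ with~$\a\nabla v$ symmetrically via~$\mathbf{M}_0$ (this is exactly the role played by~\eqref{e.weaknorms.moreproto} with~$\mathbf{M}=\mathbf{M}_0$), so that the combined~$\mathbf{M}_0$-weighted test pairs can be estimated against~$|\mathbf{E}^{\nicefrac12}\mathbf{M}_0^{-1}\mathbf{E}^{\nicefrac12}|^{\nicefrac12}\cdot|\mathbf{E}^{-\nicefrac12}\cdots\mathbf{E}^{-\nicefrac12}|$ with the~$\Theta$-dependence entering only through a factor~$\Theta^{\nicefrac12}$ that is absorbed into the~$\log\Theta$ in~\eqref{e.m.explivomit}. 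The final assembly then takes the form: choose~$k$ so that the exceptional probability from Lemma~\ref{l.bfA.CFS.adapted} and the tail error~$3^{-(s-\rho/2)h}$ in~\eqref{e.weaknorms.moreproto} are~$\ll\sigma^2$; choose~$N\geq C\sigma^{-2}\log(1+\Theta)$ so that the pigeonhole block is sufficiently flat; apply the div-curl identity to the resulting maximizer on scale~$n_1+k$; and invoke monotonicity of~$\Theta_n$ to transport the bound from~$n_1+k$ to~$m$.
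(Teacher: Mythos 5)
Your architecture is the paper's: pigeonhole on the monotone family $n\mapsto\bfAhom(\cu_n)$ to find a flat block, renormalize via Proposition~\ref{p.renormalization.P}, transfer to adapted cubes and get variance control from Lemmas~\ref{l.tilt.to.Euc} and~\ref{l.bfA.CFS.adapted}, feed the flatness into the $\mathbf{M}_0$-weighted weak-norm estimate of Lemma~\ref{l.weaknorms.moreproto}, and close with a coarse-grained div-curl argument. Two points in your sketch, however, are exactly where the quantitative content lives and as written would not close. First, the choice of $(p,q)$: ``unit-size vectors'' do not suffice. The paper takes $p=\thom_m^{-\nicefrac12}e$ and $q=\thom_m^{\nicefrac12}e$ (shifted by $\mp\hhom p$ for $J$ versus $J^*$), where $\thom_m=\bhom_m\#\,\shom_{*,m}$ is the geometric mean of the coarse-grained matrices \emph{at scale $m$}, not the $\m_0$ built from $\bfE$. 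This balancing is what yields $\bigl|\bfAhom_m^{\nicefrac12}\bigl(\begin{smallmatrix}-p\\ q\end{smallmatrix}\bigr)\bigr|^2\le C\Theta_m^{\nicefrac12}$ (see~\eqref{e.pq.bounds}) and, via the exact identity of Lemma~\ref{l.Jminusmeans.to.Theta}, makes $\E[\tilde J+\tilde J^*]$ equal (up to sign-definite terms) to the trace quantity controlling $\Theta_m-1$. With unit $p,q$ the multiplicative factor is $|\bhom_m|\vee|\shom_{*,m}^{-1}|$ (a dimensional quantity, not controlled by $\Theta_m^{\nicefrac12}$), and what you bound is the absolute gap $\E[e\cdot(\s-\s_*)(\cus_n)e]$, which only controls the \emph{relative} quantity $\Theta_n-1$ after paying another factor of $|\shom_*^{-1}|$. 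Second, the bias: the div-curl step controls $\E[J(\cus_n,p,q)]-\tfrac12 P\cdot Q$ where $P,Q$ are the \emph{expectations} of the averages in~\eqref{e.PQ.choice}, while your comparison inside $\cus_n$ produces the product of the random averages; the gap (expectation of a product versus product of expectations) must be paid for with the flat-block variance estimate~\eqref{e.pigeon.var}, which is precisely why the paper works with the bias-corrected quantity $\tilde J$ in~\eqref{e.Jminusmeans.to.Theta} and the estimate~\eqref{e.amazinglyeasy.apply}. You have this ingredient available, but it is absent from your accounting.

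Once these are in place, the error terms come out of size $C\delta^{\nicefrac12}\sigma\,\Theta^{\nicefrac12}\Theta_m^{\nicefrac12}$, so after Young's inequality and reabsorption you land at $\Theta_m-1\le\sigma\Theta$ — \emph{not} the $\Theta_{n_1+k}-1\le\sigma$ you assert, which the method does not deliver (fortunately $\sigma\Theta$ is all the proposition claims, and monotonicity then transports it to all larger scales). A smaller bookkeeping issue: the paper renormalizes \emph{after} the pigeonhole, at the scale where flatness holds, precisely so that the hypothesis~\eqref{e.renormalize.A.vs.E0} holds, i.e.\ the reference matrix in assumption~\ref{a.ellipticity} is within $\delta\sigma^2$ of $\bfAhom(\cu_m)$. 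If you renormalize only once at the start and then set $\mathbf{E}:=\bfAhom(\cus_{n_1})$ in Lemma~\ref{l.weaknorms.moreproto}, the minimal-scale, crude-moment and concentration bounds are stated relative to the original $\bfE$, and converting them costs factors of order $|\bfAhom(\cus_{n_1})^{-1}\bfE|\lesssim\Theta\le\Pi$ that must then be absorbed into the choice of the block length $k$; this is doable (it only inflates the logarithms already present in~\eqref{e.m.explivomit}) but needs to be said, as does the correction of the pigeonhole target from $\sigma^2/N$ to $\sigma^2$.
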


We have written the dependence of the lower bound on~$m$ in~\eqref{e.m.explivomit}  explicitly in all parameters except for~$d$. If we wish, we can write it in a nicer-looking (but less informative) way as
\begin{equation*}
m \geq \frac{C\left|\log \sigma\right| }{\sigma^2} \log(1+\Pi)\,, 
\end{equation*}
but now the constant~$C$ depends on~$(d,K_{\Psi_\S},K_{\Psi},\gamma,\beta,\nu)$, but not on~$\sigma$ nor on the ellipticity ratios~$\Pi$ and~$\Theta$. 
Therefore, Proposition~\ref{p.renormalize} says, informally, that the renormalized ellipticity ratio is reduced by a constant factor if we zoom out on the order of~$\log(1+\Pi)$ many geometric scales. 

\smallskip

The proof of Proposition~\ref{p.renormalize} is the main focus of this section. Once its proof is complete, we will iterate the statement on the order of~$\log (1+\Theta)$ many times, renormalizing at each step with the help of Proposition~\ref{p.renormalization.P}, to obtain Theorem~\ref{t.HC}.

\subsection{A reduction: finding a good range of scales}

The first step in the proof of Proposition~\ref{p.renormalize} is to make a reduction to the following statement. 

\begin{proposition}
\label{p.renormalize.reduce}
Suppose that~$\delta,\sigma \in (0,\nicefrac12]$ 
and~$l\in\N$ satisfy 
\begin{equation} 
\label{e.l.def.implies}
\max\biggl\{ 
3^{- \frac14(1-\beta) l} \Pi
\,,\, 
K_{\Psi}^8  \Pi 3^{-\frac12(\nu -\gamma)(1-\beta) l} 
\,,\,
\frac{K_{\Psi_\S}^{16d} \Pi^4}{(1-\gamma)^4}  3^{ -l } 
\,,\,
\frac{3^{-(1 - \gamma) l }}{1-\gamma}  
\biggr\}
\leq 
\delta \sigma^2
\,.
\end{equation}
Suppose that~$m \in\N$ with~$m\geq 100l$ and that the matrix~$\bfE$ in~\ref{a.ellipticity.dagger}  satisfies
\begin{equation} \label{e.renormalize.A.vs.E0}
\bfAhom(\cu_0) \leq \bfE 
\qand 
\bigl| 
\bfE^{\nicefrac12}\bfAhom^{-1}(\cu_m) \bfE^{\nicefrac12}  - \Itwod \bigr|
\leq \delta \sigma^2
\,.
\end{equation}
Then there exists a constant~$\delta_0(d)>0$ such that~$\delta \leq \delta_0$ implies  
\begin{equation}
\label{e.improve.reduce}
\Theta_m - 1
\leq 
\sigma \Theta
\,.
\end{equation}
\end{proposition}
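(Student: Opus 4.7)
The hypothesis~\eqref{e.renormalize.A.vs.E0} together with monotonicity of~$n\mapsto\bfAhom(\cu_n)$ (cf.~\eqref{e.subadditivity} and the fact that~$\bfE\geq \bfAhom(\cu_0)\geq \bfAhom(\cu_n)$) forces the entire sequence~$\{\bfAhom(\cu_n)\}_{n=0}^m$ to be pinched between~$\bfE$ and a matrix within~$C\delta\sigma^2$ of~$\bfE$. This is precisely the ``good range of scales'' of Assertion~4 in Section~\ref{ss.ideas}: almost no subadditive progress is being made, which (as the renormalization philosophy predicts) forces the optimizers of the variational problem for~$\bfA$ at scale~$3^m$ to already be essentially flat. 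The game is to convert this near-flatness into an upper bound on the gap~$\s(\cus_m)-\s_*(\cus_m)$ via a coarse-grained div-curl argument, and then to extract the ratio~$\Theta_m-1$.

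\textbf{Step 1 (from expectations to realizations).} Combine the pigeonhole observation above with the adapted-cube concentration estimate of Lemma~\ref{l.bfA.CFS.adapted}, the comparison between adapted and Euclidean cubes of Lemma~\ref{l.tilt.to.Euc}, and the moment bounds of Lemma~\ref{l.bfA.upperbounds} and Lemma~\ref{l.crude.moments}, to produce a good event~$\mathcal{G}$ on which, uniformly for~$k$ in a suitable mesoscopic window~$[m-l,m]$ and all~$z\in 3^k\Lat\cap\cus_m$,
\begin{equation*}
\bigl|\bfE^{-\nicefrac12}(\bfA(z+\!\cus_k)-\bfE)\bfE^{-\nicefrac12}\bigr|\leq C(\delta\sigma^2)^{\nicefrac12}\,.
\end{equation*}
The four terms entering~\eqref{e.l.def.implies} are tuned so as to dominate, respectively, the deterministic boundary-layer error from~\eqref{e.tilt.by.Euc} and~\eqref{e.Euc.by.tilt}, the~$\Psi$-stochastic fluctuation from~\eqref{e.bfA.CFS.adapted}, the~$\Psi_\S$ minimal-scale error, and the~$\gamma$-discounted small-scale error from Lemma~\ref{l.crude.moments}.

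\textbf{Step 2 (flatness of optimizer and div-curl).} Fix~$p,q\in\Rd$ and let~$v_m:=v(\cdot,\cus_m,p,q)$ maximize~$J(\cus_m,p,q)$. Insert the event~$\mathcal{G}$ into Lemma~\ref{l.weaknorms.moreproto} (with~$\mathbf{E}=\bfE$ and~$\mathbf{M}=\mathbf{M}_0$): on~$\mathcal{G}$ each term of the right-hand side of~\eqref{e.weaknorms.moreproto} is bounded by~$C(\delta\sigma^2)^{\nicefrac14}|\bfE^{\nicefrac12}(-p,q)^t|$, which says exactly that~$\nabla v_m$ and~$\a\nabla v_m$ are close in a negative Besov norm to their averages. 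Writing~$\nabla v_m=(\nabla v_m)_{\cus_m}+\xi$ and~$\a\nabla v_m=(\a\nabla v_m)_{\cus_m}+\eta$, where~$\xi$ is a gradient with zero mean and~$\eta$ is a mean-zero divergence-free field, the integration-by-parts cancellation (justified via Lemma~\ref{l.testing.makes.sense} and the Besov duality~\eqref{e.divcurl.est.cus}) gives
\begin{equation*}
2J(\cus_m,p,q)=\fint_{\cus_m}\nabla v_m\cdot \a\nabla v_m = (\nabla v_m)_{\cus_m}\cdot(\a\nabla v_m)_{\cus_m}+E(p,q)\,,
\end{equation*}
with~$|E(p,q)|\leq C(\delta\sigma^2)^{\nicefrac12}|\bfE^{\nicefrac12}(-p,q)^t|^2$. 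Substituting the formulas~\eqref{e.all.averages} and~\eqref{e.Jaas.matform} on the two sides of this identity and taking expectations yields an approximate algebraic identity between~$\shom(\cus_m)$, $\shom_*(\cus_m)$ and~$\khom(\cus_m)$ with error~$O((\delta\sigma^2)^{\nicefrac12})\cdot|\bfE|$. Optimizing in~$(p,q)$ (minimizing over the anti-symmetric~$\h$ as in~\eqref{e.Theta.n.def}) and transferring back to the Euclidean cube~$\cu_m$ by Lemma~\ref{l.tilt.to.Euc} produces~$\Theta_m-1\leq C\delta^{\nicefrac12}\Theta$; choosing~$\delta_0$ so that~$C\delta_0^{\nicefrac12}\leq 1$ gives~\eqref{e.improve.reduce}.

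\textbf{Main obstacle.} The delicate point is the bookkeeping in the div-curl cross term: all weak-norm factors in Lemma~\ref{l.weaknorms.moreproto} come with matrix weights~$\mathbf{M}^{\pm\nicefrac12}$, and the whole argument fails if these are not equivalent (up to a~$\Pi$-independent constant) to~$\bfE^{\pm\nicefrac12}$, which is why the adapted-cube geometry and the geometric-mean matrix~$\m_0$ of Section~\ref{ss.subadditivity} are indispensable. A related subtlety is that~$\Theta_m$ is defined via an optimization over skew-symmetric~$\h$, so the whole chain of identities must be run after centering by a suitable~$\h_0$ as in Section~\ref{ss.skew}; the invariance~\eqref{e.Ak.vs.A.diff} keeps this centering harmless but must be tracked at every step so that the factor~$\Theta$ on the right-hand side of~\eqref{e.improve.reduce} (rather than~$\Pi$) emerges.
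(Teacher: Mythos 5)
Your overall architecture (pigeonhole pinching of the expectations, flatness of the maximizers, a coarse-grained div-curl step, centering of the skew part, adapted cubes) matches the paper's, but Step 1 contains a genuine gap that the rest of the argument leans on. The hypotheses give you two things only: a pinching of the \emph{expectations} $\bfAhom(\cu_n)$ between $\bfE$ and $(1+\delta\sigma^2)^{-1}\bfE$ (via \eqref{e.renormalize.A.vs.E0} and monotonicity), and concentration for \emph{averages over many boxes} via~\ref{a.CFS}. Neither yields the event $\mathcal{G}$ you claim, on which $\bigl|\bfE^{-\nicefrac12}(\bfA(z+\!\cus_k)-\bfE)\bfE^{-\nicefrac12}\bigr|\leq C(\delta\sigma^2)^{\nicefrac12}$ uniformly over all $z\in 3^k\Lat\cap\cus_m$ and all $k$ in a mesoscopic window. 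For a single box there is no averaging, so the lower-bound direction $\bfA(z+\!\cus_k)\geq(1-C(\delta\sigma^2)^{\nicefrac12})\bfE$ has no support at all (individual boxes may sit far below the mean, by factors comparable to $\Theta$ — this is the whole point of the high-contrast regime), and the upper bound available pathwise from~\ref{a.ellipticity} carries the factor $3^{\gamma(m-k)}$, not $1+C(\delta\sigma^2)^{\nicefrac12}$. The paper avoids this entirely by never leaving expectation: it proves a \emph{variance} bound for the single large adapted cube, $\E\bigl[|\bfAhom^{-\nicefrac12}(\cu_m)\bfA(\cus_n)\bfAhom^{-\nicefrac12}(\cu_m)-\Itwod|^2\bigr]\leq C\delta\sigma^2$ (Lemma~\ref{l.pigeonhole}, using CFS for averages of mesoboxes together with the sign of the additivity defect), and then runs the energy and weak-norm estimates of Lemmas~\ref{l.Jtilde.energy.bound} and~\ref{l.weaknorms} with $\E$ on the outside; since $\Theta_m$ is defined through $\bfAhom(\cu_m)=\E[\bfA(\cu_m)]$, an in-expectation argument suffices, and the pathwise good event is both unobtainable and unnecessary.

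Two further points would need repair even granting a usable fluctuation estimate. First, after your div-curl identity you take expectations of $(\nabla v_m)_{\cus_m}\cdot(\a\nabla v_m)_{\cus_m}$; the coarse-grained matrices control the \emph{product of the expectations} of these averages, not the expectation of the product, and the discrepancy is exactly the covariance that the centering $\tilde J$ in \eqref{e.Jminusmeans} is designed to isolate — controlling it requires the variance estimate again, plus the cutoff-boundary-layer terms (handled in the paper through the expected additivity defect $\bar\tau_{n,k}$ and $\Zd$-stationarity in Steps 2--4 of Lemma~\ref{l.Jtilde.energy.bound}), none of which are free. Second, your final bound $\Theta_m-1\leq C\delta^{\nicefrac12}\Theta$ cannot close the proof: since $\delta_0=\delta_0(d)$ is fixed while $\sigma\in(0,\nicefrac12]$ is arbitrary, you must carry the factor $\sigma$ through the error terms (the paper obtains $C\delta^{\nicefrac12}\sigma\Theta^{\nicefrac12}\Theta_m^{\nicefrac12}$) and then absorb the $\Theta_m$-contribution before concluding $\Theta_m-1\leq\sigma\Theta$; with your stated bound, choosing $C\delta_0^{\nicefrac12}\leq1$ only gives $\Theta_m-1\leq\Theta$.
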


The proof of Proposition~\ref{p.renormalize.reduce} is given in Section~\ref{s.one.renorm}, below. 
We first demonstrate that it implies Proposition~\ref{p.renormalize}, which relies on the following lemma. The idea is very simple: since the deterministic matrix~$\bfAhom(\cu_n)$ is monotone in~$n$ and its determinant is bounded between~$1$ and~$\Theta^d$, we can find a sequence of consecutive~$n$ for which it does not change much. 

\begin{lemma}[Pigeonhole lemma]
\label{l.pigeon}
Let~$\delta_1 , \sigma \in (0,\nf12]$ and~$h,N \in \N$ such that~$N \geq h \lceil 2 \delta_1^{-1}\left|\log \sigma\right|\rceil$. 
For every~$m_1\in\N_0$, at least one of the following two alternatives is valid: 
\begin{itemize} 
\item there exists~$m \in \N \cap [m_1+h, m_1+N]$ such that~$\bfAhom(\cu_{m - h} ) \leq (1+\delta_1) \bfAhom(\cu_{m})$;
or 
\item $\XiDet_{m_1+N}\leq \sigma \XiDet_{m_1}$. 
\end{itemize}
\end{lemma}
\begin{proof}
For simplicity, assume~$m_1=0$. 
If the first alternative fails, then, by the monotonicity of~$j\mapsto \bfAhom(\cu_j)$, we obtain, for~$k  \coloneqq  \lfloor N h^{-1} \rfloor \geq \lceil 2 \delta_1^{-1}\left|\log \sigma\right|\rceil$, 
\begin{align*}
\XiDet_{N} \XiDet_{0}^{-1} 
\leq 
\XiDet_{kh} \XiDet_{0}^{-1}
=
\prod_{j=1}^k 
\det \bigl( 
\bfAhom(\cu_{jh})  \bfAhom^{-1} (\cu_{(j-1)h}) 
\bigr)^{\nf1d}
\leq 
(1+\delta_1)^{-k}
\leq \sigma 
\,,
\end{align*} 
proving the second alternative. 
\end{proof}

\begin{proof}[Reduction of Proposition~\ref{p.renormalize} to Proposition~\ref{p.renormalize.reduce}]
Fix~$\delta,\sigma\in(0,\nicefrac12]$ and let~$L(\delta,\sigma)$ be defined by
\begin{equation} 
\label{e.l.one}
L(\delta,\sigma)
 \coloneqq  \biggl\lceil 8d \log K_{\Psi_\S} + \frac{100d}{\alpha^2} \log \biggl( \frac{2^{10}\Pi K_{\Psi}}{\alpha \delta \sigma^2} \biggr)  
 \biggr\rceil\,.
\end{equation}
We apply Lemma~\ref{l.pigeon} with the following choices of parameters:
\begin{equation}
\label{e.params.choices}
\delta_1 \coloneqq 
\frac12 \delta\sigma^2 \,,
\quad
n  \coloneqq  100L(\delta,\sigma)\,, 
\quad 
h  \coloneqq  2n\,,
\quad 
N  \coloneqq  2n \bigl\lceil 2 \delta^{-1} \left|\log \sigma \right| \bigr\rceil
\quad\mbox{and}\quad
m_1  \coloneqq  N
\,.
\end{equation}	
The lemma asserts that one of the following alternatives is valid: 
\begin{enumerate}

\item[(i)] 
there exists~$m\in\N$ with~$m \in [m_1+2n, 2m_1]$ such that 
\begin{equation*}
\bigl| 
\bfAhom^{-\nicefrac12} (\cu_m) \bfAhom(\cu_{m-2n})\bfAhom^{-\nicefrac12} (\cu_m) - \Itwod \bigr|
\leq \frac12  \delta\sigma^2
\,;
\end{equation*}
\item[(ii)] $\XiDet_{2m_1}\leq \sigma \XiDet_{m_1}$. 

\end{enumerate}
If we are in case~(ii), there is nothing left to show, since, in view of the definitions in~\eqref{e.l.one} and~\eqref{e.params.choices}, this implies~\eqref{e.improve} by the monotonicity of~$j\mapsto \bfAhom(\cu_j)$. 

\smallskip

We may therefore assume we are in case~(i). 
We intend to apply Proposition~\ref{p.renormalization.P} with parameters~$n_0=m-n$,~$l_0 =n$,~$\rho =\frac12 (\min\{\nu,1\}+\gamma)$ and~$\frac14\delta\sigma^2$ instead of~$\delta$. In order to apply the proposition, we need to check that the condition~\eqref{e.l0.condition} is valid. This is however immediate from the choice~$n = 100L(\delta,\sigma)$ and the definition of~$L(\delta,\sigma)$ above.
The application of the proposition yields that the probability measure~$\P_{m-n}$, defined in~\eqref{e.Pn0} as the pushforward of~$\P$ under the dilation map~$\a \mapsto \a(3^{m-n}\cdot)$, satisfies assumptions~\ref{a.stationarity},~\ref{a.ellipticity.dagger} and~\ref{a.CFS} with the new parameters
\begin{equation}
\label{e.newparams}
\left\{
\begin{aligned}
& \mathbf{E}_{\mathrm{new}}  \coloneqq  (1+\tfrac14 \delta \sigma^2) \bfAhom(\cu_{m-2n}) 
\,, \\ & 
\gamma_{\mathrm{new}}  \coloneqq  \frac12 (\min\{1,\nu\}+\gamma) 
\, \\ &
K_{\Psi,\mathrm{new}}  \coloneqq  K_{\Psi}
\, \\ &
K_{\Psi_\S,\mathrm{new}}  \coloneqq  \max\big\{ K_{\Psi_{\S}} , K_{\Psi}^{\lceil \nicefrac1\mu \rceil} \bigr\}
\,, \\ & 
\Theta_{\mathrm{new}}  \coloneqq  
(1+\tfrac14 \delta \sigma^2)^2
\Theta_{m-n}\leq 
( 1 + \delta \sigma^2)
\Theta
\,, \\ & 
\Pi_{\mathrm{new}}  \coloneqq  2^{10}\Pi\,.
\end{aligned}
\right.
\end{equation}
We will now apply Proposition~\ref{p.renormalize.reduce} with~$(\P_{m-n},n)$ in place of~$(\P,m)$. This requires that~$n\geq 100 l$ for some~$l$ satisfying~\eqref{e.l.def.implies} with the new parameters in~\eqref{e.newparams}. 
For this we take~$l= L(\delta,\sigma)$, we note that~$n=100l$ and that~\eqref{e.l.def.implies} with the new parameters is valid by the definition of~$L(\delta,\sigma)$ in~\eqref{e.l.one}. 
To verify the final hypothesis~\eqref{e.renormalize.A.vs.E0}, we observe that~$\bfAhom(\cu_{m-h}) \leq  \mathbf{E}_{\mathrm{new}}$ and
\begin{align*} 
\lefteqn{ 
\bigl| 
\bfAhom^{-\nicefrac12} (\cu_m)  \mathbf{E}_{\mathrm{new}} \bfAhom^{-\nicefrac12} (\cu_m)- \Itwod \bigr|
} \qquad & 
\notag \\ & \leq 
(1+\tfrac14 \delta \sigma^2) 
\bigl| 
\bfAhom^{-\nicefrac12} (\cu_m) \bfAhom(\cu_{m-2n}) \bfAhom^{-\nicefrac12} (\cu_m)- \Itwod \bigr| 
+ 
\frac14 \delta \sigma^2
\leq 
\delta \sigma^2
\,.
\end{align*}
We note that~$\bfAhom(\cu_m)$ for~$\P$ is the same as~$\bfAhom(\cu_n)$ for~$\P_{m-n}$. 
The application of Proposition~\ref{p.renormalize.reduce} therefore yields that~$\Theta_m$ (which is~$\Theta_{n}$ for~$\P_{m-n}$) satisfies
\begin{equation*}
\Theta_m - 1
\leq 
\sigma \Theta_{\mathrm{new}} \leq 4 \sigma \Theta
\,.
\end{equation*}
This yields~\eqref{e.improve} with~$4\sigma$ in place of~$\sigma$; this can be fixed by enlarging the constant~$C$ in~\eqref{e.m.explivomit} and replacing~$\sigma$ with~$\frac1{4}\sigma$. 
By~$m\leq 2m_1$, the definition of~$m_1$ in~\eqref{e.params.choices} and the fact that~$k\mapsto \Theta_k$ is monotone nonincreasing, the proof is now complete.
\end{proof}

\subsection{One renormalization step} 
\label{s.one.renorm}

We present the proof of  Proposition~\ref{p.renormalize.reduce} in this subsection.
Throughout, we work with the following fixed parameters:

\begin{itemize}

\item $\delta_0 = \delta_0(d)\in (0,\nicefrac12]$ is a small constant that will be selected at the end of the proof;

\item $\sigma \in (0,\nicefrac12]$ and~$\delta \in (0,\delta_0]$ are given constants in the statement of Proposition~\ref{p.renormalize.reduce}.

\item We fix an integer~$l\in\N$ representing a mesoscopic scale; we take it to be the smallest positive integer satisfying the condition~\eqref{e.l.def.implies}, which we repeat here for convenience: 
\begin{equation} 
\label{e.l.def}
\max\biggl\{ 
\frac{K_{\Psi_\S}^{3\gamma} \Pi}{1-\gamma} 
3^{- \frac14(1-\beta) l} 
\,,\, 
K_{\Psi}^8  \Pi 3^{-\frac12(\nu -\gamma)(1-\beta) l} 
\,,\,
\frac{K_{\Psi_\S}^{16d} \Pi^4}{(1-\gamma)^4}  3^{ -l } 
\,,\,
\frac{3^{-(1 - \gamma) l }}{1-\gamma}  
\biggr\}
\leq 
\delta \sigma^2
\,.
\end{equation} 
We require also that~$l$ is large enough that~$3^l \Lat \subseteq \Zd$; in view of~\eqref{e.adapted.stationarity}, this can be ensured by taking~$\delta_0$ sufficiently small (depending only on~$d$).
We emphasize that the parameter~$l$ above depends on~$\delta$ and~$\sigma$, in addition to the other parameters~$(K_\Psi,K_{\Psi_\S},\gamma,\nu,\beta,\Pi)$, but this dependence is quite explicit. 

\item We assume that~$m\in \N$ with~$m\geq 100 l$ is such that~\eqref{e.renormalize.A.vs.E0} holds, that is, 
\begin{equation} \label{e.renormalize.A.vs.E0.again}
\bfAhom(\cu_0) \leq \bfE 
\qand 
\bigl| 
\bfAhom^{-\nicefrac12} (\cu_m) \bfE\bfAhom^{-\nicefrac12} (\cu_m)  - \Itwod \bigr|
\leq \delta \sigma^2
\,.
\end{equation}
\end{itemize}
To simplify the presentation,  throughout this subsection, we work with the following notational conventions and assumptions:

\begin{itemize}

\item 
To keep the notation short, for every~$j\in\N$ we define~$\bfAhom_j \coloneqq  \bfAhom(\cu_j)$,~$\shom_j \coloneqq \shom(\cu_j)$,~$\shom_{*,j} \coloneqq \shom_*(\cu_j)$ and~$\khom_j \coloneqq  \khom(\cu_j)$. Similarly we define~$\bhom_{j}  \coloneqq  
\shom_j + \khom_j^t\shom_{*,j}^{-1} \khom_j$ and, given a constant matrix~$\h$, we set~$\bhom_{\h,j}  \coloneqq  
\shom_j + (\khom_j - \h)^t\shom_{*,j}^{-1} (\khom_j - \h)$.

\item The coefficient field is ``centered'' so that the anti-symmetric part of a certain annealed coarse-grained matrix vanishes. By subtracting the antisymmetric matrix~$\frac12 (\khom_m - \khom_m^t)$ from the coefficient field, and recentering both~$\bfE$ and~$\bfA(U)$ accordingly (as in Section~\ref{ss.skew}), we may assume that
\begin{equation} 
\label{e.centering}
\khom_m = \khom_m^t  
\,.
\end{equation}
This centering does not alter~$\Theta$ or~$\Pi$. 

\item We choose~$\m_0$ to be the (metric) geometric mean of the matrices~$\b_0$ and~$\s_{*,0}$, denoted by
\begin{equation}
\label{e.m.naught.def}
\m_0  \coloneqq  \b_0\#\, \s_{*,0} 
\,.
\end{equation}
This notion of the geometric mean of two positive matrices is given in Appendix~\ref{s.geomean}. 
With this choice of~$\m_0$ in mind, throughout the rest of this section, we work with the adapted cubes~$\cus_m$ and the lattice~$\mathbb{L}_0$ defined in Section~\ref{ss.subadditivity}, above.

\end{itemize}
We stress that~\eqref{e.centering} is assumed to be in force for the rest of the proof of Proposition~\ref{p.renormalize.reduce}.

\smallskip

We next write down some inequalities involving the coarse-grained matrices that are needed throughout this subsection. 
We first claim that, in view of the above centering condition in~\eqref{e.centering},
\begin{equation} 
\label{e.mnaught.vs.bnaught}
\bigl| \m_0^{-\nicefrac12} \b_{0} \m_0^{-\nicefrac12}\bigr|  
= 
\bigl|\m_0^{\nicefrac12} \s_{*,0}^{-1} \m_0^{\nicefrac12} \bigr| 
=
\bigl| \s_{*,0}^{-\nicefrac12} \b_{0} \s_{*,0}^{-\nicefrac12}  \bigr|^{\nicefrac12}
\leq  
4 \Theta^{\nicefrac12} \,.
\end{equation}
By the definition of~$\m_0$ in~\eqref{e.m.naught.def} and property~\eqref{e.ricatti} of the geometric mean that
\begin{equation*} 
\m_0 \s_{*,0}^{-1} \m_0 = \s_0 + \k_0^t\s_{*,0}^{-1}\k_0 = \b_{0}\,,
\end{equation*}
from which we get the first identity in~\eqref{e.mnaught.vs.bnaught}.
To see the second identity, we compute as follows:
\begin{align}
\label{e.rat.bounds.before}
\bigl| \m_0^{-\nicefrac 12}  \b_{0} \m_0^{-\nicefrac 12} \bigr|
=
\bigl| 
\s_{*,0}^{-\nicefrac12} \m_0 \s_{*,0}^{-\nicefrac12} 
\bigr|
&
=
\bigl| \s_{*,0}^{-\nicefrac12}  \m_0 \s_{*,0}^{-1} \m_0\s_{*,0}^{-\nicefrac12} \bigr|^{\nicefrac12} 
=
\bigl|  \s_{*,0}^{-\nicefrac12}\b_{0}  \s_{*,0}^{-\nicefrac12}\bigr|^{\nicefrac12} 
\,.
\end{align}
To prove the last inequality of~\eqref{e.mnaught.vs.bnaught}, we first apply~\eqref{e.renormalize.A.vs.E0.again} to obtain
\begin{equation}
\label{e.Enaught.vs.Am}
\bfAhom_m  
\leq
\bfAhom_0
\leq
\bfE 
\leq 
\bigl| \bfAhom_m^{-\nicefrac12} \bfE \bfAhom_m^{-\nicefrac12}  \bigr| \bfAhom_m 
\leq 
2 \bfAhom_m 
\,.
\end{equation}
It follows by~\eqref{e.Eone.vs.Etwo.one} that 
\begin{equation} 
\label{e.szero.vs.sm}
\s_{*,0} \leq \shom_{*,m} \leq  2 \s_{*,0}
\qand
\s_0 \leq 2 \shom_m \leq 2 \bigl( \shom_m + (\khom_m - \k_0)^t \shom_{*,m}^{-1}
(\khom_m - \k_0) \bigr) 
\leq  4 \s_0
\,.
\end{equation}
Using the triangle inequality,~\eqref{e.szero.vs.sm},~\eqref{e.symm.k.quad.small} and~$\Theta_m \leq \Theta$, we get
\begin{align*} 
\bigl| \s_{*,0}^{-\nicefrac12} \b_0 \s_{*,0}^{-\nicefrac12} \bigr| 
& 
\leq 
\bigl| \s_{*,0}^{-\nicefrac12} \s_0 \s_{*,0}^{-\nicefrac12} \bigr| +
2 \bigl| \s_{*,0}^{-\nicefrac12} (\k_0 - \khom_m)^t\s_{*,0}^{-1} (\k_0 - \khom_m) \s_{*,0}^{-\nicefrac12} \bigr|
+
2 \bigl| \s_{*,0}^{-\nicefrac12} \khom_m^t \s_{*,0}^{-1} \khom_m \s_{*,0}^{-\nicefrac12} \bigr|
\notag \\ &
\leq
\Theta 
+
4\bigl| \s_{*,0}^{-\nicefrac12} (\k_0 - \khom_m)^t\s_{*,m}^{-1} (\k_0 - \khom_m) \s_{*,0}^{-\nicefrac12} \bigr|
+
8 \bigl| \s_{*,m}^{-\nf 12} \khom_m \s_{*,m}^{-\nicefrac12} \bigr|^2
\notag \\ &
\leq
\Theta 
+
8\bigl| \s_{*,0}^{-\nicefrac12} \s_0 \s_{*,0}^{-\nicefrac12} \bigr|
+
2(\Theta_m - 1)
\notag \\ &
\leq
16 \Theta
\,,
\end{align*}
which completes the proof of~\eqref{e.mnaught.vs.bnaught}. 
We next compute, using~\eqref{e.mnaught.vs.bnaught} and~\eqref{e.rat.bounds.before}, 
\begin{equation*}
| \m_0 | 
\leq 
| \s_{*,0}| 
| 
\s_{*,0}^{-\nicefrac12} \m_0 \s_{*,0}^{-\nicefrac12} 
|
\leq 
4 \Theta^{\nf12} | \s_{*,0}| 
\qquad \mbox{and} \qquad 
| \m_0^{-1} |
\leq 
| \b_0^{-1}  | 
| \m_0^{-\nicefrac 12}  \b_{0} \m_0^{-\nicefrac 12} | 
\leq 4\Theta^{\nf12} | \s_{*,0}^{-1} |\,.
\end{equation*}
As a result we obtain an upper bound on the quantity~$\Pi_{\m_0}$ defined in~\eqref{e.bigM.naught.def},
\begin{equation}
\label{e.Pi.Pi.old}
\Pi_{\m_0}
=
| \m_0 | |\m_0^{-1}| 
\leq
16\Theta \Pi 
\leq 
16 \Pi^{2} \,.
\end{equation}

\smallskip

We continue by transferring the assumed bounds~\eqref{e.renormalize.A.vs.E0.again} from Euclidean cubes to the adapted cubes and, using the mixing condition, obtain an estimate on the variance of~$\bfA(\cus_n)$ across roughly the same range of scales. 
We note that~$x \mapsto \bfA(x +\cus_n)$ is~$\Z^d$-stationary for every~$n \in \N$ with~$n \geq l$, since~$3^l \Lat \subseteq \Zd$. 

\begin{lemma}
\label{l.pigeonhole} 
There exists a constant~$C(d) < \infty$ such that
\begin{equation}
\label{e.first.pigeon.adapted}
\max_{n \in \N \cap [l,m-l]}
\bigl| 
\bfAhom_m^{-\nicefrac12} \bfAhom(\cus_n) \bfAhom_m^{-\nicefrac12}   - \Itwod \bigr|
\leq 
C \delta \sigma^2 
\end{equation}
and
\begin{equation} 
\label{e.pigeon.var}
\max_{n \in \N \cap [l,m-l]}
\E\Bigl  [
\bigl  |  \bfAhom_m^{-\nicefrac12} \bfA(\cus_n) \bfAhom_m^{-\nicefrac12}  - \Itwod \bigr  |^{2} \Bigr  ]
\leq 
C \delta \sigma^2
\,.
\end{equation}
\end{lemma}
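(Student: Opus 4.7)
The plan is to combine the deterministic monotonicity of $\bfAhom$ on Euclidean cubes (from subadditivity~\eqref{e.subadditivity}) with the tilt-to-Euclidean estimates in Lemma~\ref{l.tilt.to.Euc} for part~\eqref{e.first.pigeon.adapted}, and then upgrade this to a stochastic $L^2$-bound for part~\eqref{e.pigeon.var} using the concentration result in Lemma~\ref{l.bfA.CFS.adapted}.

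For~\eqref{e.first.pigeon.adapted}, the key observation is that subadditivity gives $\bfAhom(\cu_j) \leq \bfAhom(\cu_k)$ for $k \leq j$, so combining with the hypothesis~\eqref{e.renormalize.A.vs.E0.again} yields $(1+\delta\sigma^2)^{-1} \bfE \leq \bfAhom(\cu_j) \leq \bfE$ for all $j \in \{0,\ldots,m\}$. To transfer this to adapted cubes~$\cus_n$, I would apply~\eqref{e.tilt.by.Euc} with $k=0$ to obtain $\bfAhom(\cus_n) \leq \bfAhom(\cu_0) + C\Pi 3^{-n}\bfE \leq (1 + C\Pi 3^{-l})\bfE$ for $n \geq l$, and~\eqref{e.Euc.by.tilt} with outer scale $m$ and inner scale $n$ to obtain $\bfAhom(\cus_n) \geq \bfAhom(\cu_m) - C\Pi 3^{n-m}\bfE$ for $n \leq m-l$. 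The perturbation terms $C\Pi 3^{-l}\bfE$ are absorbed into the budget $C\delta\sigma^2\bfE$ by the first inequality in~\eqref{e.l.def}, and the conclusion follows after comparing $\bfAhom_m$ to $\bfE$ using the hypothesis again.

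For~\eqref{e.pigeon.var}, I would decompose
\begin{equation*}
\bfA(\cus_n) - \bfAhom_m = \bigl(\bfA(\cus_n) - \bfAhom(\cus_n)\bigr) + \bigl(\bfAhom(\cus_n) - \bfAhom_m\bigr),
\end{equation*}
where the second term is deterministic and already controlled by~\eqref{e.first.pigeon.adapted}. For the stochastic term, the plan is to use subadditivity to sandwich $\bfA(\cus_n)$: from~\eqref{e.subadditivity}, $\bfA(\cus_n) \leq Y$ with $Y := \avsum_{z \in 3^{n-l}\Lat \cap \cus_n} \bfA(z+\cus_{n-l})$, and dually $\bfA_*^{-1}(\cus_n) \leq Y_* := \avsum \bfA_*^{-1}(z+\cus_{n-l})$ together with the ordering $\bfA(\cus_n) \geq \bfA_*(\cus_n) \geq Y_*^{-1}$ from~\eqref{e.ssk.bounds}. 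Taking expectations, $\bfAhom(\cus_{n-l})$ is close to $\bfE$ by part~\eqref{e.first.pigeon.adapted} (handling the case $n < 2l$ directly via Lemma~\ref{l.crude.moments}). Applying Lemma~\ref{l.bfA.CFS.adapted} with $(n, n-l)$ in place of $(m,n)$, the deviation $|\bfE^{-\nicefrac12}(Y - \bfAhom(\cus_{n-l}))\bfE^{-\nicefrac12}|$ is bounded by a sum of $\O_{\Psi}$- and $\O_{\Psi_\S}$-type fluctuations whose scales match exactly the three remaining conditions in~\eqref{e.l.def}. Integrating the moment bound~\eqref{e.Psi.moments.bound} then yields an $L^2$ estimate on $Y - \bfAhom(\cus_{n-l})$, and the Loewner gap $Y - \bfA(\cus_n) \geq 0$ is controlled by Cauchy--Schwarz applied to the pointwise crude bound $Y \leq C\bfE$ from Lemma~\ref{l.crude.moments} and its small expectation $\bfAhom(\cus_{n-l}) - \bfAhom(\cus_n) = O(\delta\sigma^2)\bfE$.

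The main obstacle is the asymmetry of subadditivity: it provides only a one-sided Loewner domination of $\bfA(\cus_n)$, so extracting an $L^2$-bound on the \emph{lower} tail is not immediate. The solution is precisely to run the dual subadditivity argument on $\bfA_*^{-1}$ in parallel with the primal one, then glue them together using $\bfA_* \leq \bfA$; this effectively traps $\bfA(\cus_n)$ between $Y_*^{-1}$ and $Y$, both of which concentrate around $\bfE$. A secondary bookkeeping difficulty is matching each of the four inequalities in~\eqref{e.l.def} to the specific error term it is designed to absorb from Lemma~\ref{l.bfA.CFS.adapted}, so that every stochastic fluctuation ends up bounded by $C\delta\sigma^2$ after integration.
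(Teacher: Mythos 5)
Your argument for~\eqref{e.first.pigeon.adapted} is essentially the paper's: two applications of Lemma~\ref{l.tilt.to.Euc} to pass between adapted and Euclidean cubes, combined with the monotonicity of $j\mapsto\bfAhom(\cu_j)$, the hypothesis~\eqref{e.renormalize.A.vs.E0.again}, and the first condition in~\eqref{e.l.def} to absorb the $C\Pi 3^{-l}$ errors. That part is fine.

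For~\eqref{e.pigeon.var} there is a genuine gap, and it lies in what you identify as your main idea. The dual sandwich $Y_*^{-1}\leq\bfA_*(\cus_n)\leq\bfA(\cus_n)\leq Y$ does not trap $\bfA(\cus_n)$ near $\bfAhom_m$. Indeed, since $\bfA_*^{-1}(U)=\mathbf{R}\,\bfA(U)\,\mathbf{R}$, the quantity $Y_*$ concentrates around $\mathbf{R}\,\bfAhom(\cus_{n-l})\,\mathbf{R}$, which by part~\eqref{e.first.pigeon.adapted} is close to $\mathbf{R}\,\bfAhom_m\,\mathbf{R}$; hence $Y_*^{-1}$ is close to $\mathbf{R}\,\bfAhom_m^{-1}\mathbf{R}$, which by~\eqref{e.bigA.formulas.inv} is the \emph{starred} matrix built from $\shom_{*,m}$ and $\shom_m^{-1}$, not $\bfAhom_m$ itself. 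The distance between these two matrices is of the order of the gap $\shom_m-\shom_{*,m}$, i.e.\ of order $\Theta_m-1$ (Lemma~\ref{l.bfE.bounds}), which is exactly the unknown quantity Proposition~\ref{p.renormalize.reduce} is trying to bound and is a priori only $\lesssim\Theta$ under the standing hypotheses. So your trap has width of order the coarse-grained ellipticity gap and cannot give~\eqref{e.pigeon.var}. The mechanism that actually closes the lower side is the one you mention only in your last sentence: the additivity defect $Y-\bfA(\cus_n)\geq 0$ is a \emph{signed} matrix whose expectation equals $\bfAhom(\cus_{n-l})-\bfAhom(\cus_n)$, which is small because both scales are close to $\bfAhom_m$ by part~\eqref{e.first.pigeon.adapted}; writing $\bfA(\cus_n)-\bfAhom_m=-(Y-\bfA(\cus_n))+(Y-\bfAhom(\cus_{n-l}))+(\bfAhom(\cus_{n-l})-\bfAhom_m)$ and estimating the three pieces (sign-plus-small-mean, CFS fluctuation, deterministic bias) is the paper's proof, and it makes the dual sandwich unnecessary. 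Your proposed execution of this step is also not quite right: the ``pointwise crude bound $Y\leq C\bfE$'' is not almost sure --- Lemma~\ref{l.crude.moments} gives it only on the event $\{3^m\geq\S_{h+h'}\}$ --- so you cannot simply pair the small expectation with an $L^\infty$ bound; you must either split on the minimal-scale event and use the tail of $\S$, or use the paper's device $\E[X^2]\leq 2\E[X]+4\E[(X-1)_+^2]$ with $(X-1)_+$ bounded by the deterministic bias plus the CFS fluctuation, both already square-integrable of size $C\delta\sigma^2$.

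There is also a parameter problem with your mesoscale choice $n-l$. Lemma~\ref{l.bfA.CFS.adapted} (inherited from~\ref{a.CFS}) requires the inner scale to exceed $\beta$ times the outer scale, i.e.\ $(1-\beta)n>l$, which fails for $n$ near $l$ whenever $\beta>0$; moreover for $n<2l$ the inner scale drops below the range where part~\eqref{e.first.pigeon.adapted} applies, and the fallback to Lemma~\ref{l.crude.moments} only yields an $O(1)$ bound on $\bfAhom(\cus_{n-l})$, which destroys the $C\delta\sigma^2$ conclusion there. The paper avoids both issues by taking the mesoscale gap $h=\lceil\tfrac14(1-\beta)l\rceil$ (so that $k=n-h$ satisfies $k>\beta n$ and $k\geq h$) and by proving the deterministic bound on the larger range of scales $[h,m-h]$.
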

\begin{proof}
Define~$h  \coloneqq  \lceil \frac14(1-\beta) l \rceil$. By~\eqref{e.l.def}, we have that each of the following conditions is valid:
\begin{equation*} 
\frac{K_{\Psi_\S}^8 \Pi^2}{(1-\gamma)^2}  3^{ -l } \leq \delta \sigma^2\,, \quad   K_{\Psi}^8  \Pi^2 3^{-\frac12(\nu -\gamma)(1-\beta) l} 
\leq \delta \sigma^2 
\qand
3^{- \frac14(1-\beta) l} \Pi \leq  \delta \sigma^2
\,.
\end{equation*}

\emph{Step 1.}
The proof of~\eqref{e.first.pigeon.adapted}. We will show that
\begin{equation}
\label{e.first.pigeon.adapted.pre}
\max_{n \in \N \cap [h,m-h]}
\bigl| 
\bfAhom_m^{-\nicefrac12} \bfAhom(\cus_n) \bfAhom_m^{-\nicefrac12} - \Itwod \bigr|
\leq 
C \delta \sigma^2 \,.
\end{equation}
This inequality is a consequence of~\eqref{e.renormalize.A.vs.E0.again} and Lemma~\ref{l.tilt.to.Euc}.
The latter implies that there exists a constant~$C (d)<\infty$ such that, for every~$n,m\in\N$ with $h\leq n \leq m-h$,
\begin{align*}
\Itwod
=
\bfAhom_m^{-\nicefrac 12}\bfAhom_m\bfAhom_m^{-\nicefrac 12}
&
\leq
 \bfAhom_m^{-\nicefrac 12}\bfAhom (\cus_n)\bfAhom_m^{-\nicefrac 12} 
+
C 3^{n-m}  \bfAhom_m^{-\nicefrac 12}\bfE  \bfAhom_m^{-\nicefrac 12}
\notag \\ &
\leq 
\bfAhom_m^{-\nicefrac 12}\bfAhom_{0} \bfAhom_m^{-\nicefrac 12}
+
\frac{C \Pi K_{\Psi_{\S}}^{3\gamma}}{1-\gamma}  \bigl( 3^{n-m} + 3^{-n} \bigr)  \bfAhom_m^{-\nicefrac 12}\bfE  \bfAhom_m^{-\nicefrac 12}
\notag \\ &
\leq 
\biggl( 1 + \frac{C \Pi K_{\Psi_{\S}}^{3\gamma}}{1-\gamma}  3^{-h}  \biggr)  \bigl(1 + \delta \sigma^2\bigr) \Itwod
\,.
\end{align*}
This string of inequalities implies~\eqref{e.first.pigeon.adapted} provided that~$3^h \geq ((1-\gamma)\delta \sigma^2)^{-1} \Pi K_{\Psi_{\S}}^{3\gamma}$, which is guaranteed by the choice~$h = \lceil \frac14(1-\beta) l \rceil$ and~\eqref{e.l.def}.

\smallskip

\emph{Step 2.}
We next show that,\footnote{The statement in Step~2 is valid without the assumptions of Proposition~\ref{p.renormalize.reduce}, as the proof does not use them. It also does not use the recentering assumption~\eqref{e.centering}. We will need these observations in Section~\ref{ss.pigeon.prime} when we need to reuse the arguments here.} for every~$m,n,k \in \N$ with~$k\leq n$, we have that 
\begin{align} 
\label{e.variance.HC}
\lefteqn{
\E\Bigl [
\bigl | 
\bfAhom^{-\nicefrac12} (\cu_m)  \bfA(\cus_n) \bfAhom^{-\nicefrac12} (\cu_m)  - \Itwod  
\bigr |^{2}
\Bigr ]
} \quad &
\notag \\ & 
\leq 
40d \max_{j \in \{k,n \}} \Bigl( \bigl  |\bfAhom^{-\nicefrac12} (\cu_m) \bfAhom(\cus_j) \bfAhom^{-\nicefrac12} (\cu_m) - \Itwod \bigr  | 
+ 
\bigl | \bfAhom^{-\nicefrac12} (\cu_m) \bfAhom(\cus_j) \bfAhom^{-\nicefrac12} (\cu_m)  - \Itwod \bigr  |^2 \Bigr)
\notag \\ &  \quad 
+ 
27 \E\Biggl[
\biggl |
\avsum_{z\in3^{k}\Lat \cap \cus_{n}} \! \! \!
\bfAhom^{-\nicefrac12} (\cu_m)
\bigl ( 
\bfA(z+\cus_k) - \bfAhom(z+\cus_k)
\bigr )
\bfAhom^{-\nicefrac12} (\cu_m)
\biggr |^2\Biggr]
\,.
\end{align}
By the triangle inequality, we get that
\begin{align}
\label{e.variance.basic.split}
\lefteqn{
\bigl | 
\bfAhom_m^{-\nicefrac12}  \bfA(\cus_n) \bfAhom_m^{-\nicefrac12}    - \Itwod  
\bigr |^2
} \qquad &
\notag \\ &
\leq
3\, \biggl | 
\avsum_{z\in3^{k}\Lat \cap \cus_{n}} 
\!\!\!
\bfAhom_m^{-\nicefrac12} \bigl (\bfA(z+\cus_k) - \bfAhom(\cus_k) \bigr ) \bfAhom_m^{-\nicefrac12} 
\biggr |^{2}
+
3 \, \Bigl | 
\bfAhom_m^{-\nicefrac12}  \bfAhom(\cus_k) \bfAhom_m^{-\nicefrac12}  - \Itwod
\Bigr |^{2}
\notag \\ &
\qquad
+
3\,\biggl | 
\bfAhom_m^{-\nicefrac12}   \!
\Bigl ( \bfA(\cus_n)   - \! \! \! \avsum_{z\in3^{k}\Lat \cap \cus_{n}} 
\!\!\!
\bfA(z+\cus_k) \Bigr ) \bfAhom_m^{-\nicefrac12} 
\biggr |^{2}
\,.
\end{align}
The first term and the second term appear on the right side of~\eqref{e.variance.HC}. The last term on the right side of~\eqref{e.variance.basic.split} is the square of the additivity defect, written in terms of~$\bfA$. 
By subadditivity, we also have that   
\begin{equation}
\label{e.monot.tilq}
\bfAhom_m^{-\nicefrac12}  
\biggl   ( \bfA(\cus_n)   - \avsum_{z\in3^{k}\Lat \cap \cus_{n}} \bfA(z+\cus_k) \biggr   )
\bfAhom_m^{-\nicefrac12} 
\leq
0
\,.
\end{equation}
We thus obtain
\begin{align*}
\lefteqn{
\E \Biggl [ \biggl| \bfAhom_m^{-\nicefrac12} 
\biggl   ( \bfA(\cus_n)   - \!\!\!\avsum_{z\in3^{k}\Lat \cap \cus_{n}} \bfA(z+\cus_k) \biggr   ) \bfAhom_m^{-\nicefrac12} 
\biggr|
\Biggr]
} \qquad &
\notag \\ &
\leq
2 d \biggl| \E \biggl [ \bfAhom_m^{-\nicefrac12} 
\biggl   ( \bfA(\cus_n)   - \avsum_{z\in3^{k}\Lat \cap \cus_{n}} \bfA(z+\cus_k) \biggr   ) \bfAhom_m^{-\nicefrac12} 
\biggr] \biggr|
\\ & 
\leq 2d
\bigl| \bfAhom_m^{-\nicefrac12} 
\bigl( 
 \bfAhom(\cus_k) 
-
\bfAhom(\cus_n)   
\bigr)
\bfAhom_m^{-\nicefrac12} 
\bigr|
\\ & 
\leq 
4 d \max_{j \in \{k,n \}} \bigl  |\bfAhom_m^{-\nicefrac12} \bfAhom(\cus_j) \bfAhom_m^{-\nicefrac12}   - \Itwod \bigr  | 
\,.
\end{align*}
We use~\eqref{e.monot.tilq} to  (crudely) estimate one factor from above: we have
\begin{align*}
\lefteqn{
\biggl | 
\bfAhom_m^{-\nicefrac12}  \!
\biggl ( \bfA(\cus_n)   - \avsum_{z\in3^{k}\Lat \cap \cus_{n}} \bfA(z+\cus_k) \biggr ) \bfAhom_m^{-\nicefrac12} 
\biggr | 
} \qquad & 
\notag \\ &
\leq 
\biggl |   
\avsum_{z\in3^{k}\Lat \cap \cus_{n}} \bfAhom_m^{-\nicefrac12} \bfA(z+\cus_k) \bfAhom_m^{-\nicefrac12} 
\biggr | 
\notag \\ &
\leq 
1+ \bigl  |\bfAhom_m^{-\nicefrac12}  \bfAhom(\cus_k) \bfAhom_m^{-\nicefrac12}   - \Itwod \bigr  | 
+
\biggl |
\avsum_{z\in3^{k}\Lat \cap \cus_{n}} 
\bfAhom_m^{-\nicefrac12} 
\bigl ( 
\bfA(z+\cus_k) - \bfAhom(z+\cus_k)
\bigr ) \bfAhom_m^{-\nicefrac12} 
\biggr |
\,.
\end{align*}
For any nonnegative random variable~$X$, we have $X^2 
\leq 2 X + 4 (X-1)_+^2$. Therefore, by the previous two displays, 
\begin{align} 
\lefteqn{
\E\Biggl [
\biggl  | 
\bfAhom_m^{-\nicefrac12} 
\biggl   ( \bfA(\cus_n)   - \! \! \! \! \avsum_{z\in3^{k}\Lat \cap \cus_{n}} \! \! \!  \bfA(z+\cus_{k}) \biggr   ) \bfAhom_m^{-\nicefrac12} 
\biggr  |^{2}
\Biggr ]
} \qquad &
\notag \\ & 
\leq 
8 d \max_{j \in \{k,n \}} \bigl  |\bfAhom_m^{-\nicefrac12} \bfAhom(\cus_j) \bfAhom_m^{-\nicefrac12}  - \Itwod \bigr  | 
+ 8 \bigl  |\bfAhom_m^{-\nicefrac12}  \bfAhom(\cus_k)\bfAhom_m^{-\nicefrac12}   - \Itwod \bigr  |^2
\notag \\ & \qquad
+ 
8 \E\Biggl[
\biggl |
\avsum_{z\in3^{k}\Lat \cap \cus_{n}} 
\bfAhom_m^{-\nicefrac12} 
\bigl ( 
\bfA(z+\cus_k) - \bfAhom(z+\cus_k)
\bigr )
\bfAhom_m^{-\nicefrac12} 
\biggr |^2\Biggr]
\notag
\,.
\end{align}
Combining this with~\eqref{e.variance.basic.split} yields~\eqref{e.variance.HC}.

\smallskip

\emph{Step 3.}
The proof of~\eqref{e.pigeon.var}. 
Let~$ n\in\N$ with~$l \leq n \leq m-l$  and let~$k  \coloneqq  n - h$. We then have that~$k - \beta n = (1-\beta) n - h \geq (1-\beta) l - h > 0$ and $k \geq h$.
Appealing to Lemma~\ref{l.bfA.CFS.adapted}, we obtain that,
\begin{multline*}
\biggl| \avsum_{z\in 3^{k}\Lat \cap \cus_n }\!\!\!\!\!
\bfE^{-\nicefrac12}  \bigl(  \bfA(z+\cus_k) - \bfAhom(z+\cus_k) \bigr) \bfE^{-\nicefrac12}
\biggr|
\\
\leq 
\frac{CK_{\Psi_\S}^2 \Pi}{1-\gamma} 3^{\gamma h - l}
+
\O_{\Psi_{\S}} \biggl ( \frac{C\Pi}{1-\gamma} 3^{\gamma h - l}\biggr )
+
\O_{\Psi}\Bigl( C\Pi^{\nicefrac12}3^{-(\nu-\gamma) h}  \Bigr)
\,.
\end{multline*}
Since~$h  \coloneqq  \lceil \frac14(1-\beta) l \rceil$, it follows by~\eqref{e.l.def} that
\begin{align}
\label{e.var.term.pre}
\lefteqn{
\E\Biggl[ 
\biggl |
\avsum_{z\in3^{k}\Lat \cap \cus_{n}} 
\! \! \! \! \bfAhom_m^{-\nicefrac12}    \bigl (\bfA(z+\cus_k) - \bfAhom(\cus_k) \bigr ) \bfAhom_m^{-\nicefrac12} 
\biggr |^2
\Biggr]
} \qquad &
\notag \\ & 
\leq
\frac{CK_{\Psi_\S}^8 \Pi^2 3^{ -l }}{(1-\gamma)^2}  + C K_{\Psi}^8  \Pi 3^{-\frac12(\nu -\gamma)(1-\beta) l} 
\leq 
C \delta \sigma^2
\,.
\end{align}
Since $k\geq h$, the expectation of the third term on the right side of~\eqref{e.variance.HC} is controlled by~\eqref{e.first.pigeon.adapted.pre}:
\begin{equation}
\label{e.var.term3}
\max_{j \in \{k,n \}} \bigl  |\bfAhom_m^{-\nicefrac12} \bfAhom(\cus_j) \bfAhom_m^{-\nicefrac12}   - \Itwod \bigr  | 
\leq
C \delta^2 \sigma^8
\leq 
C \delta \sigma^2
\,.
\end{equation}
Therefore, putting together~\eqref{e.var.term.pre},~\eqref{e.var.term3} and~\eqref{e.variance.HC} completes the proof of the lemma. 
\end{proof}

Our next goal is to relate~$\Theta_m-1$ to the variational quantities~$J$ and~$J^*$ and their maximizers. It is convenient to introduce the following variant of~$J$:
\begin{equation}
\tilde{J}(U,p,q)
\label{e.Jminusmeans}
 \coloneqq 
J(U,p,q)
-
\frac12 \E\biggl [ \fint_U \nabla v (\cdot,U,p,q)  \biggr ]
\cdot
\E\biggl [ \fint_U \a \nabla v (\cdot,U,p,q)  \biggr ]
\,.
\end{equation}
This ``centers'' the quantity~$J$ by removing the part of the energy due to the ``bias'' in the spatial averages of the gradient and flux of its maximizer~$v (\cdot,U,p,q)$.\footnote{This bias does not usually appear in the theory because a choice of the parameters~$p,q$ is typically made so that one of the factors vanishes. In the high contrast setting, this would create additional error terms that are too large. Here, we must be more careful in our choice of these parameters~$p,q$ so as to balance various error terms.}
We let~$\tilde{J}^*(U,p,q)$ denote the analogous quantity defined like in~\eqref{e.Jminusmeans} but for the adjoint coefficient field~$\a^t$.

\smallskip

We also introduce the following matrices: first, the anti-symmetric part of~$\khom(U)$ is denoted by
\begin{equation}
\label{e.hhomU}
\hhom(U)  \coloneqq 
\frac12(\khom-\khom^t)(U)\,,
\end{equation}
and the geometric mean of~$\bhom_{\hhom( U) }(U)$ and~$\shom_{*}(U)$ is given by 
\begin{equation}
\label{e.thomU}
\thom(U)  \coloneqq 
\bigl( \shom+ ( \khom - \hhom   )^t 
\shom_{*}^{-1} 
( \khom-\hhom  ) \bigr)  (U)
\#
\shom_{*}(U)
=
\bigl (\bhom_{\hhom( U) } \# \shom_{*}\bigr )(U)
\,.
\end{equation}
Note that our centering assumption~\eqref{e.centering} implies that $\hhom(\cu_m) = 0$, so that  
\begin{equation}
\label{e.thom.m.def}
\thom_m  \coloneqq   \bhom_m \#\, \shom_{*,m} = 
\bigl( \shom(\cu_m)+  \khom(\cu_m)   
\shom_{*}^{-1}(\cu_m) \khom(\cu_m)
\bigr)
\#\,
\shom_{*}(\cu_m)\,.
\end{equation}

In the next lemma we control the ratio of~$\thom(U)$ and~$\shom_*(U)$ by an expression involving the expectations of~$\tilde{J}(U,p,q)$ and~$\tilde{J}^*(U,p,q')$ and, consequently, give an upper bound for~$\Theta_m -1$ in terms of the latter for~$U=\cu_m$.

\begin{lemma}
\label{l.Jminusmeans.to.Theta}
For every bounded Lipschitz domain~$U\subseteq\Rd$, 
\begin{align}
\label{e.our.strategy}
\bigl |\bigl (\shom_{*}^{-\nicefrac12}\bhom_{\hhom(U)}  \shom_{*}^{-\nicefrac12} \bigr )(U) - \Id \bigr |
&
\leq  
2d \sup_{|e|=1}\biggl(
\E\Bigl    [ \tilde{J}(U,\thom^{-\nicefrac 12} (U) e, \thom^{\nicefrac 12} (U) e-\hhom (U) \thom^{-\nicefrac 12} (U) e) \Bigr  ]
\notag \\ & \qquad\qquad \quad
+ 
\E\Bigl  [\tilde{J}^*(U,\thom^{-\nicefrac 12} (U) e, \thom^{\nicefrac 12} (U) e+\hhom (U) \thom^{-\nicefrac 12} (U) e)\Bigr  ]
\biggr)
\,.
\end{align}
In particular, since~$\hhom(\cu_m) = 0$, 
\begin{equation}
\label{e.Thetam.by.Jildes}
\Theta_m - 1
\leq 
2 d 
\sup_{|e|=1}
\E\bigl  [ 
 \tilde{J}(\cu_m,\thom^{-\nicefrac 12}_m e , \thom^{\nicefrac 12}_m e) 
+ \tilde{J}^*(\cu_m,\thom^{-\nicefrac 12}_m e, \thom^{\nicefrac 12}_m e)\bigr  ]
\,.
\end{equation}
\end{lemma}
\begin{proof}
The lemma is a consequence of the following claim:
For every~$p,q,h\in \Rd$,
\begin{align}
\label{e.Jminusmeans.to.Theta}
\lefteqn{
\E\bigl [ \tilde{J}(U,p,q-h) + \tilde{J}^*(U,p,q+h) \bigr ]
}
\qquad &
\notag \\ &
=
q \cdot \bigl( \shom(U)\shom_*^{-1}(U) -\Id \bigr) p
+
\shom_*^{-1}(U)q
\cdot
(\khom(U)+\khom^t(U))
\shom_*^{-1}(U)(\khom(U)p - h)
\,.
\end{align}
Indeed, we obtain~\eqref{e.our.strategy} from~\eqref{e.Jminusmeans.to.Theta} by using the definitions of~$\thom(U)$ and~$\hhom(U)$ in~\eqref{e.thomU} and~\eqref{e.hhomU}, respectively, and the following computation (all of the matrices are evaluated at~$U$):
\begin{align*}
\lefteqn{
\sum_{j=1}^d
\E\Bigl  [ \tilde{J}(U,\thom^{-\nicefrac 12} e_j , \thom^{\nicefrac 12} e_j-\hhom  \thom^{-\nicefrac 12} e_j) + \tilde{J}^*(U,\thom^{-\nicefrac 12} e_j , \thom^{\nicefrac 12} e_j + \hhom  \thom^{-\nicefrac 12} e_j)\Bigr  ]
}
\qquad &
\notag \\ &
=
\sum_{j=1}^d
\Bigl (
e_j \cdot \thom^{\nicefrac 12}\bigl( \shom \shom_{*}^{-1} -\Id \bigr)\thom^{-\nicefrac 12} e_j
+
\frac12 e_j
\cdot
\thom^{\nicefrac 12} \shom_{*}^{-1} (\khom  +\khom ^t )
\shom_{*}^{-1}(\khom  +\khom ^t )\thom^{-\nicefrac 12} e_j
\Bigl )
\notag \\ &
=
\tr\bigl( \thom^{\nicefrac 12}\bigl( \shom \shom_{*}^{-1} -\Id \bigr)\thom^{-\nicefrac 12} \bigr)
+\frac12 \tr\bigl( \thom^{\nicefrac 12} \shom_{*}^{-1} (\khom  +\khom ^t )
\shom_{*}^{-1}(\khom  +\khom ^t )\thom^{-\nicefrac 12} \bigr)
\notag \\ &
= 
\tr\bigl(  \shom_{*}^{-\nicefrac12} \shom \shom_{*}^{-\nicefrac12} - \Id \bigr)
+ \frac12 \tr\bigl( \shom_{*}^{-\nicefrac12} (\khom  -\hhom )^t\,
\shom_{*}^{-1}(\khom  -\hhom )  \shom_{*}^{-\nicefrac12}  \bigr )
\notag \\ &
\geq 
\frac12
\tr\bigl(  \shom_{*}^{-\nicefrac12} \bigl( \shom +  (\khom  -\hhom )^t\,
\shom_{*}^{-1}(\khom  -\hhom ) \bigr ) \shom_{*}^{-\nicefrac12} -\Id \bigr )
\notag \\ &
\geq 
\frac12
\bigl|  \shom_{*}^{-\nicefrac12} \bigl( \shom +  (\khom  -\hhom )^t\,
\shom_{*}^{-1}(\khom  -\hhom ) \bigr ) \shom_{*}^{-\nicefrac12} -\Id \bigr |
\,,
\end{align*}
where the cyclic property of traces was applied to obtain the last equality and the non-negativity of the matrices inside of the traces to obtain the last inequalities.

\smallskip

We turn to the proof of~\eqref{e.Jminusmeans.to.Theta}. Recall from~\eqref{e.J.mat} and~\eqref{e.meet.the.homs} that we have the identity 
\begin{equation}
\label{e.J.mat.E}
\E \bigl[ J(U,p,q) \bigr] 
= 
\frac 12p \cdot \shom(U) p 
+ \frac 12 (q+\khom(U) p) \cdot \shom_*^{-1}(U) (q+\khom(U) p) 
- p \cdot q
\,.
\end{equation}
According to~\eqref{e.all.averages.entries}, we have that 
\begin{equation}
\label{e.all.averages.entries.again}
\left\{
\begin{aligned}
& 
\E \biggl[ \fint_U 
\nabla v(\cdot,U,p,q)\biggr] 
=
- p + \shom_{*}^{-1}(U) (q + \khom(U) p)
\\ & 
\E \biggl[ 
\fint_U 
\a \nabla v(\cdot,U,p,q) \biggr]
=
(\Id -\khom^t  \shom_{*}^{-1}\bigr )(U)  q - \bhom(U) p\,.
\end{aligned}
\right.
\end{equation}
Therefore, 
\begin{align*}
\lefteqn{ 
\E \biggl[ \fint_U 
\nabla v(\cdot,U,p,q)\biggr] \cdot
\E \biggl[ 
\fint_U 
\a \nabla v(\cdot,U,p,q) \biggr]
} \qquad & 
\notag \\ & 
=
\bigl( - p + \shom_{*}^{-1}(U) (q + \khom(U) p) \bigr) 
\cdot 
\bigl( (\Id -\khom^t  \shom_{*}^{-1}\bigr )(U)  q - \bhom(U) p \bigr)
\notag \\ & 
=
p\cdot \shom  p 
+ (q+\khom p) \cdot \shom_{*}^{-1} (q+\khom p) 
-
(q+\khom p) \cdot 
\shom_*^{-1} \khom \shom_{*}^{-1}
(q+\khom p)
- \shom_*^{-1}\shom p \cdot (q+\khom p)
-p\cdot q\,,
\end{align*}
where in the last line and in the rest of the argument, 
we suppress the dependence on~$U$ from the notation since it plays no important role.
Combining~\eqref{e.J.mat.E} and the previous display, we obtain  the following expression for~$\E [ \tilde{J}(U,p,q)]$ in terms of the coarse-grained matrices:
\begin{align*}
\E \bigl[ \tilde{J}(U,p,q) \bigr] 
&
=
\E \bigl[ J(U,p,q) \bigr] 
-
\frac12 \E\biggl [ \fint_U \nabla v (\cdot,U,p,q)  \biggr ]
\cdot
\E\biggl [ \fint_U \a \nabla v (\cdot,U,p,q)  \biggr ]
\notag \\ & 
= 
\frac12 (q+\khom p) \cdot 
\shom_*^{-1} \khom \shom_{*}^{-1}
(q+\khom p)
+\frac12 \shom_*^{-1}\shom p \cdot (q+\khom p)
-\frac12 p\cdot q
\,.
\end{align*}
By a similar computation, we obtain the following formula for~$\E [ \tilde{J}^*(U,p,q)]$:
\begin{align*}
\E \bigl[ \tilde{J}^*(U,p,q) \bigr] 
= 
\frac12 (q-\khom p) \cdot 
\shom_*^{-1} \khom \shom_{*}^{-1}
(q-\khom p)
+\frac12 \shom_*^{-1}\shom p \cdot (q-\khom p)
-\frac12 p\cdot q
\,.
\end{align*}
Combining the above displays gives us
\begin{equation*} 
\E\bigl [ \tilde{J}(U,p,q{-}h) + \tilde{J}^*(U,p,q{+}h) \bigr ] 
=
p \cdot (\shom \shom_*^{-1} - \Id)   q + (\khom p - h )\cdot \shom_*^{-1} (\khom + \khom^t) \shom_*^{-1} q 
\,.
\end{equation*}
This completes the proof of~\eqref{e.Jminusmeans.to.Theta} and thus of~\eqref{e.our.strategy}.

\smallskip

In view of the fact that~$\hhom(\cu_m)=0$, the combination of~\eqref{e.Theta.n.def} and~\eqref{e.our.strategy} implies~\eqref{e.Thetam.by.Jildes}.
\end{proof}

Motivated by Lemma~\ref{l.Jminusmeans.to.Theta},
our goal is now to get an upper bound estimate on
\begin{equation}
\label{e.whatwewanttobound}
\E\bigl  [ \tilde{J}(\cu_m,p,q) + \tilde{J}^*(\cu_m,p,q)\bigr  ] 
\,,
\end{equation}
with the choices
\begin{equation}
\label{e.pqh.choice}
p \coloneqq  \thom^{-\nicefrac 12}_m e
\quad \mbox{and} \quad
q \coloneqq  \thom^{\nicefrac 12}_m e
 \,,
\end{equation}
where~$\thom_m$ is defined by~\eqref{e.thom.m.def} and~$e\in\Rd$ with $|e|=1$ is the unit vector attaining the supremum on the right side~\eqref{e.our.strategy} with~$U=\cu_m$: that is, 
\begin{align}
\label{e.e.choice}
\E\bigl  [ \tilde{J}(\cu_m,p,q) + \tilde{J}^*(\cu_m,p,q)\bigr  ]
=
\sup_{|e'|=1}
\E\Bigl  [ 
 \tilde{J}(\cu_m,\thom^{-\nicefrac 12}_m e' , \thom^{\nicefrac 12}_m e') 
+ \tilde{J}^*(\cu_m,\thom^{-\nicefrac 12}_m e' , \thom^{\nicefrac 12}_m e')\Bigr  ]
\,.
\end{align}

To get a bound on the right side of~\eqref{e.whatwewanttobound}, we 
first switch to the adapted cubes at the cost of giving up a few scales: using~\eqref{e.Euc.by.tilt} and our choice of~$(p,q)$ in~\eqref{e.pqh.choice}, we have that, for~$n=m-l$, 
\begin{align}
\label{e.dragon.egg}
\lefteqn{
\E\bigl  [ \tilde{J}(\cu_m,p,q) + \tilde{J}^*(\cu_m,p,q)\bigr  ]
} \quad  & 
\notag \\ &
=
\E\bigl  [ {J}(\cu_m,p,q) + {J}^*(\cu_m,p,q)\bigr  ]
- \frac12 P\cdot Q -  \frac12 P^*\cdot Q^*
\notag \\ & 
\leq
\E\bigl  [ {J}(\cus_n,p,q) + {J}^*(\cus_n,p,q)\bigr  ]
- \frac12 P\cdot Q -  \frac12 P^*\cdot Q^*
+
\frac{C\Pi^2 K_{\Psi_{\S}}^{3\gamma}}{1-\gamma} 3^{-(m-n)} 
\,,
\end{align}
where we let~$P,Q,P^*,Q^*\in\Rd$ denote the vectors
\begin{equation}
\label{e.PQ.choice}
\begin{pmatrix}
P \\ Q 
\end{pmatrix}
 \coloneqq  \E\biggl[ \fint_{\cu_m} 
\begin{pmatrix}
\nabla v(\cdot,\cu_m,p,q)  \\ \a \nabla v(\cdot,\cu_m,p,q)
\end{pmatrix}
\biggr]
\qand
\begin{pmatrix}
P^* \\ Q^* 
\end{pmatrix}
 \coloneqq  \E\biggl[ \fint_{\cu_m} 
\begin{pmatrix}
\nabla v^*(\cdot,\cu_m,p,q)  \\ \a^t \nabla v^*(\cdot,\cu_m,p,q)
\end{pmatrix}
\biggr]
\,.
\end{equation}
Therefore, what we want to bound is the quantity
\begin{equation}
\label{e.thequantity}
\E\bigl  [ J(\cus_n,p , q ) \bigr]
-\frac12 P \cdot Q\,,
\end{equation}
where~$p$ and~$q$ are as in~\eqref{e.pqh.choice},~$e$ is chosen so that~\eqref{e.e.choice} holds,~$P$ and~$Q$ are defined in~\eqref{e.PQ.choice}, and~$n\in \N$ is some suitably chosen parameter with~$n<m$. The analogous bound for~$J^*$ is a consequence of the one for~$J$ if we apply it with the random field~$\a^t$ in place of~$\a$.

\smallskip

The idea now is to write the quantity in~\eqref{e.thequantity} 
as the integral of the product of the centered gradient and centered flux of their maximizers:
\begin{align}
\label{e.Jtilde.to.energy}
\E\bigl  [  J(\cus_n,p , q)  \bigr]
-\frac12 P \cdot Q
&
=
\frac12 
\begin{pmatrix} 
Q \\ 
P 
\end{pmatrix} 
\cdot
\E \biggl[
\begin{pmatrix} 
\nabla v(\cdot,\cus_n,p,q) - P \\ 
\a \nabla v(\cdot,\cus_n,p,q) - Q 
\end{pmatrix} 
\biggr]
\notag \\ & \qquad
+
\frac12
\E\biggl[ 
\fint_{\cus_n}\!
\bigl( \nabla v(\cdot,\cus_n,p,q) - P \bigr) \cdot
\bigl(\a\nabla v(\cdot,\cus_n,p,q) - Q\bigr)
\biggr]
\,.
\end{align}
Note the formula~\eqref{e.Jtilde.to.energy} is valid for any~$P$ and~$Q$ and does not use the particular choice in~\eqref{e.PQ.choice}. 

\smallskip

The way one would normally proceed (in the uniformly elliptic case) with estimating the right side of~\eqref{e.Jtilde.to.energy} is to use the additivity defect to estimate the expectation on the first line since the spatial averages of gradients and fluxes can be expressed in terms of the coarse-grained matrices themselves. The second line is then typically estimated using the Caccioppoli inequality, which reduces the energy term to an~$L^2$ type oscillation of the maximizer itself, which can then be reduced once again to the spatial averages of the gradient and thus the additivity defect.

\smallskip

The problem with this strategy in our context is that the Caccioppoli inequality produces estimates with factors of the (pointwise) ellipticity constants of the microscopic matrix~$\a(x)$. Since we do not assume that~$\a(\cdot)$ is uniformly elliptic, this strategy is unavailable; even if we did make a uniform ellipticity assumption, this estimate would produce factors of the ellipticity ratio that would result in a much worse estimate than the one we will prove. 

\smallskip

We instead proceed in a more \emph{coarse-grained} fashion by directly relating the energy of the maximizer to the weak Sobolev norms of the gradient and flux---while paying only the price required by the \emph{coarse-grained} ellipticity ratio.

\smallskip

This is the content of the next lemma, in which we control the left side of~\eqref{e.Jtilde.to.energy} by using a similar identity to~\eqref{e.Jtilde.to.energy}, but with a cutoff function smuggled in, and then a quantitative ``div-curl'' type argument  to control the energy term.

\begin{lemma}
\label{l.Jtilde.energy.bound}
There exists a constant~$C(d)<\infty$ such that, for~$n = m - l$, we have the estimate
\begin{align}
\label{e.Jtilde.energy.bound}
\Bigl| \E\bigl  [ J(\cus_n,p , q ) \bigr]
- \frac12 P \cdot Q 
\Bigr|
\leq
C 3^{-n} 
\E \Biggl[
\biggl[ 
\mathbf{M}_0^{\nicefrac12} 
\begin{pmatrix} 
\nabla v(\cdot,\cus_n,p,q) - P \\ 
\a \nabla v(\cdot,\cus_n,p,q) - Q 
\end{pmatrix} 
\biggr]_{\Besov{-\nf12}{2}{1}(\cus_{n})}^2
\Biggr]
+
C \delta^{\nicefrac12} \sigma 
\Theta
\,.
\end{align}
\end{lemma}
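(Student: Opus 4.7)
My plan is to apply the identity~\eqref{e.Jtilde.to.energy} (with~$v := v(\cdot,\cus_n,p,q)$) to decompose
\begin{equation*}
\E\bigl[J(\cus_n,p,q)\bigr] - \tfrac12 P\cdot Q = I_1 + I_2,
\end{equation*}
where $I_1 := \tfrac12\begin{pmatrix}Q\\P\end{pmatrix}\cdot\E\bigl[(\nabla v - P,\a\nabla v - Q)^T_{\cus_n}\bigr]$ is the ``means'' piece and $I_2 := \tfrac12\E\bigl[\fint_{\cus_n}(\nabla v - P)\cdot(\a\nabla v - Q)\bigr]$ is the ``energy'' piece. The piece~$I_1$ will produce the additive error~$C\delta^{\nicefrac12}\sigma\Theta^{\nicefrac12}\Theta_m^{\nicefrac12}$, and $I_2$ will match the weak Besov norm term in the target.

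For~$I_1$, I will use the explicit formula~\eqref{e.all.averages} to obtain the identity
\begin{equation*}
\E\bigl[(\nabla v)_{\cus_n},(\a\nabla v)_{\cus_n}\bigr]^T - (P,Q)^T = \mathbf{R}\bigl(\bfAhom(\cus_n) - \bfAhom_m\bigr)(-p,q)^T,
\end{equation*}
and then apply Cauchy--Schwarz after inserting factors of~$\bfAhom_m^{\pm\nicefrac12}$. The middle factor~$\bigl|\bfAhom_m^{-\nicefrac12}(\bfAhom(\cus_n)-\bfAhom_m)\bfAhom_m^{-\nicefrac12}\bigr|$ is at most~$C\delta\sigma^2$ by the pigeonhole estimate~\eqref{e.first.pigeon.adapted}. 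The two outer factors are controlled by~$\Theta_m^{\nicefrac14}$ and~$\Theta^{\nicefrac14}\Theta_m^{\nicefrac14}$ respectively, using the specific choices~\eqref{e.pqh.choice} of $(p,q)$, the geometric mean definition~\eqref{e.thom.m.def} of~$\thom_m$, the centering~\eqref{e.centering}, and~\eqref{e.mnaught.vs.bnaught}. Combined with the trivial bound~$\delta\sigma^2 \leq \delta^{\nicefrac12}\sigma$ (valid since~$\delta,\sigma\in(0,\nicefrac12]$), this yields the stated error estimate for~$|I_1|$.

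For~$I_2$, I carry out a div-curl integration by parts adapted to the~$\m_0$-geometry. Choose a cutoff~$\varphi\in C_{\mathrm{c}}^\infty(\cus_n)$ satisfying~$3^n|\mathbf{q}_0\nabla\varphi|+3^{2n}|(\mathbf{q}_0\nabla)^2\varphi|\leq C$ and~$\varphi\equiv 1$ on the bulk of~$\cus_n$. Since~$Q\in\R^d$ is constant and~$v$ solves~$-\nabla\cdot\a\nabla v=0$, the vector field~$\a\nabla v - Q$ is divergence-free, hence for any~$c\in\R$,
\begin{equation*}
\fint_{\cus_n}\varphi^2(\nabla v - P)\cdot(\a\nabla v - Q) = -2\fint_{\cus_n}\varphi(v - P\cdot x - c)\,\nabla\varphi\cdot (\a\nabla v - Q).
\end{equation*}
Choosing~$c$ so that the left factor has zero mean on~$\cus_n$, I then apply H\"older duality between~$\underline{\mathring{B}}_{2,1}^{-\nicefrac12}(\cus_n)$ and~$\underline{B}_{2,\infty}^{\nicefrac12}(\cus_n)$, followed by the adapted coarse-grained Poincar\'e-type inequality~\eqref{e.divcurl.est1.cus} to convert the positive seminorm of~$\varphi(v - P\cdot x - c)\m_0^{\nicefrac12}\nabla\varphi$ into~$C 3^{-n}\bigl[\m_0^{\nicefrac12}(\nabla v - P)\bigr]_{\underline{\mathring{B}}_{2,1}^{-\nicefrac12}(\cus_n)}$. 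The AM--GM inequality together with the block-diagonal structure~$\mathbf{M}_0 = \mathrm{diag}(\m_0,\m_0^{-1})$ then repackage the resulting product of two seminorms into the single squared seminorm of~$\mathbf{M}_0^{\nicefrac12}$ times the centered gradient-flux pair, matching the first term on the right side of~\eqref{e.Jtilde.energy.bound}.

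The main technical obstacle is handling the boundary region~$\cus_n\setminus\{\varphi\equiv 1\}$ that the clean interior integration by parts does not reach. I expect to treat this either by a Whitney-type covering of the boundary layer by smaller adapted subcubes, on each of which the div-curl identity is reapplied at a smaller scale (naturally fitting into the scalewise sum defining~$\underline{\mathring{B}}_{2,1}^{-\nicefrac12}$), or by choosing~$\varphi$ so that its transition region coincides with a dyadic scale already counted by the weak norm. Throughout, the~$\m_0$-adapted geometry and the key matrix identity~$\mathbf{M}_0^{-\nicefrac12}\mathbf{R}\mathbf{M}_0^{-\nicefrac12}=\mathbf{R}$ (a consequence of~$\m_0$ being symmetric) are what keep the estimate free of spurious factors of~$\Pi = \Lambda/\lambda$.
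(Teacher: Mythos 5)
Your treatment of the means term~$I_1$ is fine and matches the paper: the identity from~\eqref{e.all.averages}, the pigeonhole bound~\eqref{e.first.pigeon.adapted}, and the bounds~\eqref{e.PQ.bound.pre},~\eqref{e.pq.bounds} on the outer factors give an error of order~$\delta\sigma^2\Theta^{\nicefrac12}\Theta_m^{\nicefrac12}$, which fits inside the stated budget. The genuine gap is in~$I_2$. Your div-curl integration by parts only produces an identity for the \emph{cutoff} bilinear form~$\fint\varphi^2(\nabla v-P)\cdot(\a\nabla v-Q)$, and you then propose to treat the discrepancy with the un-cutoff term as a boundary-layer error, to be absorbed by a Whitney covering or by aligning the transition region with a dyadic scale. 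In this degenerate setting that discrepancy is not small: nothing prevents a constant fraction of the energy of~$v_n$ from concentrating in the layer where~$\nabla\varphi\neq0$, and any attempt to bound the bilinear integrand pointwise (Cauchy--Schwarz with~$\s^{\pm\nicefrac12}$, or a Caccioppoli estimate localizing to the layer) reintroduces exactly the pointwise ellipticity factors the whole argument is designed to avoid --- this is the obstruction the paper spells out right after~\eqref{e.Jtilde.to.energy}. Re-running the div-curl identity on smaller subcubes of the layer does not help, since each reapplication needs its own compactly supported cutoff and leaves behind the same type of uncontrolled layer term, so the procedure is circular.

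The paper's proof closes this hole by a mechanism that is absent from your plan: the cutoff is built into the decomposition~\eqref{e.Jtilde.decomp} with the normalization~$(\varphi)_{\cus_n}=1$, and then~$\Zd$--stationarity makes the leading layer contributions cancel exactly (the sums~$\sum_z(1-(\varphi)_{z+\cus_k})\E[J(z+\!\cus_k)]$ and~$\sum_z(1-(\varphi)_{z+\cus_k})\E[(\nabla v_{z,k})_{z+\cus_k}]$ vanish), leaving only the additivity defect~$\bar{\tau}_{n,k}$ across a mesoscale~$k$ together with multiscale coarse-grained sums (Steps 2--4, estimates~\eqref{e.cutoffenergy.vs.J},~\eqref{e.Pvarphi.incontrol},~\eqref{e.Qvarphi.incontrol},~\eqref{e.Ahomj.vs.Enaught.sum}). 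The smallness of~$\bar{\tau}_{n,k}$ is precisely where the hypothesis~\eqref{e.first.pigeon.adapted} enters, via~\eqref{e.what.is.bartau}, and the final reabsorption with Young's inequality and the choice~$\ep=\delta^{\nicefrac12}\sigma\Theta^{-\nicefrac12}$ is what produces the~$\delta^{\nicefrac12}\sigma$ in the error. Without this stationarity-plus-defect argument, or a substitute for it, your estimate of~$I_2$ does not go through.
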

\begin{proof}
We first prove a preliminary statement which is valid for general~$P,Q \in \Rd$ (not only for the specific choices made in~\eqref{e.PQ.choice}), and also for a general~$\m_0$ and general~$\mathbf{M}_0$ with the defining property that~$3^{k_0} \Lat \subseteq \Zd$ and~$\mathbf{q}_0 = \mbox{const} \cdot \mathbf{m}_0^{\nicefrac12}$ (not only for the specific choices made in~\eqref{e.m.naught.def} and~\eqref{e.q.naught}). We then conclude in Step 6 below using the particular choices~\eqref{e.PQ.choice},~\eqref{e.m.naught.def} and~\eqref{e.q.naught}.

\smallskip

To simply notation, we drop~$p,q$ and denote, for every~$z \in \Z^d$ and~$k,n \in \N$ with~$n > k+ 3 > k \geq k_0$, 
\begin{equation*}  
v_{k,z}  \coloneqq  v(\cdot,z +\cus_k,p,q)
\,, \quad
J(z+\cus_k)
=
J(z+\cus_k,p,q)
\,, \quad 
\bar{\tau}_{n,k}  \coloneqq  
\E\big[ J(\cus_k) - J(\cus_n)\bigr]
\,.
\end{equation*}
Denote also~$v_n  \coloneqq  v(\cdot,\cus_n,p,q)$.  We fix a nonnegative smooth test function~$\varphi \in C_{c}^{\infty}(\cus_n)$ such that 
\begin{equation*}  
(\varphi)_{\cus_n}  = 1 \,, 
\quad 
0 \leq  \varphi \leq 2
\qand
\bigl \| \mathbf{q}_0^{j} \nabla^j \varphi \bigr \|_{L^\infty(\cus_n)} \leq 3^{-j(n-2)}\,, \quad j \in \{1,2\}\,.
\end{equation*}
Notice that, by~\eqref{e.first.variation},~\eqref{e.quadratic.response} and~\eqref{e.J.energy}, we have
\begin{equation} 
\label{e.subadd.gives.this}
\frac12 \avsum_{z \in 3^k \Lat \cap \cus_n} \E\Bigl[ \bigl\| \s^{\nicefrac12} \nabla (v_n - v_{k,z})  \bigr\|_{\underline{L}^2(z+\cus_k)}^2 \Bigr] = \bar{\tau}_{n,k} \,.
\end{equation}
Next, observe that
\begin{align}
\label{e.Jtilde.decomp}
\E\bigl  [ J(\cus_n)\bigr]
-\frac12 P \cdot Q
&
=
\E\biggl[ 
\fint_{\cus_n} 
\frac12 \varphi \bigl( \nabla v_{n} - P \bigr) \cdot \bigl(\a \nabla v_{n} - Q \bigr) 
\biggr]   
\notag \\ 
& \qquad 
+ 
\E\biggl[ J(\cus_n ) -  
\fint_{\cus_n} \frac12 \varphi \nabla v_{n}  \cdot \s \nabla v_{n}  
\biggr]   
\notag \\ 
& \qquad 
+ 
\frac12 Q \cdot \E\bigl [\bigl( (\varphi-1) \nabla v_n)_{\cus_n} \bigr ]
+ 
\frac12 P \cdot \E\bigl [\bigl( (\varphi-1) \a \nabla v_n)_{\cus_n} \bigr ]
\notag \\ & \qquad 
+
\frac12 Q \cdot \bigl( \E\bigl[(\nabla v_n)_{\cus_n}\bigr] - P )  
+ \frac12 P \cdot \bigl( \E\bigl[(\a \nabla v_n)_{\cus_n}\bigr] - Q ) 
\,.
\end{align}
We proceed by estimating each of the four lines on the right side of~\eqref{e.Jtilde.decomp} separately.

\smallskip

\emph{Step 1.}
We show that there exists a constant~$C(d)<\infty$ such that
\begin{equation}  
\label{e.divcurl.applied}
\biggl| 
\fint_{\cus_n}
\varphi ( \nabla v_{n} - P ) \cdot
(\a \nabla v_{n} - Q)
\biggr|
\leq 
C3^{-n} \bigl[ \m_0^{\nicefrac12} (\nabla v_n - P)  \bigr]_{\Besov{-\nf12}{2}{1}(\cus_{n})} \bigl [  \mathbf{m}_{0}^{-\nicefrac12} (\a \nabla v_{n} - Q) \bigr]_{\underline{B}_{2,1}^{-\nicefrac12}(\cus_n)} 
\,.
\end{equation}
By Lemma~\ref{l.testing.makes.sense} we may test the equation for~$v_n$ with~$(v_n - \ell_P) \varphi$ with~$\ell_P(x)  \coloneqq  (v_n)_{\cus_n} +  P \cdot x$ and integrate by parts. Using~\eqref{e.szero.vs.qzero}, we get  
\begin{align}   
\biggl| 
\fint_{\cus_n}
\varphi ( \nabla v_{n} - P ) \cdot
(\a \nabla v_{n} - Q)
\biggr|
 &
=
\biggl| 
\fint_{\cus_n}
 ( v_{n} - \ell_P ) \mathbf{m}_{0}^{\nicefrac12}  \nabla \varphi \cdot
\mathbf{m}_{0}^{-\nicefrac12}  (\a \nabla v_{n} - Q)
\biggr|
\notag \\ &
\leq 
C
\bigl \| (v_{n} - \ell_{P})  \mathbf{m}_{0}^{\nicefrac12} \nabla \varphi \bigr\|_{\underline{B}_{2,\infty}^{\nicefrac12}(\cus_n)} 
\bigl [  \mathbf{m}_{0}^{-\nicefrac12} (\a \nabla v_{n} - Q) \bigr]_{\underline{B}_{2,1}^{-\nicefrac12}(\cus_n)} 
\,.
\notag
\end{align}
By applying~\eqref{e.divcurl.est1.cus}, we find that 
\begin{equation*}  
\bigl \| (v_{n} - \ell_{P})  \mathbf{m}_{0}^{\nicefrac12} \nabla \varphi  \bigr\|_{\underline{B}_{2,\infty}^{\nicefrac12}(\cus_n)} 
\leq 
C 3^{-n} \bigl[ \m_0^{\nicefrac12} (\nabla v_{n} - P)  \bigr]_{\Besov{-\nf12}{2}{1}(\cus_{n})}
\,,
\end{equation*}
and~\eqref{e.divcurl.applied} follows. 

\smallskip

\emph{Step 2.} 
We show that, for every~$k\in\N$ with~$k_0 \leq k\leq n$, 
\begin{align}
\label{e.cutoffenergy.vs.J}
\biggl| 
\E\biggl[ 
\fint_{\cus_n} \frac12  \varphi \nabla v_{n}  \cdot \s \nabla v_{n}  
\biggr]   
-
\E\bigl[ J(\cus_n
) \bigr] 
\biggr| 
\leq
2 \bar{\tau}_{n,k} 
+ 
4 \bar{\tau}_{n,k}^{\nicefrac 12}\E[J(\cus_n  )]^{\nicefrac12} 
+ 
3^{k + 2 -n}  \E\bigl[ J(\cus_n ) \bigr] 
\,.
\end{align}
We split the energy term, writing it as
\begin{align} \label{e.cutoffenergy.vs.J.prepre}
\frac12 \E\biggl[  
\fint_{\cus_n} \varphi \nabla v_{n}  \cdot \s \nabla v_{n}  
\biggr]  
& = 
 \frac12   \avsum_{z\in 3^{k}\Lat  \cap \cus_n} 
(\varphi)_{z+ \cus_k}  \E\biggl[   \fint_{z+ \cus_k}  \nabla v_{n}  \cdot \s \nabla v_{n}  \biggr]  
\notag \\ 
& \qquad 
+   \frac12   \avsum_{z\in 3^{k}\Lat  \cap \cus_n} \E\biggl[  \fint_{z+ \cus_k} (\varphi -  (\varphi)_{z+ \cus_k} ) 
\nabla v_{n}  \cdot \s \nabla v_{n} \biggr]  
\,.
\end{align}
The second term is very small: we have that 
\begin{align} \label{e.cutoffenergy.vs.J.pre1}
\lefteqn{
\Biggl | 
\avsum_{z\in 3^{k}\Lat  \cap \cus_n} 
\E\biggl[  
\fint_{z+ \cus_k} (\varphi -  (\varphi)_{z+ \cus_k} )  \nabla v_{n}  \cdot \s \nabla v_{n}  
\biggr]  
\Biggr |
} \qquad  & 
\notag \\ & 
\leq 
\max_{z\in 3^{k}\Lat  \cap \cus_n}\|  \varphi -  (\varphi)_{z+ \cus_k}   \|_{L^\infty(z+ \cus_k)}
\E\biggl[  \fint_{\cus_n}  \nabla v_{n}  \cdot \s \nabla v_{n} \biggr]  
\leq
2\cdot 3^{- (n-k) + 2}  \E\bigl[ J(\cus_n)\bigr] 
\,.
\end{align}
Next, we have by~\eqref{e.J.energy} that
\begin{align*} 
\frac12 \fint_{z+ \cus_k}  \nabla v_{n}  \cdot \s \nabla v_{n}  
& = 
\frac12 \fint_{z+ \cus_k}  \nabla v_{k,z}  \cdot \s \nabla v_{k,z} + 
\frac12 \fint_{z+ \cus_k}  \nabla (v_n - v_{k,z})  \cdot \s \nabla (v_n + v_{k,z} )
\notag \\ & 
=  J(z+\cus_k) + \frac12
\fint_{z+ \cus_k}  \nabla (v_n - v_{k,z})  \cdot \s \nabla (v_n + v_{k,z} )
\,.
\end{align*}
Taking the expectation of the previous display and using~$\Zd$--stationarity and~$(\varphi)_{\cus_m}=1$, we get
\begin{align*} 
\lefteqn{
\frac12 \avsum_{z\in 3^{k}\Lat  \cap \cus_n}
(\varphi)_{z+ \cus_k} \E\biggl[ \fint_{z+ \cus_k}  \nabla v_{n}  \cdot \s \nabla v_{n} \biggr]
} \qquad & 
\notag \\ &
= \E\bigl[J(\cus_k) \bigr] 
+
\frac12  \avsum_{z\in 3^{k}\Lat  \cap \cus_n}
(\varphi)_{z+ \cus_k}    
\E\biggl[  \fint_{z+ \cus_k}  \nabla (v_n - v_{k,z})  \cdot \s \nabla (v_n + v_{k,z} )\biggr]
\,.
\end{align*}
Using H\"older's inequality,~$\Zd$--stationarity,~\eqref{e.J.energy},~\eqref{e.subadd.gives.this} and~$\| \varphi \|_{L^\infty(\cus_n)} \leq 2$, we have
\begin{align*} 
\lefteqn{
\frac12  \avsum_{z\in 3^{k}\Lat  \cap \cus_n}
(\varphi)_{z+ \cus_k}    
\E\biggl[  \fint_{z+ \cus_k}  \nabla (v_n - v_{k,z})  \cdot \s \nabla (v_n + v_{k,z} )\biggr]
} \qquad &
\notag \\ &
\leq 
2 \bigl(\E\bigl[J(\cus_k) \bigr]  + \E\bigl[J(\cus_n) \bigr] \bigr)^{\nicefrac12} 
\bar{\tau}_{n,k}^{\nicefrac12} 
\leq 
2^{\nicefrac32} \E\bigl[J(\cus_n) \bigr]^{\nicefrac12} \bar{\tau}_{n,k}^{\nicefrac12}  + 2  \bar{\tau}_{n,k}
\,.
\end{align*}
By combining the previous three displays with~\eqref{e.cutoffenergy.vs.J.prepre} and~\eqref{e.cutoffenergy.vs.J.pre1}, we obtain~\eqref{e.cutoffenergy.vs.J}.

\smallskip

\emph{Step 3.}
We next show that there exists~$C(d)<\infty$ such that,
for every~$k\in\N$ with~$k_0\leq k\leq n$,
\begin{multline}
\label{e.Pvarphi.incontrol}
\bigl| \E\bigl[  ( (\varphi-1) \nabla v_{n}  )_{\cus_n}\bigr]  \cdot Q \bigr|
\leq
(2 \bar{\tau}_{n,k})^{\nicefrac12} \bigl| \shom_{*}^{-\nicefrac12}(\cus_k) Q \bigr|  
\\
+
C  3^{-(n{-}k)} \E\bigl[ J(\cus_n)\bigr]^{\nicefrac12} 
\biggl( \sum_{j=-\infty}^k 
3^{\frac32(j-k)}   \!\!\!\!
 \avsum_{z \in 3^j\Lat \cap \cus_k} \! \!  \! \bigl|\shom_{*}^{-\nicefrac12}(z +\cus_j) Q \bigr|^2 \biggr)^{\! \nicefrac12} 
\,.
\end{multline}
We first rewrite
\begin{align} \label{e.split.Pvarphi}
\E\bigl[  \bigl( (1-\varphi) \nabla v_{n} \bigr)_{\cus_n}\bigr] 
&
=
\avsum_{z\in 3^{k}\Lat  \cap \cus_n}
\bigl( 1 - (\varphi)_{z+\cus_k}\bigr) \E\bigl[ ( \nabla v_{n} - \nabla v_{k,z})_{z + \cus_k} \bigr] 
\notag \\ & \qquad
+ \avsum_{z\in 3^{k}\Lat  \cap \cus_n}
\bigl( 1 - (\varphi)_{z+\cus_k}\bigr) \E\bigl[ (\nabla v_{k,z})_{z + \cus_k} \bigr]  
\notag \\  & \qquad 
+
\avsum_{z\in 3^{k}\Lat  \cap \cus_n} \, 
\E\biggl[ \fint_{z + \cus_k} \bigl( (\varphi)_{z+\cus_k}  - \varphi \bigr) \nabla v_{n} \biggr]
\,.
\end{align}
By H\"older's inequality,~\eqref{e.energymaps.nonsymm} and stationarity, we get
\begin{align} 
\label{e.energymaps.nonsymm.mean}
\Bigl| \E\bigl[ ( \nabla v_{n} - \nabla v_{k,z})_{z + \cus_k} \bigr] \cdot Q \Bigr|
&\leq
\E\Bigl[ \bigl| \s_{*}^{\nicefrac12} (z+\cus_k)( \nabla v_{n} - \nabla v_{k,z})_{z + \cus_k} \bigr|^2 \Bigr]^{\nicefrac12} \E\Bigl[ \bigl| \s_{*}^{-\nicefrac12}(z+\cus_k) Q\bigr|^2\Bigr]^{\nicefrac12} 
\notag \\ &
\leq  \E\Bigl[ \bigl\| \s^{\nf12} ( \nabla v_{n} - \nabla v_{k,z} ) \bigr\|_{\underline{L}^2(z+\cus_k)}^2\Bigr]^{\nicefrac12} \bigl| \shom_{*}^{-\nicefrac12} (\cus_k)Q\bigr|
\,.
\end{align}
The contribution of the first term on the right in~\eqref{e.split.Pvarphi} can then be estimated using the above display, H\"older's inequality and~$0 \leq \varphi \leq 2$:
\begin{equation*}  
\avsum_{z\in 3^{k}\Lat  \cap \cus_n}\Bigl| 
\bigl( 1 - (\varphi)_{z+\cus_k}\bigr) 
\E\bigl[ ( \nabla v_{n} - \nabla v_{k,z})_{z + \cus_k} \bigr] \cdot Q \Bigr|
\leq 
(2\bar{\tau}_{n,k})^{\nicefrac12} \bigl| \shom_{*}^{-\nicefrac12}(\cus_k) Q \bigr| 
\,.
\end{equation*}
The second term on the right side of~\eqref{e.split.Pvarphi}, by~$(\varphi)_{\cus_n} = 1$ and~$\Z^d$-stationarity, is zero:
\begin{equation*}  
\avsum_{z\in 3^{k}\Lat  \cap \cus_n}
\bigl( 1 - (\varphi)_{z+\cus_k}\bigr) \E\bigl[ (\nabla v_{k,z})_{z + \cus_k} \bigr]   
=
\E\bigl[ (\nabla v_{0,k})_{\cus_k} \bigr] 
\avsum_{z\in 3^{k}\Lat  \cap \cus_n}
\bigl( 1 - (\varphi)_{z+\cus_k}\bigr)  
=
0
\,.
\end{equation*}
The last term on the right side of~\eqref{e.split.Pvarphi} is small, and we start estimating it, by using~\eqref{e.energymaps.nonsymm}:
\begin{align*}  
\biggl| 
\fint_{z + \cus_k} \bigl( (\varphi)_{z+\cus_k}  {-} \varphi \bigr) \nabla v_{n}  \cdot  Q 
\biggr|
&
\leq 
\bigl[ \varphi \bigr]_{\underline{B}_{1,\infty}^{1}(z + \cus_k)} 
\bigl[ \nabla v_{n}  \cdot  Q \bigr]_{\underline{B}_{1,1}^{-1}(z + \cus_k)} 
\notag \\ &
\leq
C 3^{-n}
\sum_{j =-\infty}^k 3^{j}
\avsum_{z' \in 3^j\Lat \cap (z + \cus_k)}
\bigl| \bigl( \nabla v_{n} \bigr)_{z'+\cus_j} \cdot  Q \bigr |
\notag \\ &
\leq
C 3^{-n}
\sum_{j =-\infty}^k 3^{j}
\! \! \! \avsum_{z' \in z+ 3^j\Lat \cap \cus_k} \! \! \!
 \| \s^{\nicefrac12} \nabla v_n \|_{\underline{L}^2(z' + \cus_j )} \bigl| \s_*^{-\nicefrac12}(z' +\cus_j)  Q \bigr| 
\,.
\end{align*}
By H\"older's inequality, we then obtain 
\begin{align*}
\lefteqn{
\avsum_{z\in 3^{k}\Lat  \cap \cus_n} \E\biggl[  \biggl| 
\fint_{z + \cus_k} \bigl( (\varphi)_{z+\cus_k}  - \varphi \bigr) \nabla v_{n} \cdot  Q 
\biggr|
\biggr]
} \qquad &
\notag \\ & 
\leq
C 3^{-(n-k)}
\E\bigl[ J(\cus_n)\bigr]^{\nicefrac 12} 
\biggl(\sum_{j=-\infty}^k 
3^{\frac32(j-k)} 
\avsum_{z \in 3^j \Lat \cap \cus_n} \! \!  Q \cdot \shom_*^{-1}(z' +\cus_j) Q \biggr)^{\! \nicefrac12} 
\,.
\end{align*}
Combining the above displays yields~\eqref{e.Pvarphi.incontrol}. 

\smallskip

\emph{Step 4.}
Similarly to the previous step, we argue next that there exists~$C(d)<\infty$ such that, for every~$k\in\N$ with~$k_0 \leq k\leq n$,
\begin{multline}
\label{e.Qvarphi.incontrol}
\bigl| 
\E\bigl [\bigl( (\varphi-1) \a \nabla v_n)_{\cus_n} \bigr ] \cdot P \bigr|
\leq
(2 \bar{\tau}_{n,k})^{\nicefrac12} \bigl| \bhom^{\nicefrac12}(\cus_k) P \bigr| 
\\ + 
C  3^{-(n{-}k)} \E\bigl[ J(\cus_n)\bigr]^{\nicefrac12} 
\biggl( \sum_{j=-\infty}^k 
3^{\frac32(j-k)}   \!\!\!\!
 \avsum_{z \in 3^j\Lat \cap \cus_k} \! \!  \! \bigl|\bhom^{\nicefrac12}(z +\cus_j) P \bigr|^2 \biggr)^{\! \nicefrac12} 
\,.
\end{multline}
The proof is almost a verbatim repetition of Step 3 above using identities for fluxes in place of gradients, in particular, an appropriate version of~\eqref{e.energymaps.nonsymm.flux} similar to~\eqref{e.energymaps.nonsymm.mean}.  We omit the details. 

\smallskip

\emph{Step 5.} 
We next show that\footnote{The statement asserted in Step~5 is valid without the assumptions of Proposition~\ref{p.renormalize.reduce} and without the centering assumption~\eqref{e.centering}, and for general~$p,q, P,Q\in\Rd$. Indeed, neither the assumptions of the proposition, the centering assumption, nor the choices of these parameters are used in the argument of Lemma~\ref{l.Jtilde.energy.bound} before Step~6. This more general statement will be used later, both in Section~\ref{ss.algebraic} and Section~\ref{ss.pigeon.prime}.} there exists a constant~$C(d)<\infty$ such that,
for every~$k,n\in\N$ with~$k_0 \leq k \leq n - 4$ and~$\ep \in (0,1]$,
we have
\begin{align}  
\label{e.divcurl.conclusion.pre}
\Bigl| \E\bigl  [ J(\cus_n)\bigr] -\frac12 P \cdot Q \Bigr|
& 
\leq 
50\ep^{-1} \bar{\tau}_{n,k} 
+
4 \ep
\bigl(  \bigl| \shom_{*}^{-\nicefrac12}(\cus_k) Q \bigr|  +  \bigl| \bhom^{\nicefrac12}(\cus_k) P \bigr| \bigr)^2
\notag \\ &  \quad 
+
C 3^{-(n-k)}  
\sum_{j=-\infty}^k 
3^{\frac32(j-k)}   \!\!\!\!
 \avsum_{z \in 3^j\Lat \cap \cus_k} \! \!  \! \bigl( \bigl|\shom_{*}^{-\nicefrac12}(z +\cus_j) Q \bigr| + \bigl|\bhom^{\nicefrac12}(z +\cus_j) P \bigr|  \bigr)^2
\notag \\ & \quad 
+
\frac12 \Bigl| Q \cdot \bigl( \E\bigl[(\nabla v_n)_{\cus_n}\bigr] - P )  
+ P \cdot \bigl( \E\bigl[(\a \nabla v_n)_{\cus_n}\bigr] - Q ) \Bigr|
\notag \\ & \quad 
+
C 3^{-n} 
\E \Biggl[
\biggl[ 
\mathbf{M}_0^{\nicefrac12} 
\begin{pmatrix} 
\nabla v_n - P \\ 
\a \nabla v_n - Q 
\end{pmatrix} 
\biggr]_{\Besov{-\nf12}{2}{1}(\cus_{n})}^2
\Biggr] 
\,.
\end{align}
Above~$k_0(d)$ is as in~\eqref{e.adapted.stationarity} guaranteeing that~$\bfAhom(z' +\cus_j) = \bfAhom(\cus_j)$ for every $j\in \N$ with~$j \geq k_0$. Fix~$\ep \in (0,1]$.
To prove~\eqref{e.divcurl.conclusion.pre}, we  combine~\eqref{e.Jtilde.decomp},~\eqref{e.divcurl.applied},~\eqref{e.cutoffenergy.vs.J},~\eqref{e.Pvarphi.incontrol} and~\eqref{e.Qvarphi.incontrol} and obtain
\begin{align}  
\lefteqn{
\Bigl| \E\bigl  [ J(\cus_n)\bigr] -\frac12 P \cdot Q \Bigr|
} \quad &
\notag \\ & 
\leq 
\bar{\tau}_{n,k}^{\nicefrac 12} \Bigl( 2 \bar{\tau}_{n,k}^{\nicefrac 12} + 4 \E[J(\cus_n  )]^{\nicefrac12} +  2^{\nicefrac12} \bigl(  \bigl| \shom_{*}^{-\nicefrac12}(\cus_k) Q \bigr|  +  \bigl| \bhom^{\nicefrac12}(\cus_k) P \bigr| \bigr) \Bigr)
+ 
3^{k + 2 -n}  \E\bigl[ J(\cus_n ) \bigr] 
\notag \\ & 
\quad 
+
 C 3^{-(n-k)} \E\bigl[ J(\cus_n)\bigr]^{\nicefrac12}
\biggl( \sum_{j=-\infty}^k 
3^{\frac32(j-k)}   \!\!\!\!
 \avsum_{z \in 3^j\Lat \cap \cus_k} \! \!  \! \bigl( \bigl|\shom_{*}^{-\nicefrac12}(z +\cus_j) Q \bigr| + \bigl|\bhom^{\nicefrac12}(z +\cus_j) P \bigr|  \bigr)^2\biggr)^{\! \nicefrac12} 
 \!\!\!
\notag \\ &  \quad 
+
\frac12 \Bigl| Q \cdot \bigl( \E\bigl[(\nabla v_n)_{\cus_n}\bigr] - P )  
+ P \cdot \bigl( \E\bigl[(\a \nabla v_n)_{\cus_n}\bigr] - Q ) \Bigr|
\notag \\ & 
\quad 
+
C 3^{-n}\E \Bigl[
\bigl[ \m_0^{\nicefrac12} (\nabla v_n - P)  \bigr]_{\Besov{-\nf12}{2}{1}(\cus_{n})} \bigl [  \mathbf{m}_{0}^{-\nicefrac12} (\a \nabla v_{n} - Q) \bigr]_{\underline{B}_{2,1}^{-\nicefrac12}(\cus_n)} 
\Bigr]
\,.
\notag
\end{align}
By the triangle inequality and Young's inequality, using also~$\bhom(\cus_k) \geq \shom_{*}(\cus_k)$, we get
\begin{equation*}  
\E\bigl[ J(\cus_n ) \bigr]
\leq 
\Bigl| \E[J(\cus_n )]  -\frac12 P\cdot Q \Bigr|
+ 
\frac14 \bigl(  \bigl| \shom_{*}^{-\nicefrac12}(\cus_k) Q \bigr|  +  \bigl| \bhom^{\nicefrac12}(\cus_k) P \bigr| \bigr)^2 \,.
\end{equation*}
Thus, since~$k \leq n-4$, 
\begin{equation*} 
3^{k + 2 -n}  \E\bigl[ J(\cus_n ) \bigr] 
\leq \frac19 \Bigl| \E[J(\cus_n )]  -\frac12 P\cdot Q \Bigr| + 3^{k + 1 -n}  \bigl(  \bigl| \shom_{*}^{-\nicefrac12}(\cus_k) Q \bigr|  +  \bigl| \bhom^{\nicefrac12}(\cus_k) P \bigr| \bigr)^2
\,.
\end{equation*}
It also follows, again by Young's inequality, that
\begin{align*}  
4 \bar{\tau}_{n,k}^{\nicefrac 12}\E[J(\cus_n  )]^{\nicefrac12}  
& 
\leq
\frac \ep4 \Bigl| \E[J(\cus_n )]  -\frac12 P\cdot Q \Bigr|
+
16\ep^{-1} \bar{\tau}_{n,k}
+
\frac \ep{16} \bigl(  \bigl| \shom_{*}^{-\nicefrac12}(\cus_k) Q \bigr|  +  \bigl| \bhom^{\nicefrac12}(\cus_k) P \bigr| \bigr)^2\,.
\end{align*}
Similarly, 
\begin{equation*}
(2 \bar{\tau}_{n,k})^{\nicefrac 12} \bigl(  \bigl| \shom_{*}^{-\nicefrac12}(\cus_k) Q \bigr|  +  \bigl| \bhom^{\nicefrac12}(\cus_k) P \bigr| \bigr) 
\leq \ep^{-1} \bar{\tau}_{n,k} +   
\frac\ep2 \bigl(  \bigl| \shom_{*}^{-\nicefrac12}(\cus_k) Q \bigr|  +  \bigl| \bhom^{\nicefrac12}(\cus_k) P \bigr| \bigr)^2
\,.
\end{equation*}
Furthermore, by H\"older's and Young's inequalities,
\begin{align*} 
\lefteqn{
 C 3^{-(n-k)} \E\bigl[ J(\cus_n)\bigr]^{\nicefrac12}
\biggl( \sum_{j=-\infty}^k 
3^{\frac32(j-k)}   \!\!\!\!
 \avsum_{z \in 3^j\Lat \cap \cus_k} \! \!  \! \bigl( \bigl|\shom_{*}^{-\nicefrac12}(z +\cus_j) Q \bigr| + \bigl|\bhom^{\nicefrac12}(z +\cus_j) P \bigr|  \bigr)^2\biggr)^{\! \nicefrac12} 
 \!\!\!
} \qquad &
\notag \\ &
\leq
\frac1{4} \Bigl| \E[J(\cus_n )]  -\frac12 P\cdot Q \Bigr|  
\notag \\ & \qquad
+ C 3^{-(n-k)}  
\sum_{j=-\infty}^k 
3^{\frac32(j-k)}   \!\!\!\!
 \avsum_{z \in 3^j\Lat \cap \cus_k} \! \!  \! \bigl( \bigl|\shom_{*}^{-\nicefrac12}(z +\cus_j) Q \bigr| + \bigl|\bhom^{\nicefrac12}(z +\cus_j) P \bigr|  \bigr)^2
\,.
\end{align*}
Consequently, we obtain~\eqref{e.divcurl.conclusion.pre} by combining the estimates in the previous six displays and reabsorbing the term~$(\nicefrac19 + \nicefrac \ep4 + \nicefrac14) | \E[J(\cus_n )]  -\frac12 P\cdot Q |$  from the right. 

\smallskip

\emph{Step 6.} We complete the proof of~\eqref{e.Jtilde.energy.bound}. We fix~$n  \coloneqq  m - l$,~$k  \coloneqq  n-l$, and estimate the quantities appearing in~\eqref{e.divcurl.conclusion.pre}. 
First, we show that 
\begin{equation} 
\label{e.pq.bounds}
\biggl|
\bfAhom_{m}^{\nicefrac 12} 
\begin{pmatrix} 
-p \\ q
\end{pmatrix}
\biggr|^2
\leq
2\bigl ( |\bhom_m^{\nicefrac12} p|^2 + |\shom_{*,m}^{-\nicefrac12} q|^2 \bigr)
\leq 
4
\bigl|  \shom_{*,m}^{-\nicefrac12}\bhom_{m} \shom_{*,m}^{-\nicefrac12} \bigr|^{\nicefrac12} 
\leq 
C \Theta^{\nf12}
\,.
\end{equation}
The first inequality of~\eqref{e.pq.bounds} is a consequence of~\eqref{e.bfA.bounds.diag}. Next, since~$\tt_m  \coloneqq  \bhom_{m}\# \shom_{*,m}^{-1}$, by the properties of the metric geometric mean, namely the equation~\eqref{e.ricatti}, we have 
\begin{equation*}
\tt_m^{-\nicefrac12} 
\bhom_{m} \tt_m^{-\nicefrac12} 
=
\tt_m^{\nicefrac12} \shom_{*,m}^{-1} \tt_m^{\nicefrac12} 
\,.
\end{equation*}
Similarly to~\eqref{e.rat.bounds.before}, we get
\begin{equation*}
\bigl| \tt_m^{-\nicefrac 12}  \bhom_{m} \tt_m^{-\nicefrac 12} \bigr|
=
\bigl| 
\tt_m^{\nicefrac12} \shom_{*,m}^{-1} \tt_m^{\nicefrac12} 
\bigr|
=
\bigl|  \shom_{*,m}^{-\nicefrac12}\bhom_{m}  \shom_{*,m}^{-\nicefrac12}\bigr|^{\nicefrac12} 
\,.
\end{equation*}
From the above display, we obtain
\begin{equation}
\label{e.dingus.1}
\bigl| \bhom_{m}^{\nicefrac12} p\bigr|
=
\bigl| \bhom_{m}^{\nicefrac12} \tt_m^{-\nicefrac12} e\bigr|
\leq
\bigl| \tt_m^{-\nicefrac12} \bhom_{m} \tt_m^{-\nicefrac12} \bigr|^{\nicefrac12} 
=
\bigl|  \shom_{*,m}^{-\nicefrac12}\bhom_{m}  \shom_{*,m}^{-\nicefrac12}\bigr|^{\nicefrac14}
\end{equation}
and, similarly, 
\begin{equation}
\label{e.dingus.2}
\bigl| \shom_{m,*}^{-\nicefrac12} q \bigr| 
=
\bigl| \shom_{m,*}^{-\nicefrac12} \tt_m^{\nicefrac12} e\bigr|
\leq 
\bigl| 
\tt_m^{\nicefrac12} \shom_{*,m}^{-1} \tt_m^{\nicefrac12} 
\bigr|^{\nicefrac12} 
=
\bigl|  \shom_{*,m}^{-\nicefrac12}\bhom_{m}  \shom_{*,m}^{-\nicefrac12}\bigr|^{\nicefrac14}
\,.
\end{equation}
The middle inequality of~\eqref{e.pq.bounds} is a consequence of~\eqref{e.dingus.1} and~\eqref{e.dingus.2}. 
The last inequality of~\eqref{e.pq.bounds} follows by~\eqref{e.symm.k.quad.small} and the symmetry of~$\khom_m$.

\smallskip

Next we use~\eqref{e.first.pigeon.adapted} and~\eqref{e.bfA.bounds.diag} to control the additivity defect: we have
\begin{align} 
\bar{\tau}_{n,k} 
= 
\E\big[J(\cus_k) - J(\cus_n)\bigr]
& 
=
\frac12 \begin{pmatrix} - p \\ q  \end{pmatrix}
\cdot 
\bigl(  \bfAhom(\cus_k) - \bfAhom(\cus_n) \bigr)
\begin{pmatrix} - p \\ q   \end{pmatrix}
\notag \\ & 
\leq
\max_{j\in\{k,n\}}\Bigl| \bfAhom_{m}^{-\nicefrac 12} \bfAhom(\cus_j)  \bfAhom_{m}^{-\nicefrac 12}\! - \Itwod
\Bigr| 
\biggl |
\bfAhom_{m}^{\nicefrac 12} 
\begin{pmatrix} 
-p \\ q
\end{pmatrix}
\biggr |^2 
\leq 
C \delta \sigma^2 
\Theta^{\nf12}
\,.
\label{e.what.is.bartau}
\end{align}
By~\eqref{e.l.def}, we see that, since~$k=m-2l > 90 l$,
\begin{equation} 
\label{e.l.cond.two}
\frac{K_{\Psi_\S}^{90} \Pi^{90} }{(1-\gamma)^{90}} 3^{-k}  \leq 1 
 \,.
\end{equation}
Using~\eqref{e.l.cond.two} and~\eqref{e.tilt.by.Euc} we obtain 
\begin{align*} 
\lefteqn{
\sum_{j=-\infty}^k 
3^{\frac32(j-k)}    \! \! \! \avsum_{z \in 3^j\Lat \cap \cus_n} \! \! \!
\bigl( \bigl|\shom_{*}^{-\nicefrac12}(z +\cus_j) Q \bigr| + \bigl|\bhom^{\nicefrac12}(z +\cus_j) P \bigr|  \bigr)^2
} \qquad &
\notag \\ &
\leq 
\bigl( \bigl|\shom_{*,0}^{-\nicefrac12} Q \bigr| +   \bigl|\bhom_0^{\nicefrac12} P \bigr|  \bigr)^2 \sum_{j=-\infty}^k 
3^{\frac32(j-k)} 
\! \! \! \! \! \avsum_{z \in 3^j\Lat \cap \cus_n} \! \! \!  
\bigl( \bigl|  \shom_{*,0}^{\nicefrac12}   \shom_{*}^{-1}(z+\cus_j) \shom_{*,0}^{\nicefrac12}  \bigr| + 
\bigl|  \bhom_{0}^{-\nicefrac12}   \bhom(z+\cus_j) \bhom_{0}^{-\nicefrac12}    \bigr|   \bigr)
\,.
\end{align*}
On the one hand, for~$j \geq k_0$, we deduce by a similar computation as in~\eqref{e.szero.vs.sm} and stationarity that
\begin{equation*} 
\sum_{j=k_0}^k 
3^{\frac32(j-k)} 
\! \! \! \! \! \avsum_{z \in 3^j\Lat \cap \cus_n} \! \! \!  
\bigl( \bigl|  \shom_{*,0}^{\nicefrac12}   \shom_{*}^{-1}(z+\cus_j) \shom_{*,0}^{\nicefrac12}  \bigr| + 
\bigl|  \bhom_{0}^{-\nicefrac12}   \bhom(z+\cus_j) \bhom_{0}^{-\nicefrac12}    \bigr|   \bigr)
\leq
C 
\,.
\end{equation*} 
On the other hand, by Lemma~\ref{l.crude.moments} and~\eqref{e.l.cond.two} we get, for every~$j \in \Z$ with~$j\leq n$ and~$z \in 3^j\Lat \cap \cus_n$,
\begin{equation*} 
\bfAhom(z+\cus_j) \leq \biggl( 1+ \frac{C\Pi 3^{-n}}{1-\gamma}  \biggr)^{\gamma} 3^{\gamma(n-j)} \bfE \leq C 3^{\gamma(n-j)} \bfE
 \,.
\end{equation*}
Thus, as in~\eqref{e.szero.vs.sm}, we obtain 
\begin{equation*} 
\sum_{j=-\infty}^{k_0} 
3^{\frac32(j-k)} 
\! \! \! \! \! \avsum_{z \in 3^j\Lat \cap \cus_n} \! \! \!  
\bigl( \bigl|  \shom_{*,0}^{\nicefrac12}   \shom_{*}^{-1}(z+\cus_j) \shom_{*,0}^{\nicefrac12}  \bigr| + 
\bigl|  \bhom_{0}^{-\nicefrac12}   \bhom(z+\cus_j) \bhom_{0}^{-\nicefrac12}    \bigr|   \bigr)
\leq
C 3^{-\frac32 k + \gamma n}
 \,.
\end{equation*}
Since~$-\frac32 k + \gamma n = -(\frac32 - \gamma)n + \gamma l \leq -\frac12 n + 2 l \leq - 10 l$, we then deduce that
\begin{equation} 
\label{e.Ahomj.vs.Enaught.sum}
\sum_{j=-\infty}^k 
3^{\frac32(j-k)}    \! \! \! \avsum_{z \in 3^j\Lat \cap \cus_n} \! \! \!
\bigl( \bigl|\shom_{*}^{-\nicefrac12}(z +\cus_j) Q \bigr| + \bigl|\bhom^{\nicefrac12}(z +\cus_j) P \bigr|  \bigr)^2
\leq 
C \bigl( \bigl|\shom_{*,0}^{-\nicefrac12} Q \bigr| +   \bigl|\bhom_0^{\nicefrac12} P \bigr|  \bigr)^2  
\,.
\end{equation}
We next estimate~$\bigl| \b_{0}^{\nicefrac12} P \bigr| +  \bigl| \s_{*,0}^{-\nicefrac12} Q \bigr| $. We recall the definition of~$\mathbf{R}$ in~\eqref{e.R.def} and also define
\begin{equation*}  
\widetilde{\mathbf{E}}_0  \coloneqq   \begin{pmatrix} 
\b_0 & 0 \\ 0 & \s_{*,0}^{-1} 
\end{pmatrix}
\,.
\end{equation*}
In view of~\eqref{e.pqh.choice},~\eqref{e.all.averages},~\eqref{e.PQ.choice} and~\eqref{e.pq.bounds}, we have by the triangle inequality that
\begin{align*}  
\biggl| 
\widetilde{\mathbf{E}}_0^{\nicefrac12}  \begin{pmatrix} 
P \\ Q
\end{pmatrix}
\biggr|^2
& =
\begin{pmatrix} 
-p \\ q
\end{pmatrix}
\cdot \bigl( 
(\bfAhom_{m}  \mathbf{R} -\Itwod )\widetilde{\mathbf{E}}_0  
( \mathbf{R} \bfAhom_{m}  -\Itwod )
\bigr)
\begin{pmatrix} 
-p \\ q
\end{pmatrix}
\notag \\ &
\leq
2 \bigl( 
\bigl| \bfAhom_{m}^{\nicefrac 12} \mathbf{R}  \widetilde{\mathbf{E}}_0 \mathbf{R}  \bfAhom_{m}^{\nicefrac 12} \bigr|
+
\bigl| \bfAhom_{m}^{-\nicefrac 12} \widetilde{\mathbf{E}}_0 \bfAhom_{m}^{-\nicefrac 12} \bigr|  \bigr) 
\biggl| \bfAhom_{m}^{\nicefrac 12} 
\begin{pmatrix} 
-p \\ q
\end{pmatrix}
\biggr|^2
\notag \\ &
\leq 
C\Theta^{\nf12}
\bigl( 
\bigl| \bfAhom_{m}^{\nicefrac 12} \mathbf{R}  \widetilde{\mathbf{E}}_0 \mathbf{R}  \bfAhom_{m}^{\nicefrac 12} \bigr|
+
\bigl| \bfAhom_{m}^{-\nicefrac 12} \widetilde{\mathbf{E}}_0 \bfAhom_{m}^{-\nicefrac 12} \bigr|  \bigr)   
\,.
\end{align*}
Since~$\bfAhom_m  \leq \bfE$, we obtain by~\eqref{e.bfA.bounds.diag},~\eqref{e.mnaught.vs.bnaught} and~\eqref{e.szero.vs.sm} that
\begin{align*}  
\bigl| \bfAhom_{m}^{\nicefrac 12} \mathbf{R}  \widetilde{\mathbf{E}}_0 \mathbf{R}  \bfAhom_{m}^{\nicefrac 12} \bigr|
=
\Biggl| \begin{pmatrix} 
\s_{*,0}^{-\nicefrac12}  & 0 \\ 0 & \b_0^{\nicefrac12} 
\end{pmatrix} \bfAhom_{m} \begin{pmatrix} 
\s_{*,0}^{-\nicefrac12}  & 0 \\ 0 & \b_0^{\nicefrac12} 
\end{pmatrix}  \Biggr|
&
\leq 
2 \bigl( \bigl| \s_{*,0}^{-\nicefrac12} \bhom_m \s_{*,0}^{-\nicefrac12} \bigr|   + \bigl| \shom_{*,m}^{-\nicefrac12} \b_0 \shom_{*,m}^{-\nicefrac12} \bigr|   \bigr)
\notag \\ &
\leq 
C \Theta \bigl( \bigl|  \b_{0}^{-\nicefrac12} \bhom_m \b_{0}^{-\nicefrac12}  \bigr| + 
\bigl|  \shom_{*,m}^{-\nicefrac12}  \s_{*,0}  \shom_{*,m}^{-\nicefrac12} \bigr| \bigr)
\notag \\ & 
\leq
C\Theta
\end{align*}
and
\begin{align*}  
\bigl| \bfAhom_{m}^{-\nicefrac 12} \widetilde{\mathbf{E}}_0 \bfAhom_{m}^{-\nicefrac 12} \bigr|
& 
=
\bigl| \widetilde{\mathbf{E}}_0^{\nicefrac 12}  \bfAhom_{m}^{-1}
\widetilde{\mathbf{E}}_0^{\nicefrac 12} \bigr|
\notag \\ &
\leq
2 \bigl( \bigl|  \b_{0}^{\nicefrac12}  \shom_m^{-1} \b_{0}^{\nicefrac12}  \bigr| + \bigl|  \s_{*,0}^{-\nicefrac12}  (\shom_{*,m} + \khom_m \shom_m^{-1} \khom_m^t) \s_{*,0}^{-\nicefrac12}  \bigr| \bigr)
\notag \\ &
\leq
2 \bigl( \bigl|  \b_{0}^{\nicefrac12}  \shom_{*,0}^{-1} \b_{0}^{\nicefrac12}  \bigr| + \bigl|  \s_{*,0}^{-\nicefrac12}  \b_m \s_{*,0}^{-\nicefrac12}  \bigr| \bigr)
\leq 
C \bigl( \Theta + \bigl|  \s_{*,0}^{-\nicefrac12}  \b_0 \s_{*,0}^{-\nicefrac12}  \bigr| \bigr)
\leq
C \Theta
\,.
\end{align*}
It follows that
\begin{equation}
\label{e.PQ.bound.pre}
| \b_0^{\nicefrac12}P |^2 + | \s_{*,0}^{-\nicefrac12}Q |^2  
=  \biggl| \widetilde{\mathbf{E}}_0^{\nicefrac12}  \begin{pmatrix} 
P \\ Q
\end{pmatrix} \biggr|^2
\leq C \Theta \biggl| \bfAhom_{m}^{\nicefrac 12} 
\begin{pmatrix} 
-p \\ q
\end{pmatrix} \biggr|^2
\leq C\Theta^{\nf32}
\,.
\end{equation}
Next, using~\eqref{e.PQ.choice} and~\eqref{e.all.averages}, we have, by~\eqref{e.renormalize.A.vs.E0},~\eqref{e.first.pigeon.adapted},~\eqref{e.pq.bounds} and~\eqref{e.PQ.bound.pre},
\begin{align}
\label{e.amazinglyeasy.apply.really}
\Biggl| 
\begin{pmatrix} 
Q \\ P
\end{pmatrix}  
\cdot
\E \bigg[ \fint_{\cus_n}
\begin{pmatrix} 
\nabla v_{n}  \\  \a \nabla v_{n} 
\end{pmatrix}  -
\begin{pmatrix} 
P \\ Q 
\end{pmatrix} 
\biggr]
\Biggr| 
&
=
\Biggl| 
\begin{pmatrix} 
P \\ Q
\end{pmatrix}  
\cdot \bigl( \bfAhom(\cus_n) - \bfAhom_m \bigr)
\cdot
\begin{pmatrix} 
-p \\ q 
\end{pmatrix} 
\biggr]
\Biggr|  
\notag \\ &
\leq
\bigl| \bfE^{-\nicefrac 12} \bigl( \bfAhom(\cus_n) - \bfAhom_m \bigr) \bfE^{-\nicefrac 12} \bigr|
\biggl |
\bfE^{\nicefrac 12} 
\begin{pmatrix} 
-p \\ q
\end{pmatrix}
\biggr |
\biggl |
\bfE^{\nicefrac 12} 
\begin{pmatrix} 
P \\ Q
\end{pmatrix}
\biggr |
\notag \\ &
\leq
C \delta \sigma^2 
\biggl |
\bfAhom_{m}^{\nicefrac 12} 
\begin{pmatrix} 
-p \\ q
\end{pmatrix}
\biggr |
\bigl( \bigl| \b_{0}^{\nicefrac12} P \bigr| +  \bigl| \s_{*,0}^{-\nicefrac12} Q \bigr|   \bigr)
\notag \\ &
\leq
C \delta \sigma^2 \Theta
\,.
\end{align}
Inserting now~\eqref{e.Ahomj.vs.Enaught.sum},~\eqref{e.amazinglyeasy.apply.really} and~\eqref{e.what.is.bartau} into~\eqref{e.divcurl.conclusion.pre} yields
\begin{align*} 
\biggl| \E\bigl  [ J(\cus_n)\bigr] -\frac12 P \cdot Q \biggr| 
& 
\leq
C3^{-n} 
\E \Biggl[
\biggl[ 
\mathbf{M}_0^{\nicefrac12} 
\begin{pmatrix} 
\nabla v_n - P \\ 
\a \nabla v_n - Q 
\end{pmatrix} 
\biggr]_{\Besov{-\nf12}{2}{1}(\cus_{n})}^2
\Biggr]
\notag \\ &  \qquad 
+
C\Theta^{\nf12} \bigl( 
\delta \sigma^2 (\ep^{-1} + \Theta^{\nicefrac12}) 
+
(\ep+3^{-l})  \Theta
\bigr)
\,.
\end{align*}
By~\eqref{e.l.def}, $3^{-l}  \Theta \leq \delta \sigma^2$. Thus, by taking~$\ep  \coloneqq  \delta^{\nicefrac12} \sigma \Theta^{-\nicefrac12}$, we obtain
\begin{equation*} 
\biggl| \E\bigl  [ J(\cus_n)\bigr] -\frac12 P \cdot Q \biggr| 
\leq
C3^{-n} 
\E \Biggl[
\biggl[ 
\mathbf{M}_0^{\nicefrac12} 
\begin{pmatrix} 
\nabla v_n - P \\ 
\a \nabla v_n - Q 
\end{pmatrix} 
\biggr]_{\Besov{-\nf12}{2}{1}(\cus_{n})}^2
\Biggr]
+
C \delta^{\nicefrac12} \sigma\Theta
\,.
\end{equation*}
The proof is complete.  
\end{proof}

The previous lemma reduces our task to estimating the right side of~\eqref{e.Jtilde.energy.bound}.
This is the content of the following lemma, which is based on an application of Lemma~\ref{l.weaknorms.moreproto}.  

\begin{lemma} 
\label{l.weaknorms}
There exists a constant~$C(d)<\infty$ such that, for~$n = m-l$, we have that
\begin{equation*}  
3^{-n} 
\E \Biggl[
\biggl[ 
\mathbf{M}_0^{\nicefrac12} 
\begin{pmatrix} 
\nabla v(\cdot,\cus_n,p,q) - P \\ 
\a \nabla v(\cdot,\cus_n,p,q) - Q 
\end{pmatrix} 
\biggr]_{\Besov{-\nf12}{2}{1}(\cus_{n})}^2
\Biggr]
\leq
C  \delta \sigma^2 
\Theta
\,.
\end{equation*}
\end{lemma}

\begin{proof}
Denote~$v_n  \coloneqq  v(\cdot,\cus_n,p,q)$ for short. 
To prepare for the application of Lemma~\ref{l.weaknorms.moreproto}, we observe that, since~$n = m-l > 90 l$, we have by~\eqref{e.l.def} that
\begin{equation} 
\label{e.l.cond.three}
\frac{ \Pi^4 K_{\Psi_\S}^{4d + 15} }{(1-\gamma)^5} 3^{-n} \leq \delta\sigma^2   
\qand
\frac{3^{-(1 - \gamma) l }}{1-\gamma}  \leq \delta \sigma^2
 \,.
\end{equation}
Taking also~$\mathbf{M}=\mathbf{M}_0$ and~$\mathbf{E}= \bfE$, we observe that by~\eqref{e.mnaught.vs.bnaught} we have
\begin{equation}
\label{e.MEbound}
\bigl| \mathbf{M}^{-\nicefrac12} \mathbf{E} \mathbf{M}^{-\nicefrac12} \bigr|
=
\bigl| \mathbf{M}_0^{-\nicefrac12} \bfE \mathbf{M}_0^{-\nicefrac12} \bigr|
\leq
2 \bigl( \bigl| \m_0^{-\nicefrac12} \b_0 \m_0^{-\nicefrac12}   \bigr|
+ \bigl| \m_0^{\nicefrac12} \s_{*,0}^{-1} \m_0^{\nicefrac12}   \bigr| \bigr)
\leq 
C\Theta^{\nicefrac12}  \,.
\end{equation}
Using~\eqref{e.Enaught.vs.Am}, we see that $\bfE \leq 2\bfAhom_m$ and, by~\eqref{e.pq.bounds} and~\eqref{e.pqh.choice}, we have the estimate
\begin{equation} 
\label{e.Enaught.vs.p.q.verified.first}
\max\Bigl\{ \Bigl | 
\bfE^{\nicefrac 12}  
\Bigl(
\begin{matrix} 
-p \\ q
\end{matrix} 
\Bigr)
\Bigr|\,,  |p\cdot q| \Bigr\} 
\leq 
\max\Bigl\{ 2 \Bigl | 
 \bfAhom_m^{\nicefrac 12}  
\Bigl(
\begin{matrix} 
-p \\ q
\end{matrix} 
\Bigr)
\Bigr|\,,  1  \Bigr\} 
\leq C \Theta^{\nf14}
\,.
\end{equation}
Moreover, by the definitions of~$\mathcal{M}_{n,\gamma}$ in~\eqref{e.event.moreproto},~$p$ and~$q$, we get 
\begin{align} 
\label{e.grad.vn.bound.in.proof}
\| \s^{\nicefrac12} \nabla v_n  \|_{\underline{L}^2(\cus_n)}^2
= 
\begin{pmatrix} 
-p \\ q
\end{pmatrix}  
\cdot \bfA(\cus_n)  \begin{pmatrix} 
-p \\ q
\end{pmatrix}  
- 2 p\cdot q
&
\leq 
(1+\mathcal{M}_{n,\gamma}) 
\Bigl | 
\bfE^{\nicefrac 12}  
\Bigl(
\begin{matrix} 
-p \\ q
\end{matrix} 
\Bigr)
\Bigr|^2
\notag \\ & 
\leq
C (1+\mathcal{M}_{n,\gamma})  
\Theta^{\nf12} 
\,.
\end{align}
We now apply Lemma~\ref{l.weaknorms.moreproto}: we square the inequality~\eqref{e.weaknorms.moreproto} with~$s=\nf12$,~$h=l$ and~$\rho =\gamma$,  take the expectation of the result, and substitute for some factors on the right side with~\eqref{e.MEbound},~\eqref{e.Enaught.vs.p.q.verified.first} and~\eqref{e.grad.vn.bound.in.proof}, and get rid of some squares on the right side by using H\"older's inequality in the form
\begin{align*} 
\biggl( \sum_{k=n-l}^n 3^{\frac12 (k-n)} X_k^{\nicefrac12}   \biggr)^2 
&
=
\biggl( \sum_{k=n-l}^n 3^{\frac12 \ep(k-n)}3^{\frac12(1-\ep) (k-n)} X_k^{\nicefrac12}   \biggr)^2 
\notag \\ &
\leq
\biggl( \sum_{k=n-l}^n 3^{\ep(k-n)} \biggr) \biggl( \sum_{k=n-l}^n 3^{(1-\ep) (k-n)} X_k   \biggr) \leq \frac{C}{\ep} \sum_{k=n-l}^n 3^{(1-\ep) (k-n)} X_k 
\,.
\end{align*}
We obtain, for every~$\ep \in (0,1)$, 
\begin{align} 
\label{e.weaknorms.proto.applied}
\lefteqn{
3^{-n}\E\Biggl[ \biggl[
\mathbf{M}_0^{\nicefrac12} 
\begin{pmatrix} 
\nabla v_n -  (\nabla v_n)_{\cus_n}   \\ 
\a \nabla v_n- (\a\nabla v_n)_{\cus_n}
\end{pmatrix} 
\biggr]_{\Besov{-\nf12}{2}{1}(\cus_{n})}^2
\Biggr]
} \quad  &
\notag \\ &
\leq 
\frac{C \Theta^{\nicefrac12}}{\ep} 
\Bigl | 
\bfE^{\nicefrac 12}  
\Bigl(
\begin{matrix} 
-p \\ q
\end{matrix} 
\Bigr)
\Bigr|^2
\sum_{k=n-l}^n  3^{(1-\ep)(k-n)}  
\E\Bigl[  \bigl|  \bfE^{-\nicefrac12} ( \bfA(\cus_k) - \bfA(\cus_n) )  \bfE^{-\nicefrac12}\bigr|^2 
\Bigl]
\notag \\ &
\qquad 
+
\frac{C \Theta^{\nicefrac12}}{\ep} \Bigl | 
\bfE^{\nicefrac 12}  
\Bigl(
\begin{matrix} 
-p \\ q
\end{matrix} 
\Bigr)
\Bigr|^2 
\sum_{k=n-l}^{n} 3^{(1-\ep)(k-n)}
\E\Biggl[ \biggl| 
\avsum_{z\in 3^{k}\Lat  \cap \cus_n}  \bfE^{-\nicefrac12}\bigl(\bfA(z{+}\cus_k) - \bfA(\cus_n)  \bigr)   \bfE^{-\nicefrac12}
\biggr| \Biggl]
\notag \\ &
\qquad 
+
\frac{C\Theta^{\nicefrac12}}{1-\gamma} \Bigl | 
\bfE^{\nicefrac 12}  
\Bigl(
\begin{matrix} 
-p \\ q
\end{matrix} 
\Bigr)
\Bigr|^2
\Bigl( 3^{-(1-\gamma)l} 
+
\E\bigl[\mathcal{M}_{n,\gamma}^2 \indc_{\{\mathcal{M}_{n,\gamma}  > 1\}}  
 \bigr]   \Bigr)
\,.
\end{align}
We will take~$\ep = \nicefrac12$ in the above display. 
Using~\eqref{e.Enaught.vs.Am},~\eqref{e.first.pigeon.adapted},~\eqref{e.pigeon.var} and the triangle inequality, the first term on the right is bounded by~$C \delta \sigma^2 \Theta$. Similarly,  by~\eqref{e.Enaught.vs.Am} and~\eqref{e.first.pigeon.adapted}, 
\begin{align} 
\label{e.e.weaknorms.proto.applied.second}
\lefteqn{
\sum_{k=n-l}^{n} 3^{(1-\ep)(k-n)}
\E\Biggl[ \biggl|  \avsum_{z\in 3^{k}\Lat  \cap \cus_n}   \bfE^{-\nicefrac12}\bigl(\bfA(z{+}\cus_k) - \bfA(\cus_n)  \bigr)  \bfE^{-\nicefrac12}  \biggr|\Biggl]
} \qquad &
\notag \\ &
\leq 
\sum_{k=n-l}^{n} 3^{(1-\ep)(k-n)}
 \tr\bigl(  \bfE^{-\nicefrac12} \bigl(\bfAhom(\cus_k) - \bfAhom(\cus_n)  \bigr)  \bfE^{-\nicefrac12}  \bigr)
\leq C \delta \sigma^2
\,.
\end{align}
The second inequality is valid since, by subadditivity, $\avsum_{z\in 3^{k}\Lat  \cap \cus_n} \bfA(z{+}\cus_k) \geq \bfA(\cus_n)$. Furthermore, letting~$\S_{h'}$ be as in Lemma~\ref{l.crude.moments} (with~$h'$ being the smallest integer such that~\eqref{e.h.delta.relation} is valid with~$\delta=1$ and~$h=0$), and recalling the definition in~\eqref{e.event.moreproto}, we have the implication 
\begin{equation*} 
3^{n}  \geq \S_{h'} 
\implies
\mathcal{M}_{n,\gamma} \leq \sup_{k \in \Z \cap (-\infty,n]} 
3^{-\gamma (n-k)}   \max_{z\in 3^{k}\Lat  \cap \cus_n}  \bigl| \bfE^{-\nicefrac12} \bfA(y+\cus_k) \bfE^{-\nicefrac12} \bigr|
\leq 1\,.
\end{equation*}
Moreover, by~\eqref{e.crude.moments.adapted}, we get
\begin{equation*} 
\mathcal{M}_{n,\gamma} 
\leq 3^{\gamma} 3^{-\gamma n} \S_{h'}^{\gamma} 
\,.
\end{equation*}
Therefore, by the above two displays,~\eqref{e.h.delta.relation},~\eqref{e.Sh.O},~\eqref{e.Psi.moments.bound} and~\eqref{e.l.cond.three},
\begin{align} 
\label{e.this.is.so.nice.pre}
\frac{1}{1-\gamma}
\E[\mathcal{M}_{n,\gamma}^2 \indc_{\{\mathcal{M}_{n,\gamma}  > 1\}} ] 
&
\leq
\frac{3^{2\gamma} 3^{-2\gamma n} }{1-\gamma}
\E\Biggl[
\S_{h'}^{2\gamma} \biggl( \frac{\S_{h'}}{3^n} \biggr)^{2(1-\gamma)} \Biggr] 
\leq
\frac{C K_{\Psi_\S}^{8d+30} \Pi^4 3^{-2n}}{(1-\gamma)^3} 
\leq 
C\delta \sigma^2
\end{align}
and
\begin{equation} 
\label{e.this.is.so.nice.pre.second}
\frac{1}{1-\gamma} 3^{-(1-\gamma)l}
\leq
C \delta \sigma^2
\,.
\end{equation}
By combining the above displays, we obtain that
\begin{equation*}  
3^{-n} 
\E \Biggl[
\biggl[ 
\mathbf{M}_0^{\nicefrac12} 
\begin{pmatrix} 
\nabla v(\cdot,\cus_n,p,q) -  (\nabla v_n)_{\cus_n} \\ 
\a \nabla v(\cdot,\cus_n,p,q) -  (\a\nabla v_n)_{\cus_n}
\end{pmatrix} 
\biggr]_{\Besov{-\nf12}{2}{1}(\cus_{n})}^2
\Biggr]
\leq
C  \delta \sigma^2 \Theta \, .
\end{equation*}
The last thing to check is that
\begin{equation*}  
\E\Biggl[ \biggl|
\mathbf{M}_0^{\nicefrac12} 
\begin{pmatrix} 
P -  (\nabla v_n)_{\cus_n}   \\ 
Q - (\a\nabla v_n)_{\cus_n}
\end{pmatrix} 
\biggr|^2 
\Biggr]
\leq 
C  \delta \sigma^2 \Theta \, .
\end{equation*}
To see this, we use~\eqref{e.PQ.choice} and the same computation as for~\eqref{e.sqbound} to estimate
\begin{equation*} 
\biggl|
\mathbf{M}_0^{\nicefrac12} 
\begin{pmatrix} 
P -  (\nabla v_n)_{\cus_n}   \\ 
Q - (\a\nabla v_n)_{\cus_n}
\end{pmatrix} 
\biggr|^2 
\leq
\bigl| \bfE^{\nicefrac12}  \mathbf{M}_0^{-1} \bfE^{\nicefrac12} \bigr|
\bigl| \bfE^{-\nicefrac12}
\bigl( \bfAhom_m - \bfA(\cus_n) \bigr) \bfE^{-\nicefrac12} \bigr|^2
\biggl| \bfE^{\nicefrac12}\begin{pmatrix} 
-p  \\ 
q
\end{pmatrix} 
\biggr|^2
\,.
\end{equation*}
Thus, we obtain by~\eqref{e.MEbound},~\eqref{e.Enaught.vs.p.q.verified.first},~\eqref{e.Enaught.vs.Am} and~\eqref{e.pigeon.var} that
\begin{align*}
\E\Biggl[ \biggl|
\mathbf{M}_0^{\nicefrac12} 
\begin{pmatrix} 
P -  (\nabla v_n)_{\cus_n}   \\ 
Q - (\a\nabla v_n)_{\cus_n}
\end{pmatrix} 
\biggr|^2 
\Biggr]
& 
\leq
C \Theta 
 \bigl| \bfE^{-\nicefrac12} \bfAhom_m  \bfE^{-\nicefrac12}\bigr|
\E\Bigl[
\bigl| \bfAhom_m^{-\nicefrac12}  \bfAhom(\cus_n) \bfAhom_m^{-\nicefrac12}  - \Itwod \bigr|^2 \Bigr]
\leq
C \delta \sigma^2 \Theta 
\,.
\end{align*}
The proof is complete.
\end{proof}

We turn to the proof of Proposition~\ref{p.renormalize.reduce}.

\begin{proof}[{Proof of Proposition~\ref{p.renormalize.reduce}}]
We just need to assemble the estimates we have proved above and choose the parameters appropriately. As above, we set~$n  \coloneqq  m - l$.

\smallskip

Combining~\eqref{e.Thetam.by.Jildes},~\eqref{e.dragon.egg} and Lemmas~\ref{l.Jtilde.energy.bound} and~\ref{l.weaknorms},  using also the analogous estimates for~$J^*$, we obtain, for a constant~$C(d)<\infty$,
\begin{align}
\label{e.Jtilde.energy.bound.2}
\Theta_m -1
&
\leq
2d\Bigl ( \E\bigl  [ {J}(\cus_n,p,q) + {J}^*(\cus_n,p,q)\bigr  ]
- \frac12  P\cdot Q -\frac12 P^*\cdot Q^*
\Bigr )
+
C(1-\gamma)^{-1} \Pi^2 K_{\Psi_{\S}}^{3\gamma}3^{-l} 
\notag \\ &
\leq C \delta^{\nf12} \sigma \Theta 
+
C(1-\gamma)^{-1} \Pi^2 K_{\Psi_{\S}}^{3\gamma}3^{-l} 
\,.
\end{align}
Since~$l=m-n$, we have by~\eqref{e.l.def} that, if~$\delta_0(d)$ is sufficiently small, then the second term on the right side is bounded by~$\frac12\sigma$. Assuming also that~$\delta_0$ satisfies~$\delta_0 \leq
\bigl( 2C_{\eqref{e.Jtilde.energy.bound.2} }
\bigr)^{\!-2}$ so that the first term on the right is bounded by~$\frac12\sigma \Theta$, 
we obtain~$\Theta_m-1 \leq \frac12 \sigma \Theta + \frac12\sigma \leq \sigma \Theta$, which is~\eqref{e.improve.reduce}. 
\end{proof}

\subsection{The iteration from high contrast to small contrast}

We give the proof of Theorem~\ref{t.HC}, which is based on an iteration of Proposition~\ref{p.renormalize}, renormalizing between each iteration step by appealing to  Proposition~\ref{p.renormalization.P}. 

\begin{proof}[{Proof of Theorem~\ref{t.HC}}]
We introduce the parameters
\begin{equation}
\label{e.newparams.encore.pre}
\left\{
\begin{aligned}
& \gamma_{*}  \coloneqq  \frac12 (\min\{ \nu,1 \} +\gamma) 
\, \\ &
K_{\Psi_\S}^{*}  \coloneqq  \max\big\{ K_{\Psi_{\S}} , K_{\Psi}^{\lceil \nicefrac1\mu \rceil} \bigr\}
\,, \\ & 
\Theta^{*}  \coloneqq  4\Theta
\,, \\ & 
\Pi^{*}  \coloneqq  2^{10}\Pi\,.
\end{aligned}
\right.
\end{equation}
Also denote~$\rho \coloneqq  \frac12(\min\{ \nu,1\}+ \gamma_*)$ and~$\alpha^* \coloneqq  (\min\{ \nu,1\}-\gamma^{*})(1-\beta)$. 
Motivated by~\eqref{e.l0.condition}, we let~$l_0 \in\N$ be defined by
\begin{equation}
\label{e.special.l0}
l_0 \coloneqq 
\biggl\lceil
\frac{1}{\rho-\gamma^{*}}
\Bigl(1  + \frac d{\alpha^{*}}\Bigr) \bigl( 6 + 
\log (\Theta^{*})
\bigr)  +\frac{6}{\alpha^{*}} \bigl( 1 + \log K_{\Psi} \bigr)  
\biggr\rceil
\,,
\end{equation}
Motivated by~\eqref{e.m.explivomit}, we define
\begin{equation*}
l_1 \coloneqq  
\biggl\lceil 
\frac{C}{\sigma^2} 
\biggl(  
\log K_{\Psi_\S}^* + \frac{1}{(\alpha^*)^2} \log \Bigl( \frac{\Pi ^*K_{\Psi}}{\alpha^* \sigma} \Bigr)  
\biggr)
\biggr\rceil\,,
\end{equation*}
where~$C(d)<\infty$ is the constant in Proposition~\ref{p.renormalize}. 
In terms of the original parameters, we have that 
\begin{equation}
\label{e.l0.vomitbound}
l_0 \leq 
\biggl\lceil
\frac{4}{\alpha}
\Bigl(1  + \frac {2d}{\alpha}\Bigr) \bigl( 9 + 
\log \Theta
\bigr)  +\frac{12}{\alpha} \bigl( 1 + \log K_{\Psi} \bigr)  
\biggr\rceil
\leq 
\frac{C}{\alpha^2} \log \Bigl( \frac{\Pi K_{\Psi}}{\alpha\sigma } \Bigr)
\,,
\end{equation}
where we recall~$\alpha= (\min\{\nu,1\}-\gamma )(1-\beta)$; and, by inflating~$C$ by an additional (universal) factor, 
\begin{equation}
\label{e.l1.vomitbound}
l_1 \leq 
\frac{C}{\sigma^2} 
\biggl(  
\log K_{\Psi_\S} + \frac{1}{\alpha^2} \log \Bigl( \frac{\Pi K_{\Psi}}{\alpha\sigma } \Bigr)  
\biggr)
\,.
\end{equation}
We also denote~$m_0  \coloneqq  l_0  + \lceil \log K_\Psi\rceil$.

\smallskip

For each~$n\in\N$ with~$n \geq m_0$, we may apply Proposition~\ref{p.renormalization.P} to obtain that the pushforward probability measure~$\P_{n}$, defined in~\eqref{e.Pn0}, satisfies the assumptions~\ref{a.stationarity},~\ref{a.ellipticity.dagger} and~\ref{a.CFS} with~$\delta\leq 1$ and the new parameters
\begin{equation*}
\left\{
\begin{aligned}
& \mathbf{E}_{\mathrm{new}}  \coloneqq  2 \bfAhom(\cu_{n-2l_0}) 
\,, \\ & 
\gamma_{\mathrm{new}}  \coloneqq  \gamma^*
\, \\ &
K_{\Psi,\mathrm{new}}  \coloneqq  K_{\Psi}
\, \\ &
K_{\Psi_\S,\mathrm{new}}  \coloneqq  K_{\Psi_{\S}}^{*}
\,, \\ & 
\Theta_{\mathrm{new}}  \coloneqq  4 \Theta_{n-2l_0} \leq 4\Theta = \Theta^*
\,, \\ & 
\Pi_{\mathrm{new}} \leq \Pi^* \,.
\end{aligned}
\right.
\end{equation*}
For such~$n$, applying Proposition~\ref{p.renormalize} with~$\P_{n}$ in place of~$\P$, with~$\delta=1$, and with~$\sigma=\sigma_0=\nicefrac18$ as above, 
we obtain that, for every~$n\geq m_0$, 
\begin{equation*}
\min \biggl\{ 
\frac{\Theta_{n+2l_0+l_1} -1}{\Theta_n} \,, 
\frac{\XiDet_{n+2l_0+l_1}}{\XiDet_n} 
\biggr\} 
\leq 
\sigma 
\,.
\end{equation*}
This implies that 
\begin{equation*}
\bigl( \Theta_{n+2l_0+l_1} - (1+2\sigma) \bigr) \XiDet_{n+2l_0+l_1}
\leq 
\sigma \bigl( \Theta_n - (1+2\sigma) \bigr) \XiDet_n\,.
\end{equation*}
Since~$\sigma \leq \nf12$ and~$\Theta_{m_0} \XiDet_{m_0} \leq \Theta^2$, iterating this inequality gives 
\begin{align*}
\bigl( 
\Theta_{m_0 + k ( 2l_0+l_1)} 
- (1+2\sigma) 
\bigr) 
\XiDet_{m_0 + k ( 2l_0+l_1)}
\leq 
\sigma^k 
\bigl( 
\Theta_{m_0 } 
- (1+2\sigma) 
\bigr) 
\XiDet_{m_0 }
\leq \sigma^k \Theta^2 
\,.
\end{align*}
Inserting~$k= \lceil 2\log_2 \Theta + 1\rceil$, we therefore obtain 
\begin{equation*}
m \geq m_0 + 2(\log_2 \Theta+1) (2l_0+l_1) 
\quad \implies \quad 
\Theta_m \XiDet_m \leq 1+ 3\sigma\,.
\end{equation*}
Since~$\XiDet_m \geq 1$, we also obtain~$\Theta_m\leq 1+ 2\sigma$ for all such~$m$.
In view of~\eqref{e.l0.vomitbound},~\eqref{e.l1.vomitbound} and the definition of~$m_0$, we find that
\begin{equation*}
m_0 + 2(\log_2 \Theta+1) (2l_0+l_1)  
\leq 
\frac{C}{\sigma^2} 
\biggl(  
\log K_{\Psi_\S} + \frac{1}{\alpha^2} \log \Bigl( \frac{\Pi K_{\Psi}}{\alpha \sigma} \Bigr)  
\biggr)
\log (1+\Theta)
\,,
\end{equation*}
which, after relabeling~$2\sigma$ to~$\sigma$, yields~\eqref{e.squoosh} and completes the proof of the theorem.
\end{proof}

\section{Renormalization in small contrast}
\label{ss.algebraic}

The main point of Theorem~\ref{t.HC} is to estimate, in the case that~$\Theta$ is very large, the  scale parameter~$m$ at which~$\Theta_m$ is not very large (say bounded by two). It does not however give a useful quantitative estimate on the rate of convergence of~$\Theta_m-1$ to zero, as the right side of~\eqref{e.highcontrast.tamed.m} blows up very fast as~$\sigma \to 0$. The purpose of this section is to improve this by showing that~$\Theta_m-1$ decays algebraically in the scale~$3^m$, beginning at the length scale prescribed by Theorem~\ref{t.HC}.

\begin{theorem}
\label{t.main.algebraic}
Assume that~$\P$ satisfies assumptions~\ref{a.stationarity},~\ref{a.ellipticity.dagger} and~\ref{a.CFS} with~$\nu > \gamma$.
There exist constants~$C(d)<\infty$ and~$c(d) \in (0,\nicefrac12]$ such that, if we define
\begin{equation}
\label{e.alpha.kappa.def}
\left\{
\begin{aligned}
&
\alpha \coloneqq  \bigl( \min\{ \nu,1\} - \gamma\bigr)(1-\beta)\,,
\\ & 
\kappa  \coloneqq  \min\{ c , \nicefrac \alpha3\}
\\ & 
m_*  \coloneqq  
C
\Bigl(  
\log K_{\Psi_\S} + \frac{1}{\alpha^2} \log \Bigl( \frac{\Pi K_{\Psi}}{\alpha} \Bigr)  
\Bigr)\log (1+\Theta)
\,,
\end{aligned}
\right. 
\end{equation}
then, for every~$m\in\N$ with~$m\geq m_*$,
\begin{equation}
\label{e.algebraic.Thetam.main}
\Theta_m-1
\leq 
3^{-\kappa (m - m_*) } 
\,.
\end{equation} 
\end{theorem}

The idea of the proof of Theorem~\ref{t.main.algebraic} is simple. If we renormalize at scale~$3^{m_*+l_0}$, with~$m_*\in\N$ larger than the right side of~\eqref{e.highcontrast.tamed.m} with~$\sigma = 10^{-3}$ and~$l_0$ given in~\eqref{e.ell.naught.def}, then the combination of Theorem~\ref{t.HC} and Lemma~\ref{l.renormalize.ellipticity} says that the renormalized ellipticity ratio (in the sense of~\ref{a.ellipticity.dagger}) is bounded by~$1.001$. In other words, we are in the small ellipticity contrast regime. We should therefore try to adapt the usual iteration arguments, developed in the uniformly elliptic setting (see for instance~\cite{AKMbook,AK.Book}), to show that the coarse-grained matrices converge to the homogenized matrix at an algebraic rate. Since the contrast is now small, we cannot pick up any further dependence on ellipticity. 
We just need to demonstrate that these iteration arguments are flexible enough that they still work under  assumption~\ref{a.ellipticity.dagger}, rather than requiring uniform ellipticity.

\smallskip

This is the purpose of the following proposition, which is independent of the main results of Section~\ref{s.firstpigeon} and asserts that, if~$\Theta_0 -1$ is sufficiently small, then the convergence of~$\Theta_m-1$ is algebraic in the length scale~$3^m$ and has power-like dependence in the ellipticity ratios and other given parameters. This result is of interest beyond its application to the proof of Theorem~\ref{t.main.algebraic} because many problems which exhibit large ellipticity contrast---in the sense of uniform ellipticity---are actually of small ellipticity contrast in the sense of \emph{coarse-grained} ellipticity.

\begin{proposition}
[Homogenization in small contrast]
\label{p.algebraic.exp}
Assume~$\P$ satisfies assumptions~\ref{a.stationarity},~\ref{a.ellipticity.dagger} and~\ref{a.CFS} with~$\nu > \gamma$.
There exist constants~$C(d)<\infty$ and~$c(d) \in (0,\nicefrac12]$ such that, if we define
\begin{align}
\left\{
\begin{aligned}
& 
\label{e.kappa.def}
\kappa 
 \coloneqq 
\min\biggl\{ c  \,,\, \frac{1- \gamma}{2} \,,\,\frac{\nu-\gamma}{1+\nu-\gamma}  \,,\, \frac{(1-\beta)(\nu-\gamma)}{\beta+(1-\beta)(\nu-\gamma)} \biggr\}
\quad \mbox{and} 
\\& 
m_0  \coloneqq  \biggl\lceil \frac{C}{\kappa}\log_3 \Bigl( \max\bigl\{ \Pi, K_{\Psi_\S} , K_{\Psi}, (1-\gamma)^{-1}\bigr\} \Bigr) \biggr\rceil 
\,,
\end{aligned}
\right.
\end{align}
then we have
\begin{equation*}
\Theta_0 - 1\leq c
\ \  \implies \ \ 
\Theta_m-1
\leq 
3^{-\kappa  (m - m_0) } 
\,,
\quad \forall m \in \N \cap [m_0,\infty) \,.
\end{equation*}
\end{proposition}

The proof Proposition~\ref{p.algebraic.exp} is the main result of this section. 
We show first that Theorem~\ref{t.main.algebraic} can be reduced to its statement, by using Theorem~\ref{t.HC} and the renormalization lemma (Proposition~\ref{p.renormalization.P}) to remove the restriction~$\Theta_0 -1 \leq c$.

\begin{proof}[{Proof of Theorem~\ref{t.main.algebraic}}]
Using Theorem~\ref{t.HC}, we find a scale~$n_0$ with 
\begin{equation*}
n_0 \leq 
C
\biggl(  
\log K_{\Psi_\S} + \frac{1}{\alpha^2} \log \Bigl( \frac{\Pi K_{\Psi}}{\alpha} \Bigr)  
\biggr)
\log (1+\Theta)
\end{equation*}
such that, if we let~$\sigma_0(d)$ be the constant given in the statement of~Proposition~\ref{p.algebraic.exp}, then 
\begin{equation*}
\Theta_{n_0} \leq 1 + \sigma_0\,.
\end{equation*}
Let~$l_0$ be defined as in~\eqref{e.newparams.encore.pre}--\eqref{e.special.l0} and let~$m_0  \coloneqq  n_0 + 2l_0 + \lceil \log K_\Psi \rceil$. 
As in the argument at the beginning of the proof of Theorem~\ref{t.HC}, we may apply  Proposition~\ref{p.renormalization.P} to obtain that the pushforward probability measure~$\P_{m_0-2l_0}$, defined in~\eqref{e.Pn0}, satisfies the assumptions~\ref{a.stationarity},~\ref{a.ellipticity.dagger} and~\ref{a.CFS} with the new parameters
\begin{equation*}
\left\{
\begin{aligned}
& \mathbf{E}_{\mathrm{new}}  \coloneqq  2 \bfAhom(\cu_{m_0-2l_0}) \leq 2 \bfAhom(\cu_{n_0}) 
\,, \\ & 
\gamma_{\mathrm{new}}  \coloneqq  \frac12 (\min\{1,\nu\}+\gamma)
\, \\ &
K_{\Psi,\mathrm{new}}  \coloneqq  K_{\Psi}
\, \\ &
K_{\Psi_\S,\mathrm{new}}  \coloneqq  K_{\Psi_{\S}}^{*}
\,, \\ & 
\Theta_{\mathrm{new}}  \coloneqq  4\Theta_{m_0-2l_0} \leq 4\Theta_{n_0} \leq 4+4\sigma_0 \leq 8
\,, \\ & 
\Pi_{\mathrm{new}} \leq \Pi^* \,.
\end{aligned}
\right.
\end{equation*}
Observe that~$\Theta_0$ for~$\P_{m_0-2l_0}$ is the same as~$\Theta_{m_0-2l_0}$ for~$\P$, and we have~$\Theta_{m_0-2l_0} \leq \Theta_{n_0} \leq 1+\sigma_0$. We may therefore apply Proposition~\ref{p.algebraic.exp} with~$\P_{m_0-2l_0}$ in place of~$\P$, to obtain that, for every~$n\in\N$,
\begin{equation*}
\Theta_{n_0 + \lceil \log K_\Psi\rceil + n}
=
\Theta_{m_0-2l_0 +n } 
\leq
\frac{C (\Pi^*)^2\max\{ K_{\Psi_\S}^* , K_{\Psi}\}^8  }{ (1-\gamma^*)^2} 
3^{-\kappa  n}
\leq
\frac{C \Pi^2}{\alpha^2} \max\big\{ K_{\Psi_\S} , K_{\Psi}^{\lceil \nicefrac1\alpha \rceil} \big\}^8  
3^{-\kappa  n}\,,
\end{equation*}
where~$\kappa$ is as defined in~\eqref{e.kappa.def} and~$\alpha = ( \min\{ \nu,1\} - \gamma)(1-\beta)$. 
Reindexing this, we obtain
\begin{equation*}
\Theta_{m} 
\leq 
\frac{C \Pi^2}{\alpha^2} \max\bigl\{ K_{\Psi_\S} , K_{\Psi}^{\lceil \nicefrac1\alpha \rceil}\bigr\}^8  
3^{-\kappa  \left(m-n_0-\lceil \log K_\Psi \rceil \right)} \,,
\quad \forall m \geq n_0 + \lceil \log K_\Psi\rceil\,,
\end{equation*}
We next observe that 
\begin{equation*}
\frac{C \Pi^2\max\bigl\{ K_{\Psi_\S} , K_{\Psi}^{\lceil \nicefrac1\alpha \rceil}\bigr\}^8  }{ \alpha^2} 
3^{\kappa (n_0 + \lceil \log K_\Psi \rceil )} 
\leq
\exp\biggl( C
\biggl(  
\log K_{\Psi_\S} + \frac{1}{\alpha^2} \log \Bigl( \frac{\Pi K_{\Psi}}{\alpha} \Bigr)  
\biggr)
\log (1+\Theta)
\biggr)\,.
\end{equation*}
Substituting this into the previous display gives~\eqref{e.algebraic.Thetam.main} for every~$m \geq n_0 + \lceil \log K_\Psi\rceil$. For~$m\in\N$ with~$m \leq n_0 + \lceil \log K_\Psi\rceil$, the inequality is obtained trivially from~$\Theta_m \leq \Pi$, after possibly enlarging the constant~$C$. This completes the proof of the corollary. 
\end{proof}

In the next subsection, we identify the homogenized matrix~$\ahom$ and state a quenched version of Theorem~\ref{t.main.algebraic}, the proof of which we defer to Section~\ref{ss.main.algebraic.quenched.proof}. The proof of Proposition~\ref{p.algebraic.exp}, adapted from~\cite[Section 5]{AK.Book}, appears in Section~\ref{ss.smallcontrast}. 

\smallskip

Throughout the rest of this section, we assume~$\P$ is a probability measure on~$(\Omega,\mathcal{F})$ satisfying~\ref{a.stationarity},~\ref{a.ellipticity.dagger} and~\ref{a.CFS}, with~$\nu > \gamma$.

\subsection{Identification of the homogenized matrix}
Theorem~\ref{t.HC} implies the qualitative limit
\begin{equation*}
\lim_{m\to \infty} (\Theta_m -1) = 0\,,
\end{equation*}
and this allows us to identify the homogenized matrix. To see this, we recall that, by~\eqref{e.monotone.s}, each of the maps~$n \mapsto \shom(\cu_n)$ and~$n \mapsto \shom_{*}^{-1}(\cu_n)$ is nonincreasing and bounded. Thus there exist~$\shom$ and~$\shom_*^{-1}$ such that~$\shom_* \leq \shom$ and 
\begin{equation*}
\shom = \lim_{n\to \infty} \shom(\cu_n) \quad \mbox{and} \quad 
\shom_{*}^{-1} = \lim_{n\to \infty} \shom_*^{-1}(\cu_n)
\,.
\end{equation*}
The definition of~$\Theta_n $ implies that, for every~$n\in\N$,
\begin{equation*}
\shom_*  \leq \shom \leq \shom(\cu_n) \leq  \Theta_n  \shom_*(\cu_n)  \leq \Theta_n  \shom_*  \,.
\end{equation*}
Therefore, the qualitative limit~$\Theta_n \to 1$ implies that~$\shom=\shom_*$.
To obtain a limit for~$\khom(\cu_n)$, we recall two facts: first, by~\eqref{e.Eone.vs.Etwo.one} we have that, for every~$m,n\in\N$ with~$m\geq n$, 
\begin{equation*}  
\shom(\cu_n) + (\khom(\cu_n) - \khom(\cu_m))^t \shom_{*}^{-1}(\cu_n) (\khom(\cu_n) - \khom(\cu_m))
\leq 
\shom(\cu_m )
\leq  
\Theta_m\shom_{*}(\cu_m )
\,.
\end{equation*}
Together with the qualitative limit~$\Theta_m \to 1$ and~$\shom=\shom_*$, this implies that~$\khom(\cu_n)$ has a limit, which we denote by~$\khom$. 
Second, we recall that by~\eqref{e.symm.k.quad.small}, we have 
\begin{equation*}  
\bigl| \s_{*}^{-\nicefrac12}(\cu_m)  (\khom(\cu_m) + \khom(\cu_m)^t)\s_{*}^{-\nicefrac12}(\cu_m)   \bigr| 
\leq
\Theta_m - 1 
\,.
\end{equation*}
Sending~$m\to \infty$ yields that~$\khom$ is anti-symmetric. 
We define
\begin{equation*}
\ahom  \coloneqq  \shom + \khom\,.
\end{equation*}
We let~$\bfAhom$ denote the corresponding~$2d$-by-$2d$ limiting matrix
\begin{equation*}
\label{e.Ahom.final}
\bfAhom  \coloneqq 
\begin{pmatrix} 
\shom + \khom^t \shom^{-1}\khom & -\khom^t \shom^{-1} \\ -\shom^{-1}\khom & \shom^{-1}
\end{pmatrix}\,.
\end{equation*}
It follows that~$\lim_{m\to \infty} \bfAhom(\cu_m) = \bfAhom$. 
Moreover, due to the ordering~$\bfAhom \leq \bfAhom(\cu_m)$, we can apply Lemma~\ref{l.bfE.bounds} to obtain
\begin{equation}
\label{e.Thetam.controls.Ahom}
0
\leq 
\bfAhom(\cu_m) - \bfAhom
\leq 
6(\Theta_m -1) \bfAhom\,.
\end{equation}

\smallskip

By combining Theorem~\ref{t.main.algebraic} with the renormalization lemmas in Section~\ref{ss.renormalization}, we obtain \emph{quenched} estimates for the difference between the random matrix~$\bfA(\cu_m)$ and~$\bfAhom$.

\begin{corollary} 
\label{c.main.algebraic.quenched}
Assume the assumptions of Theorem~\ref{t.main.algebraic}, and let~$\kappa$ and~$m_*$ be as in the  statement. Also let~$\mu$ be defined as in~\eqref{e.def.mu}. 
For each~$\delta>0$ and~$\rho \in (\gamma,1)$ and~$\mu'\in (0,\mu)$, there exists a random variable~$\mathcal{R} = \mathcal{R}_{\delta,\rho}$ satisfying
\begin{equation}
\label{e.cor.sec.four.R}
\mathcal{R}^{\mu} 
= 
\O_\Psi \Bigl( 
3^{m_* \mu}
\exp \Bigl( 
\frac{C}{\rho-\gamma}
\log \bigl( \max\{ K_{\Psi},\Theta,\delta^{-1} \} \bigr) 
\Bigr) 
\Bigr)\,,
\end{equation}
such that, if we define the exponent 
\begin{equation}
\theta \coloneqq  \frac14  \min \Bigl\{ \kappa, \frac12 \frac{(\rho-\gamma)\mu }{d+\mu} \Bigr\} 
\,,
\label{e.theta.def}
\end{equation}
then, for every~$m\in \N$, 
\begin{align}
\label{e.final.P2}
\lefteqn{
3^m\geq \max\bigl\{ \S , \mathcal{R} \bigr\}  
} \ \ 
\notag \\& 
\! \implies \! 
\bfA(z+\cu_k) 
\leq 
\biggl( 1 + \delta \biggl( \frac{3^m}{\max\{ \mathcal{S},\mathcal{R}\} } \biggr)^{\!-\theta}  3^{\rho(m-k)} \biggr)
\bfAhom
\,,
\quad  \forall k \leq m\,, \ z \in 3^k \Z^d \cap \cu_m\,.
\end{align}
\end{corollary}
The proof of Corollary~\ref{c.main.algebraic.quenched} is given in Section~\ref{ss.main.algebraic.quenched.proof} below.

\subsection{Iteration from small contrast to homogenization}
\label{ss.smallcontrast}

The proof of Proposition~\ref{p.algebraic.exp} is based on an iteration argument which uses some of the same ingredients as the one in the proof of Theorem~\ref{t.HC}, but takes advantage of the smallness of~$\Theta_0 -1$ to accelerate the convergence of~$\Theta_m-1$ to zero. 
As it is crucial that our estimates which do not have dependence on~$\Theta$ or~$\Pi$, we must work with the adapted cubes~$\cus_{n}$ instead of the Euclidean cubes~$\cu_n$. 
In this section, these adapted cubes are defined as in Section~\ref{ss.subadditivity} with respect to the matrix
\begin{equation*}
\m_0  \coloneqq  \s_0\,.
\end{equation*}
Since we are working in the low contrast regime in this section, all reasonable definitions of~$\m_0$ will be equivalent.

\smallskip

It will be convenient to define an analog of~$\Theta_m$ in terms of the adapted cubes. In fact we will require two variations of~$\Theta_m$, defined by
\begin{equation*} 
\hat{\Theta}_m \coloneqq  
\frac1d
\tr\bigl( ( \shom_{*}^{-\nicefrac12} \shom\shom_{*}^{-\nicefrac12})(\cus_{m}) \bigr)
\qand
\tilde{\Theta}_m \coloneqq  \bigl| ( \shom_{*}^{-\nicefrac12} \shom\shom_{*}^{-\nicefrac12})(\cus_{m}) \bigr|
\,.
\end{equation*}
By subadditivity, both of the mappings~$m \mapsto \hat{\Theta}_m$ and~$m \mapsto \tilde{\Theta}_m$ are monotone decreasing. We also have that
\begin{equation}
\label{e.ordering.hat.tilde.Theta}
\hat{\Theta}_m \leq \tilde{\Theta}_m \leq d \hat{\Theta}_m 
\,.
\end{equation}

\smallskip

We begin the proof of Proposition~\ref{p.algebraic.exp} with an alternative to Lemma~\ref{l.pigeonhole}. Although the estimate here is quenched, rather than an estimate for the variance as in~\eqref{e.variance.HC}, it is nevertheless very close to~\eqref{e.variance.HC}. 
One significant improvement should be highlighted. 
In~\eqref{e.variance.HC}, there are three error terms on the right side: the first term is analogous to~$\hat{\Theta}_k - \hat{\Theta}_m$, up to constant factors, which appears on the right side of~\eqref{e.algebraic.add.error} below. The difference here is that the error is \emph{quadratic}; that is, the term is squared in~\eqref{e.algebraic.add.error} compared to~\eqref{e.variance.HC}. (Note that the left side of~\eqref{e.variance.HC} has a square, but the left side of~\eqref{e.algebraic.add.error} does not.)
This may seem like a subtle and technical point at first glance, but it is the main reason for the accelerated convergence in small contrast. 
The reader can also consult the arguments of~\cite[Sections 4.2 \& 5.1]{AK.Book} for more discussion of this point.

\begin{lemma}[Fluctuation estimate]
\label{l.variance}
Suppose~$k,n,m\in\N$ satisfy~$k \leq n \leq m$ and
\begin{equation}
\label{e.smallness.ass}
\tilde{\Theta}_m   - 1 
\leq  (80d)^{-1}
\,.
\end{equation}
Then, we have the estimates
\begin{equation}
\label{e.algebraic.add.error}
\bigl| \bfAhom^{-\nicefrac12}(\cus_m)   \bfAhom(\cus_k) \bfAhom^{-\nicefrac12}(\cus_m)   - \Itwod \bigr|
\leq 
4d \bigl(\hat{\Theta}_k - \hat{\Theta}_m \bigr)
\end{equation}
and
\begin{align}
\label{e.variance.proto} 
\lefteqn{
\bigl |
\bfAhom^{-\nicefrac12}(\cus_m) 
\bfA(\cus_n)  \bfAhom^{-\nicefrac12}(\cus_m) 
-\Itwod
\bigr | 
} \qquad &
\notag \\ &
\leq 
16 d (\hat{\Theta}_k-1) 
+  
4 \, \biggl |  \avsum_{z \in 3^k \Lat \cap \cus_n} \! \!
\bfAhom^{-\nicefrac12}(\cus_m)  \bigl ( \bfA(z+\cus_k) - \bfAhom(\cus_k)\bigr ) \bfAhom^{-\nicefrac12}(\cus_m)  
\biggr |
\,.
\end{align}
\end{lemma}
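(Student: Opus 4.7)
For~\eqref{e.algebraic.add.error} the plan is to exploit that the difference is one-sided positive. Since $\bfAhom(\cus_k) \geq \bfAhom(\cus_m)$ by subadditivity, the matrix $\bfAhom^{-\nicefrac12}(\cus_m)\bfAhom(\cus_k)\bfAhom^{-\nicefrac12}(\cus_m) - \Itwod$ is positive semidefinite, so its spectral norm is controlled by its trace. After reducing to the case $\khom(\cus_m) = 0$ using the centering from Section~\ref{ss.skew} (which does not affect $\shom_m, \shom_{*,m}$, nor $\hat{\Theta}_k$), a block computation using~\eqref{e.bigA.formulas.inv} splits
\begin{equation*}
\tr\bigl(\bfAhom^{-1}(\cus_m)\bfAhom(\cus_k)\bigr) - 2d
=
\tr\bigl(\shom_m^{-1}(\shom_k{-}\shom_m)\bigr) + \tr\bigl(\shom_m^{-1}\tilde\khom^t \shom_{*,k}^{-1}\tilde\khom\bigr) + \tr\bigl(\shom_{*,m}(\shom_{*,k}^{-1}{-}\shom_{*,m}^{-1})\bigr),
\end{equation*}
with $\tilde\khom := \khom(\cus_k)$. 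Each summand is to be shown $\leq d(\hat{\Theta}_k - \hat{\Theta}_m)$ (up to small multiplicative constants which the factor $4d$ absorbs). The first and third terms yield to monotonicity alone, using $\shom_m \geq \shom_{*,m} \geq \shom_{*,k}$. For the cross term, applying~\eqref{e.Eone.vs.Etwo.one} to the ``star'' ordering $\bfAhom_*(\cus_k) \leq \bfAhom_*(\cus_m)$ (equivalent to $\bfAhom(\cus_m) \leq \bfAhom(\cus_k)$ by block swap) delivers the key quadratic bound $\tilde\khom^t \shom_k^{-1}\tilde\khom \leq \shom_{*,m} - \shom_{*,k}$; the smallness hypothesis~\eqref{e.smallness.ass} then enters only to exchange $\shom_{*,k}^{-1}$ with a constant multiple of $\shom_k^{-1}$ (via $\shom_k \leq (1+d(\hat\Theta_k-1))\shom_{*,k}$ from Lemma~\ref{l.bfE.bounds}).

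For~\eqref{e.variance.proto} I split into an upper and a lower bound on $X := \bfAhom^{-\nicefrac12}(\cus_m)\bfA(\cus_n)\bfAhom^{-\nicefrac12}(\cus_m)$. Let $F$ denote the avsum term on the right-hand side of the statement, and $S := \avsum_{z \in 3^k\Lat \cap \cus_n} \bfA(z+\!\cus_k)$. For $\lambda_{\max}$, subadditivity gives $X \leq \bfAhom^{-\nicefrac12}(\cus_m) S\bfAhom^{-\nicefrac12}(\cus_m)$, and the right side equals $(\bfAhom^{-\nicefrac12}(\cus_m) \bfAhom(\cus_k)\bfAhom^{-\nicefrac12}(\cus_m) - \Itwod) + \Itwod + F$, whose spectral norm is controlled by~\eqref{e.algebraic.add.error} plus $|F|$. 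For $\lambda_{\min}$, I use the dual bounds $\bfA(\cus_n) \geq \bfA_*(\cus_n)$ and the ``star'' subadditivity $\bfA_*^{-1}(\cus_n) \leq \avsum_z \bfA_*^{-1}(z+\!\cus_k)$. Together with the identity $\bfA_*^{-1} = \mathbf{R}\bfA\mathbf{R}$ (from~\eqref{e.bigAstar.def}, where $\mathbf{R}$ is the block swap from~\eqref{e.R.def}), this yields the chain
\begin{equation*}
\bfA(\cus_n) \geq \bfA_*(\cus_n) \geq \bigl(\avsum_z \bfA_*^{-1}(z+\!\cus_k)\bigr)^{-1} = \mathbf{R} S^{-1} \mathbf{R}.
\end{equation*}
Since $|\bfAhom^{-\nicefrac12}(\cus_k) \bigl(S - \bfAhom(\cus_k)\bigr)\bfAhom^{-\nicefrac12}(\cus_k)| \leq |F|$ (using $\bfAhom(\cus_m) \leq \bfAhom(\cus_k)$ to dominate the conjugation), one gets $S^{-1} \geq (1+|F|)^{-1}\bfAhom^{-1}(\cus_k)$. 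Applying $\mathbf{R}\bfAhom^{-1}(\cus_k)\mathbf{R} = \bfAhom_*(\cus_k)$ and then Lemma~\ref{l.bfE.bounds} via $\bfAhom_*(\cus_k) \geq (1+O(d(\hat\Theta_k-1)))^{-1}\bfAhom(\cus_k) \geq (1+O(d(\hat\Theta_k-1)))^{-1}\bfAhom(\cus_m)$ produces $\lambda_{\min}(X) \geq 1 - O(|F| + d(\hat\Theta_k-1))$, which combines with the upper bound to give~\eqref{e.variance.proto}.

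The subtle point (and the one I expect to require the most care) is ensuring that the lower bound on $\bfA(\cus_n)$ introduces only the \emph{regular} fluctuation $|F|$ on the right-hand side, rather than a separate ``star'' avsum of $\bfA_*^{-1}(z+\!\cus_k) - \bfAhom_*^{-1}(\cus_k)$. This is precisely what the block-swap identity $\bfA_*^{-1} = \mathbf{R}\bfA\mathbf{R}$ allows: it converts the star fluctuation into a conjugation of the regular one by $\mathbf{R}$, and the resulting conjugation factor $|\bfAhom^{\nicefrac12}(\cus_m)\mathbf{R}\bfAhom^{\nicefrac12}(\cus_m)|^2 = |\bfAhom_*^{-\nicefrac12}(\cus_m)\bfAhom(\cus_m)\bfAhom_*^{-\nicefrac12}(\cus_m)|$ is brought close to $1$ by Lemma~\ref{l.bfE.bounds} applied to $\bfAhom(\cus_m)$ in the regime $\hat\Theta_k - 1 \leq (80d)^{-1}$. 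The linearity (not quadratic-ness) of the $\hat\Theta_k - \hat\Theta_m$ error in~\eqref{e.algebraic.add.error}—the improvement over the corresponding variance-type bound in Lemma~\ref{l.pigeonhole}—is ultimately what enables the accelerated algebraic convergence of Proposition~\ref{p.algebraic.exp}.
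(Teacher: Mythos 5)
Your plan takes a genuinely different route from the paper in both halves. For~\eqref{e.variance.proto}, your two-sided sandwich -- subadditivity of~$\bfA$ from above, star-subadditivity through the block swap~$\bfA_*^{-1}=\mathbf{R}\bfA\mathbf{R}$ from below, then Lemma~\ref{l.bfE.bounds} to convert~$\mathbf{R}\,\bfAhom^{-1}(\cus_k)\,\mathbf{R}$ back into~$\bigl(1+Cd(\hat{\Theta}_k-1)\bigr)^{-1}\bfAhom(\cus_m)$ -- is a legitimate and arguably cleaner alternative to the paper's argument, which instead splits off the additivity defect and controls it via the sample-mean/harmonic-mean inequality in~\eqref{e.big.burrito}--\eqref{e.burrito.wrap}. (Do record the ordering~$\mathbf{R}\,\bfAhom^{-1}(\cus_k)\,\mathbf{R}\leq\bfAhom(\cus_k)$ needed to invoke Lemma~\ref{l.bfE.bounds}; it follows from~$\bfA_*\leq\bfA$ together with the operator Jensen inequality~$\E[\bfA]^{-1}\leq\E[\bfA^{-1}]$.) For~\eqref{e.algebraic.add.error} the paper argues differently as well: it conjugates the difference~$\bfAhom(\cus_k)-\bfAhom(\cus_m)$ by~$\mathbf{G}$ with an~$\eta$-interpolated shift and optimizes in~$\eta$ to obtain the quadratically small bound~\eqref{e.good.quads} on the~$\khom$-difference, and then estimates the block norm directly. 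Your trace route genuinely sidesteps that: only the quadratic expression in~$\tilde\khom$ survives the trace against a block-diagonal matrix, whereas the paper's block-norm estimate sees~$\khom(\cus_k)-\khom(\cus_m)$ to the first power and therefore needs~\eqref{e.good.quads}.

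The one genuine gap is the reduction to~$\khom(\cus_m)=0$ ``using the centering from Section~\ref{ss.skew}.'' That centering subtracts a constant \emph{anti-symmetric} matrix from the coefficient field, so it can only remove the anti-symmetric part of~$\khom(\cus_m)$; the symmetric part survives and is nonzero in general (it is controlled only through~\eqref{e.symm.k.quad.small} by the gap~$\shom(\cus_m)-\shom_*(\cus_m)$, i.e.\ by~$\hat{\Theta}_m-1$). Nor can you simply drop it: your three-term trace identity would then acquire additional cross terms of size of order~$(\hat{\Theta}_m-1)^2$, which are \emph{not} dominated by~$\hat{\Theta}_k-\hat{\Theta}_m$. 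Note the asserted estimate forces~$\bfAhom(\cus_k)=\bfAhom(\cus_m)$ whenever~$\hat{\Theta}_k=\hat{\Theta}_m$, so no additive error depending only on~$\hat{\Theta}_k-1$ is permissible; the bound must be linear in the \emph{difference}, and this is exactly why the paper works with~$\khom(\cus_k)-\khom(\cus_m)$ throughout rather than pretending~$\khom(\cus_m)$ vanishes.

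The repair is in the paper's toolbox: conjugate both annealed matrices by~$\mathbf{G}_{\h_0}$ with~$\h_0=\khom(\cus_m)$, a purely algebraic operation valid for arbitrary, not necessarily anti-symmetric~$\h_0$ (see the discussion after~\eqref{e.Gk.def} and~\eqref{e.Ak.vs.A.diff}; it is also how Lemma~\ref{l.bfE.bounds} is proved). This leaves~$\shom$,~$\shom_*$, the Loewner ordering,~$\hat{\Theta}_k$,~$\hat{\Theta}_m$ and the norm on the left of~\eqref{e.algebraic.add.error} unchanged, makes the conjugated~$\bfAhom(\cus_m)$ block diagonal, and replaces~$\khom(\cus_k)$ by~$\tilde\khom:=\khom(\cus_k)-\khom(\cus_m)$, which is what must appear in your decomposition. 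With this~$\tilde\khom$ the computation closes: applying~\eqref{e.Eone.vs.Etwo.one} (with~$\theta=1$) to~$\bfAhom(\cus_m)\leq\bfAhom(\cus_k)$ gives~$\tilde\khom^t\,\shom_*^{-1}(\cus_m)\,\tilde\khom\leq\shom(\cus_k)-\shom(\cus_m)$ (your star-ordering variant is equally usable), and the commensurability~$\shom_*(\cus_k)\leq\shom_*(\cus_m)\leq\shom(\cus_m)\leq\shom(\cus_k)\leq\bigl(1+d(\hat{\Theta}_k-1)\bigr)\shom_*(\cus_k)$ provided by~\eqref{e.smallness.ass} lets you trade the various inverses at cost~$1+(80)^{-1}$, landing within the stated constant~$4d(\hat{\Theta}_k-\hat{\Theta}_m)$.
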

\begin{proof}
Fix~$n,m,k\in\N$ with~$k \leq n  \leq m$. We slightly deviate from the notation in the previous section and set, for every~$j\in \N$, 
\begin{equation*}  
\bfAhom_{j}  \coloneqq  \bfAhom(\cus_j)\,, \ \ 
\bfAhom_{*,j}  \coloneqq  \bfAhom_*(\cus_j)\,, \ \  
\shom_j  \coloneqq  \shom(\cus_j)\,, \quad
\shom_{*,j}  \coloneqq  \shom_{*}(\cus_j)\,, \ \ 
\khom_{j}  \coloneqq  \khom(\cus_j) \ \ 
\bhom_j  \coloneqq  \bhom(\cus_j)\,.
\end{equation*}
We assume, without loss of generality, that $\khom_k$ is symmetric. Otherwise, we recenter by subtracting the anti-symmetric part of~$\khom_k$ from the coefficient field~$\a(\cdot)$, using the observations from Section~\ref{ss.skew}. 
Note that~$\hat{\Theta}_j$ as well as the matrix ratios in~\eqref{e.algebraic.add.error} and~\eqref{e.variance.proto} are invariant with respect to recenterings of the anti-symmetric part of the coefficient field, as explained in Section~\ref{ss.skew}, and therefore so are the assumptions and conclusions of the lemma. 

\smallskip

\emph{Step 1}. The proof of~\eqref{e.algebraic.add.error}. 
We argue as in the proof of Lemma~\ref{l.bfE.bounds}. Denote, for~$\eta\in [0,1]$, 
\begin{equation*}
\mathbf{G}_\eta
 \coloneqq  
\mathbf{G}_{\khom_{k} + (1-\eta)(\khom_m-\khom_k)}
 \coloneqq \begin{pmatrix} 
\Id
& 0
\\ \khom_{k} + (1-\eta)(\khom_m-\khom_k)
& \Id
\end{pmatrix}
\end{equation*}
and compute the top left block of the matrix~$\mathbf{G}_\eta^t
\bigl( \bfAhom_k - \bfAhom_m \bigr)\mathbf{G}_\eta$, which is 
\begin{align*}
&
\mbox{the top left block of} \ \mathbf{G}_\eta^t
\bigl( \bfAhom_k - \bfAhom_m \bigr)\mathbf{G}_\eta
\notag \\ & \qquad
=
(\shom_k-\shom_m) +
(1-\eta)^2
(\khom_k-\khom_m)^t\shom_{*,k}^{-1} (\khom_k-\khom_m)
- \eta^2 (\khom_k-\khom_m)^t\shom_{*,m}^{-1} (\khom_k-\khom_m)
\,.
\end{align*}
The nonnegativity of~$\mathbf{G}_\eta^t
\bigl( \bfAhom_k - \bfAhom_m \bigr)\mathbf{G}_\eta$ implies that the matrix above is also nonnegative, and thus
\begin{align*}
 \eta^2 (\khom_k-\khom_m)^t\shom_{*,m}^{-1} (\khom_k-\khom_m)
&
\leq 
(\shom_k-\shom_m) +
(1-\eta)^2
(\khom_k-\khom_m)^t\shom_{*,k}^{-1} (\khom_k-\khom_m)
\notag \\ &
\leq 
(\shom_k-\shom_m) +
\bigl |\shom_{*,m}^{\nicefrac12}\shom_{*,k}^{-1}\shom_{*,m}^{\nicefrac12}\bigr |
(1-\eta)^2
(\khom_k-\khom_m)^t\shom_{*,m}^{-1} (\khom_k-\khom_m)\,.
\end{align*}
After rearranging this, we get
\begin{align*}
\bigl (\eta^2 
- 
\bigl |\shom_{*,m}^{\nicefrac12}\shom_{*,k}^{-1}\shom_{*,m}^{\nicefrac12}\bigr |
(1-\eta)^2
\bigr )
(\khom_k-\khom_m)^t\shom_{*,m}^{-1} (\khom_k-\khom_m)
\leq
\shom_k-\shom_m
\,.
\end{align*}
Optimizing in~$\eta$ leads to the choice~$\eta= \bigl |\shom_{*,m}^{\nicefrac12}\shom_{*,k}^{-1}\shom_{*,m}^{\nicefrac12}\bigr |/\bigl |\shom_{*,m}^{\nicefrac12}\shom_{*,k}^{-1}\shom_{*,m}^{\nicefrac12}-\Id\bigr |$ and we obtain
\begin{align*}
(\khom_k-\khom_m)^t\shom_{*,m}^{-1} (\khom_k-\khom_m)
&
\leq
\bigl |\shom_{*,m}^{\nicefrac12}\shom_{*,k}^{-1}\shom_{*,m}^{\nicefrac12}-\Id\bigr |
(\shom_k-\shom_m)
\notag \\ &
\leq
\bigl |\shom_{*,m}^{\nicefrac12}\shom_{*,k}^{-1}\shom_{*,m}^{\nicefrac12}-\Id\bigr |
\bigl |\shom_{m}^{-\nicefrac12}\shom_{k}\shom_{m}^{-\nicefrac12}-\Id\bigr |
\shom_m
\,.
\end{align*}
Observe furthermore that 
\begin{align*}
d\bigl ( \hat{\Theta}_k - \hat{\Theta}_m \bigr )
=
\tr\bigl( \shom_k \shom_{*,k}^{-1} - \shom_m \shom_{*,m}^{-1} \bigr) 
&
=
\tr\Bigl( (\shom_k -\shom_m)\shom_{*,k}^{-1}
+
\shom_m( \shom_{*,k}^{-1}-\shom_{*,m}^{-1}) \Bigr) 
\notag \\ & 
\geq 
\tr\Bigl( (\shom_k -\shom_m)\shom_{m}^{-1}
+
\shom_{*,m}( \shom_{*,k}^{-1}-\shom_{*,m}^{-1}) \Bigr) 
\notag \\ & 
=
\tr\Bigl(
\shom_k \shom_m^{-1} 
+
\shom_{*,k}^{-1} 
\shom_{*,m} 
\Bigr) 
- 2d
\notag \\ & 
\geq 
\bigl (\bigl |\shom_{m}^{-\nicefrac12}\shom_{k}\shom_{m}^{-\nicefrac12}-\Id\bigr | + \bigl |\shom_{*,m}^{\nicefrac12}\shom_{*,k}^{-1}\shom_{*,m}^{\nicefrac12}-\Id\bigr |
 \bigr )
\,.
\end{align*}
Consequently, using the above two displays together with Young's inequality, we get
\begin{equation} 
\label{e.good.quads.really}
\bigl( \bigl |\shom_{m}^{-\nicefrac12}\shom_{k}\shom_{m}^{-\nicefrac12}-\Id\bigr | 
+
\bigl |\shom_{*,m}^{\nicefrac12}\shom_{*,k}^{-1}\shom_{*,m}^{\nicefrac12}-\Id\bigr |\bigr) 
\vee
\bigl |\shom_{*,m}^{-\nicefrac12} (\khom_k-\khom_m)  \shom_{m}^{-\nicefrac12} \bigr | 
\leq
d\bigl ( \hat{\Theta}_k - \hat{\Theta}_m \bigr )
 \,.
\end{equation}
Next, we observe that in the case~$\eta=0$ we have, by~\eqref{e.Ak0.formula}, 
\begin{equation*}
\mathbf{G}_0^t\bfAhom_m \mathbf{G}_0 
=
\begin{pmatrix} 
\shom_m 
& 0
\\ 0
& \shom_{*,m}^{-1} 
\end{pmatrix}
\ \  \mbox{and} \ \
\mathbf{G}_0^t\bfAhom_k\mathbf{G}_0 
=
\begin{pmatrix} 
\shom_k + (\khom_{k}-\khom_m)^t\shom_{*,k}^{-1}(\khom_{k}-\khom_m)
& -(\khom_{k}-\khom_m)^t\shom_{*,k}^{-1}
\\ -  \shom_{*,k}^{-1}(\khom_{k}-\khom_m)
& \shom_{*,k}^{-1} 
\end{pmatrix}
\,.
\end{equation*}
Using~\eqref{e.Ak.vs.A.diff},~$\bfAhom_k \geq \bfAhom_m$,~\eqref{e.ordering.hat.tilde.Theta},~\eqref{e.smallness.ass} and~\eqref{e.good.quads.really}, we obtain 
\begin{align*}
\bigl| \bfAhom_{m}^{-\nicefrac12}   \bfAhom_{k} \bfAhom_{m}^{-\nicefrac12}   - \Itwod \bigr|
&
=
\bigl| (\mathbf{G}_0^t\bfAhom_m\mathbf{G}_0)^{-\nicefrac12} (\mathbf{G}_0^t\bfAhom_k\mathbf{G}_0) 
(\mathbf{G}_0^t\bfAhom_m\mathbf{G}_0)^{-\nicefrac12}  - \Itwod \bigr|
\notag \\ &
\leq
2\bigl( \bigl |\shom_{m}^{-\nicefrac12}\shom_{k}\shom_{m}^{-\nicefrac12}-\Id\bigr | 
+ 
\bigl |\shom_{*,m}^{-\nicefrac12} (\khom_k-\khom_m)  \shom_{m}^{-\nicefrac12} \bigr |^2 \bigr)
+ 
2\bigl |\shom_{*,m}^{\nicefrac12}\shom_{*,k}^{-1}\shom_{*,m}^{\nicefrac12}-\Id\bigr |
\notag \\ &
\leq 
4d\bigl ( \hat{\Theta}_k - \hat{\Theta}_m \bigr ) \,.
\end{align*}
This completes the proof of~\eqref{e.algebraic.add.error}.

\smallskip

\emph{Step 2.} 
We show~\eqref{e.variance.proto}.  By the triangle inequality, we have that
\begin{align}
\label{e.var.splitting}
\bigl | \,
\bfAhom_m^{-\nicefrac12} 
\bfA(\cus_n)  
\bfAhom_m^{-\nicefrac12} 
-\Itwod
\bigr |
&
\leq 
\Bigl|\bfAhom_m^{-\nicefrac12}  \Bigl (
\bfA(\cus_n)   - \avsum_{z} \bfA(z+\cus_k)
\Bigr )\bfAhom_m^{-\nicefrac12} 
\Bigr|
\notag \\ & \qquad
+
\Bigl| 
\bfAhom_m^{-\nicefrac12}
\avsum_{z} \bigl (\bfA(z+\cus_k) - \bfAhom_k \bigr)
\bfAhom_m^{-\nicefrac12} 
\Bigr|
\notag \\ & \qquad
+ \bigl | 
\bfAhom_m^{-\nicefrac12} 
\bfAhom_k
\bfAhom_m^{-\nicefrac12} 
-\Itwod
\bigr |
\,.
\end{align}
To lighten the notation, we will drop the index set in the sums over~$z$ in this step, which is in every instance over~$z \in 3^k \Lat \cap \! \! \cus_n$. 
The last term can be estimated using~\eqref{e.algebraic.add.error}
and the second last term on the right is the second term on the right in~\eqref{e.variance.proto}. 
Therefore, we focus on estimating the first term on the right side of~\eqref{e.var.splitting}.

\smallskip

Fix any matrix~$\bfB\in \R^{2d\times 2d}_{\mathrm{sym}}$ and consider the following string of inequalities: 
\begin{align}
\label{e.big.burrito}
\bfA(\cus_n)
&
\leq
\avsum_{z}
\bfA(z+\cus_k)
\notag \\ & =
\avsum_{z}
\bfA_*(z+\cus_k)
+
\avsum_{z}
(\bfA-\bfA_*) (z+\cus_k)
\notag \\ & 
\leq 
\biggl ( \avsum_{z} \bfA_*^{-1}(z+\cus_k)\biggr)^{\!-1}
+
\avsum_{z}
\bigl ( (\bfA_* - \bfB) \bfA_*^{-1} (\bfA_* - \bfB)  \bigr ) (z+\cus_k)
+
\avsum_{z}
(\bfA-\bfA_*) (z+\cus_k)
\notag \\ &
\leq 
\bfA_*(\cus_n) 
+
\avsum_{z}
\bigl ( (\bfA_* - \bfB) \bfA_*^{-1} (\bfA_* - \bfB)  \bigr )(z+\cus_k)
+
\avsum_{z}
(\bfA-\bfA_*) (z+\cus_k)
\notag \\ &
=
\bfA_*(\cus_n) 
+
\avsum_{z}
\bigl ( (\bfA(z+\cus_k) - \bfB) +\bfB (\bfA_*^{-1}(z+\cus_k) - \bfB^{-1})\bfB  \bigr )
\notag \\ &
\leq 
\bfA(\cus_n) 
+
\avsum_{z}
\bigl ( (\bfA(z+\cus_k) - \bfB) +\bfB (\bfA_*^{-1}(z+\cus_k) - \bfB^{-1})\bfB  \bigr )
\,.
\end{align}
The first and fourth lines in the display above are valid by the subadditivity of~$\bfA$ and~$\bfA_*^{-1}$, respectively. The second and fifth lines are just rearrangements. The sixth line is valid by the ordering~$\bfA_*\leq \bfA$. The third line is the key step which says roughly that the sample mean and harmonic mean are 
separated by, at most, the sample variance. 
The inequality we used here can be derived as follows.
Denoting the sample mean and the harmonic mean by
\begin{equation*} 
 \mathbf{M}
  \coloneqq  \avsum_{z} \bfA_{*}(z+\cus_k)
 \qand
 \mathbf{H} \coloneqq  \biggl(
\avsum_{z}  \bfA_{*}^{-1}(z+\cus_k) \biggr)^{\!-1} \,,
\end{equation*}
respectively, then we have the following identity, which can be checked by a direct computation: for every symmetric nonnegative matrix~$\tilde \bfB \in \R_{\mathrm{sym}}^{2d \times 2d}$, we have
\begin{equation*}  
\mathbf{M} 
= 
\mathbf{H} 
+ 
\avsum_{z} \bigl(  \bfA_*(z+\cus_k)  - \widetilde{\bfB}  \bigr)   \bfA_*^{-1}(z+\cus_k)  \bigl(  \bfA_*(z+\cus_k) - \widetilde{\bfB} \bigr) 
-
( \mathbf{H} - \widetilde{\bfB})\mathbf{H}^{-1} ( \mathbf{H} - \widetilde{\bfB}) 
\,.
\end{equation*}
Discarding the last nonnegative term yields the inequality in the third line of~\eqref{e.big.burrito}, above. 

\smallskip

Next, by comparing the first and last lines of~\eqref{e.big.burrito} and inserting~$\bfB=\bfAhom(\cus_k)$, we obtain that 
\begin{equation}
\label{e.burrito.wrap}
0\leq 
\avsum_{z} \bfA(z+\cus_k) - \bfA(\cus_n) 
\leq 
\avsum_{z}
\Bigl ( \bigl(\bfA - \bfAhom_k\bigr) +\bfAhom\bigl (\bfA_{*}^{-1} - \bfAhom^{-1}\bigr )\bfAhom \Bigr )(z+\cus_k)
\,.
\end{equation}
We next multiply the matrix inequality from the left and right by~$\bfAhom_m^{-\nicefrac12}$, 
and decompose the resulting second matrix on the right, by using
\begin{equation*} 
\bfA_*^{-1}(z+\cus_k) = \mathbf{R}  \bfA(z+\cus_k)\mathbf{R}\,, \quad \bfAhom_{*}^{-1}(\cus_k) = \mathbf{R}  \bfAhom(\cus_k) \mathbf{R}
\qand 
\bfAhom_{m}^{-\nicefrac12} = \mathbf{R}  \bfAhom_{*,m}^{\nicefrac12} \mathbf{R}\,,
\end{equation*}
as
\begin{align*} 
\lefteqn{
\bfAhom_m^{-\nf12} \Bigl(
\bigl ( \bfAhom\bigl (\bfA_{*}^{-1} - \bfAhom^{-1}\bigr )\bfAhom \bigr )(z+\cus_k)
\Bigr) \bfAhom_m^{-\nf12}
} \qquad &
\notag \\ &
=
 \mathbf{R}  \bfAhom_{*,m}^{\nf12} \bfAhom_{*,k}^{-1} \bfAhom_{m}^{\nf12} \Bigl( \bfAhom_{m}^{-\nf12}  (\bfA(z+\cus_k) - \bfAhom_k) \bfAhom_{m}^{-\nf12} 
 \Bigr)
\bfAhom_{m}^{\nf12} \bfAhom_{*,k}^{-1}  \bfAhom_{*,m}^{\nf12} \mathbf{R}
\notag \\ & \qquad 
+
\mathbf{R}  \bfAhom_{*,m}^{\nf12} \bfAhom_{*,k}^{-\nf 12}  \bigl(
\bfAhom_{*,k}^{-\nf 12}  \bfAhom_k \bfAhom_{*,k}^{-\nf 12} - \Itwod
\bigr)
\bfAhom_{*,k}^{-\nf 12}  \bfAhom_{*,m}^{\nf12} \mathbf{R}
\,.
\end{align*}
Therefore, using Lemma~\ref{l.bfE.bounds}, we obtain 
\begin{align*} 
\lefteqn{
\biggl| \avsum_{z} \bfAhom_m^{-\nicefrac12}\Bigl (\bfAhom\bigl (\bfA_*^{-1} - \bfAhom^{-1}\bigr )\bfAhom \Bigr )(z+\cus_k) \bfAhom_m^{-\nicefrac12}
\biggr|
} \qquad &
\notag \\ &
\leq
\bigl| \bfAhom_{*,m}^{\nicefrac12}  \bfAhom_{*,k}^{-1}\bfAhom_m^{\nicefrac12} \bigr|^2 \biggl| \avsum_{z}   \bfAhom_m^{-\nicefrac12} \bigl( \bfA(z+\cus_k) - \bfAhom(\cus_k) \bigr)\bfAhom_m^{-\nicefrac12} \biggr| 
+
6(\tilde{\Theta}_k-1)\bigl| \bfAhom_{*,m}^{\nicefrac12}  \bfAhom_{*,k}^{-1}\bfAhom_{*,m}^{\nicefrac12} \bigr|
\,.
\end{align*}
By~\eqref{e.grok},~\eqref{e.algebraic.add.error} and~\eqref{e.smallness.ass} we get 
\begin{align*}
\bigl| \bfAhom_{*,m}^{\nicefrac12}  \bfAhom_{*,k}^{-1}\bfAhom_m^{\nicefrac12} \bigr|^2
&  
= \bigl| 
\mathbf{R} \bfAhom_{*,m}^{-\nicefrac12}\bfAhom_m^{\nicefrac12} \bigl( \bfAhom_m^{-\nicefrac12} \bfAhom_k \bfAhom_m^{-\nicefrac12} \bigr)^2 \bfAhom_m^{\nicefrac12} \bfAhom_{*,m}^{-\nicefrac12} \mathbf{R} 
\bigr|
\notag \\ & 
\leq
\bigl| \bfAhom_{*,m}^{-\nf12}  \bfAhom_{m} \bfAhom_{*,m}^{-\nf12}  \bigr| \bigl| \bfAhom_{m}^{-\nicefrac12} \bfAhom_k \bfAhom_{m}^{-\nicefrac12} \bigr|^2 
\notag \\ &
\leq 
\bigl(1 + 6(\tilde\Theta_m-1) \bigr) \bigl( 1+ \bigl| \bfAhom_{m}^{-\nicefrac12} \bfAhom_k \bfAhom_{m}^{-\nicefrac12} - \Itwod \bigr| \bigr)^2
\leq 2
\end{align*}
and
\begin{align} 
\bigl| \bfAhom_{*,m}^{\nicefrac12}  \bfAhom_{*}^{-1}(\cus_k)\bfAhom_{*,m}^{\nicefrac12} \bigr| &
= 
\bigl| \mathbf{R} \bfAhom_{m}^{-\nicefrac12}  \bfAhom_k  \bfAhom_{m}^{-\nicefrac12} \mathbf{R}  \bigr|
\leq
\bigl( 1+ \bigl| \bfAhom_{m}^{-\nicefrac12} \bfAhom_k \bfAhom_{m}^{-\nicefrac12} - \Itwod \bigr| \bigr)
\leq 2
\,.
\notag
\end{align}
Therefore,~\eqref{e.ordering.hat.tilde.Theta},~\eqref{e.burrito.wrap} and the above three displays give us
\begin{equation*} 
\biggl| \avsum_{z} \bfAhom_m^{-\nicefrac12} \bigl( \bfA(z+\cus_k) - \bfA(\cus_n) \bigr)\bfAhom_m^{-\nicefrac12} \biggr| 
\leq 
3 
\biggl| \avsum_{z} \bfAhom_m^{-\nicefrac12} \bigl( \bfA(z+\cus_k) - \bfAhom(\cus_k) \bigr)\bfAhom_m^{-\nicefrac12} \biggr|
+ 12d(\hat{\Theta}_k-1)
 \,.
\end{equation*}
Combining this with~\eqref{e.burrito.wrap},~\eqref{e.algebraic.add.error} and~\eqref{e.var.splitting} yields~\eqref{e.variance.proto},  completing the proof.  
\end{proof}

\begin{lemma}
\label{l.Jagainsmallconstrast}
Assume that~$\Theta -1 \leq \sigma \leq  \nicefrac{1}{10}$. 
Then there exist~$\sigma_0(d)>0$ and~$C(d) <\infty$ such that, if~$\sigma \leq \sigma_0$ then, for every~$n \in\N$ satisfying 
\begin{equation}
\label{e.nn0cond}
n \geq 2n_0, \quad n_0  \coloneqq  \biggl\lceil \frac{50}{1-\gamma} \log \frac{CK_{\Psi_\S} \Pi}{1-\gamma}  \biggr\rceil
\,,
\end{equation}
we have the estimate
\begin{equation}
\label{e.Jagainsmallconstrast}
\hat{\Theta}_{n} - 1
\leq
C \!\!
\sum_{k=n_0}^n  3^{\frac12(k-n)} 
\Bigl(   
\E\Bigl[  \bigl| \bfE^{-\nicefrac12}  ( \bfA(\cus_k) - \bfAhom(\cus_k) ) \bfE^{-\nicefrac12}  \bigr|^2  \Bigl]
+
\bigl(\hat{\Theta}_{k} - \hat{\Theta}_n \bigr)
\Bigr)
+
C3^{-\frac12(1-\gamma) n} 
\,.
\end{equation}
\end{lemma}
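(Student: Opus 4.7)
The plan is to mirror the structure of Lemmas~\ref{l.Jtilde.energy.bound} and~\ref{l.weaknorms} from Section~\ref{s.one.renorm}, but replacing the fluctuation estimate~\eqref{e.pigeon.var} (which produced $O(\delta\sigma^2)$) by the small-contrast estimate of Lemma~\ref{l.variance}, whose first right-hand term is the \emph{linear} quantity $\hat\Theta_k-\hat\Theta_m$ rather than a squared fluctuation. Concretely, I first absorb the anti-symmetric part of $\khom(\cus_n)$ into the coefficient field via the recentering described in Section~\ref{ss.skew}, so that $\hhom(\cus_n)=0$ and $\khom(\cus_n)$ is symmetric. Applying Lemma~\ref{l.Jminusmeans.to.Theta} on $U=\cus_n$ with test vectors $p_j:=\thom(\cus_n)^{-1/2}e_j$ and $q_j:=\thom(\cus_n)^{1/2}e_j$, the trace computation in~\eqref{e.Jminusmeans.to.Theta.plugme} bounds $\hat\Theta_n-1$ by a constant times $\sum_j(\E[\tilde J(\cus_n,p_j,q_j)]+\E[\tilde J^*(\cus_n,p_j,q_j)])$, and each summand equals $\E[J(\cus_n,p_j,q_j)]-\tfrac12 P_j\cdot Q_j$ (and the analogous $J^*$ quantity) with $(P_j,Q_j):=\E[(\nabla v_{n,j}, \a\nabla v_{n,j})_{\cus_n}]$.

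Next I apply the general inequality~\eqref{e.divcurl.conclusion.pre} of Step~5 of Lemma~\ref{l.Jtilde.energy.bound} (valid, by the footnote there, without the hypotheses of Proposition~\ref{p.renormalize.reduce}) with this choice of $(P_j,Q_j)$, so the third line of~\eqref{e.divcurl.conclusion.pre} vanishes identically; the prefactor $|\b_0^{1/2}P_j|^2+|\s_{*,0}^{-1/2}Q_j|^2$ is controlled by the small-contrast assumption $\Theta-1\leq\sigma$ through~\eqref{e.Enaught.vs.Am}--\eqref{e.pq.bounds}. The additivity defect $\bar\tau_{n,k}$ is, by the identity~\eqref{e.what.is.bartau}, proportional to $|\bfAhom(\cus_k)-\bfAhom(\cus_n)|$, and here Lemma~\ref{l.variance}'s estimate~\eqref{e.algebraic.add.error} (applied with $m=n$) produces exactly the linear quantity $\hat\Theta_k-\hat\Theta_n$ that appears in the target bound~\eqref{e.Jagainsmallconstrast}. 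The weak-$\underline{\mathring B}_{2,1}^{-1/2}$ term on the last line of~\eqref{e.divcurl.conclusion.pre} is then attacked by Lemma~\ref{l.weaknorms.moreproto} with $s=\tfrac12$, $\rho=\gamma$ and step $h=n-k$; this produces the fluctuation sum $\sum_{k}3^{\frac12(k-n)}(\avsum_z|\bfE^{-1/2}(\bfA(z+\cus_k)-\bfA(\cus_n))\bfE^{-1/2}|^2)^{1/2}$, which after $\Z^d$-stationarity and Cauchy--Schwarz becomes the $\E[|\bfE^{-1/2}(\bfA(\cus_k)-\bfAhom(\cus_k))\bfE^{-1/2}|^2]^{1/2}$ factor in~\eqref{e.Jagainsmallconstrast}, plus a second contribution which, again by Lemma~\ref{l.variance} (or directly by~\eqref{e.algebraic.add.error} combined with subadditivity), is dominated by $\hat\Theta_k-\hat\Theta_n$.

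The mesoscopic threshold $n_0$ in~\eqref{e.nn0cond} is dictated by the small-scale error in Lemma~\ref{l.weaknorms.moreproto}: the term $3^{-(s-\rho/2)h}=3^{-\frac12(1-\gamma)h}$ and the indicator $\indc_{\{\mathcal M_{n,\gamma}>\delta\}}$ in~\eqref{e.weaknorms.moreproto} must be controlled using the stretched-exponential integrability~\eqref{e.Psi.moments.bound} of $\S$, together with the mesoscopic bound of Lemma~\ref{l.ellipticity.mesoscales}; with the logarithmic choice of $n_0$, the contributions from scales $k<n_0$ aggregate into the single residual $C\,3^{-\frac12(1-\gamma)n}$ appearing on the right. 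The main obstacle will be organizing the bookkeeping so that the two different squared-vs-unsquared bounds (the quenched linear defect of Lemma~\ref{l.variance} and the $L^2$ fluctuation produced by Cauchy--Schwarz on the weak-norm term) combine cleanly without reintroducing a factor of $\Pi$ or $\Theta$ beyond what is permitted by the smallness of $\sigma\leq\sigma_0(d)$; it is precisely at this point that the smallness hypothesis $\hat\Theta_k-1\leq(80d)^{-1}$ of Lemma~\ref{l.variance} is verified by choosing $\sigma_0$ small, and is what distinguishes this argument from the high-contrast setting of Section~\ref{s.firstpigeon}.
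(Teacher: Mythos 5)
Your plan is sound and rests on the same three workhorses as the paper's own proof: the general div--curl inequality of Step~5 of Lemma~\ref{l.Jtilde.energy.bound} (equation~\eqref{e.divcurl.conclusion.pre}), the weak-norm estimate of Lemma~\ref{l.weaknorms.moreproto}, and the linear bound of additivity defects by $\hat{\Theta}_k-\hat{\Theta}_n$. Where you differ is in how $\hat{\Theta}_n-1$ is tied to the $J$'s: you transplant the high-contrast device, taking $(p,q)=(\thom^{-\nicefrac12}(\cus_n)e,\thom^{\nicefrac12}(\cus_n)e)$, invoking Lemma~\ref{l.Jminusmeans.to.Theta} for the bias-subtracted $\tilde J$, and choosing $(P,Q)$ equal to the expected spatial averages so that the third line of~\eqref{e.divcurl.conclusion.pre} vanishes. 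The paper instead takes $(p,q)=(\shom_{*}^{-\nicefrac12}(\cus_n)e,\shom_{*}^{\nicefrac12}(\cus_n)e)$ and $(P,Q)=(0,0)$: after recentering, $\khom(\cus_n)$ is symmetric, so a direct computation (\eqref{e.Thetam.byJ.agh}) gives $\hat{\Theta}_n-1\leq C\max_{|e|=1}\bigl(\E[J]+\E[J^*]\bigr)$ with no bias subtraction, and the (nonzero) expected means of $\nabla v_n$ and $\a\nabla v_n$ are removed only at the end, where the formulas~\eqref{e.all.averages.entries} show they contribute $C(\hat{\Theta}_n-1)^2$ plus the fluctuation $\E\bigl[|\bfE^{-\nicefrac12}(\bfA(\cus_n)-\bfAhom(\cus_n))\bfE^{-\nicefrac12}|^2\bigr]$. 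Both routes close; the paper's buys a shorter algebraic start (no geometric mean $\thom(\cus_n)$, no Lemma~\ref{l.Jminusmeans.to.Theta}), while yours buys a vanishing bias line at the cost of checking that your $P,Q$, which are only of size $O(\hat{\Theta}_n-1)$ rather than zero, keep the prefactor term $C\ep\bigl(|\b_{0}^{\nicefrac12}P|+|\s_{*,0}^{-\nicefrac12}Q|\bigr)^2$ quadratically small. Two bookkeeping points you should make explicit: first, the smallness $\sigma\leq\sigma_0(d)$ is needed not only to verify the hypothesis $\hat{\Theta}_k-1\leq(80d)^{-1}$ of Lemma~\ref{l.variance} for $k\geq n_0$, but also to absorb the resulting $C(\hat{\Theta}_n-1)^2$ term into the left side, exactly as in the paper's final step; second, the weak norm enters~\eqref{e.divcurl.conclusion.pre} squared, so after squaring~\eqref{e.weaknorms.moreproto} and applying H\"older as in~\eqref{e.weaknorms.proto.applied} you obtain the expectations of the squares, as they appear in~\eqref{e.Jagainsmallconstrast}, rather than their square roots as written in your sketch.
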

\begin{proof}
We denote~$n_1\in\N$ by
\begin{equation*} 
n_1  \coloneqq  
\Bigl\lceil (1-\gamma)^{-1} +
4 \log K_{\Psi_{\S}} + 2 \log \Pi + k_0 + A \Bigr\rceil \,,
\end{equation*}
where~$k_0(d)$ is assumed to be so large that~$3^{k_0} \Lat \subseteq \Zd$ (see Section~\ref{ss.subadditivity}), and~$A(d)>0$ is sufficiently large and~$\sigma_0(d)$ sufficiently small so that, by~\eqref{e.bfAhom.by.E0}, for given~$c(d) \in (0,1)$, 
\begin{equation*}
\bfAhom (\cu_{n_1}) 
\leq (1+c(d)) \bfE 
\end{equation*}
and, consequently, by Lemma~\ref{l.tilt.to.Euc}, 
\begin{equation*}
\bfAhom(\cus_{2n_1}) 
\leq 
 (1+c(d)) \bfE \,.
\end{equation*}
Moreover, by Lemmas~\ref{l.bfE.bounds} and~\ref{l.bfE.bfA.bounds} and the assumption~$\sigma \leq \sigma_0(d)$, we deduce that, for every~$m \geq 2n_1$, 
\begin{equation}
\label{e.Enaught.vs.AHom.alg}
\bigl | \bfE^{-\nicefrac12}  \bfAhom(\cus_{m}) \bfE^{-\nicefrac12}  - \Itwod \bigr | 
\leq 
c(d)\,.
\end{equation}
Note that~$2n_1 \leq n_0$ with~$n_0$ defined in~\eqref{e.nn0cond}, provided the~$C$ in~\eqref{e.nn0cond} is sufficiently large. 

\smallskip

Fix~$n\in\N$ with~$n\geq 2n_0$. 
We recenter the coefficient field by subtracting the constant anti-symmetric matrix~$\frac12 (\khom - \khom^t)(\cus_n)$ from~$\a(\cdot)$ and relabel the new matrix field as~$\a(\cdot)$. This allows us to assume, without loss of generality, that~$\khom(\cus_n)$ is symmetric.\footnote{Note that this recentering may, as we have seen above in~\eqref{e.Pi.Pi.old}, change the value of~$\Pi$ by at most a factor of~$100d$ since~$\Theta \leq 2$. Because this factor of~$100d$ can be absorbed into the constant~$C$ in the statement of the proposition, we will ignore this issue in the argument.} 
We, therefore, have that, for any~$e\in\Rd$, 
\begin{align*}
\lefteqn{
\E \bigl[ J(\cus_n,\shom_{*}^{-\nicefrac12}(\cus_n)e,\shom_{*}^{\nicefrac12}(\cus_n)e)\bigr] 
} \qquad & 
\notag \\ & 
=
\frac12 e \cdot 
(\shom_*^{-\nicefrac12} \shom \shom_*^{-\nicefrac12} - \Id)(\cus_n) e 
+
\frac12 e \cdot \bigl( 2\shom_*^{-\nicefrac12}\khom \shom_*^{-\nicefrac12} + \shom_*^{-\nicefrac12}\khom \shom_*^{-1}\khom \shom_*^{-\nicefrac12} \bigr)(\cus_n) e
\notag \\ & 
\geq
\frac12 e \cdot 
(\shom_*^{-\nicefrac12} \shom \shom_*^{-\nicefrac12} - \Id )(\cus_n) e 
+
e \cdot \shom_*^{-\nicefrac12}\khom \shom_*^{-\nicefrac12} e
\end{align*}
and, likewise, 
\begin{equation*}
\E \bigl[ J^*(\cus_n,\shom_{*}^{-\nicefrac12}(\cus_n)e,\shom_{*}^{\nicefrac12}(\cus_n)e)\bigr] 
\geq
\frac12 e \cdot 
(\shom_*^{-\nicefrac12} \shom \shom_*^{-\nicefrac12} - \Id )(\cus_n) e 
-
e \cdot \shom_*^{-\nicefrac12}\khom \shom_*^{-\nicefrac12} e \,.
\end{equation*}
Consequently, we deduce that 
\begin{equation}
\label{e.Thetam.byJ.agh}
\hat{\Theta}_n -1 
\leq 
C 
\max_{|e|=1}
\Bigl( 
\E \bigl[ J(\cus_n,\shom_{*}^{-\nicefrac12}(\cus_n)e,\shom_{*}^{\nicefrac12}(\cus_n)e)\bigr] 
+
\E \bigl[ J^*(\cus_n,\shom_{*}^{-\nicefrac12}(\cus_n)e,\shom_{*}^{\nicefrac12}(\cus_n)e)\bigr]  \Bigr)
\,.
\end{equation}
We fix a unit direction~$e \in \R^d$ with~$|e|=1$ and set, for the remainder of the argument,  
\begin{equation*} 
p  \coloneqq  \shom_{*}^{-\nicefrac12}(\cus_n)e 
\qquad \mbox{and} \qquad 
q  \coloneqq  \shom_{*}^{\nicefrac12}(\cus_n)e
\,.
\end{equation*}
Having fixed~$(p,q)$, we will denote~$v_{m}  \coloneqq  v(\cdot,\cus_m,p,q)$ for short, for every~$m\in \N$.

\smallskip

Applying~\eqref{e.divcurl.conclusion.pre} with~$(P,Q)=(0,0)$ and~$\ep=1$,~\eqref{e.Jaas.matform} and~\eqref{e.algebraic.add.error}, we obtain
\begin{align}  
\label{e.divcurl.conclusion.pre.applied}
\E\bigl  [ J(\cus_n,p,q)\bigr]  
&
\leq 
C 3^{-n}\E \Biggl[\biggl[ 
\mathbf{M}_0^{\nicefrac12} 
\begin{pmatrix} 
\nabla v_n  \\ 
\a \nabla v_n  
\end{pmatrix} 
\biggr]_{\Besov{-\nf12}{2}{1}(\cus_{n})}^2
\Biggr]
+  100 \bigl( 
\E \bigl  [ J(\cus_{n-4} ,p,q)\bigr] - \E \bigl  [ J(\cus_{n} ,p,q)\bigr] \bigr)
\notag \\ & 
\leq
C 3^{-n}\E \Biggl[\biggl[ 
\mathbf{M}_0^{\nicefrac12} 
\begin{pmatrix} 
\nabla v_n  \\ 
\a \nabla v_n 
\end{pmatrix} 
\biggr]_{\Besov{-\nf12}{2}{1}(\cus_{n})}^2
\Biggr]
+
C (\hat{\Theta}_{n-4} - \hat{\Theta}_n ) 
\,.
\end{align}
To estimate the first term on the right side, we need to apply Lemma~\ref{l.weaknorms.moreproto}. 
To prepare, we make some preliminary estimates. 
First, by~\eqref{e.bfA.bounds.diag} and~\eqref{e.symm.k.quad.small} (applied for $\mathbf{E}_1 = \bfAhom(\cus_n)$), we have 
\begin{align}
\label{e.Am.pq.bound}
\biggl |
\bfAhom^{\nicefrac 12} (\cus_n)
\begin{pmatrix} 
-p \\ q
\end{pmatrix}
\biggr |^2 
&
\leq 
2 \bigl | (\shom_{*}^{-\nicefrac12} \bhom \shom_{*}^{-\nicefrac12}) (\cus_n)\bigr| 
\notag \\ &
\leq 
2 +  \bigl | \shom_{*}^{-\nicefrac12} \shom \shom_{*}^{-\nicefrac12} - \Id \bigr|(\cus_n)  + 2 \bigl | \shom_{*}^{-\nicefrac12} \khom \shom_{*}^{-\nicefrac12} \bigr|^2(\cus_n)
\leq 
3
\,.
\end{align}
We note next that, by~$\Theta -1 \leq \nicefrac1{10}$, 
\begin{equation*} 
\bigl| \mathbf{M}_0^{-\nicefrac12} \bfE\mathbf{M}_0^{-\nicefrac12}  \bigr| \leq 2
\end{equation*}
and, by~\eqref{e.Am.pq.bound} and~\eqref{e.Enaught.vs.AHom.alg}, 
\begin{equation*} 
\Bigl | 
\bfE^{\nicefrac 12}  
\Bigl(
\begin{matrix} 
-p \\ q
\end{matrix} 
\Bigr)
\Bigr|\leq 4\,.
\end{equation*}
We now square and take the expectation of~\eqref{e.weaknorms.moreproto}. Some of the appearing terms have already been analyzed in the proof of Lemma~\ref{l.weaknorms}, such as~\eqref{e.this.is.so.nice.pre} and~\eqref{e.this.is.so.nice.pre.second}, and they will suffice for our purposes and we will not repeat them here. 
Note that, while we make particular choices of the parameters~$p$ and~$q$ in that section, the estimates that we rely on here are valid in general (as we noted there). 
We also use~\eqref{e.Enaught.vs.AHom.alg}, which gives us~$\bigl| \bfE^{-\nf12}  ( \bfAhom(\cus_n) - \bfAhom(\cus_k) ) \bfE^{-\nf12} \bigr|\leq \nf1{40}$,
and thus, by the triangle inequality, 
\begin{align*} 
\E\Bigl[  \bigl| \bfE^{-\nicefrac12}  ( \bfA(\cus_k) - \bfA(\cus_n) ) \bfE^{-\nicefrac12}  \bigr|^2  \Bigl]
&
\leq 
3\E\Bigl[  \bigl|\bfE^{-\nicefrac12} ( \bfA(\cus_k) - \bfAhom(\cus_k) ) \bfE^{-\nicefrac12}  \bigr|^2
 \Bigl]
 \notag \\ & \qquad 
+ 
3\E\Bigl[  \bigl| \bfE^{-\nicefrac12}   ( \bfA(\cus_n) - \bfAhom(\cus_n) ) \bfE^{-\nicefrac12}  \bigr|^2
 \Bigl]
 \notag \\ & \qquad 
+
\frac3{40}\bigl| \bfE^{-\nicefrac12} ( \bfAhom(\cus_k) - \bfAhom(\cus_n) ) \bfE^{-\nicefrac12}  \bigr|
\,.
\end{align*}
Using this display, we argue as in~\eqref{e.weaknorms.proto.applied} to obtain~$C(d)<\infty$ such that, for every~$\ep \in (0,1)$, 
\begin{align} 
\label{e.weaknorms.proto.applied.again}
\lefteqn{
3^{-n} \E\Biggl[ \biggl[
\mathbf{M}_0^{\nicefrac12} 
\begin{pmatrix} 
\nabla v_n -   (\nabla v_n)_{\cus_n}  \\ 
\a \nabla v_n-  (\a\nabla v_n)_{\cus_n}  
\end{pmatrix} 
\biggr]_{\Besov{-\nf12}{2}{1}(\cus_{n})}^2
\Biggr]
} \quad  &
\notag \\ &
\leq 
\frac{C}{\ep} 
\sum_{k=n_0}^n  3^{(1-\ep)(k-n)}  
\Bigl( 
\E\Bigl[  \bigl| \bfE^{-\nicefrac12}  ( \bfA(\cus_k) - \bfAhom(\cus_k) ) \bfE^{-\nicefrac12}  \bigr|^2  \Bigl]
+ 
\bigl| \bfE^{-\nicefrac12}  ( \bfAhom(\cus_n) - \bfAhom(\cus_k) ) \bfE^{-\nicefrac12}  \bigr|
\Bigr)
\notag \\ &
\qquad 
+
\frac{C K_{\Psi_\S}^{6} \Pi^3}{(1-\gamma)^4} 3^{-3n}
+
\frac{C K_{\Psi_\S} \Pi }{(1-\gamma)^2} 3^{-(1-\gamma)(n-n_0)}
\,.
\end{align}
We take~$\ep = \nicefrac12$ in~\eqref{e.weaknorms.proto.applied.again}.
The last line of~\eqref{e.weaknorms.proto.applied.again} can be brutally estimated, using~\eqref{e.nn0cond}, by
\begin{equation*} 
\frac{K_{\Psi_\S}^{6} \Pi^3}{(1-\gamma)^4} 3^{-2n}
+
\frac{K_{\Psi_\S} \Pi }{(1-\gamma)^2} 3^{-(1-\gamma)(n-n_0)}
\leq 
3^{-\frac12(1-\gamma) n}
\,.
\end{equation*}
The second term on the right side of the first line can be estimated using~\eqref{e.algebraic.add.error} by
\begin{equation*}  
\sum_{k=n_0}^{n} 3^{(1-\ep)(k-n)}
\bigl|  \bfE^{-\nicefrac12}   ( \bfAhom(\cus_n) - \bfAhom(\cus_k) ) \bfE^{-\nicefrac12}  \bigr|
\leq 
C 
\sum_{k=n_0}^n 3^{\frac12(k-n)} \bigl(\hat{\Theta}_{k} - \hat{\Theta}_n \bigr)
\,.
\end{equation*}
Finally, we can remove the constant from the left side of~\eqref{e.weaknorms.proto.applied.again} using the choice of~$p$ and~$q$, which yield, in view of~\eqref{e.all.averages.entries.again},
\begin{equation*}
\begin{pmatrix} 
\shom_*^{\nicefrac12 } (\cus_n) \E [ (\nabla v_n)_{\cus_n}]  \\ 
\shom_*^{-\nicefrac12 } (\cus_n) \E[ (\a\nabla v_n)_{\cus_n} ] 
\end{pmatrix} 
=
\begin{pmatrix} 
(\shom_*^{-\nicefrac12 } \khom \shom_*^{-\nicefrac12} )(\cus_n)e  \\
- (\shom_*^{-\nicefrac12 }\khom\shom_*^{-1} \khom \shom_*^{-\nicefrac12 } + \shom_*^{-\nicefrac12 } \khom \shom_*^{-\nicefrac12} )(\cus_n)e
\end{pmatrix} 
\,.
\end{equation*}
It follows from the previous display,~\eqref{e.all.averages},~\eqref{e.symm.k.quad.small} and~\eqref{e.Am.pq.bound}  that 
\begin{align*}
\E\biggl[ \biggl|
\mathbf{M}_0^{\nicefrac12} \!
\begin{pmatrix} 
(\nabla v_n)_{\cus_n}  \\ 
(\a\nabla v_n)_{\cus_n}  
\end{pmatrix} 
\biggr|^2
\biggr]
&
\leq
2\E\biggl[ \biggl|
\mathbf{M}_0^{\nicefrac12} \!
\begin{pmatrix} 
(\nabla v_n)_{\cus_n}- \E [(\nabla v_n)_{\cus_n}] \\ 
(\a\nabla v_n)_{\cus_n} - \E [ (\a\nabla v_n)_{\cus_n} ]
\end{pmatrix} 
\biggr|^2
\biggr]
+
2\biggl|
\mathbf{M}_0^{\nicefrac12} \!
\begin{pmatrix} 
 \E [ (\nabla v_n)_{\cus_n} ]  \\ 
\E [  (\a\nabla v_n)_{\cus_n} ]
\end{pmatrix} 
\biggr|^2
\\ &
\leq 
C\E\Bigl[  \bigl|  \bfE^{-\nicefrac12} ( \bfA(\cus_n) - \bfAhom(\cus_n) )  \bfE^{-\nicefrac12}  \bigr|^2  \Bigl]
+
C\bigl( \hat{\Theta}_n -1 \bigr)^2
\,.
\end{align*}
Combining the above, we obtain 
\begin{align*}
\E\bigl  [ J(\cus_n,p,q)\bigr]  
&
\leq
C \!\!
\sum_{k=n_0}^n  3^{\frac12(k-n)} 
\Bigl(   
\E\Bigl[  \bigl|  \bfE^{-\nicefrac12}   ( \bfA(\cus_k) - \bfAhom(\cus_k) )  \bfE^{-\nicefrac12}  \bigr|^2  \Bigl]
+
\bigl(\hat{\Theta}_{k} - \hat{\Theta}_n \bigr)
\Bigr)
\\ & \qquad 
+
C\bigl( \hat{\Theta}_n -1 \bigr)^2
+
C3^{-\frac12(1-\gamma) n} \,.
\end{align*}
By symmetry, we obtain the same bound for~$J^*$ in place of~$J$. By~\eqref{e.Thetam.byJ.agh}, we obtain~\eqref{e.Jagainsmallconstrast} with an additional term of~$C\bigl( \hat{\Theta}_n -1 \bigr)^2$ on the right side. This term can, however, now be absorbed by the left side if we require~$\sigma_0$ to be sufficiently small. The proof is complete. 
\end{proof}

We are now ready to prove the proposition. 

\begin{proof}[Proof of Proposition~\ref{p.algebraic.exp}]
We combine the previous two lemmas and then iterate the result. 
Using Lemma~\ref{l.variance} and~\eqref{e.Enaught.vs.AHom.alg} for sufficiently small~$c(d)$, we have that, for every~$l \geq 2n_0$, 
\begin{align*}
\lefteqn{
\E \Bigl[ \bigl | 
\bfE^{-\nicefrac12} 
\bigl( \bfA(\cus_k)  
- \bfAhom(\cus_k) \bigr)  \bfE^{-\nicefrac12} 
\bigr | ^2 \Bigr]
} \qquad &
\notag \\ &
\leq 
C (\hat{\Theta}_l-1)^2
+  
C
\E \Biggl[ \biggl | \,
\bfE^{-\nicefrac12} 
\! \! \! \! \avsum_{z \in 3^l \Lat \cap \cus_k} \! \!
\bigl ( \bfA(z+\cus_l) - \bfAhom(\cus_l)\bigr ) \bfE^{-\nicefrac12} 
\biggr |^2 \Biggr]
\,.
\end{align*}
By~\eqref{e.bfA.CFS.adapted}, we have that, for every~$k,l \in\N$ with~$\beta k < l \leq k$,
\begin{equation*}
\E \Biggl[
\biggl |
\bfE^{-\nicefrac12} \! \! \! \! \! \! \avsum_{z \in 3^l \Lat \cap \cus_k} \! \! \! \!  \! 
 \bigl ( \bfA(z+\cus_l) - \bfAhom(\cus_l) \bigr ) \bfE^{-\nicefrac12} 
\biggr |^2
\Biggr ]
\leq 
 \frac{C\Pi^2 K_{\Psi_\S}^8}{(1-\gamma)^2} 3^{2(\gamma(k-l) - k)}
+
C \Pi K_{\Psi}^8 3^{-2 (\nu-\gamma)(k-l)} 
\,.
\end{equation*}
For each~$k \in \N$, we choose~$l = l_k$ defined by
\begin{equation*} 
l_k  \coloneqq  \biggl\lceil\frac{\nu - \gamma}{\kappa + \nu -\gamma} k \biggr\rceil + 1
\,.
\end{equation*}
By the definition of~$\kappa$ in~\eqref{e.kappa.def}, we observe that  
\begin{align*}
\kappa l_k \geq (\nu-\gamma)(k-l_k) \geq \kappa(l_k -2)  
\,,\quad 
l_k > \max\Bigl\{ \beta, \frac12\Bigr\}  k
\qquad \mbox{and} \qquad
2(k - \gamma (k-l_k)) 
\geq 
\kappa k - 4 
\,.
\end{align*}
We deduce that 
\begin{equation*}
\E \Bigl[ \bigl | 
\bfE^{-\nicefrac12} 
\bigl( \bfA(\cus_k)  
- \bfAhom(\cus_k) \bigr) \bfE^{-\nicefrac12} 
\bigr | ^2 \Bigr]
\leq 
C (\hat{\Theta}_{l_k}-1)^2
+
CH 3^{-\kappa k}\,,
\end{equation*}
where we set
\begin{equation*}
H \coloneqq  \frac{\Pi^2 \max\{ K_{\Psi_\S}, K_\Psi\} ^8}{(1-\gamma)^2} \,.
\end{equation*}
Inserting this estimate into the result of Lemma~\ref{l.Jagainsmallconstrast} yields, for every~$n\geq 2n_0$, 
\begin{align*}
\hat{\Theta}_{n} - 1
&
\leq
C \!\!
\sum_{k=2n_0}^n  3^{\frac12(k-n)} 
\Bigl(   
(\hat{\Theta}_{l_k}-1)^2
+
H 3^{-\kappa k}
+
\bigl(\hat{\Theta}_{k} - \hat{\Theta}_n \bigr)
\Bigr)
+
C3^{-\frac12(1-\gamma) n} 
\notag \\ & 
\leq 
C \!\!
\sum_{k=2n_0}^n  3^{\frac12(k-n)} 
\Bigl(   
(\hat{\Theta}_{l_k}-1)^2
+
\bigl(\hat{\Theta}_{k} - \hat{\Theta}_n \bigr)
\Bigr)
+
CH 3^{-\kappa n} 
\,.
\end{align*}
Iterating this inequality yields (see for instance~\cite[Lemma 4.8]{AK.Book}), for every~$n\geq 2n_0$, 
\begin{equation*}
\hat{\Theta}_{n} - 1
\leq 
C H 3^{-\kappa (n-2n_0)} 
\,.
\end{equation*}
To switch back to Euclidean cubes, we use~\eqref{l.tilt.to.Euc} to obtain, for every~$m,n\in\N$ with~$m \geq 2 ( n - n_0)$ and~$n \geq 2n_0$, 
\begin{equation*}
\Theta_m - 1 
\leq 
\hat{\Theta}_{n} - 1
+
C H 3^{-(m-n)} 
\leq 
C H 3^{-\kappa (n-2n_0)} 
\leq 
C H 3^{-\frac 12 \kappa (m-2n_0)}
\,. 
\end{equation*}
After taking~$m_*$ as in~\eqref{e.alpha.kappa.def} with~$C_{\eqref{e.alpha.kappa.def}}$ suitably large and relabeling~$\frac12\kappa$ as~$\kappa$, we obtain~\eqref{e.algebraic.Thetam.main}. 
\end{proof}

\subsection{Quenched convergence of the coarse-grained matrices}
\label{ss.main.algebraic.quenched.proof}

In this subsection we present some quenched estimates on the coarse-grained matrices which are consequences of Theorem~\ref{t.main.algebraic}, beginning with the proof of Corollary~\ref{c.main.algebraic.quenched}. 

\begin{proof}[{Proof of Corollary~\ref{c.main.algebraic.quenched}}]
Fix~$\rho \in (\gamma,1)$ and~$\delta>0$ and let~$m_*$ and~$\kappa$ be as in~\eqref{e.alpha.kappa.def}. 
Also define~$\gamma' \coloneqq \frac12(\gamma+\rho)$ and 
\begin{equation}
\label{e.cor.beta.constr}
\eta \coloneqq \frac14 \frac{(\rho-\gamma)\mu }{d+\mu}
\,.
\end{equation}
Denote
\begin{equation*}
\delta_n  \coloneqq \delta 3^{-\eta(n-m_*)} 
\end{equation*}
and, for each~$n\geq m_*$, let~$l_n \in\N$ be defined as in~\eqref{e.ell.naught.def} but with~$\delta_n$ in place of~$\delta$ and~$\gamma'$ in place of~$\rho$, that is, 
\begin{equation*}
l_n  \coloneqq  
\biggl\lceil \frac{1}{\gamma'-\gamma}
\Bigl(1  + \frac d{\mu}\Bigr) \bigl( 6 + \eta(n-m_*)+
\log_3 ( \delta^{-1} \Theta )
\bigr)  +\frac{6}{\mu} \bigl( 1 + \log K_{\Psi} \bigr) 
\biggr\rceil
\,.
\end{equation*}
Observe that 
\begin{equation*}
l_n \leq 
\underbrace{
\biggl\lceil \frac{1}{\gamma'-\gamma}
\Bigl(1  + \frac d{\mu}\Bigr) \bigl( 6 +
\log_3 ( \delta^{-1} \Theta )
\bigr)  +\frac{6}{\mu} \bigl( 1 + \log K_{\Psi} \bigr) 
+1
\biggr\rceil
}_{=:l_0}
+
\underbrace{\frac{2(d+\mu)\eta}{(\rho-\gamma)\mu} 
}_{=: \tau}
(n-m_*) \,. 
\end{equation*}
In view of the constraint~\eqref{e.cor.beta.constr}, we have~$\tau\leq \nf12$. By the definition of~$l_n$, we find that, for every~$N$,  
\begin{equation}
\label{e.tau.imps}
n 
\geq 
m_*
+
( 1 - \tau)^{-1} 
\bigl( l_0 + N \bigr) 
\implies
n - l_n - m_* \geq N \,.
\end{equation}
For every~$n \geq n_* \coloneqq  m_*
+( 1 - \tau )^{-1} 
\bigl( l_0 + 2\log ( \delta^{-1} K_\Psi) \bigr)$, we let~$\mathcal{R}_n$ be as in Lemma~\ref{l.renormalize.ellipticity}, again with~$\delta_n$ in place of~$\delta$. We obtain from the lemma that~\eqref{e.new.ellipticity} holds, which states that, for every~$m\geq n \geq n_*$, 
\begin{align*}
\lefteqn{ 
3^m \geq \max\{ \S , \mathcal{R}_n \} 
} \ \  &
\notag \\ &
\implies 
\bfA(z+\cu_k) 
\leq 
\bigl( 1 + 
\delta 3^{-\eta(n-m_*)} 3^{\gamma'(m-k)} \bigr)
\bfAhom(\cu_{n-l_n})
\,,
\quad
\forall k \in \Z \cap (-\infty,m]\,,
\, z \in 3^k\Zd \cap \cu_m\,.
\end{align*}
By Theorem~\ref{t.main.algebraic} and~\eqref{e.Thetam.controls.Ahom}, we have that~$n\geq n_*$ implies 
\begin{equation*}
\bfAhom(\cu_{n-l_n})
\leq 
\bigl( 1 + 6\cdot 3^{-\kappa(n-l_n-m_*)} \bigr) \bfAhom
\leq 
\bigl( 1 + \delta 3^{-\kappa(1-\tau) (n-n_*)} \bigr) \bfAhom
\,.
\end{equation*}
Combining the previous two displays and defining~$\theta \coloneqq \min\{ \eta, \kappa(1-\tau) \}\geq \min\{ \eta, \tfrac12\kappa \}$ yields
\begin{align*}
\lefteqn{ 
3^m \geq \max\{ \S , \mathcal{R}_n \} 
} \ \  &
\notag \\ &
\implies 
\bfA(z+\cu_k) 
\leq 
\bigl( 1 + 
4\delta 3^{-\theta(n-n_*)} 3^{\gamma'(m-k)} \bigr)
\bfAhom
\,,
\quad
\forall k \in \Z \cap (-\infty,m]\,,
\ z \in 3^k\Zd \cap \cu_m\,,
\notag \\ &
\implies 
\sum_{k=-\infty}^m
3^{- \rho(m-k)}
\max_{z \in 3^k \Zd \cap \cu_m} \Bigl| \bigl( \bfAhom^{-\nicefrac12}( \bfA(z+\cu_k) - \bfAhom) \bfAhom^{-\nicefrac12} \bigr)_+ \Bigr| 
\leq 
C\delta(\rho-\gamma)^{-1} 3^{-\theta(n-n_*)}
\,.
\end{align*}
This motivates the definition~$N(m) \coloneqq 
\min \bigl\{ k \in\N \,:\, \mathcal{R}_{m-k} \leq 3^m \bigr\}$.
Using~\eqref{e.new.scale}, we have
\begin{equation*}
\P \bigl[ N(m) \geq k \bigr] 
\leq 
\P \bigl[  \mathcal{R}_{m-k} \geq 3^m  \bigr] 
=
\P \bigl[  \mathcal{R}_{m-k}^\mu \geq 3^{\mu m}  \bigr] 
\leq 
\frac{1}
{\Psi( 3^{k\mu})}
\,.
\end{equation*}
Thus~$3^{\mu N(m)} \leq \O_{\Psi}( 1 )$.
We next define the quantity 
\begin{equation*}
F_n \coloneqq 
\sum_{m=n}^\infty 3^{\frac12\theta(m-n)} 
\sum_{k=-\infty}^m
3^{- \rho(m-k)}
\max_{z \in 3^k \Zd \cap \cu_m} \Bigl| \bigl( \bfAhom^{-\nicefrac12}( \bfA(z+\cu_k) - \bfAhom) \bfAhom^{-\nicefrac12} \bigr)_+ \Bigr| 
\indc_{\{ \S \leq 3^m \}}
\,.
\end{equation*}
We observe that 
\begin{align*}
F_n 
&
\leq 
\frac{C\delta}{\rho-\gamma} \sum_{m=n}^\infty 3^{\frac12\theta(m-n)} 
3^{-\theta(m - N(m) -n_*)}
\leq 
\frac{C\delta}{\rho-\gamma}
3^{-\theta(n-n_*)}
\sum_{m=n}^\infty 3^{-\frac12\theta (m-n)} 
3^{\theta N(m) }
\end{align*}
and therefore 
\begin{equation*}
F_n^{\nf \mu\theta} 
\leq 
\O_{\Psi} \left( (C\delta(\rho-\gamma)^{-1}\theta^{-1} )^{\nf \mu\theta} 
3^{-\mu(n-n_*)} \right)
\,.
\end{equation*}
Define the minimal scale~$\mathcal{R} 
\coloneqq 
\sup 
\bigl\{ 
3^n \,:\, 
F_n \geq \delta 
\bigr\}$  and observe that 
\begin{align*}
\indc_{\{ \mathcal{R} \geq 3^{n} \}} 
\leq
\sum_{k=n}^\infty 
\indc_{\{  F_k \geq \delta \}} 
&
\leq 
\O_{\Psi} \biggl( 
\sum_{k=n}^\infty 
(C(\rho-\gamma)^{-1}\theta^{-1} )^{\nf \mu\theta} 
3^{-\mu(k-n_*)} 
\biggr)
\notag \\ & 
\leq 
\O_{\Psi} \bigl( 
\mu^{-1} 
(C(\rho-\gamma)^{-1}\theta^{-1} )^{\nf \mu\theta} 
3^{-\mu(n-n_*)} 
\bigr)
\,.
\end{align*}
This implies~\eqref{e.cor.sec.four.R}.
Finally, we observe that~$3^m\geq \max\{ \S,\mathcal{R}\}$ implies that 
\begin{equation*}
\sum_{k=-\infty}^m
3^{- \rho(m-k)}
\max_{z \in 3^k \Zd \cap \cu_m} \Bigl| \bigl( \bfAhom^{-\nicefrac12}( \bfA(z+\cu_k) - \bfAhom) \bfAhom^{-\nicefrac12} \bigr)_+ \Bigr| 
\indc_{\{ \S \leq 3^m \}}
\leq 
\delta 3^{-\frac12\theta(m-n)}\,.
\end{equation*}
This completes the proof. 
\end{proof}

\section{Quantitative homogenization}
\label{s.homogenization}

Up to this point, we have focused on developing properties of the coarse-grained coefficients~$\bfA(U)$ and proving, under the assumptions~\ref{a.stationarity},~\ref{a.ellipticity.dagger} and~\ref{a.CFS}, quantitative estimates for their convergence to the homogenized matrix~$\bfAhom$ as the domain~$U$ grows. 
In this section the perspective changes: we show how the coarse-grained coefficients can be used to control general solutions.
In particular, we present a broad ``black box'' principle (Proposition~\ref{p.big.black.box}) which asserts that, provided~$\bfA(\cu)$ is quantitatively close to~$\bfAhom$ for an appropriate collection of subcubes~$\cu \subseteq U$, one obtains quantitative estimates on the difference between solutions of the Dirichlet problems for~$-\nabla \cdot \a\nabla$ and~$-\nabla \cdot \ahom\nabla$ in~$U$.
Using this black box in combination with the results obtained in previous sections (in particular, Corollary~\ref{c.main.algebraic.quenched}), we will obtain the second bullet in Theorem~\ref{t.main}. 
The proof of Proposition~\ref{p.big.black.box} is the content of Section~\ref{ss.blackbox}.
By combining this result with Corollary~\ref{c.main.algebraic.quenched}, we will establish the second bullet of Theorem~\ref{t.main}.

\smallskip

Parallel black box principles will be developed in Sections~\ref{ss.LSreg} and~\ref{ss.correctors}.
In these, the closeness of the coarse-grained matrices~$\bfA(\cu_n)$ to~$\bfAhom$ over a range of scales~$n$ leads, respectively, to large-scale regularity estimates and first-order corrector estimates.
Together with Corollary~\ref{c.main.algebraic.quenched}, these results establish the final two bullets of Theorem~\ref{t.main}.

\smallskip

We emphasize that the black box statements and their proofs are \emph{purely deterministic}---they do not rely on the probabilistic assumptions~\ref{a.stationarity},~\ref{a.ellipticity.dagger} and~\ref{a.CFS}, nor on the main results of the earlier sections.
The proofs of Theorem~\ref{t.main} and Theorem~\ref{t.HC.intro} are completed in Section~\ref{ss.proofs.main.results}.

\subsection{Deterministic black box for quantitative homogenization} 
\label{ss.blackbox}

We begin by introducing the following multiscale composite quantity, which is closely related to the coarse-grained ellipticity constants and quantifies the homogenization error. 
Recall that~$\css{s} = 1 - 3^{-s}$ and that~$\Omega(\cu_m)$ is defined in~\eqref{e.Omega.U.def}.

\begin{definition}[Homogenization error]
\label{d.mathcal.E}
For every~$s \in (0,1]$,~$q\in [1,\infty)$,~$m,n \in \Z$ with~$n\leq m$, coefficient field~$\a \in \Omega( \cu_m)$ and matrix~$\a_1 \in \R^{d\times d}_+$, we let~$\bfA_1$ denote the matrix defined in terms of~$\a_1$ by the usual formula~\eqref{e.bfA.def}, that is, 
\begin{equation}
\label{e.form.of.A.naught}
\bfA_1  \coloneqq 
\begin{pmatrix} 
\s_1 + \k_1^t\s_1^{-1}\k_1  
& -\k_1^t\s_1^{-1}
\\ - \s_1^{-1}\k_1 
& \s_1^{-1}
\end{pmatrix}
\quad \mbox{where} \quad 
\s_1 \coloneqq  \frac12(\a_1+\a_1^t)
\,, 
\quad
\k_1 \coloneqq  \frac12(\a_1-\a_1^t)
\,,
\end{equation}
and we define the multiscale composite quantity
\begin{equation}
\mathcal{E}_{s,q} (\cu_m ; \a, \a_1 ) 
\coloneqq
\biggl( 
\css{sq} 
\sum_{l=-\infty}^m 
3^{-sq(m-l)} \!\!
\max_{z \in 3^l\Zd \cap \cu_m}
\max_{|e| =1} \bigl( \bfJ\bigl(z {+} \cu_l,\bfA_0^{-\nf 12} e, \bfA_0^{\nf 12} e \,; \a\bigr) \bigr)^{\nf q2}
\biggr)^{\!\nf1q} 
\,.
\label{e.mathcal.E.def}
\end{equation}
We extend this definition to~$q=\infty$ and to translations~$y+\cu_m$ of~$\cu_m$ in the obvious way. 
\end{definition}

\begin{remark}
The random variable~$\mathcal{E}_{s,q}(\cu_m;\a,\a_1)$ quantifies the closeness of~$\bfA(\cu)$ to~$\bfA_1$ for all triadic subcubes~$\cu$ of~$\cu_m$. 
Indeed, by~\eqref{e.Jsplitting},~\eqref{e.bigAstar.def} and~$\bfA_1^{\nf 12} = \mathbf{R} \bfA_1^{-\nf 12} \mathbf{R}$, 
\begin{equation}
\bfJ\bigl( U ,\bfA_1^{-\nf 12} e, \bfA_1^{\nf 12} e \,; \a\bigr)
\leq 
\bigl| \bigl( \bfA_1^{-\nf12} \bigl( \bfA(U;\a) -  \bfA_1 \bigr)  \bfA_1^{-\nf12} \bigr)_+
\bigr| 
\label{e.J.t.bfA.Ahom}
\end{equation}
and a converse of this inequality (which we do not use in this section) can be obtained from~\eqref{e.diagonalset.nosymm.bfJ}.
Under the assumptions of Theorem~\ref{t.main.algebraic}, the estimate stated in Corollary~\ref{c.main.algebraic.quenched} gives us good control over the random variable defined in~\eqref{e.mathcal.E.def} when~$\bfA_1 = \bfAhom$. 
Select~$\rho \in (\gamma,1)$ and~$\delta>0$. Applying Corollary~\ref{c.main.algebraic.quenched} yields, in view of~\eqref{e.J.t.bfA.Ahom}, for every~$m\in \N$, and~$s \in ( \nf\rho2, \nf12)$, 
\begin{equation}
\label{e.impl.to.mathcal.E}
3^m\geq \max\bigl\{ \S , \mathcal{R}_{\delta,\rho}  \}
\quad \implies \quad
\mathcal{E}_{s,2} ( \cu_m;\a,\ahom )^2 
\leq 
\delta 
(\rho - 2s)^{-1} 
\biggl( \frac{3^m}{\max\{ \mathcal{S},\mathcal{R}\} } \biggr)^{\!-\theta}  
\,.
\end{equation}
\end{remark}

\begin{proposition}[Homogenization black box]
\label{p.big.black.box} 
Assume that~$s\in (0,\nf12)$,~$U \subseteq \cu_0$ is a Lipschitz domain,~$\a \in \Omega(\cu_0)$ and~$\a_1 \in \R^{d\times d}_{+}$ is a matrix satisfying, for some constants~$0<\lambda < \Lambda < \infty$,
\begin{equation}
\lambda|\xi|^2 \leq \xi \cdot \a_1 \xi
\quad \mbox{and} \quad 
\Lambda^{-1}|\xi|^2 \leq \xi \cdot \a_1^{-1} \xi
\,.
\label{e.a0.ue}
\end{equation}
Then there exists a constant~$C(U,s,\lambda,\Lambda,d)<\infty$ such that the following hold.
\begin{itemize}

\item \emph{Homogenization error estimate.}
For every~$h \in H^{1-s}(U)$ and~$u\in H^1_{\a}(U)$ satisfying 
\begin{equation*}
\left\{
\begin{aligned}
& -\nabla \cdot \a\nabla u = 0 & \mbox{in} & \ U \,, 
\\ 
& -\nabla \cdot \a_1\nabla h = 0 & \mbox{in} & \ U \,, 
\\ 
& 
u - h \in H^{1-s}_0(U)
\,,
\end{aligned}
\right.
\end{equation*}
we have the estimate
\begin{equation}
\|  \nabla u - \nabla h \|_{\Hminushat{-s} (U)}
+
\| \a \nabla u - \a_1\nabla h \|_{\Hminushat{-s} (U)}
\leq 
C 
\mathcal{E}_ {s,2} (\cu_0;\a,\a_1)
\|\s^{\nf12} \nabla u \|_{{L}^2(U)}
\,.
\label{e.bbb.homogenization}
\end{equation}

\item \emph{Attainability of~$H^{\nf12+s}(\partial U)$ boundary data.}
For every~$h \in W^{1,\infty}(U)\cap H^{1+s}(U)$, the unique solution~$u\in g+H^{1}_{\a,0}(U)$ of the Dirichlet problem 
\begin{equation}
\left\{
\begin{aligned} 
& -\nabla \cdot \a\nabla u = 0 & \mbox{in} & \ U \,, 
\\ 
& u = g & \mbox{on} & \ \partial U 
\,,
\end{aligned}
\right.
\label{e.bbb.Dirichlet}
\end{equation}
satisfies the estimate
\begin{equation}
\| \s^{\nf12} \nabla u \|_{\underline{L}^2(U)} 
\leq 
C 
\bigl( 
1+
\mathcal{E}_ {s,2} (\cu_0;\a,\a_1)
\bigr)
\| \nabla g \|_{H^s(U)}
\,.
\label{e.attainability.estimate}
\end{equation}

\end{itemize}
\end{proposition}

The reason we allow the constant~$C$ in Proposition~\ref{p.big.black.box}  to have implicit dependence on the ellipticity constants~$\lambda$ and~$\Lambda$ is because the lemma will be applied only in the case~$\lambda ,\Lambda \in [\nf12,2]$. Indeed, we will apply Proposition~\ref{p.big.black.box} after an affine change of variables into the ``adapted'' geometry of the homogenized matrix. 
The dependence of the constant~$C$ on the domain~$U$ enters only via the Lipschitz character of~$\partial U$. Likewise, the dependence of~$C$ on~$s$ enters only via a lower bound for~$s(1-2s)$ which can be made explicit by a detailed examination of the arguments, but in the proof of Theorem~\ref{t.main} we will anyway apply it with the specific choice~$s= \frac12(1+\gamma)$. These remarks also  hold for every statement in this subsection. 

\smallskip

The two statements of Proposition~\ref{p.big.black.box} can be used in combination with a density argument, to obtain, for every~$h \in H^{1+s}(U)$ satisfying~$\nabla \cdot \a_1 \nabla h = 0$ in~$U$, the existence of a solution~$u\in H^1_{\a}(U) \cap (h+H^{1-s}_0(U))$ of~\eqref{e.bbb.Dirichlet} which satisfies the estimate 
\begin{equation}
\|  \nabla u - \nabla h \|_{\Hminushat{-s} (U)}
+
\| \a \nabla u - \a_1\nabla h \|_{\Hminushat{-s} (U)}
\leq 
C 
\bigl( 
1+
\mathcal{E}_ {s,2} (\cu_0;\a,\a_1)
\bigr) 
\mathcal{E}_ {s,2} (\cu_0;\a,\a_1)
\| \nabla h \|_{H^s(U)}
\,.
\label{e.combine.bbb.statements}
\end{equation}
If, in addition, the boundary data belongs to~$W^{1,\infty} (\partial U)$, then the solution of~\eqref{e.bbb.Dirichlet} is unique.
In particular, the second bullet of Proposition~\ref{p.big.black.box} says that any element of~$H^{\nf12+s}(\partial U)$ can be realized as the trace of a solution~$u\in \A(U;\a)$, provided that the quantity~$\mathcal{E}_{s,2}(\cu_0;\a,\a_1)$ is finite (which, as will see below in Lemma~\ref{l.mathcal.E.to.Lambdas}, is equivalent to the finiteness of both~$\lambda_{s,2}^{-1}(\cu_0;\a)$ and~$\Lambda_{s,2}(\cu_0;\a)$). Therefore Proposition~\ref{p.big.black.box} says that, if~$\mathcal{E}_ {s,2} (\cu_0;\a,\a_1)$ is small, then (i) any finite-energy element of~$\mathcal{A}(U;\a)$ can be approximated by an~$\a_1$-harmonic function; and, conversely, (ii) any~$\a_1$-harmonic function in~$U$ with boundary data in~$H^{\nf12+s}(\partial U)$ can be approximated by an element of~$\mathcal{A}(U;\a)$ with the same boundary data. 

\smallskip

We begin the proof of Proposition~\ref{p.big.black.box} with an abstract duality argument which estimates the left side of~\eqref{e.bbb.homogenization} by a weak norm of the vector field~$(\a-\a_1)\nabla u$. 

\begin{lemma}
\label{l.homogenization.by.duality}
Let~$s \in (0,\nf12)$ and~$U$ be a bounded Lipschitz domain,~$\a \in \Omega(U)$ and~$\a_1 \in \R^{d\times d}_{+}$ be a matrix satisfying~\eqref{e.a0.ue} for some~$0<\lambda < \Lambda$.
Then there exists~$C(U,s,\lambda,\Lambda,d)<\infty$ such that, for every~$u,v\in H^{1-s}(U)$ satisfying \footnote{The first line of~\eqref{e.homogenization.duality.eq} is to be understood in the sense of distributions, that is, $\int_U \nabla \phi \cdot (\a \nabla u - \a_1 \nabla v ) = 0$ for every~$\phi \in C^\infty_c(U)$.}
\begin{equation}
\left\{
\begin{aligned}
& \nabla \cdot (\a \nabla u - \a_1 \nabla v ) = 0
\quad \mbox{in} \ U\,, 
\\
& 
u-v \in H_0^{1-s}(U)\,, 
\\ & 
\a \nabla u \in H^{-s}(U;\Rd)\,,
\end{aligned}
\right.
\label{e.homogenization.duality.eq}
\end{equation}
we have the estimate
\begin{equation}
\|  \nabla u - \nabla v \|_{\Hminushat{-s} (U)}
+
\| \a \nabla u - \a_1\nabla v \|_{\Hminushat{-s}  (U)}
\leq
C
\| (\a-\a_1) \nabla u  \|_{\Hminushat{-s}  (U)}
\,.
\label{e.duality.error.estimate}
\end{equation}
\end{lemma} 
\begin{proof}
We use a duality argument, starting from the identity
\begin{equation}
\|  \nabla u - \nabla v \|_{\Hminushat{-s} (U)}
=
\sup \biggl\{ 
\int_{U} \h \cdot (\nabla u  -\nabla v) 
\,:\, \h\in C^\infty(U ;\Rd)\,, \ 
\| \h \|_{H^{s} (U)} \leq 1 
\biggr\} 
\,.
\label{e.duality.identity}
\end{equation}
Fix~$\h \in C^\infty(\cu_m ;\Rd)$ satisfying~$\| \h \|_{{H}^{s} (U)}^{2} \leq 1$
and let~$w\in H^1(U )$ be the solution of the Dirichlet problem
\begin{equation*}
\left\{
\begin{aligned}
& -\nabla \cdot \a_1^t\nabla w = \nabla \cdot \h 
&  \mbox{in} & \ U \,, \\ 
& w = 0 &  \mbox{on} & \ \partial  U \,,
\end{aligned}
\right.
\end{equation*}
Since~$s < \nf12$, we may apply~\cite[Theorem 0.5(ii)]{JeKe} to obtain\footnote{If the domain~$U$ has a smooth boundary, then the estimate~\eqref{e.CZ.Lp.app.nablaw} is classical, has a much simpler proof, and there is no dependence of~$C$ on~$s$ (see for instance~\cite[Theorem 0.3]{JeKe}).} the existence of~$C(s,U,\lambda,\Lambda,d)<\infty$ such that 
\begin{equation}
\label{e.CZ.Lp.app.nablaw}
\| \nabla w \|_{{H}^{s}(U)} 
\leq 
C \| \h \|_{{H}^{s}(U)} \leq C 
\,.
\end{equation}
Therefore the equation for~$w$ can be tested with~$H^{1-s}_0(U)$ and thus in particular with~$u-v$. 
Note that~$w$ can be approximated in~$H^{1+s}(U)$ by elements of~$C^\infty_c(U)$, and thus by passing to limits and using also that~$\a\nabla u - \a_1 \nabla v \in H^{-s}(U)$, we can also test the equation in the first line of~\eqref{e.homogenization.duality.eq} with~$w$. 
We therefore obtain 
\begin{equation}
\label{e.duality.testing}
\int_{U}
\h \cdot (\nabla u  -\nabla v) 
=
- 
\int_{U}
\nabla w \cdot 
\a_1( \nabla u  - \nabla v) 
=
\int_{U} (\a - \a_1)\nabla u \cdot \nabla w
\,.
\end{equation}
We deduce that 
\begin{equation*}
\biggl| \int_{U} 
\h \cdot (\nabla u  -\nabla v)  \biggr| 
\leq 
\bigl[ (\a - \a_1)\nabla u  \bigr]_{H^{-s}(U)} 
\| \nabla w \|_{H^{s}(U)} 
\leq
C \bigl[ (\a - \a_1)\nabla u  \bigr]_{H^{-s}(U)} 
\,.
\end{equation*}
In view of~\eqref{e.duality.identity}, the proof of the bound for~$\|  \nabla u - \nabla v \|_{\Hminushat{-s} (U)}
$ is complete. To estimate the difference of the fluxes, we use the triangle inequality and the estimate for the gradients to obtain
\begin{align*}
\| \a \nabla u - \a_1\nabla v \|_{\Hminushat{-s} (U)}
&
\leq 
\| (\a-\a_1) \nabla u  \|_{\Hminushat{-s} (U)}
+
\| \a_1( \nabla u - \nabla v) \|_{\Hminushat{-s} (U)}
\notag \\ & 
\leq 
\| (\a-\a_1) \nabla u  \|_{\Hminushat{-s} (U)}
+
C \|  \nabla u - \nabla v \|_{\Hminushat{-s} (U)}
\notag \\ & 
\leq 
C \| (\a-\a_1) \nabla u  \|_{\Hminushat{-s} (U)}
\,.
\end{align*}
This completes the proof of~\eqref{e.duality.error.estimate}.
\end{proof}

We next show that weak norms of the vector field~$(\a-\a_1)\nabla u$ can be controlled by~$\mathcal{E}_ {s,2} (\cu_m;\a,\a_1)$ and the energy of the solution~$u$. We also give an extension of Lemma~\ref{l.crude.weaknorms} to domains which are not necessarily triadic cubes, obtaining in particular the embedding
\begin{equation}
U \subseteq \cu_0
\ \mbox{ and } \
\lambda_{s,2}(\cu_0;\a) > 0
\quad \implies \quad
\A(U;\a) \hookrightarrow H^{1-s}(U)
\,.
\label{e.general.embedding}
\end{equation} 

\begin{lemma}
\label{l.coarse.graining.operator}
Let~$s\in (0,\nf 12)$,~$U \subseteq \cu_0$ be a Lipschitz domain and~$\a \in \Omega(\cu_0)$.
There exists a constant~$C(U,s,d)$ such that, for every~$\a_1\in \R^{d\times d}_{+}$, if we denote~$\s_1 \coloneqq \frac12(\a_1+\a_1^t)$ and~$\k_1 \coloneqq \frac12(\a_1-\a_1^t)$, then, for every~$u\in\A(U; \a)$, 
\begin{equation}
\label{e.CG.average.flux}
\| \s_1^{\nf 12}\nabla u \|_{\Hminusuls{-s}(U)}
+
\| \s_1^{-\nf 12}(\a-\k_1)\nabla u \|_{\Hminusuls{-s}(U)}
\leq
C
\bigl(1+\mathcal{E}_ {s,2} (\cu_0;\a,\a_1)\bigr)
\|\s^{\nf12} \nabla u \|_{\underline{L}^2(U)}
\end{equation}
and
\begin{equation}
\label{e.CG.elliptic.operator}
\| \s_1^{-\nf 12} ( \a-\a_1) \nabla u \|_{\Hminusuls{-s}(U)}
\leq
C
\mathcal{E}_ {s,2} (\cu_0;\a,\a_1)
\|\s^{\nf12} \nabla u \|_{\underline{L}^2(U)}
\,.
\end{equation}
\end{lemma}
\begin{proof}
We may assume without loss of generality that~$\a_1$ is symmetric; otherwise we recenter the fields by subtracting the constant anti-symmetric matrix~$\k_1 \coloneqq \frac12(\a_1-\a_1^t)$ from both~$\a$ and~$\a_1$. Notice that~$\A(U; \a) = \A(U; \a-\k_1)$. Moreover, by~\eqref{e.your.face} and~\eqref{e.what.h0.does}, for every Lipschitz domain~$V$,
\begin{equation} 
\mathcal{E}_ {s,2} (\cu_0;\a,\a_1) 
=
\mathcal{E}_ {s,2} (\cu_0;\a-\k_1,\s_1)
\qand
\s_*(V;\a-\h_0) = \s_*(V;\a) 
\,.
 \label{e.center.mathcalE.and.sstar}
\end{equation}
For each~$n \in \Z$, we define a collection of boundary layer cubes of~$U$ of size~$3^{n}$ by
\begin{equation*} 
\mathcal{W}_n(U)  \coloneqq  \Bigl\{ z + \cu_{n} \, : \, z \in 3^{n} \Zd \cap U \,, z + \cu_{n+1} \subset U \,,  (z + \cu_{n+2}) \cap \partial U \neq \emptyset \Bigr\}\,.
\end{equation*}
By applying~\eqref{e.weak.norms.ordering.A} and H\"older's inequality, we find that, for every~$\g \in H^s(U\,; \Rd)$, 
\begin{align*} 
\biggl| \fint_{U} \g \cdot \s_1^{\nf 12} \nabla u  \biggr| & 
\leq 
\sum_{n \in \Z} \sum_{\cu \in \mathcal{W}_n(U)} 
\frac{|\cu_n|}{|U|} 
\biggl|   \fint_{\cu} \g \cdot  \s_1^{\nf 12} \nabla u  \biggr| 
\notag \\ &
\leq
C \sum_{n \in \Z} \sum_{\cu \in \mathcal{W}_n(U)}  \frac{|\cu_n|}{|U|}   [  \s_1^{\nf 12} \nabla u  ]_{\Besov{-s}{2}{2}(\cu)} \| \g \|_{\underline{B}^{s}_{2,2}(\cu)}
\notag \\ &
\leq
C \biggl( \sum_{n \in \Z}
\sum_{\cu \in \mathcal{W}_n(U)}  \frac{|\cu_n|}{|U|}   [ \s_1^{\nf 12} \nabla u  ]_{\Besov{-s}{2}{2}(\cu)}^2  \biggr)^{\! \nf12}
\biggl( \sum_{n \in \Z}
\sum_{\cu \in \mathcal{W}_n(U)}  \frac{|\cu_n|}{|U|}  \| \g \|_{\underline{B}^{s}_{2,2}(\cu)}^2  \biggr)^{\! \nf12}
\,.
\end{align*}
To estimate the first factor on the right side, we apply~\eqref{e.energymaps.nonsymm} and~\eqref{e.center.mathcalE.and.sstar} to get
\begin{align*} 
\lefteqn{
\sum_{n \in \Z}
\sum_{\cu \in \mathcal{W}_n(U)}  \frac{|\cu_n|}{|U|}   [ \s_1^{\nf 12} \nabla u  ]_{\Besov{-s}{2}{2}(\cu)}^2
} \quad  &
\notag \\ &
=
\sum_{n \in \Z}
\sum_{\cu \in \mathcal{W}_n(U)}  \frac{|\cu_n|}{|U|}  \sum_{k = -\infty}^n 
3^{2s k} 
\avsum_{z \in 3^k \Zd \cap \cu}  \bigl| \s_1^{\nf 12} ( \nabla u )_{z+\cu_k} \bigr|^2 
\notag \\ &
\leq
C
\sum_{k=-\infty}^0  
3^{2s k} 
\max_{z \in 3^k \Zd \cap U, z+\cu_k \subset U} \bigl| \s_1^{\nf 12} \s_*^{-1}(z+\cu_k) \s_1^{\nf12} \bigr|
\frac1{|U|}
\sum_{n \in \Z} 
\sum_{\cu \in \mathcal{W}_n(U)}  
\| \s^{\nf12} \nabla u \|_{L^2(\cu)}^2
\notag \\ &
\leq
C s^{-1} \bigl( 1+ \mathcal{E}_{s,2}(\cu_0, \a,\a_1  ) \bigr)^2 \| \s^{\nf12} \nabla u \|_{\underline{L}^2(U)}^2
\,.
\end{align*}
For the second factor, we use Lemma~\ref{l.Wsp.vs.Bspp} and Proposition~\ref{p.fractional.hardy} to obtain, for every~$\g \in H^s(U\,;\Rd)$,
\begin{align*} 
\lefteqn{
\sum_{n \in \Z} \sum_{\cu \in \mathcal{W}_n(U)}  \frac{|\cu_n|}{|U|} \| \g \|_{\underline{B}^{s}_{2,2}(\cu)}^{2} 
} \qquad &
\notag \\ &
\leq
\frac{C}{|U|} \sum_{n \in \Z} \sum_{\cu \in \mathcal{W}_n(U)}  3^{-2sn} \| \g \|_{L^2(\cu)}^{2} 
+
 \frac{C}{|U|}  \sum_{n \in \Z} \sum_{\cu \in \mathcal{W}_n(U)} 
\int_{\cu} \int_{\cu} \frac{|\g(x) - \g(y)|^2}{|x-y|^{d+2s}} \, dx \, dy
\notag \\ &
\leq
\frac{C}{1-2s} |(\g)_U|^2 
+
C \fint_{U}  \frac{|\g(x) - (\g)_U|^2}{\dist(x,\partial U)^{2s}} \, dx
+
C
\fint_{U} \int_{U} \frac{|\g(x) - \g(y)|^2}{|x-y|^{d+2s}} \, dx \, dy
\notag \\ &
\leq
\frac{C}{1-2s}  |(\g)_U|^2 
+ 
C(U,s,d) [\g]_{\underline{H}^s(U)}
\,.
\end{align*}
Combining the previous three displays yields the result for the gradient in~\eqref{e.CG.average.flux}.   To show~\eqref{e.CG.elliptic.operator}, as above, we have
\begin{align*} 
\lefteqn{
\biggl| \fint_{U} \g \cdot \s_1^{-\nf 12}( \a-\a_1) \nabla u  \biggr|
} \qquad  &
\notag \\ & 
\leq
C \biggl( \sum_{n \in \Z}
\sum_{\cu \in \mathcal{W}_n(U)}  \frac{|\cu_n|}{|U|}   \bigl[ \s_1^{-\nf 12}( \a-\a_1) \nabla u  \bigr]_{\Besov{-s}{2}{2}(\cu)}^2  \biggr)^{\! \nf12}
\biggl( \sum_{n \in \Z}
\sum_{\cu \in \mathcal{W}_n(U)}  \frac{|\cu_n|}{|U|}  \| \g \|_{\underline{B}^{s}_{2,2}(\cu)}^2  \biggr)^{\! \nf12}
\,.
\end{align*}
To estimate the first factor on the right side, we apply~\eqref{e.fluxmaps} to get
\begin{align*}
\lefteqn{
\sum_{n \in \Z}
\sum_{\cu \in \mathcal{W}_n(U)}  \frac{|\cu_n|}{|U|}   \bigl[ \s_{0}^{-\nf 12} ( \a-\a_1) \nabla u  \bigr]_{\Besov{-s}{2}{2}(\cu)}^2
} \quad  &
\notag \\ &
=
\sum_{n \in \Z}
\sum_{\cu \in \mathcal{W}_n(U)}  \frac{|\cu_n|}{|U|}  \sum_{k = -\infty}^n 
3^{2s k} 
\avsum_{z \in 3^k \Zd \cap \cu}  \bigl| \bigl( \s_{0}^{-\nf 12} ( \a-\a_1) \nabla u \bigr)_{z+\cu_k} \bigr|^2 
\notag \\ &
\leq
C 
\sum_{k=-\infty}^0 
3^{2s k} 
\max_{z \in 3^k \Zd \cap U, z+\cu_k \subset U}
\sup_{|e| \leq 1}
J(z+\cu_k, \s_{0}^{-\nf 12}e, \s_{0}^{\nf 12} e)
\frac1{|U|}
\sum_{n \in \Z} 
\sum_{\cu \in \mathcal{W}_n(U)}  
\| \s^{\nf12} \nabla u \|_{L^2(\cu)}^2
\notag \\ &
\leq
C s^{-1}   \mathcal{E}_{s,2}(\cu_0, \a,\a_1  )^2 \| \s^{\nf12} \nabla u \|_{\underline{L}^2(U)}^2
\,.
\end{align*}
Combining the previous three displays yields~\eqref{e.CG.elliptic.operator}.  The proof is complete. 
\end{proof}

The combination of Lemmas~\ref{l.homogenization.by.duality}  and~\ref{l.coarse.graining.operator} immediately yield the first statement of Proposition~\ref{p.big.black.box}. The second statement of the proposition on the energy estimate for~$H^{1+s}$ boundary data is the focus of the rest of the section. 

\smallskip

To control a general solution on a Lipschitz domain~$U$ using coarse-grained matrices, it is natural to discretize the problem by introducing a triangulation of~$U$ into simplices.
The gradients and fluxes of solutions of the homogenized equation can then be approximated by piecewise constant fields subordinate to the triangulation.
This enables a subadditivity argument: the energy of any solution~$u \in \A(U;\a)$ can be bounded by the (properly weighted) sum of the coarse-grained matrices over the simplexes, and hence by~$1+\mathcal{E}_{s,2}$, multiplied by the energy of the homogenized solution.
To formalize this argument, we draw\ on standard ideas from finite element theory.

\smallskip

We begin with a geometric lemma, which provides a triangulation of an arbitrary bounded domain~$U\subseteq\mathbb R^d$ into~$d$--dimensional simplexes. 
The partition is designed with affine approximation in mind, and in particular it is \emph{conforming}: 
whenever two neighboring simplexes meet, their intersection is a common face of both. 
It also satisfies a Whitney--type property: the diameter of each simplex is comparable to its distance from the boundary of~$U$, and so the simplexes become arbitrarily small near the boundary.

\begin{lemma}[Whitney--graded conforming simplicial mesh]
\label{l.whitney.graded.mesh}
Let~$U \subseteq \mathbb{R}^d$ be a bounded open set. 
There exists a constant~$C(d) > 0$, depending only on~$d$, 
and a countable family~$\mathcal{P}$ of closed~$d$--simplexes satisfying the following:

\begin{itemize}
\item \emph{Partitioning of $U$:} 
\begin{equation}
U = \bigcup_{\triangle \in \mathcal{P}} \triangle
\qquad \mbox{and} \qquad 
\intr(\triangle) \cap \intr (\triangle') =\emptyset\,, 
\quad \forall \triangle,\triangle' \in \mathcal{P}, \ \triangle\neq \triangle'
\,.
\label{e.partitioning}
\end{equation}

\item \emph{Conformity of the mesh:} for every~$\triangle,\triangle' \in \mathcal{P}$, the intersection 
$\triangle\cap \triangle'$ is either empty or a common subsimplex of both. 

\item \emph{Whitney--type grading:} 
for every $\triangle \in \mathcal{P}$,
\begin{equation}
C^{-1} \dist(\triangle,\partial U) 
\leq \diam(\triangle) 
\leq 
C \dist(\triangle,\partial U)
\,.
\label{e.Whitney.grading}
\end{equation}

\item \emph{Shape regularity:} 
for every~$\triangle \in \mathcal{P}$,
\begin{equation}
\diam(\triangle) 
\leq 
C  \rho(\triangle)
\,,
\label{e.shape.regularity}
\end{equation}
where $\rho(\triangle) \coloneqq \sup\{ r >0 \,:\, \exists x \in \triangle, \ B_r(x) \subseteq \triangle \}$ denotes the inradius of $\triangle$.

\item \emph{Balanced adjacency:} 
for every~$\triangle,\triangle'\in\mathcal{P}$ such that~$\triangle \cap \triangle' \neq \emptyset$, 
\begin{equation}
\frac13  \leq \frac{\diam(\triangle)}{\diam(\triangle')} \leq 3
\,.
\label{e.balanced.adjacency}
\end{equation}
\end{itemize}
\end{lemma}
\begin{proof}
Here, unlike in most of the rest of the paper, we dyadic cubes rather than triadic ones. We therefore denote~$Q_n \coloneqq [ -2^{n-1} , 2^{n-1} ]^d$ for each~$n\in\Z$. Note that~$Q_n$ is closed. 

\smallskip

\emph{Step 1.} Whitney decomposition into dyadic cubes.
There exists a countable family~$\mathcal{Q}$ of closed dyadic cubes of the form~$z+ Q_n \subseteq U$ with~$n\in\Z$, such that
\begin{equation*}
\intr(Q) \cap \intr(Q') = \emptyset\,, 
\quad 
\forall
Q,Q'\in \mathcal{Q}\,, \ Q \neq Q' \,,
\end{equation*}
\begin{equation*}
Q \cap Q' \neq \emptyset 
\quad \implies \quad 
\frac12 \diam (Q) \leq \diam(Q') \leq 2 \diam(Q)\,,
\end{equation*}
and 
\begin{equation}
C^{-1} 
\diam(Q)
\leq 
\dist(Q,\partial U)
\leq C \diam(Q)
\,.
\end{equation}
This is the classical Whitney covering. 

\smallskip

\emph{Step 2.} 
We obtain a simplicial partition~$\mathcal{S}$ by subdividing each cube in~$\mathcal{Q}$ into~$d!$ simplexes, according to the usual ``coordinate-sorting'' rule. 
Explicitly, for $z+Q_n\in\mathcal Q$ we set
\begin{equation*}
z+ \triangle_n^\pi 
\coloneqq  z + 2^n \Bigl\{ (x_1,\ldots,x_d) \in\Rd \,:\, -\tfrac12 \leq x_{\pi(1)} \leq x_{\pi(2)} \leq  \cdots \leq x_{\pi(d)} \leq\tfrac12  \Bigr\},
\end{equation*}
where~$\pi \in S_d$ and~$S_d$ denotes the set of permutations of~$\{ 1,\ldots , d\}$. We call these the \emph{dyadic simplexes}. Thus
\begin{equation*}
\mathcal{S} \coloneqq 
\bigl\{ 
z+ \triangle_n^\pi 
\,:\,
z+Q_n \in \mathcal{Q}, \ \pi \in S_d \bigr\}.
\end{equation*}
Each simplex in~$\mathcal{S}$ is shape-regular with constants depending only on~$d$. 
Moreover, if two simplexes of~$\mathcal{S}$ intersect, then either their diameters are equal, or else one has twice the size of the other. 
In the equal-size case, their intersection is a common subsimplex and their induced triangulations agree. 
The only possible nonconformity therefore arises when a simplex of diameter~$2^n$ meets neighbors of diameter~$2^{n-1}$: in this situation the triangulations do not coincide on their common boundary.

\smallskip

To remedy this, we perform local refinements of coarse simplexes of diameter~$2^n$ so that, on every face shared with finer neighbors of diameter~$2^{n-1}$, the induced triangulations match exactly. 
These refinements are carried out in such a way that the induced triangulation on any subsimplex which already agrees with its neighbors is left unchanged. 

\smallskip

\emph{Step 3.} We introduce the local geometric operation that we use to refine each coarse simplex. 
Let~$\triangle=\operatorname{conv}\{v_0,\dots,v_d\}$ be a~$d$--simplex and~$G$ be a subsimplex of~$\triangle$ of dimension~$k \in \{1,\ldots,d-1\}$, that is, $G=\operatorname{conv}\{v_{i_0},\dots,v_{i_k}\}$ for some subset~$\{ i_0,\ldots,i_k \} \subseteq 
\{ 0,\ldots, d\}$ of the indices. Enumerate the indices of~$\triangle$ which do not belong to~$G$ by
\begin{equation*}
\{ j_1,\ldots,j_{d-k}\}
= 
\{ 0,\ldots, d\} \setminus \{ i_0,\ldots,i_k \}
\,.
\end{equation*}
Suppose $x$ lies in the relative interior of $G$. 
The \emph{stellar subdivision of $\triangle$ at $(G,x)$} is obtained by dividing~$\triangle$ into the following set of~$k+1$ $d$--simplexes: 
\begin{equation*}
\operatorname{St}_{G,x}(\triangle)
\coloneqq
\bigl\{ 
\operatorname{conv}
\bigl( 
v_{j_1},\dots,v_{j_{d-k}}, 
x, v_{i_0},\dots, v_{i_{m-1}},v_{i_{m+1}},\ldots,  v_{i_k}
\bigr) 
\,:\, \quad m \in \{ 0,\ldots,k\}
\bigr\}
\,.
\end{equation*}
In other words, replace one of the vertices of~$G$ by~$x$, and repeat for each vertex of~$G$. Denote the set of~$k+1$ simplexes obtained in this way by~$\operatorname{St}_{G,x}(\triangle)$. 

\smallskip

\emph{Step 4.} 
We describe the algorithm for refining~$\mathcal{S}$ to ensure conformity. 

\smallskip

We call a $(d{-}1)$--dimensional subsimplex $F$ a \emph{coarse-fine face} if $F$ is a face of some $\triangle\in\mathcal S$ and, on the other side, abuts simplexes in $\mathcal S$ of half the diameter of~$\triangle$. 

\smallskip

On such an~$F$, we denote by~$\mathcal{V}(F)$ be the set of vertices of the fine triangulation of $F$ that do not belong to the coarse triangulation of $F$ (The ``hanging nodes'' as seen from the coarse side). 
Order the points of $\mathcal{V}(F) = \{ x_1,\ldots,x_N\}$ in order of increasing skeleton dimension: 
first those lying in the relative interior of edges of the coarse triangulation of $F$, then those lying in the interior of two dimensional subsimplexes, and so forth, ending with (the unique) point in the interior of $F$ itself. 
Within subsimplexes of the same dimension, any fixed ordering may be used.

For each point~$x_j\in \mathcal{V}(F)$, we let~$H\subseteq F$ be the unique subsimplex of the current coarse triangulation of~$F$ whose relative interior contains~$x_j$.
We apply the stellar subdivision at~$(H,x_j)$ simultaneously to all coarse~$d$--simplexes in the current partition that contain~$H$. Note that the finer neighboring simplexes already contain~$x_j$ as a vertex and are left unchanged.
This procedure modifies the current partition.  
We perform this procedure~$N$ times, first for~$x_1$, then for~$x_2$, and so on, until~$x_N$. The resulting induced partition of~$\triangle$ has the property that the induced triangulation of the face~$F$ coincides exactly with the original fine side triangulation.

\smallskip

We repeat this operation for all of the (countably many) coarse-fine interfaces in~$\mathcal{S}$. This eliminates all hanging nodes and results in a conforming partition which we denote by~$\mathcal{P}$. The refinements to~$\mathcal{S}$ are purely local and, because our construction operates from increasing order of skeletal dimension, the resulting partition obtained is independent of the order in which the coarse-fine faces are processed. 
The refinements act only along coarse-fine faces, leave faces without hanging nodes unchanged and enforce exact agreement with the fine-side triangulations on each processed face.

If~$\triangle\in \mathcal{S}$ is a coarse simplex, 
then at every stage of the refinement, each descendant~$K$ of~$\triangle$ contains a~$(d{-}1)$--simplex~$S$ of half the size of~$\triangle$, which is a common face of a~$d$-simplex~$\triangle'\in \mathcal{S}$ and~$\triangle''\in \mathcal{P}$ with~$\triangle'' \subseteq \triangle'$. Then
\begin{equation*}
\diam(K) 
\geq 
\diam(S) = 2^{n-1}\sqrt{d-1}
\geq 
\frac12 \cdot 2^{n-1} \sqrt{d} 
=
\frac12 \diam(\triangle')
\geq \frac12 \diam(\triangle'')
\end{equation*}
and 
\begin{equation*}
\diam(K) \leq \diam(\triangle) = 2^n \sqrt{d}
\leq 
3 \cdot 
2^{n-1} \sqrt{d-1} 
= 3 \diam(S) \leq
3 \diam(\triangle'') 
\,.
\end{equation*}
This confirms the balanced adjacency~\eqref{e.balanced.adjacency} condition. The Whitney grading condition~\eqref{e.Whitney.grading} is also inherited from~$\mathcal{S}$. The partitioning property~\eqref{e.partitioning} is likewise clear. 
Shape regularity follows from the fact that each element of~$\mathcal{P}$ is either one of the dyadic simplexes, or else contains a~$d$-simplex and is contained in another dyadic~$d$-simplex twice as large. 
\end{proof}

In the next lemma, we approximate an arbitrary gradient field in~$L^2_{\pot}(U)$ by a gradient field which is piecewise constant on the elements of~$\mathcal{P}$. 

\begin{lemma}[Potential projector]
\label{l.Whitney.potential.projector}
Let $s\in(0,1]$, $U\subseteq\mathbb R^d$ be a bounded Lipschitz domain, and $\mathcal P$ be the simplicial mesh of Lemma~\ref{l.whitney.graded.mesh}.
Let~$\omega(\triangle)$ denote the union of all~$\triangle'\in\mathcal{P}$ with~$\triangle\cap \triangle'\neq \emptyset$.
There exists a constant~$C(d)<\infty$ and a linear projection
\begin{equation}
S: L^2_{\pot}(U) \to \ L^2_{\pot}(U)
\end{equation}
such that, for every $\mathbf h\in L^2_{\pot}(U)$,
\begin{equation}
S\mathbf h \in \mathbf h + L^2_{\pot,0}(U)\,,
\label{e.S.projector.properties}
\end{equation}
and, for every~$\triangle\in \mathcal{P}$, 
\begin{equation}
\| S \h - (\h)_{\triangle} \|_{\underline{L}^2(\triangle)} 
\leq 
C 
\| \h - (\h)_{\triangle} \|_{\underline{L}^2(\omega(\triangle))}
\qand
(S\mathbf h)\big|_{\triangle}\ \text{is constant.}
\,.
\label{e.h.prime.size.estimate} 
\end{equation}
\end{lemma}
\begin{proof}
The argument is based on the classical Scott--Zhang interpolation scheme~\cite{ScottZhang} from finite element theory, 
which constructs local quasi-interpolants by averaging over neighboring elements.

\smallskip

\emph{Step 1.}
Let $\mathcal V$ denote the set of all vertices of the mesh $\mathcal P$.
For each vertex $a\in\mathcal V$, let $\varphi_a$ be the unique continuous function on $U$ which is affine on each simplex $\triangle\in\mathcal P$ and satisfies
\begin{equation*}
\varphi_a(a)=1, 
\qquad 
\varphi_a(b)=0 \quad \forall b\in\mathcal V \setminus \{ a \} \,.
\end{equation*}
The family $\{\varphi_a\}_{a\in\mathcal V}$ forms a partition of unity on $U$, and every continuous function that is affine on each simplex can be written uniquely as a linear combination of the $\varphi_a$.

\smallskip
 
For each vertex~$a \in \mathcal{V}$, fix once and for all an \emph{anchor element}~$\triangle_a\in\mathcal P$ that contains $a$. We may choose the simplex of maximal diameter among those containing~$a$, with lexicographical ordering used to break ties.
Fix~$a\in \mathcal{V}$. Denote the barycentric coordinates on~$\triangle_a$ by~$\{\lambda_b \}_{b\in\mathcal V\cap 
\triangle_a}$, that is, we let~$\lambda_b$ be the restriction of~$\varphi_b$ to~$\triangle_a$.  
Let $M$ be the symmetric~$(d{+}1)$-by-$(d{+}1)$ \emph{local mass matrix} with entries 
\begin{equation*}
M_{bc} = \fint_{\triangle_a}\lambda_b\lambda_c \,dx\,, \quad 
b,c \in \mathcal{V} \cap \triangle_a\,.
\end{equation*}
By the shape regularity property~\eqref{e.shape.regularity}, the matrix~$M$ is uniformly well-conditioned, that is, we have the estimate
\begin{equation}
|M| + |M^{-1}| \leq C
\,.
\label{e.potential.projector.M.cond}
\end{equation}
Define the \emph{local dual} $\psi_a \in \mathrm{span}\{\lambda_b \,:\,b\in\mathcal V\cap \triangle_a \}$ by the relations
\begin{equation*}
\fint_{\triangle_a} 
\psi_a \lambda_b \,dx
=
\indc_{\{ a = b \}} 
\,,\qquad 
b\in\mathcal V\cap \triangle_a
\,.
\end{equation*}
In other words,~$\psi_a$ is the $L^2(\triangle_a)$-dual of the local nodal basis, obtained by inverting $M$.

\smallskip

For $v\in H^1(U)$, define the node functionals and the global quasi-interpolants by
\begin{equation*}
N_a(v) 
\coloneqq
\fint_{\triangle_a} 
v\psi_a\,dx
\qquad \mbox{and} \qquad
I[v](x)
\coloneqq
\sum_{a \in \mathcal{V} }
N_a(v) \varphi_a(x)
\,.
\end{equation*}
It is clear that~$N_a(v)$ depends only on $v$ on~$\triangle_a$, and that~$I[\varphi_a]$ is supported on the vertex star of~$a$ (the union of simplexes containing~$a$).
By construction, if the restriction of~$v\in H^1(U)$ to each element of~$\triangle$ is affine, then~$Iv = v$. By mapping to a reference simplex and using~\eqref{e.potential.projector.M.cond}, we find that 
\begin{equation}
|N_a(v)| 
\leq 
C \| v\|_{\underline{L}^2(\triangle_a)} 
\qquad \mbox{and} \qquad 
\|\nabla I[v] \|_{\underline{L}^2(\triangle)}
\leq 
C
\| v - (v)_{\omega(\triangle)} \|_{\underline{H}^1(\omega(\triangle))}
\,.
\label{e.quasi.interpolant.bounds}
\end{equation}
We now define the operator~$S$ by
\begin{equation}
S\mathbf h \ \coloneqq\ \nabla I[u]
\,.
\end{equation}
It is clear that~$\h \mapsto S\h$ is linear,~$S\h \in L^2_{\pot}(U)$, and~$S\h$ is constant on each simplex of $\mathcal P$.
Moreover, by~\eqref{e.quasi.interpolant.bounds} and the Whitney grading of the mesh~\eqref{e.Whitney.grading}, we have, for every~$p\in\Rd$ and~$\triangle\in \mathcal{P}$, 
\begin{equation*}
\| S \h - p \|_{\underline{L}^2(\triangle)} 
\leq 
C 
\| \h - p \|_{\underline{L}^2(\omega(\triangle))}
\,.
\end{equation*}
and thus 
\begin{equation*}
\| S \h \|_{\underline{L}^2(U)} 
\leq 
C 
\| \h \|_{\underline{L}^2(U)}
\,.
\end{equation*}
It remains to show that~$S\h - \h$ belongs to~$H^1_0(U)$. In view of the previous display, it suffices to check this in the case that~$\h = \nabla u$ with~$u\in W^{2,\infty}(U)$.
In this case, 
let~$\hat{u}$ be the piecewise affine interpolant with respect to~$\mathcal{P}$, defined by~$\hat{u} = u$ at all vertices of~$\mathcal{P}$. We then have~$I[\hat{u}] = \hat{u}$ and it follows by~\eqref{e.quasi.interpolant.bounds} that, for~$U_\ep := \{ x \in U \, : \, \dist(x ,\partial U) > \ep \}$,
\begin{equation*}
\| I[u] - u \|_{L^{\infty}(U \setminus U_\ep )}
=
\| I[u-\hat{u}] - (u - \hat{u}) \|_{L^{\infty}(U \setminus U_\ep )}
\leq 
C \ep^2 
\| \nabla^2 u \|_{L^\infty(U)}   
\,.
\end{equation*}
This implies that~$I[u] - u$ vanishes on~$\partial U$ in the~$H^1$-trace sense, hence~$I[u]-u\in H^1_0(U)$, 
and thus~$S[\nabla u] - \nabla u = \nabla(I[u]-u)$ belongs to~$L^2_{\pot,0}(U)$. 
\end{proof}

In the next lemma, we give the divergence-free analogue of the previous lemma, by approximating, for any~$s\in (0,\nf12)$, an arbitrary divergence-free field in~$L^2_{\sol}(U) \cap H^s(U)$ by another divergence-free field in~$L^2_{\sol}(U) \cap H^s(U)$ which is piecewise constant on the elements of~$\mathcal{P}$. We extra bit of regularity represented by~$s$ is needed because the construction is based on looking at the flux of the field through the faces of each simplex, and for this we need the availability of certain trace theorems. 

\begin{lemma}[Solenoidal projector]
\label{l.Whitney.solenoidal.projectior}
Let~$s\in (0,\nf12)$,~$U\subseteq\mathbb R^d$ be a bounded Lipschitz domain and~$\mathcal P$ be the simplicial mesh of Lemma~\ref{l.whitney.graded.mesh}.
There exists a constant~$C(s,d)<\infty$ and a linear projection 
\begin{equation}
T: 
L^2_{\mathrm{sol}}(U) \cap H^s(U;\Rd)
\to
L^2_{\mathrm{sol}}(U) \cap H^s(U;\Rd)
\end{equation}
satisfying, for every~$\g\in L^2_{\mathrm{sol}}(U)\cap H^s(U;\Rd)$, 
\begin{equation}
T\g - \g \in L^2_{\mathrm{sol},0}(U)
\label{e.projector.boundary.data}
\end{equation}
and, for every~$\triangle \in \mathcal{P}$, 
\begin{equation}
\| T\g - \g \|_{\underline{H}^s(\triangle)}
\leq 
C \| \g - (\g)_{\triangle} 
\|_{\underline{H}^s(\triangle)}
\qquad \mbox{and} \qquad 
T\g \vert_{\triangle} \quad \mbox{is constant.}
\label{e.g.prime.size.estimate}
\end{equation}
\end{lemma}
\begin{proof}
The argument is based on Raviart--Thomas interpolation from the theory of finite element methods (see for instance~\cite{Brezzi}). 
We define~$T\g$ on each simplex~$\triangle \in \mathcal{P}$ by looking at the flux of~$\g$ through each face of the simplex. We then check that we can glue these together to obtain a global divergence-free field with the correct boundary data. 

\smallskip

We will use the following classical trace theorem: if $V\subseteq \Rd$ is a bounded Lipschitz domain with outward normal~$\mathbf{n}$ and~$p\in (1,\infty)$, then any field~$\h$ belonging to the space
\begin{equation*}
W^p(\mathrm{div};V)
\coloneqq
\bigl\{ \f \in L^p(V;\Rd)\,:\, \nabla \cdot \f \in L^p(V) \bigr\} \,, \quad 
\| \f \|_{W^p(\mathrm{div};V)} 
\coloneqq 
\bigl( 
\| \f \|_{L^p(V)}^p + \| \nabla \cdot \f \|_{L^p(V)}^p
\bigr)^{\nf1p} \,,
\end{equation*}
has a well-defined normal trace $\mathbf{h}\cdot \mathbf{n}\in W^{-\nf1p,p}(\partial V)$, and there exists~$C(p,V,d)<\infty$ such that
\begin{equation}
\| \mathbf{h}\cdot \mathbf{n} \|_{W^{-\nf1p,p}(\partial V)} 
\leq 
C 
\| \mathbf{h} \|_{W^p(\mathrm{div};V)} 
\,.
\label{e.normal.trace.thm}
\end{equation}
In particular, this holds for divergence-free fields $\mathbf{h}\in L^p_{\mathrm{sol}}(V)$. Observe that, by the fractional Sobolev embedding, any~$\g \in L^2_{\sol}(V) \cap H^s(V;\Rd)$ belongs to~$L^p_{\sol}(V)$ for~$p=\frac{2d}{d-2s} > 2$.

\smallskip

Let~$\g \in L^2_{\sol}(U) \cap H^s(U;\Rd)$. Fix~$\triangle\in\mathcal P$ and denote the~$(d{+}1)$ faces of~$\triangle$ by~$F_0,\ldots,F_d$ and the corresponding outward unit normals by~$\mathbf{n}_0,\ldots,\mathbf{n}_d$. 
\begin{equation}
b_i \coloneqq \int_{F_i} \g \cdot \mathbf n_i \, dS.
\end{equation}
This is well-defined, since the indicator functions of the faces belong to~$H^{s'}(\partial \triangle)$ for every~$s'\in (0,\nf12)$, and thus in particular~$W^{\nf1p,p'}(\partial \triangle)$ for~$p=\frac{2d}{d-2s}$ and we have the estimate
\begin{equation}
\diam(\triangle)^{\nf1p}  
\| \indc_{F_i} \|_{\underline{W}^{\nf1p,p'}(\partial \triangle)}
\leq C 
\,.
\label{e.indicator.of.faces}
\end{equation}
Let~$M$ be the~$d\times d$ matrix with rows equal to~$\mathbf{n}_1,\ldots,\mathbf{n}_d$. Note that we have omitted~$\mathbf{n}_0$ from the definition of~$M$. 
By the shape regularity of the simplicial partition~$\mathcal{P}$, the matrix~$M$ is invertible and we have 
\begin{equation}
\max \bigl\{ |M| , |M^{-1}| \bigr\} 
\leq 
C\,.
\label{e.M.mat.bound}
\end{equation}
Set~$\mathbf{d}\coloneqq ( b_{1}|F_{1}|^{-1} , \ldots, b_{d}|F_{d}|^{-1}) \in\Rd$ and observe that, by~\eqref{e.normal.trace.thm}, the fractional Sobolev inequality (as explained above) and~\eqref{e.indicator.of.faces}, we have, for~$p=\frac{2d}{d-2s}$, 
\begin{equation}
| \mathbf{d} | 
\leq 
C \| \g \cdot \mathbf{n} \|_{\underline{W}^{-\nf1p,p}(\partial \triangle)} 
\sum_{i=1}^d
\| \indc_{F_i} \|_{\underline{W}^{\nf1p,p'}(\partial \triangle)} 
\leq 
C \diam(\triangle)^s
\| \g \|_{\underline{H}^s(\triangle)}
\,.
\label{e.bound.dee}
\end{equation}
Define $(T\mathbf g)|_{\triangle}\coloneqq M^{-1} \mathbf{d}$. This construction is evidently linear in~$\g$, commutes with the addition of constants and satisfies, for each face $F_i$ with~$i\in \{1,\ldots,d\}$, 
\begin{equation*}
\int_{F_i}(T\mathbf g)\cdot \mathbf n_i
= b_i
= \int_{F_i}\mathbf g\cdot \mathbf n_i
\,.
\end{equation*}
To check that this relation is also satisfied for the index~$i=0$, we observe that the divergence-free condition gives, by Green's formula, 
\begin{equation*}
\sum_{i=0}^d 
\int_{F_i}\mathbf g\cdot \mathbf n_i
=
0
\qquad\mbox{and} \qquad
\sum_{i=0}^d \int_{F_i}(T\mathbf g)\vert_\triangle \cdot \mathbf n_i
=
0
\end{equation*}
and therefore
\begin{equation*}
\int_{F_0}(T\mathbf g)\cdot \mathbf{n}_0 = 
- \sum_{i=1}^d 
\int_{F_i}(T\mathbf g)\cdot \mathbf n_i
=  
- \sum_{i=1}^d
\int_{F_i}\mathbf g\cdot \mathbf n_i 
=
\int_{F_0}\mathbf g\cdot \mathbf{n}_0 \,. 
\end{equation*}
By~\eqref{e.M.mat.bound} and~\eqref{e.bound.dee}, we have
\begin{equation}
|T\g\vert_{\triangle} | 
\leq 
|M^{-1}| |\mathbf{d}| 
\leq 
C \diam(\triangle)^s
\| \g \|_{\underline{H}^s(\triangle)}
\,.
\label{e.c.estimate}
\end{equation}
If two simplexes share a face $F$ with unit normal $\mathbf n$ (with opposite orientations on the two sides), then the previous identities give
\begin{equation}
|F| ((T\mathbf g)|_{\triangle^+} - (T\mathbf g)|_{\triangle^-})
\cdot 
\mathbf n = 0
\,,
\end{equation}
so by the conformity of the partition~$\mathcal{P}$ we have that the normal component of~$T\mathbf g$ is continuous across every interior face. This implies that~$T\mathbf g\in L^2_{\mathrm{sol}}(U)$. Using that~$\g\mapsto T\g\vert_{\triangle}$ commutes with addition constant vectors, we obtain that
\begin{equation}
\| T\g - \g \|_{\underline{L}^2(\triangle)}
=
\| T( \g - (\g)_{\triangle})  - ( \g - (\g)_{\triangle})  \|_{\underline{L}^2(\triangle)}
\leq 
C \diam(\triangle)^s
[ \g ]_{\underline{H}^s(\triangle)}
\,.
\label{e.c.estimate.Hs.two}
\end{equation}
This implies the estimate in~\eqref{e.g.prime.size.estimate} since~$T\g$ is constant on each~$\triangle \in \mathcal{P}$. 
Moreover, we have the estimate
\begin{equation}
\| T \g \|_{\underline{H}^s(U)}^2 
\leq 
\frac{C}{s \wedge (1-2s)} \| \g \|_{\underline{H}^s(U)}^2 
\,.
\label{e.T.bounded.in.Hs}
\end{equation}
We will delay the demonstration of~\eqref{e.T.bounded.in.Hs} this until the end of the proof. 
It is clear that~$T$ is the identity operator on the closed subspace 
\begin{equation*}
G\coloneqq 
\bigl\{ \g \in L^2_{\mathrm{sol}}(U)\cap H^s(U;\Rd)
\,:\, \forall \triangle\in\mathcal{P}\,, \ \g\vert_{\triangle} \ \mbox{is constant} \, \bigr\} 
\,.
\end{equation*}
We have therefore proved that~$T$ a projection operator from~$L^2_{\mathrm{sol}}(U)\cap H^s(U;\Rd)$ onto~$G$. 

\smallskip

We next check the validity of the boundary condition~\eqref{e.projector.boundary.data}. Since~$U$ is a Lipschitz domain, we have that~$L^2_{\sol}(U) \cap W^{1,\infty}(U;\Rd)$ is dense in~$L^2_{\sol}(U) \cap H^s(U;\Rd)$. We therefore only need to prove~\eqref{e.projector.boundary.data} in the case that~$\g \in L^2_{\sol}(U) \cap W^{1,\infty}(U;\Rd)$. 
For such~$\g$, we obtain from~\eqref{e.c.estimate.Hs.two} and the fact that~$(T\g)$ is constant on each~$\triangle\in\mathcal{P}$ that 
\begin{align*}
\| T\g - \g \|_{L^\infty(\triangle)} 
&
\leq 
\| \g - (\g)_{\triangle} \|_{L^\infty(\triangle)} 
+
\| T\g - (\g)_{\triangle} \|_{L^\infty(\triangle)} 
\notag \\ & 
=
\| \g - (\g)_{\triangle} \|_{L^\infty(\triangle)} 
+
\| T\g - (\g)_{\triangle} \|_{\underline{L}^2(\triangle)} 
\notag \\ & 
\leq 
C\| \g - (\g)_{\triangle} \|_{L^\infty(\triangle)} 
\leq 
C\| \nabla \g \|_{L^\infty(U)} 
\diam(\triangle)\,.
\end{align*}
We deduce from the Whitney grading that, if we denote~$U_\ep\coloneqq \{ x\in U\,:\, \dist(x,\partial U) > \ep\}$, then 
\begin{equation*}
\| T\g - \g \|_{L^\infty(U \setminus U_\ep)} 
\leq 
C\ep \| \nabla \g \|_{L^\infty(U)} \,.
\end{equation*}
Select~$\eta_\ep \in C^{\infty}_c(U)$ with~$\indc_{U_\ep} \leq \eta_\ep \leq \indc_{U}$ and~$\|\nabla \eta_\ep\|_{L^\infty(U)} \leq C\ep^{-1}$. 
Denote 
\begin{equation*}
\f_\ep \coloneqq 
\eta_\ep (T\g - \g) 
\end{equation*}
and observe that~$h_\ep\coloneqq \nabla \cdot \f_\ep = \nabla \eta_\ep (T\g-\g)\in L^2(U)$ and~$(h_\ep)_U$. Let~$w_\ep$ be the solution of the Neumann problem 
\begin{equation*}
\left\{
\begin{aligned}
& -\Delta w_\ep = h_\ep & \mbox{in} & \ U \,, \\
& \mathbf{n} \cdot \nabla w_\ep = 0  & \mbox{on} & \ \partial U  \,. 
\end{aligned}
\right.
\end{equation*}
We have that 
\begin{equation*}
\| \nabla w_\ep \|_{{L}^2(U )}
\leq 
C \| h_\ep \|_{{L}^2(U)} 
\leq 
C
\| \nabla \eta_{\ep} \|_{L^2(U)} 
\| T\g - \g\|_{L^\infty(U\setminus U_\ep )} 
\leq 
C \ep^{\nf12} 
\| \nabla \g \|_{L^\infty(U)} 
\,.
\end{equation*}
Set~$\mathbf{k}_\ep \coloneqq \f_\ep + \nabla w_\ep \indc_{U\setminus U_\ep}$ and observe that~$\k_\ep$ belongs to~$L^2_{\sol,0}(U)$, and
\begin{equation*}
\| T\g-\g - \mathbf{k}_\ep \|_{\underline{L}^2(U)} 
\leq 
\| (T\g-\g )(1-\eta_\ep) \|_{\underline{L}^2(U)} 
+
\| \nabla w_\ep \indc_{U\setminus U_\ep}  \|_{\underline{L}^2(U )} 
\leq 
C \ep^{\nf12} 
\| \nabla \g \|_{L^\infty(U)} 
\,.
\end{equation*}

\smallskip

It remains to prove the claimed estimate~\eqref{e.T.bounded.in.Hs}.
Using the shape regularity~\eqref{e.shape.regularity} and the balanced adjacency~\eqref{e.balanced.adjacency} properties of~$\mathcal{P}$, we have that
\begin{align*} 
[ T \g ]_{H^s(U)}^2
&
= 
\sum_{\triangle, \triangle' \in \mathcal{P}} 
\int_{\triangle}\int_{\triangle'}
\frac{\bigl|| (T\g)\vert_{\triangle}\indc_{\triangle}(x) -(T\g)\vert_{\triangle'}\indc_{\triangle'}(y) \bigr||^2 }{|x-y|^{d+2s}} \, dx \, dy
\notag \\ &
\leq
\sum_{\triangle, \triangle' \in \mathcal{P}}
\bigl| (T\g)\vert_{\triangle}-(T\g)\vert_{\triangle'} \bigr|^2
\biggl( 
\frac{C}{1-2s} \frac{| \triangle| \indc_{\triangle \sim \triangle'}}{\diam(\triangle)^{2s}} 
+
\frac{| \triangle| |\triangle'|}{\dist(\triangle,\triangle')^{d+2s}} \indc_{\triangle \not\sim \triangle'} 
\biggr)
\notag \\ &
\leq
C \sum_{\triangle \in \mathcal{P}}
\frac{| \triangle| \bigl| (T\g)\vert_{\triangle}-(\g)_{\triangle} \bigr|^2}{\diam(\triangle)^{2s}}
\biggl( 
\frac{1}{1-2s} 
+
\sum_{\triangle' \in \mathcal{P},\triangle \not\sim \triangle'}
\frac{|\triangle'|\diam(\triangle)^{2s}}{\dist(\triangle,\triangle')^{d+2s}}
\biggr)
\notag \\ & \qquad  
+
C \sum_{\triangle, \triangle' \in \mathcal{P}}
| \triangle| \bigl| (\g)_{\triangle}-(\g)_{\triangle'} \bigr|^2
\biggl( 
\frac{1}{1-2s} \frac{\indc_{\triangle \sim \triangle'}}{\diam(\triangle)^{2s}} 
+
\frac{|\triangle'|\indc_{\triangle \not\sim \triangle'} }{\dist(\triangle,\triangle')^{d+2s}} 
\biggr)
 \,.
\end{align*}
Here, we denote by~$\triangle' \sim \triangle$ if~$\dist(\triangle , \triangle') = 0$. By balanced adjacency~\eqref{e.balanced.adjacency}, we have that 
\begin{equation*}
\sup_{x \in \triangle, y \in \triangle' }(\diam(\triangle) + \diam(\triangle') + |x-y|) \leq C \dist(\triangle,\triangle')
\qquad 
\mbox{if} \ 
\triangle' \not\sim \triangle
\,,
\end{equation*}
and hence
\begin{align*} 
\sum_{\triangle' \in \mathcal{P},\triangle \not\sim \triangle'}
\frac{|\triangle'|\diam(\triangle)^{2s}}{\dist(\triangle,\triangle')^{d+2s}}
&
\leq
C 
\sup_{x \in U}
\sup_{t>0} 
\int_{U} \frac{t^{2s}}{(t + |x-y|)^{d+2s}} \, dy
\leq
C 
\sup_{t>0} 
\int_{t}^\infty \frac{t^{2s}}{r^{1+2s}} \,dr 
\leq
C s^{-1}
\,.
\end{align*}
Using the estimatre~\eqref{e.c.estimate.Hs.two}, we therefore obtain
\begin{equation*} 
\sum_{\triangle \in \mathcal{P}}
\frac{| \triangle| \bigl| (T\g)\vert_{\triangle}-(\g)_{\triangle} \bigr|^2}{\diam(\triangle)^{2s}}
\biggl( 
\frac{1}{1-2s} 
+
\sum_{\triangle' \in \mathcal{P},\triangle \not\sim \triangle'}
\frac{|\triangle'|\diam(\triangle)^{2s}}{\dist(\triangle,\triangle')^{d+2s}}
\biggr)
\leq 
Cs^{-1}(1-2s)^{-1}
[ \g ]_{\underline{H}^s(\triangle)}
 \,.
\end{equation*}
By the fractional Hardy inequality (Proposition~\ref{p.fractional.hardy}), we deduce that
\begin{equation*} 
\sum_{\triangle, \triangle' \in \mathcal{P}}
 \bigl| (\g)_{\triangle}-(\g)_{\triangle'} \bigr|^2
\biggl( 
\frac{| \triangle| \indc_{\triangle \sim \triangle'}}{\diam(\triangle)^{2s}} 
+
\frac{| \triangle| |\triangle'|}{\dist(\triangle,\triangle')^{d+2s}} \indc_{\triangle \not\sim \triangle'} 
\biggr) 
\leq
Cs^{-1}(1-2s)^{-1}[\g]_{H^{s}(U)}^2
\,.
\end{equation*}
The mean~$(T\g)_U$ of~$T\g$ can be bounded using~\eqref{e.c.estimate.Hs.two}. The estimate~\eqref{e.T.bounded.in.Hs} is now a consequence of the above displays. The proof is complete.  
\end{proof}

In the next lemma, we show that~$\mathcal{E}_{s,q}$ is closely related to the ratio of the coarse-grained ellipticity constants and in particular we can bound the ellipticity constants~$\lambda_{s,q}^{-1}(\cu_0;\a)$ and~$\Lambda_{s,q}(\cu_0;\a)$ from above by~$C(1+\mathcal{E}_{s,q}(\cu_0,\a,\s_1)^2)$. 

\begin{lemma}
\label{l.mathcal.E.to.Lambdas}
For every~$q \in [1,\infty]$,~$s\in (0,\infty)$, coefficient field~$\a \in \Omega(\cu_0)$ and positive symmetric matrix~$\s_1 \in \R^{d\times d}_{\sym,+}$,
\begin{equation}
\label{e.bound.Es.by.Lambdas}
\frac12 \mathcal{E}_{s,q} 
(\cu_0;\a,\s_1)^2 
+
\indc_{\{ q=2\}}
\leq 
\max\bigl\{ 
|\s_1^{-1}| \Lambda_{s,q}(\cu_0;\a) 
\,,
|\s_1| \lambda_{s,q}^{-1} (\cu_0;\a) 
\bigr\}
\end{equation}
and
\begin{equation}
\label{e.bound.Lambdas.by.Es}
\max\bigl\{ 
|\s_1|^{-1}
\Lambda_{s,q}(\cu_0;\a) 
\,,
|\s_1^{-1}|^{-1} \lambda_{s,q}^{-1} (\cu_0;\a) 
\bigr\}
\leq 
1+
2 \mathcal{E}_{s,q} (\cu_0;\a,\s_1)^2
+
2\mathcal{E}_{s,q}(\cu_0;\a,\s_1)
\,.
\end{equation}
\end{lemma}
\begin{proof}
The statement in the case~$q = \infty$ follows by sending~$q \to \infty$ in~\eqref{e.bound.Lambdas.by.Es}, so we assume~$q \in [1,\infty)$. 
By~\eqref{e.Jsplitting}, for every~$e \in \R^{2d}$ with~$|e|=1$,~$\bfA_1 \in \R^{2d \times 2d}_{\sym,+}$ and Lipschitz domain~$U \subseteq \cu_0$, 
\begin{equation*}
\bfJ(U,\bfA_1^{-\nf12}e,\bfA_1^{\nf12} e ; \a)
=
\frac12 e \cdot \bfA_1^{-\nf12} \bfA(U;\a) \bfA_1^{-\nf12} e 
+
\frac12 e \cdot \bfA_1^{\nf12} \bfA_*^{-1} (U;\a) \bfA_1^{\nf12} e 
- 1
\,.
\end{equation*}
In the case that~$\bfA_1$ has the form of~\eqref{e.form.of.A.naught} with~$\k_1=0$, we obtain 
\begin{align}
\frac12
\bigl| 
\s_1^{-\nf12} \b(U;\a) \s_1^{-\nf12}
+
\s_1^{\nf12} \s_*^{-1} (U;\a)\s_1^{\nf12}
-2 \Id
\bigr|
&
\leq
\max_{|e|=1} \bfJ(U,\bfA_1^{-\nf12}e,\bfA_1^{\nf12} e ; \a)
\notag \\ & 
\leq 
\bigl| 
\s_1^{-\nf12} \b(U;\a) \s_1^{-\nf12} 
+
\s_1^{\nf12} \s_*^{-1} (U;\a)\s_1^{\nf12}
-2 \Id
\bigr|
\,.
\label{e.J.by.f}
\end{align}
Using also that~$\s_1^{-\nf12} \b(U;\a)\s_1^{-\nf12}  
+
\s_1^{\nf12}  \s_*^{-1} (U;\a)\s_1^{\nf12} 
-2 \Id \geq 0$, which implies 
\begin{align*}
\bigl| 
\s_1^{-\nf12} \b(U;\a) \s_1^{-\nf12} 
+
\s_1^{\nf12}  \s_*^{-1} (U;\a)\s_1^{\nf12} 
-2 \Id
\bigr|
&
=
\bigl| 
\s_1^{-\nf12}  \b(U;\a) \s_1^{-\nf12} 
+
\s_1^{\nf12}  \s_*^{-1} (U;\a)\s_1^{\nf12} 
\bigr|
-2
\notag \\ & 
\leq 
|\s_1^{-1} | | \b(U;\a) | +  |\s_1| |\s_*^{-1} (U;\a)| - 2 
\,, 
\end{align*}
we obtain by the triangle inequality and the definition~\eqref{e.mathcal.E.def} that
\begin{align*}
\mathcal{E}_{s,q} 
(\cu_0;\a,\s_1) ^2 
&
\leq
2 \max\bigl\{ 
|\s_1^{-1} | \Lambda_{s,q}(\cu_0;\a) 
\,,
|\s_1 | \lambda_{s,q}^{-1} (\cu_0;\a) 
\bigr\}
\,,
\end{align*}
which implies the first inequality of~\eqref{e.bound.Lambdas.by.Es}. 
In the case~$q = 2$, we can replace the left side in the above inequality by~$\mathcal{E}_{s,2} 
(\cu_0;\a,\s_1) ^2  + 2$. This implies~\eqref{e.bound.Es.by.Lambdas}. 

\smallskip

Turning to the second inequality, we use that 
\begin{align}
\label{e.xminusonetimesxminusonesquared.sstar}
\lefteqn{ 
\bigl( 
\s_1^{\nf12}  \s_*^{-1} (U;\a) \s_1^{\nf12}
- \Id 
\bigr)
\s_1^{-\nf12} \s_*(U;\a) \s_1^{-\nf12}
\bigl( 
\s_1^{\nf12}  \s_*^{-1} (U;\a) \s_1^{\nf12}
- \Id 
\bigr)
} \ \ & 
\notag \\ & 
\leq 
\s_1^{-\nf12} ( \b(U;\a) - \s_* (U;\a))\s_1^{-\nf12}
+
\bigl( 
\s_1^{\nf12}  \s_*^{-1} (U;\a) \s_1^{\nf12}
{-} \Id 
\bigr)
\s_1^{-\nf12} \s_*(U;\a) \s_1^{-\nf12}
\bigl( 
\s_1^{\nf12}  \s_*^{-1} (U;\a) \s_1^{\nf12}
{-} \Id 
\bigr)
\notag \\ & 
=
\s_1^{-\nf12} \b(U;\a) \s_1^{-\nf12}
+
\s_1^{\nf12} \s_*^{-1} (U;\a)\s_1^{\nf12}
-2 \Id
\,.
\end{align}
Similarly, 
\begin{multline}
\label{e.xminusonetimesxminusonesquared.b}
\bigl( 
\s_1^{-\nf12} \b(U;\a) \s_1^{-\nf12}
{-} \Id 
\bigr)
\s_1^{\nf12} \b^{-1}(U;\a) \s_1^{\nf12}
\bigl( 
\s_1^{-\nf12} \b(U;\a)\s_1^{-\nf12} 
{-} \Id 
\bigr)
\\
\leq 
\s_1^{-\nf12} \b(U;\a) \s_1^{-\nf12}
+
\s_1^{\nf12} \s_*^{-1} (U;\a) \s_1^{\nf12}
- 2 \Id
\,.
\end{multline}
An eigenvector of~$\s_1^{\nf12} \s_*^{-1} (U;\a)\s_1^{\nf12}$ with eigenvalue~$\mu$ is also an eigenvector of the left side of~\eqref{e.xminusonetimesxminusonesquared.sstar} with eigenvalue~$f(\mu)$, where we set~$f(x)  \coloneqq  x^{-1}(x-1)^2$. It follows that  
\begin{equation*}
f\bigl(  |\s_1^{\nf12}\s_*^{-1} (U;\a)\s_1^{\nf12} | \bigr) 
\leq
\bigl| 
\s_1^{-\nf12} \b(U;\a) \s_1^{-\nf12}
+
\s_1^{\nf12} \s_*^{-1} (U;\a)\s_1^{\nf12}
-2 \Id
\bigr| \,.
\end{equation*}
For every~$\delta,\ep>0$, we have that~$f(x) \leq \delta$ implies~$x-1 \leq \delta + \delta^{\nf12} \leq (1+\ep^{-1} )\delta + \frac14\ep$. In view of the first inequality in~\eqref{e.J.by.f}, we obtain 
\begin{equation*}
|\s_1^{\nf12}\s_*^{-1} (U;\a)\s_1^{\nf12} - \Id |   
\leq 
2 (1+\ep^{-1} ) \max_{|e|=1} \bfJ(U,\bfA_1^{-\nf12}e,\bfA_1^{\nf12} e ; \a)
+
\frac14 \ep\,.
\end{equation*}
This implies that 
\begin{equation*}
|\s_1^{-1}|^{-1} \lambda_{s,q}^{-1} (\cu_0;\a) - 1 
\leq
2 (1+\ep^{-1} ) \mathcal{E}_{s,q} 
(\cu_0;\a,\s_1)^2
+
\frac14 \ep
\,.
\end{equation*}
Optimizing in~$\ep$ yields the bound in~\eqref{e.bound.Lambdas.by.Es} for~$ \lambda_{s,q}^{-1} (\cu_0;\a)$.
The bound for~$\Lambda_{s,q}(\cu_0;\a)$ is obtained similarly, using~\eqref{e.xminusonetimesxminusonesquared.b} in place of~\eqref{e.xminusonetimesxminusonesquared.sstar}. This completes the proof of the lemma.  
\end{proof}

We are now ready to complete the proof of Proposition~\ref{p.big.black.box}. 

\begin{proof}[{Proof of Proposition~\ref{p.big.black.box}}]
The first statement of the proposition is an immediate consequence of Lemmas~\ref{l.homogenization.by.duality}  and~\ref{l.coarse.graining.operator}, as already announced. 

\smallskip

The proof of the second statement is based on Lemmas~\ref{l.Whitney.potential.projector},~\ref{l.Whitney.solenoidal.projectior} and a variational formulation of the Dirichlet problem for general non-self adjoint operators. Let~$g \in W^{1,\infty}(U) \cap H^{1+s}(U) \subseteq H^1_{\a}(U)$. 
As discussed in Section~\ref{ss.Sob}, the Lions-Lax-Milgram lemma gives the existence and uniqueness of the solution~$u\in g+H^1_{\a,0}(U)$ of the Dirichlet problem 
\begin{equation}
\left\{ 
\begin{aligned}
& -\nabla \cdot \a\nabla u = 0 & \mbox{in} & \ U \,, 
\\ 
& u = g & \mbox{on} & \ \partial U 
\,.
\end{aligned}
\right.
\label{e.bbb.Dirichlet.applied}
\end{equation}
Moreover, a variational principle states that the solution~$u$ of~\eqref{e.bbb.Dirichlet.applied} is characterized by the fact that~$(u,\a\nabla u)$ is the unique minimizer among pairs in~$(g+H^1_{\a,0}(U))\times L^2_{\s,\mathrm{sol}}(U)$ of the convex integral functional 
\begin{align*}
(w,\g) \mapsto 
\int_U 
\frac12 
\begin{pmatrix} 
\nabla w \\ \g
\end{pmatrix}
\cdot
\bfA
\begin{pmatrix} 
\nabla w \\ \g
\end{pmatrix}
\,dx
\,.
\end{align*}
A proof can be found for example in~\cite[Section 10.1]{AKMbook}, which was written in the uniformly elliptic setting but with an argument that is valid in our generality. 

\smallskip

We next extend the definitions of~$J(U,p,q\;\a)$ in~\eqref{e.J.def} and of~$\bfJ(U,P,Q)$ in~\eqref{e.bfJ} by defining, for every~$\mathbf{p},\mathbf{p}^* \in L^2_{\s,\pot}$ and~$\mathbf{q},\mathbf{q}^* \in L^2_{\s,\sol}$, 
\begin{equation}
J(U,\mathbf{p},\mathbf{q}; \a) 
\coloneqq
\sup_{u \in  \A(U;\a)}
\fint_{U} \biggl( - \frac12 \nabla u \cdot \s \nabla u - \mathbf{p} \cdot \a \nabla u + \mathbf{q} \cdot \nabla u \biggr) \,,
\label{e.J.general.def}
\end{equation}
as well as~$J^*(U,\mathbf{p},\mathbf{q}; \a) \coloneqq J(U,\mathbf{p},\mathbf{q}; \a^t)$ and
\begin{equation}
\label{e.bfJ.general}
\bfJ
\biggl(U, \begin{pmatrix} \mathbf{p}  \\ \mathbf{q} \end{pmatrix}, \begin{pmatrix} \mathbf{q}^* \\ \mathbf{p}^* \end{pmatrix} \biggr)
\coloneqq 
\frac12 
J(U,\mathbf{p}- \mathbf{p}^*,\mathbf{q}^*-\mathbf{q}; \a) 
+
\frac12 
J^*(U,\mathbf{p} + \mathbf{p}^*,\mathbf{q}+\mathbf{q}^*) 
\,.
\end{equation}
We let~$v(\cdot,U,\mathbf{p},\mathbf{q})$ denote the unique maximizer of~\eqref{e.J.general.def}, and~$v^*(\cdot,U,\mathbf{p},\mathbf{q})$ analogously for~$J^*$. 

\smallskip

Let~$\s_1 \coloneqq \frac12(\a_1 + \a_1^t)$ and~$h$ be the~$\s_1$-harmonic function in~$U$ with Dirichlet boundary data~$g$. 
Since~$s < \nf12$, by~\cite[Theorem 0.5(ii)]{JeKe} we have
\begin{equation}
\| \nabla h\|_{H^s(U)}
\leq 
C\| \nabla g\|_{H^s(U)}
\,.
\label{e.JeKe.again}
\end{equation}
Note that~$\a_1\nabla {h}\in L^2_{\sol}(U)$. We may test the minimization principle with
\begin{equation}
\left\{
\begin{aligned}
& 
w \coloneqq 
\frac12 ( v (\cdot,U,-\nabla h,\a_1 \nabla{h})
+ 
v^*(\cdot,U,-\nabla h, -\a_1 \nabla h) \bigr)\,,
\\ & 
\g \coloneqq 
\frac12 ( \a \nabla v (\cdot,U,-\nabla h,\a_1 \nabla {h})
- 
\a^t \nabla v^*(\cdot,U,-\nabla h, -\a_1 \nabla h) \bigr)\,. 
\end{aligned}
\right.
\label{e.nabla.w.and.g}
\end{equation}
A calculation gives
\begin{align*}
\fint_U 
\nabla u \cdot \a\nabla u 
=
\fint_U 
\frac12 
\begin{pmatrix} 
\nabla u \\ \a\nabla u 
\end{pmatrix}
\cdot
\bfA
\begin{pmatrix} 
\nabla u \\ \a\nabla u 
\end{pmatrix}
\,dx
\leq
\fint_U 
\frac12 
\begin{pmatrix} 
\nabla w \\ \g
\end{pmatrix}
\cdot
\bfA
\begin{pmatrix} 
\nabla w \\ \g
\end{pmatrix}
\,dx
=
\mathbf{J} 
\left(U, \begin{pmatrix} 
\nabla {h}\\ \a_1\nabla h
\end{pmatrix}
, 0 \,;\a
\right)
\,.
\end{align*}
The last equality in the previous display follows from the (purely algebraic) fact that~$\bfJ$ defined in~\eqref{e.bfJ.general} satisfies the variational identity~\eqref{e.bfJ.var} and its maximizer is precisely~$(\nabla w,\g) \in \mathcal{S}(U)$ defined in~\eqref{e.nabla.w.and.g}. 
Using~\eqref{e.bfJ.general} and that~$J$ is quadratic, we deduce that 
\begin{align}
\| \s^{\nf12} \nabla u \|_{\underline{L}^2(U)}^2
&
\leq 
J(U,\nabla h, 0 )
+
J(U,0, \a_1\nabla{h})
+ 
J^*(U,\nabla{h}, 0 ) 
+
J^*(U,0,\a_1 \nabla{h}) 
\,.
\label{e.energy.by.Js}
\end{align}
To estimate the right side of~\eqref{e.energy.by.Js}, we apply Lemmas~\ref{l.Whitney.potential.projector} and~\ref{l.Whitney.solenoidal.projectior} with~$\h = \nabla {h}$ and~$\g=\a_1\nabla {h}$, respectively. 
For the first term, we use~\eqref{e.S.projector.properties}, subadditivity,~\eqref{e.h.prime.size.estimate},~\eqref{e.J.mat},~\eqref{e.h.prime.size.estimate} and Proposition~\ref{p.fractional.hardy} to obtain 
\begin{align*}
J(U,\nabla {h}, 0) 
=
J(U,S \nabla {h}, 0) 
& \leq 
\sum_{\triangle \in \mathcal{P}} 
\frac{|\triangle|}{|U|}
J(U, (S\nabla {h})(\triangle),0)
\notag \\ & 
\leq 
\sum_{\triangle \in \mathcal{P}} 
\frac{|\triangle|}{|U|}
| \b(\triangle) | 
|(S\nabla {h})(\triangle)|^2 
\notag \\ & 
\leq 
C
\Lambda_s(\cu_0;\a) 
\sum_{\triangle \in \mathcal{P}} 
\frac{|\triangle|}{|U|}
\diam(\triangle)^{-2s} 
|(S\nabla {h})(\triangle)|^2 
\notag \\ & 
\leq 
C \Lambda_s(\cu_0;\a) 
\biggl( 
\int_U \dist(x,\partial U)^{-2s} | \nabla {h}(x)|^2\,dx
+
[ \nabla {h} ]_{H^s(U)}^2
\biggr) 
\notag \\ & 
\leq 
C \Lambda_s(\cu_0;\a) 
\| \nabla {h} \|_{H^s(U)}^2
\,.
\end{align*}
For the second term, we use~\eqref{e.projector.boundary.data},  subadditivity,~\eqref{e.g.prime.size.estimate},~\eqref{e.J.mat},~\eqref{e.g.prime.size.estimate} and Proposition~\ref{p.fractional.hardy} to get
\begin{align*}
J(U,0,\a_1\nabla {h}) 
=
J(U,0,T(\a_1\nabla {h})) 
& 
\leq 
\sum_{\triangle \in \mathcal{P}} 
\frac{|\triangle|}{|U|}
J(U, (T(\a_1\nabla {h}))(\triangle),0)
\notag \\ & 
\leq 
\sum_{\triangle \in \mathcal{P}} 
\frac{|\triangle|}{|U|}
| \s_*^{-1} (\triangle) | 
|(T(\a_1\nabla {h}))(\triangle)|^2 
\notag \\ & 
\leq 
C
\lambda_s^{-1} (\cu_0;\a) 
\sum_{\triangle \in \mathcal{P}} 
\frac{|\triangle|}{|U|}
\diam(\triangle)^{-2s} 
|(T(\a_1\nabla {h}))(\triangle)|^2 
\notag \\ & 
\leq 
C \lambda_s^{-1} (\cu_0;\a) 
|\a_1|^2 
\| \nabla {h} \|_{H^s(U)}^2\,.
\end{align*}
The estimates for~$J^*(U,\nabla {h}, 0 )$ and~$J^*(U,0,\a_1 \nabla{h})$ are obtained similarly.
By Lemma~\ref{l.mathcal.E.to.Lambdas}, 
\begin{equation*}
\bigl( 
\Lambda_s (\cu_0;\a) 
+
\lambda_s^{-1} (\cu_0;\a) |\a_1|^2 
\bigr)^{\nf12}
\leq 
C|\a_1|^{\nf12} 
\bigl( 
1+
\mathcal{E}_ {s,2} (\cu_0;\a,\a_1)
\bigr) \,. 
\end{equation*}
Combining the above displays and allowing constants to depend on~$|\a_1|$ yields~\eqref{e.attainability.estimate}.
\end{proof}

\subsection{Large-scale regularity}
\label{ss.LSreg}

In this subsection, we formulate a black box principle, following the argument of~\cite{AS}, which asserts that on scales where the homogenization error is small, solutions exhibit improved regularity.
In particular, on such scales, the energy density of the solution behaves like a uniformly bounded function, consistent with Lipschitz regularity of the solution. Since such an estimate is expected to fail at smaller scales in our setting, this is typically called a \emph{large-scale} Lipschitz estimate. 
We also prove a local version of a large-scale~$C^{1,\theta}$ estimate, following arguments of~\cite{GNO.reg,AKMbook}, by showing that solutions can be approximated to first order by finite-volume Dirichlet correctors.

\smallskip

Define, for every~$m,n \in \Z$ with~$n\leq m$,~$s\in (0,\nf12)$ and~$t \in [1,\infty]$, the ``good" event
\begin{equation} 
\label{e.G.max}
G_{n,m}^{s,t}(\delta, \a_1) 
 \coloneqq  \biggl\{ 
\a \in \Omega(\cu_m) 
\,:\,
\biggl( \sum_{k=n}^m  \bigl(\mathcal{E}_{s,2}(\cu_k;\a,\a_1)  \bigr)^t\biggr)^{\! \nicefrac1t}
\leq 
\delta
\biggr\}
\,.
\end{equation}
We also set
\begin{equation*}
G_{n,\infty}^{s,t}(\delta, \a_1) \coloneqq \bigcap_{m\geq n} G_{n,m}^{s,t}(\delta, \a_1)
\,.
\end{equation*}
For~$t = \infty$, the sum over~$k$ in~\eqref{e.G.max} is replaced by the maximum over~$k \in \Z \cap [n,m]$. 
For every~$m\in\N$ and~$\a\in\Omega(\cu_m)$, we define the \emph{finite-volume Dirichlet corrector}~$w(\cdot,\cu_m,e) \in \mathcal{A}(\cu_m;\a)$ as the solution of 
\begin{equation}
\left\{
\begin{aligned}
& -\nabla \cdot \a \nabla w(\cdot,\cu_m,e)
= 0 
&  \mbox{in} &  \ \cu_m\,, 
\\
& 
w(\cdot,\cu_m,e) = \ell_e 
&  \mbox{on} &  \ \partial \cu_m\,.
\end{aligned}
\right.
\label{e.Dirichlet.corrector}
\end{equation}
The boundary conditions means that~$w(\cdot,\cu_m,e) - \ell_e \ \in H_{\a,0}^1(\cu_m)$.
The well-posedness of~\eqref{e.Dirichlet.corrector} is a consequence of the Lions-Lax-Milgram, as discussed in Section~\ref{ss.Sob}.
We note that, by the embedding~\eqref{e.general.embedding} and the quantitative estimates of Lemma~\ref{l.coarse.graining.operator}, we have~$w(\cdot,\cu_m,e) - \ell_e \ \in H_{0}^{1-s}(\cu_m)$ provided that~$\mathcal{E}_{s,2}(\cu_m;\a,\a_1)<\infty$ for some~$\a_1$. 
 
\begin{proposition}
\label{p.big.black.regularity.box} 
Let~$s \in (0,\nf 12)$,~$m\in\N$,~$\a \in \Omega(\cu_m)$ and~$\a_1 \in \R^{d\times d}_{+}$ be a matrix satisfying, for some constants~$0<\lambda < \Lambda < \infty$,
\begin{equation}
\lambda|\xi|^2 \leq \xi \cdot \a_1 \xi
\quad \mbox{and} \quad 
\Lambda^{-1}|\xi|^2 \leq \xi \cdot \a_1^{-1} \xi
\,.
\label{e.a0.ue.Lipschitz}
\end{equation}
Then the following statements hold. 
\begin{itemize}
\item \emph{Large-scale H\"older and Lipschitz estimates}. There exist constants~$C<\infty$ and~$c\in(0,1)$, both depending only on~$(s,\lambda,\Lambda,d)$, such that, for every~$n \in \Z$ with~$n<m$ and~$u \in \mathcal{A}(\cu_m;\a)$, 
\begin{equation} 
\label{e.Es.small}
\delta \in (0,c] \implies 
\|\s^{\nicefrac12} \nabla u\|_{\underline{L}^2(\cu_{n})} \indc_{G_{n,m}^{s,\infty}(\delta,\a_1)}
\leq C\exp\bigl(C \delta (m-n) \bigr)\|\s^{\nicefrac12} \nabla u\|_{\underline{L}^2(\cu_{m})}
\end{equation}
and
\begin{equation} 
\label{e.sum.Es.small}
\|\s^{\nicefrac12} \nabla u\|_{\underline{L}^2(\cu_{n})}
\indc_{G_{n,m}^{s,1}(c,\a_1)}
\leq
C \|\s^{\nicefrac12} \nabla u\|_{\underline{L}^2(\cu_{m})}\,.
\end{equation}
\item
\emph{Large-scale~$C^{1,\eta}$-estimate}. For each~$\eta \in [\nf 12,1)$, there exists constants~$C<\infty$ and~$c\in(0,1)$, both depending only on~$(\eta,s,\lambda,\Lambda,d)$, such that,  for every~$n \in \Z$ with~$n<m$ and~$u \in \mathcal{A}(\cu_m;\a)$, there exists~$e \in \Rd$ such that
\begin{equation} 
\label{e.C.one.eta}
\bigl\|\s^{\nicefrac12} \nabla (u - w(\cdot,\cu_m,e) ) \bigr\|_{\underline{L}^2(\cu_{k})} \indc_{G_{n,m}^{s,\infty}(c,\a_1)}
\leq C3^{-\eta(m-k)} \|\s^{\nicefrac12} \nabla u\|_{\underline{L}^2(\cu_{m})}
\,.
\end{equation}
\end{itemize}
\end{proposition}

Before giving the proof of Proposition~\ref{p.big.black.regularity.box}, we collect some needed consequences of results in the previous section. 
By Lemmas~\ref{l.Poincare.largescale},~\ref{p.coarse.grained.Caccioppoli} and~\ref{l.mathcal.E.to.Lambdas}, there exists~$C(s,\lambda,\Lambda,d)<\infty$ such that, for every~$u \in \mathcal{A}(\cu_k;\a)$, 
\begin{equation} 
\label{e.CG.Cacc.Poincare}
\| \s^{\nf 12} \nabla u \|_{\underline{L}^2(\cu_{k-1})}
\indc_{G_{k,k}^{s,\infty}(1,\a_1)}
\leq C  3^{-k} \| u - (u)_{\cu_k} \|_{\underline{L}^2(\cu_{k})}
\indc_{G_{k,k}^{s,\infty}(1,\a_1)}
\leq
C \| \s^{\nf 12} \nabla u \|_{\underline{L}^2(\cu_{k})}
 \,.
\end{equation}
It is clear from the definition of the finite-volume Dirichlet correctors that~$e \mapsto w(\cdot,\cu_m,e)$ is linear.
Proposition~\ref{p.big.black.box} yields 
\begin{equation} 
\label{e.Dir.corr.energy}
\| \s^{\nf12} \nabla w(\cdot,\cu_m,e)  \|_{\underline{L}^2(\cu_m)} 
\leq 
C 
\bigl( 
1+
\mathcal{E}_ {s,2} (\cu_m;\a,\a_1)
\bigr) |e|
\end{equation}
and
\begin{multline} 
\label{e.Dir.corr.weak}
3^{-sm} \|  \nabla w(\cdot,\cu_m,e) - e \|_{\Hminushat{-s} (\cu_m)}
+
3^{-sm}  \| \a \nabla w(\cdot,\cu_m,e) - \a_1e \|_{\Hminushat{-s} (\cu_m)}
\\
\leq 
C 
\mathcal{E}_ {s,2} (\cu_m;\a,\a_1)
\bigl( 
1+
\mathcal{E}_ {s,2} (\cu_m;\a,\a_1)
\bigr) 
|e|
 \,.
\end{multline} 
Notice that the latter display implies, by the fractional Poincar\'e inequality that, for~$\delta \in (0,1]$, 
\begin{equation} 
\label{e.Dir.corr.Ltwo}
3^{-m}\| w(\cdot,\cu_m,e) - \ell_e \|_{\underline{L}^2(\cu_m)}
\leq C\mathcal{E}_ {s,2} (\cu_m;\a,\a_1)
\bigl( 
1+
\mathcal{E}_ {s,2} (\cu_m;\a,\a_1)
\bigr) |e|
  \,.
\end{equation}
Indeed, we have, for~$f \in C_c^\infty(U)$, that
\begin{equation*} 
\| f \|_{\underline{L}^2(\cu_m)}
\leq 
C\| \nabla f \|_{\underline{H}^{-1}(\cu_m)}
\leq
C 3^{(1-s)m}  \| \nabla f \|_{\underline{H}^{-s}(\cu_m)}
\,,
\end{equation*}   
and hence~\eqref{e.Dir.corr.Ltwo} follows by density of~$C_c^\infty(\cu_m)$ in~$H_0^{1-s}(\cu_m)$. 

We now present the proof of the first bullet in the statement of Proposition~\ref{p.big.black.regularity.box}. 

\begin{proof}[Proof of~\eqref{e.Es.small} and~\eqref{e.sum.Es.small}]
Fix~$n \in \Z$ with~$n<m$ and~$u \in \A(\cu_m)$. Denote,  
for every~$k \in \Z$ with~$k\leq m$, 
\begin{equation*}
E_k  \coloneqq   \inf_{\ell\; \mbox{\scriptsize{affine}}} 3^{-k}\|u - \ell \|_{\underline{L}^2(\cu_{k})}
\ \qand \
D_k  \coloneqq  \|\s^{\nicefrac12} \nabla u\|_{\underline{L}^2(\cu_{k})} \,.
\end{equation*}
Let~$\ell^{(k)}$ denote the affine function realizing the minimum in~$E_k$. Throughout the proof we assume that~$\a\in G_{n,m}^{\infty}(\delta,\a_1)$, which implies in particular that~$\mathcal{E}_{s,2}(\cu_k;\a,\a_1) \leq \delta$ for every~$k \in \Z \cap [n,m]$. 

\smallskip

For each~$k \in \Z \cap [n,m]$, using  Lemma~\ref{l.coarse.graining.operator}, we have that there exists~$h_k \in H^{1-s}(\cu_k)$ solving
\begin{equation}
\label{e.find.hk.for.u.in.cu.k}
\left\{
\begin{aligned}
& -\nabla \cdot \a_1\nabla h_k = 0 & \mbox{in} & \ \cu_k \,, 
\\ 
& 
u - h \in H^{1-s}_0(\cu_k)
\,,
\end{aligned}
\right.
\end{equation}
and, by Proposition~\ref{p.big.black.box}, 
\begin{equation}
\|  \nabla u - \nabla h_k \|_{\Hminushat{-s} (\cu_k)}
+
\| \a \nabla u - \a_0\nabla h_k \|_{\Hminushat{-s} (\cu_k)}
\leq 
C 
\mathcal{E}_{s,2}(\cu_k)
\|\s^{\nf12} \nabla u \|_{{L}^2(\cu_k)}
\,.
\label{e.bbb.homogenization.applied}
\end{equation}
In particular, 
\begin{equation*} 
3^{-k} \| u - h_k  \|_{\underline{L}^{2}(\cu_{k-1})} 
\leq
C \mathcal{E}_{s,2}(\cu_k)
\|\s^{\nicefrac12} \nabla u\|_{\underline{L}^2(\cu_{k})}
\,.
\end{equation*}
Here we suppress~$(\a,\a_1)$ from the notation for~$\mathcal{E}_{s,2}(\cu_k;\a,\a_1)$.

\smallskip

\emph{Step 1.} 
We show that there exists~$N(s,\lambda,\Lambda,d) \in \N$ and~$C(s,\lambda,\Lambda,d)<\infty$ such that, for every~$k \in \Z$ with~$n\leq k \leq m$, 
\begin{equation} 
\label{e.Lip.sum.this}
E_{k-N} 
\leq 
\frac18 E_{k}
+
C\mathcal{E}_{s,2}(\cu_k) D_k
\,.
\end{equation}
By the regularity of~$\a_1$-harmonic functions, there exists~$C(s,\lambda,\Lambda,d)<\infty$ such that
\begin{align} 
\label{e.regularity.applied}
E_{k-N}
& \leq 
 \inf_{\ell \mbox{ \scriptsize{affine}} }
3^{N-k}\| h_k  -\ell \|_{\underline{L}^{2}(\cu_{k-N})}  
+
 3^{N-k} \| u - h_k  \|_{\underline{L}^{2}(\cu_{k-N})} 
\notag \\ &
\leq 
C 3^{-N-k} \inf_{\ell \mbox{ \scriptsize{affine}} }
\| h_k  -\ell \|_{\underline{L}^{2}(\cu_{k-1})} 
+
3^{(\nicefrac d2 +1)N - k} \| u - h_k  \|_{\underline{L}^{2}(\cu_{k-1})} 
\notag \\ &
\leq
C 3^{-N}E_{k} + C  \Bigl(3^{-k-N} + 3^{(\nicefrac d2 +1)N-k} \Bigr)  \| u - h_k  \|_{\underline{L}^{2}(\cu_{k-1})} 
\notag \\ &
\leq
C  3^{-N}E_{k}  + C  3^{(\nicefrac d2 +1)N} \mathcal{E}_{s,2}(\cu_k)  D_k
\,.
\end{align}
We choose~$N \in \N$ so large that~$C_{\eqref{e.regularity.applied}} 3^{-N} \leq \nicefrac18$. Thus, the previous display implies~\eqref{e.Lip.sum.this}. 

\smallskip

\emph{Step 2.}
We next show that there exist constants~$C(s,\lambda,\Lambda,d)<\infty$ such that, for every~$h \in \Z$ with~$n \leq h\leq m$, 
\begin{equation} 
\label{e.Lip.iter.this}
D_{h}  \leq C D_m + C  \sum_{j = h+1}^{m} \mathcal{E}_{s,2}(\cu_j)  D_j    \,.
\end{equation}
To see this, we first sum over~$j$ in~\eqref{e.Lip.sum.this} to get
\begin{equation*}
\sum_{j = -1}^{\lfloor N^{-1} (m-h) \rfloor-1} E_{h + jN} 
\leq 
\frac12 \sum_{j = 0}^{\lfloor N^{-1} (m-h) \rfloor}  E_{h + jN}
+
C  \sum_{j = h}^{m} \mathcal{E}_{s,2}(\cu_j)  D_j   
\,.
\end{equation*}
Reorganizing and reabsorbing then leads to
\begin{equation*} 
\sum_{j=h}^{m} E_{j} \leq C E_{m} + C  \sum_{j = h+1}^{m} \mathcal{E}_{s,2}(\cu_j)  D_j     \,.
\end{equation*}
We then obtain, for every~$j \in \Z \cap [n,m] $, 
\begin{align} 
\label{e.affine.diff}
| \nabla \ell^{(j)} - \nabla \ell^{(m)} | 
& 
\leq 
C\sum_{j = n}^{m-1} 3^{-j} \| \ell^{(j+1)} - \ell^{(j)}\|_{\underline{L}^2(\cu_{j})}  
\leq
C\sum_{j = n}^{m} E_{j} 
\leq
C E_{m} + C  \sum_{j = h+1}^{m} \mathcal{E}_{s,2}(\cu_j)  D_j 
\,.
\end{align}
Moreover, by the Poincar\'e inequality in~\eqref{e.CG.Cacc.Poincare}, we get 
\begin{equation} 
\label{e.affine.end.point}
|\nabla \ell^{(m)}| 
\leq 
C 3^{-m}\|u - (u)_{\cu_m} \|_{\underline{L}^2(\cu_{m})}
+ CE_m
\leq 
C 3^{-m}\|u - (u)_{\cu_m} \|_{\underline{L}^2(\cu_{m})}
\leq
C D_m 
 \,.
\end{equation}
Consequently, by the coarse-grained Caccioppoli estimate in~\eqref{e.CG.Cacc.Poincare},
\begin{align} 
\notag
D_{h} 
& 
\leq 
C |\nabla \ell^{(h+1)}| 
+
C E_{h+1} 
\leq 
C D_{m} + C  \sum_{j = h+1}^{m} \mathcal{E}_{s,2}(\cu_j)  D_j 
\,,
\end{align}
which is~\eqref{e.Lip.iter.this}.  

\smallskip

\emph{Step 3.} We show~\eqref{e.Es.small}. 
Assume inductively that there exist constants~$H,K \in [1,\infty)$, to be determined by means of~$(s,\lambda,\Lambda,d)$, such that, for some~$h,k \in \Z$ with~$h  < k \leq m$, 
\begin{equation} 
\label{e.Lip.induction.one}
\sup_{k \in \N \cap [h+1,m] } 3^{- K \delta (m-k)} D_k \leq H D_m
\,.
\end{equation}
The induction assumption together with~\eqref{e.Lip.iter.this} and~$\delta \leq c  \coloneqq  K^{-1}$ implies that
\begin{equation*} 
D_h \leq C D_m + C \delta H \sum_{k=h+1}^m 3^{K \delta (m-k)} D_m
 \leq 
H 3^{K \delta (m-h)} D_m \Bigl( C H^{-1} 3^{-K \delta (m-h)}  +  C K^{-1} \Bigr)
\,,
\end{equation*}
and thus~\eqref{e.Lip.induction.one} follows by choosing~$H,K$ large enough by means of~$C$ in the above display. 

\smallskip

\emph{Step 4.}
We finally show~\eqref{e.sum.Es.small}. Assume that
\begin{equation} 
\label{e.Lip.induction.ass.two}
\sum_{k=n}^m  \mathcal{E}_{s,2}(\cu_k) \leq \delta_0 
\end{equation}
for some~$\delta_0(s,\lambda,\Lambda,d)$ to be fixed. Assume inductively that, for a given large constant~$H(s,\lambda,\Lambda,d) \in [1,\infty)$, we have, for some~$h \in \Z$ with~$n \leq h  <m$, that
\begin{equation} 
\label{e.reg.induction}
\sup_{j \in \N \cap [h+1,m]} D_j  \leq H  D_m
 \,.
\end{equation}
By~\eqref{e.Lip.iter.this},~\eqref{e.Lip.induction.ass.two} and~\eqref{e.reg.induction} we then deduce that
\begin{equation*}
D_h
\leq 
C D_m
+
C \delta_0 H D_m
\,.
\end{equation*}
Choosing thus~$\delta_0 = (2C)^{-1}$ and~$H = 2C$, we obtain the induction step and complete the proof.
\end{proof}

The proof of the second bullet in Proposition~\ref{p.big.black.regularity.box} relies on the following estimate of the  \emph{excess} of a finite-volume corrector on the good event. 

\begin{lemma}[Flatness at every scale]
\label{l.corr.flatness} 
Under the assumptions of Proposition~\ref{p.big.black.regularity.box}, there exist constants~$C(s,\lambda,\Lambda,d)<\infty$ and~$c(s,\lambda,\Lambda,d) \in (0,(2C)^{-1}]$ such that if~$\delta \in (0,c]$, then, for every~$n,m \in \Z$,  and~$\a \in G_{n,m}^{s,\infty}(\delta,\a_1)$, there exists a linear map~$e \mapsto Q_{n}[e;m]$ such that, for every~$e \in \Rd$ and~$\ell_e(x) \coloneqq  x \cdot e$, we have
\begin{equation*} 
3^{-n} 
\bigl\|w(\cdot,\cu_m,Q_{n}[e;m]) 
-(w(\cdot,\cu_n,Q_{n}[e;m]))_{\cu_n}
 - \ell_e \bigr\|_{\underline{L}^2(\cu_n)}
\indc_{G_{n,m}^{s,\infty}(\delta,\a_1)} 
\leq 
C \delta |e | 
\,.
\end{equation*}
Moreover,  we have under the event~$G_{n,m}^{s,\infty}(\delta,\a_1)$ that, for every~$e\in\Rd$,
\begin{equation} 
\label{e.corr.grad.flat}
C^{-1}| e |
\leq 
\bigl\| \s^{\nicefrac12} \nabla w(\cdot,\cu_m,Q_{n}[e;m] ) \bigr\|_{\underline{L}^2(\cu_n)} 
\leq C  | e |\,.
\end{equation} 
Finally, for every~$k\in \Z$ with~$n\leq k \leq m$ and~$e\in\Rd$, under the event~$G_{n,m}^{s,\infty}(\delta,\a_1)$, 
\begin{equation} 
\label{e.Qm.bounds}
(1+ C\delta)^{-(k-n)} \leq \frac{| Q_{k}[e;m] |}{|  Q_{n}[e;m] |}
\leq 
(1- C\delta)^{k-n}
\qand
\bigl| Q_{m}[\cdot \, ;m]  -\Id \bigr| \leq C\delta \leq \frac12 \,,
\end{equation}
and
\begin{equation} 
\label{e.Qm.bounds.sharp}
\bigl| Q_{n}[\cdot \, ;m]  -\Id \bigr| 
\indc_{G_{n,m}^{s,1}(\delta,\a_1)}
\leq C\delta \leq \frac 12 \,.
\end{equation}
\end{lemma}

\begin{proof}
Fix~$n,m \in \Z$ with~$n < m$. 
Throughout the proof, we assume that~$\a \in G_{n,m}^{s,\infty}(\delta,\a_1)$.
For~$k \in \Z$ with~$k \leq m$ and~$e \in \Rd$, define~$\ell^{(k)}[e;m]$ to be the affine function minimizing the following quantity 
\begin{equation*} 
E_{k}[e;m]  \coloneqq  \inf_{\ell\; \mbox{\scriptsize{affine}}} 3^{-k}\| w(\cdot,\cu_m,e)  - \ell \|_{\underline{L}^2(\cu_{k})}
=  
3^{-k}  \|w(\cdot,\cu_m,e)  - \ell^{(k)}[e;m] \|_{\underline{L}^2(\cu_{k})} \,.
\end{equation*}
Denote, for short,~$w = w(\cdot,\cu_m,e)$.  Denote also~$P_{n}[e;m]  \coloneqq  \nabla \ell^{(n)}[e;m]$. The mapping~$e \mapsto P_{n}[e;m]$ is linear by the linearity of~$e \mapsto \nabla w(\cdot,\cu_m,e)$. We claim that, for~$k \in \Z \cap [n,m]$, 
\begin{equation} 
\label{e.corr.induction}
E_{k}[e;m] \leq K \delta \bigl|  P_{k}[e;m] \bigr|  
\end{equation} 
for a large constant~$K(s,\lambda,\Lambda,d)$ to be fixed. Letting~$N(s,\lambda,\Lambda,d) \in \N$ be as in~\eqref{e.Lip.sum.this}, we have, for every~$k \in \Z \cap [n \vee (m-N) , m]$, 
\begin{equation} 
\label{e.induction.initial}
\bigl|e - P_{k}[e;m] \bigr|
\leq
C 3^{-k}\bigl\| \ell_{k}[e;m] - \ell_e \bigr\|_{\underline{L}^2(\cu_k)}
\leq 
C 3^{(\nicefrac d2 +1)N}   3^{-m} \bigl\| w - \ell_e \bigr\|_{\underline{L}^2(\cu_m)} 
\leq 
C \delta  | e |     \,.
\end{equation}
Thus, if~$C_{\eqref{e.induction.initial}} c \leq \nicefrac12$ and~$K \geq 2C_{\eqref{e.induction.initial}}$, we obtain~\eqref{e.corr.induction} for~$k \in \Z \cap [n \vee (m-N) , m]$. 

\smallskip

Next, if~$n < m-N$, we then assume  inductively that there exists~$h \in \Z \cap [n,m-N]$ such that~\eqref{e.corr.induction} is valid for every~$k \in \Z$ with~$h +1 \leq k \leq m$.  Since~$m$ and~$e$ are fixed, we drop them from the notation for both~$\ell$ and~$E$. By~\eqref{e.Lip.sum.this}, we have
\begin{equation} 
\label{e.corr.basic.iter.one}
E_{h}
\leq
\frac18 E_{h+N}
+
C \mathcal{E}_{s,2}(\cu_k)   \| \s^{\nicefrac12 }\nabla w \|_{\underline{L}^2(\cu_{h+N})} 
\leq
\frac{K}{8}\delta \bigl| \nabla \ell^{(h+N)} \bigr| 
+ 
C \delta \| \s^{\nicefrac12 }\nabla w \|_{\underline{L}^2(\cu_{h+N})} 
\,.
\end{equation}
In view of our claim~\eqref{e.corr.induction}, we define, for~$k \in\Z$, the composite quantity
\begin{equation*} 
F_k  \coloneqq  E_{k} - K \delta | \nabla \ell^{(k)}|\,,
\end{equation*}
and our goal is to show that~$F_h \leq 0$. The induction assumption guarantees that~$F_k\leq 0$ for~$k\in \Z \cap (h,m]$. Notice that we get by the triangle inequality, for every~$k \in \Z \cap [n,m]$, 
\begin{align} 
\label{e.affines.vs.F.pre.one}
|\nabla \ell^{(k+1)}-\nabla \ell^{(k)}|
& \leq
C E_{k+1}
\leq
C K \delta |\nabla \ell^{(k+1)}|
+C F_{k+1}\,,
\end{align}
and thus, if~$C_{\eqref{e.affines.vs.F.pre.one}} K c \leq \nicefrac12$, we have, for every~$k \in \Z \cap [n,m]$, 
\begin{equation} 
| \nabla \ell^{(k+1)}| \leq 
(1+\ep) | \nabla \ell^{(k)}|  +C F_{k+1}  \quad \mbox{with} \quad \ep  \coloneqq   \frac{C_{\eqref{e.affines.vs.F.pre.one}}K\delta}{1-C_{\eqref{e.affines.vs.F.pre.one}}K\delta}
\label{e.affines.vs.F}
\,.
\end{equation}
By the induction assumption and iteration, we then also get, for every~$k \in \Z$ with~$k> h$, 
\begin{equation}
\label{e.affines.k.vs.k.plus.one} 
|\nabla \ell^{(k)}| 
\leq
(1+\ep)^{h-k}  |\nabla \ell^{(h)}| 
\qand
|e| \leq  
C (1+\ep)^{m-h}  |\nabla \ell^{(h)}| 
\,.
\end{equation}
By~\eqref{e.CG.Cacc.Poincare}, we thus obtain 
\begin{align} 
\label{e.grad.corr.upper}
 \| \s^{\nicefrac12 }\nabla w \|_{\underline{L}^2(\cu_{h})} 
 \leq
 C
 \| \s^{\nicefrac12 }\nabla w \|_{\underline{L}^2(\cu_{h+N})} 
&
\leq
C  3^{-n} \| w - (w)_{\cu_{h+N+1}} \|_{\underline{L}^2(\cu_{h+N+1})} 
\notag \\ &
\leq
C E_{h+N+1} + C |\nabla \ell^{(h+N+1)}| 
\notag \\ &
\leq
C F_{h+N+1} + K \delta | \nabla \ell^{(h+N+1)}| + C |\nabla \ell^{(h+N+1)}|
\notag \\ &
\leq
C (1+\ep)^{N}  |\nabla \ell^{(h)}| 
\,.
\end{align}
Inserting the above two displays into~\eqref{e.corr.basic.iter.one} yields
\begin{equation} 
\label{e.Fh.is.what.it.is}
F_h \leq - K\delta |\nabla \ell^{(h)}| + (1+\ep)^N \biggl( \frac{1}{8} + \frac{C}{K} \biggr) K \delta |\nabla \ell^{(h)}| 
\end{equation}
with~$\ep$ as in~\eqref{e.affines.vs.F}. Taking~$K \geq 8C_{\eqref{e.Fh.is.what.it.is}}$ and then~$c$ so small that~$(1+\ep)^N \leq 2$, we deduce that~$F_h \leq 0$, proving the induction step and establishing~\eqref{e.corr.induction} for every~$k \in \Z \cap [n,m]$.

\smallskip

Next, similarly to~\eqref{e.affines.vs.F.pre.one} and using~\eqref{e.affines.k.vs.k.plus.one}, we obtain
\begin{equation*} 
|\nabla \ell^{(k)} - \nabla \ell^{(k+1)}| \leq CE_{k+1} \leq
C \delta  |\nabla \ell^{(k)}|   \,,
\end{equation*}
which gives us by~\eqref{e.induction.initial} that, for every~$k \in \Z \cap [n,m-1]$, 
\begin{equation} 
\label{e.what.affines.do}
1 - C\delta 
\leq
\frac{| \nabla \ell^{(k+1)}|}{| \nabla \ell^{(k)}|} 
\leq 
1 + C\delta 
\implies
(1 - C\delta)^{n+1} 
\leq
\frac{| P_{n}[e;m] |}{|e|} 
\leq
(1 + C\delta)^{n+1} 
\,.
\end{equation}
Thus~$e \mapsto P_n[e;m]$ has full rank and it is invertible, and we define~$Q_n[\cdot \, ;m] = P_n^{-1}[\cdot\,;m]$. The estimate~\eqref{e.Qm.bounds} then follows from the above display by increasing the constant~$C$.  

\smallskip

Next, the upper bound in~\eqref{e.corr.grad.flat} follows by~\eqref{e.grad.corr.upper}, which gives
\begin{equation*} 
 \| \s^{\nicefrac12 }\nabla w(\cdot,\cu_m,e) \|_{\underline{L}^2(\cu_{n})} 
\leq 
C  \bigl| P_{n}[e;m] \bigr|
 \,.
\end{equation*}
Plugging in~$Q_n[e \, ;m]$ instead of~$e$ gives the upper bound since~$P_{n}[Q_n[e \, ;m] ;m] = e$. To see the lower bound, we have by~\eqref{e.CG.Cacc.Poincare} and~\eqref{e.corr.induction} that
\begin{equation} 
\label{e.q.naught.Pn}
|  P_{n}[e;m] | 
\leq
C 3^{-n} \| w - (w)_{\cu_n}   \|_{\underline{L}^2(\cu_n)} + C E_n
\leq
C \| \s^{\nicefrac12} \nabla w(\cdot,\cu_m,e)   \|_{\underline{L}^2(\cu_n)} + C\delta | P_{n}[e;m] | 
\,,
\end{equation}
and thus the lower bound in~\eqref{e.corr.grad.flat} follows provided that~$C_{\eqref{e.q.naught.Pn}}c \leq \nicefrac12$.

\smallskip

Finally, by~\eqref{e.affine.diff} and~\eqref{e.affine.end.point}, the Lipschitz estimate~\eqref{e.Es.small} and~\eqref{e.corr.grad.flat}, we have, under the event~$G_{n,m}^{s,1}(\delta)$ with small enough~$\delta_0(s,\lambda,\Lambda,d) \in (0,1)$ and~$\delta \in (0,\delta_0]$, that 
\begin{equation*} 
\bigl| \nabla \ell_{n}[e;m]- \nabla \ell_{m}[e;m] \bigr|
\leq 
C \delta |e|  
 \,.
\end{equation*}
By~\eqref{e.induction.initial} we get that~$| e- \nabla \ell_{m}[e;m] |\leq C \delta |e|$, and thus  
\begin{equation*} 
\bigl|  P_{n}[ e ; m] - e \bigr|
\leq C\delta |e|
\implies 
\bigl| P_{n}[ \cdot \, ; m] - \Id \bigr| \leq C\delta \leq \frac12
\,.
\end{equation*}
Since~$Q_{n} = P_{n}^{-1}$, we obtain~\eqref{e.Qm.bounds.sharp}.  The proof is complete. 
\end{proof}

We next apply the previous lemma to prove the large-scale~$C^{1,\eta}$-estimate~\eqref{e.C.one.eta}, thereby completing the proof of Proposition~\ref{p.big.black.regularity.box}. 

\begin{proof}[Proof of~\eqref{e.C.one.eta}]
Throughout the proof, we fix~$s \in (0,\nf 12)$,~$\eta \in [\nf 12,1)$ and~$m,n \in \Z$ with~$n<m$. Let~$u \in \mathcal{A}(\cu_m;\a)$. Throughout the proof, we assume the good event~$G_{n,m}^{s,\infty}(\delta)$ as in~\eqref{e.G.max} for~$\delta \in (0,\theta_0]$ with~$\theta_0$ to be fixed.  Let~$e_k \in \Rd$ be such that
\begin{equation*} 
E_k \coloneqq 
\inf_{e\in\Rd} \bigl\| \s^{\nicefrac12} \nabla (u - w(\cdot,\cu_m,e)) \bigr\|_{\underline{L}^2(\cu_k)}
=
\bigl\| \s^{\nicefrac12} \nabla (u - w(\cdot,\cu_m,e_k)) \bigr\|_{\underline{L}^2(\cu_k)}  
 \,.
\end{equation*}

\smallskip

\emph{Step 1.} We first show that, for every~$\alpha \in [\nicefrac12,1)$, there exists constants~$C$ and~$\theta_1 \in (0,1)$, both depending only on~$(\eta,s,\lambda,\Lambda,d)$, such that if~$\delta \in (0,\theta_1]$, then
\begin{equation} 
\label{e.C.one.eta.pre}
E_{n} \leq C 3^{-\alpha(m-n)} E_m
\,.
\end{equation}
Set~$u_k  \coloneqq  u - w(\cdot,\cu_m,e_k)$. Letting~$h_k$ be as in~\eqref{e.find.hk.for.u.in.cu.k} (with~$u_k$ instead of~$u$), we have
\begin{equation*} 
3^{-k} \| u_k - h_k  \|_{\underline{L}^{2}(\cu_{k-1})} 
\leq 
C \delta  \|\s^{\nicefrac12} \nabla u_k \|_{\underline{L}^{2}(\cu_{k})} 
 \,.
\end{equation*} 
Notice that, by the triangle inequality,~\eqref{e.CG.Cacc.Poincare} and the above display, we also get  
\begin{equation*} 
3^{-k} \| h_k - (h_k)_{\cu_{k-1}} \|_{\underline{L}^{2}(\cu_{k-1})}
\leq 
C   \|\s^{\nicefrac12} \nabla u_k \|_{\underline{L}^{2}(\cu_{k})} 
 \,.
\end{equation*}
Now, by the regularity of~$\a_1$-harmonic functions, there is an affine function~$\ell^{(k)}$ such that
\begin{equation*} 
\| h_k -\ell^{(k)}  \|_{\underline{L}^{2}(\cu_{k-h})} \leq C3^{-2h}  \| h_k - (h_k)_{\cu_{k-1}} \|_{\underline{L}^{2}(\cu_{k-1})}
\leq
C 3^{k-2h} \|\shom^{\nicefrac12} \nabla u_k \|_{\underline{L}^{2}(\cu_{k})}
 \,.
\end{equation*}
The affine function above satisfies
\begin{equation*} 
| \nabla \ell^{(k)} |  
\leq 
\| \nabla h_k \|_{L^\infty(\cu_{k-2})} 
\leq 
C 3^{-k}\| h_k  - (h_k )_{\cu_{k-1}} \|_{L^\infty(\cu_{k-1})}
\leq 
C \|\s^{\nicefrac12} \nabla u_k \|_{\underline{L}^{2}(\cu_{k})} 
\,.
\end{equation*}
Lemma~\ref{l.corr.flatness} provides us~$\tilde{w}_k  \coloneqq  w(\cdot,\cu_m,\tilde{e}_k)$, which is a good approximant of~$\ell^{(k)}$ so that
\begin{equation*} 
3^{-k} \| \tilde{w}_k -  \ell^{(k)} \|_{\underline{L}^{2}(\cu_{k})} \leq C \delta | \nabla \ell^{(k)} | 
 \,.
\end{equation*}
Thus, by the triangle inequality, 
\begin{equation*} 
\| u_k -\tilde w_k  \|_{\underline{L}^{2}(\cu_{k+1-h})} 
\leq 
C3^k \bigl( 3^{-2h} + \delta 3^{\frac d2 h} \bigr) \|\shom^{\nicefrac12} \nabla u_k \|_{\underline{L}^{2}(\cu_{k})} 
 \,.
\end{equation*}
Since~$u_k -\tilde w_k  \in \mathcal{A}(\cu_{k-h+1};\a)$,~\eqref{e.CG.Cacc.Poincare} yields
\begin{equation} 
\label{e.Ek.excess.decay}
E_{k-h} \leq \| \nabla (u_k -\tilde w_k)  \|_{\underline{L}^{2}(\cu_{k-h})} 
\leq
C 
\bigl( 3^{-h} + \delta 3^{(\frac d2 +1) h} \bigr) E_k 
 \,.
\end{equation}
Choosing~$h_0(\eta,s,\lambda,\Lambda,d)$ to be the smallest integer such~$3^{-(1-\eta)h_0}C_{\eqref{e.Ek.excess.decay}} \leq \nicefrac 14$ and then requiring that~$\theta_1$ is so small that~$\theta_1  C_{\eqref{e.Ek.excess.decay}} 3^{(\frac d2 +2) h_0} \leq \nicefrac14$, we deduce that 
\begin{equation*} 
E_{k-h_0} \leq \frac12 \cdot 3^{-\eta h_0} E_{k} \,.
\end{equation*}
An iteration argument then proves~\eqref{e.C.one.eta.pre} using~$E_j \leq 3^{(\frac d2 +1)h_0} E_k$ for every~$k \in [n,m] \cap \Z$ and~$j \in [k-h_0,k] \cap \Z$. 

\smallskip

\emph{Step 2.} We prove~\eqref{e.C.one.eta}. assume that~$\theta_0 \leq \theta_1$ from Step 1  so that~\eqref{e.C.one.eta.pre} is valid for~$\delta \in (0,\theta_0]$. By~\eqref{e.Qm.bounds} and~\eqref{e.corr.grad.flat}, we deduce that there is~$\theta_2(\eta,s,\lambda,\Lambda,d) \in (0,1)$ such that~$\delta \leq \theta_2$ implies, for every~$k,j$ with~$n \leq j \leq k \leq m$ and~$e\in\Rd$,
\begin{equation*} 
\| \s^{\nicefrac12} \nabla w(\cdot,\cu_m,e)  \|_{\underline{L}^2(\cu_k)}   
\leq
C 3^{\frac14(k-j)}\| \s^{\nicefrac12} \nabla w(\cdot,\cu_m,e) \|_{\underline{L}^2(\cu_{j})} 
 \,.
\end{equation*} 
We fix~$\theta_0 \coloneqq \theta_1 \wedge \theta_2$, and assume that~$\delta \in (0,\theta_0]$. By~\eqref{e.C.one.eta.pre} and the triangle inequality, we get
\begin{equation*} 
\| \s^{\nicefrac12} \nabla w(\cdot,\cu_m,e_j-e_{j+1}) \|_{\underline{L}^2(\cu_{j})}
\leq 
E_j + 3^{\frac d2+1} E_{j+1} 
\leq
C3^{-\eta(m-j)} \|\s^{\nicefrac12} \nabla u\|_{\underline{L}^2(\cu_{m})}
 \,.
\end{equation*}
It follows by the previous two displays that
\begin{align*} 
\| \s^{\nicefrac12} \nabla w(\cdot,\cu_m,e_j-e_{j+1}) \|_{\underline{L}^2(\cu_k)}
&
\leq 
C 3^{\frac14(k-j)} 
\| \s^{\nicefrac12}  \nabla w(\cdot,\cu_m,e_j-e_{j+1})  \|_{\underline{L}^2(\cu_j)}
\notag \\ &
\leq 
C
3^{\frac14(k-j)} 3^{-\eta(m-j)} \| \s^{\nicefrac12} \nabla u \|_{\underline{L}^2(\cu_m)}  
\notag \\ &
= 
C 3^{-(\eta-\frac14) (k-j)} 3^{-\alpha(m-k)}
\| \s^{\nicefrac12} \nabla u \|_{\underline{L}^2(\cu_m)}  
\,.
\end{align*}
Since~$\eta \geq \nicefrac12$, 
by summing over~$j\in \{n,\ldots,k-1\}$ and using the triangle inequality, we obtain
\begin{equation*} 
\| \s^{\nicefrac12} \nabla w(\cdot,\cu_m,e_n-e_k)  \|_{\underline{L}^2(\cu_k)}
\leq 
C 3^{-\eta(m-k)} \| \s^{\nicefrac12} \nabla u \|_{\underline{L}^2(\cu_m)}   
 \,.
\end{equation*}
Therefore,~\eqref{e.C.one.eta} follows by the triangle inequality, the above display and~\eqref{e.C.one.eta.pre} by taking~$e  \coloneqq  e_n$. The proof of Proposition~\ref{p.big.black.regularity.box} is now complete.
\end{proof}

\subsection{Estimates for correctors}
\label{ss.correctors}

In this subsection, we present a deterministic black box principle which gives quantitative estimates on the first-order correctors on the good events~$G_{n,\infty}^{s,1}(c,\a_1)$ for sufficiently small~$c>0$. The idea is to telescope the estimates for the finite-volume Dirichlet correctors~$w(\cdot,\cu_m,e)$ across all scales, passing to the limit~$m\to \infty$. 

\begin{proposition}
\label{p.big.black.corrector.box}
Let~$s\in(0,\nf 12)$,~$\eta \in [\nf 12,1)$,~$\a_1 \in \R^{d\times d}_{+}$ be a matrix satisfying, for some constants~$0<\lambda < \Lambda < \infty$,
\begin{equation}
\lambda|\xi|^2 \leq \xi \cdot \a_1 \xi
\quad \mbox{and} \quad 
\Lambda^{-1}|\xi|^2 \leq \xi \cdot \a_1^{-1} \xi
\,.
\label{e.a0.ue.corr}
\end{equation}
There exist~$C(\eta,s,\lambda,\Lambda,d) <\infty$ and~$c(\eta,s,\lambda,\Lambda,d)\in (0,1)$ such that, for every~$\a \in \bigcup_{n\in\Z} G_{n,\infty}^{s,1}(c,\a_1)$ and~$e \in \Rd$, there exists~$\phi_e$ such that~$\ell_e + \phi_e \in \mathcal{A}(\Rd;\a)$ and the following statements hold. 
\begin{itemize}
\item 
For every~$n \in \Z$ and~$e \in \Rd$ with~$|e|=1$,
\begin{equation} 
\label{e.corr.weaknorms}
3^{-ns} \bigl( \|  \nabla \phi_e \|_{\Hminushat{-s} (\cu_n)}
+
\| \a (e{+}\nabla \phi_e) - \a_1e \|_{\Hminushat{-s} (\cu_n)} \bigr)
\indc_{G_{n,\infty}^{s,1}(c,\a_1)}
\\
\leq
C \!  \sum_{j=n}^{\infty} \mathcal{E}_ {s,2} (\cu_{j};\a,\a_1)
\end{equation}
and
\begin{equation} 
\label{e.corr.energy}
\Bigl| \| \s^{\nicefrac12} (e{+}\nabla \phi_e) \|_{\underline{L}^2(\cu_n)}^2 -e\cdot \s_1 e \Bigr| 
\indc_{G_{n,\infty}^{s,1}(c,\a_1)}
 \leq 
C\sum_{j=n}^{\infty} \mathcal{E}_ {s,2} (\cu_{j};\a,\a_1)
 \,.
\end{equation}
\item
For every~$m,n \in \Z$ with~$m\geq n$ and~$u \in \A(\cu_m)$, there exists~$e \in \Rd$ such that
\begin{equation} 
\| \s^{\nicefrac12} (\nabla u - (e + \nabla \phi_e))  \|_{\underline{L}^2(\cu_n)} 
\indc_{G_{n,\infty}^{s,1}(c,\a_1)}
\leq
C 3^{- \eta (m-k)} 
\| \s^{\nicefrac12} \nabla u \|_{\underline{L}^2(\cu_m)} 
\,.
\label{e.corr.C.one.eta}
\end{equation}
\item We have the Liouville property
\begin{equation} 
\Bigl\{\nabla u\, : \,  u \in \A(\Rd;\a)\,, 
\lim_{m \to \infty} 3^{-\eta m} \|\s^{\nicefrac12} \nabla u \|_{\underline{L}^2(\cu_m)} = 0 \Bigr\} =
\bigl\{ e+ \nabla \phi_e \, : \, e \in\Rd \bigr\} 
\,.
\label{e.corr.finite.dim}
\end{equation}  
\end{itemize}
\end{proposition}

\begin{proof}
Fix~$s \in (0,\nf 12)$. As before, we suppress~$(\a,\a_1)$ from the notation and denote~$\mathcal{E}_ {s,2} (\cu_m) \coloneqq \mathcal{E}_ {s,2} (\cu_m;\a, \a_1)$ and~$G_{n,\infty}^{s,1}(\delta)\coloneqq G_{n,\infty}^{s,1}(\delta,\a_1)$.   By~\eqref{e.Dir.corr.weak}, the finite-volume Dirichlet correctors satisfy
\begin{equation} 
\label{e.corr.weaknorms.applied}
\|  \nabla w(\cdot,\cu_m,e) - e \|_{\Hminushat{-s} (\cu_m)}
+
\| \a \nabla w(\cdot,\cu_m,e) - \a_1e \|_{\Hminushat{-s} (\cu_m)} 
\\
\leq
C3^{sm}  \mathcal{E}_ {s,2} (\cu_m) \bigl( 1 + \mathcal{E}_ {s,2} (\cu_m)\bigr)
 | e|  
\end{equation}
and, by~\eqref{e.Dir.corr.Ltwo},
\begin{equation} 
\label{e.corr.slope.applied}
3^{-m}
\bigl\| w(\cdot,\cu_m,e) - \ell_e \bigr\|_{\underline{L}^2(\cu_m)} 
\leq
C \mathcal{E}_ {s,2} (\cu_m) \bigl( 1 + \mathcal{E}_ {s,2} (\cu_m)\bigr)
 | e|  \,.
\end{equation}
Since~$w(\cdot,\cu_m,e) - \ell_e \in H_{\a,0}^{1}(\cu_m) \cap  H_0^{1-s}(\cu_m)$ and~$\a \nabla w(\cdot,\cu_m,e)  \in H^{-s}(\cu_m)$, and~$s \in (0,\nf 12)$, we obtain 
\begin{align*}
\fint_{\cu_m} \nabla w(\cdot,\cu_m,e) \cdot \a \nabla w(\cdot,\cu_m,e) 
&=
e \cdot \fint_{\cu_m} \a \nabla w(\cdot,\cu_m,e) 
= e\cdot \s_1 e +  e \cdot \fint_{\cu_m} (\a \nabla w(\cdot,\cu_m,e) - \a_1 e)
\,.
\end{align*}
Thus, for the energy we have by~\eqref{e.corr.weaknorms.applied} that
\begin{equation} 
\label{e.corr.energy.applied}
\bigl| \| \s^{\nicefrac12} \nabla w(\cdot,\cu_m,e) \|_{\underline{L}^2(\cu_m)}^2 - e\cdot \s_1 e  \bigr| 
 \leq 
 C\mathcal{E}_ {s,2} (\cu_m) \bigl( 1 + \mathcal{E}_ {s,2} (\cu_m)\bigr)
  |e|^2
 \,.
\end{equation}
Let~$w_m  \coloneqq  w(\cdot,\cu_{m+1},e)- w(\cdot,\cu_m,e)$. The Caccioppoli estimate~\eqref{e.CG.Cacc.Poincare}, together with~\eqref{e.corr.slope.applied} and the triangle inequality, implies that
\begin{equation}
\label{e.largescale.Caccioppoli.first.applied.corr}
\| \s^{\nicefrac12} \nabla w_m \|_{\underline{L}^2(\cu_{m-1})} 
\indc_{G_{m,m+1}^{s,\infty}(1)}
\leq
C 3^{-m} \| w_m \|_{\underline{L}^2(\cu_m)}\indc_{G_{m,m+1}^{s,\infty}(1)}
\leq 
C \mathcal{E}_ {s,2} (\cu_{m+1}) |e| \,.
\end{equation}
Thus, by the Lipschitz estimate~\eqref{e.sum.Es.small}, 
\begin{align*} 
\| \s^{\nicefrac12} \nabla w_m \|_{\underline{L}^2(\cu_n)}
\indc_{G_{n,m+1}^{s,1}(c)}
 &
\leq
C  \| \s^{\nicefrac12} \nabla w_m \|_{\underline{L}^2(\cu_{m-1})}
\indc_{G_{m,m+1}^{s,\infty}(1)}
\leq
C \mathcal{E}_ {s,2} (\cu_{m+1}) 
\,.
\end{align*}
In particular, by the triangle inequality, if~$\delta \in(0,c]$, for every~$k,m \in Z$ with~$m \geq k \geq n$, 
\begin{equation*} 
\| \s^{\nicefrac12} \nabla (w(\cdot,\cu_m,e) - w(\cdot,\cu_k,e)) \|_{\underline{L}^2(\cu_n)}\indc_{G_{n,\infty}^{s,1}(\delta)}
\leq C |e|  \sum_{j=k}^{m} \mathcal{E}_ {s,2} (\cu_{j}) \indc_{G_{n,\infty}^{s,1}(\delta)}
\leq C \delta |e|
\,.
\end{equation*}
Therefore,~$\{w(\cdot,\cu_m,e)\}_{m}$ is a Cauchy sequence in~$H_\s^1(\cu_n)$ if~$\a \in G_{n,\infty}^{s,1}(\delta)$. By a diagonal argument, we find~$\psi_e \in \mathcal{A}(\Rd ; \a)$ so that
\begin{equation*} 
\psi_e  \coloneqq  \lim_{m \to \infty} w(\cdot,\cu_m,e) 
\qand
\phi_e  \coloneqq  \psi_e - \ell_e
\,.
\end{equation*}
The telescope summation and the triangle inequality give us
\begin{equation*} 
\| \s^{\nicefrac12} \nabla(\psi_e - w(\cdot,\cu_n,e)) \|_{\underline{L}^2(\cu_n)}\indc_{G_{n,\infty}^{s,1}(\delta)}
\leq
C |e| \sum_{j=n}^{\infty} \mathcal{E}_ {s,2} (\cu_{j}) \indc_{G_{n,\infty}^{s,1}(\delta)}
\leq C \delta |e|
\,.
\end{equation*}
It is straightforward to check that, if~$\a \in G_{n,\infty}^{s,1}(\delta)$,~$\psi_e \in \A(\Rd;\a)$ and~$e \mapsto \psi_e$ is linear by the linearity of~$e \mapsto w(\cdot,\cu_n,e)$. Moreover, by~\eqref{e.corr.weaknorms.applied}, Lemma~\ref{l.crude.weaknorms},~\eqref{l.Wsp.vs.Bspp} and the triangle inequality,
\begin{equation} 
\label{e.corr.weaknorms.pre}
3^{-ns} \Bigl( 
\|  \nabla \phi_e \|_{\Hminushat{-s} (\cu_n)}
+
\| \a (\nabla \phi_e + e) - \a_1e \|_{\Hminushat{-s} (\cu_n)} \Bigr)\indc_{G_{n,\infty}^{s,1}(\delta)}
\\
\leq
C |e| \sum_{j=n}^{\infty} \mathcal{E}_ {s,2} (\cu_{j}) \,.
\end{equation}
and, by~\eqref{e.corr.energy.applied}, the energy satisfies
\begin{equation*} 
\bigl| \| \s^{\nicefrac12} (e+\nabla \phi_e) \|_{\underline{L}^2(\cu_m)}^2 - | \s_1^{\nf 12}e|^2 \bigr| \indc_{G_{n,\infty}^{s,1}(\delta)}
 \leq 
C |e|^2 \sum_{j=n}^{\infty} \mathcal{E}_ {s,2} (\cu_{j})
 \,.
\end{equation*}
The above two displays give us both~\eqref{e.corr.weaknorms} and~\eqref{e.corr.energy}. 
Furthermore, by Proposition~\ref{p.big.black.regularity.box} and linearity of~$e \mapsto \psi_e$ we have that there exists a linear map~$e \mapsto P_m[e] \in \Rd$ such that, for every~$n,m \in \N$ with~$m \geq n$ and~$\delta \in (0,c]$,
\begin{align*} 
\lefteqn{
\bigl\| \s^{\nicefrac12} \nabla \bigl(\psi_e - w(\cdot,\cu_m,P_m[e]) \bigr) \bigr\|_{\underline{L}^2(\cu_n)}
\indc_{G_{n,\infty}^{s,1}(\delta)}
} \qquad &
\notag \\ &
\leq
C 3^{- \eta (m-n)}
\bigl\| \s^{\nicefrac12} \nabla(\psi_e - w(\cdot,\cu_m,e)) \ \bigr\|_{\underline{L}^2(\cu_m)}
\indc_{G_{n,\infty}^{s,1}(\delta)}
\leq
C 3^{- \eta (m-n)} |e|
\sum_{j=m}^{\infty} \mathcal{E}_ {s,2} (\cu_{j}) \indc_{G_{n,\infty}^{s,1}(\delta)}
\,.
\end{align*}
By~\eqref{e.CG.Cacc.Poincare}, we deduce that
\begin{align*}
| e - P_m[e] | 
&
\leq 
C \bigl\| \s^{\nicefrac12} \nabla \bigl(w(\cdot,\cu_m,e)  - w(\cdot,\cu_m,P_m[e]) \bigr) \bigr\|_{\underline{L}^2(\cu_n)}
\indc_{G_{n,\infty}^{s,1}(\delta)}
\notag \\ &
\leq
C |e|
\sum_{j=m}^{\infty} \mathcal{E}_ {s,2} (\cu_{j}) \indc_{G_{n,\infty}^{s,1}(\delta)}
\leq C \delta |e|
\,.
\end{align*}
By taking~$\delta$ small enough, we find that~$e \mapsto P_m[e]$ is injective. Therefore we get that for every~$e \in \Rd$ there exists~$e' \in \Rd$ such that
\begin{equation*} 
\| \s^{\nicefrac12} \nabla(\psi_{e'} - w(\cdot,\cu_m,e)) \|_{\underline{L}^2(\cu_n)} 
\indc_{G_{n,\infty}^{s,1}(\delta)}
\leq
C 3^{- \eta (m-n)}
\| \s^{\nicefrac12} \nabla \psi_{e'}  \|_{\underline{L}^2(\cu_n)} 
\sum_{j=n}^{\infty} \mathcal{E}_ {s,2} (\cu_{j}) \indc_{G_{n,\infty}^{s,1}(\delta)}
\,.
\end{equation*}  
Using this,~\eqref{e.corr.energy.applied} and Proposition~\ref{p.big.black.regularity.box} we then deduce that, for every~$m,n \in \Z$ with~$m\geq n$ and for every~$u \in \A(\cu_m)$, there exists~$e \in \Rd$ such that
\begin{equation*} 
\| \s^{\nicefrac12} \nabla(u - \psi_{e} ) \|_{\underline{L}^2(\cu_n)} \indc_{G_{n,\infty}^{s,1}(\delta)}
\leq
C 3^{- \eta (m-k)} 
\| \s^{\nicefrac12} \nabla u \|_{\underline{L}^2(\cu_m)} 
\,.
\end{equation*}
This is~\eqref{e.corr.C.one.eta}. 
Finally, by taking~$\frac12(1+\eta)$ instead of~$\eta$, this implies that if~$\a \in G_{n,\infty}^{s,1}(\delta)$, we obtain the last bullet point~\eqref{e.corr.finite.dim} of Proposition~\ref{p.big.black.corrector.box}. The proof is complete.
\end{proof}

\subsection{Proofs of the main results} 
\label{ss.proofs.main.results}

We present the proofs of Theorems~\ref{t.HC.intro} and~\ref{t.main}, beginning with the latter. The idea is to use Corollary~\ref{c.main.algebraic.quenched} to enforce the hypothesis of the black box propositions (Propositions~\ref{p.big.black.box},~\ref{p.big.black.regularity.box} and~\ref{p.big.black.corrector.box}) after an affine change of variables to the intrinsic geometry of the homogenized matrix. 

\begin{proof}[{Proof of Theorem~\ref{t.main}}]
We assume, without loss of generality, that the anti-symmetric part~$\khom$ of the homogenized matrix~$\ahom$ vanishes, so that~$\ahom = \shom$. Otherwise, we recenter by subtracting~$\khom$ from~$\a(\cdot)$ and apply the observations of Section~\ref{ss.skew}.
We let~$\bfAhom$ be the matrix defined in~\eqref{e.Ahom.final} and we let
\begin{equation*}
\overline{\Pi} \coloneqq | \shom | |\shom^{-1} |^{-1}
\end{equation*}
denote the aspect ratio of the homogenized matrix. Note that~$\overline{\Pi} \leq \Pi$. 

\smallskip

We work in the intrinsic geometry of the homogenized matrix~$\shom$. As in~\eqref{e.cus.k.def}, we let~$\cus_{n} \coloneqq \mathbf{q}_0 (\cu_n)$, where~$\mathbf{q}_0$ is the positive matrix defined in~\eqref{e.q.naught} with~$\m_0 \coloneqq \shom$. 
Recall by~\eqref{e.cus.bound} that 
\begin{equation*}
\tfrac{99}{100} \cu_k
\subseteq \,\cus_k 
\subseteq \tfrac{101}{100}
\overline{\Pi}^{\nf12}
\cu_k
\subseteq 
\cu_{k+n_*} \quad \mbox{where} \ n_* \coloneqq 1 + \bigl\lceil \log_3  \overline{\Pi}^{\nf12} \bigr \rceil\,.
\end{equation*}
We will apply  Propositions~\ref{p.big.black.box},~\ref{p.big.black.regularity.box} and~\ref{p.big.black.corrector.box} after making the affine change of variables~$x \mapsto \mathbf{q}_0x$. Observe that if~$u$ is a solution of 
\begin{equation*}
-\nabla \cdot \a \nabla u = f \quad \mbox{in} \ U\,, 
\end{equation*}
and we define, for~$\mu\in(0,\infty)$,  
\begin{equation*}
\tilde{u} (x) \coloneqq u(\mathbf{q}_0 x)
\,, \quad 
\tilde{\a}(x) \coloneqq
\mu \mathbf{q}_0^{-1}\a(\mathbf{q}_0 x)\mathbf{q}_0^{-1} \,,
\quad 
\tilde{f}(x) \coloneqq \mu f (\mathbf{q}_0 x)\,,
\end{equation*}
then~$\tilde{u}$ is a solution of 
\begin{equation*}
-\nabla \cdot \tilde{\a} \nabla \tilde{u} = \tilde{f} \quad \mbox{in} \ \mathbf{q}_0^{-1}U
\,.
\end{equation*}
We introduce, for~$s\in (0,\infty)$ and~$q\in [1,\infty)$, the random variable 
\begin{equation}
\mathcal{E}_{s,q}(\cus_m;\a,\shom)
\coloneqq
\biggl( 
\css{sq} 
\sum_{l=-\infty}^m 
3^{-sq(m-l)} \!\!
\max_{z \in 3^l\Lat \cap \cus_m}
\max_{|e| =1} \bigl( \bfJ\bigl(z {+} \cus_l,\bfAhom^{-\nf 12} e, \bfAhom^{\nf 12} e \,; \a\bigr) \bigr)^{\nf q2}
\biggr)^{\!\nf1q} 
\,.
\label{e.mathcal.E.cus}
\end{equation}
where~$\Lat \coloneqq \mathbf{q}_0 (\Zd)$, as in~\eqref{e.adapted.lattice}.
We may compare~\eqref{e.mathcal.E.cus} to the definition of~$\mathcal{E}_{s,q} (\cu_m;\a, \shom)$  in~\eqref{e.mathcal.E.def}. 
In fact, by the above change of variables, we find that, for every~$\mu\in (0,\infty)$, 
\begin{equation}
\mathcal{E}_{s,q}(\cus_m;\a,\shom)
=
\mathcal{E}_{s,q}\bigl(\cu_m ; \mu \mathbf{q}_0^{-1}\a\mathbf{q}_0^{-1} ,\mu \mathbf{q}_0^{-1}\shom\mathbf{q}_0^{-1} \bigr)
\,.
\label{e.change.of.var.mathcal.E}
\end{equation}
We will choose~$\mu\coloneqq |\shom^{-1} |$. In this case, the estimate~\eqref{e.szero.vs.qzero} yields 
\begin{equation*}
\frac{99}{100} \Id 
\leq 
\mu \mathbf{q}_0^{-1}\shom\mathbf{q}_0^{-1}  
\leq 
\frac{101}{100} \Id \,,
\end{equation*}
so we will apply each of the propositions with ellipticity constants~$\lambda$ and~$\Lambda$ very close to one. We also define an event analogous to the one in~\eqref{e.G.max} by 
\begin{equation} 
\label{e.G.max.encore}
\tilde{G}_{n,m}^{s,t}(\delta, \a_1) 
\coloneqq
\biggl\{ \a \in L^1(\cus_m;\R^{d\times d}_{+}) \,:\,
\biggl( \sum_{k=n}^m  \bigl(\mathcal{E}_{s,2}(\cus_k;\a,\a_1)  \bigr)^t\biggr)^{\! \nicefrac1t}
\leq 
\delta
\biggr\}
\,.
\end{equation}
In view of~\eqref{e.change.of.var.mathcal.E}, the change of variables implies that
\begin{equation}
\a \in \tilde{G}_{n,m}^{s,t}(\delta,\ahom) 
\quad \iff \quad 
\mu \mathbf{q}_0^{-1}\a\mathbf{q}_0^{-1}
\in G_{n,m}^{s,t}(\delta, \mu \mathbf{q}_0^{-1}\shom\mathbf{q}_0^{-1} )
\,. 
\label{e.change.of.var.events.G}
\end{equation}

Before applying the propositions, we need to estimate~$\mathcal{E}_{s,2} (\cus_m;\a, \shom)$. 
We will show that, for every~$s \in (\nf\gamma2 , \nf12)$, if we select~$\rho \coloneqq \frac12(1+s) \in (s,1)$ and~$\delta>0$ and let~$\mathcal{R}_{\delta,\rho}$ and~$\theta$ be as in the statement of Corollary~\ref{c.main.algebraic.quenched}, then, for every~$m\in\N$, we have 
\begin{equation}
\label{e.control.mathcal.E.cus}
3^{m} \geq \max\bigl\{ \S , \mathcal{R}_{\delta,\rho}  \}
\implies  
\mathcal{E}_{s,2} ( \cus_{m};\a,\shom )^2 
\leq
C \delta (1-2s)^{-2}   \overline{\Pi}^{\frac12(1+2s)}
\biggl( \frac{3^{m}}{\max\{ \mathcal{S},\mathcal{R}_{\delta,\rho}  \} } \biggr)^{\!-\theta}  
\,.
\end{equation}
To prove~\eqref{e.control.mathcal.E.cus}, we use a subadditivity argument to compare it to~$\mathcal{E}_{s,2} (\cu_{m+n_*};\a, \shom)$ and then appeal to~\eqref{e.impl.to.mathcal.E}. 
As in the proof of Lemma~\ref{l.crude.moments}, we may partition every cube of the form~$y+\cus_k$ into~$\{ V_j(k,y) \,:\, j \in \Z \cap (-\infty,k] \}$ where each set~$V_j(k,y)$ is a union of cubes of the form~$z+ \cu_j$ with~$z\in 3^j\Zd$ and has Lebesgue measure satisfying~$|V_j(k,y)|\leq C \overline{\Pi}^{\nf12}3^{j-k} |\cus_k|$. 
We deduce that
\begin{align*}
\max_{z\in 3^k \Lat \cap \cus_m}
\bfJ\bigl(z {+} \cus_k,\bfAhom^{-\nf 12} e, \bfAhom^{\nf 12} e \,; \a \bigr)
&
\leq 
\sum_{j=-\infty}^k
\frac{|V_j(k,y)|}{|\cus_k|} 
\max_{z\in 3^j\Zd\cap \cu_{m+n_*}}
\bfJ\bigl(z {+} \cu_j,\bfAhom^{-\nf 12} e, \bfAhom^{\nf 12} e \,; \a \bigr) 
\notag\\ & 
\leq 
C \overline{\Pi}^{\nf12}
\sum_{j=-\infty}^k
3^{j-k}
\max_{z\in 3^j\Zd\cap \cu_{m+n_*}}
\bfJ\bigl(z {+} \cu_j,\bfAhom^{-\nf 12} e, \bfAhom^{\nf 12} e \,; \a \bigr) 
\end{align*}
and therefore, using Fubini for sums, 
\begin{align*}
\mathcal{E}_{s,2}(\cus_m;\a,\shom)^2
&
\leq
C s \overline{\Pi}^{\nf12}
\sum_{k=-\infty}^m 
3^{-2s(m-k)} 
\sum_{j=-\infty}^k
3^{j-k} 
\max_{z\in 3^j\Zd\cap \cu_{m+n_*}}
\bfJ\bigl(z {+} \cu_j,\bfAhom^{-\nf 12} e, \bfAhom^{\nf 12} e \,; \a \bigr) 
\notag \\ & 
\leq 
C s (1-2s)^{-1}  \overline{\Pi}^{\nf12}
\sum_{j=-\infty}^m
3^{-2s(m-j)} 
\max_{z\in 3^j\Zd\cap \cu_{m+n_*}}
\bfJ\bigl(z {+} \cu_j,\bfAhom^{-\nf 12} e, \bfAhom^{\nf 12} e \,; \a \bigr)
\notag \\ & 
\leq  
C (1-2s)^{-1}  \overline{\Pi}^{\nf12}
3^{2s n_*} 
\mathcal{E}_{s,2}(\cu_{m+n_*} ; \a,\shom)^2
\notag \\ & 
\leq 
C (1-2s)^{-1}  \overline{\Pi}^{\frac12(1+2s)}
\mathcal{E}_{s,2}(\cu_{m+n_*} ; \a,\shom)^2
\,.
\end{align*}
Applying~\eqref{e.impl.to.mathcal.E},  yields, for every~$m\in \N$, and~$s \in ( \nf\rho2, \nf12)$, 
\begin{align*}
3^{m} \geq \max\bigl\{ \S , \mathcal{R}_{\delta,\rho}  \}
&
\quad \implies \quad 
3^{m+n_*} \geq \max\bigl\{ \S , \mathcal{R}_{\delta,\rho}  \}
\notag \\ & 
\quad \implies \quad 
\mathcal{E}_{s,2} ( \cu_{m+n_*};\a,\shom )^2 
\leq 
\delta 
(\rho - 2s)^{-1} 
\biggl( \frac{3^{m+n_*}}{\max\{ \mathcal{S},\mathcal{R}_{\delta,\rho} \} } \biggr)^{\!-\theta}  
\notag \\ & 
\quad \implies \quad  
\mathcal{E}_{s,2} ( \cus_{m};\a,\shom )^2 
\leq
C \delta (1-2s)^{-2}   \overline{\Pi}^{\frac12(1+2s)}
\biggl( \frac{3^{m}}{\max\{ \mathcal{S},\mathcal{R}_{\delta,\rho}  \} } \biggr)^{\!-\theta}  
\,.
\end{align*}
This completes the proof of~\eqref{e.control.mathcal.E.cus}. 

\smallskip

If we define
\begin{equation*}
\X_s \coloneqq
\bigl(C_{\eqref{e.control.mathcal.E.cus}} 
(1-2s)^{-2}   \overline{\Pi}^{\frac12(1+2s)}
\bigr)^{\nf1\theta} 
\max\{ \mathcal{S},\mathcal{R}_{\delta,\rho} \}
\,, 
\end{equation*}
then we have 
\begin{equation*}
3^m \geq \X_s \quad \implies \quad
\mathcal{E}_{s,2} ( \cus_{m};\a,\shom )^2 
\leq
\delta 
\Bigl( \frac{3^{m}}{\X_s } \Bigr)^{\!-\theta}
\,.
\end{equation*}
By~\eqref{e.control.mathcal.E.cus},\begin{align}
3^n\geq \X_s
& \quad \implies \quad
\max_{k\in \Z\cap [n,\infty) }
\mathcal{E}_{s,2}(\cus_k;\a,\shom) 
\leq 
\sum_{k=n}^\infty 
\mathcal{E}_{s,2}(\cus_k;\a,\shom) 
\leq 
C \delta \Bigl( \frac{3^{n}}{\X_s } \Bigr)^{\!-\theta}
\notag \\
& \quad \implies \quad
\a \in \tilde{G}_{n,\infty}^{s,t}(C\delta (\X_s3^{-n})^\theta ,\ahom) \,, 
\quad \forall t \in [1,\infty]
\,.
\label{e.control.events.G}
\end{align}
By~\eqref{e.alpha.kappa.def},~\eqref{e.cor.sec.four.R} and~\eqref{e.theta.def}, and the selection~$\rho = \frac12(1+s)$, the random variable~$\X_s$ satisfies~\eqref{e.thm.B.X.integrability}. There is additional dependence of the constants~$C$ and~$\theta$ on~$s$ compared to what is stated in the theorem, but this can be removed by simply taking~$s\coloneqq \frac12(1+\gamma)$. This confirms the first bullet of the theorem statement. 

\smallskip

We are now in a position to apply the deterministic black boxes developed in previous subsections. First, in view of~\eqref{e.control.mathcal.E.cus} and the change of variables described above, an application of Proposition~\ref{p.big.black.box} immediately yields the second bullet in the statement of the theorem, namely the estimate~\eqref{e.big.homogenization.estimate}. Here we use both of the statements of the proposition in combination, as in~\eqref{e.combine.bbb.statements}. 

\smallskip

Second, in view of~\eqref{e.control.events.G}, if we take~$\delta$ sufficiently small, then an application of Proposition~\ref{p.big.black.corrector.box} gives the third bullet in the theorem statement, in particular the estimate~\eqref{e.corrector.estimates} and the Liouville result characterizing the space~$\mathcal{A}_1(\Rd)$. In addition,~\eqref{e.corr.C.one.eta} implies~\eqref{e.largescale.C1theta}.

\smallskip

Finally, applying Proposition~\ref{p.big.black.regularity.box} gives us~\eqref{e.largescale.C01}, thereby establishing the last bullet of the theorem. This completes the proof of the theorem. 
\end{proof}

\begin{proof}[{Proof of Theorem~\ref{t.HC.intro}}]
The statement is nearly a special case of Theorem~\ref{t.main}. 
The assumptions of uniform ellipticity and finite range dependence give us the hypotheses of Theorem~\ref{t.main} with parameters
\begin{equation*}
\gamma = 0
\,, \quad 
\mathcal{S} = 0 
\,, \quad 
\Psi(t)  = \exp(ct^2) 
\,, \quad 
\beta = 0 
\,, \quad 
\nu = \tfrac d2 
\,, \quad 
\Psi_\S \equiv + \infty
\,, \quad 
(\lambda_0,\Lambda_0) = ( \lambda,\Lambda)\,.
\end{equation*}
The length scale~$L$ in~\eqref{e.length.scale.L} then satisfies
\begin{equation*}
L \leq
\exp\biggl( C\log^2 \Bigl(1+\frac{\Lambda}{\lambda} \Bigr) \biggr)\,,
\end{equation*}
and we observe that~\eqref{e.thm.B.X.integrability} implies~\eqref{e.thm.A.estimate.for.X}. 
The assertions in the second and third bullets of Theorem~\ref{t.main} then imply the claims of Theorem~\ref{t.HC.intro} in the special case that~$f = 0$ and~$g\in H^{2}$ in~\eqref{e.BVPs}. Indeed, we obtain stronger quantitative estimates than those stated in Theorem~\ref{t.HC.intro}. 
The case of nonzero~$f$ and~$g$ only in~$H^1$ in~\eqref{e.BVPs} follow the estimates on the first-order correctors given in the fourth bullet of Theorem~\ref{t.main} and a standard two-scale expansion argument which can be found for example in~\cite{AKMbook} or~\cite{AK.Book}. This completes the proof. 
\end{proof}

\section{Equations with weaker mixing assumptions}
\label{ss.pigeon.prime}

Our arguments do not require a general mixing condition to hold; we just need a linear concentration for sums of~$\bfA(U_i)$ indexed over certain finite, disjoint~$\{ U_i\}$ (as in~\ref{a.CFS}).  This turns out to be essential for the application considered in~\cite{ABK.SD}, because in that paper, the correlations of the field decay very slowly, and the correlations of the coarse-grained matrices are actually much better. The more general assumption is stated in~\ref{a.CFS.weaker}, below. 

\smallskip

We also allow the ellipticity condition to be slightly weaker by permitting the upper bounds on ellipticity to be given in terms of finite moments (rather than deterministic above a minimal scale).
The assumption is: 

\begin{enumerate}[label=(\textrm{P\arabic*'})]
\setcounter{enumi}{1}
\item \emph{Ellipticity of coarse-grained coefficients.} 
\label{a.ellipticity.weaker}
There exists~$\gamma\in [0,1)$,~$H \in [1,\infty)$,~$D \in [0,\infty)$,~$m_{2} \in \N$, a nondecreasing function~$\Psi_{\S}:\R_+ \to \R_+$ with constants~$K_{\Psi_{\S}} \in [1,\infty)$ and~$ p_{\Psi_\S} \in (2,\infty)$ satisfying, for every~$p \in [2,p_{\Psi_\S}]$, 
\begin{equation*}
s^p \leq K_{\Psi_{\S}}^{3\lceil p\rceil^2} \frac{\Psi_{\S}(ts)}{\Psi_{\S}(t)} \,, \quad \forall t,s\in [1,\infty)\,,
\end{equation*}
such that, for every~$m\in\N$ with~$m \geq m_2$, we have
\begin{equation*}
\sup_{k \in \Z \cap (-\infty,m]} 3^{\gamma(k-m)}  \max_{z\in 3^{k} \Zd  \cap \cu_m} \bigl|  \bigl( \bfAhom^{-\nicefrac12}\!(\cu_m)  \bfA(z + \cu_k) \bfAhom^{-\nicefrac12}\!(\cu_m) - \Itwod \bigr)_+ \bigr| 
\leq
\O_{\Psi_{\S}}(H m^D)
\,.
\end{equation*}
\end{enumerate} 

\begin{enumerate}[label=(\textrm{P\arabic*'})]
\setcounter{enumi}{2}
\item \emph{Concentrations for sums ($\CFS$).} 
\label{a.CFS.weaker}
There exist~$\beta \in \left[0,1\right)$,~$L_1,L_2 \in [1,\infty)$,~$m_{3} \in \N$, a nonincreasing, positive sequence~$\N \ni n \mapsto \omega_n$ with~$\lim_{n\to \infty} \omega_n=0$, an increasing function~$\Psi:\R_+ \to\R_+$ and constants~$K_{\Psi} \in [1,\infty)$ and~$p_{\Psi} \in (d,\infty)$ satisfying the growth condition, for every~$p \in (1,p_{\Psi}]$,
\begin{equation}
\label{e.Psi.growth.prime}
s^p \leq 
K_{\Psi}^{3\lceil p\rceil^2} \frac{\Psi(ts)}{\Psi(t)} 
\,, \quad \forall t,s\in [1,\infty) \,,
\end{equation}
such that, for every~$m,n\in\N$ with~$n \geq m_3$ and~$\beta m < n < m - L_1 \log (L_2 n)$,
\begin{equation} 
\label{e.CFS.weaker}
\biggl| 
 \avsum_{z \in  3^n \Zd \cap \cu_m}  \! \! \!
\bfAhom^{-\nicefrac12}(\cu_n)
\bfA(z+\cu_n)\bfAhom^{-\nicefrac12}(\cu_n)- \Itwod \biggr|
\leq
\O_{\Psi}( \omega_n )   \,.
\end{equation}
\end{enumerate} 

Finally, in this section, we also make an isotropy assumption for simplicity (to avoid working with the adapted cubes~$\cus_n$).

\begin{enumerate}[label=(\textrm{P\arabic*})]
\setcounter{enumi}{3}
\item \emph{Dihedral symmetry:} the law~$\P$ is invariant under negation, reflections and permutations across the coordinate planes. That is, for every matrix~$R$ with exactly one~$\pm 1$ in each row and column and $0$s elsewhere, the law of the conjugated coefficient~$R^t \a(R \cdot) R$ is the same as that of~$\a$.
\label{a.iso}
\end{enumerate} 

Due to the isotropy assumption~\ref{a.iso}, for every~$n\in\N$, the coarse-grained matrices~$\shom(\cu_n)$ and~$\shom_*(\cu_n)$ are both scalar multiples of~$\Id$. Due to the negation symmetry assumption,~$\a$ and~$\a^t$ have the same law and therefore~$\khom(\cu_n) = 0$. We deduce that~$\bfAhom(\cu_n)$ has scalar block matrices. In particular, if~$\bfE =\bfAhom(\cu_n)$, then~$\m_0 = \Id$ and therefore also~$\mathbf{q}_0 = \Id$. Thus, the adapted cubes play no role in this case.

\smallskip

The analog of the estimate~\eqref{e.algebraic.Thetam.main} in~Theorem~\ref{t.main.algebraic} under these assumptions reads as follows. 

\begin{theorem}
\label{t.weaker.P3}
Assume that~$\P$ satisfies~\ref{a.stationarity},~\ref{a.ellipticity.weaker},~\ref{a.CFS.weaker} and~\ref{a.iso}. There exist constants~$C(d)<\infty$ and~$c(d),\alpha(d) \in (0,1)$ such that, by defining
\begin{equation*} 
\Upsilon_1 
 \coloneqq  
\frac{C K_{\Psi}^{4d^2}}{\min\{d+1,p_\Psi\} - d}
\,,\quad 
\Upsilon_2
 \coloneqq  
\frac{C}{\min\{3,p_{\Psi_\S}\} - 2} 
\exp \biggl(C \Bigl( L_1 \log L_2 + \frac{D+\log (H +  K_{\Psi_\S})}{1-\gamma} \Bigr)\biggr)  
\,,
\end{equation*}
then, for every~$m,m_0 \in \N$ satisfying the conditions~$m \geq \max\{m_2,m_3 \}$, 
\begin{equation*} 
\omega_{m}^2
\leq 
\Upsilon_1^{-1}
\qand
m_0
\geq  
\frac{CD}{(1-\beta)(1-\gamma)^2} \log \bigl( (\Upsilon_1 + \Upsilon_2) (m_2 \vee m_3 + m_0) \Theta_0  \bigr) \log( 3\Theta_0) \,,
\end{equation*}
we have, for~$\kappa  \coloneqq  \min\{ \alpha, 1-\gamma\}$ and for every~$n \in \N$ with~$n \geq m + 4m_0$, 
\begin{equation*} 
\Theta_n -1 \leq 
\Upsilon_1 \omega_{m}^2 + C 3^{-\kappa(n-m-4m_0)}
\,.
\end{equation*}
\end{theorem}

We have done most of the work for this theorem already in Sections~\ref{s.firstpigeon} and~\ref{ss.algebraic}. For this reason, we will be somewhat brief with the details.  

\smallskip

We first demonstrate how  assumptions~\ref{a.stationarity},~\ref{a.ellipticity.weaker},~\ref{a.CFS.weaker} and~\ref{a.iso} yield information about  weak norms. The main technical tool is  Lemma~\ref{l.weaknorms.moreproto} and estimates proved in Section~\ref{s.firstpigeon}, such as~\eqref{e.weaknorms.proto.applied} and~\eqref{e.e.weaknorms.proto.applied.second}.

\begin{lemma} 
\label{l.weaknorms.prime}
There exists a constant~$C(d)<\infty$ such that for every~$p,q\in\Rd$ and for every~$n,h \in \N$ with~$h \geq \max\{ (\beta \vee \nf 12) n , m_2 , m_3\} $, 
and for~$\bfE  \coloneqq \bfAhom(\cu_h)$ and~$\m_0,\mathbf{M}_0$ defined by~\eqref{e.m.naught.def} and~\eqref{e.bigM.naught.def}, we have that
\begin{align}  
\label{e.Hminus.s.prime}
\lefteqn{
3^{-n} 
\E\Biggl[ \biggl[
\mathbf{M}_0^{\nicefrac12} 
\begin{pmatrix} 
\nabla v_n -(\nabla v_n)_{\cu_n}   \\ 
\a \nabla v_n-  (\a\nabla v_n)_{\cu_n} 
\end{pmatrix} 
\biggr]_{\Besov{-\nf12}{2}{1}(\cu_{n})}^2
\Biggr]
} \qquad &
\notag \\ &
\leq
C \Theta_h^{\nicefrac12} 
\Bigl | 
\bfAhom^{\nicefrac 12}(\cu_h)  
\Bigl(
\begin{matrix} 
-p \\ q
\end{matrix} 
\Bigr)
\Bigr|^2
\sum_{k=h}^n  3^{\frac12(k-n)}  
\E\Bigl[  \bigl| \bfAhom^{-\nicefrac12}(\cu_h) \bigl( \bfA(\cu_k) - \bfA(\cu_n) \bigr) \bfAhom^{-\nicefrac12}(\cu_h) \bigr|^2 
\Bigl]
\notag \\ & \qquad 
+
C \Theta_h^{\nicefrac12} \Bigl | 
\bfAhom^{\nicefrac 12}(\cu_h)  
\Bigl(
\begin{matrix} 
-p \\ q
\end{matrix} 
\Bigr)
\Bigr|^2
\sum_{k=h}^n  3^{\frac12(k-n)}  
\bigl| \bfAhom^{-\nicefrac12}(\cu_h)  \bigl(\bfAhom(\cu_k) - \bfAhom(\cu_n)  \bigr)  
\bfAhom^{-\nicefrac12}(\cu_h)
\bigr|
\notag \\ & \qquad 
+
C\Theta_h^{\nicefrac12} \Bigl | 
\bfAhom^{\nicefrac 12}(\cu_h)  
\Bigl(
\begin{matrix} 
-p \\ q
\end{matrix} 
\Bigr)
\Bigr|^2
\biggl( 
\frac{K_{\Psi}^{4d^2} \omega_h^2 }{\min\{d+1,p_\Psi\} - d}    + \frac{ 3^{- \frac12 (1-\gamma)(n- h - K \log h - h_c)} }{\min\{3,p_{\Psi_\S}\} - 2}  
\biggr)
\,,
\end{align}
where
\begin{equation} 
\label{e.L.and.hc}
K  \coloneqq   L_1 + \frac{16D}{1-\gamma}
\qand
h_c  \coloneqq  2 K \log (2L_2) + \frac{100}{1-\gamma} (D+\log H +  K_{\Psi_\S} )
\,.
\end{equation}
\end{lemma}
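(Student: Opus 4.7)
The plan is to mimic the proof of Lemma~\ref{l.weaknorms}, but with the anchor matrix~$\bfE$ chosen at the mesoscale~$h$ (rather than at the top scale~$m$) and with the two auxiliary inputs~\eqref{e.this.is.so.nice.pre}--\eqref{e.this.is.so.nice.pre.second} replaced by quantitative versions of the weaker assumptions~\ref{a.ellipticity.weaker} and~\ref{a.CFS.weaker}. First, by the isotropy assumption~\ref{a.iso}, the matrices~$\shom(\cu_h),\shom_*(\cu_h),\khom(\cu_h)$ are each scalar multiples of~$\Id$; consequently~$\m_0$ and the rotation~$\mathbf{q}_0$ from~\eqref{e.m.naught.def}--\eqref{e.q.naught} are also scalar, so the adapted cubes~$\cus_n$ coincide (up to a harmless scaling absorbed into constants) with Euclidean cubes~$\cu_n$. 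In particular, we may apply Lemma~\ref{l.weaknorms.moreproto} with~$\mathbf{E}:=\bfAhom(\cu_h)$, $\mathbf{M}:=\mathbf{M}_0$, $\rho=\gamma$, $s=\nicefrac12$, and with the parameter named~``$h$'' in Lemma~\ref{l.weaknorms.moreproto} taken equal to~$n-h$.

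The next step is to verify that the prefactors in~\eqref{e.weaknorms.moreproto} are controlled by~$\Theta_h$. Exactly as in~\eqref{e.mnaught.vs.bnaught}, the fact that~$\mathbf{E}=\bfAhom(\cu_h)$ together with~\eqref{e.bfA.bounds.diag} yields~$|\mathbf{M}^{-\nicefrac12}\mathbf{E}\mathbf{M}^{-\nicefrac12}|\leq C\Theta_h^{\nicefrac12}$. Squaring~\eqref{e.weaknorms.moreproto}, taking expectations, and applying H\"older's inequality to the two sums on the right (as in the passage from~\eqref{e.weaknorms.moreproto} to~\eqref{e.weaknorms.proto.applied} with~$\ep=\nicefrac12$) produces precisely the first two sums on the right side of~\eqref{e.Hminus.s.prime}, together with a remainder term
\begin{equation*}
R := C\Theta_h^{\nicefrac12}\Bigl|\bfAhom^{\nicefrac12}(\cu_h)\Bigl(\begin{matrix}-p\\q\end{matrix}\Bigr)\Bigr|^2\Bigl(3^{-\nicefrac12(1-\gamma)(n-h)}+\E\bigl[\mathcal{M}_{n,\gamma}^2\indc_{\{\mathcal{M}_{n,\gamma}>1\}}\bigr]\Bigr)\,,
\end{equation*}
where~$\mathcal{M}_{n,\gamma}$ is the random variable from~\eqref{e.event.moreproto} built with~$\mathbf{E}=\bfAhom(\cu_h)$. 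The contributions of the variance and bias sums require no further work: they follow by applying the triangle inequality in the form~$|\bfA(\cu_k)-\bfAhom(\cu_k)|\leq|\bfA(\cu_k)-\bfA(\cu_n)|+|\bfAhom(\cu_n)-\bfAhom(\cu_k)|$ plus~$\E[|\bfA(\cu_n)-\bfAhom(\cu_n)|^2]$, absorbing the latter back into the~$k=n$ contribution.

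The main work therefore lies in dominating~$R$ by the last line of~\eqref{e.Hminus.s.prime}; this is the step that is genuinely different from the arguments of Section~\ref{s.firstpigeon}, since the old minimal scale~$\S$ is no longer available. My plan is to split the analysis at the threshold~$n_* := h + K\log h + h_c$ with~$K,h_c$ as in~\eqref{e.L.and.hc}. For~$n\leq n_*$, the claimed bound is vacuous up to a universal constant and follows trivially from the deterministic upper bound~$\mathcal{M}_{n,\gamma}\leq C\Theta_h$ provided by~\eqref{e.bfA.bounds} applied scale-by-scale. For~$n> n_*$, I estimate~$\mathcal{M}_{n,\gamma}$ by splitting the supremum over~$k\in(-\infty,n]$ into two regimes: the mesoscopic regime~$k\in[h+L_1\log(L_2 k),n]$, in which~\ref{a.CFS.weaker} supplies an~$\O_{\Psi}(\omega_k)\subseteq\O_{\Psi}(\omega_h)$ bound on each individual coarse-grained ratio (here using the monotonicity of~$\omega_k$); and the microscopic regime~$k<h+L_1\log(L_2 k)$, for which~\ref{a.ellipticity.weaker} gives~$\O_{\Psi_{\S}}(Hn^D)$ control with the decay factor~$3^{\gamma(k-n)}$. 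A union bound over the at-most~$n$ many scales~$k$, combined with the polynomial moment growth~\eqref{e.Psi.S.growth.prime} and~\eqref{e.Psi.growth.prime} (applied with exponents~$p=\min\{d{+}1,p_{\Psi}\}$ and~$p=\min\{3,p_{\Psi_\S}\}$ respectively, to produce the two divisors appearing in the last line of~\eqref{e.Hminus.s.prime}), yields after a computation paralleling~\eqref{e.this.is.so.nice.pre}
\begin{equation*}
\E\bigl[\mathcal{M}_{n,\gamma}^2\indc_{\{\mathcal{M}_{n,\gamma}>1\}}\bigr]
\leq
\frac{C K_{\Psi}^{4d^2}\omega_h^2}{\min\{d{+}1,p_\Psi\}-d}
+
\frac{C\, 3^{-(1-\gamma)(n-n_*)}}{\min\{3,p_{\Psi_\S}\}-2}\,,
\end{equation*}
which combined with the trivial bound~$3^{-\nicefrac12(1-\gamma)(n-h)}\leq 3^{-\nicefrac12(1-\gamma)(n-n_*)}$ gives the result.

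The hard part will be the microscopic regime of the last step: the moment constants coming from~\eqref{e.Psi.S.growth.prime} grow like~$K_{\Psi_\S}^{3\lceil p\rceil^2}$ and~$Hn^D$, so the constants~$K$ and~$h_c$ in~\eqref{e.L.and.hc} must be chosen with enough slack that the factor~$Hn^D$ from~\ref{a.ellipticity.weaker} is dominated by the power~$3^{\gamma(k-n)}$ exactly when~$n-k\geq K\log k+h_c$; this is what forces~$K$ to scale like~$D/(1-\gamma)$ and~$h_c$ like~$(D+\log H+\log K_{\Psi_\S})/(1-\gamma)$, matching~\eqref{e.L.and.hc}. Beyond this bookkeeping, everything else is algebraic rearrangement already carried out in the uniform-mixing case.
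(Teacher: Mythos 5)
Your overall architecture is the right one and is essentially the paper's: apply Lemma~\ref{l.weaknorms.moreproto} with $\mathbf{E}=\bfAhom(\cu_h)$ and $\mathbf{M}=\mathbf{M}_0$, use $|\mathbf{M}_0^{-\nicefrac12}\bfAhom(\cu_h)\mathbf{M}_0^{-\nicefrac12}|\le C\Theta_h^{\nicefrac12}$, square and take expectations as in~\eqref{e.weaknorms.proto.applied}, and control the random variable~$\mathcal{M}_{n,\rho}$ of~\eqref{e.event.moreproto} by splitting scales near $h+L_1\log(L_2h)$, using subadditivity plus~\ref{a.CFS.weaker} on mesoscales and~\ref{a.ellipticity.weaker} on microscales. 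The genuine gap is your parameter choice $\rho=\gamma$, which breaks the key estimate in both regimes. On mesoscales, for each $k$ you must control the maximum over the $3^{d(n-k)}$ cubes $z\in 3^k\Zd\cap\cu_n$; the union bound costs $3^{d(n-k)}$ while the moment growth~\eqref{e.Psi.growth.prime} only buys a tail $t^{-\eta}3^{-\rho\eta(n-k)}$ with $\eta=\min\{d+1,p_\Psi\}$, so you need $\rho\eta>d$ --- which fails whenever $\gamma\le d/\eta$, and $\gamma\in[0,1)$ is arbitrary. On microscales the situation is worse: assumption~\ref{a.ellipticity.weaker} controls exactly the quantity $\sup_k 3^{-\gamma(n-k)}\max_z|(\cdot)_+|$, i.e.\ the entire $\gamma$-weight is already spent inside the definition of $\mathcal{M}_{n,\gamma}$, so what it yields is $\O_{\Psi_\S}(Hn^D)$ with \emph{no} decay in $n-h$; and since there is no minimal scale $\S$ in this setting, the event $\{\mathcal{M}_{n,\gamma}>1\}$ is not rare, so $\E\bigl[\mathcal{M}_{n,\gamma}^2\indc_{\{\mathcal{M}_{n,\gamma}>1\}}\bigr]$ is of size $(Hn^D)^2$ rather than geometrically small. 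Your claimed intermediate bound containing the factor $3^{-(1-\gamma)(n-n_*)}$ therefore cannot be derived along the route you describe.

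The paper's remedy is precisely to take $\rho$ strictly above $\gamma$, $d/\eta$ and $2/\theta$: with $\eta=\min\{p_\Psi,d+1\}$, $\theta=\min\{p_{\Psi_\S},3\}$, $\rho'=\gamma\vee\nicefrac d\eta\vee\nicefrac2\theta$ and $\rho=\frac{1+\rho'}2$ (so $s=\nicefrac12>\nicefrac\rho2$ is still admissible in Lemma~\ref{l.weaknorms.moreproto}), one has $\rho\eta>d$ to close the mesoscale union bound, $\rho\theta>2$ to make the second moment finite and produce the divisor $\min\{3,p_{\Psi_\S}\}-2$, and the surplus $\rho-\gamma\ge\frac{1-\gamma}2$ to convert the $\O_{\Psi_\S}(Hn^D)$ bound from~\ref{a.ellipticity.weaker} into the decay $3^{-\frac12(1-\gamma)(n-h-L_1\log(L_2h))}$ of~\eqref{e.this.is.so.nice.again}, whose prefactors $H^2n^{2D}K_{\Psi_\S}^{36}$ are then absorbed into the shift $K\log h+h_c$ of~\eqref{e.L.and.hc}. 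Two smaller inaccuracies: the ``trivial deterministic bound $\mathcal{M}_{n,\gamma}\le C\Theta_h$'' you invoke for $n\le n_*$ does not exist here, since no pointwise bounds on $\a$ are assumed and~\eqref{e.bfA.bounds} only bounds $\bfA(U)$ by spatial averages of an unbounded field (this case needs no separate treatment anyway); and the mesoscale bound from~\ref{a.CFS.weaker} is obtained by partitioning $z+\cu_k$ at scale $h$ and comes out as $\O_\Psi(\omega_h)$, not $\O_\Psi(\omega_k)$ --- harmless, but worth stating correctly.
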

\begin{proof} 
Fix~$p,q\in\Rd$ and~$n,h \in \N$ satisfying~$h \geq \max\{ (\beta \vee \nf 12) n , m_2 , m_3\} $. Set
\begin{equation*}  
\rho'  \coloneqq  \max\Bigl\{ \gamma\,,  \frac{d}{ \min\{d+1, p_\Psi\} } \,, \frac{2}{\min\{3, p_{\Psi_\S}\}}  \Bigr\}
\qand
\rho  \coloneqq  \frac{1+\rho'}{2}
\,.
\end{equation*}
Let~$\mathbf{E} = \bfE  \coloneqq  \bfAhom(\cu_h)$ and~$\mathbf{M} \coloneqq  \mathbf{M}_0$. 
We have that~$| \mathbf{M}^{-\nicefrac12}\mathbf{E} \mathbf{M}^{-\nicefrac12}| \leq C \Theta_h^{\nicefrac12}$. Lemma~\ref{l.weaknorms.moreproto} is applicable\footnote{The geometry of the adapted cubes does not play any particular role in the proof of Lemma~\ref{l.weaknorms.moreproto}.} with~$\cus_{ k} = \cu_k$ for every~$k\in \Z$, and we will estimate different terms appearing on the right in~\eqref{e.weaknorms.moreproto} and estimate them as in~\eqref{e.weaknorms.proto.applied} (with~$l = n-h$  and~$\ep = \nf 12$ there). Notice that the first two terms on the right in~\eqref{e.Hminus.s.prime} are exactly as in~\eqref{e.weaknorms.proto.applied} with~$\bfE = \bfAhom(\cu_h)$.  We show that 
\begin{equation} 
\label{e.mathcalM.m.rho.bound}
\E\bigl[\mathcal{M}_{n,\rho}^2 \bigr] 
\leq
C
\biggl( 
\frac{K_{\Psi}^{4d^2} \omega_h^2 }{\min\{d+1,p_\Psi\} - d}    + \frac{ K_{\Psi_\S}^{36}H^2 n^{2D} 3^{- \frac12 (1-\gamma)(n-h - L_1 \log (L_2 h))} }{\min\{3,p_{\Psi_\S}\} - 2}  
\biggr)
\,,
\end{equation}
from which~\eqref{e.Hminus.s.prime} follows by~\eqref{e.weaknorms.proto.applied} and by~$n^{2D} \leq 3^{\frac14(1-\gamma) K \log h}$ using the lower bound~$h \geq \frac12 n$. To show~\eqref{e.mathcalM.m.rho.bound}, in view of the definition~\eqref{e.event.moreproto}, we claim that
\begin{multline} 
\label{e.this.is.so.nice.again}
\E \Biggl[ \biggl( \max_{k \in \Z \cap (-\infty,n]}  3^{-\rho(n-k)}
 \max_{z\in 3^{k}\Lat  \cap \cu_n}  \Bigl| \bigl( \bfAhom^{-\nicefrac12}(\cu_h) ( \bfA(z + \! \cu_k) -\bfAhom(\cu_h) ) \bfAhom^{-\nicefrac12}(\cu_h)  \bigr)_+\Bigr| \biggr)^{\! 2} \Biggr]
\\  
\leq 
C
\biggl( 
\frac{K_{\Psi}^{4d^2} \omega_h^2 }{\min\{d+1,p_\Psi\} - d}    + \frac{ K_{\Psi_\S}^{36}H^2 n^{2D} 3^{- \frac12 (1-\gamma)(n-h - L_1 \log (L_2 h))} }{\min\{3,p_{\Psi_\S}\} - 2}  
\biggr)
 \,.
\end{multline}
On the one hand, by subadditivity and~\ref{a.CFS.weaker} we have that, for every~$k\in\N$ with~$\beta k \leq h \leq k$ and~$k \geq h + L_1 \log (L_2 h)$, 
\begin{align*} 
\notag
\lefteqn{
\Bigl| \bigl( \bfAhom^{-\nicefrac12}(\cu_h) ( \bfA(z + \! \cu_k) -\bfAhom(\cu_h) ) \bfAhom^{-\nicefrac12}(\cu_h)  \bigr)_+\Bigr|
} \qquad &
\notag \\ &
\leq 
\biggl| 
\avsum_{z'\in z+3^{h} \Lat \cap \cu_{k} }
\bfAhom^{-\nicefrac12}(\cu_h)
\bigl( \bfA(z' + \! \cu_h) - \bfAhom(\cu_h) \bigr)   
\bfAhom^{-\nicefrac12}(\cu_h) 
\biggr|
\leq \O_{\Psi}(\omega_h ) 
\,.
\end{align*}
By a union bound, it follows that, for every~$t \geq 1$ and~$k\in\N$ with~$\beta k \leq h \leq k$ and~$k \geq h + L_1 \log (L_2 h)$, 
\begin{multline} 
\notag
\P\biggl[ 
3^{- \rho(n-k)}  \max_{z \in 3^k \Lat \cap \cu_n}
\Bigl| \bigl( \bfAhom^{-\nicefrac12}(\cu_h) ( \bfA(z + \! \cu_k) -\bfAhom(\cu_h) ) \bfAhom^{-\nicefrac12}(\cu_h)  \bigr)_+\Bigr|
> 
\omega_h  t 
\biggr]
\\
\leq
\frac{3^{d(n-k)} }{\Psi(3^{\rho(n-k)}t)}
\leq
K^{4 d^2} 3^{- (\rho \eta - d)(n-k)} t^{-\eta}
\notag
\,.
\end{multline}
Thus, by another union bound and~\eqref{e.Psi.growth.prime}, with~$h' \coloneqq h + L_1 \log (L_2 h)$, 
\begin{multline*} 
\notag
\P\biggl[ 
\max_{k \in \N \cap [h',n]}  3^{- \rho(n-k)}  \max_{z \in 3^k \Lat \cap \cu_n}
\Bigl| \bigl( \bfAhom^{-\nicefrac12}(\cu_h) ( \bfA(z + \! \cu_k) -\bfAhom(\cu_h) ) \bfAhom^{-\nicefrac12}(\cu_h)  \bigr)_+\Bigr|
> 
\omega_h  t 
\biggr]
\\
\leq
\frac{2 K_{\Psi}^{4 d^2 }}{\eta \rho-d} t^{-\eta}
  \,.
\end{multline*}
On the other hand, since~$\bfAhom(\cu_h) \geq \bfAhom(\cu_n)$,  we have by~\ref{a.ellipticity.weaker} that
\begin{align*} 
\lefteqn{
\P \biggl[
\sup_{k \in \Z \cap (-\infty,h')} \! \! \!\!\!\! \!\!\!\!
3^{-\rho(n-k)} \!\! \!\! \!\!
\max_{z \in 3^k \Zd \cap \cu_n}
\bigl| \bigl( \bfAhom^{-\nicefrac12}(\cu_h)\bfA(z + \! \cu_k)\bfAhom^{-\nicefrac12}(\cu_h) - \Itwod \bigr)_{+} \bigr|
> H n^D 3^{- (\rho-\gamma)(n-h')}  t
 \biggr]
} \qquad &
\notag \\ &
\leq
\P \biggl[
\sup_{k \in \Z \cap (-\infty,n]} 3^{-\gamma(n-k)}  \max_{z \in 3^k \Zd \cap \cu_n} \bigl|\bigl( \bfAhom^{-\nicefrac12}(\cu_n)\bfA(z + \! \cu_k)\bfAhom^{-\nicefrac12}(\cu_n) - \Itwod \bigr)_{+} \bigr|
> H n^D t
\biggr]
\notag \\ &
\leq
\frac{1}{\Psi_\S \bigl( t \bigr)}
\leq
C K_{\Psi_\S}^{36} t^{-\theta}\,.
\end{align*}
Combining the previous two displays yields~\eqref{e.this.is.so.nice.again} in view of the definition of~$\rho$. This concludes the proof.
\end{proof}

\begin{proof}[Proof of Theorem~\ref{t.weaker.P3}]

\emph{Step 1.}
Since~$\khom(\cu_m) = 0$, we have,  for every~$m\in \Z$, 
\begin{equation*}
\Theta_m
= 
\bigl| (\shom_{*}^{-\nf12} \shom \shom_{*}^{-\nf12}) (\cu_m)\bigr|\,,
\end{equation*}
and observe that, since both~$\shom(\cu_m)$ and~$\shom_*(\cu_m)$ are scalar multiples of~$\Id$,~$\XiDet_m = \Theta_m$. Recall that~$\N \ni m \mapsto \Theta_m$ is non-increasing. 

\smallskip

We fix parameters~$\sigma \in (0,1)$ ans~$\delta \in (0,(80d)^{-2}]$ to be selected below to depend only on~$d$. We also let~$K$ and~$h_c$ be as in~\eqref{e.L.and.hc}. We take~$m_1 \in \N$ so large that both
\begin{equation} 
\label{e.omegan.small}
m_1 \geq \max\{3,m_2 , m_3\} \qand 
\frac{\omega_{m_1}^2  K_{\Psi}^{4d^2}}{\min\{d+1,p_\Psi\} - d}
\leq  
\delta^2 \sigma 
\end{equation}
are valid. We define constants~$\Upsilon_1,\Upsilon_2,\Upsilon$ by
\begin{equation*} 
\Upsilon_1   \coloneqq   \frac{K_{\Psi}^{4d^2}}{\min\{d+1,p_\Psi\} - d}\,,
\quad
\Upsilon_2  \coloneqq  
\frac{3^{h_c}}{\min\{3,p_{\Psi_\S}\} - 2} 
\qand
\Upsilon  \coloneqq  \Upsilon_1 \vee \Upsilon_2
\,.
\end{equation*}
Define~$m_0(\delta,\sigma)$ to be the smallest integer satisfying 
\begin{equation*} 
m_0(\sigma,\delta)
\geq 
\frac{400\bigl\lceil 2 \delta^{-1} \left|\log \sigma \right| \bigr\rceil}{1-\beta}
\biggl( \frac{K}{\delta \sigma^2(1-\gamma)}
 \log \Bigl( \frac{ (2m_0(\sigma,\delta)+m_1)^{D+1} \Upsilon \Theta_0}{\sigma^2 \delta} \Bigr)  + 8 h_c \biggr) \log (3\Theta_0) 
\,.
\end{equation*}
Define also
\begin{equation} 
\label{e.L.def.prime}
L = L(\sigma,\delta)  \coloneqq  \frac{100K}{\delta \sigma^2(1-\gamma)}  \log \Bigl( \frac{(2m_0(\sigma,\delta)+m_1)^{D+1}  \Upsilon \Theta_0}{\delta \sigma^2} \Bigr)   
+ 8 h_c
\end{equation}  
and
\begin{equation} 
\label{e.M.def.prime}
N = N(\sigma,\delta) \coloneqq  2L \bigl\lceil 2 \delta^{-1} \left|\log \sigma \right| \bigr\rceil
\,.
\end{equation}  
Observe that
\begin{equation} 
\label{e.prime.m.naught.lower.bound}
m_0(\sigma,\delta) \geq 
\frac{2\log(3\Theta_0)}{1-\beta} N(\sigma,\delta)
\geq 
2\log(3\Theta_0) N(\sigma,\delta)
\end{equation}
and, by the definition of~$h_c$, 
\begin{equation} 
\label{e.prime.L.large.enough}
2K \log(L_2 (m_1+2m_0))
\leq
2K \log(2L_2) +  2K \log(m_1)+ 2K\log(m_0)
\leq
\frac{1}{4} L\,.
\end{equation}

\smallskip

\emph{Step 2.}
We claim that there exists~$\delta_0(d) \in (0,1)$ such that if~$\delta \leq \delta_0$, then
\begin{equation} 
\label{e.prime.step.one}
\Theta_{m_1+2 m_0} \leq 1 + \sigma
\,.
\end{equation}
To show this, we follow the outline of Section~\ref{s.firstpigeon}.

\smallskip

Notice first that, by~\eqref{e.prime.L.large.enough}, for every~$n \in \N$ with~$m_1 \leq n \leq m_1 + 2m_0$, 
\begin{equation} 
\label{e.log.h.small}
L_1 \log (L_2 n)
+
K \log n + h_c  \leq \frac14 L  
\end{equation}
and, by~\eqref{e.omegan.small} and~\eqref{e.L.def.prime}, we have
\begin{equation*} 
\Upsilon_1 \omega_{m_1}^2  
+ 
\Upsilon  3^{- \frac18(1-\gamma)L } 
\leq 
2 \delta \sigma^2 
 \,.
\end{equation*}
Notice also that if~$n \in \N$ is such that~$n \geq m_1 + m_0$, then $2 N(\sigma,\delta) \leq (1-\beta)n$.

\smallskip

Using Lemma~\ref{l.pigeon} with~$h  \coloneqq  2L(\sigma,\delta)$ and~$N=N(\sigma,\delta)$ as in~\eqref{e.M.def.prime}, we deduce that, since~$\XiDet_n = \Theta_n$ for every~$n \in \N$, for every~$m \in \N$ with~$m_1 + m_0 \leq m \leq m_1 + 2m_0 - N$,  at least one of the following two alternatives is valid: 
\begin{itemize} 
\item there exists~$k \in \N \cap [m+2L, m+N]$ such that~$\bfAhom(\cu_{k - 2L} ) \leq (1+\sigma^2\delta) \bfAhom(\cu_{k})$;
or 
\item $\Theta_{m+N}\leq \sigma^2\delta \Theta_{m}$. 
\end{itemize} 
Let us now argue under the assumption that the first alternative is valid. We show that it yields that there exists~$\delta_0(d) \in (0,1)$ such that if~$\delta \leq \delta_0$, then~$\Theta_{m+N} 
\leq 
1+ \frac14 \sigma \Theta_m$. Notice then that both alternatives yield this estimate, provided that $ \delta \leq \nf {1} {4} $. 

\smallskip

The first alternative gives us that, for every~$n \in \N$ with~$k-2L \leq n \leq k$, 
\begin{equation}
\label{e.first.pigeon.launch.prime}
\bigl| \bfAhom^{-\nicefrac12}(\cu_k) \bfAhom(\cu_n) \bfAhom^{-\nicefrac12}(\cu_k)  - \Itwod \bigr|
\leq 
\delta\sigma^2
\,.
\end{equation}
Since~$j \geq \beta n$ for every~$j,n\in\N$ with~$ m_1 + m_0 \leq j \leq n \leq j+L$,~\eqref{e.CFS.weaker},~\eqref{e.log.h.small} and~\eqref{e.omegan.small} yield that, for every~$j,n \in \N$ with~$j \geq k - 2L$ and~$j + \frac14 L \leq n \leq k$, 
\begin{equation} 
\label{e.CFS.weaker.applied}
\E\Biggl[
\biggl| 
 \avsum_{z \in  3^j \Zd \cap \cu_n}  \! \! \!
\bfAhom^{-\nicefrac12}(\cu_j)  \bigl(\bfA(z+\cu_j) - \bfAhom(\cu_j) \bigr) \bfAhom^{-\nicefrac12}(\cu_j)  \biggr|^2 \Biggr]
\leq
K_{\Psi}^{27} \omega_h^2 \leq \delta \sigma^2  \,.
\end{equation}
Using~\eqref{e.first.pigeon.launch.prime} and~\eqref{e.CFS.weaker.applied} in~\eqref{e.variance.HC} (applied with $m=k$ and~$k=j$), we deduce that, for every~$n \in \N$ with~$ k - L \leq n \leq k$, 
\begin{equation} 
\label{e.variance.HC.prime}
\E\Bigl [
\bigl | 
\bfAhom^{-\nicefrac12}(\cu_k)  \bfA(\cu_n)\bfAhom^{-\nicefrac12}(\cu_k)    - \Itwod  
\bigr |^{2}
\Bigr ] 
\leq C \delta \sigma^2 
\,.
\end{equation}
Set next~$\bfE  \coloneqq  \bfAhom(\cu_{k-L})$ and~$\thom_k  \coloneqq   (\bhom \#\, \shom_{*})(\cu_k)$,  and take, for~$e \in \Rd$ with~$|e|=1$,  
\begin{equation*} 
\begin{pmatrix}
p \\ q
\end{pmatrix}
 \coloneqq 
\begin{pmatrix}
\thom_k^{-\nicefrac12} e \\ \thom_k^{\nicefrac12} e
\end{pmatrix}
\qand
\begin{pmatrix}
P \\ Q
\end{pmatrix}
\coloneqq
\E\biggl[ \fint_{\cu_k} 
\begin{pmatrix}
\nabla v(\cdot,\cu_k,p,q) \\ \a \nabla v(\cdot,\cu_k,p,q)
\end{pmatrix}
 \biggr]
 \,.
\end{equation*}
We insert these choices into~\eqref{e.divcurl.conclusion.pre} and obtain, for every~$\ep \in (0,1]$, 
\begin{align*}  
\biggl| \E\bigl  [ J(\cu_k)\bigr] -\frac12 P \cdot Q \biggr|
& 
\leq 
50\ep^{-1} \bar{\tau}_{k,k-L} 
+
4 \ep
\bigl(  \bigl| \shom_{*}^{-\nicefrac12}(\cu_k) Q \bigr|  +  \bigl| \bhom^{\nicefrac12}(\cu_k) P \bigr| \bigr)^2
\notag \\ &  \quad 
+
C 
\sum_{j=-\infty}^{k-L} 
3^{j-k}  
 \avsum_{z \in 3^j\Lat \cap \cus_{k-4}} 
 \bigl( \bigl|\shom_{*}^{-\nicefrac12}(z +\cu_j) Q \bigr| + \bigl|\bhom^{\nicefrac12}(z +\cu_j) P \bigr|  \bigr)^2
\notag \\ & \qquad 
+
C 3^{-k} 
\E \Biggl[
\biggl[ 
\mathbf{M}_0^{\nicefrac12} 
\begin{pmatrix} 
\nabla v_k - P \\ 
\a \nabla v_k - Q 
\end{pmatrix} 
\biggr]_{\Besov{-\nf12}{2}{1}(\cu_{k})}^2
\Biggr] 
\,.
\end{align*}
By selecting~$\ep  \coloneqq  \delta^{\nicefrac12} \sigma \Theta_{m}^{-\nicefrac12}$ and following the arguments leading to~\eqref{e.PQ.bound.pre} and~\eqref{e.pq.bounds}, we obtain
\begin{equation} 
\label{e.pq.bounds.prime}
\biggl|
\bfAhom^{\nicefrac 12}(\cu_k)
\begin{pmatrix} 
-p \\ q
\end{pmatrix}
\biggr|^2 
\leq C \Theta_m^{\nicefrac12}
\qand
| \b_0^{\nicefrac12}P | + | \s_{*,0}^{-\nicefrac12}Q |  \leq C \Theta_m^{\nicefrac34} 
\end{equation}
and, consequently, as in~\eqref{e.what.is.bartau}, by~\eqref{e.first.pigeon.launch.prime} and the previous display, we get 
\begin{equation*}
\bar{\tau}_{k,k-L}  \leq C \delta \sigma^2 \Theta_{m}^{\nicefrac12}   \,.
\end{equation*}
Furthermore, by~\eqref{e.first.pigeon.launch.prime}, we have
\begin{equation*} 
\sum_{j=k-2L}^{k-L} 
3^{j-k+L} 
 \avsum_{z \in 3^j\Lat \cap \cus_n} 
 \bigl( \bigl|  \shom_{*,0}^{\nicefrac12}   \shom_{*}^{-1}(z+ \cu_j) \shom_{*,0}^{\nicefrac12}  \bigr| + 
\bigl|  \bhom_{0}^{-\nicefrac12}   \bhom(z+\cu_j) \bhom_{0}^{-\nicefrac12}    \bigr|   \bigr)\leq C  
\end{equation*}
and, by~\eqref{e.this.is.so.nice.again},~\eqref{e.omegan.small} and the definition of~$L$,
\begin{equation*} 
\sum_{j=-\infty}^{k-2L} \!\!\!
3^{j-k+L} 
\! \! \! \! \! \avsum_{z \in 3^j\Zd \cap \cu_n} \! \! \!  
\bigl( \bigl|  \shom_{*,0}^{\nicefrac12}   \shom_{*}^{-1}(z+\! \cu_j) \shom_{*,0}^{\nicefrac12}  \bigr| + 
\bigl|  \bhom_{0}^{-\nicefrac12}   \bhom(z+\cu_j) \bhom_{0}^{-\nicefrac12}    \bigr|   \bigr)
\leq 
C
\,.
\end{equation*}
Thus,
\begin{equation*} 
\sum_{j=-\infty}^{k-L} \!\!\!
3^{j-k+L} 
\! \! \! \! \! \avsum_{z \in 3^j\Zd \cap \cus_n} \! \! \!  
\bigl( \bigl|  \shom_{*,0}^{\nicefrac12}   \shom_{*}^{-1}(z+\! \cu_j) \shom_{*,0}^{\nicefrac12}  \bigr| + 
\bigl|  \bhom_{0}^{-\nicefrac12}   \bhom(z+\cu_j) \bhom_{0}^{-\nicefrac12}    \bigr|   \bigr)
\leq
C
\,.
\end{equation*}
Combining the above displays then yields 
\begin{align*}  
\biggl| \E\bigl  [ J(\cu_n)\bigr] -\frac12 P \cdot Q \biggr|
& 
\leq 
C \bigl(\sigma \delta^{\nicefrac12} + 3^{-L} \Theta_m^{\nf 12} \bigr) \Theta_m 
+
C 3^{-n} 
\E \Biggl[
\biggl[ 
\mathbf{M}_0^{\nicefrac12} 
\begin{pmatrix} 
\nabla v_n - P \\ 
\a \nabla v_n - Q 
\end{pmatrix} 
\biggr]_{\Besov{-\nf12}{2}{1}(\cu_{n})}^2
\Biggr] 
\,.
\end{align*}
To estimate the last term on the right, we use Lemma~\ref{l.weaknorms.prime} (applied with~$n =k$  and~$h = k-L$),~\eqref{e.pq.bounds.prime},~\eqref{e.first.pigeon.launch.prime},~\eqref{e.variance.HC.prime} and to deduce that
\begin{equation*}
3^{-k} 
\E\Biggl[ \biggl[
\mathbf{M}_0^{\nicefrac12} 
\begin{pmatrix} 
\nabla v_k -(\nabla v_k)_{\cu_k}   \\ 
\a \nabla v_k- (\a\nabla v_k)_{\cu_k}
\end{pmatrix} 
\biggr]_{\Besov{-\nf12}{2}{1}(\cu_{k})}^2
\Biggr]
\leq  
C \delta \sigma^2\Theta_m
\,.
\end{equation*}
By similar estimates for~$J^*$, Lemma~\ref{l.Jminusmeans.to.Theta} and~\eqref{e.L.def.prime} yield 
\begin{equation*}
\Theta_{m}-1
\leq \Theta_k - 1 \leq  
C \bigl(\sigma \delta^{\nicefrac12}  + 3^{-L} \Theta_m^{\nicefrac12} \bigr) \Theta_m 
\leq 
C \sigma \delta^{\nicefrac12} \Theta_m
\,.
\end{equation*}
Using the monotonicity of~$m\mapsto \Theta_m$, we then deduce that there exists~$\delta_0(d) \in (0,1)$ such that, if~$\delta \leq \delta_0$, then 
\begin{equation*}
\Theta_{m+N} 
\leq 
1+ \frac14 \sigma \Theta_m
\,,
\end{equation*}
as claimed. By iterating this~$n$ times with~$n$ satisfying~$nN \leq m_0 < (n+1)N$, we obtain by the monotonicity of~$m\mapsto \Theta_m$ and~\eqref{e.prime.m.naught.lower.bound} that
\begin{equation*}
\Theta_{m_1+2m_0}   
\leq
\Theta_{m_1+m_0 + nN}   
\leq 
\frac{1}{1-\nf \sigma 4}
+ \Bigl( \frac{\sigma}{4}\Bigr)^n \Theta_0
\leq 
1 + \frac12 \sigma + \frac12 \sigma \exp(1-n) \Theta_0 \leq 1+\sigma
\,,
\end{equation*}
which gives us~\eqref{e.prime.step.one}.

\smallskip

\emph{Step 3.}
We next claim that there exist constants~$\alpha(d),\sigma(d) \in (0,(80d^2)^{-1})$ such that, for~$\kappa  \coloneqq  \min\{ \alpha ,1-\gamma \}$ and for every~$n \in \N$ with~$n \geq m_1 + 4m_0(\delta_0,\sigma)$, 
\begin{equation} 
\label{e.induction.prime}
\Theta_n - 1 \leq \min\Bigl\{ \sigma ,  \sigma^{-1} \bigl( \Upsilon_1 \omega_{m_1}^2 +  3^{-\kappa(n-m_1-4m_0)} \bigr) \Bigr\}
\,.
\end{equation}
We assume inductively that there exists $m\in \N$ with~$m> m_1 + 4 m_0$  such that~\eqref{e.induction.prime} is valid for every~$n \in \N$ with~$n \in [m_1 + 4m_0 , m-1]$. The initial step $n = m_1 + 4m_0$ is valid by Step 1. Our goal is to show that~\eqref{e.induction.prime} is true also for~$n=m$, which then proves the induction step. 

\smallskip

To prove~\eqref{e.induction.prime} with~$n=m$, we first go back to the proof of Lemma~\ref{l.variance}. Noting that~$\hat{\Theta}_m \coloneqq  \Theta_m$ (since the adapted cubes are Euclidean in our setting and~$\shom(\cu_m)$ and~$\shom_*(\cu_m)$ are scalar multiples of identity matrix) and repeating Step 1 in the proof of Lemma~\ref{l.variance}, we find that there exists~$C(d)<\infty$ such that, for every~$k,n \in \N$ with~$m_1 + 2m_0 \leq k \leq n$, 
\begin{equation} 
\label{e.algebraic.add.error.again}
\bigl| \bfAhom^{-\nicefrac12}(\cu_n)  \bfAhom(\cu_k) \bfAhom^{-\nicefrac12}(\cu_n)    - \Itwod \bigr| \leq 4d(\hat{\Theta}_k - \hat{\Theta}_n) \leq \frac1{40}
\,.
\end{equation}
We then revisit Step 2 in the proof of Lemma~\ref{l.variance}. As a consequence of~\eqref{e.variance.proto} we deduce that, for every~$k,m,n \in \N$ with~$2m_0 \leq k \leq n \leq m$ we have that 
\begin{align*} 
\notag
\lefteqn{
\E\Bigl  [ \bigl | \,
\bfAhom^{-\nicefrac12}(\cu_m)  
\bfA(\cu_n)  
\bfAhom^{-\nicefrac12}(\cu_m)  
-\Itwod
\bigr |^{2} \Bigr  ]
} \qquad &
\notag \\ &
\leq
C (\hat{\Theta}_k - 1)^2 
+ 
C
\E \Biggl[
\biggl |
\avsum_{z \in 3^k \Zd \cap \cu_n}
\bfAhom^{-\nicefrac12}(\cu_m)   \bigl ( \bfA(z+\cu_k) - \bfAhom_k\bigr ) \bfAhom^{-\nicefrac12}(\cu_m)  
\biggr |^2
\Biggr ] 
\,.
\end{align*}
If~$\max\{m_1 + 2 m_0, \lceil \beta m \rceil \} < k \leq m - L_1 \log (L_2 k)$, then, by~\ref{a.CFS.weaker} and~\eqref{e.algebraic.add.error.again},
\begin{equation*}
\E \Biggl[
\biggl |
\avsum_{z \in 3^k \Zd \cap \cu_n}
\bfAhom^{-\nicefrac12}(\cu_m)  
 \bigl ( \bfA(z+\cu_k) - \bfAhom_k\bigr ) 
 \bfAhom^{-\nicefrac12}(\cu_m)  
\biggr |^2
\Biggr ] 
\leq C K_\Psi^{27} \omega_k^2 \leq C \Upsilon_1 \omega_{m_1}^2
\,.
\end{equation*}
We thus obtain, for every~$n \in \N$ with~$m_1 + 3m_0 \leq n \leq m$,  that
\begin{align}
\label{e.variance.again.prime}
 \E\Bigl  [ \bigl | \,
\bfAhom^{-\nicefrac12}(\cu_m)  
\bfA(\cu_n)  
\bfAhom^{-\nicefrac12}(\cu_m)  
-\Itwod
\bigr |^{2} \Bigr ]
& 
\leq 
C (\hat{\Theta}_{n - \lceil L_1 \log (L_2n)\rceil} - 1)^2 + C  K_\Psi^{27}  \omega_{n - \lceil L_1 \log (L_2n)\rceil}^2
\notag \\ & 
\leq
C \Bigl(
\Upsilon_1 \omega_{m_1}^2 +  3^{-\kappa(n-m_1-4m_0)} \Bigr)\,.
\end{align}
We then follow the proof of Proposition~\ref{p.algebraic.exp}. Let~$\bfE  \coloneqq  \bfA(\cu_{m})$ and set
\begin{equation*}
\begin{pmatrix}
p \\ q
\end{pmatrix}
=
\begin{pmatrix}
\shom_{*}^{-\nicefrac12}(\cu_m)e \\ \shom_{*}^{\nicefrac12}(\cu_m)e
\end{pmatrix}
\qand
\begin{pmatrix}
P \\ Q
\end{pmatrix}
\coloneqq
\E\biggl[ \fint_{\cu_m} 
\begin{pmatrix}
\nabla v(\cdot,\cu_m,p,q) \\ \a \nabla v(\cdot,\cu_m,p,q)
\end{pmatrix}
 \biggr]
\,.
\end{equation*}
By repeating the argument for~\eqref{e.Thetam.byJ.agh} and~\eqref{e.divcurl.conclusion.pre.applied}, we deduce that there exists a constant~$C(d)<\infty$ such that
\begin{align*}
\hat{\Theta}_n -1
\leq
C 3^{-n}
\E \Biggl[\biggl[ 
\mathbf{M}_0^{\nicefrac12} 
\begin{pmatrix} 
\nabla v_n - P \\ 
\a \nabla v_n - Q 
\end{pmatrix} 
\biggr]_{\Besov{-\nf12}{2}{1}(\cu_{n})}^2
\Biggr]
+
C (\hat{\Theta}_{n-4} - \hat{\Theta}_n ) 
\end{align*}
for small enough~$\sigma(d)$. 
Lemma~\ref{l.weaknorms.prime}, together with~\eqref{e.algebraic.add.error.again} and~\eqref{e.variance.again.prime}, yields that
\begin{align} 
\lefteqn{
3^{-n} 
\E\Biggl[ \biggl[
\mathbf{M}_0^{\nicefrac12} 
\begin{pmatrix} 
\nabla v_n -  (\nabla v_n)_{\cu_n}   \\ 
\a \nabla v_n- (\a\nabla v_n)_{\cu_n}
\end{pmatrix} 
\biggr]_{\Besov{-\nf12}{2}{1}(\cu_{n})}^2
\Biggr]
} \qquad &
\notag \\ &
\leq
C  \! \! \! \!
\sum_{k=m_1+ 3m_0}^n  \! \! \! \! 3^{\frac12(k-n)}  
\E\Bigl[  \bigl| \bfAhom^{-\nicefrac12}(\cu_m)  \bigl( \bfA(\cu_k) - \bfAhom(\cu_k) \bigr)\bfAhom^{-\nicefrac12}(\cu_m)   \bigr|^2 
\Bigl]
\notag \\ & \qquad 
+
C \! \! \! \!
\sum_{k=m_1+ 3m_0}^n \! \! \! \! 3^{\frac12(k-n)}  
\bigl| \bfAhom^{-\nicefrac12}(\cu_m)   \bigl(\bfAhom(\cu_k) - \bfA(\cu_n)  \bigr)  \bfAhom^{-\nicefrac12}(\cu_m)  
\bigr|
\notag \\ & \qquad 
+ C \Bigl(
\Upsilon_1 \omega_{m_1}^2 +  3^{-\kappa(n-m_1-4m_0)} \Bigr)
\notag \\ &
\leq 
C  \! \! \! \! \sum_{k=m_1 + 3m_0}^n \! \! \! \! 3^{\frac12(k-n)} \bigl( \hat{\Theta}_{k} - \hat{\Theta}_{n}  \bigr)
+
C \Bigl(
\Upsilon_1 \omega_{m_1}^2 +  3^{-\kappa(n-m_1-4m_0)} \Bigr)
\,.
\notag
\end{align}
Combining this with~\eqref{e.variance.again.prime} leads to
\begin{equation*}
\hat{\Theta}_n -1 
\leq 
C  \sum_{k=m_1 + 4m_0}^n  3^{\frac12(k-n)} \bigl( \hat{\Theta}_{k} - \hat{\Theta}_{m}  \bigr)
+ C \Bigl(
\Upsilon_1 \omega_{m_1}^2 +  3^{-\kappa(n-m_1-4m_0)} \Bigr)
\,.
\end{equation*}
We have that
\begin{equation*} 
\sum_{k=m_1 + 4m_0}^n 3^{- 2\kappa_0 (n - k)} 3^{-(1-\gamma)(k -m_1- 4m_0)}  
\leq 
\frac{2}{\kappa_0} 3^{- \min\{1-\gamma ,\kappa_0 \}(n -m_1- 4m_0) } 
\,.
\end{equation*}
As in the proof of Proposition~\ref{p.algebraic.exp}, we obtain by iteration that there exist constants~$\alpha(d) \in (0,1)$ and~$C(d)<\infty$ such that, with~$\kappa  \coloneqq  \min\{\alpha, \frac12 (1-\gamma)\}$,  
\begin{align} 
\notag
\Theta_m - 1 
& \leq 
 \frac{C}{\kappa_0 \alpha}  \Bigl(
\Upsilon_1 \omega_{m_1}^2 +  3^{-\kappa(n-m_1-4m_0)} \Bigr)
\,.
\notag
\end{align}
Therefore, by taking~$\sigma  \coloneqq  C^{-1} \kappa_0 \alpha$, which then depends only on~$d$, we obtain~\eqref{e.induction.prime} for~$n=m$, proving the induction step.  

\smallskip

\emph{Step 4.} The conclusion. The result follows by~\eqref{e.induction.prime} after relabelling~$\Upsilon$ and replacing~$m_0$ by~$Cm_0$ for a large constant~$C$, if necessary, using the monotonicity of~$m \mapsto \Theta_m$. 
\end{proof}

\appendix

\section{Besov spaces and functional inequalities}
\label{s.Besov.appendix}

Recall the definition of the Besov seminorms given in~\eqref{e.Bs.seminorm}, which is 
\begin{equation}
\label{e.Bs.seminorm.A}
\left[ g \right]_{\underline{B}_{p,q}^{s}(\cu_{n})}
 \coloneqq 
\Biggl( 
\sum_{k=-\infty}^n
3^{- s q k} \biggl( 
\avsum_{z\in 3^{k-1}\Zd, \, z + \cu_k \subseteq \cu_n}
\bigl \| g - (g)_{z+\cu_k}\bigr \|_{\underline{L}^p(z+\cu_k)}^p
\biggr)^{\!\nicefrac qp}
\Biggr)^{\! \nicefrac1q}\,.
\end{equation}
The case~$q=\infty$ is defined in~\eqref{e.Bs.seminorm.infty} and the negative seminorms are given in~\eqref{e.Bs.minus.seminorm} and not repeated here. 
In~\eqref{e.Bs.minus.seminorm.explicit} we also defined the quantity
\begin{equation}
\label{e.Bs.minus.seminorm.explicit.A}
\left[ f \right]_{\Besov{-s}{p}{q}(\cu_n)}
 \coloneqq 
\Biggl(
\sum_{k=-\infty}^n
3^{s qk}
\biggl(
\avsum_{z\in 3^k\Zd \cap \cu_n}
\bigl| (f)_{z+\cu_k}\bigr |^p
\biggr)^{\!\nicefrac qp}
\Biggr)^{\! \nicefrac1q}
\,.
\end{equation}
We write~$f \in \Besovnoul{-s}{p}{q}(\cu_{n})$ if this quantity is finite. 
Throughout, the H\"older conjugate exponent of~$p \in [1,\infty]$ is denoted by~$p' \coloneqq  p/(p-1)$. 

\smallskip

The next lemma shows that the negative seminorm~$\left[ \cdot \right]_{\Bhatminusul{-s}{p}{q} (\cu_{n})}$ is upper bounded by~$\left[ \cdot \right]_{\Besov{-s}{p}{q}(\cu_n)}$.

\begin{lemma} 
\label{l.duality.Bs}
Let~$p,q\in [1,\infty]$ and~$s\in (0,1)$, with~$s=1$ allowed if~$q=1$. For every~$m\in \Z$ and~$f \in \Besovnoul{-s}{p}{q}(\cu_{m})$, 
\begin{equation}
\label{e.weak.norms.ordering.A}
\left[ f \right]_{\Bhatminusul{-s}{p}{q} (\cu_{m})}
\leq
3^{d+s} 
 \left[ f \right]_{\Besov{-s}{p}{q}(\cu_{m})}
\,.
\end{equation}
\end{lemma}
\begin{proof}
For each~$k \in (-\infty,m] \cap \Z$, we introduce the following approximation of~$g$:
\begin{equation*} 
[g]_k  \coloneqq  \sum_{z \in 3^k \Zd \cap \cu_m} (g)_{z+\cu_k} \indc_{z+\cu_k}\,.
\end{equation*}
By the Lebesgue differentiation theorem, we have
\begin{equation*} 
g = 
(g)_{\cu_m} 
+ 
\sum_{k=-\infty}^m 
\bigl( [g]_{k-1} - [g]_k \bigr)
\,.
\end{equation*}
We may therefore write
\begin{align*}
&
\fint_{\cu_m}
f   g
=
(f)_{\cu_m}  (g)_{\cu_m} 
+
\sum_{k=-\infty}^m \,
\avsum_{z\in 3^k\Zd\cap \cu_m} \,
\avsum_{y\in z+ 3^{k-1}\Zd\cap \cu_k}
\big(
(g)_{y+\cu_{k-1}} - (g)_{z+\cu_k}
\big)\cdot (f)_{y+\cu_{k-1}}
\,.
\end{align*}
By Jensen's inequality, for~$y \in z + 3^{k-1} \Zd \cap \cu_k$, we have
\begin{equation*} 
\big|
(g)_{y+\cu_{k-1}} - (g)_{z+\cu_k}
\big|
\leq 
\| g - (g)_{z+\cu_k} \|_{\underline{L}^1(y+\cu_{k-1})}
\leq 3^d \| g - (g)_{z+\cu_k} \|_{\underline{L}^1(z+\cu_{k})}
 \,.
\end{equation*}
Denote the H\"older conjugate exponent of~$p \in [1,\infty]$ by~$p' \coloneqq  p/(p-1)$, and similarly for~$q$. By H\"older's inequality,  for every~$p,q\in (1,\infty)$,
\begin{align*}  
\lefteqn{
\sum_{k=-\infty}^m  
\avsum_{z\in 3^k\Zd\cap \cu_m}
 \avsum_{y\in z+ 3^{k-1}\Zd\cap cu_k} \bigl|
\bigl( (g)_{y+\cu_{k-1}} - (g)_{z+\cu_k}
\bigr) \cdot  (f)_{y+\cu_{k-1}} \bigr| 
} 
\qquad & 
\\ & 
\leq 
3^d\sum_{k=-\infty}^m  
\avsum_{z\in 3^k\Zd\cap \cu_m} 
\| g - (g)_{z+\cu_k} \|_{\underline{L}^1(z+\cu_k)}  
\avsum_{y\in 3^{k-1}\Zd\cap(z+\cu_k)} \bigl| (f)_{y+\cu_{k-1}} \bigr|
\\ & 
\leq 
3^{d+s} \sum_{k=-\infty}^m 3^{-sk} 
 \biggl( \avsum_{z\in 3^k\Zd\cap \cu_m} \| g - (g)_{z+\cu_k} \|_{\underline{L}^{1}(z+\cu_k)}^{p'}  \biggr)^{\! \nicefrac {1}{p'}} 
 3^{s(k-1)} 
\biggl( \avsum_{z\in 3^{k-1}\Zd\cap \cu_m} \bigl| (f)_{z+\cu_{k-1}} \bigr|^p  \biggr)^{\! \nicefrac {1}{p}} 
\\ & 
\leq 
3^{d+s}
\left[ g  \right]_{\underline{B}_{p',q'}^{s}(\cu_{m})} 
\biggl( 
\sum_{k=-\infty}^{m-1} 
 3^{sq k} 
\biggl( \avsum_{z\in 3^{k}\Zd\cap \cu_m} \bigl| (f)_{z+\cu_{k}} \bigr|^p  \biggr)^{\! \nicefrac {q}{p}} 
 \biggr)^{\! \nf 1q}
\,.
\end{align*}
We may add the term~$|(g)_{\cu_m}| |(f)_{\cu_m}|$ to this inequality and absorb it in the sum on the right side by incrementing the sum to include~$k=m$. This yields, in the case~$p,q \in (1,\infty)$, that
\begin{equation}
\label{e.duality.Bs}
\fint_{\cu_m} f g \leq
3^{d+s} 
\| g  \|_{\underline{B}_{p',q'}^{s}(\cu_m)}
 \left[ f \right]_{\Besov{-s}{p}{q}(\cu_{m})}
\,.
\end{equation}
The case that one of the exponents~$p$ and~$q$ belongs to~$\{1,\infty\}$ is obtained similarly. The inequality~\eqref{e.duality.Bs} implies~\eqref{e.weak.norms.ordering.A} by the definition of the seminorms with negative regularity in~\eqref{e.Bs.minus.seminorm}. 
\end{proof}

The next lemma provides a Poincar\'e inequality. 
\begin{lemma} 
\label{l.multiscale.SP.C}
There exists a constant~$C(d)<\infty$ such that, for every~$m\in\Z$ and~$u \in L^2(\cu_m)$ with~$\nabla u\in\Besovnoul{-1}{2}{1}(\cu_{m})$, we have
\begin{equation} 
\label{e.multiscale.SP.C}
\left\| u - (u)_{\cu_m} \right\|_{\underline{L}^2(\cu_m)}
\leq
C 
\| \nabla u  \|_{\Besov{-1}{2}{1}(\cu_{m})}
 \,.
\end{equation}
\end{lemma}

\begin{proof}
Assume that~$(u)_{\cu_m} = 0$. 
Let~$w \in W^{2,2}(\cu_m)$ solve~$-\Delta w = u$ with zero Neumann data.
By the classical Calder\'on-Zygmund estimate, we have that 
\begin{equation*} 
\| \nabla w\|_{\underline{H}^{1}(\cu_m)} 
\leq
C\| u\|_{\underline{L}^2(\cu_m)}
 \,.
\end{equation*}
By the Poincar\'e inequality,
\begin{equation*} 
[ \nabla w]_{\underline{B}_{2,\infty}^1(\cu_m) }
=
\sup_{k \in (-\infty,m]} 
3^{-2k} \avsum_{z\in 3^{k-1}\Zd, \,  z + \cu_k \subseteq \cu_m}
\bigl \| \nabla w - (\nabla w)_{z+\cu_k}\bigr \|_{\underline{L}^2(z+\cu_k)}^2 
\leq
C \| u \|_{\underline{L}^2(\cu_m)}^2
 \,.
\end{equation*} 
Using this and testing the equation of~$w$ with~$u$, we deduce by~$(u)_{\cu_m} =0$ and~\eqref{e.duality.Bs} that
\begin{align*} 
\| u \|_{\underline{L}^2(\cu_m)}^2
& = 
\fint_{\cu_m}  \nabla w \cdot \nabla u
\leq
C \| \nabla w \|_{\underline{B}_{2,\infty}^{1}(\cu_m)} 
\| \nabla u  \|_{\Besov{-1}{2}{1}(\cu_{n})}
\leq
C \| u  \|_{\underline{L}^{2}(\cu_m)}
\| \nabla u  \|_{\Besov{-1}{2}{1}(\cu_{m})}
\,.
\end{align*}
This concludes the proof after an application of Young's inequality and reabsorption. 
\end{proof}

We next generalize the previous lemma to more general Besov spaces. 

\begin{lemma}
\label{l.dualitylemma}
There exists~$C(d)<\infty$ such that, for every~$m \in \Z$,~$s \in [0,1)$ and~$u \in B^{s}_{2,\infty}(\cu_{m})$,
\begin{equation}  
\label{e.divcurl.est0}
\left\| u- (u)_{\cu_m}  \right\|_{\underline{B}_{2,\infty}^{s}(\cu_m)}
\leq 
C
\left[ \nabla u  \right]_{\Besov{s-1}{2}{1}(\cu_{m})}
\,.
\end{equation}
Moreover, if~$\varphi \in W^{2,\infty}(\cu_m)$, then
\begin{equation}
\label{e.divcurl.est1}
\left\| (u- (u)_{\cu_m}) \nabla \varphi \right\|_{\underline{B}_{2,\infty}^{s}(\cu_m)}
\leq 
C
3^{m}
 \| \nabla \varphi\|_{\underline{W}^{1,\infty}(\cu_m)}
\left[ \nabla u  \right]_{\Besov{s-1}{2}{1}(\cu_{m})}
\,.
\end{equation}
\end{lemma}
\begin{proof}
Fix~$s\in (0,1)$ and~$m\in\Z$.
Without loss of generality, assume that~$(u)_{\cu_m} = 0$. 
Throughout the proof we denote the lattice appearing in~\eqref{e.Bs.seminorm} by~$\mathcal{Z}_k  \coloneqq  \{ z\in 3^{k-1}\Zd \, : \,  z + \cu_k \subseteq \cu_m\}$ for each~$k \in \Z$ with~$k \leq m$. Fix also~$\phi \in C_c^\infty(\cu_m)$ satisfying~$3^{m}  \| \nabla \phi \|_{L^\infty(\cu_m)} + 3^{2m} \| \nabla^2 \phi \|_{L^\infty(\cu_m)} \leq 1$. 

\smallskip

\emph{Step 1.} The proof of~\eqref{e.divcurl.est0}. We apply~\eqref{e.multiscale.SP.C} to get
\begin{align*}   
\| u  - (u )_{z+\cu_k} \|_{\underline{L}^2(z+\cu_k)}^2
&
\leq
 C \sum_{j=-\infty}^{k} 3^{j} \biggl(  \avsum_{z' \in z + 3^j \Z^d \cap  \cu_k}  \bigl | (\nabla u)_{z' + \cu_j} \bigr |^2 \biggr)^{\! \nf 12}
\,.
\notag
\end{align*}
Therefore, we obtain by H\"older's inequality that
\begin{align*}
\lefteqn{
3^{-2sk}\avsum_{z\in \mathcal{Z}_k} \! \! 
\| u  - (u )_{z+\cu_k} \|_{\underline{L}^2(z+\cu_k)}^2
} \qquad &
\notag \\  &
\leq
C 3^{-2sk}
\avsum_{z\in \mathcal{Z}_k}  \Biggl( \sum_{j=-\infty}^{k} 3^{j} 
\biggl( \avsum_{z' \in z+ 3^j \Z^d \cap  \cu_n}  \bigl | (\nabla u)_{z' + \cu_j} \bigr |^2 \biggr)^{\! \nicefrac12}  
\Biggr)^{\! 2}
\notag \\  &
=
C 3^{-2sk}
\sum_{j,j'=-\infty}^{k} 3^{j' + j} 
\avsum_{z\in \mathcal{Z}_k}  
\biggl( \avsum_{z' \in z+ 3^j \Z^d \cap  \cu_n}  \bigl | (\nabla u)_{z' + \cu_j} \bigr |^2 \biggr)^{\! \nicefrac12}  
\biggl( \avsum_{z' \in z+3^{j'} \Z^d \cap  \cu_n}  \bigl | (\nabla u)_{z' + \cu_{j'}} \bigr |^2 \biggr)^{\! \nicefrac12}  
\notag \\  &
\leq
C 3^{-2sk}
\sum_{j,j'=-\infty}^{k} 3^{j' + j} 
\biggl( \avsum_{z \in\mathcal{Z}_j}  \bigl | (\nabla u)_{z + \cu_j} \bigr |^2 \biggr)^{\! \nicefrac12}  
\biggl( \avsum_{z \in\mathcal{Z}_{j'}}  \bigl | (\nabla u)_{z + \cu_{j'}} \bigr |^2 \biggr)^{\! \nicefrac12}  
\notag \\  &
\leq
C
\Biggl(\sum_{j=-\infty}^{m} 3^{(1-s)j} 
\biggl( \avsum_{z \in 3^j \Zd \cap \cu_m}  \bigl | (\nabla u)_{z + \cu_j} \bigr |^2 \biggr)^{\! \nicefrac12}  
\Biggr)^2
\,.
\end{align*}
Thus~\eqref{e.divcurl.est0} follows by taking supremum over~$k \in \Z \cap (-\infty,m]$. 

\smallskip

\emph{Step 2.} The proof of~\eqref{e.divcurl.est1}. By the definition~\eqref{e.Bs.seminorm.A},
\begin{equation*}  
\left[ u \nabla \varphi \right]_{\underline{B}_{2,\infty}^{s}(\cu_m)}
=
\sup_{k \in (-\infty,m] \cap \Z}
3^{- s k} \biggl( 
\avsum_{z\in \mathcal{Z}_k}
\bigl \| u \nabla \varphi - (u \nabla \varphi)_{z+\cu_k}\bigr \|_{\underline{L}^2(z+\cu_k)}^2
\biggr)^{\! \nicefrac12}
\,.
\end{equation*}
By the triangle inequality and the Poincar\'e inequality, we get
\begin{align*}  
\lefteqn{
\avsum_{z\in \mathcal{Z}_k}
\bigl \| v \nabla \varphi - (v \nabla \varphi)_{z+\cu_k}\bigr \|_{\underline{L}^2(z+\cu_k)}^2
} \qquad &
\notag \\ & 
\leq 
4
\avsum_{z\in \mathcal{Z}_k} \Bigl( 
\bigl \| \nabla \varphi  - (\nabla \varphi)_{z+\cu_k} \bigr \|_{\underline{L}^2(z+\cu_k)}^2 | (v)_{z+\cu_k} |^2 
+
\bigl \| (v - (v)_{z+\cu_k} ) \nabla \varphi \bigr \|_{\underline{L}^2(z+\cu_k)}^2
\Bigr)
\notag \\ & 
\leq
C3^{k} \| \nabla^2 \varphi \|_{L^\infty(\cu_m)}  \| v \|_{\underline{L}^2(\cu_m)}
+
C \| \nabla \varphi \|_{L^\infty(\cu_m)} 
\biggl( 
\avsum_{z\in \mathcal{Z}_k}
\bigl \| v - (v)_{z+\cu_k}\bigr \|_{\underline{L}^2(z+\cu_k)}^2
\biggr)^{\! \nicefrac12}
\,.
\end{align*}
Since $| (u \nabla \varphi)_{\cu_m}| \leq \| \nabla \varphi \|_{L^{\infty}(\cu_m)} \| u \|_{\underline{L}^2(\cu_m)}$, we obtain that
\begin{equation}
\label{e.divcurl.est1.pre}
\left\| u \nabla \varphi \right\|_{\underline{B}_{2,\infty}^{s}(\cu_m)} 
\leq
C \bigl( \| \nabla \varphi \|_{L^\infty(\cu_m)} + 3^{m} \| \nabla^2 \varphi \|_{L^{\infty}(\cu_m)} 
\bigr) \left\| u \right\|_{\underline{B}_{2,\infty}^{s}(\cu_m)} 
\,.
\end{equation}
An application of~\eqref{e.divcurl.est0} then concludes the proof.  
\end{proof}

The next lemma shows that the seminorm of~$\underline{B}^s_{p,p}(\cu_m)$ is equivalent with the one of~$\underline{W}^{s,p}(\cu_m)$.

\begin{lemma} 
\label{l.Wsp.vs.Bspp}
There exists a constant~$C(d)<\infty$ such that, 
for every~$s\in (0,1)$,~$p\in [1,\infty)$,~$m \in \Z$ and~$u \in W^{s,p}(\cu_m)$, 
\begin{equation} 
\label{e.Wsp.vs.Bspp}
C^{-1} [  u ]_{\underline{W}^{s,p}(\cu_m)}
\leq
[  u ]_{\underline{B}^s_{p,p}(\cu_m)} 
\leq
C [  u ]_{\underline{W}^{s,p}(\cu_m)}
\,.
\end{equation}
Moreover, for every~$s\in (0,1)$,~$p\in [1,\infty)$,~$m \in \Z$ and~$f \in  \Besovnoul{-s}{p}{p}(\cu_{m})$,
\begin{equation} 
\label{e.Wminus.sp.vs.Bminus.sp}
[  f ]_{\Wminusul{-s}{p}(\cu_m)} \leq 
C \left[ f \right]_{\Besov{-s}{p}{p}(\cu_{m})} 
 \,.
\end{equation}
\end{lemma}
\begin{proof}
In what follows, we use the notation~$A \approx_{d} B$ to denote the existence of a constant~$C(d)$ such that~$A \approx_{d} B$ implies~$C^{-1} A \leq B \leq C$. 

\smallskip

Let~$\{\psi\}_j$ be a partition of unity so that~$\sum_{j\in\Z} \psi_j = 1$,~$\psi_j$ is supported in~$\cu_{j+2} \setminus \cu_j$,~$\sum_{j=n-1}^{n+1} \psi_j = 1$ in the support of~$\psi_n$ and~$\|\nabla \psi_j \|_{L^\infty(\Rd)} \leq C 3^{-j}$. 
We compute, for every~$h\in\N$ with~$h\geq 1$, 
\begin{align*}
\lefteqn{[  u ]_{\underline{W}^{s,p}(\cu_m)}
=
\biggl(
\fint_{\cu_m} \int_{\cu_m} 
\frac{|u(x) - u(y)|^p}{|x-y|^{d+sp}} 
\, dx \, dy
\biggr)^{\! \nf1p}
} \qquad & 
\notag \\ &
\approx_{d}
\biggl( 
\sum_{n=-\infty}^{m+1} 
3^{-n(d+sp)}
\fint_{\cu_m} \int_{\cu_m} |u(x) - u(y)|^p  \psi_{n}(x-y) \, dx \, dy 
\biggr)^{\! \nf1p}
\notag \\ &
\approx_{d}
\biggl( 
\sum_{n=-\infty}^{m+1} 
3^{-n(d+sp)} \avsum_{z\in 3^{n-h}\Zd \cap \cu_m}  
\fint_{(z+\cu_{n-h+1}) \cap \cu_m} \int_{\cu_m} |u(x) - u(y)|^p  \psi_{n}(x-y)   \, dy \, dx
\biggr)^{\! \nf1p}
\,.
\end{align*}
For~$y\in \cu_m$ and~$x \in z + \cu_{n-h+4}$ with~$h \geq 10$, we have
\begin{align*}
\bigl| \psi_{n}(x-y) 
-
\psi_{n}(z-y) \bigr|
= 
\biggl| 
(x-z) \cdot \int_{0}^1 \nabla \psi_n(t x + (1-t)z - y) \, dt  
\biggr|
\leq
C 3^{-h}\sum_{j=n-2}^{n+2}\psi_j(x-y) \,.
\end{align*} 
Taking~$h$ sufficiently large, depending only on~$d$, we deduce that 
\begin{align*} 
[  u ]_{\underline{W}^{s,p}(\cu_m)}^p
\geq 
c^p
\sum_{n=-\infty}^{m+1} 
3^{-n(d+sp)}
\avsum_{z\in 3^{n-h}\Zd \cap \cu_m}  
\fint_{(z+\cu_{n-h+4}) \cap \cu_m} \int_{\cu_m} |u(x) - u(y)|^p  \psi_{n}(z-y)   \, dy \, dx
\,.
\end{align*}
Next, since~$\int_{\cu_m} \psi_{n}(z-y) \, dy \geq c | \cu_n|$ for every~$z \in \cu_m$ and~$n \leq m-1$, Jensen's inequality yields
 \begin{align*}
\int_{\cu_m} |u(x) - u(y)|^p \psi_{n}(z-y) \, dy
\geq 
c^p  | \cu_n| \biggl| u(x) - \biggl( \int_{\cu_m} \psi_{n}(z-y)\,dy \biggr)^{-1} \int_{\cu_m}  u(y) \psi_{n}(z-y)\,dy  \biggr|^p\,.
\end{align*}
Therefore, 
\begin{multline*} 
\fint_{(z+\cu_{n-h+4}) \cap \cu_m} \int_{\cu_m} |u(x) - u(y)|^p  \psi_{n}(z-y)   \, dy \, dx
\\
\geq 
c^p | \cu_n| 
\inf_{a \in \R}
\|u - a \|_{\underline{L}^p((z+\cu_{n-h+4}) \cap \cu_m)}^p
\geq 
c^p|\cu_n| 
\|u - (u)_{(z+\cu_{n-h+4})\cap \cu_m } \|_{\underline{L}^p((z+\cu_{n-h+4}) \cap \cu_m)}^p
\,.
\end{multline*}
We therefore obtain
\begin{align*}
[  u ]_{\underline{W}^{s,p}(\cu_m)}^p
\geq 
c^p 
\sum_{n=-\infty}^{m-1} 
3^{-nsp}
\avsum_{z\in 3^{n-h}\Zd \cap \cu_m}  
\|u - (u)_{(z+\cu_{n-h+4})\cap \cu_m } \|_{\underline{L}^p((z+\cu_{n-h+4}) \cap \cu_m)}^p
\end{align*}
Using the triangle inequality and the fact that the cubes are overlapping, it is easy to check that the right side of this display is equivalent to the expression in the middle of~\eqref{e.Wsp.vs.Bspp}. This completes the proof of the lower bound in~\eqref{e.Wsp.vs.Bspp}. 

\smallskip

For the upper bound in~\eqref{e.Wsp.vs.Bspp}, we compute
\begin{align*} 
\lefteqn{
\sum_{n=-\infty}^{m+1} 
3^{-n(d+sp)}
\fint_{\cu_m} \int_{\cu_m} |u(x) - u(y)|^p  \psi_{n}(x-y) \, dx \, dy 
} \qquad & 
\notag  \\ &  
\leq 
C 
\sum_{n=-\infty}^{m} 
3^{-nsp} \avsum_{z \in 3^{n} \Zd \cap \cu_m } 
\fint_{(z+ \cu_{n+3}) \cap \cu_m} \fint_{(z+ \cu_{n+3}) \cap \cu_m} |u(x) - u(y)|^p  \, dx \, dy
\notag  \\ &  
\leq 
C 
\sum_{n=-\infty}^{m} 
3^{-nsp} \avsum_{z \in 3^{n} \Zd \cap \cu_m } \inf_{a \in \R}\| u - a\|_{\underline{L}^p( (z+ \cu_{n+1}) \cap \cu_m)}^p
 \,.
\end{align*}
Finally,~\eqref{e.Wminus.sp.vs.Bminus.sp} is a consequence of~\eqref{e.weak.norms.ordering.A} and~\eqref{e.Wsp.vs.Bspp}. The proof is now complete. 
\end{proof}

In Section~\ref{ss.blackbox}, we use the following fractional weighted Poincar\'e-type inequality. 

\begin{proposition}[{Fractional Hardy-Poincar\'e inequality~\cite{DLV}}]
\label{p.fractional.hardy}
For every~$s\in (0,\nf12)$ and bounded Lipschitz domain~$U \subseteq \Rd$, there exists~$C(U,d)<\infty$, depending only on~$d$ and the Lipschitz character of~$U$, such that, for every~$u\in H^s(U)$,
\begin{equation}
\int_{U} 
\dist(x,\partial U)^{-2s} 
\bigl|u(x) - (u)_U \bigr|^2 
\leq 
Cs^{-1}(1-2s)^{-1} [ u ]_{H^s(U)}^2
\,.
\label{e.fractional.hardy}
\end{equation} 
\end{proposition}
\begin{proof}
See~\cite[Remark 3.3]{DLV}.
\end{proof}

\section{Geometric means for positive matrices}
\label{s.geomean}

Recall that if~$A,B>0$ are positive real numbers, then the minimum of the map 
\begin{equation*}
x \mapsto 
\frac 12x^{-\nicefrac12} A x^{-\nicefrac12} + \frac 12x^{\nicefrac12}B^{-1} x^{\nicefrac12} 
\end{equation*}
is attained uniquely at~$x = A\# B$, where $A\#B  \coloneqq  (AB)^{\nicefrac12}$ is the geometric mean of~$A$ and~$B$, and the minimum is equal to~$A\#B^{-1}$.
It turns out that this fact can be generalized to positive definite matrices. 

\smallskip

There are two different notions of geometric mean for positive definite matrices. The first one, introduced by Ando~\cite{Ando}, is called the \emph{metric geometric mean}. It is defined for any pair of positive definite matrices~$A$ and~$B$ by 
\begin{equation*}
A\#B = A^{\nicefrac12} \bigl(A^{-\nicefrac12} B  A^{-\nicefrac12}  \bigr)^{\nicefrac12} \! A^{\nicefrac12} \,.
\end{equation*}
The matrix~$A\#B$ is the unique positive definite matrix solution~$X$ of the equation
\begin{equation}
\label{e.ricatti}
X A^{-1} X = B. 
\end{equation}
We see from this characterization that the metric geometric mean is symmetric in~$A$ and~$B$, that is,~$A\#B = B\#A$.

\smallskip

A second notion of geometric mean for positive definite matrices  was introduced later by Fiedler and Pt\'{a}k~\cite{FP}, is the \emph{spectral geometric mean}~$A\,\natural B$ of two matrices~$A$ and~$B$ is defined by
\begin{equation*}
A\,\natural B
 \coloneqq 
(A^{-1}\#B)^{\nicefrac12} A (A^{-1}\#B)^{\nicefrac12}
\,.
\end{equation*}
It is also characterized by the following identity that relates it to the metric geometric mean:
\begin{equation*}
A^{-1} \# (A\,\natural B ) = B \# (A\,\natural B )^{-1}\,.
\end{equation*}
It gets its name from the fact that~$(A\,\natural B)^2$ is positively similar to~$AB$. In fact, there exists a positive definite matrix~$C$ such that
\begin{equation*}
A\,\natural B = C A C = C^{-1} B  C^{-1}
\,,
\end{equation*}
and this property characterizes~$A\,\natural B$. It also follows from this characterization that the spectral geometric means are also symmetric in~$A$ and~$B$, that is,~$A\natural B = B\natural A$. Since~$(A\,\natural B)^2$ is similar to~$AB$, the eigenvalues of~$A\,\natural B$ are the square roots of those of~$AB$ or~$BA$ or~$A^{\nicefrac12}B A^{\nicefrac12}$ or~$B^{\nicefrac12}A B^{\nicefrac12}$. This property gives it its name. 

The largest eigenvalue of~$A\,\natural B$ is larger than the largest eigenvalue of~$A\# B$, while this relation is reversed for the smallest eigenvalue. In particular,~$|A\# B| \leq  |A\,\natural B|$. It turns out that~$A\# B = A\,\natural B$ if and only if~$A$ and~$B$ commute. If they do not commute, there is no relation between the two in the Loewner partial order. All of the facts asserted above can be found in~\cite{FP}.

\smallskip

It turns out that the map
\begin{equation*}
X \mapsto 
\frac 12X^{-\nicefrac12} A X^{-\nicefrac12} + \frac 12X^{\nicefrac12}B^{-1} X^{\nicefrac12} 
\end{equation*}
is minimized by~$X= A\# B$ and the minimum is equal to~$A \natural B^{-1}$.

\section{Orlicz quasi-norms and concentration inequalities}
\label{ss.big.O}

\subsection{The~\texorpdfstring{$\mathcal{O}_{\Psi}$}O-Psi notation for weak Orlicz quasi-norms}

If~$\Psi:\R_+\to [1,\infty)$ is an increasing function satisfying the mild growth condition
\begin{equation*}
\lim_{t\to \infty} \frac1t \Psi(t) = +\infty\,,
\end{equation*}
and~$X$ is a random variable, then we use the notation~``$X \leq \O_{\Psi}(A)$'' for~$A\geq 0$ as a shorthand for the statement
\begin{equation*}
\P \bigl[ X > tA \bigr] 
\leq 
\frac{1}{\Psi(t)}\,, \quad \forall t \in [1,\infty)\,.
\end{equation*}
For example, we could write~\eqref{e.S.integrability} as~$\S \leq \O_{\Psi_\S}(1)$ and, similarly,~\eqref{e.CFS} can be written as
\begin{equation*}
\biggl| \,
\avsum_{z\in 3^n\Zd\cap \cu_m}  X_z
\biggr| 
\leq 
\O_{\Psi} \bigl( 3^{-\frac d2(m-n)} \bigr)
\,.
\end{equation*}
Thus, the notation~``$X \leq \O_{\Psi}(A)$'' is an alternative way of writing that a weak Orlicz quasi-norm of~$X$ is bounded by~$A$. We note, however, that we do \emph{not} require~$\Psi$ to be convex here. 
We also write~$X = \O_{\Psi}(A)$ to mean that~$|X| \leq \O_{\Psi}(A)$. 

\smallskip

If~$\Psi_1$ and~$\Psi_2$ are two such functions, then we write~$X \leq \O_{\Psi_1}(A_i)$ to mean that~$X$ can be written as the sum~$X = Y_1+Y_2$ of two random variables~$Y_1$ and~$Y_2$ satisfying~$Y_i = \O_{\Psi_i}(A_i)$ for~$i\in\{1,2\}$.

\smallskip

For this~$\O_{\Psi}$ notation to be useful, we need to have other properties such as a (generalized) triangle inequality for infinite sums and sufficient growth of~$\Psi$ that~$X=\O_{\Psi}(A)$ implies bounds on finite moments of~$X$. 
In the next lemma, we prove such properties under the growth condition~\eqref{e.Psi.growth} used in our assumptions. 

\begin{lemma}
\label{l.Psi.growth}
Suppose that~$\Psi: \R_+ \to [1,\infty)$ is an increasing function and~$K_\Psi\in [2,\infty)$ such that
\begin{equation*}
t \Psi(t) \leq \Psi(K_\Psi  t), \quad \forall t \in [1,\infty)\,.
\end{equation*}
Then we have the following:
\begin{itemize}
\item For every~$p\in [1,\infty)$, 
\begin{equation}
\label{e.eat.the.poly}
\frac{t^p}{\Psi(t)} 
\leq 
\frac1{\Psi\bigl(K_\Psi^{-4\lceil p\rceil}t \bigr)}\,, \quad \forall \, t \in \bigl[ K_\Psi^{4\lceil p\rceil} ,\infty \bigr)\,,
\end{equation}

\item $\Psi$ satisfies the following minimal growth bound:
\begin{equation}
\label{e.explog2.growth}
\Psi(t) \geq 
\exp \biggl( \frac{\log^2 t}{9 \log K_\Psi} \biggr)\,, \quad \forall t \in [K_{\Psi}^2,\infty)\,.
\end{equation}

\item For every~$p\in[2,\infty)$, 
\begin{equation}
\label{e.Psi.doubling}
s^p \leq 
K_{\Psi}^{3p^2} \frac{\Psi(ts)}{\Psi(t)} 
\,, \quad \forall t,s\in [1,\infty)\,.
\end{equation}

\item For any random variable~$X$ and~$a\in (0,\infty)$ and~$p\in[1,\infty)$, 
\begin{equation}
\label{e.Psi.moments.bound}
X = \O_\Psi(a) 
\ \implies \
\E [ X^p ] \leq 
a^p \Bigl( 1+2p K_\Psi^{\lceil \frac12 p(p+1) \rceil} \bigl(1+\log K_\Psi \bigr) \Bigr)\,.
\end{equation}

\item For any sequence~$\{ X_k \}_{k\in\N}$ of random variables,
\begin{equation}
\label{e.Psi.triangle}
X_k \leq \O_\Psi (a_k) 
\ \implies \  
\sum_{k\in\N} X_k \leq \O_\Psi \biggl( 4 K_{\Psi}^{7} \!\sum_{k\in\N} a_k \biggr)\,.
\end{equation}
\end{itemize}

\end{lemma}
\begin{proof}

\emph{Step 1.}
We begin with the observation that~\eqref{e.Psi.growth} allows us to absorb powers of~$t$ in front of~$\Psi$ by a dilation of~$\Psi$. By induction, it implies that, for every power~$k \in \N$, 
\begin{equation}
\label{e.eat.poly.factors.pre}
t^k \Psi(t) 
\leq 
K_\Psi^{-\frac12 k(k-1)} \Psi(K_\Psi^{k} t)\,, \quad \forall t \in [1,\infty)\,.
\end{equation}
In particular, for every~$p\in [1,\infty)$,
\begin{equation}
\label{e.eat.poly.factors}
t^p \Psi(t) 
\leq 
\Psi\bigl(K_\Psi^{\lceil p\rceil} t\bigr)
\,, \quad \forall t\in [1,\infty)\,.
\end{equation}
This implies that, for every~$p\in [1,\infty)$,
\begin{equation*}
\frac{t^p}{\Psi(t)}
\leq 
\bigl( K_\Psi^{2\lceil 2p\rceil} t^{-1} \bigr)^{p} 
\frac{1}{\Psi\bigl(K_\Psi^{\lceil 2p\rceil} t\bigr)}\,,
\quad \forall t \geq K_\Psi^{\lceil 2p\rceil} \,.
\end{equation*}
If we restrict to~$t \geq K_\Psi^{4\lceil p\rceil}$, then we can ignore the first factor. This yields~\eqref{e.eat.the.poly}. 

\smallskip 

\emph{Step 2.} The proof of~\eqref{e.explog2.growth}.
We observe that~\eqref{e.eat.poly.factors.pre} implies, for every~$p\in \N\cap [1,\infty)$ and~$t \in [ K_\Psi^{p}, K_\Psi^{p+1} ]$,
\begin{equation}
\label{e.polylift}
\Psi(t) \geq \frac{\Psi(t)}{\Psi(1)} 
\geq 
K_\Psi^{\frac12 p(p-1)} 
\geq 
t^{\frac{p(p-1)}{2(p+1)}}
\geq 
\exp \biggl( \frac{p(p-1)}{2(p+1)} \log t \biggr)
\geq 
\exp \biggl( \frac{p(p-1)}{2(p+1)^2} \frac{\log^2 t}{\log K_\Psi} \biggr)
\,.
\end{equation}
Specializing to~$p\geq2$ yields~\eqref{e.explog2.growth}.

\smallskip

\emph{Step 3.}
The proof of~\eqref{e.Psi.doubling}.
As in~\eqref{e.polylift}, we use~\eqref{e.eat.poly.factors.pre} to find that, for every~$p\in\N$, we have
\begin{equation*}
\frac{\Psi(ts)}{\Psi(t)} 
\geq s^{\frac{p(p-1)}{2(p+1)}}\,,
\quad \forall t\in [1,\infty)\,,  \ 
s\in [ K_\Psi^p,K_{\Psi}^{p+1}]
\,.
\end{equation*}
The upper bound restriction on~$s$ can clearly be removed, and so we obtain
\begin{equation*}
\frac{\Psi(ts)}{\Psi(t)} 
\geq s^{\frac{p(p-1)}{2(p+1)}}\,,
\quad \forall t\in [1,\infty)\,,  \ 
s\in [ K_\Psi^p,\infty)\,,
\end{equation*}
and this implies
\begin{equation*}
\frac{\Psi(ts)}{\Psi(t)} 
\geq 
K_\Psi^{-\frac{p^2(p-1)}{2(p+1)}} s^{\frac{p(p-1)}{2(p+1)}}\,,
\quad \forall t\in [1,\infty)\,,  \ 
s\in [ 1,\infty)\,,
\end{equation*}
Restricting to~$p\geq 5$, we get 
\begin{equation*}
\frac{\Psi(ts)}{\Psi(t)} 
\geq 
K_\Psi^{-\nicefrac{p^2}{3}} s^{\nicefrac{p}{3}}\,,
\quad \forall t\in [1,\infty)\,,  \ 
s\in [ 1,\infty)\,,
\end{equation*}
This implies~\eqref{e.Psi.doubling}.

\smallskip 

\emph{Step 4.} We prove~\eqref{e.Psi.moments.bound}.
For any random variable~$X$ satisfying~$X= \O_{\Psi}(a)$, we have
\begin{equation*}
a^{-n} \E\bigl[|X|^n\bigr]  
= n \int_0^\infty t^{n-1}\P\bigl[|X|> a t\bigr] \, dt  
\leq 1 + n \int_1^\infty \frac{t^{n-1}}{\Psi(t)} \, dt \,.
\end{equation*}
Using~\eqref{e.eat.poly.factors.pre}, we find that, for every $j\in N$ and~$t\in [K_\Psi^{j},\infty)$,
\begin{equation*}
\Psi(t)
\geq K_\Psi^{-\frac{1}{2}j(j+1)} t^{j} \Psi(K_\Psi^{-j} t) \geq K_\Psi^{-\frac{1}{2}j(j+1)} t^{j}
\,.
\end{equation*}
Using this, we obtain
\begin{align} 
\notag
\int_1^\infty \frac{t^{n-1}}{\Psi(t)} \, dt   
& \leq 
\sum_{j=0}^n  \int_{K_\Psi^{j}}^{K_\Psi^{j+1}} \frac{t^{n-1}}{\Psi(t)} \, dt  
+ 
\int_{K_\Psi^{n+1}}^\infty \frac{t^{n-1}}{\Psi(t)} \, dt
\notag \\ &
\leq 
\sum_{j=0}^n K_\Psi^{\frac{1}{2}j(j+1)}  \int_{K_\Psi^{j}}^{K_\Psi^{j+1}} t^{n-j-1} \, dt   
 + n K_\Psi^{\frac12 (n+2)(n+1)} \int_{K_\Psi^{n+1}}^\infty t^{-2} \, dt 
\notag \\ &
\leq 
\sum_{j=0}^{n-1} \frac{1}{n-j} K_\Psi^{\frac{1}{2}j(j+1)+(n-j)(j+1)}
+
\bigl(1+\log K_\Psi \bigr) K_\Psi^{\frac12 n(n+1)}
\notag \\ &
\leq 
2 \bigl(1+\log K_\Psi \bigr) K_\Psi^{\frac12 n(n+1)}
\,.
\notag
\end{align}
This yields, for every~$n\in\N$, 
\begin{equation*}
\E\bigl[|X|^n\bigr]  
\leq 
a^n \Bigl( 1+2n K_\Psi^{\frac12 n(n+1)} \bigl(1+\log K_\Psi \bigr) \Bigr)\,,
\end{equation*}
completing the proof of~\eqref{e.Psi.moments.bound}. 

\smallskip

\emph{Step 5.} We show that~\eqref{e.Psi.doubling} implies the generalized triangle inequality~\eqref{e.Psi.triangle}.
We assume only that~$\Psi$ satisfies, for some~$p,C_0\in (1,\infty)$,
\begin{equation}
\label{e.weird.poly.doubling}
s^p \leq C_0 \frac{\Psi(ts)}{\Psi(t)} 
\,, \quad \forall t,s\in [1,\infty)\,.
\end{equation}
We argue that this condition~\eqref{e.weird.poly.doubling} implies the following generalized triangle inequality: there exists a constant~$C(p,C_0)<\infty$ (given explicitly below in~\eqref{e.CpC0}), such that, if~$\{ X_k \}_{k\in\N}$ is any sequence of random variables, then 
\begin{equation*}
X_k \leq \O_\Psi (a_k) 
\ \implies \  
\sum_{k\in\N} X_k \leq \O_\Psi \biggl( C\!\sum_{k\in\N} a_k \biggr)\,.
\end{equation*}
To see this, we set~$X \coloneqq  \sum_{k} X_k$, $a \coloneqq  \sum_{k} a_k$, fix~$t>0$ and compute
\begin{align*}
\P \bigl[ X > t \bigr]
\leq 
\P \biggl[ \sum_{k\in\N} X_k \indc_{\{ X_k   > ta_k/2a\} } 
> \frac12 t
\biggr]
\leq
\frac2t \sum_{k\in\N} \E \bigl[ X_k \indc_{\{ X_k   > ta_k/2a\} } \bigr]
\,.
\end{align*}
We then observe that 
\begin{align*}
\E \bigl[ X_k \indc_{\{ X_k   > ta_k/2a\} } \bigr]
&
=
\int_0^\infty \P \bigl[ X_k \indc_{\{ X_k   > ta_k/2a\} } > s  \bigr]\,ds
\\ & 
\leq
\frac{ta_k}{2a}
\P \biggl[ X_k > \frac{ta_k}{2a} \biggr]
+\int_{ta_k/2s}^\infty
\P \bigl[ X_k  > s  \bigr]\,ds
\\ &
\leq
\frac{ta_k}{2a}
\frac{1}{\Psi(t/2a)}
+
\int_{ta_k/2a}^\infty
\frac{1}{\Psi (s/a_k)}
\,ds
\\ &
\leq
\frac{1}{\Psi(t/2a)}
\biggl( \frac{ta_k}{2a}
+
\int_{ta_k/2a}^\infty
\frac{\Psi(t/2a)}{\Psi (s/a_k)}
\,ds \biggr)
\\ &
\leq
\frac{1}{\Psi(t/2a)}
\biggl( \frac{ta_k}{2a}
+
C_0 \int_{ta_k/2a}^\infty
\biggl( \frac{2as}{a_kt} \biggr)^{\!-p}\,ds \biggr)
=
\frac{1}{\Psi(t/2a)} \biggl(\frac{ta_k}{2a} \biggr) \biggl( 1 + \frac{C_0}{p-1} \biggr)\,.
\end{align*}
Here, we used~\eqref{e.weird.poly.doubling} in the last line. Inserting this into the previous display after summing it over~$k\in\N$ gives
\begin{equation*}
\P \bigl[ X > t \bigr]
\leq 
\frac{1+C_0(p-1)^{-1}}{\Psi(t/2a)}
\,.
\end{equation*}
Using~\eqref{e.weird.poly.doubling} again, we can bound the right side by~$\Psi(t/Ca)^{-1}$ with
\begin{equation}
\label{e.CpC0}
C(p,C_0) \coloneqq 2(C_0+C_0^2(p-1)^{-1})^{\nicefrac1p}
\,,
\end{equation}
which then implies~$X = \O_\Psi(C a)$, as claimed. 
If we have~\eqref{e.Psi.doubling}, then we may take~$p=2$ and~$C_0 = K_\Psi^{7}$ and we find that~$C(p,C_0) \leq 4 K_{\Psi}^{7}$, as claimed in~\eqref{e.Psi.triangle}.  
This completes the proof of the lemma. 
\end{proof}

We may also use~\eqref{e.weird.poly.doubling} in the equivalent form
\begin{equation}
\label{e.eat.it}
\frac{s}{\Psi(t)} 
\leq 
\frac1{\Psi\bigl(t(sC_0)^{-\nicefrac1p} \bigr)}\,,
\quad
\forall s \in\bigl[ C_0^{-1},\infty \bigr) \,, \ t \in \bigl[ (sC_0)^{\nicefrac1p},\infty\bigr)\,.
\end{equation}

\subsection{Examples of functions satisfying the growth condition}

We introduce, for every~$\sigma \in (0,\infty)$, the function
\begin{equation*}
\Gamma_\sigma (t)  \coloneqq  \exp \bigl(  t^\sigma \bigr)\,.
\end{equation*}
If~$\sigma\geq 1$, then the function~$\Gamma_\sigma$ is nonnegative, increasing on~$\R_+$, convex and satisfies~$t \Gamma_\sigma(t) \leq \Gamma_\sigma(2t)$ for every~$t\geq 1$ and~$\Gamma_\sigma$ satisfies the generalized triangle inequality~\eqref{e.Psi.triangle} with constant~$C=1$. 
In the case~$\sigma\in (0,1)$,~$\Gamma_\sigma$ is convex on the interval~$((\frac{1-\sigma}{\sigma})^{\nicefrac 1\sigma},\infty)$, the growth condition is satisfied for
\begin{equation*}
K_{\Gamma_\sigma} = \Bigl( \frac{\sigma+1}{\sigma} \Bigr)^{\nicefrac 1\sigma}\,.
\end{equation*}
and the generalized triangle inequality is valid with constant~$C = 2\sigma^{-\nf1\sigma}$.
For every~$\sigma_1, \sigma_2 \in (0, \infty)$ and random variables~$X_1$ and$X_2$, 
\begin{equation*}
X_1 \leq \O_{\Gamma_{\sigma_1}}(A_1) \qand X_2 \leq \O_{\Gamma_{\sigma_2}}(A_2) \implies X_1 X_2 \leq \O_{\Gamma_{\frac{\sigma_1 \sigma_2}{\sigma_1 + \sigma_2}}}(A_1 A_2) \, . 
\end{equation*}
For any~$\sigma,p,K\in(0,\infty)$ and random variable~$X$,
\begin{equation*}
X \leq \Gamma_{\sigma}(K)
\iff
X^{p} \leq \Gamma_{\sigma/p} (K^p)\,.
\end{equation*}
A normal random variable~$X$ with zero mean and variance~$\gamma^2>0$ satisfies
\begin{equation*}
X = \O_{\Gamma_2}(\gamma)\,.
\end{equation*}
If~$\sigma,A>0$ and~$X_1,\ldots,X_N$ is a sequence of random variables satisfying~$X_i = \O_{\Gamma_\sigma}(A)$ with~$N \geq 2$, then 
\begin{equation*}
\max_{1\leq i \leq N} X_i = \O_{\Gamma_\sigma} \bigl( (3 \log N)^{\nicefrac1\sigma} A\bigr)\,.
\end{equation*}
To see this, we observe that, for every~$t \geq 1$, we use a union bound to estimate
\begin{align*}
\P \bigl[ \max_{1\leq i \leq N} X_i > A (3 \log N)^{\nicefrac{1}{\sigma}} t  \bigr]
&\leq \sum_{i=1}^N \P \bigl[ X_i > A (3 \log N)^{\nicefrac{1}{\sigma}} t\bigr] 
\\&
\leq N \exp\Bigl( -3 t^{\sigma} \log N \Bigr)
\leq \exp\Bigl(-t^{\sigma} (3 \log N - \log N)  \Bigr)
\leq  \exp(-t^\sigma) \,.
\end{align*}
The indicator function~$\indc_E$ of an event~$E$ with~$0<\P[E] <1$ satisfies, for every~$\sigma\in (0,\infty)$, 
\begin{equation}
\label{e.indc.O.sigma}
\indc_{E} \leq \O_{\Gamma_\sigma} \bigl( \bigl| \log \P[E] \bigr|^{-\nicefrac 1\sigma} \bigr) 
\,.
\end{equation}
This is immediate from the definitions. 

\smallskip

We have seen in~\eqref{e.explog2.growth} that any admissible~$\Psi$ grows at least like~$t\mapsto c\exp( c\log^2 t)$.
Conversely, this function satisfies the growth condition, and it is an important example since it characterizes the integrability of log-normal random variables. For instance, 
\begin{equation*}
\Psi_1(t) = \exp \Bigl( \log^2 (1+t) \Bigr)
\end{equation*}
is nonnegative, increasing on~$[0,\infty)$ and satisfies~\eqref{e.Psi.growth} with~$K_{\Psi_1} = 10$. 
More generally, for each~$\sigma \in (0,\infty)$, we define
\begin{equation}
\label{e.Psi.log-normal}
\Psi_{\sigma} (t)  \coloneqq  \exp \biggl( \frac1{\sigma^2} \log^2 \bigl( 1 + \sigma t \bigr) \biggr)
\,, \qquad t\in[0,\infty)\,.
\end{equation}
This class of functions captures the stochastic integrability of log-normal random variables in the sense that, for every random variable~$X$,  
\begin{equation*}
X \leq \O_{\Gamma_2} (\sigma)
\iff
\exp(X)-1 \leq \O_{\Psi_{\sigma}}(\sigma)\,.
\end{equation*}
If~$\sigma\leq 1$, then 
\begin{equation*}
\frac1{\sigma^2} \log^2 \bigl( 1 + \sigma t \bigr)
\leq 
\log^2 \bigl( 1 + t \bigr)\,, \quad \forall t\in [0,\infty)
\end{equation*}
by the concavity of the logarithm, and so~$Y\leq \O_{\Psi_\sigma}(a)$ implies that~$Y \leq \O_{\Psi_1}(a)$. For this reason, we will generally use~$\Psi_{\sigma}$ only for~$\sigma\geq1$. For all such~$\sigma$, the function~$\Psi_\sigma$ is admissible since 
\begin{equation}
\mbox{for every~$\sigma\in[1,\infty)$, \quad $\Psi_\sigma$ satisfies~\eqref{e.Psi.growth} with constant $K=K_{\Psi_\sigma}  \coloneqq  2 \exp(2\sigma^2)$}
\,.
\label{e.K.Psi.sigma}
\end{equation}
To prove~\eqref{e.K.Psi.sigma}, observe that, for every~$\sigma,K\geq 1$, 
\begin{align*}
t \Psi_\sigma (t) \leq \Psi_\sigma (Kt)\,, \quad \forall t\in [1,\infty)
& 
\iff \sigma^2 \log t \leq \log^2 (1+K\sigma t) - \log^2 (1+\sigma t) \,,
\quad \forall t\in [1,\infty)
\notag \\ & \;
\Longleftarrow \ \,
\sigma^2 \log t \leq 
\bigl( \log (1+K \sigma t) \bigr) \log \biggl( \frac{1+K\sigma t}{1+\sigma t} \biggr)
\,,
\quad \forall t\in [1,\infty)\,,
\end{align*}
where in the last line we used that~$A^2-B^2 = (A+B)(A-B) \geq A (A-B)$ for all~$A\geq B > 0$.
Moreover, the last statement on the right is valid with the choice of~$K=2\exp(\sigma^2)$ since it makes the second logarithm factor on the right side larger than~$\sigma ^2$, while the first is clearly larger than~$\log t$. 
This completes the proof of~\eqref{e.K.Psi.sigma}.

\subsection{Concentration inequalities with respect to Orlicz quasi-norms}

We present a simple concentration inequality for random variables with at least exponential integrability.

\begin{lemma}[Concentration for exponential random variables]
\label{l.exp.concentration}
Let~$\sigma \in [1,2]$ and~$m\in\N$. Suppose that~$X_1,\ldots,X_m$ is a sequence of independent random variables satisfying 
\begin{equation*}
X_k = \O_{\Gamma_\sigma} (1) 
\quad \mbox{and} \quad 
\E[ X_k ] = 0 \, , \quad \forall k\in \{1,\ldots,m\}\,.
\end{equation*}
Then, for every~$t\geq1$, 
\begin{equation}
\label{e.Sm.exp.concentration}
\P \Biggl[ \sum_{k=1}^m X_k
> 
t \Biggr] 
\leq 
\left\{
\begin{aligned}
&
\max\biggl\{ 
\exp\Bigl( - \frac{t^2}{40m} \Bigr)  \, , \, \exp \Bigl( -\frac12 t \Bigr)  \biggr\}& \mbox{if} & \ \sigma = 1\,,
\\ & 
\max\biggl\{ 
\exp \Bigl( - \frac{t^2}{40m} \Bigr)
\,,\,
\frac{128}{(\sigma-1)^3} 
\exp \Bigl( - \frac1{2\sigma} t^\sigma \Bigr)
\biggr\} 
 & \mbox{if} & \ \sigma \in (1,2]\,.
\end{aligned}
\right.
\end{equation}
In particular, 
\begin{equation}
\label{e.exp.concentration}
\sum_{k=1}^m X_k = 
\left\{
\begin{aligned}
& \O_{\Gamma_1}\bigl(40m^{\nicefrac12} \bigr) & \mbox{if} & \ \sigma = 1\,,
\\ & 
\O_{\Gamma_\sigma} \bigl( \max\bigl\{  40m^{\nicefrac12} , 20 \left|\log(\sigma{-}1)\right|^{\nicefrac1\sigma} \bigr\}  \bigr) 
& \mbox{if} & \ \sigma \in (1,2] \,.
\end{aligned}
\right.
\end{equation}
\end{lemma}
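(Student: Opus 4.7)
\medskip

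\noindent\textbf{Proof proposal for Lemma~\ref{l.exp.concentration}.}
The plan is to follow the classical Chernoff-type argument, computing moment generating function bounds from the Orlicz tail assumption and then optimizing in the dual parameter. The two-regime form of the bound in~\eqref{e.Sm.exp.concentration} reflects the crossover between a Gaussian regime (where the moment generating function behaves like $\exp(C\lambda^2)$ for $\lambda$ small) and a large-deviation regime governed by the true $\Gamma_\sigma$ tail.

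First I would convert $X_k = \O_{\Gamma_\sigma}(1)$ into moment bounds using~\eqref{e.Psi.moments.bound} applied to $\Psi = \Gamma_\sigma$, which gives $\E[|X_k|^p] \leq p\,\gammafun(p/\sigma + 1)$ up to universal factors. Since $\E[X_k]=0$, the Taylor expansion of the MGF yields, for $|\lambda|$ sufficiently small,
\begin{equation*}
\E\bigl[e^{\lambda X_k}\bigr]
\leq 1 + \sum_{j\geq 2} \frac{|\lambda|^j}{j!} \E[|X_k|^j]
\leq 1 + C \lambda^2 \sum_{j\geq 0} |\lambda|^j \frac{\gammafun(j/\sigma + 3)}{(j+2)!}\,.
\end{equation*}
For $\sigma = 1$ this series converges only for $|\lambda|<1$, while for $\sigma \in (1,2]$ it converges on a radius that expands like $(\sigma-1)^{-1/\sigma}$, which is the source of the $(\sigma-1)^{-3}$ prefactor. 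In both cases, on a neighborhood of zero I would conclude $\E[e^{\lambda X_k}] \leq \exp(10\,\lambda^2)$ (the factor $40 = 4\cdot 10$ in~\eqref{e.Sm.exp.concentration} will come out of the Young optimization).

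Second, by independence, Markov's inequality gives $\P[\sum_k X_k > t] \leq \exp(-\lambda t + 10 m\lambda^2)$ on the admissible range of $\lambda$. The Gaussian regime is produced by choosing $\lambda = t/(20m)$, which is admissible when $t$ is small compared to $m$, and yields $\exp(-t^2/(40m))$. When this choice of $\lambda$ exceeds the admissible radius, I would instead pick $\lambda$ at the endpoint of admissibility; reinserting into the Chernoff bound, the quadratic term becomes subdominant and one is left with the $\exp(-ct^\sigma)$ tail coming directly from the integrated tail of $X_k$. For $\sigma=1$ the cutoff is at $\lambda = \tfrac12$, producing $\exp(-t/2)$; for $\sigma \in (1,2]$ the cutoff pushes further, and the explicit prefactor $128(\sigma-1)^{-3}$ arises from the bound on the tail series in the second step, together with the absorption of the quadratic correction. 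Taking the maximum of the two regimes recovers~\eqref{e.Sm.exp.concentration}.

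Finally, the stochastic integrability statement~\eqref{e.exp.concentration} is immediate from~\eqref{e.Sm.exp.concentration} and the definition of $\O_{\Gamma_\sigma}$: for $\sigma=1$ the two regimes cross at $t \simeq m^{1/2}$, so a tail of the form $\exp(-c(t/(40 m^{1/2}))^1)$ dominates beyond that scale; for $\sigma \in (1,2]$ the prefactor $128(\sigma-1)^{-3}$ is absorbed by enlarging the scale by a factor of $|\log(\sigma-1)|^{1/\sigma}$, which explains the form of the constant in the second line of~\eqref{e.exp.concentration}. The main technical nuisance I anticipate is bookkeeping the constants in the MGF series so that the explicit numerical factors $40$, $1/(2\sigma)$, and $128(\sigma-1)^{-3}$ emerge cleanly rather than as somewhat arbitrary absolute constants; this is a matter of being patient with Stirling-type estimates for $\gammafun(j/\sigma + 3)/(j+2)!$ rather than a conceptual obstacle.
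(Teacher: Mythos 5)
Your overall framework (Chernoff bound, a quadratic MGF estimate on a restricted range of $\lambda$, and a two-regime optimization) matches the paper's for the Gaussian regime and for $\sigma=1$, but your treatment of the large-deviation regime when $\sigma\in(1,2]$ has a genuine gap. First, the premise is wrong: for $\sigma>1$ the moments satisfy $\E[|X_k|^j]\lesssim \gammafun(j/\sigma+1)$, which grows strictly slower than $j!$, so the exponential series has \emph{infinite} radius of convergence; there is no cutoff scaling like $(\sigma-1)^{-1/\sigma}$, and the prefactor $128(\sigma-1)^{-3}$ does not come from a radius of convergence. Second, and more importantly, your plan of ``picking $\lambda$ at the endpoint of admissibility'' cannot produce the claimed tail: if $\lambda$ is capped at any value independent of $t$, the Chernoff bound yields at best $\exp(-\lambda_{\max} t + Cm)$, i.e.\ a purely exponential decay in $t$, which is strictly weaker than the bound $\frac{128}{(\sigma-1)^3}\exp\bigl(-\frac{1}{2\sigma}t^\sigma\bigr)$ in~\eqref{e.Sm.exp.concentration}, since $t^\sigma\gg t$ for $\sigma>1$. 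The ``integrated tail of $X_k$'' does not enter the Chernoff bound by itself; you must make $\lambda$ grow with $t$.

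To close the gap you need a moment generating function bound valid for \emph{large} $\lambda$, of the form
\begin{equation*}
\E\bigl[\exp(\lambda X_k)\bigr]\;\leq\;\frac{C}{(\sigma-1)^3}\,\lambda^{3}\exp\Bigl(\tfrac{\sigma-1}{\sigma}\,\lambda^{\frac{\sigma}{\sigma-1}}\Bigr)\,,\qquad \lambda\geq\tfrac12\,,
\end{equation*}
and then choose $\lambda=t^{\sigma-1}$ when $t>10m$ (keeping the quadratic bound with $\lambda=t/(20m)$ when $t\leq 10m$). This is what the paper does: starting from $\bigl|\E[e^{\lambda X_k}]-1\bigr|\leq\frac12\lambda^2\,\E\bigl[X_k^2\,e^{\lambda\max\{X_k,0\}}\bigr]$ (which exploits centering without any series expansion), one estimates the tail integral $\int_1^\infty(2t+\lambda t^2)e^{\lambda t-t^\sigma}\,dt$ by Young's inequality $\lambda t\leq\frac1\sigma t^\sigma+\frac{\sigma-1}{\sigma}\lambda^{\sigma/(\sigma-1)}$, and the remaining integral $\int_1^\infty(2t+\lambda t^2)\exp\bigl(-\frac{\sigma-1}{\sigma}t^\sigma\bigr)\,dt$ produces, via gamma-function scaling, exactly the factor $\bigl(\frac{\sigma}{\sigma-1}\bigr)^{3/\sigma}$ responsible for the $(\sigma-1)^{-3}$ in the statement. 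Your moment-series route could in principle be pushed to the same sub-Weibull MGF growth $\exp(c\lambda^{\sigma/(\sigma-1)})$, but as written the plan only ever uses a quadratic MGF bound on a bounded $\lambda$-window, so the second line of~\eqref{e.Sm.exp.concentration}, and hence the $\Gamma_\sigma$-integrability in~\eqref{e.exp.concentration}, would not follow.
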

\begin{proof}
Denote~$S_m \coloneqq X_1+\cdots+X_m$. 
We start from the Chernoff bound: for every~$a>0$,  
\begin{equation}
\label{e.Chernoff}
\P \bigl[ S_m \geq a \bigr] 
\leq 
\inf_{\lambda\in(0,\infty)} 
\exp(-\lambda a) \bigl[ \exp(\lambda S_m) \bigr]
=
\inf_{\lambda\in(0,\infty)} 
\exp(-\lambda a) \prod_{k=1}^N \E \bigl[ \exp(\lambda X_k) \bigr]\,.
\end{equation}
To estimate~$\E [ \exp(\lambda X_k) ]$, we use the elementary inequality 
\begin{equation}
\label{e.exp.Taylor.2}
\bigl| \exp(\lambda x) - \bigl( 1+\lambda x \bigr) \bigr| 
\leq \frac 12 \lambda^2 |x|^2 
\exp\bigl( \lambda \max\{ x,0 \} \bigr)
\,,\quad \forall x\in\R\,,
\end{equation}
and the centering assumption that~$\E[ X_k ] = 0$ to get
\begin{align}
\label{e.explambdaX.exp}
\bigl| \E \bigl[ \exp( \lambda X_k) \bigr] - 1 \bigr|
& 
\leq 
\frac12 \lambda^2 \E \Bigl[ |X_k|^2 \exp\bigl( \lambda \max\{ X_k, 0\} \bigr) \Bigr]
\notag \\ & 
\leq 
\frac12 \lambda^2 
\Bigl( \E \bigl[ |X_k|^2 \bigr]
+
\exp(\lambda) \Bigr)
+
\frac12 \lambda^2 
\int_1^\infty 
(2t+\lambda t^2) \exp( \lambda t) \P [ X_k > t ] \,dt
\notag \\ & 
\leq 
\frac12 \lambda^2 
\bigl( 3 +
\exp(\lambda) \bigr)
+
\frac12 \lambda^2 
\int_1^\infty 
(2t+\lambda t^2) \exp( \lambda t) \P [ X_k > t ] \,dt
\,,
\end{align}
where in the last line, we estimated the second moment by
\begin{equation*}
\E \bigl[ |X_k|^2 \bigr]
\leq
1 + 2 \int_1^\infty t \P [ X_k > t] \,dt 
\leq 
1+ 2 \int_1^\infty t\exp(-t^\sigma)\,dt
\leq 5 \,.
\end{equation*}
We split the estimate of the integral on the right side of~\eqref{e.explambdaX.exp} into two cases:~$\sigma=1$ and~$\sigma\in (1,2]$. 

\smallskip

In the case~$\sigma = 1$, we impose the additional restriction~$\lambda \leq \nicefrac12$, and then apply the assumption~$X_k = \O_{\Gamma_\sigma}(1)$ to get
\begin{align}
\label{e.lambdaleqhalf}
\int_1^\infty 
(2t+\lambda t^2) \exp( \lambda t) \P [ X_k > t ] \,dt
&
\leq 
\int_1^\infty 
(2t+\lambda t^2) \exp( \lambda t - t) \,dt
\notag \\ & 
\leq 
\int_1^\infty 
(2t+\lambda t^2) \exp\Bigl( -\frac12 t\Bigr) \,dt
=
8(1+2\lambda) \leq 16\,.
\end{align}
Combining the above displays, we deduce that, in the case~$\sigma=1$, for every~$\lambda\in (0,\nicefrac12]$, 
\begin{equation*}
\E \bigl[ \exp( \lambda X_k) \bigr] 
\leq 
1 + \frac12 \lambda^2 \bigl( 5 + \exp(\nicefrac12) + 16 \bigr)
\leq 1 + 12\lambda^2\,.
\end{equation*}
Returning then to~\eqref{e.Chernoff} and using the bound~$1+x \leq \exp(x)$, we get 
\begin{equation*}
\P \bigl[ S_m \geq a \bigr] 
\leq
\exp \Bigl( -\lambda a + 12 \lambda^2 m\Bigr)\,.
\end{equation*}
Taking~$\lambda \coloneqq  \min\bigl\{\nicefrac12 \,, \frac{a}{20m} \bigr\}$ yields, for every~$a>0$, 
\begin{equation*}
\P \bigl[ S_m \geq a \bigr] \leq 
\exp \Bigl( -\min \Bigl\{ \frac15 a , \frac{a^2}{40m} \Bigr\} \Bigr)
\,.
\end{equation*}
This is~\eqref{e.Sm.exp.concentration} in the case~$\sigma=1$. 
In particular, for every~$a\geq 1$, 
\begin{equation*}
\P \bigl[ S_m \geq m^{\nicefrac12 } a \bigr] 
\leq 
\exp \Bigl( -\min \Bigl\{ \frac15 m^{\nicefrac12} a , \frac{a^2}{40} \Bigr\} \Bigr)
\leq 
\exp \Bigl( - \frac{a}{40} \Bigr)\,,
\end{equation*}
which implies that~$S_m = \O_{\Gamma_1}(40m^{\nicefrac12})$. 

\smallskip

We next consider the case~$\sigma \in (1,2]$.
If~$a\leq 10m$, we may impose the restriction~$\lambda\leq \nicefrac12$ and then we may follow the computation leading to~\eqref{e.lambdaleqhalf} in the case~$\sigma=1$ to get 
\begin{equation*}
\int_1^\infty 
(2t+\lambda t^2) \exp( \lambda t) \P [ X_k > t ] \,dt
\leq 
\int_1^\infty 
(2t+\lambda t^2) \exp( \lambda t - t^\sigma) \,dt
\leq 16
\end{equation*}
and then select~$\lambda  = \frac{a}{20m}$ to obtain
\begin{equation*}
\P \bigl[ S_m \geq a \bigr] \leq 
\exp \Bigl( - \frac{a^2}{40m} \Bigr)\,, \quad \forall a\in (0,10m]\,.
\end{equation*}
In the case~$a > 10m$, we may need to select~$\lambda > \nicefrac12$, and therefore we must estimate the integral differently. 
Using the assumption that~$X_k = \O_{\Gamma_\sigma}(1)$, we have that, for every~$\lambda>0$,
\begin{align*}
\int_1^\infty 
(2t+\lambda t^2) \exp( \lambda t) \P [ X_k > t ] \,dt
&
\leq 
\int_1^\infty 
(2t+\lambda t^2) \exp( \lambda t - t^\sigma) \,dt
\notag \\ & 
\leq 
\exp\Bigl( \frac{\sigma{-}1}{\sigma} \lambda^{\frac{\sigma}{\sigma-1}} \Bigr)
\int_1^\infty 
(2t+\lambda t^2) \exp \Bigl( - \frac{\sigma{-}1}{\sigma}t^\sigma \Bigr) \,dt\,,
\end{align*}
where in the last line we used Cauchy's inequality in the form
\begin{equation*}
\lambda t \leq \frac{1}{\sigma} t^{\sigma} 
+
\frac{\sigma{-}1}{\sigma} \lambda^{\frac{\sigma}{\sigma-1}}
\,.
\end{equation*}
Continuing the computation, we find by a change of variables that 
\begin{align*}
\int_1^\infty 
(2t+\lambda t^2) \exp \Bigl( - \frac{\sigma{-}1}{\sigma}t^\sigma \Bigr) \,dt
& 
\leq
\frac{1}{\sigma} 
\int_0^\infty 
\biggl(
2 \Bigl( \frac\sigma{\sigma{-}1} \Bigr)^{\!\frac2\sigma} s^{\frac2\sigma-1} 
+
\lambda \Bigl( \frac\sigma{\sigma{-}1} \Bigr)^{\!\frac3\sigma} s^{\frac3\sigma-1} 
\biggr) 
\exp(-s)\,ds
\notag\\ &
=
\frac{2}{\sigma} 
\Bigl( \frac{\sigma}{\sigma{-}1}\Bigr)^{\!\frac 2\sigma}\gammafun\Bigl(\frac 2\sigma\Bigr) 
+
\frac{\lambda}{\sigma} 
\Bigl( \frac{\sigma}{\sigma{-}1}\Bigr)^{\!\frac 3\sigma}\gammafun\Bigl(\frac 3\sigma\Bigr) 
\leq \frac{2+\lambda}{\sigma} \Bigl( \frac{\sigma}{\sigma{-}1}\Bigr)^{\!\frac 3\sigma}\,.
\end{align*}
Inserting these bounds into~\eqref{e.explambdaX.exp} yields
\begin{equation*}
\E \bigl[ \exp( \lambda X_k) \bigr] 
\leq 
1 + \lambda^2 \biggl( 2 + 
\frac{8+4\lambda}{(\sigma-1)^3}
\exp\Bigl( \frac{\sigma{-}1}{\sigma} \lambda^{\frac{\sigma}{\sigma-1}} \Bigr)
\biggr)
\,.
\end{equation*}
Assuming~$\lambda\ge\nicefrac12$, we get
\begin{equation*}
\E \bigl[ \exp( \lambda X_k) \bigr]
\leq \Bigl( 12+ \frac{20}{(\sigma-1)^3} \Bigr) \lambda^3 \exp \Bigl( \frac{\sigma{-}1}{\sigma} \lambda^{\frac{\sigma}{\sigma-1}} \Bigr)
\leq \frac{32}{(\sigma-1)^3} \lambda^3 \exp \Bigl( \frac{\sigma{-}1}{\sigma} \lambda^{\frac{\sigma}{\sigma-1}} \Bigr)
\,.
\end{equation*}
Inserting this into~\eqref{e.Chernoff} and selecting~$\lambda = a^{\sigma - 1}$---which we note satisfies the constraint~$\lambda\geq \nicefrac12$ since we have assumed~$a > 10m$---we obtain
\begin{equation*}
\P \bigl[ S_m \geq a \bigr] 
\leq 
\frac{32a^{3(\sigma-1)}}{(\sigma-1)^3}\exp \Bigl( - \frac1\sigma a^\sigma \Bigr)
\leq
\frac{128}{(\sigma-1)^3} 
\exp \Bigl( - \frac1{2\sigma} a^\sigma \Bigr)
\,, \quad \forall a\in [10m,\infty)\,.
\end{equation*}
This completes the proof of~\eqref{e.Sm.exp.concentration} in the case~$\sigma\in(1,2]$, which also implies~\eqref{e.exp.concentration}. 
\end{proof}

Next, we present a variant of the concentration argument above for random variables that may have weaker, sub-exponential integrability quantified by general Orlicz quasinorms. The proof is similar to the arguments of the recent paper~\cite{BMP}, using a truncation method.

\begin{proposition}[Concentration for sums of independent and~$\O_{\Psi}$-bounded random variables]
\label{p.concentration.Psi}
\emph{}\\
Let~$\Psi:\R_+ \to [1,\infty)$ be an increasing function satisfying
\begin{equation}
\label{e.Psi.growth.concentration}
C_\Psi  \coloneqq  \int_1^\infty 
\frac{t}{\Psi(t)}\,dt
<\infty 
\end{equation}
Let~$m\in\N$ and~$X_1,\ldots,X_m$ be a sequence of independent random variables satisfying 
\begin{equation*}
X_k = \O_{\Psi} (1) 
\quad \mbox{and} \quad 
\E[ X_k ] = 0 \, , \quad \forall k\in \{1,\ldots,m\}\,.
\end{equation*}
Suppose that~$M\in [0,\infty)$,~$\lambda \in (0,1]$ and~$L\in [1,\infty)$ satisfy the relation
\begin{equation}
\label{e.lambda.L.constraint}
\lambda t \leq \log \Psi(t) - 4 \log t + \log M\,,  
\quad \forall t \in [1,L]\,.
\end{equation}	
Then, for every~$t>0$, 
\begin{equation}
\label{e.Psi.concentration}
\P \Biggl[ \sum_{k=1}^m X_k
> 
t \Biggr] 
\leq 
\frac{m}{\Psi(L)} + \exp\bigl(  - \lambda t  + \lambda^2 m \bigl( 2+M+C_\Psi \bigr)  \bigr)\,.
\end{equation}
\end{proposition}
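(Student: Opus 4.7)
The plan is to combine a truncation argument with a Chernoff bound, mimicking the structure of the proof of Lemma~\ref{l.exp.concentration} but using the condition~\eqref{e.lambda.L.constraint} in place of the sub-exponential tail.

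First I would perform the truncation. Set~$Y_k := X_k \indc_{\{X_k \leq L\}}$, observe that on the event~$\{ \max_k X_k \leq L\}$ one has $\sum_k X_k = \sum_k Y_k$, and split
\begin{equation*}
\P\Bigl[ \sum_{k=1}^m X_k > t \Bigr]
\leq \P\Bigl[ \max_{1\leq k\leq m} X_k > L\Bigr] + \P\Bigl[ \sum_{k=1}^m Y_k > t\Bigr]\,.
\end{equation*}
The first term is immediately controlled by the union bound together with~$X_k = \O_{\Psi}(1)$, producing the term~$m/\Psi(L)$ on the right of~\eqref{e.Psi.concentration}.

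Next I would attack~$\P[\sum Y_k > t]$ with the Chernoff bound, which by independence reduces matters to estimating~$\E[\exp(\lambda Y_k)]$. The main claim is that
\begin{equation*}
\E \bigl[\exp(\lambda Y_k)\bigr] \leq 1 + \lambda^2 \bigl( 2 + M + C_\Psi \bigr) \leq \exp\bigl( \lambda^2 (2 + M + C_\Psi)\bigr)\,,
\end{equation*}
which once established immediately yields the exponential factor in~\eqref{e.Psi.concentration}. To prove this claim, I would use the same second-order Taylor inequality~\eqref{e.exp.Taylor.2} exploited earlier, namely
\begin{equation*}
\exp(\lambda x) \leq 1 + \lambda x + \tfrac12 \lambda^2 x^2 \exp\bigl( \lambda \max\{x,0\}\bigr)\,,
\end{equation*}
applied to~$x = Y_k$. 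Since~$\E[X_k] = 0$, one has $\E[Y_k] = -\E[X_k \indc_{\{X_k > L\}}] \leq 0$, so the linear term can be dropped, leaving
\begin{equation*}
\E\bigl[\exp(\lambda Y_k)\bigr] \leq 1 + \tfrac12 \lambda^2 \E\bigl[X_k^2 \exp(\lambda \max\{X_k,0\}) \indc_{\{X_k \leq L\}} \bigr]\,.
\end{equation*}

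The main technical step is to bound the expectation in the last display by~$2(2 + M + C_\Psi)$. I would split into~$\{X_k \leq 1\}$ and~$\{1 < X_k \leq L\}$. On the first event the factor $\exp(\lambda \max\{X_k,0\}) \leq e$, and
\begin{equation*}
\E[X_k^2 \indc_{\{X_k \leq 1\}}] \leq 1 + 2\int_1^\infty t \, \P[|X_k| > t]\,dt \leq 1 + 2 C_\Psi\,,
\end{equation*}
using the hypothesis~$X_k = \O_\Psi(1)$ and the definition~\eqref{e.Psi.growth.concentration} of~$C_\Psi$. On the second event, an integration by parts against $g(t) := t^2 \exp(\lambda t)$ reduces matters to
\begin{equation*}
\int_1^L (2t + \lambda t^2) \exp(\lambda t)\, \P[X_k > t]\,dt \leq \int_1^L (2t + \lambda t^2)\frac{\exp(\lambda t)}{\Psi(t)}\,dt\,.
\end{equation*}
Here is where the constraint~\eqref{e.lambda.L.constraint} enters: it is exactly the statement $\exp(\lambda t) \leq M \Psi(t)/t^4$ on~$[1,L]$, so the integrand is dominated by $M(2 t^{-3} + \lambda t^{-2})$, whose integral over~$[1,L]$ is at most $M(1+\lambda) \leq 2M$ since~$\lambda \leq 1$. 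The main obstacle, if any, is just to arrange the numerical constants cleanly; the choice of the~$4\log t$ offset in~\eqref{e.lambda.L.constraint} is precisely what makes this integral absolutely convergent uniformly in~$L$ and independent of~$\Psi$.

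Combining the two regimes gives the bound $2(2 + M + C_\Psi)$ on the relevant second moment, hence the claimed bound on~$\E[\exp(\lambda Y_k)]$, and plugging into Chernoff completes the proof.
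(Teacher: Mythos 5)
Your overall strategy---truncate at $L$, control the truncation event by a union bound to produce the term $m/\Psi(L)$, then run a Chernoff bound on the truncated sum using the second-order Taylor inequality and convert~\eqref{e.lambda.L.constraint} into the pointwise bound $\exp(\lambda t)\le M\,\Psi(t)\,t^{-4}$ on $[1,L]$---is exactly the paper's argument, and those structural steps are sound (including your observation that $\E[Y_k]\le 0$ for the indicator truncation, since $L\ge 1$). The gap is in the quantitative claim at the center: the bound $\E\bigl[X_k^2\exp(\lambda\max\{X_k,0\})\indc_{\{X_k\le L\}}\bigr]\le 2(2+M+C_\Psi)$ does not follow from the two-regime estimate you give. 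On the event $\{X_k\le 1\}$ you bound the factor $\exp(\lambda\max\{X_k,0\})$ by $e$ and multiply it against $\E[X_k^2\indc_{\{X_k\le 1\}}]\le 1+2C_\Psi$, which yields $e(1+2C_\Psi)$; since $2eC_\Psi>2C_\Psi$, the coefficient of $C_\Psi$ already exceeds your target once $C_\Psi$ is large, so the asserted inequality $\E[\exp(\lambda Y_k)]\le 1+\lambda^2(2+M+C_\Psi)$---and hence~\eqref{e.Psi.concentration} with the stated constant---is not established as written. (You also drop the boundary term $e^{\lambda}\,\P[X_k>1]$ arising in the integration by parts over $(1,L]$; it is harmless in size, but it should appear in the accounting.)

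The fix is to split at $0$ rather than at $1$, which is what the paper does. On $\{X_k\le 0\}$ the exponential factor equals $1$, so the negative part contributes at most $\E[X_k^2]\le 1+2C_\Psi$ with coefficient one; on $\{0<Y_k\le 1\}$ one has $Y_k^2e^{\lambda Y_k}\le e^{\lambda}$, so that region contributes only an additive $e^{\lambda}\le 3$ rather than multiplying the $C_\Psi$-term. Equivalently, handle the whole positive part by the layer-cake computation
\begin{equation*}
\E\bigl[Y_k^2 e^{\lambda Y_k}\indc_{\{Y_k>0\}}\bigr]
\le \int_0^1(2t+\lambda t^2)e^{\lambda t}\,dt+\int_1^L\frac{(2t+\lambda t^2)e^{\lambda t}}{\Psi(t)}\,dt
= e^{\lambda}+\int_1^L(2t+\lambda t^2)\exp\bigl(\lambda t-\log\Psi(t)\bigr)\,dt\,,
\end{equation*}
and then~\eqref{e.lambda.L.constraint} bounds the last integral by $M\int_1^\infty(2t^{-3}+\lambda t^{-2})\,dt=(1+\lambda)M\le 2M$. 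Altogether $\E\bigl[Y_k^2\exp(\lambda\max\{Y_k,0\})\bigr]\le 4+2M+2C_\Psi$, hence $\E[e^{\lambda Y_k}]\le 1+\tfrac12\lambda^2(4+2M+2C_\Psi)\le\exp\bigl(\lambda^2(2+M+C_\Psi)\bigr)$, and the Chernoff bound gives~\eqref{e.Psi.concentration} with the stated constant.
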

\begin{proof}
Denote~$S_m \coloneqq X_1+\cdots+X_m$. 
Fix~$a>0$ and~$L \in [1,\infty)$ and define the truncations
\begin{equation*}
Y_k  \coloneqq  \min \{ X_k , L \} 
\quad \mbox{and} \quad
T_k  \coloneqq  \sum_{k=1}^m Y_k\,.
\end{equation*}
Either~$S_m = T_m$, or else~$\max_{1\leq k \leq m} X_k > L$. We deduce, therefore, that 
\begin{equation}
\label{e.truncation.split}
\P \bigl[ S_m > a \bigr]
\leq 
\P \bigl[ T_m > a \bigr] 
+
\P \Bigl[ \max_{1\leq k \leq m} X_k > L \Bigr]\,.
\end{equation}
By a union bound and the assumption~$X_k \leq \O_{\Psi}(1)$, the second term on the right side of~\eqref{e.truncation.split} is bounded by  
\begin{equation}
\label{e.maxterm.shipping}
\P \Bigl[ \max_{1\leq k \leq m} X_k > L \Bigr]
\leq 
\sum_{k=1}^m
\P \bigl[ X_k > L \bigr]
\leq 
\frac{m}{\Psi(L)}\,.
\end{equation}
For the first term, we use the Markov inequality and independence to obtain, for every~$\lambda>0$, 
\begin{equation}
\label{e.Markov.indy.prod}
\P \bigl[ T_m > a \bigr]
\leq 
\exp( - \lambda  a ) 
\E \bigl[ \exp( \lambda T_m) \bigr] 
\leq 
\exp( - \lambda a )
\prod_{k=1}^m 
\E \bigl[ \exp( \lambda Y_k) \bigr] 
\,.
\end{equation}
In order to estimate~$\E \bigl[ \exp( \lambda Y_k) \bigr]$, we use~\eqref{e.exp.Taylor.2}, which implies
\begin{equation*}
\bigl| \E \bigl[ \exp( \lambda Y_k) \bigr] - \bigl( 1 + \lambda \E [ Y_k ] \bigr) \bigr|
\leq 
\frac12 \lambda^2 \E \Bigl[ |Y_k|^2 \exp\bigl( \lambda \max\{ Y_k, 0\} \bigr) \Bigr]\,.
\end{equation*}
Due to the truncation, the random variable~$Y_k$ is not centered. However,~$\E [ Y_k ] \leq \E [ X_k ] \leq 0$, which suffices for our purposes. We deduce, therefore, that 
\begin{equation}
\label{e.exp.Taylor}
\E \bigl[ \exp( \lambda Y_k) \bigr] \leq 1+\frac12 \lambda^2 \E \Bigl[ Y_k^2 \exp\bigl( \lambda \max\{ Y_k, 0\} \bigr) \Bigr]\,.
\end{equation}
To estimate the right side of~\eqref{e.exp.Taylor}, we use the hypothesis that~$X_k = \O_\Psi (1)$ to compute
\begin{align*}
\E \Bigl[ Y_k^2 \exp\bigl( \lambda \max\{ Y_k, 0\} \bigr) \Bigr]
&
=
\exp(\lambda) 
+
\int_1^L
(2t + \lambda t^2) 
\exp(\lambda t) \P \bigl[ X_k > t \bigr] \,dt 
\notag \\ & 
\leq 
\E \bigl[ X_k^2\bigr] 
+
\int_0^1 
(2t + \lambda t^2)
\exp(\lambda t) 
\,dt
+
\int_1^L
\frac{(2t + \lambda t^2) 
\exp(\lambda t) }
{\Psi(t)} \,dt 
\notag \\ & 
=
\E \bigl[ X_k^2 \bigr] 
+
\exp(\lambda) 
+
\int_1^L
(2t + \lambda t^2) 
\exp\bigl (\lambda t - \log \Psi(t) \bigr) \,dt 
\,.
\end{align*} 
We continue now under the assumption that~$\lambda\in(0,1]$ and~$L\in[1,\infty)$ satisfy~\eqref{e.lambda.L.constraint}. We have 
\begin{align*}
\int_1^L \!
(2t + \lambda t^2) 
\exp\bigl (\lambda t - \log \Psi(t) \bigr) \,dt 
\leq 
M\int_1^L \! (2t + \lambda t^2)  t^{-4} \,dt 
&
\leq 
M\int_1^\infty \! (2t + \lambda t^2)  t^{-4} \,dt 
=
\Bigl( \frac23+\frac\lambda 3\Bigr)M .
\end{align*}
For the second moments of~$X_k$, we use the condition~\eqref{e.Psi.growth.concentration}, which implies that 
\begin{equation*}
\E \bigl[ X_k^2 \bigr]\leq 1+2C_\Psi\,.
\end{equation*}
We, therefore, obtain that 
\begin{equation*}
\E \bigl[ \exp( \lambda Y_k) \bigr] 
\leq 
1 
+
\frac12 \lambda^2 \Bigl( \E \bigl[ X_k^2 \bigr] + \exp(\lambda) + M \Bigr) 
\leq 
1 
+
\frac 12 \lambda^2 \bigl( 4+M+ 2C_\Psi  \bigr)
\leq
\exp \Bigl( \frac 12 \lambda^2 \bigl( 4+M+ 2C_\Psi  \bigr) \Bigr)
.
\end{equation*}
Inserting this result  into~\eqref{e.Markov.indy.prod}, we obtain, for every~$\lambda \in (0,1]$ and~$L\geq 1$ satisfying~\eqref{e.lambda.L.constraint},
\begin{equation}
\label{e.Tm.shipping}
\P \bigl[ T_m > a \bigr] \leq \exp\Bigl(  - \lambda a  + \frac{1}{2}\lambda^2 m \bigl( 4+M+2C_\Psi \bigr)  \Bigr) \,.
\end{equation}
Inserting~\eqref{e.maxterm.shipping} and~\eqref{e.Tm.shipping} into~\eqref{e.truncation.split} yields~\eqref{e.Psi.concentration}, completing the proof. 
\end{proof}

We next exhibit consequences of Proposition~\ref{p.concentration.Psi} for some particular heavy-tailed distributions, including stretched exponentials (Weibull distributions) and those with log-normal-type tails. 

\begin{corollary}[{Concentration for stretched exponential tails}]
\label{c.concentration.Gamma.sigma}
Let~$\sigma \in (0,1)$ and define
\begin{equation*}
\Gamma_{\sigma} (t)  \coloneqq  \exp \bigl( t^\sigma \bigr)
\,, \quad t\in[1,\infty)\,.
\end{equation*}
Let~$m\in\N$ and~$X_1,\ldots,X_m$ be a sequence of independent random variables satisfying 
\begin{equation*}
X_k = \O_{\Gamma_\sigma} (1) 
\quad \mbox{and} \quad 
\E[ X_k ] = 0 \, , \quad \forall k\in \{1,\ldots,m\}\,.
\end{equation*}
Then, for every~$t \geq 1$, 
\begin{equation}
\label{e.Gammasigma.concentration}
\P \Biggl[ \sum_{k=1}^m X_k >  t \Biggr] 
\leq 
(m+1) \exp\bigl( -4 t^\sigma \bigr)
+
\exp \Bigl( - \frac {t^2} { 4\bigl( (8^{\nicefrac 2\sigma}\gammafun(\nicefrac 2\sigma))^4 +1 \bigr) m} \Bigr)\,.
\end{equation}
In particular, 
\begin{equation*}
\sum_{k=1}^m X_k \leq  \O_{\Gamma_\sigma}\bigl( 2\bigl( (8^{\nicefrac 2\sigma}\gammafun(\nicefrac 2\sigma) )^2 +1 \bigr) m^{\nicefrac12} \bigr)\,.
\end{equation*}
\end{corollary}

\begin{corollary}[{Concentration for  log-normal tails}]
\label{c.concentration.Psisigma}
As in~\eqref{e.Psi.log-normal}, we define, for~$\sigma \in [1,\infty)$, 
\begin{equation*}
\Psi_{\sigma} (t)  \coloneqq  \exp \biggl( \frac1{\sigma^2} \log^2 \bigl( 1 + \sigma t \bigr) \biggr)
\,, \quad t\in[1,\infty)\,.
\end{equation*}
Let~$m\in\N$ and~$X_1,\ldots,X_m$ be a sequence of independent random variables satisfying 
\begin{equation*}
X_k = \O_{\Psi_\sigma} (1) 
\quad \mbox{and} \quad 
\E[ X_k ] = 0 \, , \quad \forall k\in \{1,\ldots,m\}\,.
\end{equation*}
Then, for every~$t \geq 4\sigma$, 
\begin{equation}
\label{e.Psisigma.concentration}
\P \Biggl[ \sum_{k=1}^m X_k >  t \Biggr] \leq  
(m+1) \exp \Bigl( - \frac1{\sigma^2} \log^2 \frac{\sigma t}{1600} \Bigr)
+
\exp \biggl( - \frac {t^2} {(2 \exp(32\sigma^2) + 20) m} \biggr)
\,.
\end{equation}
In particular, 
\begin{equation}
\label{e.Sm.OPsisigma}
\sum_{k=1}^m X_k \leq \O_{\Psi_\sigma} \bigl( 32 \exp(16\sigma^2)  m^{\nicefrac12} \bigr)\,.
\end{equation}
\end{corollary}

\begin{proof}[Proof of Corollary~\ref{c.concentration.Gamma.sigma}]
The constant~$C_{\Gamma_\sigma}$ defined in~\eqref{e.Psi.growth.concentration} is given by
\begin{equation*}
C_{\Gamma_\sigma}
=
\int_1^\infty 
\frac{t}{\Gamma_\sigma(t)}\,dt
=
\int_1^\infty 
t \exp (-t^\sigma) \,dt
=
\frac1\sigma \int_1^\infty
s^{\frac{2}{\sigma}-1} 
 \exp (-s) \,ds
\leq
\frac1{\sigma} \gammafun\Bigl( \frac2\sigma \Bigr)\,,
\end{equation*}
where~$\gammafun$ denotes the gamma function. We also have\footnote{The proof of~\eqref{e.L.cond.Psi.for.Psi.le2} can be found in the latex file for this paper (available on arXiv), commented out below this sentence.}
\begin{equation}
\sigma \in (0,1)\,, \quad 
M= M_\sigma \coloneqq  \bigl( 8^{\nicefrac2\sigma} \gammafun(\nicefrac 2\sigma) \bigr)^{4}
\,,\quad 
\lambda\in (0,1]\,, 
\quad 
1 \leq L \leq 
\bigl( 2\lambda \bigr)^{\!-\frac{1}{1-\sigma}}
\implies 
\mbox{\eqref{e.lambda.L.constraint}} \,.
\label{e.L.cond.Psi.for.Psi.le2}
\end{equation}
By applying Proposition~\ref{p.concentration.Psi}, 
we therefore obtain, for every~$\lambda\in(0,1]$ and~$t\geq 1$, 
\begin{equation}
\label{e.SECCapp}
\P \Biggl[ \sum_{k=1}^m X_k > t \Biggr] 
\leq 
m \exp \Bigl( - \bigl( 2\lambda \bigr)^{-\frac{\sigma}{1-\sigma}} \Bigr) + \exp\bigl(  - \lambda t  + A_\sigma \lambda^2 m  \bigr)\,.
\end{equation}
where we set~$A_\sigma\coloneqq
2\bigl( (\gammafun(\nicefrac 2\sigma) 8^{\nicefrac 2\sigma})^4+1 \bigr) \geq  \bigl( 2+M_\sigma+C_{\Gamma_\sigma} \bigr)$. Given~$t\in [1,\infty)$, we apply the above inequality with~$\lambda$ selected by
\begin{equation*}
\lambda  \coloneqq  
\min\biggl\{ 
\frac {t}{2A_\sigma m} , \, \frac12 t^{\sigma -1}
\biggr\}\,.
\end{equation*}
Note that this ensures~$\lambda\leq 1$ and, with this choice, the first and second terms on the right side of~\eqref{e.SECCapp} are estimated, respectively, by
\begin{equation*}
\exp\bigl(  - \lambda t  + A_\sigma \lambda^2 m  \bigr)
\leq 
\exp\Bigl(- \frac12 \lambda t \Bigr)
=
\max\biggl\{ 
\exp\biggl(  -\frac {t^2}{2A_\sigma m}\biggr) \,,\,
\exp\biggl( - \frac14t^\sigma \biggr) 
\biggr\} 
\end{equation*}
and
\begin{equation*}
m \exp \Bigl( - \bigl( 2\lambda \bigr)^{\! -\frac{\sigma}{1-\sigma}} \Bigr)
\leq
m \exp ( -t^\sigma)\,.
\end{equation*}
Combining the above displays yields~\eqref{e.Gammasigma.concentration}. 
\end{proof}

\begin{proof}[Proof of Corollary~\ref{c.concentration.Psisigma}]
We check that the constant in~\eqref{e.Psi.growth.concentration} satisfies
\begin{equation*}
\label{e.CPsi.for.Psi2}
C_{\Psi_\sigma}
\leq 
 \exp (4\sigma^2) 
\end{equation*}
and that the condition~\eqref{e.lambda.L.constraint} is satisfied with~$M=M_\sigma \coloneqq \exp(32\sigma^2)$ and every~$\lambda\in (0,1]$ and~$L$ satisfying\footnote{For completeness we have included full demonstrations of these assertions in a commented-out part of the latex file for this paper, available from the arXiv.}   
\begin{equation*}
\label{e.L.cond.Psi.for.Psi2}
1 \leq L \leq \frac{1}{32\sigma^2\lambda} \log^2 \Bigl( 1 + \frac{1}{\sigma \lambda} \Bigr)
\,.
\end{equation*}
Applying Proposition~\ref{p.concentration.Psi} therefore yields, for every~$t>0$ and $\lambda\in (0,1]$,
\begin{equation*}
\P \Biggl[ \sum_{k=1}^m X_k >  t \Biggr] 
\leq 
\frac{m}{\Psi_\sigma \Bigl(\tfrac{1}{32\sigma^2\lambda} \log^2 \bigl( 1 + \tfrac{1}{\sigma \lambda} \bigr)\Bigr)} + \exp\bigl(  - \lambda t  + A_\sigma \lambda^2 m   \bigr)\,,
\end{equation*}
where we have set
\begin{equation*}
A_\sigma  \coloneqq  
2 \exp(32\sigma^2) + 5
\geq 
( 2+M_\sigma +C_{\Psi_\sigma})
\,.
\end{equation*}
We apply the above inequality to each~$t\geq 4\sigma$ with~$\lambda$ chosen in terms of~$t$ by
\begin{equation*}
\lambda  \coloneqq  \min\biggl\{ 
\frac {t}{2A_\sigma m} , \, \frac{2}{\sigma^2 t}\log^2 (1+\sigma t) 
\biggr\}\,.
\end{equation*}
This choice of~$\lambda$ implies that
\begin{equation*}
\exp \bigl( -\lambda t + A_\sigma \lambda^2 m \bigr) 
\leq 
\exp \Bigl( -\frac12 \lambda t \Bigr) 
=
\max\biggl\{ \exp \Bigl( - 
\frac {t^2} {4A_\sigma m} \Bigr), \, \exp \Bigl( - \frac{1}{\sigma^2}\log^2 (1+\sigma t) \Bigr)
\biggr\}\,
\end{equation*}
as well as
\begin{equation*}
\frac{1}{32\sigma^2\lambda} \log^2 \Bigl( 1 + (\sigma \lambda)^{-1} \Bigr)
\geq 
\frac{t}{64} \frac{\log^2 \bigl( 1 + (\sigma \lambda)^{-1} \bigr)}{\log^2( 1 + \sigma t ) }
\geq
\frac{t} {64} 
\biggl( 
\frac{\log \bigl( 1 + \frac12\sigma t \log^{-2}(1+\sigma t) \bigr)}{\log \bigl( 1 + \sigma t \bigr)} 
\biggr)^{\!2}
\geq \frac {t}{1600}\,.
\end{equation*}
In the last inequality of the previous display, we used that 
\begin{equation} 
\label{e.famous.onefifth}
\log \Bigl(1 + \frac{1}{2}s\log^{-2}(1+s) \Bigr) \geq \frac1{5}
\log \bigl( 1 + s \bigr) 
\,, \quad \forall s\in [1,\infty)\,.
\end{equation}
The validity of the inequality~\eqref{e.famous.onefifth} for some (universal)~$c >0$ in place of the~$\nicefrac 15$ on the right side is clear due to the fact that the ratio of the left side and~$\log(1+s)$ is positive for all~$s\geq 1$ and tends to one as~$s\to \infty$. That the inequality is valid as stated with this constant equal to~$\nicefrac15$ can be confirmed by either Mathematica or Wolfram Alpha. We used the command
\begin{center}
\begin{BVerbatim}
Reduce[Log[1 + s/(2 Log[1 + s]^2)] > 1/5 Log[1 + s], s, Reals]
\end{BVerbatim}
\end{center}
in Mathematica to validate~\eqref{e.famous.onefifth}, which instantly reported its validity for all~$s>0$.
Combining the above displays, we obtain~\eqref{e.Psisigma.concentration}.

\smallskip

To obtain~\eqref{e.Sm.OPsisigma}, we use~\eqref{e.Psisigma.concentration} with~$4A_\sigma^{\nicefrac12} m^{\nicefrac12} t$ in place of~$t$, to obtain, for every~$t \geq 1$ (note that~$A_\sigma^{\nicefrac12}\geq \sigma$),
\begin{align*} 
\P \Biggl[ \sum_{k=1}^m X_k > 4 A_\sigma^{\nicefrac12} m^{\nicefrac12} t \Biggr]  \leq  \frac{m+1}{\Psi_\sigma\Bigl( \tfrac1{400} A_\sigma^{\nicefrac12} m^{\nicefrac12} t\Bigr)} + \exp \bigl( - 4t^2 \bigr) \,. 
\end{align*} 
By an easy exercise, it can be checked\footnote{The proof of these inequalities are routine, but for completeness, we have also commented them out in the latex file after this sentence.} that, for every~$\sigma,t,m\geq 1$, 
\begin{equation*} \frac{m+1}{\Psi_\sigma\Bigl( \tfrac1{400} A_\sigma^{\nicefrac12} m^{\nicefrac12} t\Bigr)}   \leq \frac1{2 \Psi_\sigma(t) } \quad \mbox{and} \quad \exp( - 4t^2) \leq \frac1{2 \Psi_\sigma(t) }  \,. \end{equation*} 
We also observe that~$4 A_\sigma^{\nicefrac12} \leq 8 \exp (16\sigma^2)$, for every~$\sigma\geq 1$. This completes the proof of~\eqref{e.Sm.OPsisigma}.
\end{proof}

\section{Examples of random fields satisfying the assumptions}

In this appendix we give some examples of random coefficient fields which are \emph{coarse-grained elliptic} in the sense of assumption~\ref{a.ellipticity}, but not uniformly elliptic. 

\subsection{Poisson inclusions}
\label{ss.Poisson.inclusions}

In this subsection, we prove Proposition~\ref{p.inclusions}. 
We consider two Poisson point clouds~$\omega_1$ and~$\omega_2$ on~$\Rd$ with intensities~$\rho_1\geq 0$ and~$\rho_2\geq 0$, respectively. Let~$\lambda\in(0,1]$, $\Lambda\in[1,\infty)$ and define the scalar matrix-valued field
\begin{equation}
\label{e.a.inclusions.app}
\a \coloneqq  
\bigl(  1 + (\Lambda-1) \indc_{B_{\nicefrac13}} \ast \omega_1 + (\lambda-1) \indc_{B_{\nicefrac13}} \ast \omega_2 \bigr) \Id
\,.
\end{equation}
Notice that our inclusions are balls of radius~$\nicefrac13$ rather than unit radius like in~\eqref{e.a.inclusions}. We have introduced this extra dilation for notational convenience. 
We also denote~$\rho  \coloneqq  \rho_1+\rho_2$ and~$\omega = \omega_1+\omega_2$. 

\smallskip

We adapt some arguments and notation from classical percolation theory.
We view~$\Zd$ as an undirected graph with vertices~$x,y\in\Zd$ connected by an edge if and only if~$\max_{i\in\{1,\ldots,d} |x_i-y_i| = 1$. When we speak of \emph{connected} subsets of~$\Zd$, we mean those that are connected with respect to this graph structure. 
A \emph{lattice animal} is a finite, connected subset of~$\Zd$. As is well-known, a crude combinatorial counting argument gives that, for each fixed~$z_0\in\Zd$, the number of distinct lattice animals which contains~$z_0$ and has exactly~$\ell$ elements is at most~$\exp( C \ell)$ for some~$C(d)<\infty$. For each lattice animal~$A \subseteq \Zd$ with~$|A|= \ell$, the probability that every unit cell~$z+\cu_0$ with~$z\in A$ has nonempty intersection with~$\omega$ is estimated by
\begin{equation*}
\P 
\Bigl[ \forall z\in A\,, \ (z + \cu_0) \cap \omega \neq \emptyset \Bigr] 
\leq 
\rho^{\ell} = \exp \bigl( - \ell \left|\log \rho \right| \bigr)\,.
\end{equation*}
Let~$E_\ell(z)$ denote the event that there exists \emph{any} lattice animal which has length at least~$\ell$, has nontrivial intersection with~$z+\ell \cu_0$, and overlaps with points of~$\omega$ in each of its cells:
\begin{equation*}
E_\ell (z)  \coloneqq  
\Biggl\{ 
\begin{aligned}
&
\mbox{there exists a lattice animal $A\subseteq \Rd$ with~$|A|\geq \ell$ and} 
\\ & 
\mbox{$A \cap (z+\ell \cu_0) \neq \emptyset$ such that, for every~$z'\in A$, $(z' + \cu_0) \cap \omega \neq \emptyset$}
\end{aligned}
\Biggr \} \,.
\end{equation*}
A union bound yields that the probability of~$E_\ell(0)$ is at most 
\begin{equation*}
\P [ E_\ell(0) ] 
\leq 
\sum_{k=\ell}^\infty
\ell^d 
\exp (C k - k \left|\log \rho\right|) 
\,.
\end{equation*}
If we restrict the intensity~$\rho$ of the Poisson cloud~$\omega$ by requiring~$\rho \leq c$ for sufficiently small~$c(d)>0$, then we obtain
\begin{equation}
\P [ E_\ell ] \leq
\exp \Bigl( d\log \ell - \frac13 \ell \left|\log \rho\right| \Bigr) 
\leq 
\exp\bigl( -c \ell \left|\log \rho\right| \bigr)
\,.
\label{e.counting.animals}
\end{equation}
By~\eqref{e.indc.O.sigma}, this implies that, for every~$\sigma \in (0,\infty)$,  
\begin{equation*}
\indc_{E_\ell} 
\leq \O_{\Gamma_{1/\sigma}}\bigl( C^{\sigma} (\ell \left|\log \rho\right|)^{-\sigma } \bigr)
\end{equation*}
We next discretize~$\omega$ by setting
\begin{equation*}
\widehat{\omega} \coloneqq  \bigl\{ z \in\Zd \,:\, (z+\cu_0) \cap \omega \neq \emptyset \bigr\}\,.
\end{equation*}
For each~$m\in\N$, we define~$\N$--valued,~$\Zd$--stationary random fields~$N$ and~$N_m$ on~$\Zd$ by
\begin{equation*}
\left\{
\begin{aligned}
& 
N (z)  \coloneqq  \mbox{the number of elements in the connected component of~$\widehat{\omega}$ containing~$z$}\,,
\\ & 
N_m  (z)  \coloneqq  \mbox{the number of elements in the connected component of~$\widehat{\omega} \cap (z+\cu_m)$ containing~$z$}\,.
\end{aligned}
\right.
\end{equation*}
Note that~$N(z) =N_m  (z)=0$ if~$z\not \in \widehat{\omega}$.
It is clear that~$N_m$ is nondecreasing as a function of~$m$. 
In view of~\eqref{e.counting.animals}, for every~$m\in\N$,
\begin{equation}
\label{e.Nm.SIB}
N_m (0) \leq N(0) \leq \O_{\Gamma_1} \bigl( C\left|\log \rho\right|^{-1} \bigr)\,.
\end{equation}
It is also clear that~$N_{m+1}(z)$ differs from~$N_m(z)$ for~$z\in \cu_{m}$ only if~$N_{m}(z) > 3^m$, and thus only on the event~$E_{3^m}$. 
More generally, for every~$n,m\in\N$ with~$n<m$,  
\begin{align}
\label{e.Nm.AFRD}
\P \bigl[ \exists z\in \cu_m\,, N_m(z) \neq N_n(z) \bigr] 
\leq 
\P \bigl[ \exists z\in \cu_m\,, E_{3^n} (z) > 3^n \bigr]
&
\leq
3^{md} 
\exp\bigl( -c 3^n \left|\log \rho\right| \bigr)
\notag \\ & 
=
\exp\bigl( Cm - c 3^n \left|\log \rho\right| \bigr)
\,.
\end{align}
This implies that~$N_m$ has an approximate finite range of dependence property, which allows us to use concentration inequalities.

\begin{lemma}
\label{l.Poisson.small.rho}
Let~$\a(\cdot)$ be the field defined in~\eqref{e.a.inclusions.app}, with parameters~$\rho$,~$\lambda$ and~$\Lambda$ as introduced there. There exist constants~$c(d)>0$ and~$C(d)<\infty$ such that, if~$\rho\leq c$ and~$\gamma \in (0,1)$, then there exists a minimal scale~$\S$ satisfying~$\S \leq \O_{\Psi_{\S}}(1) $ and
\begin{equation*} 
3^m \geq \S \implies 
\bfA(z+\cu_n) \leq 3^{\gamma(m-n)} \bfE \quad \forall n \in \Z \cap (-\infty,m]\,, \quad
\forall z\in 3^n\Zd \cap  \cu_m
\end{equation*}
with
\begin{equation*} 
\bfE  \coloneqq   (1+ C \left|\log \rho \right|^{-2} ) \Itwod
\qand
\Psi_{\S}(t)  \coloneqq  \exp\Bigl( c  \bigl(\Lambda \vee \lambda^{-1} \bigr)^{-\frac{1}{d+2} - \frac{\gamma}{d}}   t^{\frac{\gamma}{d+2}} -1 \Bigr)\,.
\end{equation*}
Moreover,~$\Psi_\S$ satisfies
\begin{equation*} 
t \Psi_\S(t) \leq \Psi_\S( K_{\Psi_\S}t) \quad \forall t \in [1,\infty) \quad \mbox{with}\quad 
 K_{\Psi_\S}  \coloneqq  \bigl(  C\gamma^{-1}  \bigr)^{\! \frac
{d+2}{\gamma}} \bigl(\Lambda \vee \lambda^{-1} \bigr)^{\frac{1}{\gamma} + 1+ \frac{2}{d}}
\,.
\end{equation*}
\end{lemma}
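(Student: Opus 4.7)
Since $\a$ is scalar-valued (hence symmetric with $\k\equiv 0$), the coarse-grained matrix $\bfA(U)$ is block-diagonal with scalar blocks $s(U)\Id$ and $s_*^{-1}(U)\Id$; it therefore suffices to bound each of these scalars by $1 + C|\log\rho|^{-2}$ on mesoscopic scales and by $3^{\gamma(m-n)}$ times this bound on smaller subcubes. The argument combines a deterministic test-function construction with the probabilistic control of component sizes supplied by the stretched-exponential tail $N(0) \leq \O_{\Gamma_1}(C|\log\rho|^{-1})$ established in~\eqref{e.Nm.SIB}.

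\emph{Deterministic step.} Using the Dirichlet variational principle,
\[e\cdot s(U)e \leq \fint_U a\,|e + \nabla\phi|^2, \qquad \phi \in H^1_0(U),\]
test with
\[\phi_U(x) := -\!\!\sum_{C\in \mathcal{C}_+(U)} \!\! \chi_C(x)\bigl(e\cdot x - (e\cdot x)_C\bigr),\]
where $\mathcal{C}_+(U)$ enumerates the connected components of the high-conductivity inclusion set inside $U$ and $\chi_C$ is a standard cutoff equal to $1$ on $C$, supported in $C + B_1$, with $\|\nabla\chi_C\|_\infty \leq C$. On each $C$ the combination $e+\nabla\phi_U$ vanishes, so the large factor $\Lambda$ is never sampled; on the unit-thickness boundary layer the background conductivity is $1$ and the integrand is at most $C|e|^2(1+\diam(C))^2$. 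Summing,
\[s(U) - 1 \leq \frac{C}{|U|}\sum_{C\in\mathcal{C}_+(U)}\bigl(1+\diam(C)\bigr)^2|C|.\]
A dual construction with a divergence-free test field that vanishes on the low-conductivity components gives the matching estimate for $s_*^{-1}(U)$. Whenever every component in $U$ has diameter $\leq c|\log\rho|^{-1}$, both right-hand sides are $\leq C|\log\rho|^{-2}$.

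\emph{Probabilistic step.} Define $\S$ as the smallest $3^m$ such that for every $n \leq m$ and $z \in 3^n\Zd\cap \cu_m$ every component of $\widehat\omega\cap(z+\cu_n)$ obeys a size threshold $L(n,m)$ engineered so that the deterministic step delivers~\eqref{e.ellipticity}. On subcubes with $m - n \gtrsim \gamma^{-1}\log(\Lambda\vee\lambda^{-1})$ the crude pointwise bound $\bfA(x)\leq 2(\Lambda\vee\lambda^{-1})\Itwod$ is already absorbed by $3^{\gamma(m-n)}\bfE$ and no constraint is needed, while on the complementary range the threshold $L(n,m)$ becomes binding. The combinatorial inequality~\eqref{e.counting.animals}, namely $\P[E_\ell]\leq \exp(-c\ell|\log\rho|)$, together with a union bound over $n$ and $z$, then controls $\P[\S > 3^m]$; optimizing $L(n,m)$ against the scale discount $\gamma$ and the lattice-animal entropy $\exp(Cm)$ produces the stated $\Psi_\S$ together with its growth constant $K_{\Psi_\S}$.

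\emph{Main obstacle.} The chief delicacy is a three-way balance between (i) the target $|\log\rho|^{-2}$ in $\bfE$, (ii) the geometric discount $3^{\gamma(m-n)}$, and (iii) the combinatorial entropy $\exp(Cm)$ from the union bounds. Only a threshold $L(n,m)$ of the form $|\log\rho|^{-1}\cdot 3^{\delta(m-n)}$ with $\delta$ chosen in terms of $\gamma$ and $d$ makes the failure probability sum to a stretched exponential of the precise form claimed, with the prefactor $(\Lambda\vee\lambda^{-1})^{-1/(d+2)-\gamma/d}$ and exponent $\gamma/(d+2)$ emerging from the associated Chebyshev-type optimization.
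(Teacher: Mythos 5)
Your overall strategy (a test function that never samples the extreme conductivities on the inclusion components, probabilistic control of component sizes, a union bound over scales, and a crude pointwise bound absorbed by $3^{\gamma(m-n)}$ on small scales) is the same as the paper's, but two steps fail as written. First, the deterministic step is wrong for components of the inclusion set that meet, or come within unit distance of, $\partial U$: for such a component $C$ your $\phi_U$ is not in $H^1_0(U)$, and if you restrict the sum to interior components (as you must), then $e+\nabla\phi_U=e$ on the boundary components, where the conductivity is $\Lambda$. The resulting bound necessarily contains a boundary-layer term of order $\Lambda\,3^{-n}$ times the sizes of the components touching $\partial(z+\cu_n)$ — in the paper this is the term $C\Lambda|p|^2\,3^{-m}\avsum_{z\in\partial\cu_m}N_{m+1}(z)^2$, and the dual divergence-free construction for $\s_*^{-1}$ has the analogous term in $\lambda^{-1}$. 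It is precisely these terms that force the restriction $3^{n}\gtrsim(\Lambda\vee\lambda^{-1})^{1+\nicefrac2d}$ and produce the $\Lambda\vee\lambda^{-1}$ dependence in $\Psi_\S$ and $K_{\Psi_\S}$; your claimed inequality for $s(U)-1$ with no $\Lambda$ on the right is false in general, and attributing the $\Lambda\vee\lambda^{-1}$ dependence solely to the crude small-scale bound misplaces where it actually enters.

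Second, your probabilistic step cannot produce the ellipticity constant $1+C|\log\rho|^{-2}$, because you only control the \emph{maximum} component size through $\P[E_\ell]\le\exp(-c\ell|\log\rho|)$. Under a uniform diameter bound $L$ alone your deterministic estimate gives at best $s(U)-1\le C(1+L)^2$ times the inclusion volume fraction, and the sufficient condition you state, that every component have diameter at most $c|\log\rho|^{-1}$, is vacuous: a single inclusion already has diameter of order one. The constant $1+C|\log\rho|^{-2}$ comes from the spatial average $\avsum_z N(z)^2$, whose expectation is $O(|\log\rho|^{-2})$ because $N(z)=\O_{\Gamma_1}(C|\log\rho|^{-1})$ by~\eqref{e.Nm.SIB}; to bound this average with the stated stochastic integrability one needs a concentration-of-sums argument for the truncated, finite-range variables $N_n(z)^2$ (the paper uses Corollary~\ref{c.concentration.Gamma.sigma} together with~\eqref{e.Nm.AFRD} to pass from $N_{m+1}$ back to $N_n$), combined with the lattice-animal event bound~\eqref{e.counting.animals}, and then an optimization over the mesoscale $n$ which is what generates the exponent $\nicefrac{\gamma}{(d+2)}$ and the $(\Lambda\vee\lambda^{-1})$-dependent prefactor in $\Psi_\S$. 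The event bound alone controls only the maximum, so no choice of threshold $L(n,m)$ in your scheme closes this gap.
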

\begin{proof}
We construct, for each~$m\in\N$, a partition of the cube~$\cu_m$. We let~$\mathcal{C}_m(\omega)$ denote the collection of connected components of~$\widehat{\omega} \cap \cu_m$. For each~$A \in \mathcal{C}_m(\omega)$, we associate the continuum set
\begin{equation*}
\widetilde{A} \coloneqq  \bigcup_{z\in A} (z+\cu_0) 
\end{equation*}
and slightly enlarge this set by defining 
\begin{equation*}
\breve{A}  \coloneqq  \big\{ x \in \cu_m \,:\, \dist(x,\widetilde{A}) < \nicefrac 13 \} \,.
\end{equation*}
Observe that~$\breve{A}_1 \cap \breve{A}_2 = \emptyset$ if~$A_1$ and~$A_2$ are distinct connected components of~$\widehat{\omega}$. If~$\widetilde{A}$ does not touch the boundary~$\partial \cu_m$, we write~$A \in \mathcal{C}_m^\circ(\omega)$. We also let~$\mathcal{C}_m^b(\omega) \coloneqq  \mathcal{C}_m(\omega)\setminus  \mathcal{C}_m^\circ(\omega)$ be those connected components for which~$\widetilde{A}$ does touch~$\partial \cu_m$. 

\smallskip

We let~$\mathcal{P}$ be the collection of all subsets of~$\cu_m$ of the form: (i)~$\widetilde{A}$, for~$A\in \mathcal{C}_m(\omega)$; (ii)~$\breve{A} \setminus \widetilde{A}$, for~$A\in \mathcal{C}_m(\omega)$; and (iii)~$\cu_m \setminus ( \cup\{ \breve{A}\,:\, A \in \mathcal{C}_m(\omega)\})$. It is clear that~$\mathcal{P}$ is a partition of~$\cu_m$, up to a zero Lebesgue measure set.  

\smallskip

Given~$p\in\Rd$, we define a gradient field~$\nabla \phi_p$ on~$\cu_m$ with the following properties:
\begin{itemize}
\item $\nabla \phi_p$ vanishes on~$\widetilde{A}$, for every~$A \in \mathcal{C}_m^\circ(\omega)$.

\item $\nabla \phi_p = p$ in $\cu_m \setminus \cup\{ \breve{A}\,:\, A \in \mathcal{C}_m^\circ(\omega)\}$.

\item For each~$A \in \mathcal{C}_m^\circ(\omega)$, we have~$| \nabla \phi_p | \leq 20 |p| |A|$ in~$\breve{A} \setminus \widetilde{A}$.
\end{itemize}
In particular, since~$\nabla \phi_p = p$ in a neighborhood of the boundary~$\partial \cu_m$, we have that
\begin{equation*}
p\cdot \a(\cu_m) p 
\leq 
\min_{u \in \ell_p+ H^1_0(\cu_m)} 
\fint_{\cu_m} 
\nabla u \cdot \a\nabla u
\leq 
\fint_{\cu_m} 
\nabla \phi_p \cdot \a\nabla \phi_p
\,.
\end{equation*}
Since~$\nabla \phi_p$ vanishes in~$\widetilde{A}$ for each~$A \in \mathcal{C}_m^\circ(\omega)$, we have that 
\begin{equation*}
\sum_{A \in\mathcal{C}_m^\circ(\omega)} \int_{\widetilde{A}} \nabla \phi_p \cdot \a\nabla \phi_p = 0\,.
\end{equation*}
Since~$\a(x) = \Id$ in~$B \coloneqq \cu_m \setminus \cup\{ \widetilde{A}\,:\, A \in \mathcal{C}_m(\omega)\}$ and~$\a(x) \le \Lambda \Id$ otherwise, we deduce that 
\begin{equation*}
\fint_{\cu_m} 
\nabla \phi_p \cdot \a\nabla \phi_p
=
\frac{1}{|\cu_m|}
\int_{B} |\nabla \phi_p|^2
+
\frac{\Lambda }{|\cu_m|}
\sum_{A\in \mathcal{C}_m^b(\omega)} 
\int_{\widetilde{A}} |\nabla \phi_p|^2
\,.
\end{equation*}
Using the properties of~$\nabla \phi_p$, we have 
\begin{align*}
\frac{1}{|\cu_m|}
\int_{B} |\nabla \phi_p|^2
\leq 
|p|^2 + 
\frac{1}{|\cu_m|} 
\sum_{A \in \mathcal{C}_m(\omega)} 
400 |p|^2 |A|^2|\breve{A} \setminus \widetilde{A}|
&
\leq
|p|^2 + 
\frac{800|p|^2}{|\cu_m|} 
\sum_{A \in \mathcal{C}_m(\omega)} 
|A|^3
\notag \\ & 
\leq 
|p|^2 \Bigl( 1 + 
800 \avsum_{z\in \cu_m} 
N_{m+1}(z)^2  \Bigr)
\end{align*}
and, similarly, 
\begin{align*}
\frac{\Lambda }{|\cu_m|}
\sum_{A\in \mathcal{C}_m^b(\omega)} 
\int_{\widetilde{A}} |\nabla \phi_p|^2
\leq 
\frac{400\Lambda |p|^2}{|\cu_m|} 
\sum_{A\in \mathcal{C}_m^b(\omega)} 
|A|^2
&
\leq
\frac{400\Lambda |p|^2}{|\cu_m|} 
\sum_{z\in \partial \cu_m} 
N_{m+1}(z)^2 \,.
\notag \\ & 
\leq 400|p|^2  3^{-m} \Lambda 
\avsum_{z\in \partial \cu_m} 
N_{m+1}(z)^2 \,.
\end{align*}
Next, let~$n \in \N$ with~$n < m$ and  observe that 
\begin{equation*}
\avsum_{z\in \cu_m} 
N_{m+1}(z)^2 
\leq
\avsum_{z\in \cu_m} 
N_n(z)^2 
+
\avsum_{z\in \cu_m} 
\indc_{E_{3^n}(z)} N_{m+1}(z)^2
\,.
\end{equation*}
Using~\eqref{e.Nm.SIB} and~\eqref{e.Gammasigma.concentration}, and noting that the mean of~$N_n(z)$ is at most~$C \left|\log \rho \right|^{-2}$ by~\eqref{e.Nm.SIB}, we find that, for every~$t\geq C \left|\log \rho \right|^{-2}$, 
\begin{equation*}
\P \biggl[ \avsum_{z\in \cu_m} 
N_n(z)^2  > t \biggr] 
\leq 
3^{d(m-n)} \exp\bigl( -c 3^{\frac d2(m-n)} t^{\frac12} \bigr) 
+
\exp\bigl( - c3^{d(m-n)} t^2 \bigr) 
\,.
\end{equation*}
This yields, for~$t\geq C \left|\log \rho \right|^{-2}$ and~$\rho$ sufficiently small, 
\begin{align*}
\P \biggl[ \avsum_{z\in \cu_m} 
N_n(z)^2  > t \biggr] 
&
\leq 
\exp\bigl( C (m-n) - ct^{\frac12}  3^{\frac d2(m-n)}  \bigr) 
+
\exp\bigl( - c t^2 3^{d(m-n)} \bigr)
\leq
\exp\bigl(  - ct^{\frac12} 3^{\frac d2(m-n)} \bigr) 
\,.
\end{align*}
By~\eqref{e.Nm.AFRD}, we have that 
\begin{align*}
\P \biggl[ \avsum_{z\in \cu_m} 
\indc_{E_{3^n}(z)}  
N_{m+1} (z)^2  \neq 0 \biggr] 
&
\leq
\P \bigl[ \exists z\in \cu_m\,, E_{3^n} (z) > 3^n \bigr]
\leq 
\exp\bigl( Cm - c 3^n \left|\log \rho\right| \bigr)
\,.
\end{align*}
Combining the above yields, for every~$t \geq C \left|\log \rho \right|^{-2}$, 
\begin{equation*}
\P \biggl[ \avsum_{z\in \cu_m} N_{m+1}(z)^2 > t \biggr] 
\leq 
\exp\bigl(  - ct^{\frac12} 3^{\frac d2(m-n)}  \bigr) 
+
\exp\bigl( Cm - c 3^n \left|\log \rho\right| \bigr)\,.
\end{equation*}
By a very similar computation, we find that 
\begin{equation*}
\P \biggl[ \avsum_{z\in \partial \cu_m} N_{m+1}(z)^2 > t \biggr] 
\leq 
\exp\bigl(  - ct^{\frac12} 3^{\frac {d-1}2(m-n)}  \bigr) 
+
\exp\bigl( Cm - c 3^n \left|\log \rho\right| \bigr)\,.
\end{equation*}
Putting these together, we obtain, for every~$t \geq C \left|\log \rho \right|^{-2}$ and~$m,n\in\N$ with~$n<m$ and~$3^n\geq \Lambda$, 
\begin{equation}
\label{e.Poisson.a.cu.bound} 
\P \bigl[ \a(\cu_m) \not\leq (1+ t ) \Id \bigr] 
\leq
\exp\bigl(  - ct^{\frac12} 3^{\frac d2(m-n)}  \bigr) 
+
\exp\bigl( Cm - c 3^n \left|\log \rho\right| \bigr)
\,.
\end{equation}
By essentially the same argument, we also obtain an estimate for~$\a_*^{-1}(\cu_m)$, which states that, for every~$t \geq C \left|\log \rho \right|^{-2}$ and~$m,n\in\N$ with~$n<m$ and~$3^n\geq \Lambda \vee \lambda^{-1}$,
\begin{equation}
\label{e.Poisson.a.star.cu.bound} 
\P \bigl[ \a_*^{-1} (\cu_m)  \not\leq (1+ t ) \Id \bigr] 
\leq
\exp\bigl(  - ct^{\frac12} 3^{\frac d2(m-n)}  \bigr) 
+
\exp\bigl( Cm - c 3^n \left|\log \rho\right| \bigr)
\,.
\end{equation}
To prove~\eqref{e.Poisson.a.star.cu.bound}, we start from the variational formula
\begin{equation*}
q \cdot \a_*^{-1} (\cu_m) q = \min_{\g \in q + L^2_{\mathrm{sol},0}(\cu_m)} \fint_{\cu_m} \g \cdot \a^{-1} \g \,.
\end{equation*}
We test this formula with a divergence-free field~$\h_q$ on~$\cu_m$ which has the following properties: for a constant~$C(d)<\infty$, 
\begin{itemize}
\item $\h_q$ vanishes on~$\widetilde{A}$, for every~$A \in \mathcal{C}_m^\circ(\omega)$.

\item $\h_q = q$ in $\cu_m \setminus \cup\{ \breve{A}\,:\, A \in \mathcal{C}_m^\circ(\omega)\}$.

\item For each~$A \in \mathcal{C}_m^\circ(\omega)$, we have~$| \h_q  | \leq C |q| |A|$ in~$\breve{A} \setminus \widetilde{A}$.
\end{itemize}
Such a divergence-free field is relatively straightforward to construct (even if it is less obvious that than for the analogous gradient field above). The argument for~\eqref{e.Poisson.a.star.cu.bound} then follows nearly identically to that of~\eqref{e.Poisson.a.cu.bound}, with~$\h_q$ in place of~$\nabla \phi_p$.  

By combining~\eqref{e.Poisson.a.cu.bound} and~\eqref{e.Poisson.a.star.cu.bound} we obtain, for every~$t \geq C \left|\log \rho \right|^{-2}$ and~$m,n\in\N$ with~$n<m$ and~$3^n\geq \Lambda \vee \lambda^{-1}$,
\begin{equation*}
\P \bigl[ \bfA(\cu_m) \not\leq (1+ t ) \Itwod \bigr] 
\leq
\exp\bigl(  - ct^{\frac12} 3^{\frac d2(m-n)}  \bigr) 
+
\exp\bigl( Cm - c 3^n \left|\log \rho\right| \bigr)\,.
\end{equation*}
Optimizing the parameter~$n$ leads to choose~$n$ so that~$3^{(\frac d2 +1)n} \simeq t^{\frac12} 3^{\frac d2 m}$. We obtain, for every~$t \geq C \left|\log \rho \right|^{-2}$ and~$m\in\N$ with~$3^{m} \geq (\Lambda \vee \lambda^{-1} )^{1+\frac 2d}$,
\begin{equation*}
\P \bigl[ \bfA(\cu_m) \not\leq (1+ t ) \Itwod \bigr] 
\leq
\exp\bigl(  - ct^{\frac 1{d+2}} 3^{\frac d{d+2}m}  \bigr) 
\,.
\end{equation*}
Let~$n_0 \in \N$ be the smallest integer such that~$3^{n_0} \geq (\Lambda \vee \lambda^{-1} )^{1+\frac 2d}  \vee \exp(C \gamma^{-1})$. A union bound and the above display now give us 
\begin{align}
\label{e.union.bound.this.for.PPC}
\lefteqn{
\P \Bigl[ 
\exists n \in \N \cap [n_0 , m ]\,, \; z \in 3^n \Zd \cap\cu_m ,\
\bfA(z+\cu_n) \not\leq 3^{\gamma(m-n)} (1+ C \left|\log \rho \right|^{-2} ) \Itwod \Bigr] 
} \qquad\qquad\qquad &
\notag \\ &
\leq
\sum_{n=n_0}^m 
\sum_{z \in 3^n \Zd \cap\cu_m}
\P \Bigl[ 
\bfA(z+\cu_n) \not\leq 3^{\gamma(m-n)} (1+ C \left|\log \rho \right|^{-2} ) \Itwod \Bigr] 
\notag \\ &
\leq
\sum_{n=n_0}^m 
\exp\bigl( C(m-n) - c3^{\frac {\gamma}{d+2}(m-n)} 3^{\frac d{d+2}n}  \bigr) 
\notag \\ &
\leq
\exp\bigl( - c  3^{\frac {\gamma}{d+2}m + \frac{d-\gamma}{d+2} n_0}  \bigr) 
\,.
\end{align}
On the other hand, we have the quenched bound
\begin{equation*} 
3^{-\gamma(m-n_0)} \bfA(z+\cu_n)  
\leq
C 3^{\gamma n_0} (\Lambda \vee \lambda^{-1})  3^{-\gamma m} \Itwod
\leq C \bigl(\Lambda \vee \lambda^{-1} \bigr)^{1+\gamma(1+\frac 2d)} 3^{-\gamma m} \Itwod
\,,
\end{equation*}
so that if~$m_0\in\N$ is the smallest integer such that
\begin{equation*} 
3^{\gamma m_0} \geq C (\Lambda \vee \lambda^{-1} )^{1+ \gamma(1+\frac 2d)} 
\,,
\end{equation*}
we have
\begin{equation} 
\label{e.small.scales.for.PPC}
m\geq m_0\,, \quad n \leq n_0 
\quad\implies \quad
\bfA(z+\cu_n)  \leq 3^{\gamma(m-n)} \Itwod\quad \forall z \in 3^n \Zd \cap \cu_m
\,.
\end{equation}
Defining now, for~$\bfE \coloneqq  (1+ C \left|\log \rho \right|^{-2} ) \Itwod$, the minimal scale~$\mathcal{S}$ by
\begin{equation*}
\mathcal{S}  \coloneqq  \sup \biggl\{ 3^{m+1} \,:\, m\in \N \cap [m_0 ,\infty) \,, \; 
\sup_{ n \in \N \cap (-\infty, m ]} 
\,
\sup_{z\in 3^n\Zd \cap  \cu_m} 
\bfA(z+\cu_n) \not\leq 3^{\gamma(m-n)} \bfE
\biggr\}
\,,
\end{equation*}
we obtain
\begin{equation*} 
3^m \geq \S \implies 
\bfA(z+\cu_n) \leq 3^{\gamma(m-n)} \bfE \quad \forall n \in \Z \cap (-\infty,m]\,, \quad
\forall z\in 3^n\Zd \cap  \cu_m
\,.
\end{equation*}
By~\eqref{e.union.bound.this.for.PPC},~\eqref{e.small.scales.for.PPC} and a union bound we have 
\begin{equation*} 
\S \leq \O_{\Psi_\S}(1) 
\quad \mbox{with} \quad
\Psi_{\S}(t) = \exp\Bigl( c  \bigl(3^{-m_0} t\bigr)^{\frac{\gamma}{d+2}} -1 \Bigr)
\,.
\end{equation*}
By a direct computation, we also deduce that
\begin{equation*} 
K_{\Psi_\S}  \coloneqq  \Bigl( 1 + C \gamma^{-1}3^{\frac{\gamma}{d+2}m_0 }    \Bigr)^{\! \frac
{d+2}{\gamma}}
\quad \implies \quad 
t \Psi_{\S}(t) \leq \Psi(K_{\Psi_\S} t) \quad \forall t \in [1,\infty)
\,,
\end{equation*}
which yields the statement. 
\end{proof}

\subsection{Fractional Gaussian fields}
\label{ss.FGF}

In this section, we review some basic facts about fractional Gaussian fields and verify the claim made in the introduction that these fields give rise to examples of random elliptic coefficient fields satisfying our hypotheses.

\subsubsection{Definition of fractional Gaussian fields}

We begin with the definition and basic properties of fractional Gaussian fields. Many of the facts presented here can be found in~\cite{LSSW}, but we include proofs and full details of the computations for the reader's convenience. 

We denote by~$W$ a standard Gaussian white noise process on~$\Rd$. 
It is a random distribution on~$\Rd$, that is, a random element of~$\mathscr{S}'(\Rd)$, the dual of the space~$\mathscr{S}(\Rd)$ of Schwarz functions on~$\Rd$.
The field~$W$ is characterized by two properties: first, that~$W(\psi)$ is a Gaussian random variable for each~$\mathscr{S}(\Rd)$; and second, that the following covariance formula is satisfied:
\begin{equation}
\label{e.cov.W}
\cov[ W(\psi_1) , W(\psi_2) ] = \int_{\Rd} \psi_1(x)\psi_2(x)\,dx \,, \quad \forall \psi_1,\psi_2 \in \mathscr{S}(\Rd)\,.
\end{equation}
The distribution~$W$ almost surely belongs to~$H^{-\nicefrac d2-\ep}_{\mathrm{loc}}(\Rd)$, for every~$\ep>0$, but not~$H^{-\nicefrac d2}_{\mathrm{loc}}(\Rd)$. 
Immediate from the covariance formula is the following scaling invariance for~$W$: for every~$\lambda>0$, 
\begin{equation}
\label{e.W.scaling}
\lambda^{\nicefrac d2} 
W(\lambda  \cdot) 
\quad \mbox{has the same law as} \ W\,.
\end{equation}
Proof of these facts, as well as an explicit construction of~$W$, can be found in~\cite[Chapter 5]{AKMbook}. In what follows, we abuse notation by informally writing~$\int_{\Rd} \psi(x) W(x)\,dx$ in place of~$W(\psi)$. 

White noise is an example of a self-similar fractional Gaussian process. These fields are typically indexed by the \emph{Hurst parameter}, which is roughly the regularity of the field. The white noise field~$W$ has Hurst parameter~$-\nicefrac d2$. 

\smallskip

We denote the self-similar fractional Gaussian field with Hurst parameter~$-\sigma$, with~$\sigma\in (0,\nicefrac d2)$, by~$F_\sigma$.  It is characterized by the fact that~$F_\sigma(\psi)$ is a Gaussian random variable for each fixed test function~$\psi\in \mathscr{S}(\Rd)$, and the covariance formula (cf.~\cite[Theorem 3.3]{LSSW})
\begin{equation*}
\cov[ F_\sigma(\psi_1) , F_\sigma(\psi_2) ] = C(\sigma,d) \int_{\Rd}\int_{\Rd} |x-y|^{-2\sigma} \psi_1(x)\psi_2(y)\,dx\,dy \,, \quad \forall \psi_1,\psi_2 \in \mathscr{S}(\Rd)\,,
\end{equation*}
where~$C(\sigma,d)>0$ is the special constant defined by
\begin{equation}
\label{e.C.sigma.d}
C(\sigma,d)\coloneqq \frac{2^{2\sigma -d} \pi^{-\nicefrac d2} \gammafun(\sigma)}{\gammafun(\nicefrac d2-\sigma)}
\,.
\end{equation}
Note that~$\gammafun(\sigma)$ is of order~$\sigma^{-1}$ as~$\sigma \to 0$, and thus so is~$C(\sigma,d)$. 

\smallskip

The field~$F_\sigma$ can be constructed explicitly in terms of~$W$, in fact, as a deterministic function of~$W$. It is (in)formally the convolution
\begin{equation*}
F_\sigma  \coloneqq  2^\sigma (2\pi)^{-\nicefrac d2} |x|^{-(\frac d2 +\sigma)} \ast W
\,.
\end{equation*}
This convolution is, however, not well-defined. Making sense of it can be accomplished in various ways. Here we use the following integral identity: for every~$q\in(0,\infty)$ and~$x\in\Rd \setminus\{ 0 \}$, 
\begin{equation}
\label{e.xq.formula}
|x|^{-q}
=
\frac{(4\pi)^{\nicefrac d2} }{ 2^{q}\gammafun(\nicefrac q2)}
\int_0^\infty 
t^{-\frac12 ( 2 - d + q) } 
\Phi(t,x) \,dt
\,.
\end{equation}
Here~$\gammafun$ is the gamma function defined by~$\gammafun(\alpha) \coloneqq  \int_0^\infty t^{-1+\alpha} \exp(-t)\,dt$, and~$\Phi$ denotes the standard heat kernel on~$\R^{d}$, defined by
\begin{equation*}
\Phi(t,x) \coloneqq (4\pi t)^{-\nicefrac d2} \exp \biggl(-\frac{|x|^2}{4t} \biggr)\,, \quad (t,x) \in (0,\infty) \times \Rd \,.
\end{equation*}
We define~$F_\sigma$ for every~$\sigma\in (0,\nicefrac d2)$ as
\begin{equation}
\label{e.F.sigma.def}
F_\sigma(\psi) 
 \coloneqq  
\frac1{\gammafun(\nicefrac d4 - \nicefrac \sigma 2)}
\int_0^\infty 
t^{-1+\frac12(\frac d2 -\sigma)}
\int_{\Rd} (\Phi(t,\cdot) \ast \psi)(x) W(x)\,dx\,dt\,,
\qquad 
\psi \in \mathscr{S}(\Rd)\,.
\end{equation}
It is clear that~\eqref{e.F.sigma.def} defines~$F_\sigma$ as a Gaussian random distribution. 
To check that this definition yields a fractional Gaussian field with Hurst parameter~$\sigma$, it therefore suffices to check the covariance formula. By polarization, we just need to check the variance formula
\begin{equation}
\label{e.check.the.var.sigma}
\var[ F_\sigma(\psi)] = C(\sigma,d) \int_{\Rd}\int_{\Rd} |x-y|^{-2\sigma} \psi(x)\psi(y)\,dx\,dy \,, \quad \forall \psi \in \mathscr{S}(\Rd)\,,
\end{equation}
where~$C(\sigma,d)$ is as defined in~\eqref{e.C.sigma.d}.

\smallskip

To check~\eqref{e.check.the.var.sigma} we straightforwardly compute
\begin{align}
\label{e.var.H.sigma.psi}
\lefteqn{
\gammafun(\nicefrac d4 - \nicefrac \sigma 2)^{2} 
\var \bigl[ F_\sigma(\psi) \bigr] 
} \ &
\notag \\ &
=
\E \biggl[
\int_0^\infty\!
\int_0^\infty\! 
\int_{\Rd}\!
\int_{\Rd}\!
(st)^{-1+\frac12(\frac d2 -\sigma)}
( \Phi(t,\cdot) \ast \psi ) (x)
( \Phi(s,\cdot) \ast \psi ) (y)
W(x) W(y)  
\, dx \, dy \, dt \, ds
\biggr]
\notag \\ &
=
\int_0^\infty\!
\int_0^\infty\! 
\int_{\Rd}\!
(st)^{-1+\frac12(\frac d2 -\sigma)}
( \Phi(t,\cdot) \ast \psi ) (x)
( \Phi(s,\cdot) \ast \psi ) (x)
\, dx \, dt \, ds
\biggr]
\notag \\ &
=
\int_0^\infty 
\int_0^\infty 
\int_{\Rd} 
\int_{\Rd} 
(st)^{-1+\frac12(\frac d2 -\sigma)}
\Phi(t+s,x-y) 
\psi (x) \psi (y) 
\,dx \,dy \, dt \, ds 
\,.
\end{align}
In the above display, we used~\eqref{e.cov.W} to get the second equality and the semigroup property of the heat kernel to get the third equality.

\smallskip

To evaluate the expression on the last line of~\eqref{e.var.H.sigma.psi} side, we change variables by setting~$t = \frac{1}{4T}-s$ for a new variable~$T$, and then reverse the order of integration between the variables~$s$ and~$T$ then set~$s \coloneqq  S/4T$. After some computations, we get that the last line of~\eqref{e.var.H.sigma.psi} is equal to
\begin{equation}
\mbox{\eqref{e.var.H.sigma.psi}}
=
4^{s-d} \pi^{-\nicefrac d2} \!
\int_0^1 
(S-S^2)^{-1+\frac 12(\frac d2-\sigma)}
\,dS
\int_0^\infty\!\!
T^{\sigma -1} \exp ( -T|x-y|^2) \,dT\!
\int_{\Rd} \!
\int_{\Rd} \!
\psi(x) \psi(y)\,dx\,dy
\,.
\label{e.var.H.sigma.psi.cont}
\end{equation}
For the first integral factor, we have that 
\begin{equation}
\label{e.beta.identity}
\int_0^1
(S-S^2)^{-1 + \frac12(\frac d2-\sigma) } \,dS
= \frac{\gammafun( \frac d4-\frac\sigma2)^2}{\gammafun(\frac d2-\sigma)}
\,.
\end{equation}
Indeed, the integral on the right side is equal (by definition) to~$B(\frac12(\frac d2-\sigma),\frac12(\frac d2-\sigma))$, where~$B(\cdot,\cdot)$ is the \emph{beta function}. The beta function can be written in terms of the gamma function by the formula~$B(s_1,s_2) = \gammafun(s_1) \gammafun(s_2) / \gammafun(s_1+s_2)$, a proof of which can be found in~\cite[p.~18-19]{Artin} or in the Wikipedia on the beta function.
This yields~\eqref{e.beta.identity}.  
By a simple change of variables, we can relate the second integral factor on the right side of~\eqref{e.var.H.sigma.psi.cont} to the gamma function: we have
\begin{equation*}
\int_0^\infty\!\!
T^{\sigma -1} \exp ( -T|x-y|^2) \,dT
=
\gammafun (\sigma)|x-y|^{-2\sigma} 
\,.
\end{equation*}
We therefore obtain
\begin{equation*}
\gammafun(\nicefrac d4 - \nicefrac \sigma 2)^{2} 
\var \bigl[ F_\sigma(\psi) \bigr] 
=
4^{\sigma-\nicefrac d2} \pi^{-\nicefrac d2} 
\frac{\gammafun( \frac d4-\frac\sigma2)^2}{\gammafun(\frac d2-\sigma)}
\gammafun (\sigma)
\int_{\Rd}
\int_{\Rd}
|x-y|^{-2\sigma} 
\psi(x)\psi(y)
\,dx\,dy\,.
\end{equation*}
This completes the proof of~\eqref{e.check.the.var.sigma}.

\subsubsection{Finite range decomposition of fractional Gaussian fields}
\label{ss.FGF.decomp}

In this subsection, we provide an explicit decomposition of the fractional Gaussian field~$F_\sigma$ defined in~\eqref{e.F.sigma.def} of the form
\begin{equation*}
F_\sigma = \sum_{n\in\Z} F_{\sigma,n} \,,
\end{equation*}
where~$\{ F_{\sigma,n} \}_{n\in\Z}$ is a sequence of Gaussian random fields such that each~$F_{\sigma,n}$ which is defined pointwise, is locally smooth and has a range of dependence proportional to~$3^n$. 

\smallskip

We select a partition of unity~$\{ \eta_n\}_{n\in\Z}$ on~$\Rd \setminus \{0\}$ satisfying the following:
\begin{itemize}

\item
For every~$n\in\Z$, the function~$\eta_n$ belongs to~$C^\infty_c ( \Rd)$,
\begin{equation}
\label{e.eta.n.indc}
\indc_{B_{3^{n}} \setminus B_{3^{n-1}}}
\leq 
\eta_n 
\leq 
\indc_{B_{\frac 32\cdot 3^{n}} \setminus B_{\frac 23 \cdot 3^{n-1}}}
\end{equation}
and 
\begin{equation}
\label{e.eta.n.bounds}
3^n \| \nabla \eta_n \|_{L^\infty(\Rd)} 
+
3^{2n} \| \nabla^2 \eta_n \|_{L^\infty(\Rd)} 
\leq
100\,.
\end{equation}

\item 
For every $n\in\Z$ and~$x\in\Rd$, 
\begin{equation}
\label{e.eta.n.dilation}
\eta_n (x) = \eta_0(3^{-n} x)\,.
\end{equation}

\item For every~$x\in\Rd \setminus \{0\}$, 
\begin{equation}
\label{e.eta.n.partition}
\sum_{n\in\Z} \eta_n(x) = 1
\,.
\end{equation}
\end{itemize}
We can construct~$\{ \eta_n\}_{n\in\N}$ by taking the indicator function~$\indc_{B_{1} \setminus B_{\nicefrac13}}$ and mollifying the near the inner boundary~$\partial B_{\nicefrac13}$ with a smooth, radial function~$\zeta$ which is supported in~$B_{\nicefrac 19}$ and has unit mass, and then mollifying near the outer boundary~$\partial B_{1}$ with~$3^{-d} \zeta(3^{-1}\cdot)$. This defines~$\eta_0$, and we can then define~$\eta_n$ for~$n\in\Z\setminus\{0\}$ using the scaling relation~\eqref{e.eta.n.dilation}. 
The other properties of~$\eta_n$ are then immediate from the construction. 

\smallskip

With an eye toward~\eqref{e.F.sigma.def}, we define~$F_{\sigma,n}$ as follows:
\begin{equation*}
F_{\sigma,n}(x)  \coloneqq  
\frac1{\gammafun(\nicefrac d4 {-} \nicefrac \sigma 2)}
\int_0^\infty 
t^{-1+\frac12(\frac d2 -\sigma)}
\int_{\Rd} (\Phi(t,\cdot) \ast \eta_n(\cdot-x))(y) W(y)\,dy\,dt\,,
\quad x \in \Rd\,.
\end{equation*}
By~\eqref{e.xq.formula}, we have that 
\begin{equation}
\label{e.Hn.convo}
F_{\sigma,n}(x) = 2^\sigma \pi^{-\nicefrac d2} 
\int_{\Rd} 
\eta_n(y-x) |y-x|^{-(\frac d2 +\sigma)}
W(y) \,dy 
\,.
\end{equation}
It follows that~$F_{\sigma,n}$ is an~$\Rd$--stationary Gaussian field with zero mean, and 
\begin{align*}
\var \bigl[ F_{\sigma,n}(0) \bigr]
&
\leq 
4^{\sigma} \pi^{-d}
\int_{\Rd} 
\eta_n^2(x) |x|^{-(d+2\sigma)}\,dx
\notag \\ & 
\leq 
4^{\sigma}\pi^{-d}
\int_{B_{\frac 32\cdot 3^{n}} \setminus B_{\frac 23 \cdot 3^{n-1}}} 
|x|^{-(d+2\sigma)}\,dx
\notag \\ & 
=
4^{\sigma}\pi^{-d} 
| \partial B_1| 
\int_{\frac 23 \cdot 3^{n-1}}^{\frac 32\cdot 3^{n}}
r^{-(1+2\sigma)} 
\,dr
\leq
\frac{162^{\sigma}}{2\sigma} \pi^{-d} 
| \partial B_1| 
3^{-2n\sigma}
\,.
\end{align*}
Since~$\sigma < \nicefrac d2$, we obtain that, for a constant~$C(d)<\infty$, 
\begin{equation*}
\var \bigl[ F_{\sigma,n}(0) \bigr]
\leq 
C\sigma^{-1} 
3^{-2n\sigma}\,.
\end{equation*}
Since~$F_{\sigma,n}(0)$ is Gaussian, we have that 
\begin{equation}
\label{e.F.sigman.size}
\bigl| F_{\sigma,n}(0) \bigr|
=
\O_{\Gamma_2} \bigl( C\sigma^{-\nicefrac12} 
3^{-n\sigma} \bigr)\,.
\end{equation}
By a similar computation, using~\eqref{e.eta.n.bounds}, we also have that 
\begin{equation}
\label{e.nab.F.sigman.size}
\bigl| \nabla F_{\sigma,n}(0) \bigr|
=
\O_{\Gamma_2} \bigl( C\sigma^{-\nicefrac12} 
3^{-n(1+\sigma)} \bigr)\,.
\end{equation}
We observe next from~\eqref{e.eta.n.indc},~\eqref{e.Hn.convo} and the independence properties of white noise that
\begin{equation*}
\mbox{$F_{\sigma,n}$ has range of dependence at most~$\frac 32 \cdot 3^n$}
\end{equation*}
and
\begin{equation*}
\mbox{for every \ $m,n\in\N$ \  with \ $|m-n| \geq 2$, \ the fields  $F_{\sigma,n}$ and~$F_{\sigma,m}$ are independent.}
\end{equation*}
By~\eqref{e.eta.n.dilation} and the scaling invariance of white noise in~\eqref{e.W.scaling}, it is immediate that, for every~$n\in\Z$, the field~$F_{\sigma,n}$ has the same law as~$3^{-\sigma} F_{\sigma,0}(3^{-n}\cdot)$. 

\smallskip

Finally, by~\eqref{e.eta.n.partition}, we obtain that~$F_{\sigma}$ defined in~\eqref{e.F.sigma.def} satisfies
\begin{equation*}
F_{\sigma}(\psi) 
=
\sum_{n\in\Z} \int_{\Rd} F_{\sigma,n} (x) \psi(x)\,dx
\,, \qquad \psi \in \mathscr{S}(\Rd)\,.
\end{equation*}
What needs to be justified is that the sum on the right side is convergent, but this is straightforward to obtain from the bounds~\eqref{e.F.sigman.size} and~\eqref{e.nab.F.sigman.size}, above.

\subsubsection{{Proof of Proposition~\ref{p.example.advdiff}}}
\label{ss.example.advdiff}

We next present the proof of Proposition~\ref{p.example.advdiff}. In fact, we will obtain the following more general statement.\footnote{While Proposition~\ref{p.example.advdiff.gen} is mostly about checking the ellipticity assumption~\ref{a.ellipticity}, it also implies that~\ref{a.CFS} is satisfied with~$\beta = 1-\frac{2\sigma}{d}$ and~$\Psi(t)=\Gamma_2(c (\tfrac d2-\sigma) t)$, see~\cite[Chapter 3]{AK.Book}.}

\begin{proposition}
\label{p.example.advdiff.gen}
Consider the case in which 
\begin{equation*}
\a(x) = \lambda \Id 
+
\k(x) \,,
\end{equation*}
where the~$\k(\cdot)$ is a~$\Zd$--stationary random field valued in the~$d$-by-$d$ anti-symmetric matrices with real entries, and which admits the following decomposition: 
\begin{equation*}
\k(x) = \sum_{j=0}^\infty \k_j(x)\,,
\end{equation*}
where the sequence~$\{ \k_j \}_{j\in\N}$ satisfies the following:
\begin{itemize}

\item For each~$j\in\N$, the field~$\k_j$ is a~$\Zd$--stationary random field valued in the~$d$-by-$d$ anti-symmetric matrices;

\item For each~$j\in\N$, the range of dependence of~$\k_j$ is at most~$3^j$; 

\item There exists~$K_0 \in (0,\infty)$ and~$\sigma \in(0,\nicefrac d2)$ such that, for each~$j\in\N$,
\begin{equation}
\label{e.kj.bound.ass}
3^j \| \nabla \k_j \|_{L^\infty(\cu_j)}
+
\| \k_j \|_{L^\infty( \cu_j)} 
\leq 
\O_{\Gamma_2} 
\bigl( K_0 3^{-\sigma j} \bigr) 
\,.
\end{equation}
\end{itemize}
Then there exists~$C(d)<\infty$ such that, for every~$\gamma \in (0,\sigma \wedge 1)$, the ellipticity condition~\ref{a.ellipticity.dagger} is satisfied with the parameters
\begin{equation*}
\bfE = 
\begin{pmatrix} 
2(\lambda + C\lambda^{-1} K_0^2\sigma^{-2} )\Id
& 0
\\ 0
& 2\lambda^{-1} \Id
\end{pmatrix}
\quad \mbox{and} \quad
\Psi_\S(t)  = (\sigma-\gamma) \exp \bigl( C^{-1}t^\gamma -C \gamma^{-1} | \log \gamma| \bigr) 
\,.
\end{equation*}
\end{proposition}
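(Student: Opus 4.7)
The plan is to reduce the ellipticity bound for $\bfA(z+\cu_n)$ to a pointwise (realization-wise) bound on $\fint_{z+\cu_n}|\k|^2$, and then to exploit the finite-range decomposition $\k = \sum_{j} \k_j$ to establish concentration with stretched-exponential integrability. Because the symmetric part of $\a$ is the constant scalar matrix $\lambda \Id$, the basic coarse-graining bound \eqref{e.integral.ub} specializes to
\begin{equation*}
\bfA(z+\cu_n) \leq 2\fint_{z+\cu_n} \begin{pmatrix} (\lambda + \lambda^{-1}|\k(x)|^2)\Id & 0 \\ 0 & \lambda^{-1}\Id \end{pmatrix} dx,
\end{equation*}
so the sought inequality \eqref{e.ellipticity} with the stated $\bfE$ follows once we show, on $\{3^m \ge \S\}$ and uniformly in $n \leq m$ and $z \in 3^n\Zd \cap \cu_m$,
\begin{equation*}
\fint_{z+\cu_n}|\k(x)|^2\,dx \leq CK_0^2\sigma^{-2}\cdot 3^{\gamma(m-n)}.
\end{equation*}
The triangle inequality for $\Gamma_2$-quasinorms (a direct consequence of \eqref{e.Psi.triangle} applied to $\Psi = \Gamma_2$) applied to the hypothesis $|\k_j(x)| \leq \O_{\Gamma_2}(K_0 3^{-\sigma j})$ immediately gives $|\k(x)| \leq \O_{\Gamma_2}(CK_0\sigma^{-1})$, and in particular $\E[|\k(x)|^2] \leq CK_0^2\sigma^{-2}$; the displayed inequality is therefore a concentration-around-the-mean statement, and its failure defines the tail event against which we will construct $\S$.

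For the concentration, I would use the weighted Cauchy--Schwarz bound $|\k(x)|^2 \leq C\sigma^{-1}\sum_{j} 3^{\sigma j}|\k_j(x)|^2$, obtained by splitting $\sum_j |\k_j| = \sum_j 3^{-\sigma j/2}\cdot 3^{\sigma j/2}|\k_j|$, and then estimate each $\fint_{z+\cu_n}|\k_j|^2$ separately. Since $\k_j$ has range of dependence at most $3^j$, for $n \geq j$ the average over $z+\cu_n$ can be written as a sum of $3^{d(n-j)}$ approximately independent, mean-zero, $\Gamma_1$-bounded sub-cube contributions. Applying Lemma~\ref{l.exp.concentration} (with $\sigma=1$, the appropriate case for $\Gamma_1$-variables) yields
\begin{equation*}
\fint_{z+\cu_n}|\k_j|^2 \leq \E[|\k_j(0)|^2] + \O_{\Gamma_1}\bigl(CK_0^2 3^{-2\sigma j}\cdot 3^{-d(n-j)/2}\bigr),
\end{equation*}
while for $n<j$ the crude pointwise bound $\fint_{z+\cu_n}|\k_j|^2 \leq \O_{\Gamma_1}(CK_0^2 3^{-2\sigma j})$ suffices. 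Multiplying by $3^{\sigma j}$ and summing over $j \in \N$ produces a convergent geometric series (with effective rate $\sigma \wedge (d/2-\sigma)$), giving a deviation of the form $\fint_{z+\cu_n}|\k|^2 \leq CK_0^2\sigma^{-2} + \O_{\Gamma_1}(CK_0^2\sigma^{-2}\cdot 3^{-\alpha n})$ for some $\alpha(d,\sigma,\gamma) > 0$. A union bound over $z \in 3^n\Zd \cap \cu_m$ (whose polynomial multiplicity $3^{d(m-n)}$ is absorbed by the $\Gamma_1$-tail via \eqref{e.eat.poly.factors}), followed by a geometric summation over $n \in \Z \cap (-\infty,m]$ with rate $\sigma - \gamma$, then defines the minimal scale $\S$ and yields the claimed stretched-exponential integrability $\Psi_\S(t) = (\sigma-\gamma)\exp(C^{-1}t^\gamma - C\gamma^{-1}|\log\gamma|)$.

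The main obstacle is to balance three competing constraints: the Cauchy--Schwarz weight exponent must be at least $\sigma$ to recover the $\sigma^{-2}$ prefactor of $\bfE$ but strictly less than $2\sigma$ for the scale-sum to converge; the sub-exponential ($\Gamma_1$) tail must overwhelm the polynomial union-bound multiplicities uniformly in $(m,n,z)$; and the excess factor $3^{\gamma(m-n)}$ must simultaneously cover the concentration error and the accumulated union-bound loss. The restriction $\gamma \in (0, \sigma \wedge 1)$ is precisely what makes all three constraints jointly satisfiable: the upper bound $\gamma < \sigma$ keeps the geometric series over scales convergent (and produces the $(\sigma-\gamma)$ prefactor in $\Psi_\S$, diverging as $\gamma \uparrow \sigma$), while $\gamma < 1$ controls the interior regularity balance needed to handle very small cubes using the deterministic $L^\infty$ estimate $\|\k\|_{L^\infty(\cu_m)} = \O_{\Gamma_2}(CK_0\sigma^{-1}\sqrt{\log 3^m})$ (from \eqref{e.maxy.bound}), which accounts for the $-C\gamma^{-1}|\log\gamma|$ offset in the exponent of $\Psi_\S$ at the threshold between the concentration and the pointwise regime.
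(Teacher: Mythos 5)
Your proposal is correct and follows essentially the same route as the paper's proof: reduce to bounding $\fint_{z+\cu_n}|\k|^2$ via \eqref{e.integral.ub}, split $\k=\sum_j\k_j$ with a $3^{\sigma j}$-weighted Cauchy--Schwarz, apply Lemma~\ref{l.exp.concentration} to the finite-range pieces on scales $3^n\geq 3^j$ and a crude $L^\infty$/$\Gamma_1$ bound for $j>n$, then Markov and union bounds over $z$ and $n$ with the excess $3^{\gamma(m-n)}$ and $\gamma<\sigma$ to define $\S$. The only (inessential) deviations are bookkeeping: the paper treats the tail $\sum_{j>n}\k_j$ in $L^\infty$ before squaring and obtains the $\exp(C\gamma^{-1}|\log\gamma|)$ loss from absorbing the polynomial multiplicities $3^{d(m-n)}$ into the stretched-exponential tails across scales, rather than from a $\sqrt{\log}$-type sup bound on small cubes.
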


In view of the discussion in Sections~\ref{ss.dagger} and~\ref{ss.FGF.decomp}, Proposition~\ref{p.example.advdiff.gen} implies Proposition~\ref{p.example.advdiff} since the assumptions of the latter imply those of the former with~$K_0 = C(d) \sigma^{-\nicefrac12}$ in~\eqref{e.kj.bound.ass}.

\begin{proof}[{Proof of Proposition~\ref{p.example.advdiff.gen}}]
Fix~$\gamma\in (0,1)$ and~$\rho \in (0,\alpha)$ with~$\gamma < 2\rho$. 
Let~$j,n,m\in\N$ with~$j\leq n \leq m$.
By~\eqref{e.kj.bound.ass} and Lemma~\ref{l.exp.concentration}, there exists a constant~$C(d)<\infty$ such that, for every~$z\in\Rd$,   
\begin{align*}
\| \k_j \|_{\underline{L}^2(z+\cu_n)}^2
&
=
\avsum_{z'\in z+3^j\Zd \cap \cu_n}
\| \k_j \|_{\underline{L}^2(z'+\cu_n)}^2
\\ \notag & 
=  
\E\bigl[ \| \k_j \|_{\underline{L}^2(\cu_n)}^2 \bigr]
+
\avsum_{z'\in z+3^j\Zd \cap \cu_n}
\bigl(
\| \k_j \|_{\underline{L}^2(z'+\cu_n)}^2
-
\E\bigl[ \| \k_j \|_{\underline{L}^2(\cu_n)}^2 \bigr]
\bigr)
\\ \notag &
\leq 
C K_0^2 3^{-2\sigma j} 
+
\O_{\Gamma_{1}}\bigl( C K_0^2 3^{-2\sigma j} 3^{-\frac d2(n-j)}
\bigr) \,.
\end{align*}
By the Cauchy-Schwarz inequality, we have that 
\begin{equation*}
\| \k_1+\ldots+\k_n \|_{\underline{L}^2(\cu_n)}^2
\leq 
\frac{C}{\sigma} 
\sum_{j=0}^n 
3^{\sigma j} 
\| \k_j \|_{\underline{L}^2(\cu_n)}^2\,.
\end{equation*}
Combining these, we obtain
\begin{align*}
\| \k_1+\ldots+\k_n \|_{\underline{L}^2(\cu_n)}^2
&
\leq 
\frac{CK_0^2}{\sigma} 
\sum_{j=0}^n
C 3^{-\sigma j} 
+
\O_{\Gamma_{1}}\biggl( 
\frac{CK_0^2}{\sigma} 
\sum_{j=0}^n
3^{-\sigma j} 3^{-\frac d2(n-j)}
\biggr) 
\notag \\ & 
\leq 
\frac{CK_0^2}{\sigma^2}
+
\O_{\Gamma_{1}}\biggl( 
\frac{CK_0^23^{-\sigma n}}{\sigma} 
\biggr)
\,.
\end{align*}
On the other hand, by~\eqref{e.kj.bound.ass} and the generalized triangle inequality, 
\begin{equation*}
\biggl\| \sum_{j=n+1}^\infty \k_j \biggr\|_{\underline{L}^2(\cu_n)}
\leq 
\sum_{j=n+1}^\infty \| \k_j \|_{L^\infty(\cu_{j})} 
\leq 
\O_{\Gamma_2} 
\biggl( \frac{CK_0}{\sigma} 3^{-n\sigma} \biggr) 
\,.
\end{equation*}
Combining the previous two displays yields, for every~$n\in\N$, 
\begin{equation*}
\| \k \|_{\underline{L}^2(\cu_n)}^2
\leq 
\frac{CK_0^2}{\sigma^2}
+
\O_{\Gamma_{1}}\biggl( 
\frac{CK_0^23^{-\sigma n}}{\sigma} 
\biggr)
\,.
\end{equation*}
For~$n\in\Z$ with~$n<0$, the assumption~\eqref{e.kj.bound.ass} already gives a better bound; combining this with the above estimate yields, for every~$n\in\Z$,
\begin{equation*}
\| \k \|_{\underline{L}^2(\cu_n)}^2
\leq 
\frac{CK_0^2}{\sigma^2}
+
\O_{\Gamma_{1}}\biggl( 
\frac{CK_0^23^{-\sigma (n\vee 0)}}{\sigma} 
\biggr)
\,.
\end{equation*}
By the Markov inequality and a union bound, we deduce, for every~$m,n\in\N$ with~$n\leq m$,
\begin{align*}
\P \biggl[
\max_{z\in 3^n\Zd \cap \cu_m} 
\| \k \|_{\underline{L}^2(z+\cu_n)}^2
>
\frac{CK_0^2}{\sigma^2} 3^{\gamma(m-n)}
\biggr]
&
\leq 
\sum_{z\in 3^n\Zd \cap \cu_m}
\P \biggl[
\| \k \|_{\underline{L}^2(z+\cu_n)}^2
>
\frac{CK_0^2}{\sigma^2} 3^{\gamma(m-n)}
\biggr]
\notag \\ & 
\leq 
3^{d(m-n)}
\P \biggl[
\| \k \|_{\underline{L}^2(\cu_n)}^2
>
\frac{CK_0^2}{\sigma^2} 3^{\gamma(m-n)}
\biggr]
\notag \\ & 
\leq 
3^{d(m-n)} \exp \Bigl( - c \sigma^{-1} 3^{\gamma(m-n) + \sigma (n \vee 0)} \Bigr)
\,. 
\end{align*}
By another union bound, we obtain, for every~$m\in\N$,
\begin{align*}
\lefteqn{
\P \biggl[ \exists n \in \Z \cap (-\infty,  m] , \ 
\max_{z\in 3^n\Zd \cap \cu_m} 
\| \k \|_{\underline{L}^2(z+\cu_n)}^2
>
\frac{CK_0^2}{\sigma^2} 3^{\gamma(m-n)}
\biggr]
} \qquad & 
\notag \\  &
\leq 
\sum_{n=-\infty}^m
 \P \biggl[
\max_{z\in 3^n\Zd \cap \cu_m} 
\| \k \|_{\underline{L}^2(z+\cu_n)}^2
>
\frac{CK_0^2}{\sigma^2} 3^{\gamma(m-n)}
\biggr]
\notag \\  &
\leq 
\sum_{n=-\infty}^m
3^{d(m-n)} \exp \Bigl( - c \sigma^{-1} 3^{\gamma(m-n) + \sigma (n \vee 0)} \Bigr)
\leq
\frac{1}{\sigma-\gamma}
\exp\biggl( 
\frac{C\left|\log \gamma\right|}{\gamma}
\biggr)
\exp\bigl( 
-c 3^{\gamma m}
\bigr)
\,.
\end{align*}
We next define
\begin{equation*}
\S  \coloneqq  \sup \biggl\{ 
3^{m+1} \,:\, m\in\N\,, \ 
\exists n \in \Z \cap (-\infty,  m] , \ 
\max_{z\in 3^n\Zd \cap \cu_m} 
\| \k \|_{\underline{L}^2(z+\cu_n)}^2
>
\frac{CK_0^2}{\sigma^2} 3^{\gamma(m-n)}
\biggr\} \,.
\end{equation*}
The previous display says that~$\S$ has the integrability claimed in the statement. Its definition implies that~\eqref{e.ellipticity.bfE} holds for~$\bfE$ given in the statement of the proposition since by the definition of~$\S$ we have 
\begin{align*}
3^m \geq \S 
& \implies
\max_{n\in\Z\cap (-\infty , m]}  
\max_{z\in 3^n\Zd \cap \cu_m} 
\| \k \|_{\underline{L}^2(z+\cu_n)}^2
\leq
\frac{CK_0^2}{\sigma^2} 3^{\gamma(m-n)}
\notag \\ &  \implies
\left\{
\begin{aligned}
&\bfA(z+\cu_n)
\leq 
\begin{pmatrix} 
\bigl(2\lambda + C\lambda^{-1}K_0^2\sigma^{-2} 3^{\gamma(m-n)}\bigr)\Id
& 0
\\ 0
& 2\lambda^{-1}\Id
\end{pmatrix}\,,
\\ 
& \forall 
n\in\Z\cap (-\infty , m]\,, \ z\in 3^n\Zd \cap \cu_m\,.
\end{aligned}
\right.
\end{align*}
This completes the proof. 
\end{proof}

\subsection{Log-normal fields}
\label{ss.log-normal}

In this section, we prove Proposition~\ref{p.example.log-normal}. We first consider the case of a log-normal random field with a finite range of dependence. Notice that we make no symmetry assumption on~$\g$, nor do we assume that the symmetric part of~$\g$ is nonnegative. (The symmetric part of~$\a(\cdot)$ will however be positive.)

\begin{proposition}[Log-normal field with finite range of dependence]
\label{p.log-normal.frd}
Suppose~$\a(\cdot)$ is given by
\begin{equation*}
\a(x) = \exp( \g(x)) \,, 
\end{equation*}
where~$L\geq 1$, $h>0$ and~$\g(\cdot)$ is a random field valued in the~$\R^{d\times d}$ matrices satisfying
\begin{equation}
\label{e.assump.g.FRD}
\left\{
\begin{aligned}
& 
\text{$\g$ is~$\Zd$--stationary,} \\ &
\text{$\g$ has range of dependence at most~$L$,} \\ & 
\text{$\| \g \|_{L^\infty(\cu_0)} \leq \O_{\Gamma_2} (h)$.}  
\end{aligned}
\right.
\end{equation}
Then there exists~$C(d)<\infty$ such that, for every~$\gamma \in (0,1)$, the field~$\a(\cdot)$ satisfies assumption~\ref{a.ellipticity.dagger} with parameters~$\gamma$, 
\begin{equation*}
\bfE = C \exp( 18 h^2 ) \Itwod ,
\end{equation*}
and minimal scale~$\S$ satisfying 
\begin{equation*}
\P \bigl[ \S > L t \bigr] 
\leq 
\exp \bigl( Ch^2 \gamma^{-2} - ch^{-2} \log^2 t \bigr) \,.
\end{equation*}
\end{proposition}

The assumption~\eqref{e.assump.g.FRD} is satisfied, for example, if~$\g(\cdot)$ is a~$\R^{d\times d}$-valued stationary Gaussian field with a compactly supported covariance function.
Another example is if~$\g(\cdot)$ is given by the convolution of a bounded, deterministic and compactly supported~$\R^{d\times d}$--valued function on~$\Rd$ and a Poisson point process on~$\Rd$. 
Note that, in both of these cases, the distributions of~$\| \a \|_{L^\infty(\cu_0)}$ and~$\| \a^{-1} \|_{L^\infty(\cu_0)}$ have tails which are as fat as those of a log-normal random variable. More generally, under the assumption~\eqref{e.assump.g.FRD}, we have
\begin{equation*}
\P \bigl[ \| \a \|_{L^\infty(\cu_0)} > t \bigr] 
\leq 
\exp \biggl( - \frac12 \Bigl( \frac{\log t}{h} \Bigr)^{\!2}  \biggr)
\,,\quad \forall t >0\,,
\end{equation*}
with the same bound holding also for~$\a^{-1}$ in place of~$\a$.

\smallskip

It is clear that the field~$\a(\cdot)$ satisfies the stationarity assumption~\ref{a.stationarity}. The validity of~\ref{a.CFS} with~$\beta = 0$ and~$\Psi = \Gamma_2(\cdot/CL)$  for some~$C(d)<\infty$ follows from the finite range of dependence assumption; see~\cite[Section 3.2.1]{AK.Book}.

\begin{proof}[{Proof of Proposition~\ref{p.log-normal.frd}}]
We will assume without loss of generality that~$L=1$.
Let us decompose~$\g$ into pieces by writing
\begin{equation*}
\g = \sum_{j=0}^\infty \g_j 
\,, \quad \mbox{where} \quad 
\g_0(x)  \coloneqq  \g(x) \indc_{ | \g(x)| <1 \} }
\quad \mbox{and} \quad
\g_j(x)  \coloneqq  \g(x) \indc_{ \{ 2^{j-1} \leq | \g(x)| <2^j \} }
\,, \quad \forall j \geq 1\,.
\end{equation*}
Note that for each~$x$, exactly one of the~$\g_j(x)=\g(x)$ and the rest are zero. 
For every~$m\in\N$ and~$\lambda\geq 1$,  
\begin{equation*}
\fint_{\cu_m}
|\a(x)|^{\lambda/h}\,dx
=
\fint_{\cu_m} 
\bigl| \exp( \lambda h^{-1}  \g(x) ) \bigr|
\,dx 
=
1 + \sum_{j=0}^\infty
\fint_{\cu_m} \bigl( |\exp( \lambda h^{-1}  \g_j(x) )| -1 \bigr) \,dx
\,.
\end{equation*}
Let~$N_{m,j}$ denote the number of distinct~$z \in \Zd \cap \cu_m$ such that~$\g_j$ does not vanish in~$z+\cu_0$. The distribution of~$N_{m,j}$ is essentially a binomial with parameters~$3^{dm}$ (the number of unit cubes) and~$p_j$, the probability of~$\g_j \not\equiv 0$ in~$\cu_0$, which satisfies the upper bound
\begin{equation*}
p_j = \P \bigl[ \g_j \not\equiv 0 \ \mbox{in} \ \cu_0 \bigr] 
\leq 
\P \bigl[ h^{-1} \| \g \|_{L^\infty(\cu_0)} \geq 2^{j-1} \bigr] 
\leq 
\exp \bigl( - 2^{2(j-1)} \bigr)\,.
\end{equation*}
This is not quite right since neighboring unit cubes are not independent; it would be more accurate to say that~$N_{m,j}$ is bounded by~$3^d$ many identically distributed copies of a binomial distribution with parameters~$3^{(d-1)m}$ and~$p_j$. To see this, we partition the collection of unit cubes in~$\cu_m$ into~$3^d$ different subcollections, each of which contains cubes that are separated by a distance of at least one.  
From standard tail estimates on the binomial distribution (or just apply Hoeffding's inequality), we have the bound
\begin{equation*}
\P \bigl[ N_{m,j} \geq 3^{md} \exp \bigl( - 2^{2(j-1)} \bigr) + 3^d t \bigr]  
\leq 
\exp( - 2\cdot 3^{-(d-1)m} t^2  ) 
\,.
\end{equation*}
That is, 
\begin{equation}
\label{e.Nmj.Hoeffding}
N_{m,j} \leq 3^{md} \exp \bigl( - 2^{2(j-1)} \bigr)
+
\O_{\Gamma_2} \bigl( C 3^{\frac d2 m}\bigr)
\,.
\end{equation}
Alternatively, when~$j$ is relatively large, it is better to use a crude estimate obtained from a simple union bound, 
\begin{equation}
\label{e.Nmj.do.or.die}
\P \bigl[ N_{m,j} \neq 0 \bigr] 
\leq 3^{md} \P \bigl[ h^{-1} \| \g \|_{L^\infty(\cu_0)} > 2^{j-1} \bigr] 
\leq 
3^{md} \exp \bigl( - 2^{2(j-1)} \bigr)\,.
\end{equation}
Applying~\eqref{e.Nmj.Hoeffding} yields the estimate
\begin{align*}
\fint_{\cu_m} \exp \bigl( \lambda h^{-1}  \g_j(x) \bigr) 
\,dx
&
\leq
1+
\frac{N_{m,j}}{|\cu_m|} \exp \bigl( \lambda 2^j \bigr)
\notag \\ & 
\leq
1+
\exp \bigl( \lambda 2^j - 2^{2(j-1)} \bigr)
+
\O_{\Gamma_2} \bigl( C 3^{-\frac d2 m} \exp \bigl( \lambda 2^j \bigr)
\bigr)
\,.
\end{align*}
This inequality will be used for small values of~$j$, namely those satisfying~$\exp(\lambda 2^j) \leq t 3^{\delta m}$ for parameters~$\delta\in (0,\tfrac12]$ and~$t\geq 1$, to be selected.
Summing over this range of~$j$'s, we get 
\begin{align*}
\lefteqn{
\sum_{j \in\N \,:\, \exp(\lambda 2^j) \leq t3^{\delta m}}
\fint_{\cu_m}
\bigl( |\exp( \lambda h^{-1} \g_j(x) )| - 1 \bigr)
\,dx
} \qquad & 
\notag \\ &
\leq
\sum_{j=0}^\infty\exp \bigl( \lambda 2^j - 2^{2(j-1)} \bigr)
+
\sum_{j \in\N\,:\, \exp(\lambda 2^j) \leq t3^{\delta m}}
\O_{\Gamma_2} \bigl( C 3^{-\frac d2 m} \exp \bigl( \lambda 2^j \bigr)
\bigr)
\notag \\ & 
\leq
\sum_{j=0}^\infty
\exp \bigl( 2\lambda^2 + 2^{2j-3} - 2^{2j-2} \bigr)
+
\O_{\Gamma_2} \bigl( Ct 3^{-(\frac d2-\delta) m} \bigr)
\notag \\ & 
\leq
C \exp ( 2\lambda^{2})
+
\O_{\Gamma_2} \bigl( Ct 3^{-(\frac d2-\delta) m} \bigr)
\,.
\end{align*}
For~$j$'s larger than this, we simply hope that~$N_{m,j} = 0$, or else we give up. 
Using~\eqref{e.Nmj.do.or.die}, we have  
\begin{align*}
\P \biggl[ \exists j \in \N\,, \ \exp(\lambda 2^j) > t3^{\delta m}\,, \
N_{m,j} \neq 0
\biggr]
&
\leq 
\sum_{j\in\N \,:\, \exp(\lambda 2^j) > t 3^{\delta m}}
3^{md} \exp \bigl( - 2^{2 j} \bigr)
\notag  \\ &
\leq
C3^{md} 
\exp \bigl( - \bigl( \log 3^{\frac{\delta m}{\lambda}} \bigr)^2  \bigr)
\notag \\ &
\leq
\exp \bigl( - \bigl( \lambda^{-1} \log ct3^{\delta m} \bigr)^2  \bigr)
\,.
\end{align*}
We deduce that 
\begin{equation*}
\P \biggl[
\fint_{\cu_m} 
\bigl|\exp( \lambda h^{-1} \g(x) ) \bigr|
\,dx 
\geq 
C \exp ( 2\lambda^{2})
+ t
\biggr]
\leq
\exp \bigl( -c 3^{(d-2\delta) m}t \bigr)
+
\exp \bigl( - \bigl( \lambda^{-1} \log ct3^{\delta m} \bigr)^2  \bigr)
\,.
\end{equation*}
Since~$t\geq 1$, the second term on the right is larger than the first. Taking~$\delta=\nicefrac12$, we obtain, for constants~$C(d)<\infty$ and~$c(d)>0$, 
\begin{equation*}
\P \biggl[
\fint_{\cu_m} 
\bigl|\exp( \lambda h^{-1} \g(x) )\bigr|
\,dx 
\geq 
C \exp ( 2\lambda^{2})
+ t
\biggr]
\leq
C\exp \bigl( - c\lambda^{-2}\bigl( \log t3^{m} \bigr)^2  \bigr)
\,.
\end{equation*}
Fix now a parameter~$\gamma \in (0,1]$. 
By performing a union bound over a mesoscale represented by~$n\in\N$, $n\leq m$, we obtain
\begin{align*}
\lefteqn{
\P \biggl[\sup_{z\in 3^n\Zd\cap\cu_m}\fint_{z+\cu_n} \bigl|\exp( \lambda h^{-1} \g(x) )\bigr|
\,dx \geq  C3^{\gamma(m-n)} \exp ( 2\lambda^{2})  \biggr]
} \qquad\qquad &
\notag \\ & 
\leq
3^{d(m-n)}
\P \biggl[
\fint_{\cu_n} 
\bigl|\exp( \lambda h^{-1} \g(x) )\bigr|
\,dx 
\geq 
C 3^{\gamma(m-n)}\exp ( 4\lambda^{2})
\biggr]
\notag \\ &
\leq
C3^{d(m-n)}\exp \bigl( - c\lambda^{-2}\bigl( \log 3^{\gamma(m-n)} + \log 3^{m} \bigr)^2  \bigr)
\notag \\ &
\leq 
C3^{d(m-n)}\exp \bigl( - c\lambda^{-2}\bigl( \log 3^{\gamma(m-n)} \bigr)^2 \bigr) 
\exp( - c\lambda^{-2} \bigl( \log 3^{m} \bigr)^2  \bigr)
\,.
\end{align*}
Taking another union bound, we obtain
\begin{align*}
\lefteqn{
\P \biggl[ \sup_{n \in \{0,\ldots,m\} }\sup_{z\in 3^n\Zd\cap\cu_m}\fint_{z+\cu_n} \bigl|\exp( \lambda h^{-1} \g(x) )\bigr|
\,dx \geq  C3^{\gamma(m-n)} \exp ( 2\lambda^{2})  \biggr]
} \qquad\qquad\qquad &
\notag \\ &
\leq 
C\sum_{n=0}^m 
\biggl( 
3^{d(m-n)}\exp \bigl( - c\lambda^{-2}\bigl( \log 3^{\gamma(m-n)} \bigr)^2 \bigr) 
\biggr)
\exp( - c\lambda^{-2} \bigl( \log 3^{m} \bigr)^2  \bigr)
\notag \\ &
\leq 
C \exp \bigl( C(\lambda \gamma^{-1})^{2}\bigr)
\exp( - c\lambda^{-2} ( \log 3^{m} )^2  \bigr)
\,.
\end{align*}
Taking yet another union bound, we get, for every~$k\in\N$, 
\begin{align}
\label{e.kill.S}
\lefteqn{
\P \biggl[ \sup_{ m \geq k } \sup_{n \in \{0,\ldots,m\} }3^{-\gamma(m-n)} \sup_{z\in 3^n\Zd\cap\cu_m}\fint_{z+\cu_n} \bigl|\exp( \lambda h^{-1}  \g(x) )\bigr|
\,dx \geq  C\exp ( 2\lambda^{2})  \biggr]
} \qquad\qquad\qquad\qquad\qquad\qquad\qquad\qquad &
\notag \\ &
\leq 
\exp \bigl( C (\lambda \gamma^{-1})^{2}\bigr)\sum_{m=k}^\infty
\exp( - c\lambda^{-2} \bigl( \log 3^{m} \bigr)^2  \bigr)
\notag \\ &
\leq
\exp \bigl( C(\lambda \gamma^{-1})^{2}\bigr)
\exp( - c\lambda^{-2} \bigl( \log 3^{k} \bigr)^2  \bigr)
\,.
\end{align}
For the cubes smaller than the unit cubes, we use the bound 
\begin{align*}
\P \bigl[ \| \exp ( \lambda h^{-1}  \g) \|_{L^\infty(\cu_m)} >1+ t \bigr] 
&
\leq 
3^{dm} \P \bigl[ \| \exp ( \lambda h^{-1}  \g) \|_{L^\infty(\cu_0)} > 1+t \bigr] 
\notag \\ & 
\leq 
3^{dm} \exp ( - \lambda^{-2} \log^2 (1+  t ))\,.
\end{align*}
From this we obtain
\begin{align}
\label{e.kill.S.smallsuff}
\lefteqn{
\P \biggl[ \sup_{ m \geq k }\sup_{n \in -\N } 3^{-\gamma(m-n)}  \sup_{z\in 3^n\Zd\cap\cu_m}\fint_{z+\cu_n} \bigl|\exp( \lambda h^{-1} \g(x) )\bigr|
\,dx \geq 
C \biggr]
} \qquad\qquad\qquad\qquad\qquad\qquad\qquad\qquad &
\notag \\ &
\leq
\P \biggl[ \sup_{ m \geq k } 3^{-\gamma m}  \bigl\|\exp( \lambda h^{-1} \g )\bigr\|_{L^\infty(\cu_m)} \geq 
C \biggr]
\notag \\ &
\leq 
\exp \bigl( C \lambda^{2}\bigr)\sum_{m=k}^\infty
\exp( - c\lambda^{-2} \bigl( \log 3^{\gamma m} \bigr)^2  \bigr)
\notag \\ &
\leq
\exp \bigl( C(\lambda \gamma^{-1})^{2}\bigr)
\exp( - c\lambda^{-2} ( \log 3^{k} )^2  \bigr)\,,
\end{align}
as above. 
If we define the minimal scale~$\S$ by
\begin{equation}
\label{e.Slambda.def}
\S_\lambda \coloneqq  \sup \biggl\{ 3^m \,:\, 
\sup_{n \in \Z \cap (-\infty,m] }\sup_{z\in 3^n\Zd\cap\cu_m}\fint_{z+\cu_n} \bigl|\exp(\lambda h^{-1} \g(x) )\bigr|
\,dx \geq C \exp ( 2\lambda^{2} )
 3^{\gamma(m-n)}  \biggr\}\,,
\end{equation}
then~\eqref{e.kill.S} and~\eqref{e.kill.S.smallsuff} imply that 
\begin{equation}
\label{e.S.integrability.FRD}
\P \bigl[ \S_\lambda > 3^k \bigr]
\leq 
\exp \bigl( C(\lambda \gamma^{-1})^{2}\bigr)
\exp( - c\lambda^{-2} \bigl( \log 3^{k} \bigr)^2  \bigr)
\,,
\end{equation}
and hence 
\begin{equation*}
k \geq C\lambda^{2} \gamma^{-2}
\implies
\P \bigl[ \S_\lambda > 3^k \bigr]
\leq 
\exp( - c\lambda^{-2} \bigl( \log 3^{k} \bigr)^2  \bigr)
\,.
\end{equation*}
In other words, 
\begin{equation*}
\S_\lambda 
=
\exp ( C \lambda^{2} \gamma^{-2} ) 
+ 
\O_{\Psi_{C\lambda}}(C\lambda)\,.
\end{equation*}
It is clear from its definition in~\eqref{e.Slambda.def} that  
\begin{equation*}
3^m \geq \S_\lambda \implies
\fint_{z+\cu_n}
|\a(x)|^{\lambda/h} \,dx
\leq 
C3^{\gamma(m-n)} \exp ( 2\lambda^2 )\,,
\qquad \forall n\in \Z \cap (-\infty,m] \,, \ z \in 3^n\Zd\cap \cu_m
\,.
\end{equation*}
The same argument gives a similar minimal scale for~$\a^{-1}$ in place of~$\a$, so by taking the maximum of these, we may suppose that~$\S$ satisfies~\eqref{e.S.integrability.FRD} and 
\begin{equation}
\label{e.S.for.cubes}
3^m \geq \S_\lambda \implies
\left\{
\begin{aligned}
& \fint_{z+\cu_n}
\bigl( |\a(x)|^{\lambda/h} + |\a^{-1}(x)|^{\lambda/h} \bigr) \,dx
\leq 
C3^{\gamma(m-n)} \exp ( 2\lambda^{2} )
\,,
\\ &
\qquad\qquad\qquad\qquad\qquad
\forall n\in \Z \cap (-\infty,m] \,, \ z \in 3^n\Zd\cap \cu_m
\,.
\end{aligned}
\right.
\end{equation}
Since~$|\bfA(x)| \leq (1+|\a(x)|^2) |\a^{-1}(x)|$, we deduce from~\eqref{e.S.for.cubes} and H\"older's inequality that 
\begin{align*}
3^m \geq \S_{3h} 
&
\implies
\fint_{z+\cu_n}
|\bfA(x)| \,dx
\leq 
C 3^{\gamma(m-n)}\exp \bigl(  18h^2 \bigr)
\,,
\qquad \forall n\in \Z \cap (-\infty,m] \,, \ z \in 3^n\Zd\cap \cu_m
\notag \\ &
\implies 
\bfA(z+\cu_n)
\leq 
C 3^{\gamma(m-n)}\exp \bigl( 18h^2 \bigr)
\Itwod\,,
\qquad \forall n\in \Z \cap (-\infty,m] \,, \ z \in 3^n\Zd\cap \cu_m
\,.
\end{align*}
Note that scales below the unit scale can be taken care of immediately from the third line of~\eqref{e.assump.g.FRD} and a union bound. 
This completes the proof.
\end{proof}

We turn to the proof of Proposition~\ref{p.example.log-normal}. We will prove the following more general statement.\footnote{While Proposition~\ref{p.log-normal.gen} is mostly about checking the ellipticity assumption~\ref{a.ellipticity.dagger}, it also implies that~\ref{a.CFS} is satisfied with~$\beta = 1-\frac{2\sigma}{d}$ and~$\Psi(t)=\Gamma_2(c (\tfrac d2-\sigma) t)$, see~\cite[Chapter 3]{AK.Book}.}

\begin{proposition}
\label{p.log-normal.gen}
Suppose that~$\a(\cdot)$ is given by~$\a(x) = \exp( \g(x))$ where~$\g(\cdot)$ is an~$\R^{d\times d}$--valued random field which admits the decomposition
\begin{equation*}
\g(x) = \sum_{j=0}^\infty \g_j(x)\,,
\end{equation*}
where the sequence~$\{ \g_j \}_{j\in\N}$ satisfies the following:
\begin{itemize}

\item For each~$j\in\N$, the field~$\g_j$ is a~$\Zd$--stationary random field valued in the~$d$-by-$d$ matrices;

\item For each~$j\in\N$, the range of dependence of~$\g_j$ is at most~$3^j$; 

\item There exists~$h \in (0,\infty)$ and~$\sigma \in(0,\nicefrac d2)$ such that, for each~$j\in\N$,
\begin{equation*}
3^j \| \nabla \g_j \|_{L^\infty(\cu_j)}
+
\| \g_j \|_{L^\infty( \cu_j)} 
\leq 
\O_{\Gamma_2} 
\bigl( h 3^{-\sigma j} \bigr) 
\,.
\end{equation*}
\end{itemize}
Then there exists~$C(d)<\infty$ such that, for every~$\gamma \in (0,1)$, the field~$\a(\cdot)$ satisfies assumption~\ref{a.ellipticity.dagger} with parameters~$\gamma$, 
\begin{equation*}
\bfE = \exp\bigl( C h^2  \sigma^{-2} \bigr) \Itwod \,,
\end{equation*}
and minimal scale~$\S$ satisfying 
\begin{equation*}
\P \bigl[ \S > t \bigr] 
\leq 
\exp \bigl( Ch^2 \sigma^{-2} \gamma^{-2} - ch^{-2} \sigma^{2} \log^2 t  \bigr) \,.
\end{equation*}
\end{proposition}
\begin{proof}
Here, we approximate by finite range fields and apply the previous result. Denote, for each~$k\in\N$, 
\begin{equation*}
\hat{\g}_k \coloneqq \sum_{j=0}^k \g_j 
\end{equation*}
and observe that, for each~$k,m\in\N$ with~$k\leq m$,
\begin{equation*}
\| \g - \hat{\g}_k \|_{L^\infty(\cu_k)}
\leq \O_{\Gamma_2} \bigl( Ch \sigma^{-1} 3^{-\sigma k} \bigr)
\end{equation*}
and hence 
\begin{equation*}
\P \bigl[ \| \g - \hat{\g}_k \|_{L^\infty(\cu_m)} > t \bigr] 
\leq 
3^{d(m-k)} 
\exp \bigl( - ch^{-2} \sigma^2 3^{2\sigma k} t^2 \bigr) \,.
\end{equation*}
Taking~$k = \lceil \nicefrac m2 \rceil$ yields, for every~$t \geq C h\sigma^{-2}$, 
\begin{equation}
\label{e.highrese}
\P \bigl[ \| \g - \hat{\g}_{\lceil \nicefrac m2 \rceil}  \|_{L^\infty(\cu_m)} > t \bigr] 
\leq 
\exp \bigl( - ch^{-2} \sigma^2 3^{\sigma m} t^2 \bigr) \,.
\end{equation}
Applying the above result for finite range fields to~$\hat{\a}_{\lceil \nicefrac m2 \rceil} \coloneqq  \exp( h \hat{\g}_{\lceil \nicefrac m2 \rceil})$, using that 
\begin{equation*}
\hat{\g}_{\lceil \nicefrac m2 \rceil} = \O_{\Gamma_2} (Ch\sigma^{-1}) 
\end{equation*}
gives, for every~$m\geq C h^2 \sigma^{-2} \gamma^{-2}$,  
\begin{multline}
\label{e.lowrese}
\P \biggl[
\exists n\in \Z \cap (-\infty,m] \,, \ z \in 3^n\Zd\cap \cu_m\,, \ 
\fint_{z+\cu_n}
|\hat\bfA_{\lceil \nicefrac m2 \rceil}(x)| \,dx
\geq 
3^{\gamma(m-n)}\exp \bigl(  Ch^2 \sigma^{-2} h^2 \bigr)
\biggr] 
\\
\leq 
\exp\bigl( - c h^{-2} \sigma^{2} \bigl( \log 3^{m} \bigr)^2  \bigr)
\,.
\end{multline}
Combining~\eqref{e.highrese} and~\eqref{e.lowrese} yields, for every~$m\in\N$ with~$m\geq C h^2 \sigma^{-2} \gamma^{-2}$, 
\begin{multline*}
\P \biggl[
\exists n\in \Z \cap (-\infty,m] \,, \ z \in 3^n\Zd\cap \cu_m\,, \ 
\fint_{z+\cu_n}
|\bfA (x)| \,dx
\geq 
3^{\gamma(m-n)}\exp \bigl(  Ch^2  \sigma^{-2} \bigr)
\biggr] 
\\
\leq 
\exp\bigl( - c h^{-2} \sigma^{2} \bigl( \log 3^{m} \bigr)^2  \bigr)
\,.
\end{multline*}
This completes the proof. 
\end{proof}

\subsubsection*{\bf Acknowledgments}
S.A. was supported by NSF grants DMS-1954357 and DMS-2000200 and by the Simons Programme at IHES.
T.K. was supported by the Academy of Finland and by the European Research Council (ERC) under the European Union's Horizon 2020 research and innovation programme (grant agreement No 818437). We thank Ahmed Bou-Rabee, Aidan Lau and Heikki Lohi for carefully reading a previous version of this manuscript and correcting some typos and imprecisions.

{\small
\bibliographystyle{alpha}
\bibliography{highcontrast}

\newcommand{\noop}[1]{} \def\cprime{$'$}
\begin{thebibliography}{BMdlP23}

\bibitem[ABRK24]{ABK.SD}
S.~Armstrong, A.~Bou-Rabee, and T.~Kuusi.
\newblock Superdiffusive central limit theorem for a {B}rownian particle in a
  critically-correlated incompressible random drift, 2024.
\newblock arXiv:2404.01115.

\bibitem[AD18]{AD}
S.~Armstrong and P.~Dario.
\newblock Elliptic regularity and quantitative homogenization on percolation
  clusters.
\newblock {\em Comm. Pure Appl. Math.}, 71(9):1717--1849, 2018.

\bibitem[AK24]{AK.Book}
S.~Armstrong and T.~Kuusi.
\newblock Elliptic homogenization from qualitative to quantitative, 2024.
\newblock arXiv:2210.06488v2.

\bibitem[AKM19]{AKMbook}
S.~Armstrong, T.~Kuusi, and J.-C. Mourrat.
\newblock {\em Quantitative stochastic homogenization and large-scale
  regularity}, volume 352 of {\em Grundlehren der mathematischen Wissenschaften
  [Fundamental Principles of Mathematical Sciences]}.
\newblock Springer, Cham, 2019.

\bibitem[AM16]{AM}
S.~N. Armstrong and J.-C. Mourrat.
\newblock Lipschitz regularity for elliptic equations with random coefficients.
\newblock {\em Arch. Ration. Mech. Anal.}, 219(1):255--348, 2016.

\bibitem[And78]{Ando}
T.~Ando.
\newblock {\em Topics on operator inequalities}.
\newblock Hokkaido University, Research Institute of Applied Electricity,
  Division of Applied Mathematics, Sapporo, 1978.

\bibitem[Art64]{Artin}
E.~Artin.
\newblock {\em The gamma function}.
\newblock Holt, Rinehart and Winston, New York-Toronto-London, 1964.

\bibitem[AS16]{AS}
S.~N. Armstrong and C.~K. Smart.
\newblock Quantitative stochastic homogenization of convex integral
  functionals.
\newblock {\em Ann. Sci. \'Ec. Norm. Sup\'er. (4)}, 49(2):423--481, 2016.

\bibitem[Aya23]{Aya}
T.~Aya.
\newblock Quantitative stochastic homogenization of elliptic equations with
  unbounded coefficients, 2023.
\newblock arXiv:2302.00822.

\bibitem[BF91]{Brezzi}
F.~Brezzi and M.~Fortin.
\newblock {\em Mixed and hybrid finite element methods}, volume~15 of {\em
  Springer Ser. Comput. Math.}
\newblock New York etc.: Springer-Verlag, 1991.

\bibitem[BK24]{BellaK}
P.~Bella and M.~Kniely.
\newblock Regularity of random elliptic operators with degenerate coefficients
  and applications to stochastic homogenization.
\newblock {\em Stoch. Partial Differ. Equ. Anal. Comput.}, 2024.
\newblock (in press).

\bibitem[BMdlP23]{BMP}
M.~Bakhshizadeh, A.~Maleki, and V.~H. de~la Pena.
\newblock Sharp concentration results for heavy-tailed distributions.
\newblock {\em Inf. Inference}, 12(3):Paper No. iaad011, 31, 2023.

\bibitem[CGQ24]{CGQ}
N.~Clozeau, A.~Gloria, and S.~Qi.
\newblock Quantitative homogenization for log-normal coefficients, 2024.
\newblock arXiv:2403.00168.

\bibitem[Dar21]{D}
P.~Dario.
\newblock Optimal corrector estimates on percolation cluster.
\newblock {\em Ann. Appl. Probab.}, 31(1):377--431, 2021.

\bibitem[DCKT21]{DKT}
H.~Duminil-Copin, G.~Kozma, and V.~Tassion.
\newblock Upper bounds on the percolation correlation length.
\newblock In {\em In and out of equilibrium 3. {C}elebrating {V}ladas
  {S}idoravicius}, volume~77 of {\em Progr. Probab.}, pages 347--369.
  Birkh\"{a}user/Springer, Cham, [2021] \copyright 2021.

\bibitem[DG21]{DG}
P.~Dario and C.~Gu.
\newblock Quantitative homogenization of the parabolic and elliptic {G}reen's
  functions on percolation clusters.
\newblock {\em Ann. Probab.}, 49(2):556--636, 2021.

\bibitem[DG22]{DGl}
M.~Duerinckx and A.~Gloria.
\newblock Quantitative homogenization theory for random suspensions in steady
  {S}tokes flow.
\newblock {\em J. \'{E}c. polytech. Math.}, 9:1183--1244, 2022.

\bibitem[DLV18]{DLV}
B.~Dyda, J.~Lehrb\"ack, and A.V. V\"ah\"akangas.
\newblock Fractional {H}ardy-{S}obolev type inequalities for half spaces and
  {J}ohn domains.
\newblock {\em Proc. Amer. Math. Soc.}, 146(8):3393--3402, 2018.

\bibitem[FP97]{FP}
M.~Fiedler and V.~Pt\'{a}k.
\newblock A new positive definite geometric mean of two positive definite
  matrices.
\newblock {\em Linear Algebra Appl.}, 251:1--20, 1997.

\bibitem[GNO20]{GNO.reg}
A.~Gloria, S.~Neukamm, and F.~Otto.
\newblock A regularity theory for random elliptic operators.
\newblock {\em Milan J. Math.}, 88(1):99--170, 2020.

\bibitem[GO11]{GO1}
A.~Gloria and F.~Otto.
\newblock An optimal variance estimate in stochastic homogenization of discrete
  elliptic equations.
\newblock {\em Ann. Probab.}, 39(3):779--856, 2011.

\bibitem[GO12]{GO2}
A.~Gloria and F.~Otto.
\newblock An optimal error estimate in stochastic homogenization of discrete
  elliptic equations.
\newblock {\em Ann. Appl. Probab.}, 22(1):1--28, 2012.

\bibitem[Har90]{Hara}
T.~Hara.
\newblock Mean-field critical behaviour for correlation length for percolation
  in high dimensions.
\newblock {\em Probab. Theory Related Fields}, 86(3):337--385, 1990.

\bibitem[JK95]{JeKe}
D.~Jerison and C.~E. Kenig.
\newblock The inhomogeneous {D}irichlet problem in {L}ipschitz domains.
\newblock {\em J. Funct. Anal.}, 130(1):161--219, 1995.

\bibitem[KO84]{KO84}
A.~Kufner and B.~Opic.
\newblock How to define reasonably weighted {S}obolev spaces.
\newblock {\em Comment. Math. Univ. Carolin.}, 25(3):537--554, 1984.

\bibitem[LSSW16]{LSSW}
A.~Lodhia, S.~Sheffield, X.~Sun, and S.~S. Watson.
\newblock {Fractional Gaussian fields: A survey}.
\newblock {\em Probab. Surv.}, 13:1 -- 56, 2016.

\bibitem[Sho97]{Showalter}
R.~E. Showalter.
\newblock {\em Monotone operators in {B}anach space and nonlinear partial
  differential equations}, volume~49 of {\em Mathematical Surveys and
  Monographs}.
\newblock American Mathematical Society, Providence, RI, 1997.

\bibitem[SW01]{SW}
S.~Smirnov and W.~Werner.
\newblock Critical exponents for two-dimensional percolation.
\newblock {\em Math. Res. Lett.}, 8(5-6):729--744, 2001.

\bibitem[SZ90]{ScottZhang}
L.R. Scott and S.~Zhang.
\newblock Finite element interpolation of nonsmooth functions satisfying
  boundary conditions.
\newblock {\em Math. Comput.}, 54(190):483--493, 1990.

\bibitem[Wei83]{Weinberg}
S.~Weinberg.
\newblock Why the renormalization group is a good thing.
\newblock In {\em Asymptotic Realms of Physics}, pages 1--19. MIT Press, 1983.

\end{thebibliography}
}

\end{document}